\documentclass[11pt,leqno]{amsart}
\usepackage{hyperref}
\usepackage{epsfig}
\usepackage{enumitem}
\usepackage{amsmath, amssymb}
\usepackage{amscd}
\usepackage[all,cmtip,matrix,arrow]{xy}
\usepackage{mathrsfs}
\usepackage{graphicx}
\usepackage{tikz-cd}
\usepackage{mathabx}
\usepackage{array}
\usepackage{longtable}
\xyoption{rotate}
\xyoption{pdf}
\usepackage{xparse}

\newcommand{\Implies}[2]{$\text{\ref{#1}}\implies\text{\ref{#2}}$}

\makeatletter
\DeclareRobustCommand*{\bfseries}{%
  \not@math@alphabet\bfseries\mathbf
  \fontseries\bfdefault\selectfont
  \boldmath
}
\makeatother

\AtBeginEnvironment{tikzcd}{\setlength{\mathsurround}{0pt}}

\newtheorem{theo}{Theorem}[section]
\newtheorem{lemma}[theo]{Lemma}
\newtheorem{defi}[theo]{Definition}
\newtheorem{prop}[theo]{Proposition}

\newtheorem{cor}[theo]{Corollary}
\newtheorem{remark}[theo]{Remark}

\newtheorem{example}[theo]{Example}
\newtheorem{ques}[theo]{Question}
\numberwithin{equation}{section}

\mathchardef\mhyphen="2D

\def\A{{\mathbb A}}

\def\N{\mathbb{N}}
\def\bL{\mathbb{L}}

\def\R{\mathbb{R}}
\def\bS{\mathbb{S}}
\def\C{\mathbb{C}}
\def\Z{\mathbb{Z}}
\def\Q{\mathbb{Q}}

\def\wt{\widetilde}

\def\bL{{\mathbf L}}

\def\PP{{\mathbb P}}

\def\pre-tr{\operatorname{pre-tr}}
\def\h{\operatorname{h}}

\def\Hom{\operatorname{Hom}}

\def\Map{\operatorname{Map}}
\def\End{\operatorname{End}}

\def\Pic{\operatorname{Pic}}
\def\rad{\operatorname{rad}}
\def\gr{\operatorname{gr}}

\DeclareMathOperator*{\colim}{colim}

\newcommand{\tens}[1]{%
  \mathbin{\mathop{\otimes}\displaylimits_{#1}}%
}

\newcommand{\Ltens}[1]{%
  \mathbin{\mathop{\otimes}\displaylimits^{\bL}_{#1}}%
}

\newcommand{\laxtimes}[1]{\mathop{\times\mkern-13mu\raise1.3ex\hbox{$\scriptscriptstyle\to$}_{#1}}}

\newcommand{\hy}{\mhyphen}

\newcommand{\indlim}[1][]{\mathop{\varinjlim}\limits_{#1}}
\newcommand{\inddlim}[1][]{{``{\indlim[#1]}"}}
\newcommand{\prolim}[1][]{\mathop{\varprojlim}\limits_{#1}}

\newcommand{\biggplus}[1][]{\mathop{\bigoplus}\limits_{#1}}

\newcommand{\prodd}[1][]{\mathop{\prod}\limits_{#1}}

\newcommand{\bbar}{\overline}
\newcommand{\hhat}{\widehat}

\newcommand{\xto}{\xrightarrow}

\newcommand{\hto}{\hookrightarrow}

\newcommand{\onto}{\twoheadrightarrow}

\newcommand{\Ch}{\operatorname{Ch}}

\newcommand{\lex}{\operatorname{lex}}

\newcommand{\rex}{\operatorname{rex}}

\newcommand{\Groth}{\operatorname{Groth}}
\newcommand{\Mot}{\operatorname{Mot}}

\newcommand{\Nuc}{\operatorname{Nuc}}
\newcommand{\Solid}{\operatorname{Solid}}
\newcommand{\Coh}{\operatorname{Coh}}

\newcommand{\Proj}{\operatorname{Proj}}
\newcommand{\Inj}{\operatorname{Inj}}
\newcommand{\Flat}{\operatorname{Flat}}

\newcommand{\Cat}{\operatorname{Cat}}

\newcommand{\Cone}{\operatorname{Cone}}

\newcommand{\Fiber}{\operatorname{Fiber}}

\newcommand{\loc}{\operatorname{loc}}

\newcommand{\cont}{\operatorname{cont}}
\newcommand{\red}{\operatorname{red}}

\newcommand{\eff}{\operatorname{eff}}
\newcommand{\Eff}{\operatorname{Eff}}
\newcommand{\ex}{\operatorname{ex}}

\newcommand{\St}{\operatorname{St}}

\newcommand{\st}{\operatorname{st}}

\newcommand{\dual}{\operatorname{dual}}

\newcommand{\deff}{\operatorname{def}}
\newcommand{\Fil}{\operatorname{Fil}}

\newcommand{\bE}{{\mathbb E}}

\newcommand{\mk}{\mathrm k}
\newcommand{\ev}{\mathrm ev}

\newcommand{\cF}{{\mathcal F}}

\newcommand{\cO}{{\mathcal O}}

\newcommand{\cD}{{\mathcal D}}

\newcommand{\cA}{{\mathcal A}}
\newcommand{\cB}{{\mathcal B}}

\newcommand{\cC}{{\mathcal C}}
\newcommand{\cE}{{\mathcal E}}

\newcommand{\cY}{{\mathcal Y}}
\newcommand{\cU}{{\mathcal U}}

\newcommand{\cS}{{\mathcal S}}
\newcommand{\cT}{{\mathcal T}}

\newcommand{\veps}{\varepsilon}

\newcommand{\un}{\underline}
\newcommand{\la}{\langle}
\newcommand{\ra}{\rangle}

\newcommand{\incl}{\operatorname{incl}}

\newcommand{\Cohass}{\operatorname{CohAss}}

\newcommand{\TR}{\operatorname{TR}}
\newcommand{\TC}{\operatorname{TC}}
\newcommand{\HH}{\operatorname{HH}}
\newcommand{\HC}{\operatorname{HC}}
\newcommand{\HP}{\operatorname{HP}}

\newcommand{\CAlg}{\operatorname{CAlg}}

\newcommand{\Fun}{\operatorname{Fun}}

\newcommand{\Ab}{\operatorname{Ab}}
\newcommand{\ab}{\operatorname{ab}}

\newcommand{\Add}{\operatorname{Add}}
\newcommand{\Exact}{\operatorname{Exact}}

\newcommand{\Perf}{\operatorname{Perf}}

\newcommand{\perf}{\operatorname{perf}}

\newcommand{\Kar}{\operatorname{Kar}}

\newcommand{\PSh}{\operatorname{PSh}}

\newcommand{\Shv}{\operatorname{Shv}}

\newcommand{\coker}{\operatorname{coker}}

\newcommand{\Ext}{\operatorname{Ext}}

\newcommand{\Id}{\operatorname{Id}}

\newcommand{\Sp}{\operatorname{Sp}}

\newcommand{\add}{\operatorname{add}}

\newcommand{\Ind}{\operatorname{Ind}}

\newcommand{\Calk}{\operatorname{Calk}}

\newcommand{\Spec}{\operatorname{Spec}}

\newcommand{\Vect}{{\rm Vect}}

\newcommand{\Alg}{\operatorname{Alg}}

\newcommand{\id}{\operatorname{id}}

\newcommand{\tors}{\operatorname{tors}}

\newcommand{\compl}{\operatorname{compl}}

\newcommand{\Ac}{\operatorname{Ac}}

\newcommand{\iin}{\operatorname{in}}
\newcommand{\pr}{\operatorname{pr}}

\newcommand{\Mod}{\operatorname{Mod}}

\newcommand{\inv}{\operatorname{inv}}

\newcommand{\Nil}{\operatorname{Nil}}

\usepackage{amscd}

\title[Theorem of the heart for Weibel's homotopy $K$-theory]
{Theorem of the heart for Weibel's homotopy $K$-theory}

\author{Alexander I. Efimov}
\address{Steklov Mathematical Institute of RAS, Gubkin St. 8, GSP-1, Moscow 119991, Russia}
\email{efimov@mccme.ru}

\begin{document}

\begin{abstract}In this paper we prove the theorem of the heart for Weibel's homotopy $K$-theory $KH.$ Namely, if $\cC$ is a small stable $\infty$-category with a bounded $t$-structure, then the realization functor $D^b(\cC^{\heartsuit})\to \cC$ induces an equivalence of spectra $KH(\cC^{\heartsuit})\xto{\sim}KH(\cC).$ In a certain sense this result is dual to the Dundas-Goodwillie-McCarthy theorem. We deduce the d\'evissage theorem for $KH$ of abelian categories, also on the level of spectra (in all degrees). More generally, we prove these results for dualizable categories with nice $t$-structures and for the so-called coherently assembled abelian categories.
	
The proof is heavily based on another new result, which is a much stronger version of Barwick's theorem of the heart. Its special case states the following: if $\cC$ is a small stable category with a bounded $t$-structure, such that for some $n\geq 1$ the realization functor induces isomorphisms on $\Ext^{\leq n}$ between the objects of $\cC^{\heartsuit},$ then the map $K_j(\cC^{\heartsuit})\to K_j(\cC)$ is an isomorphism for $j\geq -n-1,$ and a monomorphism for $j=-n-2.$ Moreover, we prove that these estimates are sharp, even for dg categories over a field. In particular the naive $K$-theoretic theorem of the heart fails for $K_{-3}.$
\end{abstract}

%\keywords{Mac Lane homology and cohomology; Wieferich primes; critical points; spectral sequences; cubical construction.}

\maketitle

\tableofcontents

\section{Introduction}

In this paper we study the non-connective $K$-theory and Weibel's homotopy $K$-theory $KH$ of small stable $\infty$-categories with bounded $t$-structures, and more generally for dualizable categories with suitably defined nice $t$-structures. In the latter case we consider the invariants $K^{\cont}$ and $KH^{\cont} $as defined in \cite{E24}. To fix the ideas, in the introduction we give the formulations of our main results in the context of small categories, mentioning the more general versions for dualizable categories and the so-called coherently assembled abelian categories (Definition \ref{def:coh_ass_abelian_cats}). For brevity we will say ``stable category'' instead of ``stable $\infty$-category''.

Originally, Weibel's definition of $KH$ was given in the $\Z$-linear context \cite{Wei89, Wei13, Tab15}. The natural generalization for stable categories is explained for example in \cite[Section 8, Corollary 8.3]{E25c}. For a small stable category $\cC$ the spectrum $KH(\cC)$ is given by the geometric realization \begin{equation}\label{eq:definition_of_KH_intro}
KH(\cC)=\colim\limits_{[n]\in\Delta^{op}} K(\cC\otimes\Perf(\bS[x_1,\dots,x_n])),
\end{equation}
where for $n\geq 0$ we put
\begin{equation*}
\bS[x_1,\dots,x_n]=\bS[x_1]\otimes\dots\otimes\bS[x_n],\quad \bS[x]=\Sigma^{\infty}(\N_+),
\end{equation*}
$\N_+=\N\sqcup\{\ast\}.$ For a small abelian category $\cA$ we put $KH(\cA)=KH(D^b(\cA)).$

Our main result is the following theorem, which is a special case of Theorem \ref{th:theorem_of_the_heart_for_KH}.

\begin{theo}\label{th:theorem_of_the_heart_for_KH_intro}
Let $\cC$ be a small stable category with a bounded $t$-structure $(\cC_{\geq 0},\cC_{\leq 0}).$ Then the realization functor $D^b(\cC^{\heartsuit})\to \cC$ induces an equivalence of spectra 
\begin{equation*}
KH(\cC^{\heartsuit})\xto{\sim}KH(\cC).
\end{equation*}
\end{theo}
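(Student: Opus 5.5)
The plan is to combine the strong form of Barwick's theorem of the heart stated in the abstract with the simplicial structure of $KH$ in \eqref{eq:definition_of_KH_intro}. For each $n$, base-changing along $\bS\to\bS[x_1,\dots,x_n]$ produces $\cC_n=\cC\otimes\Perf(\bS[x_1,\dots,x_n])$. This category carries an induced $t$-structure, but it is no longer bounded and its heart is not $\cC^{\heartsuit}[x_1,\dots,x_n]$ in any naive sense. So rather than applying the heart theorem degreewise, I will compare the two simplicial spectra $[n]\mapsto K(D^b(\cC^{\heartsuit})\otimes\Perf(\bS[x_1,\dots,x_n]))$ and $[n]\mapsto K(\cC_n)$. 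The key input is the strong theorem: if the realization functor is an isomorphism on $\Ext^{\le m}$ between heart objects, then $K_j$ agrees for $j\ge -m-1$. I want a version of this for the relative situation, stable under tensoring with $\Perf(\bS[x_1,\dots,x_n])$ (for instance via the dualizable / coherently assembled versions, using $\Ind$ and $K^{\cont}$).

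The steps, in order, are as follows.

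(1) Factor the realization functor $D^b(\cC^{\heartsuit})\to\cC$ as a tower of intermediate small stable categories $\cC^{(m)}$ with bounded $t$-structures and the same heart. Here $\cC^{(m)}$ agrees with $\cC$ on $\Ext^{\le m}$ and with $D^b(\cC^{\heartsuit})$ in higher degrees; concretely, I would take a Postnikov-type truncation of the "Ext-algebra", or iterated square-zero-type extensions killing higher Ext classes. Then $\cC=\colim_m\cC^{(m)}$ in the appropriate sense, and each step $\cC^{(m)}\to\cC^{(m+1)}$ changes only $\Ext^{m+1}$.

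(2) Show that each step is a "nilpotent-type" extension in a sense dual to Dundas–Goodwillie–McCarthy: the fiber of $K(\cC^{(m)})\to K(\cC^{(m+1)})$ is built from a bimodule concentrated in a single degree.

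(3) Use the fact that $KH$ is $\A^1$-invariant and, being truncating, kills such nilpotent extensions. More precisely, the strong heart theorem gives that the fiber is coconnective below $-m$, and passing to the $\Delta^{op}$-colimit with the polynomial variables improves this estimate uniformly, since $KH$ is insensitive to the relevant square-zero/graded deformations. This yields $KH(\cC^{(m)})\simeq KH(\cC^{(m+1)})$.

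(4) Conclude by passing to the limit in $m$. Here one needs that $KH$ of $\cC$ is controlled by the finite-stage data: on any fixed homotopy group $KH_j$, only finitely many Ext degrees matter, by the estimate $j\ge -m-1$.

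The main obstacle is step (3): making precise that $KH$ (a truncating invariant) is invariant under the change of higher Ext groups, even though plain $K$ is not, as shown by the sharpness result for $K_{-3}$. My first attempt would be to realize $\cC^{(m+1)}$ as a filtered or graded deformation of $\cC^{(m)}$ over $\bS[t]$, so that $\A^1$-homotopy invariance of $KH$ identifies the two ends. Failing that, I would invoke the result that truncating invariants are nil-invariant (Land–Tamme), applied to the corresponding endomorphism algebras of a generator in the $\Ind$-completion using $KH^{\cont}$.
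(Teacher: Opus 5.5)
\textbf{There is a genuine gap.} Your step (3) carries the entire content of the theorem, and it is only asserted. Neither mechanism you offer for it applies.

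\emph{Truncating invariants.} Truncating invariants and Land--Tamme nil-invariance are statements about maps of \emph{connective} $\bE_1$-rings that are nilpotent extensions on $\pi_0$. Passing from $D^b(\cC^{\heartsuit})$ to $\cC$ (or from $\cC^{(m)}$ to $\cC^{(m+1)}$) instead changes higher $\Ext$ groups between heart objects, i.e.\ the negative homotopy of endomorphism algebras. No such result is available there, and the analogy with Dundas--Goodwillie--McCarthy in the introduction is explicitly heuristic.

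\emph{Filtered deformations.} A filtered deformation over $\bS[t]$ does not help by itself, because $\A^1$-invariance does not identify the two ends of a Rees family. For example, $\Z[x,y,t]/(xy-t^2)$ has fibres $\Z[x,y]/(xy-1)$ and $\Z[x,y]/(xy)$. The latter is non-negatively graded with degree-zero part $\Z$, so its $KH$ is $KH(\Z)$, while the former has $KH(\Z)\oplus\Sigma KH(\Z)$. $\A^1$-invariance is only effective for genuinely graded objects.

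\emph{Passing to the colimit.} Your claim that the $\Delta^{op}$-colimit ``improves the estimate uniformly'' goes the wrong way. The nil-group estimates deteriorate with $s$: $N^sK_j$ is controlled only for $j\geq -n+s$. So for $U_k=\Fil_k KH$ one only gets that $\Cone(U_k(F))$ is $(-n+2k-1)$-coconnective (Corollary \ref{cor:coconnectivity_estimates_for_U_k}), and these bounds diverge as $k\to\infty$. The $K$-theoretic bound $j\geq -m-1$ you invoke for $KH_j$ in step (4) does not transfer, for the same reason.

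What must actually be shown is that for every $c$ and $k$ there is $l\geq k$ such that $\tau_{\geq c}\Cone(U_k(F))\to\tau_{\geq c}\Cone(U_l(F))$ is null-homotopic. This needs structure that is absent from your sketch; the paper proceeds as follows.
\begin{enumerate}
\item Any $(-n)$-coconnective functor is written as a sequential colimit of steps whose monads are deformed tensor algebras $T^{\deff}(G)$ with $G[n]$ left $t$-exact (Proposition \ref{prop:approx_by_deformed_tensor_algebras}).
\item For an honest tensor algebra $T(G)$, the grading exhibits $\Cone(\cU_{\loc}(F))$ as a retract of $\wt{\cU}_{\loc}(\bS[x])\otimes\Cone(\cU_{\loc}(F))$. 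Hence each map $\Cone(U_k(F))\to\Cone(U_{k+1}(F))$ is null (Proposition \ref{prop:A^1-action_on_tensor_algebra}). This is where your graded idea genuinely applies.
\item A deformed tensor algebra is compared with the honest tensor algebra $T(G[-1])$ via the functor $\Theta$ of Proposition \ref{prop:ses_for_deformed}, which satisfies $\Theta^R\Theta\simeq\Id\oplus G[-1]$. The resulting comparison functor is $(-2n-1)$-coconnective (Lemma \ref{lem:A_implies_B}).
\item The induction $(S_m)\Rightarrow(T_m)\Rightarrow(S_{m+1})$ of Proposition \ref{prop:statements_S_m_and_T_m} plays this doubling of coconnectivity against the finite-level estimates of Theorem \ref{th:coconnectivity_estimates}. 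This produces the required null-homotopies.
\end{enumerate}
Your tower $\cC^{(m)}$ does exist (e.g.\ the categories $\St(\cC_{[0,m]})$). However, in the paper it only enters the proof of the $K$-theoretic estimates, not the $KH$-invariance.
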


This theorem can be considered as a dual version of the Dundas-Goodwillie-McCarthy theorem \cite[Theorem 7.0.0.2]{DGM13}, see below for an explanation of this analogy. We also deduce the following d\'evissage theorem for $KH,$ which is a special case of Theorem \ref{th:devissage_for_K_and_KH} \ref{devissage_for_KH}.

\begin{theo}\label{th:devissage_for_KH_intro}
Let $F:\cB\to\cA$ be an exact fully faithful functor between small abelian categories such that $F(\cB)$ generates $\cA$ via extensions (equivalently, $F(\cB)\subset\cA$ satisfies the assumptions of Quillen's D\'evissage theorem). Then $F$ induces an equivalence of spectra
\begin{equation*}
KH(\cB)\xto{\sim}KH(\cA).
\end{equation*}
\end{theo}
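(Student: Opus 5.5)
The plan is to deduce the statement from Theorem \ref{th:theorem_of_the_heart_for_KH_intro}, by realizing $D^b(\cA)$ as a stable category whose heart is equivalent to $\cB$. The natural candidate is $\cC:=D^b_{\cB}(\cA)$, the full subcategory of $D^b(\cA)$ of complexes whose cohomology objects lie in the essential image of $F$. Since $F(\cB)$ generates $\cA$ by extensions, every object of $\cA$ has a finite filtration with subquotients in $F(\cB)$. Hence every object of $D^b(\cA)$ is an iterated extension of shifts of objects of $F(\cB)$. The obstacle is that $F(\cB)$ need not be closed under extensions in $\cA$, so $\cC$ is not obviously stable, nor equal to $D^b(\cA)$. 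I would therefore use a different route.

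The route is to put a bounded $t$-structure with heart $\cB$ on a category with the same $KH$ as $\cA$, using the "dual" weight-type filtration. Concretely, consider the functor $D^b(F):D^b(\cB)\to D^b(\cA)$. I would prove that it becomes an equivalence after applying $KH$ by a filtration argument. Every object of $\cA$ has a finite filtration with subquotients in $F(\cB)$, and the collection of such filtrations is filtered. This lets us write $\cA$ as a union, up to Serre subcategory considerations, and reduce to the case where every object of $\cA$ has Loewy length $\le N$ with respect to $F(\cB)$. In this situation I would compare via the Barwick-type strengthening mentioned in the abstract. The realization $D^b(\cB)\to D^b(\cA)$ is an isomorphism on $\Ext^0$ and injective on $\Ext^1$. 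Invoking it requires a bounded $t$-structure on $D^b(\cA)$ with heart $\cB$, which is not available, so this alternative also stalls.

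The cleanest plan is the one I commit to. First, by Quillen's classical dévissage, $K(\cB)\to K(\cA)$ is an equivalence on connective $K$-theory. Apply the same to $\cB\otimes\Perf(\bS[x_1,\dots,x_n])$-type constructions: for each $n$, form the abelian categories $\cA_n$ and $\cB_n$, namely the hearts of the natural $t$-structures on $D^b(\cA)\otimes\Perf(\bS[x_1,\dots,x_n])$. These are finitely presented $\Z[x_1,\dots,x_n]$-module objects, and $\cB_n\subset\cA_n$ is again a dévissage situation. Theorem \ref{th:theorem_of_the_heart_for_KH_intro} applied degreewise is unnecessary: $KH$ is $\A^1$-invariant and agrees with its values on hearts by that theorem. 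So it suffices to show $KH(\cB)\to KH(\cA)$ is an equivalence after replacing $K$ by connective $K$ levelwise, because $KH$ can also be computed as the realization of connective $K$ of $\cC\otimes\Perf(\bS[x_1,\dots,x_n])$ (negative $K$ is killed by the $\A^1$-localization, via the Bass fundamental theorem). The main step is then levelwise: by Theorem \ref{th:theorem_of_the_heart_for_KH_intro}, each $K$ term may be replaced by $K$ of the hearts $\cB_n$ and $\cA_n$, and Quillen dévissage applies there.

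The hard step is checking that the hearts $\cB_n\subset\cA_n$ still satisfy the dévissage hypothesis, i.e. that each object of $\cA_n$ has a finite filtration by objects of $\cB_n$. I would first try the filtration of the underlying $\cA$-object, using the fact that $x_i$-stable subobjects can be produced by taking images of the filtration under the polynomial action and using finiteness.
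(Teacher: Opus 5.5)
Your committed plan has a genuine gap: its load-bearing steps are false.

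First, $D^b(\cA)\otimes\Perf(\bS[x_1,\dots,x_n])$ need not admit any bounded $t$-structure, so the abelian categories $\cA_n\supset\cB_n$ are not available in general. The would-be heart of finitely presented module objects need not be abelian, just as $R[x]$ can fail to be coherent for coherent $R$. The paper itself contains an obstruction. For $\cB_0=\eff(\Vect_X)$, with $X$ the cuspidal cubic, the proof of Theorem~\ref{th:sharpness_of_estimates} gives $NK_{-1}(\cB_0)\cong NK_0(X)\neq 0$. A bounded $t$-structure on $D^b(\cB_0)\otimes\Perf(\bS[x])$ would force its $K_{-1}$, which contains $NK_{-1}(\cB_0)$ as a summand, to vanish by Antieau--Gepner--Heller. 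So the step you flag as hard is not where the difficulty lies.

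Second, Theorem~\ref{th:theorem_of_the_heart_for_KH_intro} is a statement about $KH$, so it does not let you replace $K$ of each level by $K$ of a heart. For non-connective $K$ that replacement fails already in $K_{-3}$ (Theorem~\ref{th:sharpness_of_estimates}(ii)), and Barwick's theorem only reaches connective $K$.

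Third, $KH$ is not the realization of levelwise connective $K$; the fundamental theorem does not make negative $K$ vanish $\A^1$-locally. A realization of connective spectra is connective, while $KH_{-1}$ of the nodal cubic $y^2=x^3+x^2$ over $\Q$ is $\Z$. Even for abelian categories, negative nil-groups such as the $NK_{-1}$ above enter $KH_{\geq 0}$ through the graded pieces $N^kK[k]$ of $\Fil_\bullet KH$.

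More fundamentally, no levelwise comparison can work. Already at level $0$ the map $K_{-2}(\cB)\to K_{-2}(\cA)$ need not be surjective (Theorem~\ref{th:sharpness_of_estimates}(i)), and at level $s$ one only controls $N^sK_j$ for $j\geq s-1$. This is why the paper proves null-homotopies of the transition maps $\Cone(U_k(F))\to\Cone(U_l(F))$ rather than levelwise equivalences.

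Your second route stalled for the wrong reason. Theorem~\ref{th:coconnectivity_estimates_intro} needs only the standard $t$-structures, for which $D^b(\cB)\to D^b(\cA)$ is $(-1)$-coconnective. However, it yields only Corollary~\ref{cor:devissage_for_K_intro}, not the $KH$ statement. The missing idea is to trade the dévissage map for the realization functor of a different $t$-category with heart $\cB$, obtained as a kernel, and apply the theorem of the heart for $KH$ there.

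The paper works with $\check{D}(\cB)\to\check{D}(\cA)$. By Propositions~\ref{prop:approx_by_deformed_tensor_algebras} and~\ref{prop:approx_by_deformed_for_t_categories}, this functor is a sequential colimit of steps whose monads are deformed tensor algebras $T^{\deff}(G)$ with $G[1]$ left $t$-exact. For each step, the sequence $0\to\cE\to\cC\oright_H\cC\to\cD\to 0$ of Proposition~\ref{prop:ses_for_deformed} gives $\Cone(KH^{\cont}(F))\simeq\Cone(KH^{\cont}(\Theta))[1]$. Here $\Theta:\cC\to\cE$ is $(-2)$-coconnective with $\cE^{\heartsuit}\simeq\cC^{\heartsuit}$, so this cone vanishes by Theorem~\ref{th:theorem_of_the_heart_for_KH}.

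For small categories you could do the same by hand, building on your Loewy-length idea. The objects admitting a $\cB$-filtration of length $\leq 2^k$ form exactly embedded abelian subcategories, since $\cB$ is closed under subobjects and quotients (Proposition~\ref{prop:devissage_condition_for_small}). Each step is then of the form ``every object is an extension of two''. For such a step, the sequence $0\to\cC_1\to D^b(\cE_{\cA,\cB})\to D^b(\cA)\to 0$ used in the proof of Theorem~\ref{th:sharpness_of_estimates} identifies $\Cone(KH(\cB)\to KH(\cA))$ with $\Cone(KH(\cB)\to KH(\cC_1))[1]$. Since $\cC_1$ is a small $t$-category with heart $\cB$, Theorem~\ref{th:theorem_of_the_heart_for_KH_intro} kills this cone, and $KH$ commutes with the filtered union.
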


The proof of Theorem \ref{th:theorem_of_the_heart_for_KH_intro} is based on the following much stronger version of Barwick's theorem of the heart \cite[Theorem 6.1]{Bar15}. This is a special case of Theorem \ref{th:coconnectivity_estimates}.

\begin{theo}\label{th:coconnectivity_estimates_intro}
Let $\cC$ and $\cD$ be small stable categories with bounded $t$-structures $(\cC_{\geq 0},\cC_{\leq 0})$ resp. $(\cD_{\geq 0},\cD_{\leq 0}).$ Let $n\geq 1$ be an integer and suppose that we have an exact and $t$-exact functor $F:\cC\to\cD,$ such that the following conditions hold:
\begin{itemize}
	\item the image $F(\cC)$ generates $\cD$ as an idempotent-complete stable subcategory;
	\item for $x,y\in\cC^{\heartsuit}$ the map $\Ext^i_{\cC}(x,y)\to\Ext^i_{\cD}(F(x),F(y))$ is an isomorphism for $i\leq n-1,$ and a monomorphism for $i=n.$
\end{itemize}
Then the induced map $K_j(\cC)\to K_j(\cD)$ is an isomorphism for $j\geq -n,$ and a monomorphism for $j = -n-1.$
\end{theo}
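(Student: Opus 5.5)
We reduce to a statement about the cofiber of $K(\cC)\to K(\cD)$, analysed through localization sequences and the vanishing of negative $K$-theory for bounded $t$-structures. The first step is to factor $F$ through the image. Let $\cD'\subset\cD$ be the full stable subcategory generated by $F(\cC)$ without idempotent completion. Since $K$ is invariant under idempotent completion and $\cD'\subset\cD$ is dense, $K_j(\cD')\to K_j(\cD)$ is an isomorphism for $j\geq 1$, injective for $j=0$, and $K$ agrees with $K$ of the idempotent completion. So we may work with $F:\cC\to\cD$ essentially surjective up to retracts.

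The second step is to form the Verdier quotient, or more precisely the fiber category. The functor $F$ is not fully faithful, so we replace it by a localization. We consider the stable category $\cE$ of "kernels", defined via the lax pullback, or via the Ind-completion: the right adjoint $G:\Ind\cD\to\Ind\cC$ gives a monad whose failure to be the identity is measured by $\cof(\id\to GF)$. The Ext hypothesis says that, for $x\in\cC^{\heartsuit}$, this cofiber applied to $x$ lies in $\Ind(\cC)_{\leq -n}$ (coconnective in degrees $\leq -n$), with the monomorphism at $i=n$ giving one extra degree. The plan is to express $K(\cD)$ as $K$ of modules over the monad $GF$ on $\cC$, that is, $\cD\simeq$ compact modules over a monad $T$ with $\id\to T$ being $(n)$-coconnective-equivalence on the heart. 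Filtering $T$ by its Postnikov/$t$-structure tower reduces this to square-zero-type extensions whose fiber is $n$-coconnective.

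The third step, and the main obstacle, is a "coconnective DGM" statement: if $\cC\to\cD$ is a map whose fiber on mapping spectra is concentrated in degrees $\leq -n$, then $\cof(K(\cC)\to K(\cD))$ is concentrated in degrees $\leq -n-1$, with the top homotopy group receiving an injection. I would prove this through the description of negative $K$-groups via iterated Calkin/suspension categories: $K_{-m}(\cC)=K_0(\Calk^{m}(\cC))$. Bounded $t$-structures extend (Antieau–Gepner–Heller/Neeman style) so that each $\Calk^{m}$ retains the relevant $t$-structure information, and the Ext-condition shifts by one degree at each iteration. The base case is $K_0$ and $K_1$, handled by Barwick's theorem of the heart together with the hypothesis that $\Ext^{0},\Ext^1$ agree (and $\Ext^{2}$ injects). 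This identifies $K_0,K_1$ of both sides with those of hearts that agree, since extension classes and automorphisms coincide. The induction on $m$ loses exactly one degree of Ext-agreement per suspension, which produces the bound $j\geq -n$ and injectivity at $j=-n-1$.

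Finally, for $j\geq 1$ we would use Barwick's theorem, since $K_{\geq 0}$ depends only on the hearts. The hypotheses give $\cC^{\heartsuit}\simeq\cD^{\heartsuit}$ for $n\geq 1$ (using $\Ext^0$ iso, $\Ext^1$ mono, plus generation and boundedness). Hence connective $K$ agrees, and the new content is entirely the negative range handled in the third step.
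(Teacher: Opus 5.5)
There is a genuine gap. Your second step correctly recasts the hypothesis as left $t$-exactness of $\Cone(\Id\to F^RF)[n]$ (this is Proposition~\ref{prop:-n_coconnectivity_via_monads}), but everything after that is either incorrect or a restatement of the claim.

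(a) Your last paragraph fails for $n=1$. There only injectivity on $\Ext^1$ is assumed, so $\cC^{\heartsuit}\to\cD^{\heartsuit}$ is fully faithful but in general not essentially surjective. This is exactly the d\'evissage situation $D^b(\cB)\to D^b(\cA)$, where even the connective range is Quillen's d\'evissage and does not follow from Barwick's theorem. The hearts agree only for $n\geq 2$.

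(b) There is no analogue here of the Postnikov tower of a connective ring by square-zero extensions. Objectwise truncations of $\Cone(\Id\to T)$ do not assemble into monads. The paper's substitute is the deformed tensor algebra tower described below.

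(c) Your third step is the theorem itself, and the Calkin induction has no mechanism. The natural $t$-structure on $\Calk_{\omega_1}(\cC)$ is in general not compactly assembled. By Proposition~\ref{prop:t_structure_on_Calk_properties} one only gets left $t$-exactness of $\hat{\cY}$ after a shift, and the shift grows by one with each iteration. So compact objects of $\Calk^m_{\omega_1}(\cC)$ are not closed under truncation and do not form a $t$-category. Hence there is no heart on which to apply Barwick or Antieau--Gepner--Heller at the bottom of your induction. There is also no evident way to formulate, let alone propagate, an $\Ext$-condition for the induced functor on Calkin categories. ``One degree of $\Ext$-agreement per suspension'' reproduces the numbers in the answer but is not an argument.

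The missing ideas are the ones the paper actually uses. One passes to dualizable $t$-categories and approximates $F$ by a tower $\cC=\cC_0\to\cC_1\to\cdots$ with colimit $\cD$. Each step is modules over a deformed tensor algebra $T^{\deff}(G_k)$ with $G_k[n]$ left $t$-exact (Propositions~\ref{prop:approx_by_deformed_tensor_algebras} and~\ref{prop:approx_by_deformed_for_t_categories}).

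For a single such step, the short exact sequence $0\to\cE\to\cC\oright_H\cC\to\cD\to 0$ of Proposition~\ref{prop:ses_for_deformed} identifies $\Cone(K^{\cont}(\cC)\to K^{\cont}(\cD))$ with $\Cone(K^{\cont}(\cE)\to K^{\cont}(\cC))$. Here the map is induced by a left inverse of a functor $\Theta:\cC\to\cE$ that is $(-n-1)$-coconnective, and therefore restricts to an equivalence $\cC_{[0,n-1]}\simeq\cE_{[0,n-1]}$. The refined theorem of the heart, Theorem~\ref{th:theorem_of_the_heart}, says $K^{\cont}_j(\cC_{[0,m]})\to K^{\cont}_j(\cC)$ is an isomorphism for $j\geq -m-2$. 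It shows that $\Theta$ is an isomorphism on $K^{\cont}_{\geq -n-1}$, so the cone is $(-n-1)$-coconnective.

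All the control of negative $K$-theory lives in that refined theorem. Its key input, Lemma~\ref{lem:key_surjectivity_on_K_-m-2}, works as follows:
\begin{enumerate}
\item It embeds $\cC$ as $\la (\cE')^b\ra$ in a dualizable ``left-and-right completion'' $\cE'$.
\item It shows $K^{\cont}(\la (\cE')^{\pm}\ra)=0$ by an Eilenberg swindle.
\item It truncates representatives in iterated Calkin categories of $\cE'$ to amplitude $[0,m]$.
\end{enumerate}
This is combined with the cofiber sequences coming from the exact categories $\wt{\cC}_{[0,m]}$. None of these ingredients appears in your outline, and the Calkin idea by itself cannot replace them.
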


We mention the following immediate corollaries, which are again special cases of more general results for large categories (Corollary \ref{cor:theorem_of_the_heart_under_condition_on_Exts} and Theorem \ref{th:devissage_for_K_and_KH} \ref{devissage_for_K}).

\begin{cor}\label{cor:refined_th_of_the_heart_intro}
Let $\cC$ be a small stable category with a bounded $t$-structure $(\cC_{\geq 0},\cC_{\leq 0}).$ Suppose that for some $n\geq 1$ the maps $\Ext^i_{\cC^{\heartsuit}}(x,y)\to\Ext^i_{\cC}(x,y)$ are isomorphisms for $i\leq n$ and $x,y\in\cC^{\heartsuit}$ (this always holds for $n=1$). Then the map $K_j(\cC^{\heartsuit})\to K_j(\cC)$ is an isomorphism for $j\geq -n-1,$ and a monomorphism for $j=-n-2.$ 
\end{cor}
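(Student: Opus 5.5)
We deduce the corollary from Theorem \ref{th:coconnectivity_estimates_intro}, applied to the realization functor $F:D^b(\cC^{\heartsuit})\to\cC$ with the integer $n+1$ in place of $n$. Here $D^b(\cC^{\heartsuit})$ carries its standard bounded $t$-structure. The functor $F$ is exact and $t$-exact, and it restricts to the identity on hearts. Both $K_j(\cC^{\heartsuit})$ and $K_j(\cC)$ mean non-connective $K$-theory, and $K(\cC^{\heartsuit})=K(D^b(\cC^{\heartsuit}))$ by convention. So the output of Theorem \ref{th:coconnectivity_estimates_intro} with $n+1$ is exactly the desired statement: an isomorphism for $j\geq -n-1$ and a monomorphism for $j=-n-2$.

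We then check the two hypotheses of Theorem \ref{th:coconnectivity_estimates_intro}.

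First, generation. Since the $t$-structure on $\cC$ is bounded, every object of $\cC$ has a finite Postnikov filtration whose graded pieces are shifts of objects of $\cC^{\heartsuit}$. Hence $\cC$ is generated by $\cC^{\heartsuit}=F(\cC^{\heartsuit})$ under shifts and extensions, and in particular as an idempotent-complete stable subcategory.

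Second, the Ext condition with $n+1$. We need the maps $\Ext^i_{D^b(\cC^{\heartsuit})}(x,y)\to\Ext^i_{\cC}(x,y)$ to be isomorphisms for $i\leq n$ and a monomorphism for $i=n+1$. The isomorphisms for $i\leq n$ are the assumption of the corollary. For $i<0$ both sides vanish, since $x,y$ lie in the heart.

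The monomorphism at $i=n+1$ is the only non-formal point, and we argue by induction on $i$ that injectivity in degree $i+1$ follows from bijectivity in degree $i$. The key fact is that Yoneda Ext groups in an abelian category are effaceable. Let $\alpha\in\Ext^{i+1}_{\cC^{\heartsuit}}(x,y)$ with $i\geq 0$. It is represented by a Yoneda extension, so we can write $\alpha=\beta\circ\delta$, where:
\begin{itemize}
\item $0\to z\to w\to x\to 0$ is a short exact sequence in $\cC^{\heartsuit}$ with connecting class $\delta\in\Ext^1(x,z)$;
\item $\beta\in\Ext^i_{\cC^{\heartsuit}}(z,y)$;
\item $\alpha$ restricts to zero on $w$.
\end{itemize}
This is possible because the pullback of a Yoneda $(i+1)$-extension to its own first middle term splits.

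Short exact sequences in the heart give fiber sequences in both categories, so the long exact sequences in $\Ext^\bullet(-,y)$ are compatible under $F$. We then compare the segments
\begin{equation*}
\Ext^i(w,y)\to\Ext^i(z,y)\to\Ext^{i+1}(x,y)\to\Ext^{i+1}(w,y)
\end{equation*}
in the two categories. The first two vertical maps are isomorphisms by the inductive hypothesis. The goal is to show the third vertical map is injective.

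Suppose $F(\alpha)=0$. Since $\alpha$ comes from $\beta\in\Ext^i(z,y)$, so does $F(\alpha)$, namely from $F(\beta)$. By exactness, $F(\beta)$ then lifts to an element of $\Ext^i_{\cC}(w,y)$. Because $\Ext^i(w,y)\to\Ext^i_{\cC}(w,y)$ is surjective, this lift comes from some $\gamma\in\Ext^i_{\cC^{\heartsuit}}(w,y)$, and $\gamma$ restricts to $\beta$ by injectivity on $\Ext^i(z,y)$. Therefore $\alpha=\beta\circ\delta=0$, which proves injectivity in degree $i+1$.

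This gives the monomorphism in degree $n+1$; the remark that the hypothesis "always holds for $n=1$" is the known statement that the realization functor is bijective on $\Ext^{\leq 1}$. With both hypotheses verified, Theorem \ref{th:coconnectivity_estimates_intro} with $n+1$ yields the corollary.
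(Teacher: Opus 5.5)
Your proof is correct, but it follows a different route from the paper's proof. The paper proves the corollary as Corollary \ref{cor:theorem_of_the_heart_under_condition_on_Exts}. There, Proposition \ref{prop:when_partial_realization_is_an_equivalence} (a left-speciality argument) shows that the hypothesis on $\Ext^{\leq n}$ forces $D^b(\cC^{\heartsuit})\xto{\sim}\St(\cC_{[0,n-1]})$. The paper then applies the refined theorem of the heart, Theorem \ref{th:theorem_of_the_heart}, with $m=n-1$.

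You instead treat the realization functor as $(-n-1)$-coconnective and invoke Theorem \ref{th:coconnectivity_estimates_intro} with $n+1$. The only extra input you need is injectivity on $\Ext^{n+1}$. Your effaceability and four-lemma argument supplies this correctly; it is the same mechanism as Proposition \ref{prop:mono_on_Ext^2_for_small}. No induction is needed: bijectivity in degree $n$ alone gives injectivity in degree $n+1$. The paper itself remarks right after stating the corollary in the introduction that this injectivity is automatic, which is exactly what your deduction requires.

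Your argument is not circular. The proof of Theorem \ref{th:coconnectivity_estimates} uses Theorem \ref{th:theorem_of_the_heart} and Corollary \ref{cor:equiv_on_C_0_n}, not this corollary.

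The trade-off is as follows. The paper's route stays low in the dependency chain: it uses only the refined theorem of the heart, which is an input to the coconnectivity estimates, and it avoids the deformed tensor algebra machinery of Section \ref{sec:coconnectivity_estimates}. It also never needs the $\Ext^{n+1}$ check. Your route makes the corollary a uniform instance of the dual Waldhausen-type estimate, exactly as the paper handles d\'evissage for $K$ in Theorem \ref{th:devissage_for_K_and_KH}. The cost is that it rests on the heavier theorem.
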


Note that in the situation of the above corollary the maps $\Ext^{n+1}_{\cC^{\heartsuit}}(x,y)\to \Ext^{n+1}_{\cC}(x,y)$ are automatically monomorphisms for $x,y\in\cC^{\heartsuit}.$

\begin{cor}\label{cor:devissage_for_K_intro}
Let $F:\cB\to\cA$ be a fully faithful exact functor between small abelian categories, such that $F(\cB)$ generates $\cA$ via extensions. Then the induced map $K_j(\cB)\to K_j(\cA)$ is an isomorphism for $j\geq -1,$ and a monomorphism for $j = -2.$
\end{cor}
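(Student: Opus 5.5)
The plan is to deduce Corollary \ref{cor:devissage_for_K_intro} from Theorem \ref{th:coconnectivity_estimates_intro}, applied with $n=1$ to the exact functor $D^b(F):D^b(\cB)\to D^b(\cA)$. Both categories carry their standard bounded $t$-structures, whose hearts are $\cB$ and $\cA$. Since $F$ is exact, $D^b(F)$ is exact and $t$-exact, so only the two hypotheses of the theorem need to be checked.

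\emph{Generation.} Every object of $D^b(\cA)$ lies in the thick subcategory generated by its cohomology objects, because it is an iterated extension of shifts of objects of $\cA$. By assumption every object of $\cA$ is an iterated extension of objects of $F(\cB)$. Each short exact sequence in $\cA$ gives an exact triangle in $D^b(\cA)$, so every object of $\cA$ lies in the stable subcategory generated by $F(\cB)$. Hence $D^b(F)(D^b(\cB))$ generates $D^b(\cA)$ as an idempotent-complete stable subcategory; in fact no idempotent completion is needed.

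\emph{Ext condition for $n=1$.} We need, for $x,y\in\cB$:
\begin{itemize}
\item $\Ext^i_{\cB}(x,y)\to\Ext^i_{\cA}(F(x),F(y))$ is an isomorphism for $i\leq 0$;
\item the same map is injective for $i=1$.
\end{itemize}
For $i<0$ both groups vanish, and for $i=0$ the map is an isomorphism because $F$ is fully faithful. For $i=1$ we use the Yoneda description of $\Ext^1$ by extension classes. Suppose an extension $0\to y\to e\to x\to 0$ in $\cB$ becomes split after applying $F$. Then $F(e)\to F(x)$ has a section $s$. By full faithfulness $s=F(s')$ for some $s':x\to e$, and $s'$ is a section in $\cB$ because $F$ is faithful. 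Thus the class is already zero in $\Ext^1_{\cB}(x,y)$. Since the map on $\Ext^1$ is a group homomorphism, this gives injectivity.

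Theorem \ref{th:coconnectivity_estimates_intro} with $n=1$ then shows that $K_j(D^b(\cB))\to K_j(D^b(\cA))$ is an isomorphism for $j\geq -1$ and a monomorphism for $j=-2$. By the convention $K(\cA)=K(D^b(\cA))$, this is exactly the claim. None of these steps is a real obstacle; the point needing the most care is the $\Ext^1$-injectivity, which should be stated in terms of Yoneda extensions as above.
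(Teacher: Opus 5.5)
Your proposal is correct and is essentially the paper's own argument. The paper obtains this corollary as the small-category case of Theorem \ref{th:devissage_for_K_and_KH} \ref{devissage_for_K}: it observes that $\check{D}(\cB)\to\check{D}(\cA)$, which here is the ind-completion of your $D^b(F)$, is $(-1)$-coconnective, and then applies Theorem \ref{th:coconnectivity_estimates} with $n=1$. Your checks of generation, of the isomorphism on $\Ext^{\le 0}$, and of the injectivity on $\Ext^1$ are exactly what that step requires.
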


We note that the above results are new only when the hearts are non-noetherian (and non-artinian). Namely, if $\cC^{\heartsuit}$ is noetherian, then we have $K_{<0}(\cC)=0$ by a theorem of Antieau-Gepner-Heller \cite[Theorem 3.6]{AGH19}, and we have $K(\cC)\simeq KH(\cC)$ (more precisely, $K(\cC)\simeq K(\cC\otimes \Perf(\bS[x_1,\dots,x_n]))$ for all $n\geq 0$). 

We prove that the estimates in Corollaries \ref{cor:refined_th_of_the_heart_intro} and \ref{cor:devissage_for_K_intro} are sharp (Theorem \ref{th:sharpness_of_estimates}), even if we are dealing with small dg categories resp. small abelian categories over a field. In particular we obtain that the naive $K$-theoretic theorem of the heart fails for $K_{-3},$ which is a much more precise version of a result of Ramzi-Sosnilo-Winges \cite[Theorem B]{RSW24}.

We now explain a very surprising analogy with the familiar results on connective $\bE_1$-ring spectra. Intuitively the notion of a $t$-structure is dual to the notion of a weight structure \cite{Bon, Sos19}. In particular the bounded $t$-structures are ``dual'' to bounded weight structures. Consider an idempotent-complete stable category $\cC$ with a bounded weight structure $(\cC_{w\geq 0},\cC_{w\leq 0}).$ For simplicity we assume that $\cC$ is generated by a single object $G$ (this assumption is harmless, it will make no essential difference in what follows).  In this case we may assume that $G$ is of weight zero. Putting $R=\End_{\cC}(G)$ (meaning the spectrum of endomorphisms), we obtain $\cC\simeq\Perf(R),$ and $R$ is a connective $\bE_1$-ring. Conversely, for a connective $\bE_1$-ring $R$ the category $\Perf(R)$ has a unique bounded weight structure such that $R$ has weight zero \cite{Sos19}.

The following statement is well-known. It is due to Waldhausen \cite{Wal78} for simplicial rings and connective $K$-theory, and the general case is proven by Land and Tamme \cite{LT19} (the case $n=0$ follows from the proof of \cite[Corollary 3.5]{LT19}).

\begin{prop}\cite[Proposition 1.1]{Wal78} \cite[Lemma 2.4]{LT19} \label{prop:connectivity_estimates}
Let $n\geq 0$ be an integer and let $f:R\to S$ be a map of connective $\bE_1$-rings, such that the map $\pi_i(R)\to\pi_i(S)$ is an isomorphism for $i\leq n-1,$ and an epimorphism for $i = n.$ If $n=0,$ we require that the ideal $\ker(\pi_0(f))\subset \pi_0(R)$ is nilpotent. Then the induced map $K_j(R)\to K_j(S)$ is an isomorphism for $j\leq n,$ and an epimorphism for $j = n+1.$
\end{prop}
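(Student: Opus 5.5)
We prove Proposition \ref{prop:connectivity_estimates} by reducing to relative $K$-theory and the Dundas--Goodwillie--McCarthy theorem, treating the cases $n\geq 1$ and $n=0$ separately.

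\textbf{Case $n\geq 1$.} Here $\pi_0(R)\to\pi_0(S)$ is an isomorphism and the fiber of $f$ is $n$-connective. First consider connective $K$-theory. The plus-construction model is $K(R)=K_0(\pi_0R)\times BGL(R)^+$, and the map $BGL(R)\to BGL(S)$ is $(n+1)$-connected. Indeed, $GL_k(R)\to GL_k(S)$ is built from $M_k(R)\to M_k(S)$, whose fiber is $n$-connective, so the fiber of the map on classifying spaces is $n$-connective. The plus-construction preserves this connectivity, since it is a homology-isomorphism construction and a relative Hurewicz argument applies to simple spaces. So $K_j(R)\to K_j(S)$ is an isomorphism for $j\leq n$ and an epimorphism for $j=n+1$.

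A cleaner route, which I would prefer, is to use DGM. The relative term $K(R\to S)$ agrees with $\TC(R\to S)$ once we factor through $\pi_0$, but that requires nilpotent extensions. Instead I would directly use that the cofiber of $K(R)\to K(S)$ is $(n+1)$-connective. This follows from Waldhausen's estimate: the map of connective $K$-theory spaces has connectivity at least that of $R\to S$ plus one. That estimate can be checked on $\THH$/Goodwillie derivative, or via the stable $K$-theory computation $K^{st}(R;M)\simeq \THH(R;\Sigma M)$ combined with a filtration of $R\to S$ by square-zero extensions through Postnikov stages. The square-zero extensions $\tau_{\leq m+1}S\to\tau_{\leq m}S$ have $(m+1)$-connective fibers, so the relative $K$-theory of each is $(m+2)$-connective.

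For negative and zeroth $K$-groups, nonconnective $K$-theory satisfies $K_{\leq 0}(R)=K_{\leq 0}(\pi_0R)$ for connective $R$. This holds by the theorem that $K_{-i}$ of a connective ring spectrum depends only on $\pi_0$ (via the Bass construction and its compatibility with this truncation). Since $\pi_0(f)$ is an isomorphism for $n\geq1$, these groups agree, which covers $j\leq 0$.

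\textbf{Case $n=0$.} Here $\pi_0R\to\pi_0S$ is surjective with nilpotent kernel $I$. Factor $f$ as $R\to\pi_0R$ and $\pi_0S\leftarrow S$; both truncation maps are of the $n\geq1$ type, so they induce isomorphisms on $K_{\leq 1}$ and surjections on $K_2$. In particular $K_0$ and $K_1$ of $R$ and $S$ are those of $\pi_0R$ and $\pi_0S$, and the same holds for negative $K$-groups. It remains to treat the ordinary ring map $\pi_0R\to\pi_0R/I$ with $I$ nilpotent. For this, $K_0$ is an isomorphism by idempotent lifting, $K_1$ is surjective since units lift modulo a nilpotent ideal and elementary matrices lift, and $K_{<0}$ is an isomorphism by nil-invariance of negative $K$-theory (Weibel). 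So $K_j$ is an isomorphism for $j\leq 0$ and an epimorphism for $j=1$, as required.

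\textbf{Main obstacle.} The connectivity estimate $K(R\to S)$ being $(n+1)$-connective is the key step. I would handle it by the Postnikov reduction to square-zero extensions together with the plus-construction homology argument, since Hurewicz-type connectivity transfer is the standard tool here.
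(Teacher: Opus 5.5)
Your main line is correct and is essentially the argument behind the references the paper cites in place of a proof. For $n\geq 1$, Waldhausen's plus-construction argument makes $\Omega^\infty K(R)\to\Omega^\infty K(S)$ an $(n+1)$-connected map, and $K_{\le 0}$ of a connective ring spectrum depends only on $\pi_0$ \cite[Theorem 9.53]{BGT}. For $n=0$, you reduce to $\pi_0$ via the $n=1$ case and then use idempotent lifting, lifting of units, and nil-invariance of $K_{<0}$ for discrete rings; this is a valid, elementary substitute for the paper's pointer to the proof of \cite[Corollary 3.5]{LT19}.

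You should, however, delete the ``cleaner route'' paragraph. First, it appeals to Waldhausen's estimate, which is the statement itself. Second, it is off by one: isomorphisms on $K_{\le n}$ and a surjection on $K_{n+1}$ mean the cofiber of $K(R)\to K(S)$ is $(n+2)$-connective, not $(n+1)$-connective. Third, the Postnikov/square-zero sketch would not prove the claim as written. Comparing the towers of $R$ and $S$ separately only gives $K_j(R)\cong K_j(S)$ for $j\le n$, not surjectivity on $K_{n+1}$. Moreover, knowing the first derivative $K^{\mathrm{st}}\simeq\mathrm{THH}$ does not by itself bound the relative $K$-theory of the non-split square-zero extensions $\tau_{\le m+1}S\to\tau_{\le m}S$.
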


It is clear from the formulations that our Theorem \ref{th:coconnectivity_estimates_intro} for some $n\geq 1$ is ``dual'' to Proposition \ref{prop:connectivity_estimates} for $n-1:$ both in the assumptions and in the conclusions one simply replaces $\pi_j$ with $\pi_{-1-j}$ for the relevant spectra, and one replaces epimorphisms with monomorphisms. Continuing this analogy, Corollary \ref{cor:refined_th_of_the_heart_intro} for some $n\geq 1$ is ``dual'' to the special case of Proposition \ref{prop:connectivity_estimates} for the same $n$ when $S=\pi_0(R)$ and $\pi_i(R)=0$ for $1\leq i\leq n-1.$ Further, Corollary \ref{cor:devissage_for_K_intro} is ``dual'' to the case $n=0$ and $R=\pi_0(R),$ $S=\pi_0(S).$  We do not have any conceptual explanation for why such a duality takes place, but we note that in both contexts the (co)connectivity estimates are sharp. 

Now we explain why from this point of view our Theorem \ref{th:theorem_of_the_heart_for_KH_intro} is ``dual'' to the Dundas-Goodwillie-McCarthy theorem. We recall some notation. We denote by $\Cat^{\perf}$ the category of idempotent-complete stable categories and consider the universal finitary localizing invariant $\cU_{\loc}:\Cat^{\perf}\to\Mot^{\loc}$ \cite{BGT, E25c}. The category $\Mot^{\loc}$ is naturally symmetric monoidal, and moreover it is rigid by \cite[Theorem 0.1]{E25c} (we mean rigidity in the sense of Gaitsgory and Rozenblyum \cite[Definition 9.12]{GaRo17}). We also consider the universal finitary $\A^1$-invariant localizing invariant $\cU_{\loc}^{\A^1}:\Cat^{\perf}\to\Mot^{\loc,\A^1},$ so $\Mot^{\loc,\A^1}$ is a quotient of $\Mot^{\loc}$ by the cocomplete ideal generated by the reduced motive of the flat affine line $\wt{\cU}_{\loc}(\bS[x]).$ By \cite[Theorem 8.1]{E25c} this is a smashing localization. More precisely, we have the idempotent $\bE_{\infty}$-algebra in $\Mot^{\loc},$ given by the geometric realization $A=|\cU_{\loc}(\Delta^{\bullet})|,$ where $\Delta^n=\Spec\bS[x_1,\dots,x_n].$ We have an equivalence 
\begin{equation*}
\Mot^{\loc,\A^1}\simeq\Mod\hy A\subset\Mot^{\loc}.
\end{equation*}
The inclusion functor $i_*:\Mot^{\loc,\A^1}\to\Mot^{\loc}$ has a left adjoint $i^*$ and a right adjoint $i^!,$ given by
\begin{equation*}
i^*(M)=M\otimes A,\quad i^!(M)=\un{\Hom}(A,M),
\end{equation*}
where $\un{\Hom}$ is the internal $\Hom$ in $\Mot^{\loc}.$ We also denote by $\Gamma:\Mot^{\loc,\A^1}\to\Sp$ the functor of global sections, i.e. $\Gamma=\Hom(1,-),$ where $1$ is the unit object $A.$

By \cite[Theorem 9.3]{E25c} (whose proof of course uses the DGM theorem) for a connective $\bE_1$-ring $R$ we have a natural equivalence of spectra
\begin{equation}\label{eq:corepresentability_of_K^inv}
\Gamma(i^!(\cU_{\loc}(R)))\simeq K^{\inv}(R) = \Fiber(K(R)\to \TC(R)),
\end{equation}
where $\cU_{\loc}(R)=\cU_{\loc}(\Perf(R)),$ and $\TC(R)$ is the topological cyclic homology \cite{BHM93, NS18}. The map $K(R)\to \TC(R)$ is the cyclotomic trace. In view of the equivalence \eqref{eq:corepresentability_of_K^inv}, the DGM theorem states that we have an equivalence
\begin{equation*}
\Gamma(i^!(\cU_{\loc}(R)))\xto{\sim} \Gamma(i^!(\cU_{\loc}(\pi_0(R)))).
\end{equation*}
More precisely, \cite[Theorem 7.0.0.2]{DGM13} states a more general version for a quotient of $\pi_0(R)$ by a nilpotent ideal, but this is automatic by \cite[Corollary 3.5]{LT19}.

Now, by \cite[Corollary 8.3]{E25c} for $\cC\in\Cat^{\perf}$ we have 
\begin{equation*}
\Gamma(i^*(\cU_{\loc}(\cC)))\simeq KH(\cC).
\end{equation*} Therefore, the statement of Theorem \ref{th:theorem_of_the_heart_for_KH_intro} is ``dual'' to the DGM theorem if we continue the above analogy. This was in fact our motivation for why theorem of the heart should hold for $KH.$

The original goal of this paper was much more modest: to prove the generalizations of Barwick's theorem of the heart \cite[Theorem 6.1]{Bar15} and Quillen's D\'evissage theorem \cite[Theorem 4]{Qui73} for large categories. However, the methods which we developed allowed for much better results. We briefly explain which large abelian categories we consider, and which $t$-structures on large categories are ``nice'' in this context.

A Grothendieck abelian category $\cA$ is {\it coherently assembled} if the colimit functor $\colim:\Ind(\cA)\to\cA$ has an {\it exact} left adjoint $\hat{\cY}:\cA\to\Ind(\cA).$ Recall that the mere existence of this left adjoint means that the category is compactly assembled \cite[Definition 21.1.2.1]{Lur18} (since $\cA$ is an ordinary category, this means that $\cA$ is continuous in the sense of Joyal and Johnstone \cite{JJ}). In our situation this is equivalent to the condition that $\cA$ satisfies the Grothendieck's axiom (AB6) \cite{Gro}. As explained in \cite[Proposition E.8]{E24}, if $\cA$ is a compactly generated Grothendieck abelian category, then $\cA$ is coherently assembled if and only if $\cA$ is locally coherent (i.e. equivalent to an ind-completion of a small abelian category).

Recall that if $\cA$ is a Grothendieck abelian category, then its {\it unseparated derived category} $\check{D}(\cA)$ is identified with the (dg nerve of) the dg category $\Ch(\Inj_{\cA})$ of unbounded complexes of injective objects of $\cA,$ see \cite{Kr05, Kr15} and \cite[Section C.5.8]{Lur18}. If $\cA$ is moreover coherently assembled, then the category $\check{D}(\cA)$ is dualizable (Propositions \ref{prop:St_of_Grothendieck_abelian} and \ref{prop:St_of_coherently_assembled}). Hence, we can define the continuous $K$-theory of $\cA:$
\begin{equation*}
K^{\cont}(\cA)=K^{\cont}(\check{D}(\cA)).
\end{equation*}

If $\cA$ is locally coherent, then $\check{D}(\cA)\simeq\Ind(D^b(\cA^{\omega}))$ by \cite[Theorem 4.9]{Kr15}, hence $K^{\cont}(\cA)\simeq K(\cA^{\omega}).$ 

Now, if $\cC$ is a dualizable (presentable stable) category, then a $t$ -structure $(\cC_{\geq 0},\cC_{\leq 0})$ is called {\it compactly assembled} if it is accessible, compatible with filtered colimits and the functor $\hat{\cY}:\cC\to \Ind(\cC)$ (left adjoint to $\colim$) is $t$-exact. In this case the heart $\cC^{\heartsuit}$ is automatically coherently assembled. We further impose a condition that the $t$-structure is {\it continuously bounded}, which means that $\cC$ is generated by $\cC^b$ (equivalently, by $\cC^{\heartsuit}$) as a localizing subcategory. For brevity we say that $\cC$ is a dualizable $t$-category if it is a dualizable category equipped with a compactly assembled continuously bounded $t$-structure.

For a dualizable $t$-category $\cC$ we have a (strongly continuous) realization functor $\check{D}(\cC^{\heartsuit})\to\cC,$ which follows from the universal property of $\check{D}(\cA)$ explained in \cite{Lur17b}, and also from the universal property of $\check{D}(\cA)_{\geq 0}$ from \cite[Proposition C.5.5.20, Theorem C.5.8.8]{Lur18}. If $\cC$ is compactly generated, then the $t$-structure on $\cC$ induces a bounded $t$-structure on $\cC^{\omega},$ and the functor $\check{D}(\cC^{\heartsuit})\to\cC$ is identified with the ind-completion of the realization functor $D^b(\cC^{\omega,\heartsuit})\to\cC^{\omega}.$ With this in mind, it is clear that the above statements for small stable categories with bounded $t$-structures can at least be formulated more generally for dualizable $t$-categories.

However, one has to be cautious with generalizations to the dualizable context. Namely, for a small stable category $\cD$ with a bounded $t$-structure we have $K_{-1}(\cD)=0$ by a theorem of Antieau-Gepner-Heller \cite[Theorem 2.35]{AGH19}. However, for a dualizable $t$-category $\cC$ in general we have $K_{-1}^{\cont}(\cC)\ne 0,$ even if the realization functor $\check{D}(\cC^{\heartsuit})\to\cC$ is an equivalence (Corollary \ref{cor:K_-1_not_vanishing}). This is a very rare example of a situation when a reasonable statement about dualizable categories fails in general, but holds for compactly generated categories.

We mention the following natural questions which are not studied in this paper. The first one is somewhat vague, and it is motivated by the work of Levy and Sosnilo \cite{LS25} and by the above analogy (duality) with weight structures.

\begin{ques}\label{ques:non_coherence_rank_positive} Let $\cC$ be a dualizable category with a $t$-structure $(\cC_{\geq 0},\cC_{\leq 0})$ satisfying the following conditions: the $t$-structure is accessible, compatible with filtered colimits, $\cC$ is generated by $\cC^{\heartsuit}$ as a localizing subcategory, and for some $n\geq 0$ the functor $\hat{\cY}[-n]:\cC\to\Ind(\cC)$ is left $t$-exact. The (dualizable versions of) the above results deal with the case $n=0.$ Is there a reasonable generalization to the case $n>0?$
\end{ques}

The following question is motivated by the above analogy with connective $\bE_1$-rings and by the functor $\TR$ (topological restriction) with coefficients in a bimodule \cite{LM12, KMN23}. We introduce some non-standard notation for the ``dual'' of the familiar Goodwillie tower \cite{Goo03}. Let $\Phi:\cC\to\cD$ be a (not necessarily exact) functor between $\omega_1$-presentable stable categories, commuting with $\omega_1$-filtered colimits. For $n\geq 0$ denote by $P_n^{\vee}(\Phi):\cC\to\cD$ the universal polynomial functor of degree $\leq n,$ commuting with $\omega_1$-filtered colimits, with a map $P_n^{\vee}(\Phi)\to \Phi$ (its existence follows from the adjoint functor theorem). We say that the {\it coanalytic approximation} of $\Phi$ is the colimit $\indlim[n]P_n^{\vee}(\Phi)$ (computed objectwise). 

\begin{ques}
Let $\cC$ be a dualizable category (without a $t$-structure), and let $F:\cC\to\cC$ be a colimit-preserving endofunctor. We consider the tensor algebra $T(F)=\biggplus[n\geq 0]F^{\circ n}$ as a monad on $\cC,$ and take the category of modules $\Mod_{T(F)}(\cC),$ which is dualizable by \cite[Proposition C.1]{E24}. Consider the functor
\begin{multline}\label{eq:cocyclic_K_theory}
\Fun^L(\cC,\cC)\to \Sp,\quad F\mapsto \wt{K}^{\cont}(\Mod_{T(F)}(\cC))\\
=\Cone(K^{\cont}(\cC)\to K^{\cont}(\Mod_{T(F)}(\cC))).
\end{multline}
We denote by
\begin{equation*}
\TR^{\vee}(\cC,-):\Fun^L(\cC,\cC)\to \Sp
\end{equation*}
the coanalytic approximation of \eqref{eq:cocyclic_K_theory}.

Now suppose that $\cC$ is a dualizable $t$-category and $F:\cC\to\cC$ is (colimit-preserving and) left $t$-exact. Is it true that the map
\begin{equation*}
\TR^{\vee}(\cC,F)\to \wt{K}^{\cont}(\Mod_{T(F)}(\cC))
\end{equation*}
is an equivalence of spectra?
\end{ques}

We give a brief overview of the structure of the paper.

In Section \ref{sec:coh_ass_exact} we introduce and study locally $\kappa$-coherent exact $\infty$-categories, focusing on coherently assembled exact $\infty$-categories. We are mostly interested in the presentable stable envelope $\check{\St}(\cE)$ when $\cE$ is coherently assembled, and in the properties of the category $\check{\St}(\cE),$ such as dualizability. In particular, we define the continuous $K$-theory of such exact $\infty$-categories by putting $K^{\cont}(\cE)=K^{\cont}(\check{\St}(\cE)).$ Our approach in this section is somewhat minimalistic, in particular we mostly avoid the non-coherently assembled case. We reduce most of the non-trivial statements to the familiar results about small exact $\infty$-categories and their (small) stable envelopes. Among other things, we explain a natural version of the situation of d\'evissage for coherently assembled abelian categories in Subsection \ref{ssec:situation_of_devissage}. The results of this section are used in the proofs of our main theorems, especially in Section \ref{sec:theorem_of_the_heart_for_K}. 

In Section \ref{sec:comp_ass_t_structures} we introduce and study compactly assembled $t$-structures on dualizable categories, mostly focusing on continuously bounded $t$-structures. In particular, in Subsection \ref{ssec:construction_of_t_structures} we explain a general method of constructing a nice $t$-structure via a functor from a coherently assembled abelian category, based on a similar result for small categories. This method allows to construct many auxiliary categories with $t$-structures, which are used essentially in all the proofs of the main theorems. In particular, we consider the presentable stable envelopes $\check{\St}(\cC_{[0,m]})$ with natural $t$-structures with the same heart. Here $\cC_{[0,m]}=\cC_{\geq 0}\cap\cC_{\leq m}$ is considered as an exact $\infty$-category with the induced exact structure from $\cC.$ Again, our approach in this section is minimalistic, and some statements can be generalized to the case when $\cC$ is merely presentable stable, the $t$-structure is compatible with filtered colimits and right complete, and the Grothendieck prestable category $\cC_{\geq 0}$ is anticomplete in the sense of \cite[Definition C.5.5.4]{Lur18}. 

In Section \ref{sec:theorem_of_the_heart_for_K} we prove refined theorem of the heart in the context of dualizable categories (Theorem \ref{th:theorem_of_the_heart}). It states that for a dualizable $t$-category $\cC$ and for $m\geq 0$ the spectrum $\Cone(K^{\cont}(\cC_{[0,m]})\to K^{\cont}(\cC))$ is $(-m-3)$-coconnective. This in particular implies the above Corollary \ref{cor:refined_th_of_the_heart_intro}. One of the key ingredients is the surjectivity of the map $K_{-m-2}^{\cont}(\cC_{[0,m]})\to K_{-m-2}(\cC),$ which is proved by a very non-trivial argument. In particular, it requires some careful analysis of the (compactly generated) iterated Calkin categories and natural $t$-structures on them. Another important ingredient is the construction of certain short exact sequences of dualizable categories (Proposition \ref{prop:properties_of_tilde_C}), which relate the categories $\check{\St}(\cC_{[0,m]})$ for various $m.$

In Section \ref{sec:coconnectivity_estimates} we prove Theorem \ref{th:coconnectivity_estimates} on the coconnectivity estimates, which is a generalization of Theorem \ref{th:coconnectivity_estimates_intro}. One of the main ideas is a very abstract construction: given a strongly continuous exact functor $F:\cC\to\cD$ between dualizable categories such that $F(\cC)$ generates $\cD,$ there is a canonical sequence $\cC=\cC_0\xto{F_0}\cC_1\xto{F_1}\cC_2\dots$ in $\Cat_{\st}^{\dual},$ such that $\indlim[n]\cC_n\simeq\cD$ and each monad $F_n^R\circ F_n$ on $\cC_n$ is obtained as a so-called deformed tensor algebra. If $\cC$ and $\cD$ are dualizable $t$-categories and $F$ is $t$-exact, then all $\cC_n$ are also naturally dualizable $t$-categories and the transition functors are $t$-exact. A further analysis of deformed tensor algebras allows to eventually reduce Theorem \ref{th:coconnectivity_estimates} to Theorem \ref{th:theorem_of_the_heart} (refined theorem of the heart). We also deduce similar results for higher nil groups (Corollaries \ref{cor:coconnectivity_estimates_for_N^s K} and \ref{cor:coconnectivity_for_N^sK_of_t_categories}).

In Section \ref{sec:th_of_the_heart_for_KH} we prove theorem of the heart for $KH^{\cont}$ for dualizable $t$-categories (Theorem \ref{th:theorem_of_the_heart_for_KH}), which is a generalization of Theorem \ref{th:theorem_of_the_heart_for_KH_intro}. Here we briefly explain a more precise statement, for simplicity restricting to the case of small categories. For $k\geq 0$ denote by $U_k=\Fil_k KH$ the $k$-th term of the standard filtration on $KH,$ namely $U_k(\cC)$ is obtained by taking the colimit over $[n]\in\Delta_{\leq k}^{op}$ in the right hand side of \eqref{eq:definition_of_KH_intro}. Then the localizing invariants $U_k$ form a direct sequence with $U_0(\cC)=K(\cC)$ and $\indlim[k]U_k(\cC) \simeq KH(\cC).$ We prove by induction on $m\geq 0$ that if $F:\cC\to\cD$ satisfies the assumptions of Theorem \ref{th:coconnectivity_estimates_intro} for some $n\geq 1,$ then the map of spectra
\begin{equation*}
\tau_{\geq c}\Cone(U_k(F))\to \tau_{\geq c}\Cone(U_l(F))
\end{equation*}  
is null-homotopic for
\begin{equation*}
c\geq -2^m(n-1)+2k-1,\quad l-k\geq 2^m-1.
\end{equation*}
The case $n=2$ directly implies Theorem \ref{th:theorem_of_the_heart_for_KH_intro}.

In Section \ref{sec:devissage} we deduce d\'evissage theorems for $KH^{\cont}$ and $K^{\cont}$ of coherently assembled abelian categories, which are generalizations of Theorem \ref{th:theorem_of_the_heart_for_KH_intro} and Corollary \ref{cor:devissage_for_K_intro}.

In Section \ref{sec:sharpness_of_estimates} we prove that our estimates in Corollaries \ref{cor:refined_th_of_the_heart_intro} and \ref{cor:devissage_for_K_intro} are sharp (Theorem \ref{th:sharpness_of_estimates}), working over a field $\mk.$ The constructions are based on the exact category $\cE$ of vector bundles on a (projective) cuspidal cubic curve $X$ over $\mk,$ and on the abelian category $\eff(\cE)$ of effaceable functors $\cE^{op}\to\Ab.$ Essentially we only use that $X$ is proper and $NK_0(X)\ne 0.$ We also prove the sharpness of the estimates for higher nil groups (Theorem \ref{th:sharpness_for_N^sK}), also giving a non-trivial example of a computation of $DK(\cA)=\Fiber(K(\cA)\to KH(\cA))$ for an abelian category $\cA.$

In Section \ref{sec:examples_of_comp_ass_and_coh_ass} we study examples of compactly assembled $t$-structures and coherently assembled abelian categories. In particular, we consider sheaves on locally compact Hausdorff spaces (Proposition \ref{prop:t_structure_on_sheaves}). As a special case, we obtain that the abelian category $\cA$ of sheaves of vector spaces on the real line is coherently assembled, and we have $K_{-1}^{\cont}(\cA)\cong\Z.$ Therefore, already the Schlichting's vanishing theorem for $K_{-1}$ of small abelian categories \cite[Theorem 6]{Sch06} does not generalize to coherently assembled abelian categories. We consider the category of nuclear solid modules over $\Z_p$ (the original version due to Clausen and Scholze \cite{CS20}), and show that it has a natural compactly assembled continuously bounded $t$-structure (Proposition \ref{prop:t_structure_on_Nuc_of_Z_p}). We also prove a generalization of Chase criterion \cite{Ch60} of coherence of associative unital rings. This is Theorem \ref{th:Chase}. One way to state it is as follows: a dualizable $\Ab$-module $\cA$ in $\Pr^L$ is a coherently assembled abelian category if and only if the category of flat objects in $\cA^{\vee}$ has infinite products. Here an object $x\in\cA^{\vee}$ is flat if the functor $\ev(-,x):\cA\to\Ab$ is left exact. We recall that an $\Ab$-module in $\Pr^L$ is the same thing as a presentable additive (ordinary) category $\cA.$ By the forthcoming work of Levy, Liang and Sosnilo \cite{LLS}, $\cA$ is dualizable if and only if $\cA$ is a Grothendieck abelian category satisfying (AB6) and (AB4*). Equivalently by Roos' theorem \cite{Roo65} this means that $\cA$ is equivalent to a category of almost modules, i.e. $\cA\simeq\Mod\hy R/\Mod\hy(R/I),$ where $R$ is an associative unital ring and $I\subset R$ is a two-sided ideal such that $I^2=I.$ Finally, in Subsection \ref{ssec:concluding_remarks} we mention some example and questions which are not covered by this paper.

{\noindent{\bf Convention.}} {\it We will freely use the theory of $\infty$-categories as developed in \cite{Lur09, Lur17a}. We will simply say ``category'' instead of $\infty$-category. If the category is $0$-truncated (i.e. the morphism spaces are discrete), we will say ``ordinary category''. Since a stable category $\cC$ is naturally enriched over spectra, for $x,y\in\cC$ we denote by $\Hom_{\cC}(x,y)$ the spectrum of morphisms. We denote by $\Sp$ the category of spectra, and by $\Sp_{\geq 0}$ the category of connective spectra. As we already mentioned above, we denote by $\Cat^{\perf}$ the category of idempotent-complete small stable categories and exact functors. Throughout this paper we use the homological grading, unless otherwise stated.}

{\noindent{\bf Acknowledgements.}} I am grateful to Dmitry Kaledin, Ishan Levy, Maxime Ramzi and Vladimir Sosnilo for useful discussions. This
work was performed at the Steklov International Mathematical Center and supported by
the Ministry of Science and Higher Education of the Russian Federation (agreement no.
075-15-2025-303). I was partially supported by the HSE University Basic Research
Program.

\section{Coherently assembled abelian categories and exact categories}
\label{sec:coh_ass_exact}

%\begin{defi}
%Let $\cE=(\cE,\cE_{\dagger},\cE^{\dagger})$ be a small exact $\infty$-category, and let $\kappa$ be a regular cardinal. We define an exact structure on the category $\Ind_{\kappa}(\cE)$ as follows. A $1$-morphism $f$ in $\Ind_{\kappa}(\cE)$ is in $\Ind_{\kappa}(\cE_{\dagger})$ resp. $\Ind_{\kappa}(\cE^{\dagger})$ if $f$ is a $\kappa$-filtered colimit of morphisms in $\cE_{\dagger}$ resp. $\cE^{\dagger}.$
%\end{defi}

\subsection{Reminder on exact categories}
\label{ssec:reminder_on_exact}

We recall some material from \cite{Bar15, Kle, SW26} on small exact categories. 

\begin{defi}\cite[Section 3]{Bar15}\label{def:small_exact}
An exact category is an additive category $\cE$ with a choice of two subcategories $(\iin \cE,\pr \cE),$ consisting of (exact) inclusions resp. (exact) projections, such that the following conditions hold.
 \begin{enumerate}[label=(\roman*),ref=(\roman*)]
 	\item For $x\in\cE,$ the map $0\to x$ is an inclusion and the map $x\to 0$ is a projection.
 	\item The class of inclusions is stable under pushouts along all maps, and the class of projections is stable under pullbacks along all maps.
 	\item For a square 
 	\begin{equation*}\label{eq:exact_square}
 	\begin{tikzcd}
 	x\ar[r, "i"]\ar[d, "p"] & y\ar[d, "q"]\\
 	x'\ar[r, "j"] & y'
 	\end{tikzcd}
 	\end{equation*}
 in $\cE$ the following are equivalent:
 
 (a) $i$ is an inclusion, $p$ is a projection and the square is cocartesian;
 
 (b) $j$ is an inclusion, $q$ is a projection and the square is cartesian.
 
\noindent Such a square is called an exact square. If $x'=0,$ we will say that $x\to y\to z$ is a short exact sequence in $\cE.$
 \end{enumerate}
\end{defi} 

We will omit the subcategories $\iin \cE$ and $\pr \cE$ from the notation, assuming that they are chosen. An additive functor $F:\cE\to\cE'$ between exact categories is called exact if it preserves short exact sequences, or equivalently exact squares. We denote by $\Fun^{\ex}(\cE,\cE')$ the (additive) category of exact functors $\cE\to\cE'.$ We denote by $\Exact$ the category of small exact categories and exact functors.

We say that an exact functor between exact categories $F:\cE\to\cE'$ reflects exactness if for any square in $\cE,$ if its image is exact in $\cE',$ then it is exact in $\cE.$

If we denote by $\Add$ the category of small additive categories and additive functors, then the forgetful functor $\Exact\to\Add$ has a left adjoint $\cA\mapsto\cA^{\add}.$ Here $\cA^{\add}$ is the exact category $(\cA,\iin \cA,\pr \cA),$ where $\iin \cA$ consists of split monomorphisms and $\pr \cA$ consists of split epimorphisms. We call this a split exact structure on $\cA.$ Note that if $\cE$ is a small exact category, then the identity functor $\cE^{\add}\to \cE$ is exact, but it does not reflect exactness in general. 

Note that any stable category $\cC$ can be considered as an exact category with $\iin\cC=\pr\cC=\cC.$ For stable categories exactness of a functor in the above sense is equivalent to the exactness in the usual sense. Denoting by $\Cat^{\ex}$ the category of small stable categories and exact functors, we obtain a fully faithful functor $\Cat^{\ex}\hto \Exact.$ By \cite[Proposition 4.22, Corollary 4.24]{Kle} it has a left adjoint, which we denote by $\St:\Exact\to\Cat^{\ex}_{\st}.$ The category $\St(\cE)$ is called a stable envelope of the category $\cE.$ The following is due to Klemenc.

\begin{prop}\cite[Propositions 4.17, Proposition 4.25]{Kle}\label{prop:fully_faithful_E_to_St_of_E}
Let $\cE$ be a small exact category, and denote by $j:\cE\to\St(\cE)$ the universal exact functor. Then $j$ is fully faithful and reflects exactness, and its essential image is closed under extensions. 
\end{prop}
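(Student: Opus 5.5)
The plan is to construct $\St(\cE)$ concretely and read off the three properties from that construction. Rather than work with the abstract left adjoint, I will use the bounded derived category model: consider the additive category $\cE$ with the exact structure, and take the stable subcategory of $\Fun^{\times}(\cE^{op},\Sp)$ (spectral presheaves sending finite direct sums to products) consisting of presheaves $F$ that send exact squares in $\cE$ to cartesian squares of spectra. Call this full subcategory $\cS$. It is stable, because exactness conditions are stated by finite limits, which commute with fibers and cofibers in $\Sp$, and it is presentable. I then define $\St(\cE)$ as the smallest stable subcategory of $\cS$ containing the images of representables, after sheafification.

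The first step is to check that for $x\in\cE$ the spectral representable $\Sigma^{\infty}_+\Hom(-,x)$, suitably group-completed, i.e. the connective presheaf $y\mapsto\Hom_{\cE}(y,x)$ viewed as a grouplike $\bE_\infty$-space, already sends exact squares to cartesian squares on $\pi_0$. On the higher homotopy it needs a correction. I would therefore replace representables by their images under the left adjoint $L:\Fun^{\times}(\cE^{op},\Sp)\to\cS$, and set $j(x)=L(h_x)$.

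Full faithfulness then reduces to the claim that $h_x\to L(h_x)$ is an equivalence on connective parts. The approach is to compute $L$ by iterating the cofiber of the universal exact-square defect; each iteration only adds terms in negative degrees, because a representable is left exact on short exact sequences. The universal property with respect to stable targets is immediate from the Yoneda lemma and the construction of $L$. Idempotent completeness is not required here, since the target is $\Cat^{\ex}$.

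Reflection of exactness comes next. Suppose $x\to y\to z$ is a null-composite sequence in $\cE$ whose image in $\St(\cE)$ is a fiber sequence. The fiber sequence gives that $\Hom(-,x)\to\Hom(-,y)\to\Hom(-,z)$ is left exact on $\pi_0$, and that $\pi_{-1}$ vanishes in the appropriate range. From this I would extract that $y\to z$ is a projection, using the fact that the identity of $z$ lifts after passing to an admissible cover. That passage uses the defining axioms (ii) and (iii).

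Closure under extensions is the main obstacle. Given a cofiber sequence $j(x)\to w\to j(z)$ in $\St(\cE)$, it is classified by a class in $\pi_0\Hom(j(z),j(x)[1])=\Ext^1$. I need to show that this $\Ext^1$ coincides with the Yoneda $\Ext^1_{\cE}(z,x)$ of conflations. The first thing I would try is to compute $\pi_{-1}$ of $L(h_x)(z)$ via one step of the iteration: it is the colimit over projections $y\onto z$ of the cokernels of $\Hom(y,x)\to\Hom(\ker,x)$. This colimit is exactly the Yoneda description. Realizing a class by a conflation $x\to e\to z$ would then give $w\simeq j(e)$, by the five lemma on mapping spectra together with full faithfulness.
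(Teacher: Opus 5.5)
The paper does not reprove this statement; it quotes Klemenc. So the question is whether your sketch closes on its own.

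Your architecture is sound. $\cS$ is the localization of $\Fun^{\add}(\cE^{op},\Sp)$ away from the defect objects $E(f)$. The stable subcategory generated by the $L(h_x)$ is $\St(\cE^{\add})/\Ac(\cE)$ and has the universal property. Full faithfulness is equivalent to $\tau_{\geq 0}L(h_x)\simeq h_x$. Closure under extensions reduces to identifying $\pi_{-1}L(h_x)(z)=\pi_0\Hom(j(z),j(x)[1])$ with Yoneda $\Ext^1$.

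\textbf{The gap.} The step that carries all the content is not proved. Left exactness of $h_x$ only controls the \emph{first} correction. For a conflation $a\xrightarrow{f}b\to c$ one has $\Hom(E(f),h_x)\simeq\Fiber\bigl(h_x(c)\to\Fiber(h_x(b)\to h_x(a))\bigr)$, and this lies in $\Sp_{\leq -2}$ because $h_x(c)$ is the connective cover of the inner fiber. So in the first step only cells $E(f)[k]$ with $k\leq -1$ get attached. In particular the defect of $h_x$ sits in $\pi_{-1}$, not in ``higher homotopy'' as your first paragraph says.

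After this step the object $F_1$ is no longer representable, and your justification no longer applies. An iteration converging to $L(h_x)$ must cone off maps from all shifts $E(g)[k]$ into $F_1$. In general there are nonzero maps $E(g)[-1]\to F_1$, namely lifts of composites $E(g)\to E(f)$ into the degree $-1$ cells just attached. Their cones attach cells in degree $0$. So neither $\tau_{\geq 0}L(h_x)\simeq h_x$ nor your ``one step'' formula for $\pi_{-1}L(h_x)$ follows; these two claims are precisely the theorem.

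\textbf{What is missing} is control over the whole kernel $\Ind(\Ac(\cE))$ of $L$, not just over its generators. One way to get it is to use that the Postnikov $t$-structure restricts to a bounded $t$-structure on $\Ac(\cE)$ whose heart consists of the $E(f)$ (Proposition \ref{prop:t_structure_on_Ac}); make sure the proof you invoke does not itself use the present statement. Then:
\begin{enumerate}
\item The computation above gives $\Hom(E,h_x)\in\Sp_{\leq -2}$ for all $E\in\Ac(\cE)_{\geq 0}$.
\item Hence $\Gamma(h_x)=\Fiber(h_x\to L(h_x))$ lies in $\Ind(\Ac(\cE))_{\leq -2}$, and so is pointwise in $\Sp_{\leq -2}$.
\item This gives $\tau_{\geq 0}L(h_x)\simeq h_x$, which is full faithfulness.
\item It also gives $\pi_{-1}L(h_x)\cong\pi_{-2}\Gamma(h_x)$, which is a filtered colimit of effaceable functors and hence weakly effaceable.
\end{enumerate}

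Weak effaceability is exactly what your extension argument needs. A class $\alpha\in\pi_0\Hom(j(z),j(x)[1])$ dies on some projection $p:y\to z$. So $\alpha$ factors through $j(\ker p)[1]$, comes from a map $\ker p\to x$ by full faithfulness, and is realized by the pushout conflation. An alternative input would be an independent proof that connective additive sheaves form a prestable category whose stabilization is $\cS$.

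Once this is in place, reflection of exactness is formal: a cofiber sequence $j(x)\to j(y)\to j(z)$ is then isomorphic to the image of a conflation. Your separate ``lift after an admissible cover'' argument rests on the same unproved weak effaceability and is not needed.
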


We refer to \cite{SW26} for further details. We will need the following more precise statement about the stable envelope of an exact category $\cE.$ First, consider the stable category $\St(\cE^{\add})\simeq\St^{\add}(\cE)$ with the universal additive functor $\cY:\cE\to\St(\cE^{\add}).$ We have
\begin{equation*}
\Ind(\St(\cE^{\add}))\simeq\Fun^{\add}(\cE^{op},\Sp),
\end{equation*}
where the superscript ``add'' means that we consider the additive functors. We identify $\St(\cE^{\add})$ with the full subcategory of this category of functors, and we have $\cY(x)=\Hom_{\cE}(-,x).$ Clearly, we have a quotient functor $\St(\cE^{\add})\to\St(\cE).$ Its kernel is denoted by $\Ac(\cE),$ and it is generated as a stable subcategory by the objects of the form $E(f)\in\Ac(\cE),$ where $x\xto{f}y\to z$ is a short exact sequence in $\cE$ and
\begin{equation}\label{eq:object_E_of_f}
E(f)=\Cone(\Cone(\cY(f))\to \cY(z)).
\end{equation}
As explained for example in \cite[Proposition 3.9]{SW26}, the associated functor $\cE^{op}\to\Sp$ takes values in $\Sp^{\heartsuit}\simeq\Ab,$ and we have
\begin{equation*}
E(f)\cong \coker(\pi_0\cY(y)\to\pi_0\cY(z)).
\end{equation*} 
Such functors $\cE^{op}\to\Ab$ are called effaceable, and they form an ordinary additive category, denoted by $\eff(\cE).$ This category is in fact abelian, more precisely the following holds.

\begin{prop}\cite[Lemma 1.2]{Nee21} \cite[Proposition G.7]{E24} \cite[Corollary 3.12]{SW26} \label{prop:t_structure_on_Ac}
The category $\Ac(\cE)$ has a bounded $t$-structure, which is induced by the standard (Postnikov) $t$-structure on $\Fun^{\add}(\cE^{op},\Sp).$ We have $\Ac(\cE)^{\heartsuit}\simeq \eff(\cE).$
\end{prop}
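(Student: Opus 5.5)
We need to show that $\Ac(\cE)$, viewed inside $\Fun^{\add}(\cE^{op},\Sp)$, is closed under the Postnikov truncations, and that its heart is $\eff(\cE)$. The plan is to first establish the stronger claim
\begin{equation*}
\Ac(\cE)=\{M\in\St(\cE^{\add}) : \pi_i M\in\eff(\cE)\text{ for all } i,\ \pi_i M=0 \text{ for } |i|\gg 0\}.
\end{equation*}
Boundedness and the identification of the heart both follow formally from this description, once $\eff(\cE)$ is known to be closed under kernels, cokernels and extensions in $\Fun^{\add}(\cE^{op},\Ab)$.

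The first step concerns effaceable functors. A functor $G$ is effaceable if it is a cokernel $\coker(\pi_0\cY(y)\to\pi_0\cY(z))$ for a projection $y\to z$. Equivalently, $G$ is finitely presented and every element of $G(w)$ is killed by some projection $w'\to w$.
\begin{itemize}
\item For closure under kernels, I would use the axioms of Definition \ref{def:small_exact}: pullbacks of projections are projections, and compositions of projections are projections.
\item Closure under cokernels and extensions follows similarly.
\item Finite presentation is preserved because the category of finitely presented functors is abelian when $\cE$ has weak kernels. Here I expect a subtlety: $\cE$ need not have weak kernels. So instead I would argue directly, using the presentation $0\to\cY(x)\to\cY(y)\to\cY(z)\to E\to 0$ coming from a short exact sequence $x\to y\to z$.
\end{itemize}
In that presentation, $\pi_0 E(f)=E$ and the other homotopy groups vanish. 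This uses left exactness of $\Hom(-,x)\to\Hom(-,y)\to\Hom(-,z)$, which holds because $x$ is the kernel of $y\to z$.

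The second step is to show that $\Ac(\cE)$ equals the category $\cB$ defined by the display above. For $\Ac(\cE)\subset\cB$: the generators $E(f)$ lie in $\cB$, and $\cB$ is a stable subcategory by the long exact sequence together with the closure properties from the first step. Note that $\cB\subset\St(\cE^{\add})$ requires that effaceable functors be compact. This holds since each effaceable functor is the cone $E(f)$ of representables. For the converse, I would induct on the number of nonzero homotopy groups, using the fibre sequences $\tau_{\geq k+1}M\to M\to \pi_k M[k]$. Each $\pi_kM[k]$ is effaceable, hence a shift of some $E(f)$, hence lies in $\Ac(\cE)$. It remains to check that the Postnikov pieces are compact, i.e. lie in $\St(\cE^{\add})$. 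This is automatic since they are finite extensions of shifted effaceables.

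Given the description of $\Ac(\cE)$ as $\cB$, truncations of objects of $\Ac(\cE)$ stay in $\Ac(\cE)$, so the Postnikov $t$-structure restricts. It is bounded because each object has finitely many nonzero homotopy groups, and the heart is exactly $\eff(\cE)$.

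The main obstacle is the closure of $\eff(\cE)$ under kernels and extensions inside additive functors $\cE^{op}\to\Ab$, since $\cE$ lacks weak kernels. I would first try Keller's argument: compare pullback squares of projections to produce explicit presentations $\coker(\cY(y'')\to\cY(z''))$ with $y''\to z''$ a projection. If that fails, the fallback is to cite \cite[Proposition 3.9]{SW26} for the cokernel description of $E(f)$ and then argue the closure properties via the $\Ac(\cE)$-side stability.
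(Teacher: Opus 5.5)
Your reduction is the right one. Suppose $\eff(\cE)$ is closed under kernels, cokernels and extensions in $\Fun^{\add}(\cE^{op},\Ab)$. Then the bounded objects with effaceable homotopy groups form a stable subcategory containing the $E(f)$. Postnikov induction shows it is generated by them, and the $t$-structure restricts with heart $\eff(\cE)$.

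The gap is that this closure statement carries all of the content, and you do not prove it. For a map $\phi\colon E_1\to E_2$ in $\eff(\cE)$ one has $\ker\phi=\pi_1\Cone(\phi)$ with $\Cone(\phi)\in\Ac(\cE)$. So kernel-closure is essentially equivalent to the proposition, and your fallback (``argue the closure properties via the $\Ac(\cE)$-side stability'') is circular.

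The ``direct'' route through $0\to\cY(x)\to\cY(y)\to\cY(z)\to E\to 0$ also fails in the present $\infty$-categorical setting, if you use it as a resolution by representables. After applying $\pi_0$ the left end is not exact: the kernel of $\pi_0\cY(x)\to\pi_0\cY(y)$ is the image of $\pi_1\cY(z)$. So you get no finite resolution with which to run a finiteness two-out-of-three argument for $\ker\phi$. For an ordinary exact category it is a length-two projective resolution, and that argument would work. Pullback-stability of projections alone also does not settle kernels; a splitting step is needed.

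Here is what fills the gap. Write $E_k=\coker(\pi_0\cY(y_k)\to\pi_0\cY(z_k))$ for projections $p_k\colon y_k\to z_k$ with fibres $x_k$. By Yoneda, $\phi$ is induced by some $g\colon z_1\to z_2$ with $g p_1\simeq p_2 h$ for some $h\colon y_1\to y_2$. Let $P=z_1\times_{z_2}y_2$, so that $q\colon P\to z_1$ is a projection with fibre $i\colon x_2\to P$. Let $s\colon y_1\to P$ be induced by $(p_1,h)$.

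The universal property of $P$ shows that $\pi_0\cY(P)\to\ker\phi$, $b\mapsto[qb]$, is surjective. Exactness of $\pi_0\cY(x_2)\to\pi_0\cY(P)\to\pi_0\cY(z_1)$ shows that its kernel is the image of $(s,i)\colon y_1\oplus x_2\to P$. Hence $\ker\phi\cong\coker(\pi_0\cY(y_1\oplus x_2)\to\pi_0\cY(P))$.

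The map $(s,i)$ is a projection. The base change $y_1\times_{z_1}P\to y_1$ of $q$ has the section $(\id,s)$, so $y_1\times_{z_1}P\simeq y_1\oplus x_2$ (split in $\St(\cE)$). Under this identification, $(s,i)$ becomes the base change of $p_1$ along $q$.

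For cokernels, $\coker\phi=\coker(\pi_0\cY(y_2\oplus z_1)\to\pi_0\cY(z_2))$ via $(p_2,g)=(\id,g)\circ(p_2\oplus\id)$. This is a composite of projections, each a base change of a projection up to an automorphism.

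For extensions, finite presentation is closed under extensions by the standard horseshoe argument. Weak effaceability is closed under extensions by composing projections. The characterization ``finitely presented and weakly effaceable'' quoted before the proposition then applies.

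With these three facts in place, your Steps 2 and 3 are correct. The paper gives no argument of its own here and only cites the references.
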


\begin{remark}
If $\cE$ is an ordinary exact category, then the above category $\Ac(\cE)$ is denoted by $\Ac^b(\cE)$ in \cite{Nee21}, since it is the category of bounded acyclic complexes. This should not lead to confusion, because we will not consider the unbounded version.
\end{remark}

 We also recall a more general class of functors. Recall that for an exact category $\cE,$ an additive functor $F:\cE^{op}\to\Ab$ is called {\it weakly effaceable} if for any $x\in\cE$ and for any $\alpha\in F(x)$ there exists a projection $f:y\to x$ in $\cE$ such that $F(f)(\alpha) = 0.$ We denote by $\Eff(\cE)$ the category of weakly effaceable functors $\cE^{op}\to\Ab.$ It is easy to see that we have $\eff(\cE)\subset\Eff(\cE).$ Moreover, by \cite[Proposition 3.9]{SW26} an additive functor $\cE^{op}\to\Ab$ is effaceable if and only if it is finitely presented and weakly effaceable. It is easy to see (assuming Proposition \ref{prop:t_structure_on_Ac}) that the category $\Eff(\cE)$ is abelian and locally coherent, and its full subcategory of finitely presented objects coincides with $\eff(\cE).$ Indeed, it suffices to observe that any map from a finitely presented additive functor $F:\cE^{op}\to\Ab$ to a weakly effaceable functor $G:\cE^{op}\to\Ab$ factors through an effaceable functor.
 
We will use the following notation. If $\cC$ is a stable category, then for $x,y\in\cC$ and $n\in\Z$ we put $\Ext^n_{\cC}(x,y)=\pi_{-n}\Hom_{\cC}(x,y).$ For a small exact category $\cE$ and for $x,y\in\cE$ we put $\Ext^n_{\cE}(x,y)=\Ext^n_{\St(\cE)}(j(x),j(y)),$ where $j:\cE\to\St(\cE)$ is the universal exact functor. By Proposition \ref{prop:fully_faithful_E_to_St_of_E} we have $\Ext^{-n}_{\cE}(x,y)=\pi_n\Map_{\cE}(x,y)$ for $n\geq 0.$ Also, the abelian group $\Ext^1_{\cE}(x,y)$ classifies extensions of $x$ by $y$ in $\cE.$

The following is a well-known observation, but we are not aware of a precise reference.

\begin{prop}\label{prop:mono_on_Ext^2_for_small}
Let $\cC$ be a stable category. Let $\cE\subset\cC$ be a small full additive subcategory closed under extensions. We consider $\cE$ as an exact category with the induced exact structure from $\cC.$ Consider the exact functor $\St(\cE)\to\cC$ induced by the inclusion $\cE\to\cC.$ Then for $x,y\in\cE$ the induced map $\Ext^n_{\cE}(x,y)\to\Ext^n_{\cC}(x,y)$ is an isomorphism for $n\leq 1,$ and a monomorphism for $n=2.$
\end{prop}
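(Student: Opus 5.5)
The plan is to treat the degrees $n\leq 0$, $n=1$ and $n=2$ separately, always working inside $\St(\cE)$ through the universal exact functor $j$. Write $\Phi:\St(\cE)\to\cC$ for the induced exact functor.

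\emph{Degrees $n\leq 0$.} By Proposition \ref{prop:fully_faithful_E_to_St_of_E}, $j$ is fully faithful. Hence $\Ext^{n}_{\cE}(x,y)=\pi_{-n}\Map_{\cE}(x,y)$, and since $\cE\subset\cC$ is a full subcategory this group equals $\Ext^n_{\cC}(x,y)$.

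\emph{Degree $n=1$.} Both groups classify extensions. An element of $\Ext^1_{\St(\cE)}(jx,jy)$ is a map $jx\to jy[1]$, and its fiber $e$ sits in a cofiber sequence $jy\to e\to jx$. The essential image of $j$ is closed under extensions (Proposition \ref{prop:fully_faithful_E_to_St_of_E}), so $e\simeq j(e')$ for some $e'\in\cE$. The sequence $y\to e'\to x$ is then a short exact sequence in $\cE$, because $j$ reflects exactness. Thus $\Ext^1_{\cE}(x,y)$ is the group of extensions in $\cE$. On the other side, $\Ext^1_{\cC}(x,y)$ is the set of cofiber sequences $y\to e\to x$ in $\cC$. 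Since $\cE$ is closed under extensions, every such $e$ lies in $\cE$, and the sequence is exact in the induced exact structure. Because $\Phi\circ j$ is the inclusion, the map $\Ext^1_{\cE}\to\Ext^1_{\cC}$ sends the class of an extension to the class of the same sequence. It is therefore a bijection. To keep this honest I would phrase it via fibers of maps: $\alpha:x\to y[1]$ has fiber $e$, and $\Phi$ preserves fibers.

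\emph{Degree $n=2$.} This is the main obstacle. Take $\alpha\in\Ext^2_{\cE}(x,y)$, i.e.\ a map $jx\to jy[2]$, and suppose $\Phi(\alpha)=0$. I would first use Yoneda decomposition in $\St(\cE)$. The object $\St(\cE)$ is generated by $j(\cE)$, and I expect every $\Ext^2$ class between objects of $\cE$ to be a Yoneda product $\alpha=\beta\circ\gamma$ with $\gamma\in\Ext^1(x,z)$ and $\beta\in\Ext^1(z,y)$ for some $z\in\cE$. A clean way to see this: let $w=\mathrm{fib}(jx\to jy[2])[1]$, an extension of $jx$ by $jy[1]$, so it lies in the "$[0,1]$-amplitude" part. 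I would then argue that it admits a presentation whose $\Ext^1$ factorization goes through an object of $\cE$, using the explicit description $\St(\cE)=\St(\cE^{\add})/\Ac(\cE)$ if necessary.

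Given such a factorization, $\gamma$ corresponds to a short exact sequence $z\to e\to x$ in $\cE$, and $\alpha=\beta\circ\gamma$ (up to shift). The long exact sequence of $\Hom(-,y)$ applied to $z\to e\to x$, in both $\St(\cE)$ and $\cC$, shows that $\alpha$ vanishes if and only if $\beta$ extends along $z\to e$, that is, $\beta$ lies in the image of $\Ext^1(e,y)\to\Ext^1(z,y)$. The same criterion holds in $\cC$, since $\Phi$ maps this long exact sequence to the corresponding one. Now compare the segment $\Ext^1(e,y)\to\Ext^1(z,y)\to\Ext^2(x,y)$ in the two categories. The $\Ext^1$ terms agree by the degree-one step, so the five lemma style argument shows that $\Phi(\alpha)=0$ implies $\alpha=0$. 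This gives injectivity on $\Ext^2$.

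The delicate point is the decomposition of an arbitrary $\Ext^2_{\cE}$ class as a Yoneda product through an object of $\cE$. I would try to prove it by writing $x$ as a quotient, via a projection in $\cE^{\add}$-presentation in $\St(\cE)$, and by using that the $\Ext^2$ class dies after pulling back along some projection $e\to x$. That vanishing is the weak effaceability of the functor $\Ext^2_{\cE}(-,y)$ restricted to $\cE$, which I would extract from Proposition \ref{prop:t_structure_on_Ac}.
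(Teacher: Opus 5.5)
Your proposal is correct and follows the paper's proof. For $n\leq 1$ the paper also uses Proposition \ref{prop:fully_faithful_E_to_St_of_E} and the description of $\Ext^1$ via extensions. For $n=2$ it takes a projection $p:z\to x$ with $p^*(\alpha)=0$ (weak effaceability, quoted from \cite[Lemma 4.3]{SW26}), factors $\alpha$ through the connecting map of $\Fiber(p)\to z\to x$, and concludes with the $\Ext^1$-isomorphism exactly as in your long-exact-sequence step. You can drop your tentative ``$[0,1]$-amplitude'' route, and your plan to get weak effaceability from Proposition \ref{prop:t_structure_on_Ac} does work: for $n\geq 1$, $\Ext^n_{\St(\cE)}(j(-),j(y))$ is $\pi_{-n-1}$ of the fiber of the map from $\cY(y)$ to its reflection into exact functors $\cE^{op}\to\Sp$, and this fiber lies in $\Ind(\Ac(\cE))$, so its homotopy functors are filtered colimits of effaceable functors, hence weakly effaceable.
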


\begin{proof}
The isomorphisms for $n\leq 1$ follow from Proposition \ref{prop:fully_faithful_E_to_St_of_E} and from the description of $\Ext^1$ via extensions. Now take an element $\alpha\in\Ext^2_{\cE}(x,y).$ By \cite[Lemma 4.3]{SW26} there exists a projection $p:z\to x$ such that we have $p^*(\alpha)=0\in\Ext^2_{\cE}(z,y).$ Putting $w=\Fiber(\alpha)\in\cE,$ we see that $\alpha=\beta\circ\gamma,$ where $\beta\in\Ext^1_{\cE}(w,y),$ $\gamma\in\Ext^1_{\cE}(x,w).$ If the image of $\alpha$ in $\Ext^2_{\cC}(x,y)$ is zero, then the map $w\to y[1]$ in $\cC$ factors through $z,$ hence $\beta$ is contained in the image of the map $\Ext^1_{\cE}(z,y)\to\Ext^1_{\cE}(w,y).$ But this exactly means that $\alpha=0.$ This shows the injectivity of the map $\Ext^2_{\cE}(x,y)\to\Ext^2_{\cC}(x,y).$    
\end{proof}

We recall the following class of exact subcategories. The condition in the following definition already appears in \cite{Kel96}.

\begin{defi}\cite[Definition 1.1]{SW26}\label{def:left_special} Let $\cE$ be an exact category. A full additive subcategory $\cE'\subset\cE$ is called left special if $\cE'$ is closed under extensions in $\cE,$ and for any projection $p:y\to x$ in $\cE$ with $x\in\cE',$ there exists $z\in\cE'$ and a map $f:z\to y,$ such that the composition $p\circ f:z\to x$ is a projection in $\cE.$
\end{defi}

\begin{theo}\cite[Theorem 4.5]{SW26}\label{th:left_special_implies_fully_faithful}
Let $\cE$ be a small exact category, and let $\cE'\subset\cE$ be a left special full subcategory. We consider $\cE'$ as an exact category with an induced exact structure. Then the functor $\St(\cE')\to\St(\cE)$ is fully faithful.
\end{theo}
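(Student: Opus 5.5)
The plan is to reduce full faithfulness to a statement about the acyclic subcategories, and to prove that statement using the $t$-structure of Proposition \ref{prop:t_structure_on_Ac} together with the left special condition.

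\emph{Setting up the quotients.} We have Verdier quotients $\St(\cE'^{\add})/\Ac(\cE')\simeq\St(\cE')$ and $\St(\cE^{\add})/\Ac(\cE)\simeq\St(\cE)$. The inclusion $\cE'\subset\cE$ induces a fully faithful functor $\St(\cE'^{\add})\to\St(\cE^{\add})$, because in both categories the morphism spectra between representables are the mapping spectra of $\cE'$ and $\cE$, which agree on $\cE'$. Since $\cE'$ is closed under extensions, short exact sequences of $\cE'$ are exactly the exact sequences of $\cE$ with all terms in $\cE'$. Hence this functor sends $\Ac(\cE')$ into $\Ac(\cE)$, and the induced functor on quotients is the functor $\St(\cE')\to\St(\cE)$ we want to study.

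\emph{The reduction.} By the standard criterion for Verdier quotients, the induced functor is fully faithful provided the following holds: every map $a\to N$ with $a\in\St(\cE'^{\add})$ and $N\in\Ac(\cE)$ factors through an object of $\Ac(\cE')$ (the morphisms out of $a$ in the Verdier quotient are colimits over such maps). Since $\St(\cE'^{\add})$ is generated by the representables $\cY(x)$ for $x\in\cE'$, I will induct on the length of $a$ and reduce to $a=\cY(x)[n]$. Next I use Proposition \ref{prop:t_structure_on_Ac}: $N$ has a finite Postnikov filtration with layers in $\eff(\cE)[k]$. The functor $\cY(x)$ is connective and projective in $\Fun^{\add}(\cE^{op},\Sp)$, so maps $\cY(x)[n]\to M[k]$ with $M\in\eff(\cE)$ are nonzero only for $n=k$. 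By a dévissage over the filtration, the problem then reduces to the key case: given $M\in\eff(\cE)$ and $\alpha\in M(x)$ with $x\in\cE'$, find an object of $\Ac(\cE')$ through which $\alpha:\cY(x)\to M$ factors. The factorizations for the different layers also have to be glued, and I expect this is handled by the same argument applied to extension classes.

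\emph{The key case and the main obstacle.} Since $M$ is effaceable, there is a projection $p:y\to x$ in $\cE$ with $p^*\alpha=0$. By the left special condition there is $z\in\cE'$ and $f:z\to y$ with $q=p\circ f:z\to x$ a projection in $\cE$, and still $q^*\alpha=0$. The main obstacle is that $\ker(q)$ need not lie in $\cE'$, so $q$ need not be a projection of $\cE'$, and the evident candidate $E(q)$ need not lie in $\Ac(\cE')$. What I would try first is to iterate the left special condition on the kernel, producing an $\cE'$-resolution $\dots\to z_1\to z_0=z\to x$, each stage being a projection in $\cE$ onto the previous kernel. Closure of $\cE'$ under extensions should then allow these stages to be spliced into genuine short exact sequences of $\cE'$. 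The vanishing of $q^*\alpha$ shows that $\alpha$ factors through $\Cone(\cY(z)\to\cY(x))$. Truncating the resolution should then realize this factorization through an object built from $\cE'$-exact sequences, i.e. an object of $\Ac(\cE')$, up to an error in higher Postnikov degree which is handled inductively. Controlling this error term, and checking that the resulting object really lies in $\Ac(\cE')$, is the step I expect to require the most care.
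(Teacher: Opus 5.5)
You locate the gap correctly, but the remedy you propose is not available. Left speciality only produces $\cE'$-covers of objects \emph{of} $\cE'$, and $\ker(q)$ is in general not in $\cE'$, so there is nothing to iterate on it.

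More seriously, if the hypothesis is read literally (the composite $p\circ f$ only a projection of $\cE$), no argument can close this step, because the statement is then false. Here is a counterexample:
\begin{itemize}
\item Let $A=\mk Q/(ba)$ be the path algebra of $Q\colon 1\xrightarrow{a}2\xrightarrow{b}3$ modulo the path of length two.
\item Let $\cE$ be the category of finite-dimensional representations, with indecomposable projectives $P_i$ and simples $S_i$, and let $\cE'=\add(A\oplus S_1)$.
\item The minimal resolution $0\to P_3\to P_2\to P_1\to S_1\to 0$ gives $\Ext^1_A(S_1,S_1)=\Ext^1_A(S_1,A)=0$ and $\Ext^2_A(S_1,P_3)\cong\mk$.
\item Hence all extensions between objects of $\cE'$ split. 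So $\cE'$ is extension-closed, carries the split exact structure, and satisfies the literal condition: cover $P\oplus S_1^{\oplus r}$ by the projective $P\oplus P_1^{\oplus r}$, which lifts along any epimorphism.
\item Yet $\Ext^2_{\St(\cE')}(S_1,P_3)=0\neq\Ext^2_A(S_1,P_3)=\Ext^2_{\St(\cE)}(S_1,P_3)$.
\end{itemize}
This is exactly your obstacle: every $\cE$-projection $z\to S_1$ with $z\in\cE'$ that kills this class has a summand $S_2\notin\cE'$ in its kernel.

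So the hypothesis has to be read with $p\circ f$ a projection \emph{of} $\cE'$, i.e.\ with fiber in $\cE'$. This is also what is actually verified wherever the paper uses the theorem:
\begin{itemize}
\item in Proposition \ref{prop:various_E^lambda} the fiber $x_{i_0}$ lies in $\cE^{\kappa}$;
\item in Proposition \ref{prop:when_partial_realization_is_an_equivalence}, a projection of $\cC_{[0,m]}$ between objects of $\cC^{\heartsuit}$ automatically has fiber in $\cC^{\heartsuit}$.
\end{itemize}
With this reading your key case is immediate. The map $q$ is then a projection of $\cE'$, so $E(q)\in\Ac(\cE')$. The obstruction to extending $\alpha$ along $\cY(x)\to E(q)$ is the class $q^*\alpha\in\ker\bigl(M(z)\to M(\ker q)\bigr)$, which vanishes.

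The remaining weak point in your plan is the gluing over Postnikov layers. After handling the top layer you need factorizations for sources such as $\Fiber(\cY(x)\to E(q))\simeq\Cone(\cY(\ker q)\to\cY(z))$, not just shifted representables. So the inductions on source and target have to be interleaved, and you do not do this.

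The route implicit in the paper's use of \cite[Lemmas 4.2 and 4.3]{SW26} (see the proof of Proposition \ref{prop:St_of_E_to_check_St_of_E}) avoids the issue entirely:
\begin{itemize}
\item Full faithfulness follows once, for $x\in\cE'$ and $n>0$, the functor $\Ext^n_{\St(\cE)}(-,x)$ restricted to $\cE'$ is weakly effaceable by projections of $\cE'$.
\item Lemma 4.3 kills each class by a projection of $\cE$, and left speciality turns it into a projection of $\cE'$.
\end{itemize}
Concretely this is dimension shifting, $\alpha=\delta(\beta)$ with $\beta\in\Ext^{n-1}(\ker q,x)$, which is precisely where $\ker q\in\cE'$ is needed.
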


For completeness we mention the following statement, which might be not in the literature. In the case of ordinary exact categories this is \cite[Theorem 2.8]{BS01}, with a different proof.

\begin{prop}
Let $\cE$ be an idempotent-complete exact category. Then the category $\St(\cE)$ is also idempotent-complete.
\end{prop}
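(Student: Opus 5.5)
Since $\St(\cE)\simeq\St(\cE^{\add})/\Ac(\cE)$, the plan is to compare the idempotent completion $\St(\cE)^{\Kar}$ with $\St(\cE)$ on $K_0$, via Thomason's classification of dense subcategories. Recall that $\St(\cE)\subset\St(\cE)^{\Kar}$ is dense, and Thomason's theorem says $\St(\cE)=\St(\cE)^{\Kar}$ as soon as every class in $K_0(\St(\cE)^{\Kar})$ comes from $K_0(\St(\cE))$. So it suffices to prove that the map
\begin{equation*}
K_0(\St(\cE))\to K_0(\St(\cE)^{\Kar})
\end{equation*}
is surjective.

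First I establish surjectivity for the split exact structure, i.e. for $\St(\cE^{\add})$. Its idempotent completion is the category of compact objects of $\Fun^{\add}(\cE^{op},\Sp)$. Since $\cE$ is additive and idempotent complete, every retract of a finite cell complex built from representables $\cY(x)$ has $K_0$-class equal to a difference of classes of representables. More directly, $K_0(\Perf)$ of a connective additive category with idempotent-complete homotopy category $h\cE$ is generated by the classes $[\cY(x)]$; this uses the weight structure of Sosnilo/Bondarko, whose heart is $\cE^{\Kar}=\cE$. Hence every class comes from $\St(\cE^{\add})$, which is therefore idempotent complete.

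Next I pass to the quotient. The Verdier quotient $\St(\cE^{\add})\to\St(\cE)$ has kernel $\Ac(\cE)$, and on idempotent completions $\St(\cE^{\add})^{\Kar}\to\St(\cE)^{\Kar}$ is again a Karoubi quotient. By Thomason–Neeman, $K_0$ of a Karoubi quotient receives a surjection from $K_0$ of the source modulo the cokernel to $K_{-1}$ of the kernel. Concretely, the sequence
\begin{equation*}
K_0(\St(\cE^{\add})^{\Kar})\to K_0(\St(\cE)^{\Kar})\to K_{-1}(\Ac(\cE))
\end{equation*}
is exact. By Proposition \ref{prop:t_structure_on_Ac}, $\Ac(\cE)$ has a bounded $t$-structure, so $K_{-1}(\Ac(\cE))=0$ by Antieau–Gepner–Heller \cite[Theorem 2.35]{AGH19}. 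Therefore every class in $K_0(\St(\cE)^{\Kar})$ lifts to $K_0(\St(\cE^{\add}))=K_0(\St(\cE^{\add})^{\Kar})$, which maps through $K_0(\St(\cE))$. This gives the required surjectivity, and Thomason's theorem concludes.

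The main obstacle is the first step. One must check carefully that idempotent completeness of $\cE$, an $\infty$-category in which idempotents split, gives $\St(\cE^{\add})^{\Kar}=\St(\cE^{\add})$. The clean way is through the bounded weight structure, whose heart is the idempotent completion of $\cE$. Bondarko's results then show that the idempotent completion of a category with a bounded weight structure acquires a bounded weight structure with idempotent-completed heart, and that objects of weight amplitude $[a,b]$ are built from heart objects. Knowing that the heart is $\cE$, an induction on weight amplitude shows that retracts already lie in $\St(\cE^{\add})$. If this is delicate, the alternative is the $K_0$ route for $\St(\cE^{\add})$: there $K_0$ of the completion is $K_0^{\oplus}(h\cE^{\Kar})=K_0^{\oplus}(h\cE)$, which is again generated by objects of $\cE$.
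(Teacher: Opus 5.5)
Your proof is correct and follows essentially the same route as the paper. The paper also takes $\St(\cE^{\add})$ to be idempotent-complete by Bondarko's weight structure results. It then uses the sequence $\Ac(\cE)\to\St(\cE^{\add})\to\St(\cE)^{\Kar}$ in $\Cat^{\perf}$, gets $K_{-1}(\Ac(\cE))=0$ from the bounded $t$-structure via Antieau--Gepner--Heller, deduces surjectivity on $K_0$, and concludes by Thomason's theorem. The only difference is that you spell out the first step, and give a $K_0$-based alternative for it, where the paper simply cites Bondarko.
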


\begin{proof}
Denote by $\St(\cE)^{\Kar}$ the idempotent completion. The category $\St(\cE^{\add})$ is idempotent-complete by \cite[Theorem 5.3.1, Proposition 6.2.1]{Bon}, so we have a short exact sequence in $\Cat^{\perf}:$
\begin{equation*}
0\to\Ac(\cE)\to \St(\cE^{\add})\to\St(\cE)^{\Kar}\to 0.
\end{equation*}
By Proposition \ref{prop:t_structure_on_Ac} the category $\Ac(\cE)$ has a bounded $t$-structure, hence by \cite[Theorem 2.35]{AGH19} we have $K_{-1}(\Ac(\cE))=0.$ The long exact sequence gives the surjectivity of the map $K_0(\St(\cE^{\add}))\to K_0(\St(\cE)^{\Kar}).$ By \cite[Theorem 2.1]{Th} this implies that $\St(\cE)$ is idempotent-complete.
\end{proof}

\subsection{Accessibly exact categories}

To deal with large exact categories we will need the following basic construction. We will use the following notation: if $\cA$ is a small category and $\kappa$ is a regular cardinal, then we write the objects of $\Ind_{\kappa}(\cA)$ as $\inddlim[i\in I]x_i,$ where $I$ is a $\kappa$-directed poset and we are assuming a functor $I\to\cA,$ $i\mapsto x_i.$ One can more generally take $I$ to be a $\kappa$-filtered $\infty$-category, but recall that by \cite[Proposition 5.3.1.18]{Lur09} for such $I$ there exists a $\kappa$-directed poset $J$ and a cofinal functor $J\to I.$ 

\begin{prop}\label{prop:exact_structure_on_Ind_kappa} Let $\kappa\leq \lambda$ be a regular cardinals. Let $\cE$ be a small exact category. Then the category $\Ind_{\kappa}(\cE)^{\lambda}$ has the following exact structure. A morphism $f$ in  $\Ind_{\kappa}(\cE)^{\lambda}$ is an inclusion resp. a projection if it is a $\lambda$-small $\kappa$-directed colimit of inclusions resp. projections in $\cE.$ 

Taking the union over $\lambda,$ we obtain an exact structure on $\Ind_{\kappa}(\cE).$ The (fully faithful) functor $\Ind_{\kappa}(\cE)\to \Ind_{\kappa}(\St(\cE))$ preserves and reflects exactness. Its  essential image is closed under extensions.
\end{prop}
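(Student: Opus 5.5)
The plan is to reduce everything to the stable category $\Ind_{\kappa}(\St(\cE))$, where exactness is automatic, and transport the structure back along the fully faithful functor $\Ind_{\kappa}(j)$. Here $j:\cE\to\St(\cE)$ is the universal exact functor from Proposition \ref{prop:fully_faithful_E_to_St_of_E}. Since $j$ is fully faithful, $\Ind_{\kappa}(j)$ is fully faithful and preserves $\lambda$-compact objects.

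The first step is to characterize the candidate inclusions. I claim that a morphism $f$ in $\Ind_{\kappa}(\cE)^{\lambda}$ is a $\lambda$-small $\kappa$-directed colimit of inclusions in $\cE$ if and only if its image in $\Ind_{\kappa}(\St(\cE))$ has cofiber lying in $\Ind_{\kappa}(\cE)$. This is the key lemma.

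\begin{itemize}
\item For ``only if'': $\kappa$-directed colimits are exact in $\Ind_{\kappa}(\St(\cE))$, so the cofiber is $\inddlim[i]\Cone(f_i)$, and each $\Cone(f_i)$ lies in $j(\cE)$.
\item For ``if'': write $f=\inddlim[i] f_i$ as a $\lambda$-small $\kappa$-directed colimit of maps $f_i:x_i\to y_i$ in $\cE$, and write the cofiber as $\inddlim[k] z_k$ with $z_k\in\cE$. By a standard cofinality and reindexing argument, we can arrange compatible maps $y_i\to z_i$ so that each composite $x_i\to y_i\to z_i$ vanishes in $\St(\cE)$. The difficulty is that $\Cone(f_i)\to z_i$ need not be an equivalence. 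I would fix this by enlarging the index category, replacing $x_i$ by $\Fiber(y_i\to z_i)$.
\end{itemize}

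This enlargement is the main obstacle: the fiber $\Fiber(y_i\to z_i)$ need not lie in $\cE$. To handle it, I would use that $\Cone(f)\simeq \inddlim[k] z_k$ with each $z_k\in\cE$. Each compact piece $\Cone(f_i)\to\Cone(f)$ factors through some $z_k$. Since $\Cone(f_i)$ is an extension of $y_i$ by $x_i[1]$, the map from it to $z_k$ restricts on $x_i[1]$ to something nullhomotopic after passing further along the diagram. After such a reindexing we get a map $y_i\to z_{k(i)}$ whose composite with $x_i$ is zero. Then I would replace the system $(x_i\to y_i)$ by the cofinal system $\bigl(\Fiber(y_i\oplus z'\to z_{k(i)})\to y_i\oplus z'\bigr)$, adding split summands $z'$ so that $y_i\oplus z'\to z_{k(i)}$ becomes surjective in the exact sense. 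For the required projection onto $z_{k(i)}$, I would use that the cofiber maps $\inddlim z_k$ are compatible and take $z'=z_{k(i)}$ with the identity component. The fiber then lies in $\cE$ by closure under extensions (Proposition \ref{prop:fully_faithful_E_to_St_of_E}), and the colimit is unchanged. Projections are handled dually, using fibers in place of cofibers.

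With this lemma, $\Ind_{\kappa}(\cE)^{\lambda}$ is an extension-closed full subcategory of the stable category $\Ind_{\kappa}(\St(\cE))^{\lambda}$, and extension-closedness is proved below. The inclusions and projections are exactly the maps with cofiber, respectively fiber, in the subcategory. Hence the axioms of Definition \ref{def:small_exact} hold by the standard fact that an extension-closed subcategory of a stable category inherits an exact structure. Stability of inclusions under pushout, and the equivalence of (a) and (b) for exact squares, are checked inside the stable category. Compatibility as $\lambda$ grows is clear from the description, so taking the union over $\lambda$ gives the exact structure on $\Ind_{\kappa}(\cE)$. Preservation and reflection of exactness by $\Ind_{\kappa}(\cE)\to\Ind_{\kappa}(\St(\cE))$ are then tautological.

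Finally, for extension-closedness, take a fiber sequence $X\to Y\to Z$ in $\Ind_{\kappa}(\St(\cE))$ with $X,Z\in\Ind_{\kappa}(\cE)$. I would write the boundary map $Z\to X[1]$ as a $\kappa$-directed colimit of maps $z_i\to x_i[1]$ with $z_i,x_i\in\cE$. This is possible by choosing the indexing of the arrow category in $\Ind_{\kappa}$, since $\Ind_{\kappa}$ of the arrow category is the arrow category of $\Ind_{\kappa}$. Then $Y\simeq\inddlim[i]\Fiber(z_i\to x_i[1])$, and each term lies in $j(\cE)$ by Proposition \ref{prop:fully_faithful_E_to_St_of_E}.
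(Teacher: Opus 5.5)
There is a genuine gap in the ``if'' direction of your key lemma, and it sits exactly at the step you flagged as the main obstacle.

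\textbf{Why the split-summand repair fails.} With $z'=z_{k(i)}$ and identity component, the fiber of $(g_i,\id)\colon y_i\oplus z_{k(i)}\to z_{k(i)}$ is just $y_i$, embedded via $(\id,-g_i)$. So your new system is $y_i\to y_i\oplus z_{k(i)}$, and its colimit is $y\to y\oplus\Cone(f)$, not $f\colon x\to y$. Even the source has changed, so ``the colimit is unchanged'' is false. Calling it a ``cofinal system'' also has no meaning, since it is not a subsystem of the original one.

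The preceding step is muddled as well. $\Cone(f_i)$ is an extension of $x_i[1]$ by $y_i$, so $x_i[1]$ is a quotient of $\Cone(f_i)$ and one cannot ``restrict'' to it. The map $y_i\to z_{k(i)}$ with null composite from $x_i$ is simply $\Cone(f_i)\to z_{k(i)}$ precomposed with $y_i\to\Cone(f_i)$. More generally, starting from $f$ and trying to correct the cones of the $f_i$ is the wrong handle.

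\textbf{The fix.} Use the device from your own last paragraph, applied to the boundary map rather than to $f$. Let $f\colon x\to y$ have cofiber $c$ lying in $\Ind_{\kappa}(\cE)^{\lambda}$.
\begin{itemize}
\item Write $\delta\colon c\to x[1]$ as a $\lambda$-small $\kappa$-directed colimit of maps $\delta_i\colon c_i\to x_i[1]$ with $c_i,x_i\in\cE$.
\item Since filtered colimits are exact in $\Ind_{\kappa}(\St(\cE))$, the sequence $x\to y\to c$ is, as a diagram, the colimit of $x_i\to\Fiber(\delta_i)\to c_i$. So $f$ is the colimit of $x_i\to\Fiber(\delta_i)$, and the projection $y\to c$ is the colimit of $\Fiber(\delta_i)\to c_i$.
\item Each $\Fiber(\delta_i)$ lies in the image of $\cE$ by closure under extensions.
\item Each $x_i\to\Fiber(\delta_i)\to c_i$ is a short exact sequence in $\cE$ because $\cE\to\St(\cE)$ \emph{reflects exactness} (Proposition \ref{prop:fully_faithful_E_to_St_of_E}). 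You need this reflection here, not only extension-closedness.
\end{itemize}
This shows that the short exact sequences of the induced structure are exactly the $\lambda$-small $\kappa$-directed colimits of short exact sequences in $\cE$. It handles inclusions and projections simultaneously, with no separate dual argument. It is precisely the paper's proof, which applies ``the same argument'' as for extension-closedness. With this replacement, the rest of your outline goes through as written.
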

%\begin{enumerate}[label=(\roman*),ref=(\roman*)]
	%\item Let $\cC$ be a small stable category with a bounded heart structure $(\cC_{\leq 0},\cC_{\geq 0}).$ Then the pair $(\Ind(\cC_{\leq 0})^{\kappa},\Ind(\cC_{\geq 0})^{\kappa})$ is a heart structure on $\Ind(\cC)^{\kappa}.$ \label{heart_structure_on_Ind}
	%\item 
%\end{enumerate}

\begin{proof} 
Consider the stable envelope $\St(\cE),$ and denote by $j':\cE\to\St(\cE)$ the universal exact functor. By Proposition \ref{prop:fully_faithful_E_to_St_of_E} $j'$ is fully faithful, reflects exactness, and its essential image is closed under extensions in $\St(\cE).$ Consider the stable category $\cC=\Ind_{\kappa}(\St(\cE))^{\lambda}.$ Then we have a fully faithful functor $j:\Ind_{\kappa}(\cE)^{\lambda}\to\cC.$ 

We claim that the essential image of $j$ is closed under extensions. Indeed, consider a cofiber sequence of the form $j(x)\to z\to j(y)$ in $\cC.$ By \cite[Proposition 5.3.5.15]{Lur09} and its proof the morphism $j(y)\to j(x)[1]$ is a $\lambda$-small $\kappa$-directed colimit of morphisms $j'(y_i)\to j'(x_i)[1],$ where $x_i,y_i\in \cE,$ $i\in I.$ Hence, $z\cong\inddlim[i\in I]\Fiber(j'(y_i)\to j'(x_i)[1])\in\cC$ is contained in the essential image of $j,$ since each object $\Fiber(j'(y_i)\to j'(x_i)[1])$ is contained in the essential image of $j'.$ This shows that the essential image of $j$ is closed under extensions.

Consider the induced exact structure on $\Ind_{\kappa}(\cE)^{\lambda}.$ The same argument as above shows that short exact sequences in this category are exactly the $\lambda$-small $\kappa$-directed colimits of short exact sequences in $\cE.$

Finally, the assertions about the functor $\Ind_{\kappa}(\cE)\to \Ind_{\kappa}(\St(\cE))$ follow from the above.
\end{proof}

\begin{defi}
We define a locally $\kappa$-coherent exact category to be a category of the form $\Ind_{\kappa}(\cE),$ where $\cE$ is a small exact category and $\kappa$ is a regular cardinal. The exact structure is described in Proposition \ref{prop:exact_structure_on_Ind_kappa}. For $\kappa=\omega$ we say ``locally coherent'' instead of ``locally $\omega$-coherent''.

An accessibly exact category is a locally $\kappa$-coherent exact category for some $\kappa.$
\end{defi}

%From now on for brevity we will simply say ``accessibly exact category'', meaning an $\infty$-category, and similarly for locally $\kappa$-coherent exact categories. 
We will consider the accessible stable categories as accessibly exact categories in the natural way. 

\begin{prop}\label{prop:properties_of_Ind_kappa_of_small_exact}
Let $\cE$ be a small exact category and let $\kappa$ be a regular cardinal.
\begin{enumerate}[label=(\roman*),ref=(\roman*)]
	\item The inclusion functor $\Ind_{\kappa}(\cE)\to \Ind(\cE)$ preserves and reflects exactness. Its essential image is closed under extensions. \label{Ind_kappa_inside_Ind}
	\item Let $\cE'\subset\cE$ be an additive subcategory closed under extensions, equipped with the induced exact structure from $\cE.$ Then the functor $\Ind_{\kappa}(\cE')\to\Ind_{\kappa}(\cE)$ preserves and reflects exactness. Its essential image is closed under extensions. \label{Ind_kappa_of_subcategory}
\end{enumerate}
\end{prop}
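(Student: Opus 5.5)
Both parts follow from Proposition \ref{prop:exact_structure_on_Ind_kappa}: the exact structure on each $\Ind_{\kappa}(\cE)$ is detected by a fully faithful embedding into the ind-completion of a stable category, with essential image closed under extensions. We therefore move both questions into the stable world. There inclusions and projections are arbitrary morphisms, and exactness of a square just means it is (co)cartesian.

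For \ref{Ind_kappa_inside_Ind}, consider the commutative square of fully faithful functors
\begin{equation*}
\begin{tikzcd}
\Ind_{\kappa}(\cE)\ar[r]\ar[d] & \Ind(\cE)\ar[d]\\
\Ind_{\kappa}(\St(\cE))\ar[r] & \Ind(\St(\cE)).
\end{tikzcd}
\end{equation*}
By Proposition \ref{prop:exact_structure_on_Ind_kappa} (applied to $\kappa$ and to $\omega$), both vertical functors preserve and reflect exactness and have extension-closed essential images. The bottom functor $\Ind_{\kappa}(\St(\cE))\to\Ind(\St(\cE))$ is the fully faithful functor extending $\St(\cE)\hto\Ind(\St(\cE))$ along $\kappa$-filtered colimits. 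It is exact, and a fully faithful exact functor between stable categories preserves and reflects cocartesian squares. Preservation and reflection of exactness for the top functor then follow by chasing the square.

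For closure under extensions, let $x\to z\to y$ be a short exact sequence in $\Ind(\cE)$ with $x,y\in\Ind_{\kappa}(\cE)$. Its image in $\Ind(\St(\cE))$ is a cofiber sequence, so $z$ is the fiber of a map $y\to x[1]$ between objects of $\Ind_{\kappa}(\St(\cE))$. Since the bottom functor is fully faithful and exact, $z$ lies in $\Ind_{\kappa}(\St(\cE))$. By extension-closedness of $\Ind_{\kappa}(\cE)$ inside $\Ind_{\kappa}(\St(\cE))$, $z$ lies in $\Ind_{\kappa}(\cE)$. The one point that needs care is whether the structure as $\Ind$ of an $\Ind_{\kappa}$ is compatible with this chase; it is, because everything happens inside $\Ind(\St(\cE))$, where fibers are computed objectwise on presentations.

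For \ref{Ind_kappa_of_subcategory}, the first step is to show that $\St(\cE')\to\St(\cE)$ is fully faithful, which we cannot quite expect in general. So instead we avoid $\St(\cE')$ and realize $\Ind_{\kappa}(\cE')$ directly as the full subcategory of $\Ind_{\kappa}(\St(\cE))$ spanned by $\kappa$-directed colimits of objects of $\cE'$. Here $\cE'\subset\cE\subset\St(\cE)$ is an extension-closed full subcategory of a stable category, and its exact structure is the one induced from $\St(\cE)$, since $\cE$ is closed under extensions in $\St(\cE)$ and reflects exactness.

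The proof of Proposition \ref{prop:exact_structure_on_Ind_kappa} used only these properties of $j'$: full faithfulness, extension-closed image, and the fact that exactness is induced. Repeating it verbatim for $\cE'\subset\St(\cE)$ and each $\lambda$ gives the following. The functor $\Ind_{\kappa}(\cE')\to\Ind_{\kappa}(\St(\cE))$ is fully faithful, has extension-closed image, and its induced exact structure is the one where inclusions and projections are $\kappa$-directed colimits of those in $\cE'$. That is exactly the exact structure of Proposition \ref{prop:exact_structure_on_Ind_kappa} for $\cE'$. This step is the main obstacle, namely that the definition of the exact structure on $\Ind_{\kappa}(\cE')$ does not depend on the choice of ambient stable category. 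It is handled by the explicit description of short exact sequences as $\lambda$-small $\kappa$-directed colimits of short exact sequences in $\cE'$, which is intrinsic.

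Finally, compose with $\Ind_{\kappa}(\cE)\to\Ind_{\kappa}(\St(\cE))$. Both $\Ind_{\kappa}(\cE')$ and $\Ind_{\kappa}(\cE)$ carry exact structures induced from the same stable category and sit in it as extension-closed subcategories. Hence the inclusion $\Ind_{\kappa}(\cE')\to\Ind_{\kappa}(\cE)$ preserves and reflects exactness, and its image is closed under extensions in $\Ind_{\kappa}(\cE)$.
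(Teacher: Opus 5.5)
Your proof is correct and follows the paper's argument. Part (i) uses exactly the same commutative square, with $\Ind_{\kappa}(\St(\cE))\to\Ind(\St(\cE))$ treated as the evident stable case. In part (ii), like the paper, you sidestep the possible failure of full faithfulness of $\St(\cE')\to\St(\cE)$ by working inside $\Ind_{\kappa}(\St(\cE))$ and rerunning the proof that constructs the exact structure on $\Ind_{\kappa}(\cE)$. You also check explicitly that the intrinsic exact structure on $\Ind_{\kappa}(\cE')$ agrees with the one induced from $\Ind_{\kappa}(\St(\cE))$; the paper leaves this implicit, and your check is correct.
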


\begin{proof}
\ref{Ind_kappa_inside_Ind} If $\cE$ is stable with the standard exact structure, then the statement is evident. The general case reduces to the stable case using Proposition \ref{prop:exact_structure_on_Ind_kappa}. Namely, we have a commutative square
\begin{equation*}
\begin{tikzcd}
\Ind_{\kappa}(\cE) \ar[r]\ar[d] & \Ind(\cE)\ar[d]\\
\Ind_{\kappa}(\St(\cE))\ar[r] & \Ind(\St(\cE)),
\end{tikzcd}
\end{equation*}
and both vertical functors are fully faithful, preserve and reflect exactness, and their essential images are closed under extensions. The same holds for the lower horizontal functor by the above special case. Hence, the same holds for the upper horizontal functor.

\ref{Ind_kappa_of_subcategory} The proof is similar, but a little caution is required: the functor $\St(\cE')\to \St(\cE)$ does not have to be fully faithful. Instead, we first observe that we can replace $\cE$ by $\St(\cE),$ by Proposition \ref{prop:exact_structure_on_Ind_kappa}. Assuming that $\cE$ is stable, the same argument as in loc. cit. shows that the essential image of $\Ind_{\kappa}(\cE')$ in $\Ind_{\kappa}(\cE)$ is closed under extensions. It also shows that this functor reflects exactness.
\end{proof}

The following definition is essentially tautological, but we spell it out for completeness.

\begin{defi}
Let $\cE$ be an accessibly exact category and let $\kappa$ be a regular cardinal such that $\cE$ has $\kappa$-filtered colimits. We say that the $\kappa$-filtered colimits are exact in $\cE$ if the class of short exact sequences in $\cE$ is closed under $\kappa$-filtered colimits. 
\end{defi}

This exactness holds in the following basic situation.

\begin{prop}\label{prop:locally_kappa_coherent_satisfies_exactness_of_kappa_filtered}
Let $\cE$ be a locally $\kappa$-coherent exact category. Then the $\kappa$-filtered colimits are exact in $\cE.$
\end{prop}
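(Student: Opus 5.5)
The plan is to reduce to the stable case using Proposition \ref{prop:exact_structure_on_Ind_kappa}. Write $\cE=\Ind_{\kappa}(\cE_0)$ with $\cE_0$ small exact, and let $j:\Ind_{\kappa}(\cE_0)\to\Ind_{\kappa}(\St(\cE_0))$ be the fully faithful functor from loc. cit. It preserves and reflects exactness, and its essential image is closed under extensions. It also preserves $\kappa$-filtered colimits, since it is the $\Ind_{\kappa}$-extension of $\cE_0\to\St(\cE_0)$. Take a $\kappa$-filtered diagram $I\to\Fun(\Delta^1\times\Delta^1,\cE)$ of exact squares $x_i\to y_i\to z_i$ (with zero corner), and let $x\to y\to z$ be its colimit in $\cE$. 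Applying $j$ gives a $\kappa$-filtered diagram of short exact sequences in the stable category $\cD=\Ind_{\kappa}(\St(\cE_0))$, that is, of bicartesian squares $j(x_i)\to j(y_i)\to j(z_i)$ with zero corner.

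Next I will show that $\kappa$-filtered colimits in $\cD$ preserve cofiber sequences. This holds because $\cD$ is stable and colimits commute with colimits: the colimit of pushout squares is a pushout square. So the colimit square $j(x)\to j(y)\to j(z)$ is a cofiber sequence in $\cD$, using that $j$ commutes with $\kappa$-filtered colimits. Since this square in $\cD$ is bicartesian, it is exact in the stable exact structure. As $j$ reflects exactness, the square $x\to y\to z$ is exact in $\cE$.

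The only point requiring care is the claim that $j$ preserves $\kappa$-filtered colimits, and that the colimit in $\cE$ of the diagram of squares is computed pointwise. Both hold because $\Ind_{\kappa}(-)$ is functorial for functors between small categories, and the induced functor commutes with $\kappa$-filtered colimits \cite[Proposition 5.3.5.10]{Lur09}. Colimits in functor categories $\Fun(\Delta^1\times\Delta^1,-)$ are pointwise. I expect no real obstacle beyond this bookkeeping. Reflection of exactness is exactly the statement of Proposition \ref{prop:exact_structure_on_Ind_kappa}, which applies to all of $\Ind_{\kappa}(\cE_0)$ (the union over $\lambda$), so size bounds on the colimit do not matter.
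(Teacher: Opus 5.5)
Your proof is correct and is the same argument as the paper's: reduce to the stable case via the fully faithful functor $\Ind_{\kappa}(\cE_0)\to\Ind_{\kappa}(\St(\cE_0))$ from Proposition~\ref{prop:exact_structure_on_Ind_kappa}, which commutes with $\kappa$-filtered colimits and reflects exactness. The paper phrases this with $\cE^{\kappa}$ in place of your presenting category $\cE_0$. Your choice is, if anything, slightly cleaner, because the induced exact structure on $\cE^{\kappa}$ is only justified later, in Proposition~\ref{prop:kappa_compact_closed_under_extensions}, and that proposition relies on the present result.
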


\begin{proof}
Again, the stable case is evident. The general case reduces to the stable case by Proposition \ref{prop:exact_structure_on_Ind_kappa}: the functor $\cE\to\Ind_{\kappa}(\St(\cE^{\kappa}))$ commutes with $\kappa$-filtered colimits and reflects exactness.
\end{proof}

The following observation is slightly subtle.

\begin{prop}\label{prop:kappa_compact_closed_under_extensions}
Let $\cE$ be an accessibly exact category and let $\kappa$ be a regular cardinal. Suppose that $\cE$ has exact $\kappa$-filtered colimits. Then the full subcategory $\cE^{\kappa}\subset\cE$ is closed under extensions.
\end{prop}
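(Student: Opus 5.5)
We reduce to a statement about stable $\kappa$-accessible categories by embedding $\cE$ into a stable envelope. Since $\cE$ is accessibly exact, it is locally $\lambda$-coherent for some regular $\lambda$: $\cE\simeq\Ind_{\lambda}(\cE_0)$ with $\cE_0$ small exact. If $\lambda\leq\kappa$, then $\cE^{\kappa}$ is the closure of $\cE_0$ under $\kappa$-small $\lambda$-filtered colimits and retracts. The subtlety is that $\kappa$ need not be comparable with $\lambda$ in a useful way, and $\kappa$-compact objects of $\cE$ need not be described directly via $\cE_0$. For this reason I would argue intrinsically, using the exactness of $\kappa$-filtered colimits.

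Take a short exact sequence $x\to y\to z$ in $\cE$ with $x,z\in\cE^{\kappa}$. The strategy is to write $y$ as a $\kappa$-filtered colimit of objects $y_i$ and show that the identity of $y$ factors through some $y_i$. First, write $y=\colim_{i\in I}y_i$ as a $\kappa$-filtered colimit of $\kappa$-compact objects, choosing $I$ large enough; this is possible since $\cE$ is $\kappa'$-accessible for large $\kappa'$. A cleaner version is to use the embedding $\cE\hookrightarrow\Ind_{\lambda}(\St(\cE_0))=:\cC$ from Proposition \ref{prop:exact_structure_on_Ind_kappa}. This embedding is fully faithful, preserves and reflects exactness, commutes with $\lambda$-filtered colimits, and has essential image closed under extensions; it also commutes with $\kappa$-filtered colimits once $\kappa\geq\lambda$.

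Concretely, the argument runs in three steps.

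(1) Express the extension class as a colimit. The sequence is classified by the map $z\to x[1]$ in $\cC$. Write $z$ as a $\kappa$-filtered colimit $\colim_i z_i$ in $\cE$ with $z_i\in\cE^{\kappa}$; since $z$ is $\kappa$-compact, $z$ is a retract of some $z_i$.

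(2) Form a sequence with $\kappa$-compact middle term. Pull back the extension along $z_i\to z$; pullbacks along projections exist in $\cE$. This gives $x\to y_i\to z_i$, and $y$ is a retract of $y_i$ via the retraction data, because exact sequences are functorial in the extension class. This reduces the problem to the case where the middle term is built from $\kappa$-compact pieces.

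(3) Prove compactness of the middle term. The heart of the argument is to show directly that $y$ is $\kappa$-compact. Let $w=\colim_j w_j$ be a $\kappa$-filtered colimit. We compare $\Map(y,w)$ with $\colim_j\Map(y,w_j)$ using the fiber sequence of mapping spaces induced by $x\to y\to z$. A cofiber sequence $x\to y\to z$ in the stable category $\cC$ gives a fiber sequence of mapping spectra $\Hom(z,w)\to\Hom(y,w)\to\Hom(x,w)$. We therefore need $\kappa$-compactness of $x$ and $z$ in the sense of mapping spectra in $\cC$ restricted to colimits in $\cE$; that is, we need $\Hom_{\cC}(x,-)$ to commute with $\kappa$-filtered colimits of objects of $\cE$ in all degrees, including $\pi_{-1}$, which is $\Ext^1$.

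The main obstacle is exactly this $\Ext^1$ control. For $\pi_{\geq 0}$, compactness in $\cE$ gives the commutation directly. For $\Ext^1_{\cE}(x,w)$ we use the exactness of $\kappa$-filtered colimits. Surjectivity of $\colim_j\Ext^1(x,w_j)\to\Ext^1(x,w)$ is not needed for the five lemma. For injectivity, take an extension $w_j\to e\to x$ that splits after pushout to $w$. The pushed-out extensions $e_k$ for $k\geq j$ have $\kappa$-filtered colimit the extension by $w$, by exactness of $\kappa$-filtered colimits. The splitting $x\to\colim_k e_k$ then factors through some $e_k$ by $\kappa$-compactness of $x$. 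After possibly enlarging $k$, this factorization is a section, again using compactness of $x$ to check the composite to $x$. Hence the class dies at stage $k$. With injectivity on $\Ext^1$ and isomorphisms on $\pi_{\geq 0}$ for both $x$ and $z$, the five lemma applied to the long exact sequences yields that $\colim_j\Map(y,w_j)\to\Map(y,w)$ is an equivalence. Therefore $y\in\cE^{\kappa}$.
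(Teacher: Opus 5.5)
Your step (3) is the paper's proof: the five lemma applied to the long exact sequences coming from $\Hom(z,-)\to\Hom(y,-)\to\Hom(x,-)$ reduces everything to injectivity of $\indlim[j]\Ext^1_{\cE}(z,w_j)\to\Ext^1_{\cE}(z,\indlim[j]w_j)$, which is proved by pushing out the extension, using exactness of $\kappa$-filtered colimits, and lifting the splitting by $\kappa$-compactness, so the proposal is correct. Two small corrections: the $\Ext^1$ term that actually enters the five lemma is $\Ext^1_{\cE}(z,-)$, not $\Ext^1_{\cE}(x,-)$ (your injectivity argument applies verbatim to $z$), and steps (1)--(2) can be dropped entirely, since $z$ is already $\kappa$-compact and they reduce nothing.
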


\begin{proof}
Let $x\to y\to z$ be a short exact sequence in $\cE,$ and suppose that $x,z\in\cE^{\kappa}.$ Let $(w_i)_{i\in I}$ be a $\kappa$-directed system in $\cE.$ We need to prove that the map $\indlim[i]\Map_{\cE}(y,w_i)\to \Map_{\cE}(y,\indlim[i]w_i)$ is an equivalence of spaces. By assumption this holds if we replace $y$ with $x$ or $z.$ Using the long exact sequences and the five-lemma, we reduce the question to showing that the map of abelian groups 
\begin{equation}\label{eq:Ext^1_and_colimits_mono}
\indlim[i]\Ext^1_{\cE}(z,w_i)\to \Ext^1_{\cE}(z,\indlim[i] w_i)
\end{equation} 
is a monomorphism.

Take some $i_0\in I$ and an element $\alpha\in\Ext^1_{\cE}(z,w_{i_0}).$ It corresponds to a short exact sequence $w_{i_0}\to u\to z.$ For $i\geq i_0$ consider the pushout $u_i=u\sqcup_{w_{i_0}}w_i.$ Since $\kappa$-filtered colimits are exact by assumption, the image of $\alpha$ in $\Ext^1_{\cE}(z,\indlim[i]w_i)$ corresponds to the short exact sequence $\indlim[i\geq i_0]w_i\to \indlim[i\geq i_0]u_i\to z.$ Suppose that this extension splits, i.e. we have a section $s:z\to\indlim[i\geq i_0] u_i.$ Then $s$ factors through $u_{i_1}$ for some $i_1\geq i_0.$ This means that the image of $\alpha$ in $\Ext^1_{\cE}(z,w_{i_1})$ vanishes. This proves the injectivity of \eqref{eq:Ext^1_and_colimits_mono} and the proposition. 
\end{proof}

In the situation of Proposition \ref{prop:kappa_compact_closed_under_extensions} we consider $\cE^{\kappa}$ as an exact category with the induced exact structure from $\cE.$ We make further observations about the subcategories $\cE^{\lambda}$ for various $\lambda.$

\begin{prop}\label{prop:various_E^lambda}
Let $\cE$ be a locally $\kappa$-coherent exact category and let $\lambda\geq\kappa$ be a regular cardinal. 
\begin{enumerate}[label=(\roman*),ref=(\roman*)]
	\item For any $x\in\cE^{\kappa}$ the functor $\Ext^1_{\cE}(x,-):\cE\to\Ab$ commutes with $\kappa$-filtered colimits. \label{Ext^1_kappa_continuous}
	\item The inclusion functor $\cE^{\kappa}\to\cE^{\lambda}$ is left special. In particular, the functor $\St(\cE^{\kappa})\to\St(\cE^{\lambda})$ is fully faithful. \label{left_speciaal_E_kappa_in_E_lambda}
	%\item The functor $\Ind(\cE^{kappa})\to \Ind(\cE^{\lambda})$ preserves and reflects exactness. \label{Ind_E_kappa_Ind_E_lambda}
\end{enumerate} 
\end{prop}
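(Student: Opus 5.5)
Throughout we write $\cE=\Ind_{\kappa}(\cE_0)$ with $\cE_0$ small. We may take $\cE_0=\cE^{\kappa}$, using idempotent completion if necessary. By Proposition \ref{prop:exact_structure_on_Ind_kappa} we have a fully faithful embedding $\cE\hto\Ind_{\kappa}(\St(\cE_0))$ which preserves and reflects exactness and has essential image closed under extensions. Consequently, for $x,w\in\cE$, the group $\Ext^1_{\cE}(x,w)$, which classifies extensions in $\cE$, coincides with $\pi_{-1}\Hom(x,w)$ computed in $\Ind_{\kappa}(\St(\cE_0))$.

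For \ref{Ext^1_kappa_continuous}, take $x\in\cE^{\kappa}$. Then $x$ is a retract of an object of $\cE_0$, and hence it is $\kappa$-compact in the stable category $\Ind_{\kappa}(\St(\cE_0))$. The inclusion $\cE\hto\Ind_{\kappa}(\St(\cE_0))$ commutes with $\kappa$-filtered colimits. Therefore $\Hom(x,-)$ commutes with $\kappa$-filtered colimits as a functor to spectra, and so does $\pi_{-1}$ of it. That $\pi_{-1}$ is $\Ext^1_{\cE}$ follows from the identification above. An alternative route is to combine the injectivity argument of Proposition \ref{prop:kappa_compact_closed_under_extensions} with a surjectivity argument: an extension of $x$ by $\indlim w_i$ is a $\kappa$-small-indexed datum, so it descends to some $w_i$. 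The embedding argument is cleaner.

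For \ref{left_speciaal_E_kappa_in_E_lambda}, closure of $\cE^{\kappa}$ under extensions inside $\cE^{\lambda}$ follows from Proposition \ref{prop:kappa_compact_closed_under_extensions} together with Proposition \ref{prop:locally_kappa_coherent_satisfies_exactness_of_kappa_filtered}. For the lifting condition, let $p:y\to x$ be a projection in $\cE^{\lambda}$ with $x\in\cE^{\kappa}$. By the description in Proposition \ref{prop:exact_structure_on_Ind_kappa}, $p$ is a $\kappa$-directed colimit $\inddlim[i\in I] p_i$ of projections $p_i:y_i\to x_i$ in $\cE_0$. Then $x\simeq\indlim x_i$, and since $x$ is $\kappa$-compact, $\id_x$ factors through some $x_{i_0}$, so $x$ is a retract of $x_{i_0}$ via maps $s:x\to x_{i_0}$ and $r:x_{i_0}\to x$.

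Next form the pullback $z=y_{i_0}\times_{x_{i_0}}x$ along $s$. Since projections are stable under pullback, $z\to x$ is a projection in $\cE_0$, or in its idempotent completion $\cE^{\kappa}$. The composite $z\to y_{i_0}\to y$ has $p\circ(z\to y)$ equal to the composite $z\to x\xto{s}x_{i_0}\to x$. One has to check that $x_{i_0}\to x$ (the colimit structure map) composed with $s$ is $\id_x$. This holds by the choice of the factorization: $s$ was obtained as a factorization of $\id_x$ through the colimit. Hence $p\circ f$ equals the projection $z\to x$, where $f:z\to y$ is the composite above.

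Full faithfulness of $\St(\cE^{\kappa})\to\St(\cE^{\lambda})$ then follows from Theorem \ref{th:left_special_implies_fully_faithful}. The subtle points are size issues, namely that $\cE^{\lambda}$ is essentially small, and exactness of the structure maps. The main obstacle I expect is making the identification of projections in $\cE^{\lambda}$ as colimits of projections from $\cE_0$ precise, including the retract/idempotent issue. I would handle this by replacing $\cE_0$ with its idempotent completion, which is closed under extensions in $\cE$.
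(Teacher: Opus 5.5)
Your proof is correct. Part (i), and the deduction of full faithfulness from Theorem \ref{th:left_special_implies_fully_faithful}, are exactly as in the paper. The difference is in how you prove left speciality.

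The paper argues through extension classes. Given a short exact sequence $x\to y\to z$ in $\cE^{\lambda}$ with $z\in\cE^{\kappa}$, it writes the kernel $x$ as a $\kappa$-directed colimit of objects $x_i\in\cE^{\kappa}$. It then uses part (i) to lift the class $\alpha\in\Ext^1_{\cE}(z,x)$ to some $\beta\in\Ext^1_{\cE}(z,x_{i_0})$. The corresponding extension $x_{i_0}\to w\to z$ has $w\in\cE^{\kappa}$ by Proposition \ref{prop:kappa_compact_closed_under_extensions}. Since $\beta$ maps to $\alpha$, the projection $w\to z$ factors through $y$.

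You instead decompose the projection itself as a $\kappa$-directed colimit of projections of $\cE_0$. You then use $\kappa$-compactness of the target to retract it off a single stage, and pull back.

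What each route buys:
\begin{itemize}
\item Your route needs neither part (i) nor any $\Ext$ classes. It does depend on the concrete presentation of projections in $\Ind_{\kappa}(\cE_0)$ from Proposition \ref{prop:exact_structure_on_Ind_kappa}.
\item The paper's route uses only intrinsic properties: exactness of $\kappa$-filtered colimits, closure of $\cE^{\kappa}$ under extensions, and $\kappa$-continuity of $\Ext^1$. These are precisely the conditions of Proposition \ref{prop:criterion_locally_kappa_coherent}, and the argument mirrors the one used there.
\end{itemize}

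One small simplification: you do not need to justify replacing $\cE_0$ by $\cE^{\kappa}$ (that step would require checking that the two Ind-exact structures agree). Simply form the pullback $z$ in $\cE$. Then $z\to x$ is a projection with fiber $\Fiber(p_{i_0})\in\cE_0$. Hence $z\in\cE^{\kappa}$ by closure under extensions, and $z\to x$ is a projection in $\cE^{\lambda}$ for the induced exact structure.
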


\begin{proof}
We prove \ref{Ext^1_kappa_continuous}. If $\cE$ is $\kappa$-accessible stable with the standard exact structure, then the statement is evident. The general case reduces to the stable case by the proof of Proposition \ref{prop:exact_structure_on_Ind_kappa}: we have a fully faithful $\kappa$-accessible functor $i:\cE\to\Ind_{\kappa}(\St(\cE^{\kappa}))=\cC,$ and $\Ext^1_{\cE}(x,-)\cong \Ext^1_{\cC}(i(x),i(y)).$ This proves \ref{Ext^1_kappa_continuous}.

The second assertion of \ref{left_speciaal_E_kappa_in_E_lambda} follows from the first one by Theorem \ref{th:left_special_implies_fully_faithful}. We prove the left speciality. Consider a short exact sequence $x\to y\to z$ in $\cE^{\lambda}$ with $z\in\cE^{\kappa}.$ Denote by $\alpha\in\Ext^1_{\cE}(z,x)$ the corresponding element. Let $x=\indlim[i\in I]x_i,$ where $I$ is $\lambda$-small and $\kappa$-directed, and $x_i\in \cE^{\kappa}$ for $i\in I.$ By \ref{Ext^1_kappa_continuous} there exists some $i_0\in I$ and an element $\beta\in\Ext^1_{\cE}(z,x_{i_0})$ which maps to $\alpha.$ We obtain the associated short exact sequence $x_{i_0}\to w\to z.$ By Proposition \ref{prop:kappa_compact_closed_under_extensions} we have $w\in\cE^{\kappa}.$ By construction, the projection $w\to z$ factors through $y,$ which proves that $\cE^{\kappa}$ is left special in $\cE^{\lambda}.$
\end{proof}

In the situation of Proposition \ref{prop:various_E^lambda} we define the category $\St(\cE)$ to be the directed union of $\St(\cE^{\lambda})$ over all regular $\lambda\geq\kappa.$

%We will deal essentially only with those accessibly exact categories which are either small or have filtered colimits.

\subsection{Grothendieck exact categories}

\begin{defi}\label{def:Grothendieck_exact}
Let $\cE$ be an accessibly exact category. We say that $\cE$ is a Grothendieck exact category if
$\cE$ has exact filtered colimits. % the colimit functor $\colim:\Ind(\cE^{\kappa})\to\cE$ is exact.
\end{defi}

It is convenient to use the following convention: if $\cE$ is a locally $\kappa$-coherent exact category, then $\Ind(\cE)$ is a union of exact categories $\Ind(\cE^{\lambda})$ for all regular $\lambda\geq \kappa.$

\begin{prop}\label{prop:Grothendieck_exact_reformulation}
Let $\cE$ be a locally $\kappa$-coherent exact category with filtered colimits. Suppose that the colimit functor $\colim:\Ind(\cE^{\kappa})\to\cE$ is exact. Then $\cE$ is a Grothendieck exact category. 
\end{prop}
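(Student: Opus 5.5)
We must show that short exact sequences in $\cE$ are closed under all filtered colimits. The plan is to lift a filtered diagram of short exact sequences to a short exact sequence in $\Ind(\cE^{\kappa})$ and then apply the exact functor $\colim$.

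Let $I$ be a filtered category and $i\mapsto (x_i\to y_i\to z_i)$ a functor from $I$ to short exact sequences in $\cE$. Choose a regular cardinal $\lambda\geq\kappa$ with $I$ $\lambda$-small and all $x_i,y_i,z_i\in\cE^{\lambda}$; this is possible since $\cE=\Ind_\kappa(\cE^\kappa)$ is accessible. By the convention above, $\Ind(\cE)$ is the union of the exact categories $\Ind(\cE^{\lambda})$. The hypothesis is read accordingly: $\colim:\Ind(\cE^{\lambda})\to\cE$ is exact for all such $\lambda$. If the hypothesis is only stated for $\kappa$, we reduce to it, as explained in the next paragraph.

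Inside $\Ind(\cE^{\lambda})$, form the formal colimits $\inddlim[i\in I] x_i\to\inddlim[i\in I] y_i\to\inddlim[i\in I] z_i$. We claim this is a short exact sequence in $\Ind(\cE^{\lambda})$, whose exact structure is given by Proposition \ref{prop:exact_structure_on_Ind_kappa} with $\kappa=\omega$. By that proposition, $\Ind(\cE^\lambda)\to\Ind(\St(\cE^\lambda))$ preserves and reflects exactness, and in the stable category $\Ind(\St(\cE^\lambda))$ filtered colimits of cofiber sequences are cofiber sequences. So exactness of each term in $\cE^\lambda\subset\St(\cE^\lambda)$ (Proposition \ref{prop:kappa_compact_closed_under_extensions} makes $\cE^\lambda$ an exact subcategory closed under extensions) gives the claim. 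Equivalently, and more directly, a filtered colimit of short exact sequences in $\cE^\lambda$ is exact in $\Ind(\cE^\lambda)$ by the very definition of that exact structure. Applying the exact functor $\colim$ then shows that $\indlim x_i\to\indlim y_i\to\indlim z_i$ is short exact in $\cE$.

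To deduce exactness of $\colim$ on $\Ind(\cE^\lambda)$ from the $\kappa$ case, factor it as $\Ind(\cE^\lambda)\to\Ind(\Ind_\kappa(\cE^\kappa)^\lambda)\to\cE$. An object of $\cE^\lambda$ is a $\lambda$-small $\kappa$-directed colimit of objects of $\cE^\kappa$, and a short exact sequence in $\cE^\lambda$ is such a colimit of short exact sequences in $\cE^\kappa$ (proof of Proposition \ref{prop:exact_structure_on_Ind_kappa}). Hence every short exact sequence in $\Ind(\cE^\lambda)$ is the image of a short exact sequence in $\Ind(\cE^\kappa)$ under the functor $\Ind(\cE^\kappa)\to\Ind(\cE^\lambda)$, which is a left adjoint-type "flattening" compatible with $\colim$. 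Making this rewriting precise is the main obstacle. It amounts to re-indexing a double colimit as a single filtered colimit over a Grothendieck construction, keeping track of the fact that the maps between sequences need not be levelwise presented. I would handle it by passing to the stable envelope $\Ind(\St(\cE^\kappa))$, where the maps can be straightened (\cite[Proposition 5.3.5.15]{Lur09}), and then using reflection of exactness.
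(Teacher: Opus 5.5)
\textbf{There is a genuine gap.} Your first three paragraphs only reformulate the definition. Reading the hypothesis as exactness of $\colim:\Ind(\cE^{\lambda})\to\cE$ for every $\lambda\geq\kappa$ is the same as assuming $\cE$ is Grothendieck exact, since $\Ind(\cE)$ is the union of the $\Ind(\cE^{\lambda})$. So the whole content of the statement is the reduction from $\lambda$ to $\kappa$, and that is exactly the step you leave as ``the main obstacle''.

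Moreover, the version of that step you do state is false. The functor $\Ind(\cE^{\kappa})\to\Ind(\cE^{\lambda})$ is fully faithful. The constant ind-object on $y\in\cE^{\lambda}$ is compact in $\Ind(\cE^{\lambda})$, so it lies in the essential image only if $y$ is a retract of an object of $\cE^{\kappa}$, i.e.\ $y\in\cE^{\kappa}$. Hence a constant short exact sequence with a term in $\cE^{\lambda}\setminus\cE^{\kappa}$ is not the image of a short exact sequence of $\Ind(\cE^{\kappa})$. The flattening you need goes the other way, $\Ind(\cE^{\lambda})\to\Ind(\cE^{\kappa})$; it is right adjoint to $\Ind(\cE^{\kappa})\to\Ind(\cE^{\lambda})$. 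It agrees with the original ind-object only after applying $\colim$ to $\cE$.

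The missing idea is to build this flattening as a composite of functors whose exactness is already known. Then no re-indexing over a Grothendieck construction and no levelwise straightening of the $I$-indexed diagram is needed, because functoriality handles the transition maps. Your factorization through $\Ind(\Ind_{\kappa}(\cE^{\kappa})^{\lambda})$ is the right start; continue on to $\Ind(\Ind(\cE^{\kappa}))$ and take the colimit there. This is the paper's argument:
\begin{equation*}
\Ind(\cE)\simeq\Ind(\Ind_{\kappa}(\cE^{\kappa}))\to\Ind(\Ind(\cE^{\kappa}))\xto{\colim}\Ind(\cE^{\kappa})\xto{\colim}\cE.
\end{equation*}
\begin{itemize}
\item The first functor is $\Ind$ of the inclusion $\Ind_{\kappa}(\cE^{\kappa})\hto\Ind(\cE^{\kappa})$, which preserves exactness by Proposition \ref{prop:properties_of_Ind_kappa_of_small_exact}.
\item The second is exact by Proposition \ref{prop:locally_kappa_coherent_satisfies_exactness_of_kappa_filtered}, applied to the locally coherent exact category $\Ind(\cE^{\kappa})$.
\item The third is exact by hypothesis.
\end{itemize}
The composite is $\colim:\Ind(\cE)\to\cE$, since the last functor preserves filtered colimits and is left inverse to $\cE\simeq\Ind_{\kappa}(\cE^{\kappa})\hto\Ind(\cE^{\kappa})$.

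For your diagram, concretely: regard each $x_i\to y_i\to z_i$ as a short exact sequence in $\Ind(\cE^{\kappa})$, take the colimit over $I$ there (it stays short exact), and then apply $\colim$. Your idea of passing to $\Ind(\St(\cE^{\kappa}))$ and using reflection of exactness is precisely how the second input is proved, so the plan is salvageable. But you should take the colimit inside $\Ind(\cE^{\kappa})$, which exists and is functorial, rather than try to straighten the filtered diagram levelwise.
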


\begin{proof}
We need to show that the functor $\colim:\Ind(\cE)\to \cE$ is exact. By Proposition \ref{prop:locally_kappa_coherent_satisfies_exactness_of_kappa_filtered} for any small exact category $\cD$ the colimit functor $\Ind(\Ind(\cD))\to\Ind(\cD)$ is exact. Hence, so is the composition
\begin{equation*}
\Ind(\cE)\simeq \Ind(\Ind_{\kappa}(\cE^{\kappa}))\to \Ind(\Ind(\cE^{\kappa}))\xto{\colim}\Ind(\cE^{\kappa})\xto{\colim}\cE.
\end{equation*}
This proves the proposition.
\end{proof}

\begin{example}
A Grothendieck abelian category can be considered as a Grothendieck exact category.
\end{example}

\begin{example}\label{ex:locally_coherent_exact}
Let $\cE$ be a small exact category. Then $\Ind(\cE)$ is a Grothendieck exact category by Proposition \ref{prop:locally_kappa_coherent_satisfies_exactness_of_kappa_filtered}.
\end{example}

\begin{example}
Any presentable stable category can be considered as a Grothendieck exact category. More generally, any Grothendieck prestable category (\cite[Definition C.1.4.2]{Lur18})  is a Grothendieck exact category.
\end{example}

\subsection{Criterion for an exact category to be locally $\kappa$-coherent}

We will use the following general characterization of locally $\kappa$-coherent exact categories.

\begin{prop}\label{prop:criterion_locally_kappa_coherent}
	Let $\cE$ be an accessibly exact category and let $\kappa$ be a regular cardinal. Then $\cE$ is locally $\kappa$-coherent if and only if the following conditions hold.
	\begin{enumerate}[label=(\roman*),ref=(\roman*)]
		\item $\cE$ is $\kappa$-accessible (as an abstract category). \label{kappa_accessible}
		\item The $\kappa$-filtered colimits are exact in $\cE.$ \label{exactness_of_kappa_filtered}
		\item For $x\in\cE^{\kappa}$ the functor $\Ext^1_{\cE}(x,-):\cE\to\Ab$ commutes with $\kappa$-filtered colimits. \label{Ext^1_kappa_continuous_again}
	\end{enumerate}
	
	Moreover, in this case the full subcategory $\cE^{\kappa}\subset \cE$ is closed under fibers of projections in $\cE.$
\end{prop}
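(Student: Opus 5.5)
The "only if" direction is essentially already established. If $\cE=\Ind_{\kappa}(\cE_0)$ with $\cE_0$ small exact, then:
\begin{itemize}
\item condition \ref{kappa_accessible} holds by definition;
\item condition \ref{exactness_of_kappa_filtered} is Proposition \ref{prop:locally_kappa_coherent_satisfies_exactness_of_kappa_filtered};
\item condition \ref{Ext^1_kappa_continuous_again} is Proposition \ref{prop:various_E^lambda} \ref{Ext^1_kappa_continuous}.
\end{itemize}
So the work is in the "if" direction and in the final assertion.

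For the converse, assume \ref{kappa_accessible}--\ref{Ext^1_kappa_continuous_again}. By Proposition \ref{prop:kappa_compact_closed_under_extensions}, condition \ref{exactness_of_kappa_filtered} implies that $\cE^{\kappa}$ is closed under extensions in $\cE$, so we give it the induced exact structure. It is essentially small, and by \ref{kappa_accessible} the canonical functor $\Phi:\Ind_{\kappa}(\cE^{\kappa})\to\cE$ is an equivalence of categories. It remains to compare exact structures: we must show a square in $\Ind_{\kappa}(\cE^{\kappa})$ is exact (in the sense of Proposition \ref{prop:exact_structure_on_Ind_kappa}) iff its image in $\cE$ is exact. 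One direction is clear: a $\kappa$-directed colimit of short exact sequences in $\cE^{\kappa}$ is exact in $\cE$ by \ref{exactness_of_kappa_filtered}.

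The main obstacle is the other direction: every short exact sequence $x\to y\to z$ in $\cE$ is a $\kappa$-directed colimit of short exact sequences in $\cE^{\kappa}$. I would write $z=\indlim[i]z_i$ and $x=\indlim[j]x_j$ with $z_i,x_j\in\cE^{\kappa}$. Let $\alpha\in\Ext^1_{\cE}(z,x)$ be the class of the sequence, with pullbacks $\alpha_i\in\Ext^1_{\cE}(z_i,x)$. By \ref{Ext^1_kappa_continuous_again}, each $\alpha_i$ lifts to some $\Ext^1_{\cE}(z_i,x_j)$. I would organize this as follows. Consider the category of triples $(i,j,\beta)$ with $\beta\in\Ext^1(z_i,x_j)$ mapping to $\alpha_i$, which is $\kappa$-filtered since \ref{Ext^1_kappa_continuous_again} gives exactly colimit-compatibility, including the equalization of two lifts after enlarging $j$. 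Then take the extensions $x_j\to w_{ij\beta}\to z_i$. These lie in $\cE^{\kappa}$ by extension-closure. Their colimit is an extension of $z$ by $x$ with class $\alpha$, by \ref{exactness_of_kappa_filtered} together with the usual pushout/pullback description of $\Ext^1$ via exact squares. Hence it is isomorphic to $y$ compatibly. Cofinality of this index category over the $z_i$'s and $x_j$'s needs a small check. One also needs to handle that $\Ext^1_{\cE}$ here is the Yoneda $\Ext^1$ of extensions, which suffices for this argument.

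For the final assertion, let $p:y\to z$ be a projection with $y,z\in\cE^{\kappa}$ and fiber $x$. Write $x=\indlim x_j$. The class $\alpha\in\Ext^1(z,x)$ comes from some $\beta\in\Ext^1(z,x_{j_0})$, giving $x_{j_0}\to w\to z$ with $w\in\cE^{\kappa}$ and a map of extensions $w\to y$. Taking $x'_j=x_j$ for $j\ge j_0$ and the corresponding pushouts $w_j$, exactness of $\kappa$-filtered colimits gives $y\simeq\indlim w_j$. Since $y$ is $\kappa$-compact, $\id_y$ factors through some $w_j$, so $y$ is a retract of $w_j$ compatibly with the projections to $z$. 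Hence $x$ is a retract of $x_j\in\cE^{\kappa}$, and therefore $x\in\cE^{\kappa}$, since $\cE^{\kappa}$ is closed under retracts.
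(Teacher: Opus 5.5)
Your ``only if'' direction is the same as the paper's. Your retract argument for the final assertion is also correct: the section $y\to w_j$ is automatically over $z$, so $x$ is a retract of $x_j$. The paper argues that part differently. It uses the fully faithful embedding $\cE\hto\Ind_{\kappa}(\St(\cE^{\kappa}))$, which commutes with $\kappa$-filtered colimits and preserves fibers of projections, together with the fact that $\kappa$-compact objects of a stable category are closed under fibers.

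The gap is in the central step of the ``if'' direction, and it is more than a cofinality check.
\begin{enumerate}
\item A class $\beta\in\Ext^1_{\cE}(z_i,x_j)$ determines the extension $x_j\to w_{ij\beta}\to z_i$ only up to non-unique isomorphism.
\item A relation $(i,j,\beta)\le(i',j',\beta')$ among your triples does not single out a morphism $w_{ij\beta}\to w_{i'j'\beta'}$. Morphisms of extensions over given end maps are determined only up to adding composites $w_{ij\beta}\to z_i\to x_{j'}\to w_{i'j'\beta'}$, and you would need a (homotopy) coherent system of such choices. So no diagram has actually been constructed.
\item Even granting a diagram, ``its colimit has class $\alpha$'' does not follow. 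You only know that the class of the colimit and $\alpha$ have the same image in $\lim_i\Ext^1_{\cE}(z_i,x)$. The map $\Ext^1_{\cE}(z,x)\to\lim_i\Ext^1_{\cE}(z_i,x)$ need not be injective: take $\cE=\Ab$, $\kappa=\omega$, $x=\Z$, and $z=\Q$ written as a sequential colimit of copies of $\Z$.
\end{enumerate}
What is missing is that the indexing objects must come with maps of short exact sequences into $x\to y\to z$. Your triples forget exactly this data.

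The paper avoids the issue by separating the two variables, so that only a single choice is made.
\begin{enumerate}
\item The canonical pullbacks $y_i=y\times_z z_i$ are functorial in $i$. By (ii), $x\to y\to z$ is the $\kappa$-directed colimit of the sequences $x\to y_i\to z_i$. This reduces to the case $z\in\cE^{\kappa}$.
\item One lift $\beta\in\Ext^1_{\cE}(z,x_{j_0})$ of $\alpha$ gives $x_{j_0}\to w\to z$ with a map of extensions to $x\to y\to z$. The pushouts $w\sqcup_{x_{j_0}}x_j$ for $j\ge j_0$ are functorial in $j$. They lie in $\cE^{\kappa}$ by Proposition \ref{prop:kappa_compact_closed_under_extensions}, and their colimit is $x\to y\to z$ by (ii).
\end{enumerate}
The two steps compose because short exact sequences in $\Ind_{\kappa}(\cE^{\kappa})$ are closed under $\kappa$-filtered colimits (Proposition \ref{prop:locally_kappa_coherent_satisfies_exactness_of_kappa_filtered}). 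The second step is precisely the construction you already use in your proof of the final assertion.
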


\begin{proof}
	We first prove the ``only if'' direction. \ref{kappa_accessible} holds by definition, \ref{exactness_of_kappa_filtered} holds by Proposition \ref{prop:locally_kappa_coherent_satisfies_exactness_of_kappa_filtered} and \ref{Ext^1_kappa_continuous_again} holds by Proposition \ref{prop:various_E^lambda}. We prove the ``moreover'' assertion. Let $x\to y\to z$ be a short exact sequence in $\cE$ with $y,z\in\cE^{\kappa}.$ Consider the functor $i:\cE\to\Ind_{\kappa}(\St(\cE^{\kappa}))$ from Proposition \ref{prop:exact_structure_on_Ind_kappa}. Then the objects $i(y),i(z)$ are $\kappa$-compact, hence so is $i(x).$ Since $i$ is fully faithful and commutes with $\kappa$-filtered colimits, it follows that $x$ is $\kappa$-compact in $\cE.$
	
	Now we prove the ``if'' direction. By \ref{exactness_of_kappa_filtered} and by Proposition \ref{prop:kappa_compact_closed_under_extensions} the full subcategory $\cE^{\kappa}\subset\cE$ is closed under extensions. We consider $\cE^{\kappa}$ as a small exact category with the induced exact structure from $\cE.$ We equip $\Ind_{\kappa}(\cE^{\kappa})$ with the exact structure from Proposition \ref{prop:exact_structure_on_Ind_kappa}. By \ref{exactness_of_kappa_filtered} the natural functor $\Phi:\Ind_{\kappa}(\cE^{\kappa})\to\cE$ is exact. By \ref{kappa_accessible} $\Phi$ is an equivalence (of abstract categories). It remains to prove that $\Phi$ reflects exactness. In other words, we need to prove that the class of short exact sequences in $\cE$ is the smallest class which contains short exact sequences in $\cE^{\kappa}$ and is closed under $\kappa$-filtered colimits.
	
	Let $x\to y\to z$ be a short exact sequence in $\cE,$ and denote by $\alpha\in\Ext^1_{\cE}(z,x)$ the corresponding element. Let $z\cong\indlim[i\in I]z_i,$ where $I$ is $\kappa$-directed and $z_i\in\cE^{\kappa}.$ Putting $y_i = y\times_z z_i,$ we see that $x\to y\to z$ is a $\kappa$-directed colimit of short exact sequences $x\to y_i\to z_i.$ Hence, we may and will assume that $z\in \cE^{\kappa}.$ Let $x\cong \indlim[j\in J]x_j,$ where $J$ is $\kappa$-directed and $x_j\in\cE^{\kappa}.$ By \ref{Ext^1_kappa_continuous_again} there exists $j_0\in J$ and an element $\beta\in\Ext^1_{\cE}(z,x_{j_0})$ which maps to $\alpha.$ Consider the corresponding short exact sequence $x_{j_0}\to y_{j_0}\to z.$ For $j\geq j_0$ put $y_j = y_{j_0}\sqcup_{x_{j_0}} x.$ By Proposition \ref{prop:kappa_compact_closed_under_extensions} we have $y_j\in\cE^{\kappa}.$ We conclude that $x\to y\to z$ is the $\kappa$-directed colimit of $x_j\to y_j\to z$ over $j\geq j_0,$ which proves that $\Phi$ reflects exactness.   
\end{proof}

\subsection{Presentable stable envelopes}

Let $\cE$ be a Grothendieck exact category, and let $\cC$ is presentable stable category. Denote by $\Fun^{\cont,\ex}(\cE,\cC)$ the category of continuous (i.e. filtered colimit-preserving) exact functors $\cE\to\cC.$  

\begin{defi}
For a Grothendieck exact category $\cE$ we denote by $\check{\St}(\cE)$ a presentable stable category with a continuous exact functor $j:\cE\to\check{\St}(\cE),$ such that for any $\cC\in\Pr^L_{\st}$ the precomposition with $j$ induces an equivalence
\begin{equation*}
\Fun^L(\check{\St}(\cE),\cC)\xto{\sim}\Fun^{\cont,\ex}(\cE,\cC).
\end{equation*}
We call $\check{\St}(\cE)$ the presentable stable envelope of $\cE.$
\end{defi}

We use the notation $\check{\St}(\cE)$ because this is a generalization of the unseparated derived category $\check{D}(\cA)$ of a Grothendieck abelian category $\cA,$ see Proposition \ref{prop:St_of_Grothendieck_abelian} below.

\begin{prop}\label{prop:check_St_of_Grothendieck_exact_category}
For a Grothendieck exact category $\cE,$ its presentable stable envelope exists and it is given by 
\begin{equation}\label{eq:presentable_stable_envelope}
\check{\St}(\cE)=\{F:\cE^{op}\to \Sp\mid F\text{ is exact and commutes with cofiltered limits}\}.
\end{equation}
\end{prop}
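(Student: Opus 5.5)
We reduce to the small case by a choice of cardinal. Choose $\kappa$ with $\cE$ locally $\kappa$-coherent, so $\cE\simeq\Ind_{\kappa}(\cE^{\kappa})$. By Proposition \ref{prop:kappa_compact_closed_under_extensions}, $\cE^{\kappa}$ is a small exact category with the induced structure. Since $\cE$ has filtered colimits and is $\kappa$-accessible, it is presentable. A continuous exact functor $\cE\to\cC$ preserves all filtered colimits, hence all $\kappa$-filtered ones. It is therefore determined by its restriction to $\cE^{\kappa}$, which is exact; the converse extensions will be checked below.

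The candidate category is $\cD$, the right-hand side of \eqref{eq:presentable_stable_envelope}. We first compare it with the category $\cD_{\kappa}:=\Fun^{\ex}((\cE^{\kappa})^{op},\Sp)$. The latter is $\Ind(\St(\cE^{\kappa}))$: exact functors $\cE^{\kappa,op}\to\Sp$ are the same as exact functors $\St(\cE^{\kappa})^{op}\to\Sp$ by the universal property of $\St$ and Proposition \ref{prop:fully_faithful_E_to_St_of_E}. By Proposition \ref{prop:exact_structure_on_Ind_kappa}, $\cD_{\kappa}$ equals $\Fun^{\ex,\kappa\text{-cofilt}}(\cE^{op},\Sp)$, so restriction gives an embedding $\cD\hookrightarrow\cD_{\kappa}$. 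Its image consists of those $G$ whose right Kan extension to $\cE^{op}$ turns arbitrary cofiltered limits in $\cE^{op}$ (that is, filtered colimits in $\cE$) into limits.

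The key step is to realize $\cD$ as a left Bousfield localization of $\cD_{\kappa}=\Ind(\St(\cE^{\kappa}))$ at a small set of maps. For each small filtered diagram $(x_i)$ in $\cE^{\kappa}$, say of size below some $\lambda$, with colimit $x\in\cE^{\lambda}$, we take the map
\[
\colim_i \cY(x_i)\to \cY(x)
\]
in $\Ind(\St(\cE^{\lambda}))$. Here $\cY(x)$ means the formal Ind-object $\inddlim x_j$ written $\kappa$-filtered. Up to a set this is enough, since $\cE$ is $\lambda$-accessible for large $\lambda$. One then also has to fix the choice of $\kappa$ versus $\lambda$ using Proposition \ref{prop:various_E^lambda}\ref{left_speciaal_E_kappa_in_E_lambda}: the functors $\St(\cE^{\kappa})\to\St(\cE^{\lambda})$ are fully faithful, so nothing depends on $\lambda$. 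The local objects are exactly $\cD$, so $\cD$ is presentable stable, and the localization $L$ exists.

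Finally, we prove the universal property. Define $j:\cE\to\cD$ by $j=L\circ(\text{Yoneda into }\cD_{\kappa})$. By construction $j$ preserves filtered colimits. It is exact because short exact sequences in $\cE^{\kappa}$ go to cofiber sequences in $\St(\cE^{\kappa})$, general ones are $\kappa$-filtered colimits of these (Proposition \ref{prop:exact_structure_on_Ind_kappa}), and $L$ is exact. For $\cC$ presentable stable, $\Fun^L(\cD,\cC)$ is the full subcategory of $\Fun^L(\Ind\St(\cE^{\kappa}),\cC)\simeq\Fun^{\ex}(\cE^{\kappa},\cC)$ of functors inverting the localizing maps. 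These are exactly the exact functors $\cE^{\kappa}\to\cC$ whose $\kappa$-continuous extension to $\cE$ preserves all filtered colimits, i.e.\ $\Fun^{\cont,\ex}(\cE,\cC)$. That this extension is exact uses the Grothendieck hypothesis together with Proposition \ref{prop:locally_kappa_coherent_satisfies_exactness_of_kappa_filtered}.

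The main obstacle is the set-theoretic bookkeeping. One must show that the localizing class is generated by a set, and that the local objects of the Ind-category built from $\cE^{\kappa}$ really are the exact functors on $\cE$ commuting with all cofiltered limits. The latter identification should be handled via the equivalence $\colim:\Ind(\cE)\to\cE$ being exact.
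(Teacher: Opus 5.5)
Your outline is the paper's proof. You restrict along $\cE^{\kappa}\subset\cE$ to embed the right-hand side into $\Fun^{\ex}((\cE^{\kappa})^{op},\Sp)\simeq\Ind(\St(\cE^{\kappa}))$, and realize it as the local objects for maps $\colim_i\cY(x_i)\to\cY(\colim_i x_i)$. You then set $j=L\circ j''$, with $j''$ the left Kan extension of $\cE^{\kappa}\to\Ind(\St(\cE^{\kappa}))$, and get the universal property formally. However, the one non-formal input is left unproved, and the handle you offer does not close it. This is the step you yourself call the main obstacle: that the local objects for a \emph{set} of maps are exactly the functors commuting with all cofiltered limits.

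\begin{itemize}
\item ``Up to a set this is enough, since $\cE$ is $\lambda$-accessible'' gives no reason why diagrams of bounded size detect commutation with all filtered colimits.
\item Proposition~\ref{prop:various_E^lambda}\ref{left_speciaal_E_kappa_in_E_lambda} concerns left special subcategories and full faithfulness of small stable envelopes. It says nothing about whether a localization of $\Ind(\St(\cE^{\kappa}))$ depends on a size bound.
\item You switch between forming the maps in $\Ind(\St(\cE^{\kappa}))$ and in $\Ind(\St(\cE^{\lambda}))$. In the latter, the representable $\cY(x)$ and the formal $\kappa$-filtered presentation of $x$ are different objects.
\item Exactness of $\colim:\Ind(\cE)\to\cE$ is beside the point; the issue is purely one of sizes of filtered colimits.
\end{itemize}

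The missing idea is that no $\lambda$ is needed. Use only $\kappa$-small directed diagrams $I\to\cE^{\kappa}$. Their colimits stay in $\cE^{\kappa}$, so $\cY(\colim_i x_i)$ is representable and the maps form a set. Then show the following: if $G:\cE^{op}\to\Sp$ already turns $\kappa$-filtered colimits into limits and does so for these diagrams, it turns all filtered colimits into limits.
\begin{itemize}
\item Replace a filtered diagram by a cofinal directed poset, and write that poset as the $\kappa$-directed union of its $\kappa$-small directed subposets.
\item For $\kappa$-small $I$, use $\Fun(I,\cE)\simeq\Ind_{\kappa}(\Fun(I,\cE^{\kappa}))$ to write the restricted diagram as a $\kappa$-filtered colimit of diagrams in $\cE^{\kappa}$.
\item Interchange the limits.
\end{itemize}

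This is the paper's one-line assertion that the essential image of the restriction consists of the functors commuting with $\kappa$-small cofiltered limits. It supplies everything you left open:
\begin{itemize}
\item presentability of the right-hand side and existence of $L$, since the restriction is accessible and preserves products;
\item the description of $\ker L$ as generated by the cones $\Cone(\colim_i\cY(x_i)\to\cY(\colim_i x_i))$;
\item continuity of $j$, which only needs checking on $\kappa$-small directed colimits in $\cE^{\kappa}$;
\item by the same argument for functors $\cE\to\cC$, the identification of the functors inverting these maps with $\Fun^{\cont,\ex}(\cE,\cC)$.
\end{itemize}

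Finally, drop the claim that $\cE$ is presentable. It is false in general: by Theorem~\ref{th:Chase}, flat right modules over a ring that is not left coherent do not have all infinite products. Fortunately you never use it.
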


\begin{proof}
Let us temporarily denote by $\cC$ the right hand side of \eqref{eq:presentable_stable_envelope}. Choose a regular cardinal $\kappa$ such that $\cE$ is locally $\kappa$-coherent. Then restriction to $\cE^{\kappa}$ defines a fully faithful functor
\begin{equation*}
F:\cC\hto \Fun^{\ex}((\cE^{\kappa})^{op},\Sp)\simeq \Ind(\St(\cE^{\kappa})).
\end{equation*}
Its essential image consists of functors which commute with $\kappa$-small cofiltered colimits. It follows that $\cC$ is presentable stable, and $F$ is accessible and commutes with infinite products. Therefore, $F$ has a left adjoint $F^L.$  Consider the composition
\begin{equation*}
j':\cE^{\kappa}\to\St(\cE^{\kappa})\to \Ind(\St(\cE^{\kappa})).
\end{equation*}
Then the kernal of $F^L$ is generated (as a localizing subcategory) by objects of the form $\Cone(\indlim[i\in I]j'(x_i)\to j'(\indlim[i] x_i)),$ where $I$ is a $\kappa$-small directed poset and $I\to\cE^{\kappa}$, $i\mapsto x_i,$ is a functor.

Now define $j'':\cE\to\Ind(\St(\cE^{\kappa}))$ to be the left Kan extension of $j'.$ Then $j''$ is exact. We further define
\begin{equation*}
j=F^L\circ j'':\cE\to \cC.
\end{equation*}
We claim that $j$ satisfies the required universal property. First, note that $j$ commutes with filtered colimits. To check this, we only need to see that $j_{\mid \cE^{\kappa}}$ commutes with $\kappa$-small directed colimits, and this follows from the above description of the kernel of $F^L.$

Now for any $\cD\in\Pr^L_{\st}$ the functor $j''$ induces an equivalence
\begin{equation*}
\Fun^L(\Ind(\St(\cE^{\kappa})),\cD)\cong \Fun^{\kappa\hy\cont,ex}(\cE,\cD),
\end{equation*}
where the superscript ``$\kappa\hy\cont$'' means the commutation with $\kappa$-filtered colimits. It follows formally that $j$ induces an equivalence
\begin{equation*}
\Fun^L(\cC,\cD)\xto{\sim}\Fun^{\cont,\ex}(\cE,\cD). \qedhere
\end{equation*}
\end{proof}

The case of locally coherent exact categories is studied in \cite{NW25}.

\begin{prop}\label{prop:St_of_locally_coherent}
	Let $\cE$ be a locally coherent exact category. Then we have an equivalence $\check{\St}(\cE)\simeq \Ind(\St(\cE^{\omega})).$ 
\end{prop}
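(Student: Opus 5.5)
We identify the category $\cC$ of Proposition \ref{prop:check_St_of_Grothendieck_exact_category} with $\Ind(\St(\cE^{\omega}))$. We may apply Proposition \ref{prop:check_St_of_Grothendieck_exact_category} with $\kappa=\omega$: $\cE$ is locally coherent, hence Grothendieck exact by Example \ref{ex:locally_coherent_exact}. So $\check{\St}(\cE)$ is the full subcategory $\cC\subset\Fun^{\ex}(\cE^{op},\Sp)$ of exact functors commuting with cofiltered limits. Restriction to $\cE^{\omega}$ gives the fully faithful functor
\begin{equation*}
F:\cC\hto\Fun^{\ex}((\cE^{\omega})^{op},\Sp)\simeq\Ind(\St(\cE^{\omega})),
\end{equation*}
and the proof there shows that its essential image consists of the functors commuting with $\omega$-small cofiltered limits. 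It therefore suffices to show that $F$ is essentially surjective, i.e. that every object of $\Ind(\St(\cE^{\omega}))$ automatically has this property. Since $\omega$-small cofiltered limits may be taken along finite cofiltered posets, which have an initial object, this condition is vacuous.

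Equivalently, one can argue through the kernel of $F^L$. As in the proof of Proposition \ref{prop:check_St_of_Grothendieck_exact_category}, this kernel is generated by the cones $\Cone(\indlim[i\in I]j'(x_i)\to j'(\indlim[i]x_i))$ with $I$ an $\omega$-small directed poset. Such an $I$ is finite, hence has a maximal element, so each of these maps is an equivalence. Thus $\ker F^L=0$, and $F^L$ is an equivalence with inverse $F$.

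To make the universal property explicit, take $j=F^L\circ j''$, where $j''$ is the left Kan extension of $\cE^{\omega}\to\St(\cE^{\omega})\to\Ind(\St(\cE^{\omega}))$. Under the identification $\cE\simeq\Ind(\cE^{\omega})$ this $j$ is just $\Ind$ of the universal exact functor. Its exactness follows from Proposition \ref{prop:exact_structure_on_Ind_kappa} and Proposition \ref{prop:locally_kappa_coherent_satisfies_exactness_of_kappa_filtered}. Its universal property is the chain of equivalences
\begin{equation*}
\Fun^L(\Ind(\St(\cE^{\omega})),\cD)\simeq\Fun^{\ex}(\St(\cE^{\omega}),\cD)\simeq\Fun^{\ex}(\cE^{\omega},\cD)\simeq\Fun^{\cont,\ex}(\cE,\cD).
\end{equation*}
Here the last step uses that an exact functor on $\cE^{\omega}$ has a continuous extension to $\cE$ which is exact, because short exact sequences in $\cE$ are filtered colimits of those in $\cE^{\omega}$.

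The only point needing care is that last step, namely that exactness is detected on $\cE^{\omega}$. It is immediate from the explicit description of the exact structure in Proposition \ref{prop:exact_structure_on_Ind_kappa}. With it in hand, the rest is the formal observation above.
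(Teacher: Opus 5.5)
Your proposal is correct. The chain of equivalences in your third paragraph, $\Fun^L(\Ind(\St(\cE^{\omega})),\cD)\simeq\Fun^{\ex}(\cE^{\omega},\cD)\simeq\Fun^{\cont,\ex}(\cE,\cD)$, with exactness detected on $\cE^{\omega}$ via Proposition \ref{prop:exact_structure_on_Ind_kappa}, is exactly the universal-property argument the paper cites in its one-line proof. Your first two paragraphs give an equally valid direct check: you specialize the explicit model of Proposition \ref{prop:check_St_of_Grothendieck_exact_category} to $\kappa=\omega$ and observe that the cofiltered-limit condition is vacuous, because a finite directed poset has a greatest element.
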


\begin{proof}
	The description of $\check{\St}(\cE)$ follows from the universal property, see also \cite[Corollary 2.24]{NW25}.
\end{proof}

The following basic example is familiar from \cite{Lur18}.

\begin{prop}\label{prop:St_of_Grothendieck_prestable}
Let $\cC$ be a Grothendieck prestable category, which we consider as a Grothendieck exact category. Then we have an equivalence $\check{\St}(\cC)\simeq\Sp(\cC),$ compatible with the universal functors from $\cC.$
\end{prop}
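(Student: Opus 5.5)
We would compare the universal properties directly. Write $\Omega^\infty:\Sp(\cC)\to\cC$ for the right adjoint of $\Sigma^\infty_+$; recall that $\cC$ sits inside $\Sp(\cC)$ as the connective part $\Sp(\cC)_{\geq 0}$ via a fully faithful functor $i:\cC\to\Sp(\cC)$. By \cite[Proposition C.1.4.1 and Remark C.1.2.10]{Lur18}, $\Sp(\cC)$ is presentable stable, the $t$-structure is compatible with filtered colimits, and $i$ preserves all colimits. The exact structure on $\cC$ is the one for which short exact sequences are the cofiber sequences $x\to y\to z$ in $\cC$ whose fiber of $y\to z$ computed in $\Sp(\cC)$ lies in $\cC$. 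With this description $i$ is continuous and exact, so by the universal property of $\check{\St}(\cC)$ (Proposition \ref{prop:check_St_of_Grothendieck_exact_category}) it induces a colimit-preserving functor $\Phi:\check{\St}(\cC)\to\Sp(\cC)$ compatible with the universal functors. The claim is that $\Phi$ is an equivalence.

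It suffices to show that for every $\cD\in\Pr^L_{\st}$, restriction along $i$ gives an equivalence
\begin{equation*}
\Fun^L(\Sp(\cC),\cD)\xto{\sim}\Fun^{\cont,\ex}(\cC,\cD).
\end{equation*}
By Lurie's universal property of the stabilization of a prestable category (\cite[Proposition C.1.1.7]{Lur18} together with the fact that $\Sp(\cC)\simeq\Sp\otimes\cC$ for presentable prestable $\cC$), $\Fun^L(\Sp(\cC),\cD)\simeq\Fun^L(\cC,\cD)$ for stable $\cD$. So the task reduces to showing that a functor $G:\cC\to\cD$ into a stable presentable category is colimit-preserving if and only if it is continuous and exact in our sense.

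One direction is clear: a colimit-preserving functor preserves finite colimits, hence cofiber sequences, hence short exact sequences, and it preserves filtered colimits. Conversely, suppose $G$ is continuous and exact. It preserves finite coproducts since it is additive, as split sequences are exact. It preserves filtered colimits by assumption. It remains to show that $G$ preserves pushouts, or equivalently, being additive, cofibers of arbitrary maps $f:x\to y$ in $\cC$. This is the main obstacle, since not every cofiber sequence in $\cC$ is exact. We would handle it with the exact sequence $x\to y\oplus\operatorname{cone}(x\to 0)$ where cone means a cone in $\cC$: use $\Sigma x=\operatorname{cofib}(x\to 0)\in\cC$ and the standard trick that the cofiber of $f$ fits into the exact sequence
\begin{equation*}
y\to \operatorname{cofib}(f)\to \Sigma x,
\end{equation*}
whose fiber in $\Sp(\cC)$ is $y\in\cC$, so it is exact. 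Also $x\to 0\to\Sigma x$ is exact in $\cC$, since its fiber is $x$. Then $G(\Sigma x)\simeq\Sigma G(x)$, and applying $G$ to the first sequence exhibits $G(\operatorname{cofib} f)$ as an extension of $\Sigma G(x)$ by $G(y)$. Checking that the classifying map is $\Sigma G(f)$ requires comparing the two exact sequences via the map of sequences from $x\to 0\to \Sigma x$. That gives $G(\operatorname{cofib}f)\simeq\operatorname{cofib}(G(f))$, so $G$ is right exact, hence colimit-preserving.

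Finally, the equivalence of universal properties identifies $\Phi$ as an equivalence compatible with the functors from $\cC$, which concludes the proof.
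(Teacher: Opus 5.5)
Your proposal is correct and follows the paper's proof. The paper likewise combines the universal property $\Fun^L(\Sp(\cC),\cD)\simeq\Fun^L(\cC,\cD)$ for presentable stable $\cD$ with the observation that a functor $\cC\to\cD$ is continuous and exact if and only if it preserves colimits.

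Your ``main obstacle'' is not real, though. By your own criterion every cofiber sequence $x\xto{f}y\to\operatorname{cofib}(f)$ in $\cC$ is already exact: $\cC\hto\Sp(\cC)$ preserves colimits, so the fiber of $y\to\operatorname{cofib}(f)$ in $\Sp(\cC)$ is $x$ itself. Hence $G(\operatorname{cofib}(f))\simeq\operatorname{cofib}(G(f))$ immediately, and the detour through $y\to\operatorname{cofib}(f)\to\Sigma x$, whose classifying map you left unchecked, is unnecessary. What fails in general is only that not every map of $\cC$ is a projection, and that is irrelevant here.
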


\begin{proof}
This follows from the universal properties. Namely, for a presentable stable category $\cD$ a functor $F:\cC\to\cD$ is continuous and exact if and only if $F$ commutes with colimits.
\end{proof}

Recall that for a Grothendeick abelian category $\cA$ the so-called unseparated derived category $\check{D}(\cA)$ can be described as (the dg nerve of) the dg category $\Ch(\Inj_{\cA})$ of complexes of injective objects of $\cA.$ It has a natural right complete $t$-structure, compatible with filtered colimits, and we have $\check{D}(\cA)^+\simeq D^+(\cA).$ Moreover, $\check{D}(\cA)$ is generated by the heart as a localizing subcategory. We refer to \cite{Kr05, Kr15} and \cite[Section C.5.8]{Lur18} for a detailed account. 

\begin{prop}\label{prop:St_of_Grothendieck_abelian}
For a Grothendieck abelian category $\cA$ we have
\begin{equation*}
\check{\St}(\cA)\simeq\check{D}(\cA).
\end{equation*}
\end{prop}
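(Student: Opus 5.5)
The plan is to show that both $\check{\St}(\cA)$ and $\check{D}(\cA)$ corepresent the same functor on $\Pr^L_{\st}$, namely $\cC\mapsto\Fun^{\cont,\ex}(\cA,\cC)$, and to conclude by uniqueness of corepresenting objects. By Proposition \ref{prop:check_St_of_Grothendieck_exact_category}, $\check{\St}(\cA)$ exists and has this universal property. What remains is to identify the universal property of $\check{D}(\cA)$.

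For this I would use the description $\check{D}(\cA)_{\geq 0}$ together with Lurie's universal property \cite[Proposition C.5.5.20, Theorem C.5.8.8]{Lur18}. The category $\check{D}(\cA)_{\geq 0}$ is the anticomplete Grothendieck prestable category with heart $\cA$, and it is the initial such category: for any Grothendieck prestable $\cD$, restriction to $\cA=\check{D}(\cA)^{\heartsuit}$ identifies colimit-preserving functors $\check{D}(\cA)_{\geq 0}\to\cD$ with functors $\cA\to\cD$ that are exact and preserve filtered colimits. Here ``exact'' means carrying short exact sequences to cofiber sequences.

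The key steps are as follows.

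(1) Since $\check{D}(\cA)_{\geq 0}$ is Grothendieck prestable and $\check{D}(\cA)=\Sp(\check{D}(\cA)_{\geq 0})$ (the $t$-structure is right complete), Proposition \ref{prop:St_of_Grothendieck_prestable} together with the stabilization universal property gives
\[
\Fun^L(\check{D}(\cA),\cC)\simeq\Fun^L(\check{D}(\cA)_{\geq 0},\cC)\quad\text{for stable presentable }\cC.
\]

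(2) Taking $\cD=\cC_{\geq 0}$ for an arbitrary accessible $t$-structure does not quite fit, so instead I would apply Lurie's universal property with $\cD=\cC$ itself. A stable presentable category is in particular Grothendieck prestable: it has filtered colimits, they are exact, and it is a presentable additive $\infty$-category with the required exactness of filtered colimits. This gives
\[
\Fun^L(\check{D}(\cA)_{\geq 0},\cC)\simeq\Fun^{\cont,\ex}(\cA,\cC),
\]
where exactness is with respect to the abelian exact structure on $\cA$.

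(3) Compose the equivalences from steps (1) and (2) and check that the resulting equivalence is induced by restriction along the inclusion $\cA\hto\check{D}(\cA)$. This inclusion is continuous and exact: the $t$-structure is compatible with filtered colimits, and short exact sequences in the heart are cofiber sequences. Hence it is the universal functor, and $\check{\St}(\cA)\simeq\check{D}(\cA)$.

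The main obstacle is step (2): making sure Lurie's universal property applies with an arbitrary stable presentable target, and that his notion of ``exact'' (preserving finite coproducts and carrying short exact sequences in $\cA$ to pushout squares in $\cD$) matches the exactness used in the definition of $\Fun^{\cont,\ex}$.

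If that theorem is only available for separated or prestable targets in a restricted form, I would instead argue directly via Proposition \ref{prop:check_St_of_Grothendieck_exact_category}. The model there is
\[
\{F:\cA^{op}\to\Sp\ \text{exact, sending cofiltered limits in } \cA^{op} \text{ to limits}\}.
\]
I would compare it with $\Ch(\Inj_{\cA})$ through the functor $I\mapsto\Hom(-,I)$ and check that the restricted Yoneda embedding is an equivalence, using that injectives detect the relevant limits.
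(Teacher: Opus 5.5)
\textbf{Gap in step (2).} This is the obstacle you flagged, and it is fatal. The universal property in \cite[Proposition C.5.5.20, Theorem C.5.8.8]{Lur18} is an adjunction between $\Groth^{\lex}_{\ab}$ and $\Groth^{\lex}_{\infty}$. Its morphisms are colimit-preserving \emph{left exact} functors, and its right adjoint is $\cD\mapsto\tau_{\leq 0}\cD$. Concretely, it says $\Fun^{L,\lex}(\check{D}(\cA)_{\geq 0},\cD)\simeq\Fun^{L,\lex}(\cA,\tau_{\leq 0}\cD)$. So it classifies exact functors from $\cA$ into the \emph{discrete} part of $\cD$. It does not classify arbitrary continuous functors $\cA\to\cD$ that send short exact sequences to cofiber sequences.

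For a stable target $\cC$ this statement is vacuous. We have $\tau_{\leq 0}\cC=0$. Directly: any left exact $F:\check{D}(\cA)_{\geq 0}\to\cC$ kills $\cA$, because $\Omega a\simeq 0$ in $\check{D}(\cA)_{\geq 0}$ for $a\in\cA$, while $\Omega$ is an equivalence on $\cC$.

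The version you attribute to Lurie says that all colimit-preserving functors $\check{D}(\cA)_{\geq 0}\to\cC$ correspond to $\Fun^{\cont,\ex}(\cA,\cC)$. Combined with your step (1), that is literally the universal property defining $\check{\St}(\cA)$, so the argument is circular. What the SAG universal property actually gives is realization functors into stable categories with a $t$-structure for which the given functor is an exact colimit-preserving functor into the heart: take $\cD=\cC_{\geq 0}$ and stabilize. This is how the paper uses it. As you noticed, a general continuous exact functor $\cA\to\cC$ is not of this form.

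\textbf{The fallback does not close the gap.} The universal property of $\check{\St}(\cA)$ gives a colimit-preserving functor $L:\check{\St}(\cA)\to\check{D}(\cA)$ extending $\cA\hto\check{D}(\cA)$. Its right adjoint $R$ is the restricted Yoneda functor you describe. Since the heart generates $\check{D}(\cA)$ as a localizing subcategory, $R$ is conservative. But the whole content of the proposition is that the unit $\Id\to RL$ is an equivalence, i.e.\ that $L$ is fully faithful. This requires comparing $\Ext$ groups between objects of $\cA$ on both sides and controlling filtered colimits, since the generators $j(a)$ are not compact. The phrase ``injectives detect the relevant limits'' does not establish this.

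The paper does not reprove this point. It takes the universal property of $\check{D}(\cA)$ with respect to arbitrary presentable stable targets directly from \cite{Lur17b}.
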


\begin{proof}
This is explained in \cite{Lur17b}.
%We first consider the case $\cA=\Ind(\cA_0),$ where $\cA_0$ is a small has enough projective objects
\end{proof}

\begin{remark}
Essentially the same argument as in loc. cit. proves a similar result for the so-called Grothendieck abelian $(n+1)$-categories, i.e. categories of the form $\cA=\tau_{\leq n}\cC,$ where $\cC$ is a Grothendieck prestable category and $n\geq 0$ \cite[Definition C.5.4.1]{Lur18}. Namely, if we equip $\cA$ with the induced exact structure from $\cC,$ then $\check{\St}(\cA)$ is equivalent to $\Sp(\Phi_n(\cA)),$ where $\Phi_n:\Groth_n^{\lex}\to\Groth_{\infty}^{\lex}$ is left adjoint to $\tau_{\leq n}(-).$ Here $\Groth_{\infty}^{\lex}$ resp. $\Groth_n^{\lex}$ is the category of Grothendieck prestable categories resp. Grothendieck abelian $(n+1)$-categories, where the $1$-morphisms are colimit-preserving left exact functors.

We will crucially use such categories $\check{\St}(\tau_{\leq n}\cC)$ below, but only in the relatively elementary case when $\cC$ is the connective part of a compactly assembled $t$-structure on a dualizable category (Definition \ref{def:compactly_assembled_t_structure}). We will not need the above description, instead we will apply the general machinery of coherently assembled exact categories from Subsection \ref{ssec:coh_ass_exact_cats}.
%We will crucially use the following class of Grothendieck exact categories, the so-called Grothendieck abelian $n$-categories, where $n\geq 0.$ These are defined in [...] as categories of the form $\tau_{\leq n}\cC,$ where $\cC$ is a Grothendieck prestable category. We consider $\tau_{\leq n}\cC$ as an exact category with the induced exact structure from $\cC.$ More precisely, the short exact sequences in $\tau_{\leq n}\cC$ are {\it all} fiber-cofiber sequences. It is clear that $\tau_{\leq n}\cC$ is a Grothendieck exact category. Denote by $\Groth_{\infty}^{\lex}$ the category of prestable Grothendieck categories, and for $n\geq 0$ denote by $\Groth_n^{\lex}$ the category of abelian $n$-categories. In both cases the $1$-morphisms are functors commuting with all colimits and with finite limits. Denote by $\Phi_n:\Groth_n^{\lex}\to\Groth_{\infty}^{\lex}$ the left adjoint to the functor $\tau_{\leq n}(-).$ Then for $\cA\in\Groth_n^{\lex}$ we have a natural equivalence
%\begin{equation*}
%	\check{\St}(\cA)\simeq \Sp(\Phi_n(\cA)).
%\end{equation*}
\end{remark}

\subsection{Reminder on compactly assembled and dualizable categories}
\label{ssec:reminder_on_compass}

We recall some basic notions from \cite{Lur18} and \cite{E24}, and also fix the notation and terminology. Recall that we simply say ``category'' instead of ``$\infty$-category'', and say ``ordinary category'' when the spaces of morphisms are discrete.

If $\cC$ is an accessible category with filtered colimits, then $\cC$ is called {\it compactly assembled} if the colimit functor $\colim:\Ind(\cC)\to\cC$ has a left adjoint $\hat{\cY}=\hat{\cY}_{\cC}:\cC\to\Ind(\cC).$ This is \cite[Definition 21.1.2.1]{Lur18}. By \cite[Proposition 1.24]{E24} any compactly assembled category $\cC$ is $\omega_1$-accessible, hence the functor $\hat{\cY}_{\cC}$ takes values in $\Ind(\cC^{\omega_1})\subset\Ind(\cC)$ (the latter follows for example by the same argument as in \cite[Corollary 1.22]{E24}).

If $\cC$ is a compactly assembled category, then a morphism $f:x\to y$ in $\cC$ is called {\it compact} if the map $\cY(x)\to \cY(y)$ in $\Ind(\cC)$ factors through $\hat{\cY}(y),$ where $\cY:\cC\to\Ind(\cC)$ is the Yoneda embedding. Recall that any compact morphism in $\cC$ is (homotopic to) a composition of two compact morphisms, by the same argument as in \cite[Corollary 1.41]{E24}.

A functor $F:\cC\to\cD$ between compactly assembled categories is called {\it continuous} if it commutes with filtered colimits. Further, $F$ is called {\it strongly continuous} if it is continuous and the following (lax commutative) square commutes:
\begin{equation*}
\begin{tikzcd}
\cC \ar{r}{F}\ar{d}{\hat{\cY}_{\cC}} & \cD\ar{d}{\hat{\cY}_{\cD}}\\
\Ind(\cC)\ar{r}{\Ind(F)} &\Ind(\cD).
\end{tikzcd}
\end{equation*}
Note that if $F$ has a right adjoint $F^R,$ then the strong continuity of $F$ exactly means that $F^R$ is continuous (the same proof as in \cite[Proposition 1.14]{E24}).
In general, the strong continuity of a continuous functor $F:\cC\to\cD$ exactly means that $F$ preserves compact morphisms (it suffices to check this for morphisms between the objects of $\cC^{\omega_1}$).

We recall that $\Pr^L$ denotes the category of presentable categories and colimit-preserving functors. It is symmetric monoidal with the Lurie tensor product, and the unit object is given by the category of spaces $\cS.$ We denote by $\Pr^L_{\st}$ the full subcategory of presentable stable categories, which is also the category of modules over the idempotent $\bE_{\infty}$-algebra $\Sp.$ In particular, $\Pr^L_{\st}$ is closed under tensor products in $\Pr^L,$ and the unit object is given by $\Sp.$ A presentable stable category $\cC$ is called {\it dualizable} if it is a dualizable object in $\Pr^L_{\st}.$ We will simply say ``dualizable category'', assuming that it is presentable and stable. 

By \cite[Proposition D.7.3.1]{Lur18}, $\cC\in\Pr^L_{\st}$ is dualizable if and only if it is compactly assembled. In particular, the above notions apply. In this paper we will deal with some functors between dualizable categories, which are strongly continuous but not exact, such as a truncation endofunctor $\tau_{\geq 0}$ for a nice $t$-structure. For this reason we will specify if the functor is supposed to be exact. We denote by $\Cat_{\st}^{\dual}\subset\Pr^L_{\st}$ the non-full subcategory of dualizable categories and strongly continuous exact functors. For $\cC,\cD\in\Cat_{\st}^{\dual}$ we denote by $\Fun^{LL}(\cC,\cD)$ the (idempotent-complete small stable) category of strongly continuous exact functors.

We recall that again by \cite[Proposition D.7.3.1]{Lur18} a presentable stable category $\cC$ is dualizable if and only if it is a retract in $\Pr^L_{\st}$ of a compactly generated category. We have a fully faithful embedding $\Ind(-):\Cat^{\perf}\to\Cat_{\st}^{\dual}.$

We will use the convention from \cite{E25c} for Calkin categories. Namely, for $\cC\in\Cat_{\st}^{\dual}$ we put
\begin{equation*}
\Calk_{\omega_1}(\cC)=\Ind(\cC^{\omega_1})/\hat{\cY}(\cC)\simeq\ker(\colim:\Ind(\cC^{\omega_1})\to\cC).
\end{equation*}
This is a compactly generated (presentable stable) category. We have
\begin{equation*}
\Calk_{\omega_1}(\cC)\simeq\Ind(\Calk_{\omega_1}^{\cont}(\cC)),
\end{equation*}
where the category $\Calk_{\omega_1}^{\cont}(\cC)\in\Cat^{\perf}$ is introduced in \cite[Definition 1.60]{E24}. In this paper we will simply write $\Calk_{\omega_1}(\cC)^{\omega}$ for the latter category.

Finally, we recall the continuous $K$-theory and more general (accessible) localizing invariants for dualizable categories, as defined in \cite{E24}. In this paper we consider the $\Sp$-valued localizing invariants $F:\Cat^{\perf}\to\Sp$ \cite{BGT}. For such $F$ and for $\cC\in\Cat_{\st}^{\dual}$ we put
\begin{equation*}
F^{\cont}(\cC)=\Omega F(\Calk_{\omega_1}(\cC)^{\omega}).
\end{equation*} 
Then $F^{\cont}:\Cat_{\st}^{\dual}\to\Sp$ is a localizing invariant by \cite[Proposition 4.6]{E24} and we have a functorial isomorphism $F^{\cont}(\Ind(\cA))\simeq F(\cA)$ for $\cA\in\Cat^{\perf}$ by \cite[Proposition 4.7]{E24}. Moreover, $F^{\cont}$ is uniquely determined by these properties by \cite[Theorem 4.10]{E24}.

\subsection{Coherently assembled exact categories}
\label{ssec:coh_ass_exact_cats}

The following definition specifies the class of large exact categories for which one can define $K$-theory and other localizing invariants.

\begin{defi}\label{def:coherently_assembled_exact}
Let $\cE$ be a Grothendieck exact category. We say that $\cE$ is coherently assembled if $\cE$ is compactly assembled and the functor $\hat{\cY}:\cE\to\Ind(\cE)$ is exact.

We denote by $\Cohass_{\ex}$ the category of coherently assembled exact categories, where the $1$-morphisms are strongly continuous exact functors. 
\end{defi}

\begin{remark}\label{rem:hat_Y_for_locally_kappa_coherent}
In the situation of Definition \ref{def:coherently_assembled_exact} suppose that $\cE$ is locally $\kappa$-coherent for a regular cardinal $\kappa.$ Then the essential image of $\hat{\cY}$ is contained in $\Ind(\cE^{\kappa}).$ The inclusion $\Ind(\cE^{\kappa})\to\Ind(\cE)$ reflects exactness by Propositions \ref{prop:properties_of_Ind_kappa_of_small_exact} and \ref{prop:kappa_compact_closed_under_extensions}, hence the functor $\hat{\cY}:\cE\to\Ind(\cE^{\kappa})$ is exact.
\end{remark}

\begin{example}
Let $\cE$ be a dualizable (presentable stable) category with the standard exact structure. Then $\cE$ is coherently assembled. This gives a fully faithful functor $\Cat_{\st}^{\dual}\hto\Cohass_{\ex}.$
\end{example}

We will see below that a coherently assembled exact category is always locally $\omega_1$-coherent. First we observe an equivalent characterization.

\begin{prop}\label{prop:coherently_assembled_as_retract}
Let $\cE$ be a Grothendieck exact category. The following are equivalent.
\begin{enumerate}[label=(\roman*),ref=(\roman*)]
	\item $\cE$ is coherently assembled. \label{coherently_assembled}
	\item There exists a locally coherent exact category $\cE'$ and a retraction $\cE\xto{F}\cE'\xto{G}\cE,$ such that both $F$ and $G$ are exact and commute with filtered colimits. \label{retract_of_locally_coherent}
\end{enumerate}
In particular, if $\cE$ is locally coherent, then it is coherently assembled.
\end{prop}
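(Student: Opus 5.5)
The plan is to prove both implications directly, taking $\cE'=\Ind(\cE)$, or more precisely $\Ind(\cE^{\kappa})$ for a regular cardinal $\kappa$ with $\cE$ locally $\kappa$-coherent, and to finish with the ``in particular'' clause.

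\Implies{coherently_assembled}{retract_of_locally_coherent}. Choose $\kappa$ with $\cE$ locally $\kappa$-coherent. By Remark \ref{rem:hat_Y_for_locally_kappa_coherent}, $\hat{\cY}$ takes values in $\Ind(\cE^{\kappa})$ and is exact as a functor $\cE\to\Ind(\cE^{\kappa})$. Here $\cE^{\kappa}$ is a small exact category with the induced structure, by Proposition \ref{prop:kappa_compact_closed_under_extensions}, so $\cE'=\Ind(\cE^{\kappa})$ is locally coherent. Put $F=\hat{\cY}$ and $G=\colim:\Ind(\cE^{\kappa})\to\cE$.
\begin{itemize}
\item $F$ is exact by assumption. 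It commutes with filtered colimits because it is a left adjoint.
\item $G$ commutes with filtered colimits by construction. It is exact because $\cE$ has exact filtered colimits (Definition \ref{def:Grothendieck_exact}) and the exact structure on $\Ind(\cE^{\kappa})$ consists of filtered colimits of short exact sequences in $\cE^{\kappa}$ (Proposition \ref{prop:exact_structure_on_Ind_kappa}).
\item $G\circ F=\colim\circ\hat{\cY}\simeq\id$. This holds because $\colim\circ\cY\simeq\id$ and $\hat{\cY}$ is left adjoint to $\colim$ with a fully faithful right adjoint direction, which is standard for compactly assembled categories.
\end{itemize}

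\Implies{retract_of_locally_coherent}{coherently_assembled}. Write $\cE'=\Ind(\cE'_0)$ with $\cE'_0$ small exact. Then $\cE'$ is compactly assembled, since $\hat{\cY}_{\cE'}=\Ind(\cY_0)$ is the left adjoint of $\colim$, where $\cY_0:\cE'_0\to\Ind(\cE'_0)$. The functor $\hat{\cY}_{\cE'}$ is exact, because short exact sequences in $\cE'$ are filtered colimits of those in $\cE'_0$ and $\Ind$ of an exact functor preserves them (Proposition \ref{prop:exact_structure_on_Ind_kappa}).

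Next I would invoke the standard fact that a retract, via continuous functors, of a compactly assembled category is compactly assembled, with
\[
\hat{\cY}_{\cE}\simeq\Ind(G)\circ\hat{\cY}_{\cE'}\circ F.
\]
The verification is the usual one: for $x\in\cE$ and a filtered system $y$ in $\Ind(\cE)$,
\[
\Hom(\Ind(G)\hat{\cY}F(x),\,y)\to\Hom(\hat{\cY}F(x),\,\Ind(F)y)\simeq\Hom(F(x),\,F(\colim y))\to\Hom(x,\,\colim y),
\]
and one shows the composite is an equivalence using the retraction $GF\simeq\id$ together with the continuity of $F$ and $G$. This is the argument of \cite[Proposition 1.24]{E24} or \cite[Proposition 21.1.2.?]{Lur18} in spirit. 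With this formula, $\hat{\cY}_{\cE}$ is exact as a composition of exact functors.

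The main obstacle I expect is this retract step: proving that the displayed formula really is the left adjoint of $\colim$. Naively, retracts of compactly assembled categories are compactly assembled only when the idempotent $FG$ is handled properly. I would deduce it via the characterization that compactly assembled means $\colim$ has a left adjoint if and only if $\cE$ is a retract in the $(\infty,2)$-category of accessible categories with filtered colimits and continuous functors of a compactly generated one. In that formulation $\hat{\cY}$ is computed by the formula above, since $\Ind(G)\hat{\cY}_{\cE'}F$ is left adjoint to $\Ind(F)\circ{}$ composed appropriately.

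If this becomes messy, I would instead first show that $\cE$ is locally $\kappa$-coherent for suitable $\kappa$ using Proposition \ref{prop:criterion_locally_kappa_coherent}, which reduces the question to checking the universal property on $\kappa$-compact objects.

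Finally, the ``in particular'' clause follows by taking $\cE'=\cE$ and $F=G=\id$.
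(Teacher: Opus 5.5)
Your plan is the paper's. For (i)$\Rightarrow$(ii) you use the retraction $\cE\xto{\hat{\cY}}\Ind(\cE^{\kappa})\xto{\colim}\cE$, and that part is fine. For (ii)$\Rightarrow$(i) you control $\hat{\cY}_{\cE}$ through $L:=\Ind(G)\circ\hat{\cY}_{\cE'}\circ F$.

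\textbf{The gap.} It is exactly the step you flag. The isomorphism $\hat{\cY}_{\cE}\simeq L$ is false for a general retraction by continuous (even colimit-preserving exact) functors. So neither your chain of maps nor the retract characterization can prove it. The paper's proof asserts the same identification and needs the same correction. To see what actually holds, define two maps:
\begin{itemize}
\item $\beta:\Map(x,\colim y)\to\Map(Lx,y)$: apply $F$, use $F\circ\colim\simeq\colim\circ\Ind(F)$ and $\hat{\cY}_{\cE'}\dashv\colim$, then apply $\Ind(G)$ and $GF\simeq\id$.
\item $\alpha:\Map(Lx,y)\to\Map(x,\colim y)$, $g\mapsto\colim g$. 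This is what your displayed chain computes.
\end{itemize}
Then $\alpha\beta\simeq\id$, but $\beta\alpha$ is precomposition with an idempotent of $Lx$ that need not be the identity.

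\textbf{Counterexample.} Let $\cE=D(\mk)$ and $\cE'=\Fun(\N^{op},D(\mk))$. Let $h_n\in\cE'$ be the $\mk$-linear representable of $n$, so $P=\colim_n h_n$ is the constant presheaf $\mk$. Put $F=-\otimes_{\mk}P$. Let $G$ be the $\mk$-linear colimit-preserving functor with $G(h_n)=B_n:=\mk\oplus\bigoplus_{k>n}\mk e_k$, where $B_n\to B_{n+1}$ is the identity on $\mk$ and kills $e_{n+1}$. Then $G(P)=\colim_nB_n\cong\mk$, so $GF\simeq\id$. However,
\[ L(\mk)\cong\inddlim[n]B_n\cong\cY(\mk)\oplus\inddlim[n]\bigoplus_{k>n}\mk e_k\not\cong\cY(\mk)=\hat{\cY}_{\cE}(\mk). \]
The second summand has surjective nonzero transition maps, so it is a nonzero ind-object, even though its colimit is zero.

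\textbf{The fix.} It is short and suffices for the proposition.
\begin{itemize}
\item Since $\alpha\beta\simeq\id$ naturally in $x\in\cE$ and $y\in\Ind(\cE)$, the functor $\Map(x,\colim(-))$ is a retract of the corepresentable functor $\Map(Lx,-)$.
\item $\Ind(\cE)$ is idempotent complete, so $\Map(x,\colim(-))$ is corepresented by a retract of $Lx$.
\item Hence $\colim$ has a left adjoint $\hat{\cY}_{\cE}$, so $\cE$ is compactly assembled, and $\hat{\cY}_{\cE}$ is a retract of $L$ in $\Fun(\cE,\Ind(\cE))$.
\item $L$ is exact as a composite of exact functors.
\item Short exact sequences in $\Ind(\cE)$ are closed under retracts. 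Check this in $\Ind(\St(\cE^{\lambda}))$ using Proposition \ref{prop:exact_structure_on_Ind_kappa}: the image of $\Ind(\cE^{\lambda})$ is closed under retracts, exactness is reflected, and retracts of bicartesian squares are bicartesian.
\item Therefore $\hat{\cY}_{\cE}$ is exact.
\end{itemize}
The isomorphism $\hat{\cY}_{\cE}\simeq L$ is available only when $F$ is strongly continuous. In that case $\hat{\cY}_{\cE'}\circ F\simeq\Ind(F)\circ\hat{\cY}_{\cE}$, so $L\simeq\Ind(GF)\circ\hat{\cY}_{\cE}\simeq\hat{\cY}_{\cE}$. This covers $F=\hat{\cY}_{\cE}$ in the first implication, but it is not part of (ii).
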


\begin{proof}
Note that the final assertion is obvious: if $\cE$ is locally coherent, then $\hat{\cY}$ is obtained by applying $\Ind$ to the exact inclusion $\cE^{\omega}\to\cE.$

\Implies{coherently_assembled}{retract_of_locally_coherent}. Suppose that $\cE$ is locally $\kappa$-coherent. Consider the retraction $\cE\xto{\hat{\cY}}\Ind(\cE^{\kappa})\xto{\colim}\cE.$ Both functors commute with filtered colimits. By Remark \ref{rem:hat_Y_for_locally_kappa_coherent} the functor $\hat{\cY}$ is exact, and by definition of a Grothendieck exact category the functor $\colim$ is exact. This proves the implication.

\Implies{retract_of_locally_coherent}{coherently_assembled}. Since $F$ and $G$ are continuous, the functor $\hat{\cY}_{\cE}$ is isomorphic to the composition
\begin{equation*}
\cE\xto{F}\cE'\xto{\hat{\cY}_{\cE'}} \Ind(\cE')\xto{\Ind(G)}\Ind(\cE).
\end{equation*}
Here the first and the third functors are exact by assumption, and the second functor is exact by the above observation on the locally coherent case. This proves the proposition.
\end{proof}

We explain a natural analogue of split exact structures in the context of large exact categories, see also Remark \ref{rem:dualizable_additive_infty_categories} below.

\begin{prop}\label{prop:minimal_exact_structure}
Let $\cE$ be a compactly assembled additive category. Then $\cE$ has an exact structure such that a map $f:x\to y$ in $\cE$ is an exact projection if and only if any compact morphism $z\to y$ factors through $x.$ With this exact structure $\cE$ is a coherently assembled exact category. Moreover, short exact sequences in $\cE$ are exactly the filtered colimits of split short exact sequences. 
\end{prop}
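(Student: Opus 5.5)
The plan is to transport the structure from the locally coherent exact category $\Ind(\cE^{\omega_1})$ along the retraction $\cE\xto{\hat{\cY}}\Ind(\cE^{\omega_1})\xto{\colim}\cE$. By \cite[Proposition 1.24]{E24}, $\cE$ is $\omega_1$-accessible and $\hat{\cY}$ lands in $\Ind(\cE^{\omega_1})$.

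\textbf{Step 1: the model.} Equip the small additive category $\cE^{\omega_1}$ with the split exact structure $(\cE^{\omega_1})^{\add}$. By Proposition \ref{prop:exact_structure_on_Ind_kappa} (with $\kappa=\omega$), $\Ind(\cE^{\omega_1})$ acquires a locally coherent exact structure in which:
\begin{itemize}
\item short exact sequences are exactly the filtered colimits of split sequences;
\item projections are the ``pure epimorphisms'', i.e. maps $\inddlim a_i\to\inddlim b_j$ such that every map from an object of $\cE^{\omega_1}$ to the target factors through the source.
\end{itemize}
I will use this characterization of projections as the basic computational tool.

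\textbf{Step 2: the two descriptions of projections agree.} Call a map $f:x\to y$ in $\cE$ a projection if every compact morphism $z\to y$ factors through $x$. The key lemma is that $f$ is a projection if and only if $\hat{\cY}(f)$ is a projection in $\Ind(\cE^{\omega_1})$. The argument goes as follows.
\begin{itemize}
\item Write $\hat{\cY}(y)=\inddlim y_i$, where the transition maps $y_i\to y_j$ and the maps $y_i\to y$ are compact morphisms.
\item A map $c\to\hat{\cY}(y)$ with $c\in\cE^{\omega_1}$ factors through some $y_i$. Since compact morphisms factor as composites of two compact morphisms, lifting against $f$ is the same as lifting compact morphisms into $y$. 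Conversely, every compact morphism $z\to y$ factors through $\hat{\cY}(y)$ by definition.
\item A lift of a compact morphism into $y$ through $x$ can be upgraded to a lift through $\hat{\cY}(x)$. For this, factor the compact morphism as $z\to z'\to y$ with both maps compact, lift $z'\to x$, and use that $z\to z'$ is compact, so that $z\to z'\to x$ factors through $\hat{\cY}(x)$.
\end{itemize}

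\textbf{Step 3: define and verify the exact structure.} Define a square in $\cE$ to be exact if it is a filtered colimit of split exact squares. Equivalently, by Step 2, a square is exact if its image under $\hat{\cY}$ is exact in $\Ind(\cE^{\omega_1})$; any square of the latter kind is recovered by applying $\colim$ to an ind-split square. The axioms are checked as follows.
\begin{itemize}
\item Pushouts of inclusions: form the pushout in $\Ind(\cE^{\omega_1})$, where it exists and is again ind-split, and apply $\colim$. As a left adjoint, $\colim$ preserves pushouts, and $\colim\circ\hat{\cY}\simeq\id$, so this gives the pushout in $\cE$.
\item Pullbacks of projections, and axiom (iii): here I must show that a filtered colimit in $\cE$ of split (hence bicartesian) squares is again cartesian. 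Cocartesianness is automatic because filtered colimits commute with pushouts.
\end{itemize}
For cartesianness, given a test object $w$ mapping to the lower-left corner and the upper-right corner compatibly, I would test it through $\hat{\cY}(w)$. The needed input is that $\hat{\cY}$, being a left adjoint, preserves the colimit, together with the fact that in $\Ind(\cE^{\omega_1})$ ind-split squares are bicartesian. This yields a factorization $\hat{\cY}(w)\to\hat{\cY}(\text{corner})$, and applying $\colim$ produces the desired map. Uniqueness follows by the same argument applied to differences. This kernel/cartesian step is the main obstacle, because $\cE$ is not assumed to have finite limits. The whole argument must therefore go through the retraction and the compact-morphism calculus of Step 2, rather than through any limit computed in $\cE$.

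\textbf{Step 4: coherent assembly.} Once Step 3 is established, the following hold.
\begin{itemize}
\item $\hat{\cY}$ is exact by construction.
\item $\colim$ is exact, because it carries filtered colimits of split sequences to sequences of the same form.
\item Filtered colimits in $\cE$ are exact, since a filtered colimit of filtered colimits of split sequences is again of that form.
\end{itemize}
Hence $\cE$ is a retract of the locally coherent category $\Ind(\cE^{\omega_1})$ via exact continuous functors. Proposition \ref{prop:coherently_assembled_as_retract} then shows that $\cE$ is coherently assembled. The final assertion, that short exact sequences are exactly the filtered colimits of split ones, is the definition adopted in Step 3.
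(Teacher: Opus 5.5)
Your proposal is correct in substance, but it obtains the exact structure by a different route from the paper.

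\textbf{How the paper proceeds.} The paper never checks Barwick's axioms by hand. It forms $\check{\St}^{\add}(\cE)$, the category of cofiltered-limit-preserving functors $\cE^{op}\to\Sp$, with its universal continuous additive functor $j$. From the same retraction $\cE\xto{\hat{\cY}}\Ind(\cE^{\omega_1})\xto{\colim}\cE$ and the compactly generated case (flat functors, projections equal effective epimorphisms), it deduces that $j$ is fully faithful with extension-closed image. It then takes the induced exact structure, so the axioms are automatic. The retraction gives that exact sequences are filtered colimits of split ones, and coherent assembly follows from Proposition \ref{prop:coherently_assembled_as_retract}. Only at the end are projections identified with maps through which compact morphisms lift, by essentially your Step 2.

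\textbf{How your route compares.} You instead take ``filtered colimit of split squares'' as the definition and verify the axioms through the retraction. This is more elementary, and it makes the last assertion of the statement tautological. The paper's route gets the $\infty$-categorical (co)cartesianness conditions for free. It also produces the stable envelope $\check{\St}^{\add}(\cE)$, which is used in Remark \ref{rem:dualizable_additive_infty_categories}.

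\textbf{Points to tighten.}
\begin{enumerate}
\item The step you call the main obstacle is immediate. The functor $\colim\colon\Ind(\cE^{\omega_1})\to\cE$ is right adjoint to $\hat{\cY}$ and left adjoint to $\cE\simeq\Ind_{\omega_1}(\cE^{\omega_1})\hto\Ind(\cE^{\omega_1})$. So it preserves the pullbacks of projections and the pushouts of inclusions formed in $\Ind(\cE^{\omega_1})$. Together with $\colim\circ\hat{\cY}\simeq\id$, this gives existence of these (co)limits in $\cE$. It also shows that any exact square $Q\simeq\colim\hat{\cY}(Q)$ is bicartesian as an equivalence of mapping spaces. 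Your ``uniqueness by differences'' argument only controls $\pi_0$.
\item For (b)$\Rightarrow$(a) in axiom (iii) you cannot apply $\hat{\cY}$, since it does not preserve pullbacks. Instead, compare the given cartesian square with $\colim$ of the pullback in $\Ind(\cE^{\omega_1})$, using uniqueness of pullbacks.
\item In Step 2 the lift of $g\colon c\to\hat{\cY}(y)$ must be compatible in $\Ind(\cE^{\omega_1})$, not merely after applying $\colim$, because $\Map(c,\hat{\cY}(y))\to\Map(c,y)$ is not injective. To fix this, use $\hat{\cY}(y)\simeq\colim_j\hat{\cY}(y_j)$ to factor $g$ as $c\to\hat{\cY}(y_j)\xto{\hat{\cY}(u_j)}\hat{\cY}(y)$ with $u_j$ compact. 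Then lift $u_j$ through $x$ and apply $\hat{\cY}$ to the lift.
\item The pure-epimorphism description of projections in $\Ind(\cE^{\omega_1})$ is not contained in Proposition \ref{prop:exact_structure_on_Ind_kappa}. It needs the Lazard-type fact that flat objects are closed under fibers of effective epimorphisms, which the paper also uses.
\end{enumerate}
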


\begin{proof}
We first consider the case when $\cE$ is compactly generated, and put $\cA=\cE^{\omega}.$ Then $\cE$ is identified with the category of flat additive functors $\cA^{op}\to\Sp_{\geq 0},$ this is a straightforward generalization of \cite[Theorem 7.2.2.15]{Lur17a}. The definition of an exact projection in this case gives the class of morphisms which are effective epimorphisms in the ambient category $\Fun^{\add}(\cA^{op},\Sp_{\geq 0}).$ Hence, we simply obtain the standard exact structure on $\cE\simeq\Ind(\cA).$

Now consider the general case. Choose a retraction $\cE\xto{F}\cE'\xto{G}\cE,$ where $\cE'$ is additive and compactly generated, $F$ and $G$ are additive and continuous, and $F$ is moreover strongly continuous (e.g. take $\cE'=\Ind(\cE^{\omega_1})$). Denote by $\check{\St}^{\add}(\cE)$ the category of cofiltered limit-preserving functors $\cE^{op}\to\Sp.$ As in Proposition \ref{prop:check_St_of_Grothendieck_exact_category} we see that we have a universal continuous additive functor $j:\cE\to\check{\St}^{\add}(\cE).$ Consider also a similar functor $j':\cE'\to\check{\St}^{\add}(\cE').$ The latter is simply the ind-completion of the functor $\cE^{'\omega}\to\St(\cE^{'\omega}),$ where $\cE^{'\omega}$ is equipped with the split exact structure. It follows from the retraction that $j$ is fully faithful and its essential image is closed under extensions. Consider the induced exact structure on $\cE.$ Using the retraction again we see that $\cE$ is a coherently assembled exact category (by Proposition \ref{prop:coherently_assembled_as_retract}), and any short exact sequence in $\cE$ is a filtered colimit of split short exact sequences. It remains to obtain the stated description of exact projections in $\cE.$

If $f:x\to y$ is an exact projection in $\cE,$ then $f$ is a filtered colimit of split exact projections $x_i\to y_i.$ Any compact morphism $z\to y$ factors through some $y_i,$ hence through $x_i,$ hence through $x.$

Conversely, left $f:x\to y$ be a morphism in $\cE$ such that any compact morphism $z\to y$ factors through $x.$ Let $\hat{\cY}(y)\cong \inddlim[i\in I]y_i,$ where $I$ is directed. Each map $y_i\to y$ is compact, hence so is $F(y_i)\to F(y)$ by the strong continuity of $F.$ Moreover, for each $i$ the map $F(y_i)\to F(y)$ factors through $F(x).$ It follows from the above special case that $F(x)\to F(y)$ is an exact projection in $\cE'.$ Applying $G:\cE'\to\cE,$ we conclude that $x\to y$ is an exact projection in $\cE.$
\end{proof}

\begin{remark}\label{rem:dualizable_additive_infty_categories}
The category $\check{\St}^{\add}(\cE)$ from the proof of Proposition \ref{prop:minimal_exact_structure} has a natural $t$-structure such that the connective part $\cC=\check{\St}^{\add}(\cE)_{\geq 0}$ is identified with the category of cofiltered limit-preserving functors $\cE^{op}\to\Sp_{\geq 0}.$ Moreover, $\cC$ is a dualizable module over $\Sp_{\geq 0},$ i.e. a dualizable object of $\Pr^L_{\Add}$ -- the symmetric monoidal category of additive presentable $\infty$-categories \cite{LLS}. Moreover, any dualizable additive category $\cC$ can be obtained in this way: one recovers $\cE$ as the full subcategory of flat objects of $\cC.$ Here an object $x\in\cC$ is flat if the functor $\ev(x,-):\cC^{\vee}\to\Sp_{\geq 0}$ commutes with finite limits.

The dualizable additive categories are also exactly the categories of connective almost modules in the setting of higher almost ring theory, studied by Hebestreit and Scholze \cite{HS24}. The notion of a flat object is compatible with the standard notion of an almost flat almost module \cite{Fa02, GR03}.
\end{remark}

Next we show that the presentable stable envelope is well-behaved.

\begin{prop}\label{prop:St_of_coherently_assembled}
\begin{enumerate}[label=(\roman*),ref=(\roman*)]
	\item Let $\cE$ be a coherently assembled exact category. The category $\check{\St}(\cE)$ is dualizable and the functor $j:\cE\to\check{\St}(\cE)$ is strongly continuous and fully faithful. Moreover, the essential image of $\cE$ in $\check{\St}(\cE)$ is closed under extensions, and $j$ reflects exactness. \label{St_of_coh_ass}
	\item Let $F:\cE\to\cE'$ be a strongly continuous exact functor between coherently assembled exact categories. Then the induced functor $\check{\St}(F):\check{\St}(\cE)\to\check{\St}(\cE')$ is also strongly continuous. Therefore, we have a well-defined functor $\check{\St}:\Cohass_{\ex}\to\Cat_{\st}^{\dual}.$ It is left adjoint to the inclusion. \label{St_from_cohass_ex_to_cat_st_dual}
\end{enumerate}
\end{prop}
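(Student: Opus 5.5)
The approach is to reduce everything to the locally coherent case via the retraction of Proposition \ref{prop:coherently_assembled_as_retract}. Choose $\kappa$ with $\cE$ locally $\kappa$-coherent and consider the retraction $\cE\xto{\hat{\cY}}\Ind(\cE^{\kappa})\xto{\colim}\cE$. Both functors are continuous and exact (Remark \ref{rem:hat_Y_for_locally_kappa_coherent} and the Grothendieck exactness). By Proposition \ref{prop:St_of_locally_coherent}, $\check{\St}(\Ind(\cE^{\kappa}))\simeq\Ind(\St(\cE^{\kappa}))$, which is compactly generated.

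Applying the universal property of $\check{\St}$ (Proposition \ref{prop:check_St_of_Grothendieck_exact_category}) gives colimit-preserving functors $\check{\St}(\cE)\to\Ind(\St(\cE^{\kappa}))\to\check{\St}(\cE)$ whose composite is the identity. Hence $\check{\St}(\cE)$ is a retract in $\Pr^L_{\st}$ of a compactly generated category, and is therefore dualizable by \cite[Proposition D.7.3.1]{Lur18}.

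Next, the universal functor $j$ of $\Ind(\cE^{\kappa})$ is the ind-extension of $\cE^{\kappa}\to\St(\cE^{\kappa})$. By Propositions \ref{prop:fully_faithful_E_to_St_of_E} and \ref{prop:exact_structure_on_Ind_kappa}, this functor is fully faithful, reflects exactness, and has image closed under extensions; it is strongly continuous, since it is an ind-extension. The same properties for $j_{\cE}$ follow by transport along the retraction:
\begin{itemize}
\item \textbf{Full faithfulness.} For $x,y\in\cE$, the map $\Map(x,y)\to\Map(jx,jy)$ is a retract of the corresponding map for $\hat{\cY}x,\hat{\cY}y$, using that the square relating $j_{\cE}$ and $j_{\Ind(\cE^\kappa)}$ commutes on both sides of the retraction.
\item \textbf{Closure under extensions.} An extension $jx\to w\to jy$ maps to an extension in $\Ind(\St(\cE^\kappa))$, which comes from a short exact sequence $\hat{\cY}x\to e\to\hat{\cY}y$. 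Applying $\colim$ gives a short exact sequence in $\cE$ whose image under $j$ is the original extension, because the composite on $\check{\St}$ is the identity.
\item \textbf{Reflecting exactness.} If a square in $\cE$ becomes exact in $\check{\St}(\cE)$, then it becomes exact after applying the first map, hence $\hat{\cY}$ of it is exact. Applying $\colim$ then shows the original square is exact.
\end{itemize}

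Strong continuity of $j$ is where I expect subtlety, since I need to check it directly rather than only up to retraction. Write $j=\check{\St}(\colim)\circ j'\circ\hat{\cY}$, where $j'$ is the universal functor of $\Ind(\cE^\kappa)$. Compact morphisms in $\cE$ factor through compact morphisms of the form coming from $\hat{\cY}$. Concretely, a compact $f:x\to y$ has $\cY(x)\to\cY(y)$ factoring through $\hat{\cY}(y)$. The functor $j'$ sends $\Ind(\cE^\kappa)$-compact data to compact morphisms of $\Ind(\St(\cE^\kappa))$. The main check is that $\check{\St}(\colim)$ preserves compact morphisms, that is, that its right adjoint is continuous. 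I would verify this using the description \eqref{eq:presentable_stable_envelope}: the right adjoint is restriction of presheaves along the colimit functor, which preserves filtered colimits because the relevant functors are computed pointwise and the $\kappa$-compact objects of $\cE$ are sent compactly by $\hat{\cY}$. An alternative is to show that $\check{\St}(\hat{\cY})$ is left adjoint to $\check{\St}(\colim)$, inherited from $\hat{\cY}\dashv\colim$, which exhibits the right adjoint as the colimit-preserving functor $\check{\St}(\colim)$. I would try this adjunction argument first; it is also the main obstacle.

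For part \ref{St_from_cohass_ex_to_cat_st_dual}, strong continuity of $\check{\St}(F)$ means its right adjoint is continuous. Since $\check{\St}(\cE)$ is generated under colimits by $j(\cE)$, and compact morphisms in $\check{\St}(\cE)$ are generated (via the retraction above) by images of compact morphisms of $\cE$, it suffices that $\check{\St}(F)\circ j_{\cE}=j_{\cE'}\circ F$ preserves compact morphisms. This holds because $F$ does and $j_{\cE'}$ does by part \ref{St_of_coh_ass}. The adjunction with the inclusion $\Cat_{\st}^{\dual}\hto\Cohass_{\ex}$ then follows from the universal property, restricted to strongly continuous functors. The point is that a continuous exact $\check{\St}(\cE)\to\cD$ is strongly continuous if and only if its restriction along $j$ is, by the same generation of compact morphisms.
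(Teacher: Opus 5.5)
\textbf{Gap: strong continuity of $j$.} Your reduction to the retraction $\cE\xto{\hat{\cY}}\Ind(\cE^{\kappa})\xto{\colim}\cE$ is the paper's. Your transport arguments for dualizability, full faithfulness, closure under extensions and reflection of exactness are fine.

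The gap is in the strong continuity of $j$. Your decomposition $j\simeq\check{\St}(\colim)\circ j'\circ\hat{\cY}$ needs $\check{\St}(\colim)$ to preserve compact morphisms, and this is false in general. Take $\cE=\Sp$ with $\kappa=\omega_1$; the same happens for any dualizable $\cE$ that is not compactly generated, where $\kappa=\omega$ is unavailable. Then $\check{\St}(\colim)$ is just $\colim:\Ind(\Sp^{\omega_1})\to\Sp$. It sends the compact morphism $\id_{\cY(\bS[1/2])}$ to the non-compact morphism $\id_{\bS[1/2]}$. Equivalently, its right adjoint $x\mapsto\inddlim[y\in\Sp^{\omega_1}_{/x}]y$ is not continuous. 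So your presheaf-restriction verification cannot work.

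The adjunction $\check{\St}(\hat{\cY})\dashv\check{\St}(\colim)$ that you propose as an alternative is correct, since $\check{\St}$ is 2-functorial by its universal property. But it shows that $\check{\St}(\hat{\cY})$ is strongly continuous, not $\check{\St}(\colim)$. So it does not deliver the claim your decomposition requires.

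\textbf{How to close it.} For a continuous functor $A$, write $\beta_A:\hat{\cY}\circ A\to\Ind(A)\circ\hat{\cY}$ for the canonical comparison, i.e.\ the mate of $\colim\circ\Ind(A)\simeq A\circ\colim$. Strong continuity means $\beta_A$ is invertible, and these comparisons compose: $\beta_{BA}=\Ind(B)\beta_A\circ\beta_B A$.

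Put $\Phi=\check{\St}(\hat{\cY})$. By the adjunction, $\Phi$ is strongly continuous, and it is fully faithful because $\colim\circ\hat{\cY}\simeq\Id$. Then $\Ind(\Phi)(\beta_j)\circ(\beta_\Phi)_j=\beta_{\Phi j}=\beta_{j'\hat{\cY}}$. The right side is invertible since $j'$ and $\hat{\cY}$ are strongly continuous, and $(\beta_\Phi)_j$ is invertible. Since $\Ind(\Phi)$ is fully faithful, $\beta_j$ is invertible.

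Alternatively, your worry about the retract is unfounded. By the same compositionality, $\beta_j$ is a retract of $\Ind(\check{\St}(\colim))\,\beta_{j'}\,\hat{\cY}$ in the arrow category, and a retract of an isomorphism is an isomorphism. This is how the paper argues, and it needs only continuity of the retraction functors.

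\textbf{Part (ii).} This is right in spirit but should be phrased the same way rather than via ``generation of compact morphisms''. The comparison $\beta_{\check{\St}(F)}$ is a natural transformation between colimit-preserving functors. So it suffices to check it on the generators $j(y)$. There it follows from $\beta_{\check{\St}(F)\circ j}=\beta_{j'\circ F}$ and the strong continuity of $j$, $j'$ and $F$, which is exactly the paper's argument. The same check on generators gives the adjunction statement.
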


\begin{proof}
\ref{St_of_coh_ass} We first consider the case when $\cE$ is locally coherent. Then by Proposition \ref{prop:St_of_locally_coherent} the category $\check{\St}(\cE)\simeq \Ind(\St(\cE^{\omega}))$ is compactly generated, hence dualizable. The strong continuity is immediate: the functor $j$ is identified with $\Ind(\cE^{\omega}\to \St(\cE^{\omega})).$ The fully faithfulness follows from Proposition \ref{prop:fully_faithful_E_to_St_of_E}. The remaining assertions follow from Proposition \ref{prop:exact_structure_on_Ind_kappa}.

Now consider the general case. By Proposition \ref{prop:coherently_assembled_as_retract} we have a retraction $\cE\xto{F}\cE'\xto{G}\cE,$ where $\cE'$ is locally coherent and both $F$ and $G$ are exact and continuous. Now, both functors $\check{\St}(F)$ and $\check{\St}(G)$ are continuous, and the functor $j:\cE\to\check{\St}(\cE)$ is a retract of the functor $j':\cE'\to \check{\St}(\cE').$ Hence, all the assertions about $j$ and $\check{\St}(\cE)$ follow from the above special case when $\cE$ is locally coherent.

\ref{St_from_cohass_ex_to_cat_st_dual} Put $\Phi=\check{\St}(F).$ We need to show that for $x\in\check{\St}(\cE)$ we have an isomorphism $\hat{\cY}(\Phi(x))\xto{\sim}\Ind(\Phi)(\hat{\cY}(x)).$ It suffices to consider the case $x=j(y),$ where $y\in\cE$ and $j:\cE\to \check{\St}(\cE)$ is the universal continuous exact functor. Then the isomorphism follows directly from \ref{St_of_coh_ass}: we have a commutative square of compactly assembled categories
\begin{equation*}
\begin{tikzcd}
\cE\ar[r, "F"]\ar[d] & \cE'\ar[d]\\
\check{\St}(\cE)\ar[r, "\Phi"] & \check{\St}(\cE'),
\end{tikzcd}
\end{equation*}
in which both vertical arrows and the upper horizontal arrow are strongly continuous.

It follows formally that the functor $\check{\St}:\Cohass_{\ex}\to\Cat_{\st}^{\dual}$ is left adjoint to the inclusion: the adjunction counit $\cE\to\check{\St}(\cE)$ is well-defined by \ref{St_of_coh_ass}.
\end{proof}

The above proposition allows to define the continuous $K$-theory and other localizing invariants (such as $KH$) for coherently assembled exact categories.

\begin{defi}\label{def:F^cont_for_coh_ass_exact_categories}
Let $F:\Cat^{\perf}\to \Sp$ be a localizing invariant. For a coherently assembled exact category $\cE$ we put
\begin{equation*}
F^{\cont}(\cE)=F^{\cont}(\check{\St}(\cE)).
\end{equation*} 
This defines a functor
\begin{equation*}
F^{\cont}:\Cohass_{\ex}\to\Sp.
\end{equation*} 
\end{defi}

Next, we show the expected local $\omega_1$-coherence.

\begin{prop}\label{prop:coherently_assembled_is_locally_omega_1_coherent}
Let $\cE$ be a coherently assembled exact category. Then $\cE$ is locally $\omega_1$-coherent.
\end{prop}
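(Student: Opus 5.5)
Since $\cE$ is compactly assembled, it is $\omega_1$-accessible by \cite[Proposition 1.24]{E24}. The plan is therefore to verify the three conditions of Proposition \ref{prop:criterion_locally_kappa_coherent} for $\kappa=\omega_1$. Condition \ref{kappa_accessible} is exactly this $\omega_1$-accessibility. Condition \ref{exactness_of_kappa_filtered} holds because $\cE$ is Grothendieck exact, so all filtered colimits are exact, in particular $\omega_1$-filtered ones. The whole content is condition \ref{Ext^1_kappa_continuous_again}: for $x\in\cE^{\omega_1}$ the functor $\Ext^1_{\cE}(x,-)$ must commute with $\omega_1$-filtered colimits.

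For this I would pass to the stable envelope. By Proposition \ref{prop:St_of_coherently_assembled}, $\check{\St}(\cE)$ is dualizable, and $j:\cE\to\check{\St}(\cE)$ is fully faithful, strongly continuous, reflects exactness, and has essential image closed under extensions. Because the image is extension-closed, $\Ext^1_{\cE}(x,y)$, which classifies extensions in $\cE$, is identified with $\pi_{-1}\Hom_{\check{\St}(\cE)}(j(x),j(y))$. Since $j$ is continuous, it then suffices to show that $j(x)$ is $\omega_1$-compact in $\check{\St}(\cE)$. In a dualizable category, $\omega_1$-compact objects are closed under countable colimits and $\Hom(c,-)$ commutes with $\omega_1$-filtered colimits for such $c$, so this would finish the argument.

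The main obstacle is showing that $j$ preserves $\omega_1$-compactness. I would use strong continuity, by way of the fact that $x$ is $\omega_1$-compact in a compactly assembled category exactly when it is a retract of a sequential colimit $\colim(x_0\to x_1\to\cdots)$ of compact morphisms. One direction of this fact follows from $\hat{\cY}(x)\in\Ind(\cE^{\omega_1})$ together with the factorization of compact morphisms into two compact morphisms. The characterization holds in $\cE$ and in $\check{\St}(\cE)$ alike, by \cite[Section 1]{E24}. Since $j$ preserves compact morphisms (it is strongly continuous), sequential colimits, and retracts, $j(x)$ is a retract of a sequential colimit of compact morphisms, hence $\omega_1$-compact in the dualizable category $\check{\St}(\cE)$.

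If that characterization is not available in the form I need for the exact category $\cE$, I would fall back on Proposition \ref{prop:coherently_assembled_as_retract}. It gives a retraction $\cE\to\Ind(\cE^{\kappa})\to\cE$ via $\hat{\cY}$ and $\colim$, with $\hat{\cY}$ exact. Working in $\check{\St}(\Ind(\cE^{\kappa}))=\Ind(\St(\cE^{\kappa}))$, I would check directly that $\hat{\cY}(x)$ is a countable ind-object whenever $x$ is $\omega_1$-compact, and then argue there.
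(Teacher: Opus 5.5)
Your proposal is correct and follows the paper's proof. Like the paper, you verify the three conditions of Proposition~\ref{prop:criterion_locally_kappa_coherent} for $\kappa=\omega_1$, handling the only nontrivial one by passing to $\check{\St}(\cE)$ via Proposition~\ref{prop:St_of_coherently_assembled} and using that the strongly continuous functor $j$ preserves $\omega_1$-compact objects. Your justification of that last fact (retracts of sequential colimits of compact morphisms) correctly spells out what the paper takes for granted, and the fallback paragraph is not needed.
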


\begin{proof}
It suffices to check that the conditions from Proposition \ref{prop:criterion_locally_kappa_coherent} are satisfied. Since $\cE$ is compactly assembled, it is $\omega_1$-accessible by \cite[Proposition 1.24]{E24}. Next, all filtered colimits are exact in $\cE,$ in particular, $\omega_1$-filtered colimits are exact. It remains to prove that for $x\in\cE^{\omega_1}$ the functor $\Ext^1_{\cE}(x,-):\cE\to\Ab$ commutes with $\omega_1$-filtered colimits. By Proposition \ref{prop:St_of_coherently_assembled} the functor $j:\cE\to\check{\St}(\cE)$ is strongly continuous, in particular, it preserves $\omega_1$-compact objects. Hence, we have $j(x)\in\check{\St}(\cE)^{\omega_1}.$ It follows that the functor $\Ext^1_{\check{\St}(\cE)}(j(x),-):\check{\St}(\cE)\to\Ab$ commutes with $\omega_1$-filtered colimits by stability. Applying Proposition \ref{prop:St_of_coherently_assembled} again, we see that for $y\in\cE$ we have an isomorphism $\Ext^1_{\cE}(x,y)\cong \Ext^1_{\check{\St}(\cE)}(j(x),j(y)).$ Hence, the functor $\Ext^1_{\cE}(x,-)$ also commutes with $\omega_1$-filtered colimits.
\end{proof}

From now on, for a coherently assembled exact category $\cE$ we consider the small category $\cE^{\omega_1}$ as an exact category with the induced exact structure from $\cE,$ and similarly for any regular $\kappa\geq\omega_1.$ Recall that by Proposition \ref{prop:various_E^lambda} for any uncountable regular cardinals $\kappa\leq \lambda$ the functor $\St(\cE^{\kappa})\to\St(\cE^{\lambda})$ is fully faithful, and we defined $\St(\cE)$ to be the directed union of $\St(\cE^{\kappa}).$ 

\begin{prop}\label{prop:St_of_E_to_check_St_of_E}
Let $\cE$ be a coherently assembled exact category. Then the natural functor $\St(\cE)\to \check{\St}(\cE)$ is fully faithful.
\end{prop}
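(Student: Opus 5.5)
The plan is to compare Ext groups on both sides by dimension shifting. Since $\St(\cE)$ is the directed union of the categories $\St(\cE^{\kappa})$ over uncountable regular $\kappa$ (each transition functor being fully faithful by Proposition \ref{prop:various_E^lambda}), it suffices to fix one such $\kappa$ and show that $\St(\cE^{\kappa})\to\check{\St}(\cE)$ is fully faithful. Recall that $\cE$ is locally $\omega_1$-coherent (Proposition \ref{prop:coherently_assembled_is_locally_omega_1_coherent}), so it is locally $\kappa$-coherent and $\cE^{\kappa}$ is an exact subcategory closed under extensions.

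The category $\St(\cE^{\kappa})$ is generated as a stable category by the image of $\cE^{\kappa}$. Hence, by a standard d\'evissage on both variables, it suffices to prove the following. For $x,y\in\cE^{\kappa}$ and all $n\in\Z$, the map
\begin{equation*}
\Ext^n_{\cE^{\kappa}}(x,y)\to\Ext^n_{\check{\St}(\cE)}(j(x),j(y))
\end{equation*}
is an isomorphism.

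For $n\leq 0$ this follows from Proposition \ref{prop:fully_faithful_E_to_St_of_E} together with full faithfulness of $j$ (Proposition \ref{prop:St_of_coherently_assembled}), since both sides equal $\pi_{-n}\Map_{\cE}(x,y)$. For $n=1$ both sides classify extensions of $x$ by $y$. On the left these are taken in $\cE^{\kappa}$; on the right they are taken in $\cE$, because $j(\cE)$ is closed under extensions and $j$ reflects exactness. These agree because $\cE^{\kappa}\subset\cE$ is closed under extensions (Proposition \ref{prop:kappa_compact_closed_under_extensions}).

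For $n\geq 2$ I would argue by induction on $n$, proving injectivity and surjectivity simultaneously, in the style of the proof of Proposition \ref{prop:mono_on_Ext^2_for_small}. The key input is an effaceability statement on both sides. For any class $\alpha$ of degree $n\geq 1$ with source $x\in\cE^{\kappa}$, in either category, I want a projection $p:z\to x$ with $z\in\cE^{\kappa}$ and $p^*\alpha=0$.

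On the small side this is \cite[Lemma 4.3]{SW26}. On the side of $\check{\St}(\cE)$, I would first produce such an exact projection $z'\to x$ in $\cE$. Then I replace $z'$ by an object of $\cE^{\kappa}$ using left speciality of $\cE^{\kappa}\subset\cE^{\lambda}$ (Proposition \ref{prop:various_E^lambda}) for $\lambda$ large enough that $z'\in\cE^{\lambda}$.

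Given a common effaceing projection $p:z\to x$ with fiber $w\in\cE^{\kappa}$, the cofiber sequence $w\to z\to x$ gives long exact Ext sequences in both categories, compatible via the comparison map. The five lemma, applied with the induction hypothesis in degrees $n-1$ (for the pairs $(w,y)$ and $(z,y)$) together with the vanishing of $p^*$ on the relevant classes, then gives bijectivity in degree $n$.

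The main obstacle is the effaceability in $\check{\St}(\cE)$, that is, showing that every map $j(x)\to j(y)[n]$ with $n\geq 1$ is killed by some exact projection onto $x$. I would first prove this when $\cE$ is locally coherent. In that case $\check{\St}(\cE)\simeq\Ind(\St(\cE^{\omega}))$ (Proposition \ref{prop:St_of_locally_coherent}), so a map from the compact-ish object $j(x)$ factors through the small stable envelope of a filtered system, where \cite[Lemma 4.3]{SW26} applies levelwise, and exactness of filtered colimits produces the projection. I would then transport the statement to a general coherently assembled $\cE$ via the retraction $\cE\xto{\hat{\cY}}\Ind(\cE^{\omega_1})\xto{\colim}\cE$ of Proposition \ref{prop:coherently_assembled_as_retract}. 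This uses that both functors are exact and continuous, and hence preserve projections and induce compatible functors on $\check{\St}$.
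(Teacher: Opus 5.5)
Most of your architecture works. The reduction to $\St(\cE^{\kappa})$ is fine. So is the dimension-shifting induction, which is essentially a reproof of the criterion \cite[Lemmas 4.2, 4.3]{SW26} that the paper cites directly. Moving an effacing projection into $\cE^{\kappa}$ by left speciality works, and so does transporting effaceability along $\cE\to\Ind(\cE^{\omega_1})\to\cE$.

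The gap is in the step you yourself flag as the main obstacle: effaceability in $\check{\St}(\cE)\simeq\Ind(\St(\cE^{\omega}))$ for a \emph{non-compact} source $x\simeq\colim_{s\in S}x_s$ with $x_s\in\cE^{\omega}$. You cannot avoid this case. Your induction needs sources in $\cE^{\kappa}$, and after transport along $\hat{\cY}$ these become non-compact objects of $\Ind(\cE^{\omega_1})$. Compactness of $j(x_s)$ together with \cite[Lemma 4.3]{SW26} does give, for each $s$, a projection $p_s:z_s\to x_s$ in $\cE^{\omega}$ killing the restriction $\alpha_s$ of $\alpha$ to $j(x_s)$. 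The trouble is assembling these:
\begin{itemize}
\item The choices of $p_s$ are not functorial in $s$, so there is no filtered system of projections whose colimit you could take.
\item Even for a compatible system, levelwise vanishing would not suffice. The map $\Ext^n(\colim_s j(x_s),j(y))\to\lim_s\Ext^n(j(x_s),j(y))$ need not be injective, because of $\lim^1$-type terms.
\end{itemize}
So ``exactness of filtered colimits produces the projection'' does not go through as described.

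The missing idea, which is the heart of the paper's proof, is to pass through the direct sum. Let $q:\bigoplus_s x_s\to\colim_s x_s$ be the canonical map. Since $\Ext^n(\bigoplus_s j(x_s),j(y))\cong\prod_s\Ext^n(j(x_s),j(y))$, the $p_s$ assemble into one projection $\bigoplus_s z_s\to\bigoplus_s x_s$ killing $q^*\alpha$. Composing with $q$ then effaces $\alpha$, \emph{provided} $q$ is an exact projection, and that is the nontrivial point.

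The paper proves this in three steps:
\begin{itemize}
\item it replaces the exact structure on $\cE^{\omega}$ by the split one, which has fewer projections;
\item it identifies $\Ind(\cE^{\omega})$ with flat additive presheaves;
\item it uses that flat objects are closed under fibers of effective epimorphisms.
\end{itemize}
One can also see it directly, in the form where exactness of filtered colimits genuinely helps. The map $q$ is the filtered colimit, over finite subsets $S_0\subset S$ having a largest element, of the split epimorphisms $\bigoplus_{s\in S_0}x_s\to x_{\max S_0}$.

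The paper applies this directly to sources in $\cE^{\kappa}$, written as $\kappa$-small filtered colimits of compact objects. The effacing object $\bigoplus_s z_s$ then lies in $\cE^{\kappa}$ automatically. So the paper needs neither left speciality nor effaceability outside $\cE^{\kappa}$, and it does the retraction reduction once, for the functor $\St(\cE)\to\check{\St}(\cE)$ itself.
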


\begin{proof}
By Proposition \ref{prop:coherently_assembled_as_retract} we have a retraction $\cE\xto{F}\cE'\xto{G}\cE,$ where $\cE'$ is locally coherent and both $F$ and $G$ are continuous and exact. Hence, the functor $\St(\cE)\to\check{\St}(\cE)$ is a retract of the functor $\St(\cE')\to\check{\St}(\cE').$ Thus, we may and will assume that $\cE$ is locally coherent.

Consider the functor $j:\cE\to\check{\St}(\cE),$ which is fully faithful by Proposition \ref{prop:St_of_coherently_assembled}. Let $\kappa$ be an uncountable regular cardinal. By Proposition \ref{prop:various_E^lambda} it suffices to prove that the functor $\St(\cE^{\kappa})\to\check{\St}(\cE)$ is fully faithful. Take the restriction $i=j_{\mid \cE^{\kappa}}:\cE^{\kappa}\to\check{\St}(\cE).$ By \cite[Lemmas 4.2 and 4.3]{SW26} it suffices to prove that for $x\in\cE^{\kappa}$ and for $n>0$ the functor $\Ext^n_{\check{\St}(\cE)}(i(-),i(x)):(\cE^{\kappa})^{op}\to\Ab$ is weakly effaceable. Namely, we need to show that for any $y\in\cE^{\kappa}$ and for any element $\alpha\in\Ext^n_{\check{\St}(\cE)}(i(y),i(x))$ there exists a projection $z\to y$ in $\cE^{\kappa}$ such that the image of $\alpha$ in $\Ext^n_{\check{\St}(\cE)}(i(z),i(y))$ is zero.

We first consider the case $y\in\cE^{\omega}.$ Let $x\cong \indlim[s\in S]x_s,$ where $S$ is directed and $x_s\in\cE^{\omega}.$ By Proposition \ref{prop:St_of_locally_coherent} $i(y)$ is compact in $\check{\St}(\cE),$ hence there exists $s\in S$ such that $\alpha$ is the image of some element  $\beta\in\Ext^n_{\check{\St}(\cE)}(i(y),i(x_s))\cong \Ext^n_{\cE^{\omega}}(y,x_s).$ By \cite[Lemma 4.3]{SW26} there exists a projection $z\to y$ in $\cE^{\omega}$ such that the image of $\beta$ in $\Ext^n_{\check{\St}(\cE)}(i(z),i(x_s))$ vanishes. Then the image of $\alpha$ in $\Ext^n_{\check{\St}(\cE)}(i(z),i(x))$ vanishes, as required.

Next, if $y$ is a $\kappa$-small direct sum of some $y_k\in\cE^{\omega},$ then it suffices to take $z$ to be the direct sum of $z_k\in\cE^{\omega},$ where the projections $z_k\to y_k$ are as above. Now consider the general case, i.e. $y=\indlim[t\in T]y_t,$ where $T$ is a $\kappa$-small directed poset, and $y_t\in\cE^{\omega}.$ It suffices to prove that the map
\begin{equation}\label{eq:from_coproduct_to_colimit}
\biggplus[t\in T]y_t\to \indlim[t\in T]y_t
\end{equation}
is a projection in $\cE^{\kappa}.$ By Proposition \ref{prop:criterion_locally_kappa_coherent} it suffices to prove that \eqref{eq:from_coproduct_to_colimit} is a projection in $\cE.$ We may and will assume that the exact structure on $\cE^{\omega}$ is split, i.e. we have only split short exact sequences. Then $\cE$ is identified with the category of flat additive functors $(\cE^{\omega})^{op}\to \Sp_{\geq 0},$ with the induced exact structure from the Grothendieck prestable 
category $\cC=\Fun^{\add}((\cE^{\omega})^{op},\Sp_{\geq 0}).$ The class of flat objects in $\cC$ is closed under fibers of effective epimorphisms, which implies that \eqref{eq:from_coproduct_to_colimit} is a projection in $\cE,$ as required.
\end{proof}

The following corollary is immediate.

\begin{cor}\label{cor:mono_for_Ext^2}
Let $\cE$ be a coherently assembled exact category, and let $\cC$ be a presentable stable category. Suppose that we have a continuous exact functor $\Phi:\cE\to\cC$ which is fully faithful, reflects exactness and its essential image is closed under extensions. Consider the induced (continuous exact) functor $\Psi:\check{\St}(\cE)\to\cC,$ and denote by $j:\cE\to\check{\St}(\cE)$ the universal continuous exact functor, so that $\Psi\circ j\cong\Phi.$ Then for $x,y\in\cE$ the map 
\begin{equation}\label{eq:map_on_Exts}
\Ext^n_{\check{\St}(\cE)}(j(x),j(y))\to\Ext^n_{\cC}(\Phi(x),\Phi(y))
\end{equation} is an isomorphism for $n\leq 1,$ and a monomorphism for $n=2.$  
\end{cor}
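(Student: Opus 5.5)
The plan is to reduce to the small case, Proposition \ref{prop:mono_on_Ext^2_for_small}, through the fully faithful functor $\St(\cE)\to\check{\St}(\cE)$ of Proposition \ref{prop:St_of_E_to_check_St_of_E}.

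Fix $x,y\in\cE$. By Proposition \ref{prop:coherently_assembled_is_locally_omega_1_coherent}, $\cE$ is locally $\omega_1$-coherent, so there is an uncountable regular cardinal $\kappa$ with $x,y\in\cE^{\kappa}$. By Proposition \ref{prop:kappa_compact_closed_under_extensions}, $\cE^{\kappa}$ is closed under extensions in $\cE$; we give it the induced exact structure. Since $\Phi$ is fully faithful, reflects exactness and has essential image closed under extensions, the image $\Phi(\cE^{\kappa})\subset\cC$ is a small full additive subcategory closed under extensions, and its induced exact structure from $\cC$ agrees with that of $\cE^{\kappa}$ via $\Phi$. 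Proposition \ref{prop:mono_on_Ext^2_for_small} then says that
\begin{equation*}
\Ext^n_{\St(\cE^{\kappa})}(x,y)\to\Ext^n_{\cC}(\Phi(x),\Phi(y))
\end{equation*}
is an isomorphism for $n\leq 1$ and a monomorphism for $n=2$.

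This map factors as
\begin{equation*}
\St(\cE^{\kappa})\to\St(\cE)\to\check{\St}(\cE)\xto{\Psi}\cC.
\end{equation*}
The first functor is fully faithful by Proposition \ref{prop:various_E^lambda}, and the second by Proposition \ref{prop:St_of_E_to_check_St_of_E}. To check that the composite is the functor induced by $\Phi|_{\cE^{\kappa}}$, it suffices to compare on $\cE^{\kappa}$ using the universal property of $\St(\cE^{\kappa})$; there it holds because $\Psi\circ j\cong\Phi$. Consequently
\begin{equation*}
\Ext^n_{\St(\cE^{\kappa})}(x,y)\cong\Ext^n_{\check{\St}(\cE)}(j(x),j(y)),
\end{equation*}
and under this identification the map above becomes \eqref{eq:map_on_Exts}. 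This gives the claim.

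The only point requiring some care is that the exact structure on $\cE^{\kappa}$ induced from $\cE$ coincides with the one induced from $\cC$ via $\Phi$. This is exactly where the hypothesis that $\Phi$ reflects exactness is used, together with the fact that $\Phi$ preserves exactness.
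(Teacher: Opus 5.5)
Your proof is correct and follows the paper's argument essentially verbatim. You choose an uncountable regular $\kappa$ with $x,y\in\cE^{\kappa}$, identify the source of the map with $\Ext^n_{\cE^{\kappa}}(x,y)$ via Propositions \ref{prop:various_E^lambda} and \ref{prop:St_of_E_to_check_St_of_E}, use Proposition \ref{prop:kappa_compact_closed_under_extensions} to see that $\Phi(\cE^{\kappa})$ is extension-closed in $\cC$, and conclude by Proposition \ref{prop:mono_on_Ext^2_for_small}, which is exactly what the paper does. Your remark that this extension-closedness and the matching of exact structures use that $\Phi$ is fully faithful and both preserves and reflects exactness makes explicit a step the paper leaves implicit.
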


\begin{proof}
Let $\kappa$ be an uncountable regular cardinal such that $x,y\in\cE^{\kappa}.$ By Propositions \ref{prop:St_of_E_to_check_St_of_E} and \ref{prop:various_E^lambda} the source of \eqref{eq:map_on_Exts} is identified with $\Ext^n_{\cE^{\kappa}}(x,y).$ Further, by Proposition \ref{prop:kappa_compact_closed_under_extensions} the essential image $\Phi(\cE^{\kappa})$ in $\cC$ is also closed under extensions. Hence, the assertion follows from Proposition \ref{prop:mono_on_Ext^2_for_small}.
\end{proof}

Finally, we prove a version of \cite[Theorem 4.5]{SW26}, which is deduced from loc. cit.

\begin{prop}\label{prop:left_special_coherently_assembled}
Let $\Phi:\cU\to\cE$ be a functor between coherently assembled exact categories, which is fully faithful, strongly continuous, reflects exactness, and such that the essential image of $\Phi$ is closed under extensions. Suppose that the inclusion $\Phi(\cU^{\omega_1})\hto \cE^{\omega_1}$ is left special (Definition \ref{def:left_special}). Then the induced functor $\Psi:\check{\St}(\cU)\to\check{\St}(\cE)$ is fully faithful.
\end{prop}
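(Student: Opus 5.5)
The plan is to reduce to the small case, Theorem \ref{th:left_special_implies_fully_faithful}, through the $\omega_1$-compact parts, following the pattern of Proposition \ref{prop:St_of_E_to_check_St_of_E}.

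First, $\check{\St}(\cU)$ is generated under colimits by $j_{\cU}(\cU)$, hence by $j_{\cU}(\cU^{\omega_1})$, since every object of $\cU$ is an $\omega_1$-filtered colimit of $\omega_1$-compact objects and $j_{\cU}$ is continuous. By Proposition \ref{prop:St_of_coherently_assembled}, $j_{\cU}$ is strongly continuous, so these generators are $\omega_1$-compact in $\check{\St}(\cU)$. Their images under $\Psi$ are $j_{\cE}(\Phi(x))$, which are $\omega_1$-compact in $\check{\St}(\cE)$, because $\Phi$ and $j_{\cE}$ are strongly continuous and therefore preserve $\omega_1$-compact objects. Since $\Psi$ is colimit-preserving, a standard argument reduces fully faithfulness of $\Psi$ to showing that $\Psi$ is fully faithful on the stable subcategory generated by $j_{\cU}(\cU^{\omega_1})$. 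This argument proceeds by first handling fully faithfulness on $\omega_1$-compact generators and then passing to $\omega_1$-filtered colimits, using that $\Hom(a,-)$ commutes with $\omega_1$-filtered colimits for $\omega_1$-compact $a$ on both sides.

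Second, identify this subcategory. By Proposition \ref{prop:St_of_E_to_check_St_of_E}, the functor $\St(\cU^{\omega_1})\to\check{\St}(\cU)$ is fully faithful; here $\St(\cU)$ is the union of the $\St(\cU^{\kappa})$, and the $\omega_1$ piece embeds by Proposition \ref{prop:various_E^lambda}. So the subcategory in question is $\St(\cU^{\omega_1})$, and likewise $\St(\cE^{\omega_1})\subset\check{\St}(\cE)$ is fully faithful. The restriction of $\Psi$ is then identified with $\St(\Phi^{\omega_1}):\St(\cU^{\omega_1})\to\St(\cE^{\omega_1})$ followed by the fully faithful inclusion into $\check{\St}(\cE)$. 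It therefore suffices to show that $\St(\cU^{\omega_1})\to\St(\cE^{\omega_1})$ is fully faithful.

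Third, apply Theorem \ref{th:left_special_implies_fully_faithful} to $\Phi(\cU^{\omega_1})\subset\cE^{\omega_1}$, which is left special by hypothesis. This requires checking that the exact structure on $\cU^{\omega_1}$ induced from $\cU$ agrees with the one induced from $\cE^{\omega_1}$ via $\Phi$. I expect this compatibility check to be the main, though mild, obstacle. The image $\Phi(\cU)$ is closed under extensions in $\cE$, and $\Phi(\cU^{\omega_1})$ is closed under extensions inside $\Phi(\cU)$ by Proposition \ref{prop:kappa_compact_closed_under_extensions}, since $\cU^{\omega_1}$ is closed under extensions in $\cU$. Hence $\Phi(\cU^{\omega_1})$ is closed under extensions in $\cE$, and thus in $\cE^{\omega_1}$. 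Moreover $\Phi$ preserves and reflects exactness, so the two induced structures coincide. Theorem \ref{th:left_special_implies_fully_faithful} then gives fully faithfulness of $\St(\cU^{\omega_1})\to\St(\cE^{\omega_1})$, and combining this with the first two steps proves the proposition.
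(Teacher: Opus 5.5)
Your second and third steps are fine. The reduction in your first step, however, does not work as stated.

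Fix $a$ in the stable subcategory $\St(\cU^{\omega_1})$ generated by $j_{\cU}(\cU^{\omega_1})$. The class of $b$ with $\Hom(a,b)\xto{\sim}\Hom(\Psi a,\Psi b)$ is closed under finite (co)limits and $\omega_1$-filtered colimits. It is not closed under countable coproducts, because $a$ is only $\omega_1$-compact. Starting from $\St(\cU^{\omega_1})$, these operations only produce $\omega_1$-filtered colimits of its objects. Any $\omega_1$-compact object obtained this way is a retract of one of the terms, so you never reach objects such as $\bigoplus_{n\geq 0}j(x)[n]$. For example, take $\cU$ coherently assembled abelian and $0\neq x\in\cU^{\omega_1}$. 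This object is $\omega_1$-compact with $\pi_k\neq 0$ for all $k\geq 0$, so it is not a retract of an object of $D^b(\cU^{\omega_1})$.

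The general principle you invoke is in fact false. Take $R=\prod_{\N}\bS$, $X=\N\cup\{\infty\}$, $j:\N\hto X$ the open inclusion, and $D=j_!\bS\in\Shv(X;\Sp)$. Then $\Hom(D,-)=\prod_n(-)_n$, so $D$ is $\omega_1$-compact and $\End(D)\simeq R$. The functor $-\otimes_R D$ on $R$-modules is therefore fully faithful on $\Perf(R)$. But $\Hom(D,\bigoplus_{\N}D)\simeq\prod_n\bigoplus_k\bS$, which differs from $\bigoplus_k\prod_n\bS$.

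The missing ingredient is the strong continuity of $\Psi$ (Proposition \ref{prop:St_of_coherently_assembled}). You only use it to see that $\omega_1$-compact objects go to $\omega_1$-compact objects. The paper's argument runs as follows.
\begin{enumerate}
\item By strong continuity of $j_{\cU}$ and your second step, $\hat{\cY}_{\check{\St}(\cU)}$ lands in $\Ind(\St(\cU^{\omega_1}))$.
\item We have $\hat{\cY}_{\check{\St}(\cE)}\circ\Psi\cong\Ind(\Psi)\circ\hat{\cY}_{\check{\St}(\cU)}$.
\item Hence, via $\hat{\cY}$ and $\colim$, $\Psi$ is a retract in $\Pr^L_{\st}$ of $\Ind(\St(\cU^{\omega_1}))\to\Ind(\St(\cE^{\omega_1}))$.
\item In the ind-completions the generators are genuinely compact, so your third step gives full faithfulness of that functor.
\item A retract of a fully faithful functor is fully faithful.
\end{enumerate}
Equivalently, $\Hom_{\check{\St}(\cU)}(x,y)$ is computed as $\Hom(\hat{\cY}x,\hat{\cY}y)$ in $\Ind(\St(\cU^{\omega_1}))$, compatibly with $\Psi$. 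With your first step replaced by this, your argument becomes the paper's.
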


\begin{proof}
The functor $\Psi$ is a retract (in $\Pr^L_{\st}$) of the functor $\Ind(\St(\cU^{\omega_1}))\to\Ind(\St(\cE^{\omega_1})),$ which is fully faithful by Theorem \ref{th:left_special_implies_fully_faithful}.
\end{proof}

\begin{remark}
In Proposition \ref{prop:left_special_coherently_assembled} one can equivalently assume that $\Phi(\cU)$ is left special in $\cE.$ However, we prefer to formulate the condition in terms of small categories.
\end{remark}

\subsection{Coherently assembled abelian categories}
\label{ssec:coh_ass_abelian_cats}

\begin{defi}\label{def:coh_ass_abelian_cats}
A Grothendieck abelian category $\cA$ is coherently assembled if it is such as an exact category. This means that if $\cA$ is compactly assembled and the functor $\hat{\cY}:\cA\to\Ind(\cA)$ is exact.

We denote by $\Cohass_{\ab}$ the category of coherently assembled abelian categories, where the $1$-morphisms are strongly continuous exact functors.
\end{defi}

\begin{remark}
Note that a Grothendieck abelian category $\cA$ is compactly assembled if and only if $\cA$ satisfies (AB6) (the same proof as in \cite[Proposition 1.53]{E24}).
\end{remark}

We recall the following statement from \cite{E24}.

\begin{prop}\label{prop:coh_ass_comp_gen}\cite[Proposition E.8]{E24}
	Let $\cA$ be a Grothendieck abelian category which is compactly generated. Then $\cA$ is coherently assembled if and only if it is locally coherent.
\end{prop}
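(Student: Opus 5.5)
The plan is to prove both implications, using only that a compactly generated Grothendieck abelian category is $\Ind(\cA^{\omega})$ with $\cA^{\omega}$ an additive category with cokernels, and that $\hat{\cY}$ is explicit in this case.

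For ``locally coherent $\Rightarrow$ coherently assembled'': if $\cA$ is locally coherent, then $\cA^{\omega}$ is a small abelian category, closed under kernels, cokernels and extensions in $\cA$. Moreover $\cA\simeq\Ind(\cA^{\omega})$ with its standard exact structure, since filtered colimits of short exact sequences of finitely presented objects give exactly the short exact sequences of $\cA$. Thus $\cA$ is a locally coherent exact category, and the final assertion of Proposition \ref{prop:coherently_assembled_as_retract} applies.

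For the converse, assume $\cA$ is compactly generated and $\hat{\cY}$ is exact. For a compactly generated category we have $\hat{\cY}=\Ind(\iota)\circ(\text{equivalence }\cA\simeq\Ind(\cA^{\omega}))$, where $\iota:\cA^{\omega}\hto\cA$. Concretely, $\hat{\cY}(\indlim x_i)=\inddlim x_i$ for $x_i\in\cA^{\omega}$. We must show that $\cA^{\omega}$ is closed under kernels in $\cA$. It is always closed under cokernels and extensions, so this makes $\cA^{\omega}$ an abelian subcategory and $\cA=\Ind(\cA^{\omega})$ locally coherent.

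So take a map $f:x\to y$ in $\cA^{\omega}$ and let $k=\ker f\in\cA$. Exactness of $\hat{\cY}$ means $\hat{\cY}(k)=\ker(\cY(x)\to\cY(y))$ in $\Ind(\cA)$. In $\Ind(\cA)$, kernels of maps between representables are computed as the formal ind-object $\inddlim$ of compact objects mapping into $k$; that is, $\hat{\cY}(k)$ equals this kernel. On the other hand, $\Ind(\cA)$ is abelian with kernels computed levelwise, so $\ker(\cY(x)\to\cY(y))=\cY(k)$, the representable. Hence $\hat{\cY}(k)\simeq\cY(k)$, and the counit $\hat{\cY}(k)\to\cY(k)$ is an isomorphism. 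This means $\id_k$ is a compact morphism, so $k$ is a retract of a compact object, hence compact.

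The main obstacle is making this identification carefully. One must check that exactness of $\hat{\cY}$ (as a left adjoint it is automatically right exact) gives left exactness precisely in the sense of kernels in $\Ind(\cA)$, and that $\Ind(\cA)$ is abelian with filtered colimits exact, so the kernel of $\cY(f)$ is $\cY(\ker f)$. Once this holds, the argument reduces to the standard fact that an object $k$ with $\hat{\cY}(k)\simeq\cY(k)$ is compact.
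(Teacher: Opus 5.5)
Your proof is correct. The paper does not reprove this statement; it cites \cite[Proposition E.8]{E24}, and your argument is the standard one. The easy direction is the final assertion of Proposition \ref{prop:coherently_assembled_as_retract}, where $\hat{\cY}=\Ind(\cA^{\omega}\hto\cA)$ is visibly exact. For the converse, exactness of both $\hat{\cY}$ and $\cY$ identifies $\hat{\cY}(\ker f)$ and $\cY(\ker f)$ with the same kernel of $\cY(f)$ for $f$ between compact objects, so $\ker f$ is a retract of a compact object.

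Two small points of phrasing. The map $\hat{\cY}(k)\to\cY(k)$ is the canonical comparison map, not an adjunction counit; it is an isomorphism because it is the induced map between two kernels of $\cY(x)\to\cY(y)$. Also, your sentence describing the kernel of a map of representables as the formal ind-object of compact objects mapping into $k$ is only true after exactness of $\hat{\cY}$ has been used, not as a general fact about $\Ind(\cA)$.
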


For completeness we give the following non-standard characterization of locally noetherian abelian categories.

\begin{prop}
Let $\cA$ be a Grothendieck abelian category. The following are equivalent.
\begin{enumerate}[label=(\roman*),ref=(\roman*)]
	\item $\cA$ is locally noetherian. \label{loc_noeth}
	\item $\cA$ is compactly assembled and for any object $x\in\cA$ the map $\hat{\cY}(x)\to\cY(x)$ is a monomorphism in $\cA.$ \label{mono_from_hat_Y_to_Y}
\end{enumerate}
\end{prop}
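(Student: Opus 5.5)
The plan is to prove the two implications separately. The technical input is the following description of $\hat{\cY}(x)$ in a compactly assembled abelian category: it is the formal filtered colimit $\inddlim[i]x_i$ over compact morphisms $x_i\to x$. Here a map $\hat{\cY}(x)\to\cY(x)$ of ind-objects is a monomorphism in $\Ind(\cA)$ precisely when it is represented by a filtered system of monomorphisms, equivalently when the kernels of $x_i\to x$ are ind-zero. I read condition \ref{mono_from_hat_Y_to_Y} as this monomorphism condition in the abelian category $\Ind(\cA)$.

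For \Implies{loc_noeth}{mono_from_hat_Y_to_Y}, I will use that a locally noetherian category is locally coherent: $\cA\simeq\Ind(\cA^{\omega})$ with $\cA^{\omega}$ the noetherian objects. It is therefore compactly assembled, and $\hat{\cY}(x)=\inddlim x_i$ over compact (i.e. noetherian) subobjects $x_i\subset x$. Indeed, $\hat{\cY}$ is $\Ind$ applied to the inclusion, and every $x$ is the directed union of its noetherian subobjects, since the images of noetherian objects are noetherian. The transition maps are monomorphisms into $x$, so $\hat{\cY}(x)\to\cY(x)$ is a monomorphism in $\Ind(\cA)$.

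For \Implies{mono_from_hat_Y_to_Y}{loc_noeth}, I will show that every $x$ is the directed union of noetherian subobjects and that noetherian objects are compact; together these give local noetherianity.

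\begin{enumerate}[label=(\alph*)]
\item Write $\hat{\cY}(x)=\inddlim[i\in I]x_i$ with compact maps $f_i:x_i\to x$. The monomorphism hypothesis lets me replace each $x_i$ by its image $\operatorname{im}(f_i)\subset x$ without changing the ind-object, because $\inddlim \ker f_i=0$. Since $\colim\hat{\cY}(x)=x$, $x$ is the directed union of the subobjects $y_i=\operatorname{im}(f_i)$.
\item Apply the hypothesis to $y_i$ itself. By definition of compact maps, the map $\cY(y_i)\to\cY(x)$ factors through $\hat{\cY}(x)$. Composing with the mono $\hat{\cY}(x)\hookrightarrow \cY(x)$, this says that $y_i$ is ``essentially compact'' in $x$: for any directed union $x'=\bigcup_j z_j$ of subobjects containing $y_i$ in some ambient object, $y_i$ lies in some $z_j$.
\item Conclude that $y_i$ is finitely generated. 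More precisely, show that every subobject $w\subset y_i$ is finitely generated. Writing $w=\bigcup w_k$ as the directed union of images of compact maps into $w$ (by (a) applied to $w$), I need $w$ to equal some $w_k$. The plan is to use the exactness of $\colim$ together with the mono hypothesis for $y_i/w_k$ and for $w$. From these, the maps in the ind-system of $\hat{\cY}(y_i)$ that land in $w$ should factor through some $w_k$. Then, since $y_i\to x$ factors through $\hat{\cY}(x)$, the whole of $y_i\cap\ldots$ stabilizes.
\end{enumerate}

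From (c), $y_i$ is noetherian. Noetherian objects are then compact in $\cA$ by the standard AB5 argument: a map from a noetherian object into a filtered colimit factors through a finite stage, because the images of the intermediate stages form an ascending chain that must stabilize. Hence $\cA$ is generated by noetherian objects, i.e. it is locally noetherian.

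I expect step (c) to be the main obstacle: deducing the ascending chain condition on subobjects of $y_i$ from the mono condition applied to subobjects and quotients. If the direct argument fails, the first alternative I would try is a contradiction argument. Given a strictly increasing chain $w_1\subsetneq w_2\subsetneq\dots$ in $y_i$ with union $w$, consider $\hat{\cY}(y_i/w)$ and the exact sequence $0\to\hat{\cY}(w)\to\hat{\cY}(y_i)\to\hat{\cY}(y_i/w)\to 0$. I would then show that the mono condition forces the image of $\hat{\cY}(w)$ in $\cY(y_i)$ to be essentially constant, which contradicts the chain being strictly increasing.
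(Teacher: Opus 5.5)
Your direction (i)$\Rightarrow$(ii) is fine and matches the paper. The direction (ii)$\Rightarrow$(i), however, has a genuine gap in steps (b)--(c), which is where the content of this implication lies.

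In (b), compactness of the inclusion $y_i\hookrightarrow x$ only controls maps \emph{out of $x$}: for a map $x\to\indlim[j] z_j$, the restriction to $y_i$ factors through some $z_j$. It says nothing about directed unions containing $y_i$ that do not receive a map from $x$. So by itself it does not show that $y_i$ is finitely generated, let alone compact. Step (c) then contains no actual argument (``should factor'', ``stabilizes''). Your fallback presupposes the exact sequence $0\to\hat{\cY}(w)\to\hat{\cY}(y_i)\to\hat{\cY}(y_i/w)\to 0$, i.e.\ left exactness of $\hat{\cY}$, which you have not established.

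The missing idea is the following.
\begin{enumerate}
\item The mono hypothesis forces $\hat{\cY}$ to preserve monomorphisms. For $u\hookrightarrow v$, the composite $\hat{\cY}(u)\to\hat{\cY}(v)\to\cY(v)$ equals $\hat{\cY}(u)\to\cY(u)\to\cY(v)$, which is a composite of monomorphisms. Since $\hat{\cY}$ is a left adjoint, it is then exact.
\item Consequently $\Phi(u)=\coker(\hat{\cY}(u)\to\cY(u))$ is an exact functor $\cA\to\Ind(\cA)$, and $u$ is compact if and only if $\Phi(u)=0$.
\item Let $y\hookrightarrow x$ be a monomorphism which is a compact morphism. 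Then $\Phi(y)\to\Phi(x)$ is zero, because $\cY(y)\to\cY(x)$ factors through $\hat{\cY}(x)$. It is also injective, so $\Phi(y)=0$. Hence your $y_i$ are honestly compact, and $\cA$ is compactly generated.
\item In the same way, subobjects of compact objects are compact. This gives the ascending chain condition at once: the union of a chain in a compact $y$ is compact, hence equals some stage.
\end{enumerate}
This is the paper's argument.

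It also repairs your last paragraph. The AB5 argument only factors a map $y\to\indlim[j] z_j$ through the \emph{image} of some $z_j$, not through $z_j$ itself. Noetherian objects need not be compact in a general Grothendieck category. For example, the skyscraper $k_p$ in $\Shv(\R;\Vect_{\mk})$ is simple, but $\Hom(k_p,-)=\Gamma_{\{p\}}(\R,-)$ does not commute with filtered colimits. With the $\Phi$ argument, the generators you produce are compact and noetherian from the start.
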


\begin{proof}
\Implies{loc_noeth}{mono_from_hat_Y_to_Y}. %Since $\cA$ is locally noetherian, it is locally coherent, hence coherently assembled by Proposition \ref{prop:coh_ass_comp_gen}. 
Any object $x\in\cA$ is a directed union of noetherian subobjects $x_i\subset x,$ hence the map $\hat{\cY}(x)\cong\indlim[i]\cY(x_i)\to\cY(x)$ is a monomorphism.

\Implies{mono_from_hat_Y_to_Y}{loc_noeth}. We first observe that $\cA$ is coherently assembled. Indeed, the functor $\hat{\cY}:\cA\to\Ind(\cA)$ is automatically right exact, so we only need to check that it preserves monomorphisms. If $x\to y$ is a monomorphism in $\cA,$ then in the commutative square
\begin{equation*}
\begin{tikzcd}
\hat{\cY}(x) \ar[r]\ar[d] & \hat{\cY}(y)\ar[d]\\
\cY(x)\ar[r] & \cY(y)
\end{tikzcd}
\end{equation*}
both vertical arrows and the lower horizontal arrow are monomorphisms, hence so is the upper horizontal arrow, as required.

Next, we see that the functor
\begin{equation*}
\Phi:\cA\to\Ind(\cA),\quad \Phi(x)=\coker(\hat{\cY}(x)\to\cY(x)),
\end{equation*}
is exact. Moreover, an object $x\in\cA$ is compact if and only if $\Phi(x)=0.$

Now take some $x\in\cA,$ and let $\hat{\cY}(x)\cong\indlim[i\in I]\cY(x_i),$ where $I$ is directed. Since the map $\hat{\cY}(x)\to\cY(x)$ is a monomorphism, we may and will assume that each map $x_i\to x$ is a monomorphism in $\cA.$ For each $i\in I$ the map $\Phi(x_i)\to\Phi(x)$ is zero, hence $\Phi(x_i)=0$ by the left exactness of $\Phi.$ Therefore, each $x_i$ is compact and $\cA$ is compactly generated. By Proposition \ref{prop:coh_ass_comp_gen} $\cA$ is locally coherent. It remains to show that the class of compact objects in $\cA$ is closed under subobjects. This again follows from the left exactness of $\Phi:$ if $x\in\cA$ is compact and $y\subset x,$ then $\Phi(x)=0,$ hence $\Phi(y)=0,$ i.e. $y$ is compact.
\end{proof}

We record the special cases of general results on coherently assembled exact categories.

\begin{cor}\label{cor:coh_ass_abelian_general}
Let $\cA$ be a coherently assembled abelian category.

\begin{enumerate}[label=(\roman*),ref=(\roman*)]
\item $\cA$ is locally $\omega_1$-coherent. \label{coh_ass_locally_omega_1_coherent}
\item The unseparated derived category $\check{D}(\cA)$ is dualizable and the functor $\cA\to\check{D}(\cA)$ is strongly continuous. Moreover, we have a well-defined functor $\check{D}(-):\Cohass_{\ab}\to\Cat_{\st}^{\dual}.$ \label{check_D_is_dualizable}
\end{enumerate} 
\end{cor}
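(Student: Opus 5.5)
Both parts follow by specializing the general results on coherently assembled exact categories to $\cA$, regarded as an exact category with its abelian exact structure. By Definition \ref{def:coh_ass_abelian_cats}, $\cA$ is then a coherently assembled exact category in the sense of Definition \ref{def:coherently_assembled_exact}. This is Grothendieck exact because filtered colimits in a Grothendieck abelian category are exact (AB5).

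For \ref{coh_ass_locally_omega_1_coherent} we apply Proposition \ref{prop:coherently_assembled_is_locally_omega_1_coherent} directly: it says $\cE$ is locally $\omega_1$-coherent as an exact category. Its underlying category is then $\Ind_{\omega_1}(\cA^{\omega_1})$, and the exact structure on it is the one induced from short exact sequences in $\cA$. Nothing further is needed.

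For \ref{check_D_is_dualizable} we first identify $\check{\St}(\cA)\simeq\check{D}(\cA)$ via Proposition \ref{prop:St_of_Grothendieck_abelian}. This identification is compatible with the functors from $\cA$, since both are the universal continuous exact functor out of $\cA$. Then Proposition \ref{prop:St_of_coherently_assembled} \ref{St_of_coh_ass} gives dualizability of $\check{D}(\cA)$ and strong continuity of $\cA\to\check{D}(\cA)$.

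For functoriality, a morphism in $\Cohass_{\ab}$ is a strongly continuous exact functor, hence a morphism in $\Cohass_{\ex}$. This gives a fully faithful inclusion $\Cohass_{\ab}\hto\Cohass_{\ex}$. Composing with $\check{\St}:\Cohass_{\ex}\to\Cat_{\st}^{\dual}$ from Proposition \ref{prop:St_of_coherently_assembled} \ref{St_from_cohass_ex_to_cat_st_dual} yields the functor $\check{D}(-)$.

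The only point needing care is that the equivalence $\check{\St}(\cA)\simeq\check{D}(\cA)$ is natural in $\cA$, so that the composite really is "$\check{D}$" as a functor. This holds because both sides satisfy the same universal property among presentable stable categories with continuous exact functors from $\cA$.
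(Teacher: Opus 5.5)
Your proposal is correct and is exactly the paper's argument: part (i) is Proposition~\ref{prop:coherently_assembled_is_locally_omega_1_coherent} applied to $\cA$ viewed as a coherently assembled exact category, and part (ii) combines the identification $\check{\St}(\cA)\simeq\check{D}(\cA)$ of Proposition~\ref{prop:St_of_Grothendieck_abelian} with Proposition~\ref{prop:St_of_coherently_assembled}. Your added remark that this identification is natural, because both sides satisfy the same universal property, is a correct elaboration of something the paper leaves implicit.
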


\begin{proof}
\ref{coh_ass_locally_omega_1_coherent} is a special case of Proposition \ref{prop:coherently_assembled_is_locally_omega_1_coherent}, and \ref{check_D_is_dualizable} follows from Propositions \ref{prop:St_of_Grothendieck_abelian} and \ref{prop:St_of_coherently_assembled}.
\end{proof}

The localizing invariants of coherently assembled abelian categories are obtained by applying Definition \ref{def:F^cont_for_coh_ass_exact_categories}.

\begin{prop}\label{prop:coh_ass_ab_retraction_left_exact}
Let $\cA$ be a Grothendieck abelian category and let $\cB$ be a coherently assembled abelian category. Suppose that we have a retraction $\cA\xto{F}\cB\xto{G}\cA,$ such that both $F$ and $G$ are left exact and commute with filtered colimits. Then $\cA$ is coherently assembled.
\end{prop}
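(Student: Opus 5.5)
First I show that $\cA$ is compactly assembled. Since $F$ and $G$ are continuous and $G\circ F\cong\id,$ the argument of Proposition \ref{prop:coherently_assembled_as_retract} applies. Define
\begin{equation*}
\Psi=\Ind(G)\circ\hat{\cY}_{\cB}\circ F:\cA\to\Ind(\cA).
\end{equation*}
The claim is that $\Psi$ is left adjoint to $\colim:\Ind(\cA)\to\cA.$ Compactly assembled categories are closed under retracts along continuous functors (\cite[Lurie, 21.1.2]{Lur18}, see also \cite{E24}), so $\cA$ is compactly assembled, and then $\hat{\cY}_{\cA}\cong\Psi.$

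To check the formula for $\hat{\cY}_{\cA}$ directly, I would argue as follows. For $x\in\cA$ we have $\colim\Psi(x)\cong G(\colim\hat{\cY}_{\cB}(F(x)))\cong GF(x)\cong x,$ using continuity of $G.$ Every compact morphism into $F(x)$ maps under $G$ to a compact morphism into $x.$ Indeed, a factorization $\cY(b)\to\hat{\cY}_{\cB}(F x)$ gives $\cY(Gb)\to\Ind(G)\hat{\cY}_{\cB}(Fx)\to\cY(GFx).$ Conversely, the standard retract argument shows that $\Psi(x)$ satisfies the universal property of the left adjoint.

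The key step is exactness. The functor $\hat{\cY}_{\cA}$ is a left adjoint, so it is automatically right exact. It remains to show that it preserves monomorphisms, i.e. that it is left exact. Using $\hat{\cY}_{\cA}\cong\Ind(G)\circ\hat{\cY}_{\cB}\circ F,$ this is a composition of three left exact functors. $F$ is left exact by assumption. $\hat{\cY}_{\cB}$ is exact because $\cB$ is coherently assembled. $\Ind(G)$ is left exact because $G$ is left exact: finite limits in $\Ind(\cB)$ of ind-objects can be computed levelwise on a common indexing diagram, and $\Ind(G)$ acts levelwise. Hence $\hat{\cY}_{\cA}$ is left exact, so it is exact, and $\cA$ is coherently assembled.

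The main obstacle is justifying the identification $\hat{\cY}_{\cA}\cong\Ind(G)\hat{\cY}_{\cB}F$ when $F$ is merely continuous rather than strongly continuous. In Proposition \ref{prop:coherently_assembled_as_retract} this was used without comment, and I would prove it here. Write $\eta:\hat{\cY}_{\cA}\to\cY$ for the canonical map. The natural map $\Ind(G)\hat{\cY}_{\cB}F\to\Ind(G)\cY F\cong\cY GF\cong\cY$ factors through $\hat{\cY}_{\cA}.$ This factorization follows by adjunction from the fact that $G$ sends compact morphisms to compact morphisms, since it preserves filtered colimits and sends $\hat{\cY}_{\cB}$-factorizations as above. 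In the other direction, $\hat{\cY}_{\cA}(x)$ is a retract of $\Ind(G)\hat{\cY}_{\cB}F(x)$ via $\hat{\cY}_{\cA}(GFx)\to\Ind(G)\hat{\cY}_{\cB}(Fx),$ which comes from continuity of $F$ and the lax square for $F.$ Chasing the compositions, both are idempotents equal to the identity on $\hat{\cY}_{\cA}(x)$ after composing with $\eta.$ If a full isomorphism proves delicate, it suffices that $\hat{\cY}_{\cA}(x)$ is a natural retract of the left exact functor $\Ind(G)\hat{\cY}_{\cB}F.$ A retract of a monomorphism-preserving functor preserves monomorphisms, and that gives the conclusion.
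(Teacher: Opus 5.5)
Your overall plan is the paper's: compact assembly from the retraction, right exactness for free, and left exactness deduced from the left exactness of $F$, $\hat{\cY}_{\cB}$ and $\Ind(G)$. The problem is the identification $\hat{\cY}_{\cA}\cong\Ind(G)\circ\hat{\cY}_{\cB}\circ F$ on which you build the key step. It is false in general, and the arguments you give for it rely on incorrect claims.

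\begin{itemize}
\item A merely continuous $G$ does \emph{not} send compact morphisms to compact morphisms; that property is strong continuity.
\item Your check that $G(f)$ is compact only factors $\cY(Gb)\to\cY(x)$ through $\Ind(G)\hat{\cY}_{\cB}(Fx)$, not through $\hat{\cY}_{\cA}(x)$. So $\Ind(G)\hat{\cY}_{\cB}F(x)$ need not have the universal property of the left adjoint.
\end{itemize}

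What is true is a retract statement. For continuous $H$, write $\lambda_H:\hat{\cY}\circ H\to\Ind(H)\circ\hat{\cY}$ for the mate of $\colim\circ\Ind(H)\cong H\circ\colim$. The maps $\lambda_G F:\hat{\cY}_{\cA}\to\Ind(G)\hat{\cY}_{\cB}F$ and $\Ind(G)\lambda_F:\Ind(G)\hat{\cY}_{\cB}F\to\hat{\cY}_{\cA}$ compose to $\lambda_{GF}=\id$. The composite in the other order is only an idempotent. It is the identity when $F$ or $G$ is strongly continuous, but not in general. Equivalently, without knowing in advance that $\cA$ is compactly assembled: $\Hom_{\cA}(x,\colim(-))$ is a natural retract of $\Hom_{\Ind(\cA)}(\Ind(G)\hat{\cY}_{\cB}F(x),-)$. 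It is therefore corepresented by a retract of $\Ind(G)\hat{\cY}_{\cB}F(x)$, since $\Ind(\cA)$ is idempotent-complete.

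Here is a counterexample in exactly the setting of the statement.
\begin{itemize}
\item Let $R=\mk^{\N}$, let $I\subset R$ be the ideal of finitely supported sequences, and let $\delta_j\in R$ be the $j$-th coordinate idempotent.
\item Put $\cA=\Vect_{\mk}$ and $\cB=\Vect_{\mk}\times\Mod\hy R$. This $\cB$ is locally coherent because $R$ is von Neumann regular.
\item Put $F(x)=(x,x\otimes_{\mk}R/I)$ and $G(a,c)=a\oplus(c\otimes_R I)$. Both are exact (every $R$-module is flat) and continuous.
\item $GF\cong\id$, because $R/I\otimes_R I\cong I/I^2=0$.
\end{itemize}
With $f_j=1-\delta_j$ one has $R/I\cong\colim(R\xto{f_1}R\xto{f_2}\cdots)$, hence
\[
\Ind(G)\hat{\cY}_{\cB}F(\mk)\cong\mk\oplus\inddlim(I\xto{f_1}I\xto{f_2}\cdots).
\]
The second summand is a non-zero ind-object, since no composite of transition maps vanishes on $I$. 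On the other hand $\hat{\cY}_{\cA}(\mk)=\mk$, so the two are not isomorphic.

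So drop the isomorphism claim and the idempotent chase, and keep only your fallback. $\hat{\cY}_{\cA}$ is a natural retract of the left exact functor $\Ind(G)\hat{\cY}_{\cB}F$. A retract of a left exact functor is left exact, because the homology of a retract of an exact sequence is a retract of zero. That argument is complete, and it is what the paper's ``this follows from the retraction'' amounts to. The same caveat applies to the isomorphism asserted in the proof of Proposition~\ref{prop:coherently_assembled_as_retract}, where likewise only the retract is needed.
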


\begin{proof}
The retraction implies that $\cA$ is compactly assembled. The functor $\hat{\cY}_{\cA}:\cA\to\Ind(\cA)$ is automatically right exact (it is a left adjoint). Hence, we only need to check the left exactness. This follows from the retraction since both $F$ and $G$ are left exact.
\end{proof}

Some non-trivial examples of (not locally coherent) coherently assembled abelian categories are given in Section \ref{sec:examples_of_comp_ass_and_coh_ass}.

We recall the following notation. We denote by $\Groth^{\lex}_{\ab}$ the category of Grothendieck abelian categories, where the $1$-morphisms are colimit-preserving left exact functors. We denote by $\Groth^{c,\lex}_{\ab}\subset\Groth^{\lex}$ the non-full subcategory with the same objects, where $1$-morphisms are functors $F:\cC\to\cD$ such that the right adjoint $F^R:\cD\to\cC$ commutes with filtered colimits. The following statement is certainly known to experts, but we could not find a reference.

\begin{prop}
The categories $\Groth^{c,\lex}_{\ab}$ and $\Groth^{\lex}_{\ab}$ have filtered colimits, and the (non-full subcategory inclusion) functors $\Groth^{c,\lex}_{\ab}\to\Groth^{\lex}_{\ab}\to\Pr^L$ commute with filtered colimits. 
\end{prop}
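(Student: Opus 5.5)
Given a filtered diagram $I\to\Groth^{\lex}_{\ab}$, $i\mapsto\cA_i$, I would take its colimit in $\Pr^L$ and show that it is again a Grothendieck abelian category with the universal property. Recall that colimits in $\Pr^L$ are computed as limits in $\Pr^R$ along the right adjoints. So I set $\cA=\prolim[i]\cA_i$, with transition maps the right adjoints $F_{ij}^R$, and compute this limit in $\widehat{\Cat}$.

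\emph{Step 1.} $\cA$ is a Grothendieck abelian category.
\begin{itemize}
\item Each $\cA_i$ is an ordinary category, hence so is $\cA$.
\item $\cA$ is presentable, by the standard theory of limits in $\Pr^R$.
\item Finite limits in $\cA$ are computed componentwise, since the $F_{ij}^R$ preserve limits.
\item Colimits are computed via the left adjoints to the projections $p_i:\cA\to\cA_i$.
\end{itemize}

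\emph{Step 2 (main obstacle): exactness of filtered colimits and abelianness.} Here I would use that $I$ is filtered and the $F_{ij}$ are left exact. By \cite[Lemma 6.3.3.6]{Lur09} and its analogues, the colimit can be described as follows: for $x\in\cA$ we have
\begin{equation*}
x\simeq\indlim[i]p_i^L p_i(x),
\end{equation*}
where the insertion functors $p_i^L$ satisfy
\begin{equation*}
p_jp_i^L\simeq\indlim[k\geq i,j]F_{jk}^RF_{ik}.
\end{equation*}
This is a filtered colimit of left exact functors, because $F_{ik}$ is left exact and $F_{jk}^R$ is a right adjoint. Hence $p_i^L$ is left exact and $\cA$ is left exact-generated by the images $p_i^L(\cA_i)$. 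Finite limits are computed componentwise, and $\cA$ is a presentable additive category. To see that it is abelian, I would check that the comparison from coimage to image is an isomorphism. It suffices to test this after each $p_j$. After applying $p_j$ this comes down to the same comparison in $\cA_j$, once we know that $p_j$ preserves cokernels up to the filtered colimit formula.

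Concretely, I would instead identify $\cA$ via its generators. Take $\cA\simeq\Ind_\kappa$ of the colimit in small categories of the subcategories $\cA_i^\kappa$ (for $\kappa$ large), whose transition functors $F_{ij}$ are exact. A filtered colimit of small abelian categories along exact functors is abelian. It remains to see that $\Ind_\kappa$ of that colimit, or rather the left exact part, is Grothendieck. The generator is the image of the generators of the $\cA_i$, and filtered colimits are exact because they commute with the componentwise finite limits.

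\emph{Step 3: the universal property in $\Groth^{\lex}_{\ab}$.} A colimit-preserving functor $\cA\to\cB$ is left exact iff each composite $\cA_i\to\cA\to\cB$ is left exact. The "if" direction uses that every finite diagram in $\cA$ lifts to a finite diagram at some stage $i$, up to filtered colimit. This follows from the formula $x\simeq\indlim p_i^Lp_i(x)$ together with the exactness of filtered colimits in $\cB$.

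\emph{Step 4: the subcategory $\Groth^{c,\lex}_{\ab}$.} If all the $F_{ij}^R$ commute with filtered colimits, then the $p_i$ also commute with filtered colimits. This holds because filtered colimits in $\cA$ are computed componentwise, as the limit of a diagram along filtered-colimit-preserving functors. Hence $p_i$ is a morphism of $\Groth^{c,\lex}_{\ab}$. Given a cocone with targets $G_i:\cA_i\to\cB$ whose right adjoints preserve filtered colimits, the induced $G$ has right adjoint $(G_i^R)_i$, which preserves filtered colimits. This gives the colimit in $\Groth^{c,\lex}_{\ab}$ and shows compatibility of the inclusions.
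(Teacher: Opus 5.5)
Your Step 4 is fine and matches the paper's formal argument for $\Groth^{c,\lex}_{\ab}$. Step 2, however, has a genuine gap, and it sits exactly where the non-formal content of the statement lies.

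\textbf{The formula in Step 2 does not hold in general.} The formula $p_jp_i^L\simeq\indlim[k\geq i,j]F_{jk}^RF_{ik}$, and your later use of the claim that filtered colimits in $\cA$ are computed componentwise, are valid only when the right adjoints $F_{jk}^R$ commute with filtered colimits, i.e.\ for diagrams in $\Groth^{c,\lex}_{\ab}$. For a general diagram in $\Groth^{\lex}_{\ab}$, the family $(\indlim[k]F_{jk}^RF_{ik}(x))_j$ is not even compatible under the $F^R$'s, so it does not define an object of $\prolim[i]\cA_i$. For example, take the sequence $\Ab\to\Ab\to\cdots$ with the exact transition functor $M\mapsto M^{(\N)}$, whose right adjoint is $N\mapsto N^{\N}$. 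Compatibility would require the canonical injection $\indlim[k](A_k^{\N})\to(\indlim[k]A_k)^{\N}$ to be bijective for a strictly increasing chain $A_0\subsetneq A_1\subsetneq\cdots$, which is false.

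\textbf{What is left unproved.} In the general case you have not shown that the insertions $p_i^L$ are left exact, that $\cA$ is abelian, or that filtered colimits in $\cA$ are exact. Step 3 also needs the first and last of these, to commute a finite limit in $\cA$ past $\indlim[i]p_i^Lp_i$. Testing coimage versus image ``after each $p_j$'' cannot work as stated either, since $p_j$ is a right adjoint and does not preserve cokernels.

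\textbf{The fallback via generators does not repair this.} Since $I$ is only $\omega$-filtered, the $\Pr^L$-colimit is $\Ind_\kappa$ of the colimit taken in small categories with $\kappa$-small colimits. It is not $\Ind_\kappa$ of the naive colimit of the $\cA_i^\kappa$, which lacks, e.g., countable coproducts of objects coming from unboundedly many stages. Abelianness and exactness of filtered colimits for the correct colimit are precisely the original problem again.

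\textbf{What the paper uses instead.} The missing input is what the paper imports from \cite{Lur18}. It uses the fully faithful functor $\Groth^{\lex}_{\ab}\to\Groth^{\lex}_{\infty}$, $\cA\mapsto\check{D}(\cA)_{\geq 0}$, which is left adjoint to $\cC\mapsto\tau_{\leq 0}\cC$. It combines this with \cite[Proposition C.3.3.5]{Lur18}: $\Groth^{\lex}_{\infty}$ has filtered colimits, preserved by the forgetful functor to $\Pr^L$. That result is where the hard work is done, analogous to filtered limits of $\infty$-topoi. Since colimits in $\Pr^L$ are limits in $\Pr^R$ and the right adjoints preserve discrete objects, one gets $\indlim[i]\cA_i\simeq\tau_{\leq 0}(\indlim[i]\check{D}(\cA_i)_{\geq 0})$. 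This is Grothendieck abelian, and full faithfulness gives the universal property in $\Groth^{\lex}_{\ab}$.

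\textbf{What your route does prove.} Your direct approach does work for diagrams in $\Groth^{c,\lex}_{\ab}$. There your formula holds, so the $p_i^L$ are exact, and filtered colimits in $\cA$ are componentwise and hence exact. Writing $f\simeq\indlim[i]p_i^Lp_i(f)$ then shows that coimage-to-image is an isomorphism. This gives a pleasantly elementary proof in that case, but only for $c$-diagrams, not for arbitrary filtered diagrams in $\Groth^{\lex}_{\ab}$.
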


\begin{proof}
Consider an ind-system $(\cA_i)_{i\in I}.$ By \cite[Proposition C.5.5.20]{Lur18} we have a fully faithful functor $\Groth^{\lex}_{\ab}\to \Groth^{lex}_{\infty},$ $\cA\mapsto \check{D}(\cA)_{\geq 0},$ left adjoint to the functor $\cC\mapsto\tau_{\leq 0}\cC.$ Putting $\cC_i=\check{D}(\cA_i)_{\geq 0},$ we obtain an ind-system $(\cC_i)_{i\in I}$ in $\Groth_{\infty}^{\lex}$ such that $\cA_i\simeq \tau_{\leq 0}\cC_i$ (functorially in $i\in I$). By \cite[Proposition C.3.3.5]{Lur18} the category $\Groth_{\infty}^{\lex}$ has filtered colimits, which are preserved by the functor $\Groth_{\infty}^{\lex}\to\Pr^L.$ Recall that a colimit in $\Pr^L$ can be identified with the limit of the associated diagram in $\Pr^R$ (with right adjoint transition functors). The right adjoints preserve discrete objects, which implies that $\indlim[i]\cA_i\cong\tau_{\leq 0}(\indlim[i]\check{D}(\cA_i)_{\geq 0})$ is a Grothendieck abelian category (the former colimit is also computed in $\Pr^L$). Using again the fully faithfulness of the functor $\cA\mapsto \check{D}(\cA)_{\geq 0},$ we conclude that the functor $\Groth^{\lex}_{\ab}\to\Pr^L$ creates filtered colimits, as stated.
	
It follows formally that the functor $\Groth^{c,\lex}_{\ab}\to\Groth^{\lex}_{\ab}$ also creates filtered colimits, by applying the argument from the proof of \cite[Proposition 3.5.3]{Lur18} or \cite[Proposition 1.7]{E24}.
\end{proof}

Note that $\Cohass_{\ab}\subset \Groth^{c,\lex}$ is a (strictly) full subcategory. The following result is an (almost) straightforward generalization of the following fact: if $(R_i)_{i\in I}$ is an ind-system of (ordinary) associative right coherent rings with left flat transition maps, then the ring $\indlim[i]R_i$ is right coherent.

\begin{prop}\label{prop:cohass_closed_under_filtered_colimits}
The full subcategory $\Cohass_{\ab}\subset\Groth^{c,\lex}_{\ab}$ is closed under filtered colimits.
%Let $I$ be a filtered $\infty$-category, and let $I\to\Groth^{c,\lex}_{\ab},$ $i\mapsto\cA_i,$ be a functor such that each $\cA_i$ is coherently assembled. Then the colimit $\cA=\indlim[i]\cA_i$ (taken in $\Groth^{c,\lex}_{\ab}$) is also a coherently assembled abelian category. 
%Moreover, all the functors $\cA_i\to\cA$ are left exact. 
\end{prop}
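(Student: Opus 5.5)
Let $(\cA_i)_{i\in I}$ be a filtered diagram in $\Groth^{c,\lex}_{\ab}$ with each $\cA_i$ coherently assembled, and let $\cA=\indlim[i]\cA_i$. By the preceding proposition, this colimit may be computed in $\Pr^L$. Write $\phi_i:\cA_i\to\cA$ for the structure functors. They are colimit-preserving and left exact, and their right adjoints $\phi_i^R$ commute with filtered colimits. By the proof of the preceding proposition, $\cA$ is the limit of the $\cA_i$ along the right adjoints, hence $\cA$ is Grothendieck abelian. We must show two things: that $\cA$ is compactly assembled, and that $\hat{\cY}_{\cA}$ is exact.

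\textbf{Compact assembly.} Compactly assembled categories are closed under filtered colimits along strongly continuous functors (the argument of \cite[Proposition 1.7]{E24}), so $\cA$ is compactly assembled. Since every $\phi_i$ is strongly continuous, the description of $\hat{\cY}_{\cA}$ in the colimit is
\begin{equation*}
\hat{\cY}_{\cA}(x)\cong\indlim[i]\Ind(\phi_i)\bigl(\hat{\cY}_{\cA_i}(\phi_i^R(x))\bigr).
\end{equation*}
This holds because $x\cong\indlim[i]\phi_i\phi_i^R(x)$ in a filtered colimit in $\Pr^L$ with continuous right adjoints, and $\hat{\cY}$ commutes with colimits and with strongly continuous functors.

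\textbf{Exactness of $\hat{\cY}_{\cA}$.} Since $\hat{\cY}_{\cA}$ is a left adjoint it is right exact, so only left exactness remains. In the formula above:
\begin{itemize}
\item $\phi_i^R$ is left exact, being a right adjoint;
\item $\hat{\cY}_{\cA_i}$ is exact by hypothesis;
\item $\Ind(\phi_i)$ is left exact because $\phi_i$ is left exact (the 1-morphisms in $\Groth^{c,\lex}_{\ab}$ are left exact);
\item filtered colimits in $\Ind(\cA)$ are exact.
\end{itemize}
Hence each term $x\mapsto\Ind(\phi_i)\hat{\cY}_{\cA_i}\phi_i^R(x)$ is a left exact functor $\cA\to\Ind(\cA)$. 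Their filtered colimit over $i$ is then left exact, since finite limits commute with filtered colimits in the Grothendieck abelian category $\Ind(\cA)$.

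\textbf{Main obstacle.} The difficult step is justifying the colimit formula for $\hat{\cY}_{\cA}$, in particular that the transition maps are compatible so the colimit is taken over a genuine diagram indexed by $I$. I would obtain this from $\phi_j\circ\phi_{ij}\cong\phi_i$ and the strong continuity of the transition functors, which gives $\Ind(\phi_{ij})\hat{\cY}_{\cA_i}\cong\hat{\cY}_{\cA_j}\phi_{ij}$. An alternative route, if this formula is awkward, is to verify directly that $\hat{\cY}_{\cA}$ preserves monomorphisms. For this, use that a monomorphism $x\to y$ in $\cA$ gives monomorphisms $\phi_i^R x\to\phi_i^R y$, and reduce via the formula to the exactness of each $\hat{\cY}_{\cA_i}$ and the left exactness of each $\phi_i$.
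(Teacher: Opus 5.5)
Your proof is correct, but it takes a different route from the paper's. The paper never writes down $\hat{\cY}_{\cA}$. It first treats the case where all $\cA_i$ are locally coherent: there $\cA\simeq\Ind(\indlim[i]\cA_i^{\omega})$, and a filtered colimit of small abelian categories along exact functors is abelian. For the general case it observes that the diagram $i\mapsto\cA_i$ is a retract in $\Groth^{\lex}_{\ab}$ of the diagram $i\mapsto\Ind(\cA_i^{\omega_1})$, via $\hat{\cY}_{\cA_i}$ and $\colim$. Thus $\cA$ is a retract, along left exact continuous functors, of a locally coherent category, and Proposition~\ref{prop:coh_ass_ab_retraction_left_exact} applies.

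You instead compute $\hat{\cY}_{\cA}\cong\indlim[i]\Ind(\phi_i)\circ\hat{\cY}_{\cA_i}\circ\phi_i^R$ and use that left exact functors into $\Ind(\cA)$ are closed under filtered colimits. This is the categorical counterpart of the classical argument for coherent rings with flat transition maps. It makes visible that only the left exactness of the $\phi_i$ and the exactness of the $\hat{\cY}_{\cA_i}$ are used. The price is that you rely on two inputs the paper's retract argument sidesteps: the identification $\Id_{\cA}\cong\indlim[i]\phi_i\phi_i^R$ for filtered colimits in $\Pr^L$ with continuous right adjoints, and the compact assembly of $\cA$. The latter can also be seen directly: using $\Ind(\phi_i)\dashv\Ind(\phi_i^R)$ and the continuity of $\phi_i^R$, your formula visibly defines a left adjoint to $\colim$.

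The step you flag as the main obstacle is not one. Once $\cA$ is compactly assembled and the $\phi_i$ are strongly continuous, apply $\hat{\cY}_{\cA}$ to the $I$-diagram $i\mapsto\phi_i\phi_i^R$ and use the canonical isomorphisms $\hat{\cY}_{\cA}\circ\phi_i\cong\Ind(\phi_i)\circ\hat{\cY}_{\cA_i}$. This already yields a genuine $I$-indexed diagram, so the fallback via monomorphisms is unnecessary.
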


\begin{proof}
Let $I$ be a directed poset, and let $I\to\Cohass_{\ab},$ $i\mapsto\cA_i,$ be a functor.

First suppose that each $\cA_i$ is locally coherent. Then we have an $I$-indexed diagram of small abelian categories, $i\mapsto\cA_i^{\omega}.$ The transition functors are exact, hence the colimit is abelian and we have $\cA\simeq\Ind(\indlim[i]\cA_i^{\omega}).$ 
%The functors $\cA_i^{\omega}\to\cA^{\omega}$ are left exact, hence so are the functors $\cA_i\to\cA.$

The general case follows since the diagram $i\mapsto\cA_i$ in $\Groth^{\lex}_{\ab}$ is a retract of the diagram $i\mapsto\Ind(\cA_i^{\omega_1}).$
\end{proof}

Finally, we point out that the weak (AB5) axiom holds.

\begin{prop}\label{prop:weak_AB5_Groth_cohass}
The categories $\Groth^{c,\lex}_{\ab}$ and $\Cohass_{\ab}$ satisfy the weak (AB5) axiom: the class of fully faithful functors is closed under filtered colimits.
\end{prop}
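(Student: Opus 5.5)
The plan is to compute a filtered colimit of fully faithful functors in $\Groth^{\lex}_{\ab}$ through the right adjoints, and then reduce the case of $\Cohass_{\ab}$ to the case of $\Groth^{c,\lex}_{\ab}$. Since $\Cohass_{\ab}\subset\Groth^{c,\lex}_{\ab}$ is a full subcategory closed under filtered colimits (Proposition \ref{prop:cohass_closed_under_filtered_colimits}), it suffices to treat $\Groth^{c,\lex}_{\ab}$.

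So let $I$ be a directed poset. Suppose we have a natural transformation of $I$-diagrams $F_i:\cA_i\to\cB_i$ in $\Groth^{c,\lex}_{\ab}$ with each $F_i$ fully faithful. We want $F=\indlim[i]F_i:\cA\to\cB$ to be fully faithful.

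By the previous proposition, the colimits are computed in $\Pr^L$, hence as limits in $\Pr^R$ along the right adjoints. Thus $\cA\simeq\prolim[i]\cA_i$ along the transition right adjoints $(\phi_{ij})^R$, and similarly for $\cB$. An object of $\cA$ is then a compatible family $(a_i)$. The right adjoint $F^R:\cB\to\cA$ is given levelwise by $(b_i)\mapsto (F_i^R b_i)$.

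Here I need the squares $(\phi_{ij})^R F_j^R\cong F_i^R(\psi_{ij})^R$, which hold by adjunction from $F_j\phi_{ij}\cong\psi_{ij}F_i$. So $F^R$ is indeed induced levelwise. However, the left adjoint $F$ is \emph{not} levelwise in the limit description, and this is the main subtlety.

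Fully faithfulness of $F$ is equivalent to the unit $\id\to F^RF$ being an isomorphism. The key step is to describe the canonical functors $\cA_i\to\cA$ concretely. Using the fact that the transition functors have continuous right adjoints (the defining property of $\Groth^{c,\lex}_{\ab}$), I would show, as in \cite[Proposition 1.7]{E24}, that the inclusion $\mathrm{ins}_i:\cA_i\to\cA$ has right adjoint $\mathrm{ev}_i$. Moreover, I expect
\[
\mathrm{ev}_k\circ \mathrm{ins}_i\cong \indlim[j\geq i,k](\phi_{kj})^R\phi_{ij},
\]
a filtered colimit, computed in $\cA_k$. This formula is where continuity of the right adjoints is used, and it is the step I expect to be the main obstacle: one must check that the levelwise filtered colimit actually defines a compatible family, and verifying this requires the continuity.

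Granting this, $\cA$ is generated under colimits by the images of the $\mathrm{ins}_i$. Since $F^R$ is continuous (it is levelwise built from continuous functors, and colimits in the limit category of continuous right adjoints are computed levelwise), it suffices to check that the unit is an isomorphism on objects $\mathrm{ins}_i(a)$.

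Apply $\mathrm{ev}_k$ to the unit. On the left we get $\indlim[j](\phi_{kj})^R\phi_{ij}(a)$. On the right we get $F_k^R\indlim[j](\psi_{kj})^R\psi_{ij}F_i(a)$, which equals
\[
\indlim[j](\phi_{kj})^RF_j^RF_j\phi_{ij}(a).
\]
Termwise, the map is induced by the units of $F_j$, which are isomorphisms by fully faithfulness of $F_j$. Hence the unit is an isomorphism, and $F$ is fully faithful.
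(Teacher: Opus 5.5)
Your argument is correct and is essentially the paper's. You reduce to $\Groth^{c,\lex}_{\ab}$ via Proposition \ref{prop:cohass_closed_under_filtered_colimits}, and then run the formal argument of \cite[Proposition 1.67]{E24}: you check the unit of $F$ on the objects $\mathrm{ins}_i(a)$, using $\mathrm{ev}_k\circ\mathrm{ins}_i\cong\indlim[j](\phi_{kj})^R\phi_{ij}$ and continuity of the $F_k^R$. One small imprecision: since $F^R$ is only left exact and continuous here, the reduction to the objects $\mathrm{ins}_i(a)$ should use that every $x\in\cA$ is the filtered colimit $\indlim[i]\mathrm{ins}_i\mathrm{ev}_i(x)$, rather than generation under arbitrary colimits.
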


\begin{proof}
By Proposition \ref{prop:cohass_closed_under_filtered_colimits} it suffices to prove this for $\Groth^{c,\lex}_{\ab}.$ In this case the proof is the same as in \cite[Proposition 1.67]{E24}.
\end{proof}

Finally, we mention an expected statement about localizations (Serre quotients).

\begin{prop}\label{prop:Serre_quotient_coh_ass}
Let $\cA$ be a coherently assembled abelian category, and let $\cB\subset\cA$ be a coherently assembled localizing subcategory such that the inclusion functor $i:\cB\to\cA$ is strongly continuous. Then the category $\cA/\cB$ is coherently assembled and the quotient functor $q:\cA\to\cA/\cB$ is strongly continuous.

If moreover $\cA$ and $\cB$ are locally coherent, then we have an equivalence $\Ind(\cA^{\omega}/\cB^{\omega})\xto{\sim}\cA/\cB.$
\end{prop}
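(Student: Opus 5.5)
The plan is to treat the locally coherent case first, where everything reduces to small abelian categories, and then reduce the general case to it by the retraction trick used repeatedly above (Propositions \ref{prop:coherently_assembled_as_retract}, \ref{prop:cohass_closed_under_filtered_colimits}).

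\emph{Locally coherent case.} Suppose $\cA=\Ind(\cA^{\omega})$ and $\cB=\Ind(\cB^{\omega})$. Strong continuity of $i$ means that its right adjoint commutes with filtered colimits, so $i$ preserves compact objects: $\cB^{\omega}=\cB\cap\cA^{\omega}$ is a Serre subcategory of the small abelian category $\cA^{\omega}$. The Serre quotient $\cA^{\omega}/\cB^{\omega}$ is abelian, so $\Ind(\cA^{\omega}/\cB^{\omega})$ is locally coherent, hence coherently assembled.

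Now compare $\Ind(\cA^{\omega}/\cB^{\omega})$ with $\cA/\cB$. The exact functor $\Ind(\cA^{\omega})\to\Ind(\cA^{\omega}/\cB^{\omega})$ kills $\Ind(\cB^{\omega})=\cB$ and commutes with colimits, so it factors through $\cA/\cB$. Conversely, $\cA\to\cA/\cB$ restricted to $\cA^{\omega}$ kills $\cB^{\omega}$ and so factors through $\cA^{\omega}/\cB^{\omega}$; since Serre quotients of Grothendieck categories commute with filtered colimits, this factorization extends to $\Ind(\cA^{\omega}/\cB^{\omega})\to\cA/\cB$.

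To see that these two functors are mutually inverse, check that morphisms agree. In both categories they are computed as filtered colimits of $\Hom_{\cA}(x',y/y')$ with $x'\subset x$ and $x/x'\in\cB$. Here one uses that every object of $\cB$ is a filtered colimit of objects of $\cB^{\omega}$ and that subobjects of compact objects in a locally coherent category are filtered unions of compact subobjects. The quotient $q$ is then the ind-extension of an exact functor of small categories, hence strongly continuous. I expect this Hom comparison to be the main technical step: the delicate point is ensuring that the localizing subcategory generated by $\cB^{\omega}$ is all of $\cB$ and that torsion parts are computed compatibly.

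\emph{General case.} Take the retraction $\cA\xto{\hat{\cY}}\Ind(\cA^{\omega_1})\xto{\colim}\cA$, both functors exact and continuous, and similarly for $\cB$. Compatibility of $\hat{\cY}$ with the strongly continuous $i$ gives a commutative diagram in which $\Ind(\cB^{\omega_1})\to\Ind(\cA^{\omega_1})$ is the ind-extension of the exact inclusion $\cB^{\omega_1}\subset\cA^{\omega_1}$, and $\cB^{\omega_1}=\cB\cap\cA^{\omega_1}$ is Serre in $\cA^{\omega_1}$. The previous step then shows that $\cA'=\Ind(\cA^{\omega_1}/\cB^{\omega_1})$ is locally coherent.

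The exact functors $\hat{\cY}$ and $\colim$ preserve the respective $\cB$-parts, so they descend to continuous exact functors $\cA/\cB\to\cA'\to\cA/\cB$. The first is induced by $\hat{\cY}$, which sends $\cB$ into $\Ind(\cB^{\omega_1})$; the second is induced by $\colim$. Their composite is the identity, so Proposition \ref{prop:coherently_assembled_as_retract} shows that $\cA/\cB$ is coherently assembled.

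Strong continuity of $q$ follows from the retraction by the same argument as in Proposition \ref{prop:St_of_coherently_assembled}(ii), since $q$ is a retract of the strongly continuous quotient functor $\Ind(\cA^{\omega_1})\to\cA'$. One point needs checking: that $\Ind(q\mid_{\cA^{\omega_1}})\circ\hat{\cY}_{\cA}$ computes $\hat{\cY}_{\cA/\cB}\circ q$. This should follow because $q$ sends compact morphisms to compact morphisms, which in turn holds because its right adjoint (the section functor) commutes with filtered colimits; that is a consequence of the strong continuity of $i$ via the description of $\cB$-torsion.
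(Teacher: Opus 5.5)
Your proof is correct. For the main statement it takes a different route from the paper.

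\textbf{The two routes.} The paper first asserts that $q^R$ is continuous, deducing it from continuity of $i^R$ via the sequence $0\to i(i^R(x))\to x\to q^R(q(x))\to 0$. It then applies Proposition \ref{prop:coh_ass_ab_retraction_left_exact} to the left exact retraction $\cA/\cB\xto{q^R}\cA\xto{q}\cA/\cB$. Strong continuity of $q$ is then just a restatement of continuity of $q^R$.

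You instead build an \emph{exact} retraction $\cA/\cB\xto{F}\cA'\xto{G}\cA/\cB$ through the locally coherent category $\cA'=\Ind(\cA^{\omega_1}/\cB^{\omega_1})$, and apply Proposition \ref{prop:coherently_assembled_as_retract}. This needs only three inputs, all of which you justify:
\begin{itemize}
\item exactness of $\hat{\cY}_{\cA}$;
\item the inclusion $\hat{\cY}_{\cA}(\cB)\subset\Ind(\cB^{\omega_1})$;
\item the fact that $\cB^{\omega_1}=\cB\cap\cA^{\omega_1}$ is Serre in $\cA^{\omega_1}$.
\end{itemize}
What your route buys is that continuity of $q^R$ is an output rather than an input. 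The paper's displayed sequence is in general only exact on the left. The cokernel of $x\to q^R(q(x))$ lies in $\cB$ but need not vanish: for $\cA=\Mod\hy\Z$ and $\cB$ the torsion groups, it is $\Z\to\Q$. So the paper's step needs extra justification, whereas yours does not.

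\textbf{Two cleanups.} First, your last paragraph undercuts itself. The ``one point needs checking'' is literally the definition of strong continuity of $q$. The proposed justification via the section functor ``via the description of $\cB$-torsion'' is not an argument, and it is exactly the delicate point. Drop it, because the retract argument is already complete. Write $\bar q\colon\Ind(\cA^{\omega_1})\to\cA'$ for the ind-extended small quotient, which is strongly continuous. Then:
\begin{itemize}
\item $\hat{\cY}_{\cA/\cB}\cong\Ind(G)\circ\hat{\cY}_{\cA'}\circ F$ (the retract formula);
\item $F\circ q\cong\bar q\circ\hat{\cY}_{\cA}$;
\item $G\circ\bar q\cong q\circ\colim$.
\end{itemize}
Combining these gives $\hat{\cY}_{\cA/\cB}\circ q\cong\Ind(q)\circ\Ind(\colim)\circ\hat{\cY}_{\Ind(\cA^{\omega_1})}\circ\hat{\cY}_{\cA}\cong\Ind(q)\circ\hat{\cY}_{\cA}$. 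In other words, a retract of a strongly continuous functor along continuous functors is strongly continuous.

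Second, in the locally coherent case the Hom comparison is unnecessary. Once $\cB=\Ind(\cB^{\omega})$ inside $\Ind(\cA^{\omega})$, your two functors between $\cA/\cB$ and $\Ind(\cA^{\omega}/\cB^{\omega})$ are continuous. Both composites become the identity after precomposing with the localizations $q$ and $\cA^{\omega}\to\cA^{\omega}/\cB^{\omega}$ respectively. This is what the paper means by ``follows from the universal properties.''
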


\begin{proof}
The equivalence in the locally coherent case follows from the universal properties.

Denoting by $i^R$ and $q^R$ the right adjoints, for $x\in\cA$ we have a functorial short exact sequence
\begin{equation*}
0\to i(i^R(x))\to x\to q^R(q(x))\to 0.
\end{equation*}
By assumption, $i^R$ is continuous, hence so is $q^R.$ The retraction $\cA/\cB\xto{q^R}\cA\xto{q}\cA/\cB$ implies that $\cA/\cB$ is coherently assembled by Proposition \ref{prop:coh_ass_ab_retraction_left_exact}. The strong continuity of $q$ is already established.
\end{proof}

\subsection{The situation of d\'evissage}
\label{ssec:situation_of_devissage}

The conditions of Quillen's D\'evissage theorem \cite[Theorem 4]{Qui73} naturally generalize to coherently assembled abelian categories. However, this is not immediately straightforward and requires some clarification.

We first treat the case of small categories.

\begin{prop}\label{prop:devissage_condition_for_small}
Let $\cA$ be a small abelian category, and let $\cB\subset\cA$ be a strictly full abelian subcategory such that the inclusion functor $i:\cB\to\cA$ is exact. Suppose that every object of $\cA$ has a finite filtration with subquotients in $\cB.$ Then $\cB$ is closed under taking subobjects and quotients in $\cA.$ 
\end{prop}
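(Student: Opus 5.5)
We prove closure under subobjects; closure under quotients follows by the dual argument, or by applying the same reasoning to $\cA^{op}$ and $\cB^{op}$, since the hypotheses are self-dual. Let $b\in\cB$ and let $a\subset b$ be a subobject in $\cA$. We must show that $a\in\cB$.

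\textbf{Step 1: a length function.} The hypothesis gives each $x\in\cA$ a finite filtration $0=x_0\subset x_1\subset\dots\subset x_n=x$ with $x_k/x_{k-1}\in\cB$. Let $\ell(x)$ be the minimal such $n$. We argue by induction on $\ell(a)$. If $\ell(a)\le 1$, then $a\in\cB$ directly; here we use that $\cB$ is strictly full and contains $0$.

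\textbf{Step 2: the inductive step.} Choose a minimal filtration of $a$. Put $a_1=x_1\in\cB$, and note that $a/a_1$ has a filtration of length $\ell(a)-1$. The composite $a_1\hookrightarrow a\hookrightarrow b$ is a morphism in $\cB$, because $\cB$ is a full subcategory. Since $\cB$ is abelian and $i$ is exact, the cokernel $b/a_1$ computed in $\cB$ agrees with the cokernel computed in $\cA$. Hence $b/a_1\in\cB$.

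Now $a/a_1\subset b/a_1$ is a subobject of an object of $\cB$, and $\ell(a/a_1)<\ell(a)$. By the induction hypothesis, $a/a_1\in\cB$. So $a$ is an extension of $a/a_1$ by $a_1$, with both sitting inside $b$. This gives $a=\ker(b\to (b/a_1)/(a/a_1))$. The quotient $(b/a_1)/(a/a_1)$ is the cokernel of a morphism in $\cB$ between objects of $\cB$, so it lies in $\cB$. Then $a$ is the kernel of the morphism $b\to (b/a_1)/(a/a_1)$ in $\cB$. By exactness of $i$, this kernel computed in $\cB$ agrees with the one in $\cA$, so $a\in\cB$ by strict fullness.

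\textbf{Main obstacle.} The key point is that the induction must be run over \emph{all} objects simultaneously. The statement being proved is: for every $b\in\cB$ and every $a\subset b$ with $\ell(a)\le n$, we have $a\in\cB$. Applying it to the pair $b/a_1\supset a/a_1$ requires $b/a_1\in\cB$, and this is exactly where we use that cokernels in $\cB$ are computed in $\cA$.

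The dual statement is handled symmetrically, by induction on the length of the quotient $b\twoheadrightarrow c$, using kernels in $\cB$ instead of cokernels. Alternatively, it follows formally: write $c=b/a$, then $a=\ker(b\to c)$. Take $a$'s filtration pulled back from a filtration of $c$; the dual argument then applies, using exactness of $i$ for kernels.
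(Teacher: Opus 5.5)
Your proof is correct. It is a close variant of the paper's argument rather than a genuinely new idea. Both are inductions on filtration length, and both use only the fact that kernels and cokernels of morphisms in $\cB$ stay in $\cB$, by exactness of $i$ and strict fullness.

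The difference is in which object gets filtered.
\begin{itemize}
\item The paper reduces everything to one claim: $\ker(f)\in\cB$ for any $f\colon x\to y$ with $x\in\cB$ and $y\in\cA$ arbitrary. It inducts on the filtration length of the target $y$. At each step it replaces $y$ by $F_{n-1}y$ and $x$ by $x'=\ker(x\to y/F_{n-1}y)\in\cB$. This leaves the kernel unchanged, so no reassembly is needed at the end.
\item You induct on the filtration length of the subobject $a\subset b$. You peel off the bottom layer $a_1$ and pass to the pair $a/a_1\subset b/a_1$. This forces an extra step: identifying $(b/a_1)/(a/a_1)\cong b/a\in\cB$ and then recovering $a$ as a kernel.
\end{itemize}

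Your treatment of quotients via self-duality of the hypotheses is valid but unnecessary. The ``alternatively'' sentence about pulling back a filtration of $c$ does not make sense as written. The clean argument is: $a=\ker(b\to c)$ lies in $\cB$ by the subobject case, so $c=\coker(a\hookrightarrow b)\in\cB$ by exactness of $i$. This is also why the paper's kernel formulation gives both closure properties at once.
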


\begin{proof}
It suffices to prove that for any morphism $f:x\to y$ in $\cA,$ if $x\in\cB$ then $\ker(f)\in\cB.$ Let $F_{\bullet}y$ be a finite increasing filtration such that $F_0 y=0$ and $F_n y=y.$ If $n=0,$ then there is nothing to prove. If $n>0,$ then put $y'=F_{n-1}y,$ $x'=\ker(x\to y/y')$ and $f'=f_{\mid x'}:x'\to y'.$ Then $\ker(f)=\ker(f'),$ and applying induction on $n$ we obtain $\ker(f)\in\cB.$   
\end{proof}

Next we consider the locally coherent case.

\begin{prop}\label{prop:devissage_condition_for_locally_coherent}
Let $\cA$ be a small abelian category and $\cB\subset\cA$ a strictly full abelian subcategory such that the inclusion functor $i:\cB\to\cA$ is exact. We identify $\Ind(\cB)$ with its essential image in $\Ind(\cA).$ The following are equivalent.
\begin{enumerate}[label=(\roman*),ref=(\roman*)]
	\item Every object of $\cA$ has a finite filtration with subquotients in $\cB.$ \label{devissage_for_small}
	\item $\Ind(\cB)\subset\Ind(\cA)$ is closed under subobjects and quotients, and every object of $\Ind(\cA)$ has an $\N$-indexed increasing exhaustive filtration with subquotients in $\Ind(\cB).$ \label{devissage_for_loc_coh}
	\item $\Ind(\cB)$ generates $\Ind(\cA)$ via extensions and filtered colimits. \label{loc_coh_generates_by_extensions_and_filtered_colimits}
\end{enumerate}
\end{prop}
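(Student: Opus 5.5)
I will prove the cycle \Implies{devissage_for_small}{devissage_for_loc_coh}, \Implies{devissage_for_loc_coh}{loc_coh_generates_by_extensions_and_filtered_colimits}, \Implies{loc_coh_generates_by_extensions_and_filtered_colimits}{devissage_for_small}. Throughout I use that $\Ind(\cA)$ is a Grothendieck abelian category in which filtered colimits are exact, that $\cA=\Ind(\cA)^{\omega}$ is closed under finite limits and colimits, and that $\Ind(\cB)\to\Ind(\cA)$ is exact and fully faithful since $i$ is.

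For \Implies{devissage_for_small}{devissage_for_loc_coh}, I start from Proposition \ref{prop:devissage_condition_for_small}, which says $\cB\subset\cA$ is closed under subobjects and quotients. Let $x=\indlim[k]x_k\in\Ind(\cB)$ with $x_k\in\cB$, and let $y\subset x$. Then $y=\indlim[k](y\times_x x_k)$ by exactness of filtered colimits. Each $y\times_x x_k$ is a subobject of $x_k$, and it is itself a filtered colimit of its subobjects in $\cA$. Those subobjects are subobjects of $x_k$, hence lie in $\cB$, so $y\in\Ind(\cB)$. Quotients are handled the same way: $x/y=\indlim[k]x_k/(y\cap x_k)$, and each term is a quotient of $x_k$, hence a filtered colimit of quotients of $x_k$ in $\cA$, which lie in $\cB$.

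For the filtration on an arbitrary $x\in\Ind(\cA)$, I will not try to assemble it from filtrations of compact subobjects, since those lengths are unbounded. Instead I define it canonically: $F_0x=0$, and $F_{n}x/F_{n-1}x$ is the largest subobject of $x/F_{n-1}x$ lying in $\Ind(\cB)$. This largest subobject exists because $\Ind(\cB)$ is closed under subobjects, quotients and filtered colimits, hence under sums of subobjects.

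The step I expect to be the main obstacle is exhaustiveness of this filtration. The key is that the construction is compatible with compact subobjects. For $z\in\cA$ with $z\subset x$, a finite $\cB$-filtration of $z$ of length $n$ should map into $F_nx$, which I check by induction using maximality at each step. Since $x$ is the union of its compact subobjects, $\bigcup_n F_nx=x$.

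The implication \Implies{devissage_for_loc_coh}{loc_coh_generates_by_extensions_and_filtered_colimits} is immediate. Each $F_nx$ is an iterated extension of objects of $\Ind(\cB)$, and $x=\indlim[n]F_nx$.

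For \Implies{loc_coh_generates_by_extensions_and_filtered_colimits}{devissage_for_small}, let $\cS\subset\Ind(\cA)$ be the full subcategory of objects $x$ such that every compact subobject of every quotient of $x$ (equivalently, every compact subquotient) has a finite $\cB$-filtration. I will show three things.

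First, $\cS$ contains $\Ind(\cB)$. A compact subquotient of $x\in\Ind(\cB)$ is finitely presented and factors through the terms of a presentation. Using that $\cB$ is an exact abelian subcategory, I expect it is itself a subquotient of an object of $\cB$. This needs care: it requires closure of $\cB$ under subquotients, which is not given in (iii). So I will instead directly show that a compact subobject $z$ of a quotient of $x$ is a quotient, in $\cA$, of the image of some $x_k\in\cB$, and analyse this carefully.

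Second, $\cS$ is closed under extensions, by intersecting a compact subobject with the subobject and using that $\cA$ is closed under kernels.

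Third, $\cS$ is closed under filtered colimits, since a compact subobject of $\indlim[k]x_k$ comes from a compact subobject of some term.

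Granting these, (iii) gives $\cS=\Ind(\cA)$, so every $x\in\cA$ is its own compact subquotient and has a finite $\cB$-filtration. If the direct approach to the first point fails, I will define $\cS$ instead via filtrations by subquotients of objects of $\cB$, and use that the class of objects of $\cA$ with a finite filtration by subquotients of objects of $\cB$ coincides with the class in (i). This holds because, by an induction like the one in Proposition \ref{prop:devissage_condition_for_small}, subquotients of objects of $\cB$ that lie in the extension-closure of $\cB$ can be refined to genuine $\cB$-filtrations.
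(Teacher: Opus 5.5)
Your arguments for (i)$\Rightarrow$(ii) and (ii)$\Rightarrow$(iii) are fine. In particular, pushing a finite $\cB$-filtration of a compact subobject $z\subset x$ into $F_n x$ by maximality is a valid substitute for the paper's remark that the $F_n$ commute with filtered colimits.

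The gap is in (iii)$\Rightarrow$(i), at your first point $\Ind(\cB)\subset\cS$. You invoke (iii) only at the very end, so this inclusion would have to hold for every exact abelian subcategory $\cB\subset\cA$. It fails in general. Let $\cA$ be the finite-dimensional representations of the quiver $1\to 2$ over a field $\mk$. Let $\cB$ consist of the representations $(V\xrightarrow{\phi}W)$ with $\phi$ an isomorphism, i.e. sums of $P_1=(\mk\xrightarrow{\mathrm{id}}\mk)$. This $\cB$ is closed under kernels and cokernels. Yet the subobject $S_2=(0\to\mk)\subset P_1$ has no finite $\cB$-filtration, because every object in the extension closure of $\cB$ has dimension vector $(n,n)$.

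Your ``direct'' analysis can only show that a compact subquotient of an object of $\Ind(\cB)$ is a subquotient in $\cA$ of some $x_k\in\cB$. Turning that into a $\cB$-filtration needs closure of $\cB$ under subquotients. By Proposition \ref{prop:devissage_condition_for_small}, that closure is a consequence of (i), so the argument is circular.

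The fallback does not help. Your claim that ``finite filtration by subquotients of objects of $\cB$'' is the same class as in (i) is false. In the example, $S_1$ and $S_2$ are subquotients of $P_1$, so every object of $\cA$ has such a filtration, while $S_1$ is not in the extension closure of $\cB$. Restricting to subquotients ``that lie in the extension-closure of $\cB$'' makes the refinement statement tautological rather than usable. So the class you end up with is too weak to give (i).

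What you need is a class that contains $\Ind(\cB)$ and is closed under extensions and filtered colimits, without any control on subquotients. For example, let $E\subset\cA$ be the extension closure of $\cB$.
\begin{itemize}
\item $\Ind(E)\subset\Ind(\cA)$ is closed under filtered colimits, and under extensions by Proposition \ref{prop:properties_of_Ind_kappa_of_small_exact}.
\item So (iii) forces $\Ind(E)=\Ind(\cA)$, and each $a\in\cA$, being compact, is a retract of an object of $E$.
\item $E$ is closed under retracts, since it is closed under kernels. This follows by induction on filtration length, in the spirit of Proposition \ref{prop:devissage_condition_for_small}.
\end{itemize}

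The paper argues instead in $\Ind(D^b(\cA))$. There (iii) says $\cB$ generates everything as a localizing subcategory, so $\cB$ generates $D^b(\cA)$ as an idempotent-complete stable subcategory. Let $\cA'$ be the weak Serre subcategory generated by $\cB$, which is its extension closure. Then $D^b_{\cA'}(\cA)$ is such a subcategory containing $\cB$, so it is all of $D^b(\cA)$ and $\cA'=\cA$.
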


\begin{proof}
\Implies{devissage_for_small}{devissage_for_loc_coh}. By Proposition \ref{prop:devissage_condition_for_small}, $\cB\subset\cA$ is closed under subobjects and quotients, hence the same holds for $\Ind(\cB)\subset\Ind(\cA).$ Abusing the notation we write $i$ for $\Ind(i):\Ind(\cB)\to\Ind(\cA).$ Then the right adjoint $i^R:\Ind(\cA)\to\Ind(\cB)$ commutes with filtered colimits and for any $x\in\Ind(\cA)$ the map $i(i^R(x))\to x$ is a monomorphism. More precisely, $i(i^R(x))$ is the largest subobject of $x$ which is contained in $\Ind(\cB).$ Define the functorial non-negative filtration $F_{\bullet}x$ inductively: $F_0 x=0$ and $F_{n+1}x\subset x$ is the preimage of $i(i^R(x/F_n x))\subset x/F_n x.$ By induction each of the endofunctors $F_n:\Ind(\cA)\to\Ind(\cA)$ commutes with filtered colimits. It follows from \ref{devissage_for_small} that for any $x\in\cA$ we have $F_n x=x$ for large $n.$ It follows that for any $x\in\Ind(\cA)$ the filtration $F_{\bullet}x$ is exhaustive. By construction, its subquotients are in $\Ind(\cB),$ which proves the implication.

The implication \Implies{devissage_for_loc_coh}{loc_coh_generates_by_extensions_and_filtered_colimits} is evident.

\Implies{loc_coh_generates_by_extensions_and_filtered_colimits}{devissage_for_small}. We identify $\Ind(\cA)$ with the heart of the standard $t$-structure on $\Ind(D^b(\cA))\simeq \check{D}(\Ind(\cA)).$ Then \ref{loc_coh_generates_by_extensions_and_filtered_colimits} implies that $\cB$ generates $\Ind(D^b(\cA))$ as a localizing subcategory. Hence, $\cB$ generates $D^b(\cA)$ as an idempotent-complete stable subcategory. Let $\cA'\subset\cA$ be the smallest weak Serre subcategory containing $\cB,$ i.e. $\cA'$ is generated by $\cB$ via extensions. Denote by $D^b_{\cA'}(\cA)\subset D^b(\cA)$ the (idempotent-complete stable) subcategory consisting of complexes with homology in $\cA'.$ Then $\cB$ is contained in $D^b_{\cA'}(\cA),$ hence the latter category coincides with $D^b(\cA)$ and $\cA'=\cA.$ This proves the implication.
\end{proof}

Finally, we consider the coherently assembled case.

\begin{prop}\label{prop:devissage_condition_for_coherently_assembled}
Let $\cA$ be a coherently assembled abelian category, and let $\cB\subset\cA$ be a strictly full coherently assembled abelian subcategory such that the inclusion functor $i:\cB\to\cA$ is strongly continuous and exact. The following are equivalent.
\begin{enumerate}[label=(\roman*),ref=(\roman*)]
	\item $\cB\subset\cA$ is closed under subobjects and quotients, and any object of $\cA$ has an $\N$-indexed exhaustive filtration with subquotients in $\cB.$ \label{devissage_for_coh_ass}
	\item $\cB$ generates $\cA$ via extensions and filtered colimits. \label{coh_ass_generates_by_extensions_and_filtered_colimits}
\end{enumerate}
\end{prop}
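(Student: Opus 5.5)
The implication \ref{devissage_for_coh_ass}$\Rightarrow$\ref{coh_ass_generates_by_extensions_and_filtered_colimits} is evident: an $\N$-indexed exhaustive filtration exhibits $x$ as the filtered colimit of the $F_n x$, each of which is an iterated extension of objects of $\cB$. The work is in the converse, and I will reduce it to the locally coherent case, Proposition \ref{prop:devissage_condition_for_locally_coherent}, using the retractions
\begin{equation*}
\cA\xto{\hat{\cY}}\Ind(\cA^{\omega_1})\xto{\colim}\cA,\qquad \cB\xto{\hat{\cY}}\Ind(\cB^{\omega_1})\xto{\colim}\cB.
\end{equation*}
By Corollary \ref{cor:coh_ass_abelian_general} both categories are locally $\omega_1$-coherent, and strong continuity of $i$ means that $\Ind(i)$ commutes with $\hat{\cY}$.

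First I show that $\cB\subset\cA$ is closed under subobjects and quotients. Since $i$ is strongly continuous, its right adjoint $i^R$ commutes with filtered colimits. Let $\cB'\subset\cA$ be the full subcategory of objects $x$ such that every subquotient of $x$ lies in $\cB$. I want $\cB'$ closed under extensions and filtered colimits.
\begin{itemize}
\item \emph{Extensions.} For $0\to x\to y\to z\to 0$ with $x,z\in\cB'$, any subobject $y'\subset y$ is an extension of a subobject of $z$ by a subobject of $x$, so we must show $\cB$ is closed under extensions in $\cA$. Here I intend to use exactness of $\hat{\cY}$ together with the small case: applying $\hat{\cY}$ writes the sequence as a filtered colimit in $\Ind(\cA^{\omega_1})$. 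Alternatively one can argue via $\Ext^1$ and the fact that $\cB$ is abelian and exact in $\cA$.
\item \emph{Filtered colimits.} A subobject of $\colim x_i$ is the filtered union of its intersections with the images of the $x_i$, using (AB5). Subquotients of these lie in $\cB$, and $\cB$ is closed under filtered colimits because $i$ is continuous and the inclusion is strictly full.
\end{itemize}
Hence $\cB'=\cA$ by \ref{coh_ass_generates_by_extensions_and_filtered_colimits}, and in particular $\cB$ is closed under subobjects and quotients.

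Next I construct the filtration exactly as in the proof of Proposition \ref{prop:devissage_condition_for_locally_coherent}. Set $F_0x=0$, and let $F_{n+1}x$ be the preimage of the largest $\cB$-subobject $i(i^R(x/F_nx))$ of $x/F_nx$. The map $i i^R(y)\to y$ is a monomorphism, since its image is a $\cB$-subobject of $y$ and $\cB$ is closed under quotients. Each $F_n$ commutes with filtered colimits because $i^R$ does. The subquotients lie in $\cB$ by construction.

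It remains to show the filtration is exhaustive. The class of $x$ with $\bigcup_n F_nx=x$ contains $\cB$ and is closed under filtered colimits, since the $F_n$ are continuous. The main obstacle is closure under extensions. For $0\to x\to y\to z\to 0$, I plan to show that $F_n y$ maps onto $F_n z$ up to a shift, in the sense that $y/F_{n+m}y$ has no nonzero $\cB$-quotient issues. I would first try the following argument. The image of $F_ny$ in $z$ contains $F_nz$, because $F_n$ of a quotient contains the image of $F_n$, by induction using maximality. The preimage of $F_mz$ in $y$ is then an extension of $F_mz$ by $x$. Using the filtration on $x$, one gets $F_{n+m}$-bounds on the union via a Zassenhaus-type induction, as in Quillen's small-category argument. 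If this becomes unwieldy, the fallback is to reduce exhaustiveness to $\Ind(\cA^{\omega_1})$, where $\hat{\cY}$ commutes with $F_n$ by strong continuity, and apply the locally coherent case.
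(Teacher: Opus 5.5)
There is a genuine gap in (ii)$\Rightarrow$(i), and it starts at the first step. Your class $\cB'$ (objects all of whose subquotients lie in $\cB$) is contained in $\cB$, since every object is a subquotient of itself. So proving $\cB'=\cA$ would prove $\cB=\cA$.

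Correspondingly, your extension step reduces to ``$\cB$ is closed under extensions in $\cA$''. This is false in every non-trivial d\'evissage situation. For $x\mapsto(x,0)$ as in Proposition \ref{prop:loc_nilp_endomorphisms_coherently_assembled}, the object $(x\oplus x,N)$ with $N(a,b)=(b,0)$ is an extension of two objects of $\cB$ but does not lie in $\cB$. Thus closure under subobjects and quotients is not established. Everything after it (that $i i^R(y)\to y$ is a monomorphism, and the filtration itself) depends on it.

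The exhaustiveness step is also left open:
\begin{itemize}
\item The inclusion you state is backwards. Functoriality of $F_n$ gives $\mathrm{im}(F_ny\to z)\subset F_nz$, which is what your stated reason actually says.
\item The fallback claim that $\hat{\cY}$ commutes with $F_n$ ``by strong continuity'' is unjustified. Strong continuity of $i$ controls $\hat{\cY}\circ i$, not $\hat{\cY}\circ i^R$.
\item The fallback never produces a small abelian category, containing $\hat{\cY}_{\cA}(\cA)$ in its ind-completion, in which Proposition \ref{prop:devissage_condition_for_locally_coherent} can be applied.
\end{itemize}

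The missing idea is to transport hypothesis (ii) along the exact functor $\hat{\cY}_{\cA}$. Let $\cE\subset\cA^{\omega_1}$ be the weak Serre subcategory generated by $\cB^{\omega_1}$. Then:
\begin{itemize}
\item $\Ind(\cE)$ is closed under extensions in $\Ind(\cA^{\omega_1})$ by Proposition \ref{prop:properties_of_Ind_kappa_of_small_exact}, and also under filtered colimits.
\item $\hat{\cY}_{\cA}(\cB)\subset\Ind(\cB^{\omega_1})$ by strong continuity of $i$.
\item Hence exactness of $\hat{\cY}_{\cA}$ together with (ii) gives $\hat{\cY}_{\cA}(\cA)\subset\Ind(\cE)$.
\end{itemize}
Now Proposition \ref{prop:devissage_condition_for_locally_coherent} applies to $\cB^{\omega_1}\subset\cE$. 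If $y\subset x\in\cB$, then $\hat{\cY}_{\cA}(y)\subset\hat{\cY}_{\cA}(x)\in\Ind(\cB^{\omega_1})$, so $\hat{\cY}_{\cA}(y)\in\Ind(\cB^{\omega_1})$ and $y=\colim\hat{\cY}_{\cA}(y)\in\cB$. Applying $\colim$ to the $\N$-filtration of $\hat{\cY}_{\cA}(x)\in\Ind(\cE)$ gives the required filtration of $x$.

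If you prefer to keep your intrinsic approach, it can be repaired as follows.
\begin{itemize}
\item For the first step, use the class of objects $y$ such that every morphism from an object of $\cB$ to $y$ has kernel in $\cB$.
\begin{itemize}
\item This class is closed under extensions by a two-step kernel argument.
\item It is closed under filtered colimits: write $x\in\cB$ as a filtered colimit with compact transition maps, which remain compact in $\cA$ by strong continuity of $i$.
\end{itemize}
\item For exhaustiveness, a similar compactness argument shows $F_1\bigl(y/\bigcup_nF_ny\bigr)=0$. This yields exhaustiveness by induction over extensions and filtered colimits.
\end{itemize}
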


\begin{proof}
Again, the implication \Implies{devissage_for_coh_ass}{coh_ass_generates_by_extensions_and_filtered_colimits} is evident.

\Implies{coh_ass_generates_by_extensions_and_filtered_colimits}{devissage_for_coh_ass}. Denote by $\cE\subset\cA^{\omega_1}$ the weak Serre subcategory generated by $\cB^{\omega_1}$ (note that $\cE$ is not necessarily closed under countable coproducts in $\cA^{\omega_1}$). By Proposition \ref{prop:properties_of_Ind_kappa_of_small_exact} the full subcategory $\Ind(\cE)\subset\Ind(\cA^{\omega_1})$ is closed under extensions. By assumption the functor $\hat{\cY}_{\cA}:\cA\to\Ind(\cA^{\omega_1})$ sends $\cB$ to $\Ind(\cB^{\omega_1})\subset\Ind(\cE).$ Since $\hat{\cY}_{\cA}$ is exact, the condition \ref{coh_ass_generates_by_extensions_and_filtered_colimits} implies that we have $\hat{\cY}_{\cA}(\cA)\subset\Ind(\cE).$

We show that $\cB\subset\cA$ is closed under subobjects, and the assertion about quotients follows. Let $x\in\cB$ and $y\subset x,$ a priori $y\in\cA.$ Then $\hat{\cY}_{\cA}(y)\subset \hat{\cY}_{\cA}(x)\in\Ind(\cB^{\omega_1}),$ hence by Proposition \ref{prop:devissage_condition_for_locally_coherent} (applied to $\cB^{\omega_1}\subset\cE$) we have $\hat{\cY}_{\cA}(y)\in\Ind(\cB^{\omega_1}).$ Applying the colimit functor, we obtain $y\in\cB,$ as stated.

The same argument shows the existence of a filtration with required properties. Namely, let $x\in\cA,$ then by Proposition \ref{prop:devissage_condition_for_locally_coherent} the object $\hat{\cY}_{\cA}(x)\in\Ind(\cE)$ has an $\N$-indexed filtration with subquotients in $\Ind(\cB^{\omega_1}).$ Applying the colimit functor $\Ind(\cE)\to\cA,$ we obtain the required filtration on $x.$ This proves the implication.
\end{proof}

The following is a basic example when the d\'evissage condition holds.

\begin{prop}\label{prop:loc_nilp_endomorphisms_coherently_assembled}
Let $\cB$ be a coherenly assembled category. Let $\cA$ be the category of pairs $(x,N),$ where $x\in\cB$ and $N:x\to x$ is a locally nilpotent endomorphism, i.e. $\colim(x\xto{N}x\xto{N}\dots)=0.$ Then $\cA$ is coherently assembled, and the inclusion $\cB\to\cA,$ $x\mapsto (x,0),$ satisfies the conditions of Proposition \ref{prop:devissage_condition_for_coherently_assembled}.
\end{prop}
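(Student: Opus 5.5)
We view $\cA$ as the category of modules over the monad $T=-\otimes\Z[N]$ on $\cB$, i.e. $\Z[t]$-module objects of $\cB$, cut out by local nilpotence. Concretely, $\cA$ is the full subcategory of $\Fun(B\N,\cB)$ (objects with an endomorphism) on those $(x,N)$ with $\colim(x\xto{N}x\xto{N}\dots)=0$. The plan has three steps: show that $\cA$ is Grothendieck abelian and that the inclusion $\cA\hto\cB[t]:=\Fun(B\N,\cB)$ has suitable adjoints; deduce coherent assembly via a retraction; and verify the d\'evissage conditions.

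\textbf{Step 1: structure of $\cA$.} The category $\cB[t]$ is Grothendieck abelian. The local nilpotence condition is stable under subobjects, quotients, extensions and filtered colimits, using (AB5) and exactness of the colimit over $\N$. Hence $\cA\subset\cB[t]$ is a localizing subcategory, in particular Grothendieck abelian. Its inclusion has a right adjoint $R(x,N)=$ the largest locally nilpotent subobject, namely $\indlim[n]\ker(N^n)$. Since kernels and filtered colimits commute in the Grothendieck category $\cB$, this $R$ commutes with filtered colimits and is left exact.

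\textbf{Step 2: $\cB[t]$ and $\cA$ are coherently assembled.} First, $\cB[t]$ is coherently assembled. Indeed, it is modules over the exact continuous monad $x\mapsto\bigoplus_{n\ge0}x$, and $\hat{\cY}_{\cB[t]}$ is given by applying $\hat{\cY}_{\cB}$ levelwise with its induced endomorphism; this is exact because $\hat{\cY}_{\cB}$ is. Alternatively, present $\cB[t]$ as a retract of $\Ind(\cB^{\omega_1})[t]$, which is locally coherent in the relevant sense. Next, the inclusion $\iota:\cA\to\cB[t]$ and $R$ form a retraction $\cA\xto{\iota}\cB[t]\xto{R}\cA$ with both functors left exact and continuous. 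Proposition \ref{prop:coh_ass_ab_retraction_left_exact} then gives that $\cA$ is coherently assembled.

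\textbf{Step 3: d\'evissage conditions.} The functor $\cB\to\cA$, $x\mapsto(x,0)$, is fully faithful and exact. Its image is closed under subobjects and quotients in $\cA$, since $N=0$ is inherited. Every $(x,N)$ has the $\N$-indexed exhaustive filtration $F_n=\ker(N^n)$, which is exhaustive by local nilpotence. Its subquotients are killed by $N$, hence lie in $\cB$; this is condition \ref{devissage_for_coh_ass} of Proposition \ref{prop:devissage_condition_for_coherently_assembled}.

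\textbf{Main obstacle.} It remains to check that $\cB\to\cA$ is strongly continuous, i.e. that its right adjoint $(x,N)\mapsto\ker(N)$ commutes with filtered colimits. This holds because kernels commute with filtered colimits in $\cB$. I expect this to be the main point to verify, together with the exactness of $\hat{\cY}_{\cB[t]}$ in Step 2. There the key check is that $\hat{\cY}_{\cB}$ commutes with the countable direct sums defining the monad. This follows because $\hat{\cY}_{\cB}$ is a left adjoint.
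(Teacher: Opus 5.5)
\textbf{Gap in Step 2.} Your Steps 1 and 3 are fine, but Step 2 does not work. The intermediate category $\cB[t]=\Fun(B\N,\cB)$ is in general \emph{not} coherently assembled, so Proposition \ref{prop:coh_ass_ab_retraction_left_exact} cannot be applied to the retraction $\cA\to\cB[t]\to\cA$.

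Concretely, take $\cB=\Mod\hy R$. Then $\cB[t]=\Mod\hy R[t]$ is compactly generated, so by Proposition \ref{prop:coh_ass_comp_gen} it is coherently assembled iff $R[t]$ is right coherent. Coherence is not inherited by polynomial rings: in Soublin's classical example $R=\prod_{\N}\Q[[u,v]]$ is coherent but $R[t]$ is not.

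The error is your description of $\hat{\cY}_{\cB[t]}$. Applying $\hat{\cY}_{\cB}$ levelwise lands in $\Fun(B\N,\Ind(\cB))$, which is not $\Ind(\cB[t])$. To transfer exactness from $\hat{\cY}_{\cB}$ you would need the forgetful functor $U:\cB[t]\to\cB$ to be strongly continuous, i.e.\ $\Ind(U)\circ\hat{\cY}_{\cB[t]}\cong\hat{\cY}_{\cB}\circ U$. But the right adjoint of $U$ is coinduction $x\mapsto\prod_{\N}x$, which does not commute with filtered colimits. That $\hat{\cY}_{\cB}$ commutes with the countable sums of the monad only shows that the \emph{free} functor $\cB\to\cB[t]$ is strongly continuous, and this does not control exactness of $\hat{\cY}_{\cB[t]}$. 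Your alternative via $\Ind(\cB^{\omega_1})[t]$ fails for the same reason: that category is compactly generated but not locally coherent in general.

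\textbf{How to fix it.} This is what the paper does: use nilpotence to make the coinduction finite \emph{before} passing to coherent assembly.
\begin{enumerate}
\item Let $\cA_n\subset\cA$ consist of the pairs with $N^n=0$. The forgetful functor $\Phi_n:\cA_n\to\cB$ is exact, conservative and colimit-preserving. Its right adjoint $x\mapsto\Hom(\Z[t]/t^n,x)\cong x^{\oplus n}$ is continuous, so $\Phi_n$ is strongly continuous.
\item Hence $\cA_n$ is compactly assembled (it is generated by the image of the strongly continuous left adjoint). Moreover $\Ind(\Phi_n)\circ\hat{\cY}_{\cA_n}\cong\hat{\cY}_{\cB}\circ\Phi_n$ is exact. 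Since $\Ind(\Phi_n)$ is exact and conservative, $\hat{\cY}_{\cA_n}$ is exact.
\item Then $\cA\simeq\indlim[n]\cA_n$ in $\Groth^{c,\lex}_{\ab}$. The comparison functor is fully faithful by Proposition \ref{prop:weak_AB5_Groth_cohass}, and essentially surjective because $x=\bigcup_n\ker N^n$.
\item Proposition \ref{prop:cohass_closed_under_filtered_colimits} then shows that $\cA$ is coherently assembled.
\end{enumerate}
In Soublin's example this gives that locally $t$-nilpotent $R[t]$-modules form a coherently assembled category even though $\Mod\hy R[t]$ does not. So routing through $\cB[t]$ is not merely an unproved step; it is a route that cannot work.
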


\begin{proof}
The only statement requiring a proof is that $\cA$ is coherently assembled. This can be deduced from Proposition \ref{prop:t_structure_on_nilpotent_endomorphisms} below. For competeness we give a more direct proof.

For any $n\in\N$ consider the full subcategory $\cA_n\subset\cA$ consisting of pairs $(x,N)$ such that $N^n=0.$ Then we have a direct sequence $(\cA_n)_{n\geq 0}$ in $\Groth^{c,\lex}_{\ab}.$ By Proposition \ref{prop:weak_AB5_Groth_cohass} the functor $\indlim[n]\cA_n\to\cA$ is fully faithful. It is an equivalence since $\cA$ is generated by all $\cA_n$ via filtered colimits. By Proposition \ref{prop:cohass_closed_under_filtered_colimits} it suffices to prove that $\cA_n$ is coherently assembled for each $n\geq 0.$

We first note that $\cA_n$ is compactly assembled: the forgetful functor $\Phi_n:\cA_n\to\cB,$ $\Phi_n(x,N)=x,$ commutes with colimits, and the essential image of its left adjoint $\Phi_n^L$ generates the target via colimits. Next, the right adjoint $\Phi_n^R$ is continuous, and $\Phi_n$ is exact and conservative, hence  $\Ind(\Phi_n):\Ind(\cA_n)\to\Ind(\cB)$ is also exact and conservative. This formally implies that $\cA_n$ is coherently assembled since $\cB$ is.  
\end{proof}

We mention an example showing that one has to be cautious when dealing with the d\'evissage condition for coherently assembled categories. Suppose that we have a Grothendieck abelian category $\cA$ and a strictly full Grothendieck abelian subcategory $\cB\subset\cA,$ such that the inclusion functor $i:\cB\to\cA$ is exact and colimit-preserving, the right adjoint $i^R:\cA\to\cB$ is continuous, $\cB$ is closed under subobjects and quotients in $\cA,$ and every object of $\cA$ has an $\N$-indexed exhaustive filtration with subquotients in $\cB.$ It is tempting to think that if $\cB$ is coherently assembled, then so is $\cA.$ However, the following example shows that this is not the case in general, even if $\cB$ is locally coherent.

\begin{example}
Let $\mk$ be a field, and consider the ring $R=\mk[x,y_1,y_2,\dots]/(xy_n; n\geq 1).$ Let $\cA$ be the category of $R$-modules $M$ such that $x$ acts locally nilpotently on $M.$ Then $\cA$ is a Grothendieck category. Let $\cB=\Mod\hy R/x\subset\cA.$ Then $\cB$ is locally coherent since $R/x\cong\mk[y_1,y_2,\dots]$ is a sequential colimit of noetherian rings with flat transition maps. Further, $\cB\subset\cA$ is closed under subobjects and quotients, and any object $M\in\cA$ has a filtration $F_{\bullet}M,$ given by $F_n M=\ker(x^n:M\to M),$ with $\bigcup\limits_n F_n M=M$ and $\gr_n^F M\in\cB.$ The right adjoint to the inclusion is given by $M\mapsto\ker(x:M\to M),$ hence it is continuous. However, $\cA$ is not coherently assembled.

Assume the contrary. Then by Corollary \ref{cor:coh_ass_abelian_general} for any compact object $M\in\cA^{\omega}$ the functor $\Ext_{\cA}^1(M,-):\cA\to\Ab$ commutes with filtered colimits. Clearly, $R/x$ is compact in $\cA.$ Consider the direct sequence $(N_n)_{n\geq 0},$ given by $N_n=R/(x,y_1,\dots,y_n).$ Put $N=\indlim[n]N_n\cong\mk.$ Take the element $\alpha\in\Ext^1_{\cA}(R/x,N)$ corresponding to the short exact sequence
\begin{equation*}
0\to N\to R/x^2\to R/x\to 0.
\end{equation*}
Then $\alpha$ does not factor through $N_n$ for any $n,$ since $\Ext^1_{\cA}(R/x,N_n)$ is a quotient of $\Hom_R(Rx,N_n)=0.$ This gives a contradiction, hence $\cA$ is not coherently assembled. 
\end{example}

\section{Compactly assembled $t$-structures on dualizable categories}
\label{sec:comp_ass_t_structures}

\subsection{Reminder on $t$-structures}
\label{ssec:reminder_on_t_structures}

We start by recalling the basic notions related to $t$-structures, also fixing the notation. The basic reference is \cite{BBD82} in the context of triangulated categories, and \cite[Section 1.2]{Lur17a}, \cite[Appendix C]{Lur18} in the context of stable categories. 

Recall from \cite{BBD82} that a $t$-structure on a triangulated category $\cT$ is a pair of full subcategories $(\cT_{\geq 0},\cT_{\leq 0}),$ such that $\cT_{\geq 0}[1]\subset\cT_{\geq 0},$ $\cT_{\leq 0}[-1]\subset\cT_{\leq 0},$ we have $\Hom_{\cT}(y,z)=0$ for $y\in\cT_{\geq 0},$ $z\in\cT_{\leq 0}[-1],$ and for any $x\in\cT$ there exists an exact triangle
\begin{equation*}
\tau_{\geq 0}x\to x\to \tau_{\leq -1}x,\quad \tau_{\geq 0}x\in\cT_{\geq 0},\,\tau_{\leq -1}x\in\cT_{\leq 0}[-1].
\end{equation*}
A $t$-structure on a stable category $\cC$ is a pair of full subcategories $(\cC_{\geq 0},\cC_{\leq 0})$ such that $(\h \cC_{\geq 0},\h \cC_{\leq 0})$ is a $t$-structure on $\h \cC.$ We will only deal with $t$-structures on stable categories. 

If $\cD$ is another stable category with a $t$-structure $(\cD_{\geq 0},\cD_{\leq 0}),$ then an exact functor $F:\cC\to\cD$ is called left $t$-exact if $F(\cC_{\leq 0})\subset\cD_{\leq 0},$ and right $t$-exact if $F(\cC_{\geq 0})\subset\cD_{\geq 0}.$ Further, $F$ is called $t$-exact if it is both left and right $t$-exact.

Given a $t$-structure $(\cC_{\geq 0},\cC_{\leq 0})$ on a stable category $\cC,$ for any $a\in\Z$ we put
\begin{equation*}
\cC_{\geq a}=\cC_{\geq 0}[a],\quad \cC_{\leq a}=\cC_{\leq 0}[a].
\end{equation*}
We denote by $\tau_{\geq a}:\cC\to\cC_{\geq a}$ the right adjoint to the inclusion, and by $\tau_{\leq a}:\cC\to\cC_{\leq a}$ the left adjoint to the inclusion. We also denote by the same symbols the compositions $\cC\xto{\tau_{\geq a}}\cC_{\geq a}\hto\cC,$ $\cC\xto{\tau_{\leq a}}\cC_{\leq a}\hto\cC.$ This should not lead to confusion.

For integers $a\leq b$ we put $\cC_{[a,b]}=\cC_{\geq a}\cap\cC_{\leq b}.$ In particular, $\cC_{[0,0]}=\cC^{\heartsuit}$ is the heart of the $t$-structure, which is an abelian category. We denote by $\tau_{[a,b]}:\cC\to\cC_{[a,b]}$ the functor $\tau_{\geq a}\circ \tau_{\leq b}\cong\tau_{\leq b}\circ\tau_{\geq a}.$ Since we are working with not necessarily $\Z$-linear categories, we use the notation
\begin{equation*}
\pi_a(x)=\tau_{[a,a]}(x)[-a]\in\cC^{\heartsuit},\quad a\in\Z,\,x\in\cC.
\end{equation*}

We recall the notation
\begin{equation*}
\cC^+=\bigcup\limits_{a\in\Z}\cC_{\leq a},\quad \cC^-=\bigcup\limits_{a\in\Z}\cC_{\geq a},\quad \cC^b=\cC^+\cap\cC^-=\bigcup\limits_{n\geq 0}\cC_{[-n,n]}.
\end{equation*}
The $t$-structure $(\cC_{\geq 0},\cC_{\leq 0})$ is called left bounded resp. right bounded resp. bounded if $\cC=\cC^+$ resp. $\cC=\cC^-$ resp. $\cC=\cC^b.$ 

Recall that for a $t$-structure $(\cC_{\geq 0},\cC_{\leq 0})$ on $\cC,$ the left completion is given by the limit $\hhat{\cC}^l=\prolim[n\in\N]\cC_{\leq n}.$ This is a stable category, and it naturally has a $t$-structure $(\hhat{\cC}^l_{\geq 0},\hhat{\cC}^l_{\leq 0}),$ where $\hhat{\cC}^l_{\geq 0}=\prolim[n\in\N]\cC_{[0,n]},$ $\hhat{\cC}^l_{\leq 0}=\cC_{\leq 0}.$ We have a natural exact and $t$-exact functor $\cC\to\hhat{\cC}^l.$ If it is an equivalence, then the $t$-structure on $\cC$ is called {\it left complete}. Dually, we have a notion of the right completion $\hhat{\cC}^r=\prolim[n\in\N]\cC_{\geq -n},$ and similarly for right completeness. Note that in general the right completion of $\hhat{\cC}^l$ is equivalent to the left completion of $\hhat{\cC}^r,$ and both are given by $\prolim[n\in\N]\cC_{[-n,n]}.$

It is convenient to use the following terminology for brevity.

\begin{defi}
A small $t$-category is a small stable category $\cC$ with a bounded $t$-structure $(\cC_{\geq 0},\cC_{\leq 0}).$
\end{defi}

We recall that if $\cA$ is a small abelian category (considered also as an exact category), then we have $D^b(\cA)\simeq\St(\cA).$ In particular, if $\cC$ is a small $t$-category, then we have a realization functor $D^b(\cC^{\heartsuit})\to\cC,$ corresponding to the inclusion $\cC^{\heartsuit}\to\cC.$

\subsection{$t$-structures compatible with filtered colimits}

In this subsection we consider $t$-structures on presentable stable categories, before specializing to dualizable categories.

Recall that an accessible $t$-structure $(\cC_{\geq 0},\cC_{\leq 0})$ on $\cC\in\Pr^L_{\st}$ is said to be compatible with filtered colimits if the subcategory $\cC_{\leq 0}$ is closed under filtered colimits (note that $\cC_{\geq 0}$ is automatically closed under all colimits). Equivalently, this means that the functor $\tau_{\geq 0}:\cC\to\cC_{\geq 0}$ commutes with filtered colimits. In this case $\cC_{\geq 0}$ is a Grothendieck prestable category \cite[Proposition C.1.4.1]{Lur18}. In particular, the heart $\cC^{\heartsuit}$ is a Grothendieck abelian category.

It is convenient to introduce the following notation.

\begin{defi}\label{def:C^-_C^+_C^b}
Let $\cC$ be a presentable stable category, and let $(\cC_{\geq 0},\cC_{\leq 0})$ be an accessible $t$-structure compatible with filtered colimits. We denote by $\la \cC^-\ra\subset\cC$ the localizing subcategory generated by $\cC^-,$ and similarly for $\la \cC^+\ra$ and $\la \cC^b\ra.$

We say that the $t$-structure is continuously bounded if $\la\cC^b\ra=\cC.$
\end{defi}

Note that equivalently $\la \cC^-\ra$ is generated by $\cC_{\geq 0},$ $\la \cC^+\ra$ is generated by $\cC_{\leq 0},$ and $\la \cC^b\ra$ is generated by $\cC^{\heartsuit}.$ Since the $t$-structure is accessible, each of the categories $\la \cC^-\ra,$ $\la \cC^+\ra$ and $\la \cC^b\ra$ is presentable.

We first make some almost tautological observations.

\begin{prop}\label{prop:C^-_C^+_C^b_t_structures}
Let $\cC$ and $(\cC_{\geq 0},\cC_{\leq 0})$ be as in Definition \ref{def:C^-_C^+_C^b}. Then the $t$-structure on $\cC$ induces the $t$-structures on $\la\cC^-\ra,$ $\la\cC^+\ra$ and $\la\cC^b\ra,$ with the same heart. Moreover, we have equalities $\la\la \cC^-\ra^+\ra = \la\la \cC^+\ra^-\ra=\la\cC^b\ra$ (of strictly full subcategories of $\cC$). 
\end{prop}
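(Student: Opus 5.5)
The plan is to show first that each of $\la\cC^-\ra,$ $\la\cC^+\ra,$ $\la\cC^b\ra$ is stable under $\tau_{\geq 0}$ (equivalently under $\tau_{\leq -1}$, via the cofiber sequence). For a localizing subcategory $\cD\subset\cC$ generated by a set $S$ of objects, set $\cD'=\{x\in\cD\mid \tau_{\geq 0}x\in\cD\}.$ Because $\tau_{\geq 0}$ is not exact, $\cD'$ is not automatically localizing, so I will work with the class of objects $x$ such that $\tau_{\geq a}x\in\cD$ for \emph{all} $a\in\Z.$ This class is closed under shifts. It is closed under filtered colimits, since $\tau_{\geq a}$ commutes with filtered colimits by compatibility. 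Using the octahedral axiom it is also closed under extensions. Indeed, for a cofiber sequence $x\to y\to z$ the object $\tau_{\geq a}y$ sits in a fiber sequence with $\tau_{\geq a}x$ and an object built from $\tau_{\geq a}z$ and $\pi$'s in degree $a$; more cleanly, $\tau_{\geq a} y$ is an extension of $\tau_{\geq a}z'$ by $\tau_{\geq a}x$ up to a correction by $\tau_{\geq a-1}$ terms. To handle this I will use the standard trick: it suffices to show the class of $x$ with $\tau_{\leq b}x\in\cD$ for all $b$ is closed under cofibers. Then closure under finite colimits and retracts follows, and with filtered colimits the class is localizing.

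The cleaner route for the extension step is as follows. If $x\to y\to z$ is a cofiber sequence with $\tau_{\geq a}x,\tau_{\geq a}z\in\cD$ for all $a$, then $\tau_{\leq a}x,\tau_{\leq a}z\in\cD$ for all $a$. The map $\tau_{\leq a}y\to\tau_{\leq a}z$ has fiber $w$ with $\pi_i$ of $w$ agreeing with those of $x$ except near degree $a$. Concretely, there is a cofiber sequence $\tau_{\leq a}x\to w\to \pi_{a+1}(z)[a+1]$-type term. Since $\pi_{a+1}(z)[a+1]=\Cone(\tau_{\geq a+2}z\to\tau_{\geq a+1}z)\in\cD$, we get $w\in\cD$, hence $\tau_{\leq a}y\in\cD$. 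I expect this bookkeeping to be the main (though routine) obstacle. With it, the generators $\cC^-$, $\cC^+$, $\cC^b$ obviously lie in the class because their truncations stay in the same class. Thus each localizing subcategory is truncation-stable and inherits a $t$-structure by intersecting. Its heart equals $\cC^{\heartsuit}$, since $\cC^{\heartsuit}\subset\cC^b$ is contained in all three.

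For the equalities, $\la\cC^b\ra\subset\la\cC^-\ra$ is clear, and $\la\cC^b\ra\subset\la\la\cC^-\ra^+\ra$ because $\cC^b\subset\la\cC^-\ra\cap\cC^+=\la\cC^-\ra^+$ (the induced $t$-structure has bounded-below part equal to $\la\cC^-\ra\cap\cC^+$). Conversely, $\la\cC^-\ra^+$ consists of objects of $\cC^+\cap\cC^-$-type up to $\la\cC^-\ra$. More precisely, an object $x\in\la\cC^-\ra\cap\cC_{\leq n}$ is the colimit $\colim_m\tau_{\geq -m}x$, since the $t$-structure on $\cC$ need not be right complete but the colimit of $\tau_{\geq -m}x\to x$ has cofiber $\lim$-free. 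I will instead argue that $\tau_{\geq -m}x\in\cC^b$ and that $\colim_m\tau_{\geq -m}x\to x$ has cofiber $\colim_m\tau_{\leq -m-1}x$, which lies in $\bigcap_m\cC_{\leq -m}$. An object of $\la\cC^-\ra$ with this property is zero. To see this, note that $\Hom(y,-)$ vanishes on it for $y\in\cC_{\geq k}$, and objects with this vanishing for all $k$ are right orthogonal to the generators of $\la\cC^-\ra$; being in $\la\cC^-\ra$, such an object is zero. This gives $\la\cC^-\ra^+\subset\la\cC^b\ra$. The symmetric inclusion $\la\cC^+\ra^-=\la\cC^+\ra\cap\cC^-\subset\cC^b$ is immediate, since its objects lie in $\cC^-$ and are bounded above only after truncation, so $\tau_{\leq n}$ pieces lie in $\cC^b$. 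Writing $x=\colim_n\tau_{\leq n}x$ is valid in $\cC$ because $\cC_{\leq 0}$ is closed under filtered colimits and $\colim_n\tau_{\geq n+1}x$ lies in $\bigcap\cC_{\geq n}$. That object lies in $\la\cC^+\ra$ and is left orthogonal to $\cC_{\leq 0}$-generators, hence zero.
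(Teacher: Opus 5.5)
Your truncation-stability argument, the identification of the heart, and the equality $\la\la\cC^-\ra^+\ra=\la\cC^b\ra$ are fine and agree with the paper. For the equality, the paper also writes objects of $\la\cC^-\ra_{\leq 0}$ as $\colim_n\tau_{\geq -n}$ of objects of $\cC^b$. The genuine gap is the last step, the inclusion $\la\cC^+\ra^-\subset\la\cC^b\ra$.

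You argue that $w=\colim_n\tau_{\geq n+1}x\in\bigcap_n\cC_{\geq n}$ lies in $\la\cC^+\ra$ and is left orthogonal to the generators, hence $w=0$. That inference is invalid.
\begin{itemize}
\item Right orthogonality to a generating set passes to the whole localizing subcategory, because $\Hom(-,w)$ turns colimits into limits. This is why your $\la\cC^-\ra$ argument works.
\item Left orthogonality does not pass along, since $\Hom(y,-)$ does not commute with colimits for non-compact $y$.
\end{itemize}
The claimed identity $x\simeq\colim_n\tau_{\leq n}x$ is in fact false. Take $\cC=\check{D}(\Mod\hy\Z/4)$, the homotopy category of complexes of injectives. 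It is generated by its heart, so $\la\cC^+\ra=\cC$. The acyclic, non-contractible complex of injectives $\cdots\xto{2}\Z/4\xto{2}\Z/4\xto{2}\cdots$ has no nonzero maps to $\cC^+\simeq D^+(\Mod\hy\Z/4)$, since bounded-below complexes of injectives are K-injective. So it is a nonzero object of $\bigcap_n\cC_{\geq n}$. It lies in $\la\cC^+\ra^-$, but all of its truncations $\tau_{\leq n}$ vanish. The asymmetry between $\cC^-$ and $\cC^+$ is real, and the formal dual of your $\la\cC^-\ra$ argument does not go through.

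To repair this, use $R=\colim_n\tau_{\geq -n}$.
\begin{itemize}
\item Your first-half argument shows that $R$ is right adjoint to $\la\cC^-\ra\hto\cC$: $R(x)\in\la\cC^-\ra$, and the cofiber of $R(x)\to x$ is right orthogonal to $\la\cC^-\ra$.
\item Hence $R$ is exact and colimit-preserving, and $R(x)\simeq x$ for $x\in\la\cC^-\ra$.
\item For $y\in\cC_{\leq k}$, every $\tau_{\geq -n}y$ lies in $\cC^b$, so $R(y)\in\la\cC^b\ra$.
\item Therefore $\{y\mid R(y)\in\la\cC^b\ra\}$ is a localizing subcategory containing $\la\cC^+\ra$.
\item Any $x\in\la\cC^+\ra\cap\cC^-\subset\la\cC^-\ra$ then satisfies $x\simeq R(x)\in\la\cC^b\ra$.
\end{itemize}
This is the mechanism behind the square in Proposition \ref{prop:SOD_for_C_mod_C^b}.

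Minor points:
\begin{itemize}
\item You wrote $\la\cC^+\ra^-\subset\cC^b$ where you mean $\la\cC^b\ra$.
\item In your extension step, the cofiber of $\tau_{\leq a}x\to w$ is $I[a+1]$ with $I=\operatorname{im}(\pi_{a+1}(z)\to\pi_a(x))$, not $\pi_{a+1}(z)[a+1]$; for split $y=x\oplus z$ it is $0$. It still lies in $\cD$ because $\cC^{\heartsuit}\subset\cD$ in all three cases.
\item Run that argument on the class $\{x\mid\tau_{\leq b}x\in\cD\ \forall b\}$, not on the class defined via $\tau_{\geq a}$ without requiring $x\in\cD$.
\item For $\la\cC^\pm\ra$ none of this is needed: $\tau_{\geq 0}$ lands in $\cC_{\geq 0}$ and $\tau_{\leq -1}$ lands in $\cC_{\leq -1}$.
\end{itemize}
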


\begin{proof}
The first statement is evident: the (continuous) composition $\cC\xto{\tau_{\geq 0}}\cC_{\geq 0}\hto\cC$ preserves each of the subcategories $\la\cC^-\ra,$ $\la\cC^+\ra$ and $\la\cC^b\ra.$ Next, we have an inclusion $\cC^b=\cC^-\cap\cC^+\subset \la \cC^-\ra^+,$ hence $\la\cC^b\ra\subset \la\la \cC^-\ra^+\ra.$ On the other hand, $\la \cC^-\ra_{\leq 0}$ is generated via extensions and filtered colimits by $\cC^-_{\leq 0}\subset\cC^b,$ hence we have $\la\cC^b\ra = \la\la \cC^-\ra^+\ra.$ The same argument proves the equality $\la\cC^b\ra = \la\la \cC^+\ra^-\ra.$
\end{proof}

We recall the following relation with the right completion.

\begin{prop}\label{prop:C^-_and_right_completion}
Let $\cC$ and $(\cC_{\geq 0},\cC_{\leq 0})$ be as in Definition \ref{def:C^-_C^+_C^b}.
\begin{enumerate}[label=(\roman*),ref=(\roman*)]
	\item We have $\la\cC^-\ra\simeq\Sp(\cC_{\geq 0}).$
	\item The right adjoint to the inclusion $\la\cC^-\ra\to\cC$ is given by $x\mapsto\indlim[n\in\N]\tau_{\geq -n}x.$ In particular, it commutes with colimits.
	\item We have $\cC=\la\cC^-\ra$ if and only if the $t$-structure $(\cC_{\geq 0},\cC_{\leq 0})$ is right complete.
\end{enumerate}
\end{prop}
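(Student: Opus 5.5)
I would prove the three statements in the order (ii), (i), (iii), since (ii) gives an explicit handle on $\la\cC^-\ra$ from which the others follow. Set $R(x)=\indlim[n\in\N]\tau_{\geq -n}x$. Because the $t$-structure is compatible with filtered colimits, each $\tau_{\geq -n}$ is a continuous exact-on-fibers functor $\cC\to\cC$; hence $R$ commutes with filtered colimits. Moreover $R$ is exact. Indeed, $\tau_{\geq -n}$ is not exact, but for a cofiber sequence $x\to y\to z$ the sequence $\tau_{\geq -n}x\to\tau_{\geq -n}y\to\tau_{\geq -n}z$ fails to be a cofiber sequence only by a correction term lying in $\cC^{\heartsuit}[-n-1]$ (it comes from $\pi_{-n-1}$). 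These correction terms become zero after passing along $n\to n+1$, so in the colimit we get a cofiber sequence. An exact functor that preserves filtered colimits preserves all colimits. Clearly $R(x)\in\la\cC^-\ra$, and $R$ restricts to the identity on $\cC^-$.

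To show $R$ is right adjoint to the inclusion $\iota:\la\cC^-\ra\hto\cC$, I would check that the map $R(x)\to x$ induces an equivalence on $\Hom(w,-)$ for all $w\in\la\cC^-\ra$. The class of such $w$ is closed under colimits and shifts, since both sides send colimits in $w$ to limits. So it suffices to take $w\in\cC_{\geq -m}$. Then $\Hom(w,\tau_{\geq -n}x)\to\Hom(w,x)$ is an equivalence for $n\geq m$. The remaining point is to pass the filtered colimit through $\Hom(w,-)$. Here I would avoid any compactness assumption by using that the cofiber of $R(x)\to x$ is $\indlim[n]\tau_{\leq -n-1}x$. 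Each term of this system lies in $\cC_{\leq -m-1}$ for $n\geq m$, and $\cC_{\leq -m-1}$ is closed under filtered colimits, so the cofiber lies in $\cC_{\leq -m-1}$ and is right orthogonal to $w$. This proves (ii).

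For (i), I would use the universal property of $\Sp(\cC_{\geq 0})$ as the stabilization of the Grothendieck prestable category $\cC_{\geq 0}$ (as in Proposition \ref{prop:St_of_Grothendieck_prestable}). The inclusion $\cC_{\geq 0}\hto\la\cC^-\ra$ preserves colimits, so it induces a colimit-preserving functor $\Sp(\cC_{\geq 0})\to\la\cC^-\ra$. I would identify $\Sp(\cC_{\geq 0})$ with $\colim(\cC_{\geq 0}\xto{[1]}\cC_{\geq 0}\to\dots)$ in $\Pr^L$. Its right adjoint is then $x\mapsto(\tau_{\geq -n}x[n])_n$, landing in the limit along $\Omega$. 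Full faithfulness reduces to $\Hom$ computations between objects of $\cC_{\geq -m}$, and these agree. Essential surjectivity holds because $\la\cC^-\ra$ is generated under colimits by the $\cC_{\geq -m}$, and the image is a localizing subcategory containing them.

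For (iii), suppose first that $\cC=\la\cC^-\ra$. Then by (ii) we have $x\simeq\indlim\tau_{\geq -n}x$ for every $x$, and so $\cC\to\hhat{\cC}^r$ is fully faithful. For essential surjectivity, given a compatible system $(x_n\in\cC_{\geq -n})$, I would form $x=\indlim x_n$ and check that $\tau_{\geq -n}x\simeq x_n$, again using compatibility of $\cC_{\leq}$ with filtered colimits. Conversely, right completeness gives $x\simeq\prolim\tau_{\geq -n}$-data, and I would show that this recovers $x\simeq R(x)$. I expect this converse, that right completeness forces $R(x)\to x$ to be an equivalence, to be the main obstacle. I would handle it by noting that the cofiber $\indlim\tau_{\leq -n-1}x$ lies in $\bigcap_m\cC_{\leq -m}$, and that right completeness implies this intersection is zero.
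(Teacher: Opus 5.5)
The gap is in (i). Your sentence ``full faithfulness reduces to $\Hom$ computations between objects of $\cC_{\geq -m}$'' is not a valid reduction. A colimit-preserving functor that is fully faithful on a set of colimit-generators need not be fully faithful: for instance $\colim\colon\Ind(\Sp^{\omega_1})\to\Sp$ is fully faithful on $\Sp^{\omega_1}$ but kills nonzero objects.

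What has to be shown is that the unit $b\to G^R G(b)$ is an equivalence for every $b=(b_n)_{n\geq 0}\in\Sp(\cC_{\geq 0})$, where $G(b)\simeq\indlim[n]b_n[-n]$ and $G^R(x)=(\tau_{\geq 0}(x[n]))_n$. Concretely, you need $\tau_{\geq -k}\bigl(\indlim[n]b_n[-n]\bigr)\simeq b_k[-k]$. This holds because $\tau_{\geq -k}$ commutes with filtered colimits, so the left side is $\indlim[n\geq k]\tau_{\geq -k}(b_n[-n])\simeq b_k[-k]$.

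Equivalently, $G^R$ preserves filtered colimits, because these are computed levelwise in $\Sp(\cC_{\geq 0})$ and $\Omega_{\cC_{\geq 0}}=\tau_{\geq 0}\circ[-1]$ is continuous. Then the class of $b$ on which the unit is an equivalence is closed under colimits, and only at that point does your check on generators suffice. This use of compatibility with filtered colimits is exactly the step the paper's proof singles out. Once it is supplied, your essential surjectivity argument completes (i).

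Your arguments for (ii) and (iii) are correct and more hands-on than the paper's. The paper gets all three parts at once from the adjunction between $\Phi^L\colon(x_n)\mapsto\indlim[n]\Omega^n x_n$ and $\Phi(x)=(\tau_{\geq 0}(x[n]))_{n\geq 0}$, together with the known identification $\hhat{\cC}^r\simeq\Sp(\cC_{\geq 0})$. Your direct analysis of $\cC\to\hhat{\cC}^r$ via the vanishing of $\bigcap_k\cC_{\leq -k}$ avoids that citation.

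One small correction in (ii): for $w\in\cC_{\geq -m}$, the map of mapping spectra $\Hom(w,\tau_{\geq -n}x)\to\Hom(w,x)$ is not an equivalence. Its cofiber $\Hom(w,\tau_{\leq -n-1}x)$ only lies in $\Sp_{\leq -1}$. What you actually need, and what your cofiber argument gives, is that $\indlim[n]\tau_{\leq -n-1}x$ lies in $\bigcap_k\cC_{\leq -k}$, and hence is right orthogonal to every shift of $w$.
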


\begin{proof}
All the assertions are well-known. Namely, the right completion of $\cC$ is identified with $\Sp(\cC_{\geq 0}),$ see for example \cite[Lemma A.8]{AN21}. For convenience, we write the objects of $\Sp(\cC_{\geq 0})$ as $(x_n)_{n\geq 0},$ where $x_n\in\cC_{\geq 0}$ and we assume the isomorphisms $x_n\cong \Omega x_{n+1}.$ The natural functor $\Phi:\cC\to\Sp(\cC_{\geq 0})$ is given by $x\mapsto (\tau_{\geq 0}(\Sigma^n x))_{n\geq 0}.$ Its left adjoint is given by
\begin{equation*}
\Phi^L:\Sp(\cC_{\geq 0})\to\cC,\quad \Phi^L((x_n)_{n\geq 0}) = \indlim[n] \Omega_{\cC}^n x_n
\end{equation*}
Since the $t$-structure is compatible with filtered colimits, the adjunction counit $\Id\to \Phi\circ\Phi^L$ is an isomorphism, i.e. $\Phi^L$ is fully faithful. It follows that the essential image of $\Phi^L$ is identified with $\la \cC^-\ra.$ This proves all the assertions.
\end{proof}

We deduce the following statement, which will have an important $K$-theoretic application.

\begin{prop}\label{prop:SOD_for_C_mod_C^b}
Let $\cC$ and $(\cC_{\geq 0},\cC_{\leq 0})$ be as in Definition \ref{def:C^-_C^+_C^b}.
\begin{enumerate}[label=(\roman*),ref=(\roman*)]
	\item The following (lax commutative) square commutes:
	\begin{equation}\label{eq:Beck_Chevalley1}
		\begin{tikzcd}
			\la \cC^+\ra \ar[r]\ar[d] & \cC\ar[d]\\
			\la \cC^b\ra \ar[r] & \la \cC^-\ra.
		\end{tikzcd}
	\end{equation}
	Here the horizontal functors are the inclusions and the vertical functors are right adjoints to the inclusions. \label{Beck_Chevalley1}
	\item We have a semi-orthogonal decomposition in $\Pr^L_{\st}:$
	\begin{equation}\label{eq:SOD_for_C_mod_C^b}
		\cC/\la \cC^b\ra = \la \la \cC^+\ra/\la \cC^b\ra, \la \cC^-\ra/\la \cC^b\ra\ra
	\end{equation}
	In particular, we have an equivalence
	\begin{equation}\label{eq:equivalence_of_quotients1}
		\la \cC^-\ra/\la \cC^b\ra\xto{\sim} \cC/\la \cC^+\ra.
	\end{equation} \label{SOD_for_C_mod_C^b}
\end{enumerate} 
\end{prop}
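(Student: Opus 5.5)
Part \ref{Beck_Chevalley1} comes first: it is a Beck--Chevalley statement about right adjoints. Write $R^-:\cC\to\la\cC^-\ra$ for the right adjoint of the inclusion. By Proposition \ref{prop:C^-_and_right_completion} it is $R^-(x)=\indlim[n]\tau_{\geq -n}x$, and it commutes with colimits. Write $R^b_+:\la\cC^+\ra\to\la\cC^b\ra$ for the right adjoint of the other inclusion. Applying Proposition \ref{prop:C^-_and_right_completion} to $\la\cC^+\ra$ with its induced $t$-structure (Proposition \ref{prop:C^-_C^+_C^b_t_structures}), and using $\la\la\cC^+\ra^-\ra=\la\cC^b\ra$, gives $R^b_+(x)=\indlim[n]\tau_{\geq -n}x$ as well. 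Here the truncations are computed in $\cC$, since $\tau_{\geq -n}$ preserves $\la\cC^+\ra$. The two functors therefore agree on $\la\cC^+\ra$, so the square \eqref{eq:Beck_Chevalley1} commutes. More precisely, the natural transformation $R^b_+(x)\to R^-(x)$ is an isomorphism because it is the identity on the explicit formula. The only thing to verify is that the canonical Beck--Chevalley map coincides with this identification, which follows because both counits are the maps $\indlim[n]\tau_{\geq -n}x\to x$.

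For \ref{SOD_for_C_mod_C^b}, I pass to the quotient $\cQ=\cC/\la\cC^b\ra$ and write $q:\cC\to\cQ$ for the quotient functor. The images $\cA=\la\cC^+\ra/\la\cC^b\ra$ and $\cB=\la\cC^-\ra/\la\cC^b\ra$ embed fully faithfully in $\cQ$, since $\la\cC^b\ra$ is contained in both. I will show that $\Hom_{\cQ}(\cB,\cA)=0$ and that every object of $\cQ$ is an extension of an object of $\cA$ by an object of $\cB$. This gives the semi-orthogonal decomposition with $\cA$ as the left piece, in the sense that $\cA=\cB^{\perp}$.

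For orthogonality, take $y\in\la\cC^-\ra$ and $z\in\la\cC^+\ra$. Morphisms in $\cQ$ from $q(y)$ to $q(z)$ are computed as morphisms in $\cC$ from $y$ to the $\la\cC^b\ra$-local replacement of $z$, namely $\Cone(R^b(z)\to z)$, where $R^b$ is the right adjoint of $\la\cC^b\ra\hookrightarrow\cC$. Since $y\in\la\cC^-\ra$, this equals $\Hom(y,R^-\Cone(R^b z\to z))$. By part \ref{Beck_Chevalley1}, $R^-(z)\simeq R^b_+(z)=R^b(z)$, and $R^-R^b(z)=R^b(z)$. Hence $R^-\Cone(R^bz\to z)=0$, and the Hom vanishes.

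For generation, take $x\in\cC$ and the cofiber sequence $R^-(x)\to x\to\Cone$. Its first term lies in $\la\cC^-\ra$. The cone is $\indlim[n]\Cone(\tau_{\geq -n}x\to x)=\indlim[n]\tau_{\leq -n-1}x$. Each $\tau_{\leq -n-1}x$ lies in $\cC^+$, and a filtered colimit of such objects lies in $\la\cC^+\ra$. So every $q(x)$ is an extension of an object of $\cA$ by an object of $\cB$, and the decomposition follows.

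The equivalence \eqref{eq:equivalence_of_quotients1} is then formal. From the decomposition, $\cQ/\cA\simeq\cB$ via the composite $\cB\to\cQ\to\cQ/\cA$. Moreover $\cQ/\cA\simeq\cC/\la\cC^+\ra$, because $\la\cC^b\ra\subset\la\cC^+\ra$. The main obstacle is the orthogonality step. It requires a careful identification of Homs in the Verdier quotient of presentable categories, that is, checking that $\la\cC^b\ra$-localization is computed by $\Cone(R^b\to\Id)$. It also requires the Beck--Chevalley compatibility to hold on the nose, not merely objectwise.
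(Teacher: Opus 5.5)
Your proposal is correct and follows the paper's proof. Part (i) is the observation that both vertical functors are $x\mapsto\indlim[n]\tau_{\geq -n}x$, and part (ii) combines (i), which gives the orthogonality, with the fact that $\la\cC^-\ra$ and $\la\cC^+\ra$ generate $\cC$. The only difference is cosmetic: you write down the decomposition triangle $R^-(x)\to x\to\indlim[n]\tau_{\leq -n-1}x$ explicitly, whereas the paper identifies the right orthogonal of $\la\cC^-\ra/\la\cC^b\ra$ via the right adjoint and then invokes generation.
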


\begin{proof}
\ref{Beck_Chevalley1} follows directly from Propositions \ref{prop:C^-_C^+_C^b_t_structures} and \ref{prop:C^-_and_right_completion}: both vertical functors in \eqref{eq:Beck_Chevalley1} are given by $x\mapsto\indlim[n\in\N]\tau_{\geq -n}x.$

Next, \ref{SOD_for_C_mod_C^b} follows directly from \ref{Beck_Chevalley1}. Namely, the inclusion $\la\cC^-\ra/\la\cC^b\ra\to\cC/\la\cC^b\ra$ is colimit-preserving, hence it has a right adjoint. The right orthogonal is identified with $\la\cC^-\ra^{\perp}\subset\cC,$ or equivalently with $\{x\in \cC\mid \indlim[n\in\N]\tau_{\geq -n}x = 0\}.$ It follows from \ref{Beck_Chevalley1} that the image of $\la\cC^+\ra/\la\cC^b\ra$ in $\cC/\la\cC^b\ra$ is contained in the right orthogonal to $\la\cC^-\ra/\la\cC^b\ra.$ Since $\la\cC^-\ra$ and $\la\cC^+\ra$ generate $\cC,$ we obtain the semi-orthogonal decomposition \eqref{eq:SOD_for_C_mod_C^b}. The equivalence \eqref{eq:equivalence_of_quotients1} follows.
\end{proof}

\subsection{Compactly assembled $t$-structures}

%All the $t$-structures on presentable stable categories will be assumed to be accessible and compatible with filtered colimits. 
We first recall the notion of a compactly generated $t$-structure.

\begin{prop}\label{prop:compactly_generated_t_structure}
Let $\cT$ be a small stable category with a $t$-structure $(\cT_{\geq 0},\cT_{\leq 0}).$ Then the category $\Ind(\cT)$ has a $t$-structure $(\Ind(\cT)_{\geq 0},\Ind(\cT)_{\leq 0}),$ where $\Ind(\cT)_{\geq 0}$ is the essential image of the functor $\Ind(\cT_{\geq 0})\hto\Ind(\cT),$ and similarly for $\Ind(\cT)_{\leq 0}.$

Moreover, this $t$-structure on $\Ind(\cT)$ is continuously bounded if and only if the $t$-structure $(\cT_{\geq 0},\cT_{\leq 0})$ is bounded.
\end{prop}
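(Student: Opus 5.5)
The plan has two parts: first construct the $t$-structure on $\Ind(\cT)$ from the one on $\cT$, then compare boundedness on $\cT$ with continuous boundedness on $\Ind(\cT)$.

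\emph{Construction of the $t$-structure.} The subcategories $\Ind(\cT_{\geq 0})$ and $\Ind(\cT_{\leq 0})$ embed fully faithfully into $\Ind(\cT)$. Their essential images are stable under $[1]$ resp. $[-1]$.

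For orthogonality, take $x=\inddlim x_i\in\Ind(\cT_{\geq 0})$ and $y=\inddlim y_j\in\Ind(\cT_{\leq -1})$. Since each $x_i$ is compact, we have $\Hom(x,y)=\prolim[i]\indlim[j]\Hom_{\cT}(x_i,y_j)$, and every term of this vanishes.

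For the truncation triangles, recall that $\tau_{\geq 0}:\cT\to\cT_{\geq 0}$ and $\tau_{\leq -1}$ are exact functors of the underlying categories, and the fiber sequence $\tau_{\geq 0}x\to x\to\tau_{\leq -1}x$ is functorial. Applying $\Ind$ therefore produces, for $x=\inddlim x_i$, a filtered colimit of cofiber sequences. This colimit is again a cofiber sequence in $\Ind(\cT)$, with its outer terms lying in the two required subcategories.

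It is convenient to note that $\Ind(\cT)_{\leq 0}$ is closed under filtered colimits. Consequently the $t$-structure is accessible and compatible with filtered colimits, so Definition \ref{def:C^-_C^+_C^b} applies. The truncation functor on $\Ind(\cT)$ is $\Ind(\tau_{\geq 0})$, and in particular it preserves $\cT$.

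\emph{Boundedness.} Suppose first that $\cT$ is bounded. Every object of $\cT$ lies in some $\cT_{[-n,n]}\subset\Ind(\cT)^b$. Since $\cT$ generates $\Ind(\cT)$ under colimits, we get $\la\Ind(\cT)^b\ra=\Ind(\cT)$.

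Conversely, suppose $\la\Ind(\cT)^b\ra=\Ind(\cT)$. Because truncations preserve $\cT$, we have a subcategory $\cT^b\subset\cT$, which is an idempotent-complete stable subcategory of $\cT$; it is closed under retracts since the truncation of a retract is a retract of the truncation. The key point is the identity
\begin{equation*}
\Ind(\cT)^b\cap\Ind(\cT)^{\omega}\supset\cT^b,
\end{equation*}
and, more to the point, that every object of $\Ind(\cT)_{[-n,n]}$ is a filtered colimit of objects of $\cT_{[-n,n]}$. To see this, write such an object as $\inddlim x_i$ and replace each $x_i$ by $\tau_{[-n,n]}x_i$; the colimit is unchanged because $\Ind(\tau_{[-n,n]})$ acts as the identity on the object. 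Hence $\Ind(\cT)^b$ lies in $\Ind(\cT^b)$. Since $\Ind(\cT^b)$ is a localizing subcategory, we obtain $\Ind(\cT^b)=\Ind(\cT)$.

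Passing to compact objects, and using that $\cT^b$ is idempotent complete inside $\cT$ (by Thomason's theorem, or directly from the retract argument above), we get $\cT=\cT^b$.

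\emph{Main obstacle.} The step needing care is showing that objects of $\Ind(\cT)_{[-n,n]}$ are ind-objects of $\cT_{[-n,n]}$. This relies on the truncations being computed levelwise, i.e. on the fact that $\Ind(\tau)$ is right, resp. left, adjoint to the inclusion within $\Ind(\cT)$. I would verify this adjunction directly on compact generators.
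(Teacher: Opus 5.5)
Your proposal is correct and follows essentially the paper's argument. For the first part, the paper cites Lurie's Lemma C.2.4.3, which you verify by hand. For the boundedness equivalence, both arguments identify $\Ind(\cT)_{[a,b]}$ with ind-objects of $\cT_{[a,b]}$ and pass to compact objects; the paper phrases this via generation by the heart, $\Ind(\cT^{\heartsuit})\simeq\Ind(\cT)^{\heartsuit}$, rather than via your thick subcategory $\cT^b$.

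One small inaccuracy: $\tau_{\geq 0}$ and $\tau_{\leq -1}$ are not exact functors on $\cT$. This does not affect the argument, since you only use that the truncation triangle is functorial in $x$, and that is enough to apply $\Ind$ and take filtered colimits of cofiber sequences.
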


\begin{proof}
The first assertion is \cite[Lemma C.2.4.3]{Lur18}. For the second assertion, note that the $t$-structure $(\cT_{\geq 0},\cT_{\leq 0})$ is bounded iff $\cT$ is generated by $\cT^{\heartsuit}$ as an idempotent-complete stable subcategory, which equivalently means that $\Ind(\cT)$ is generated by $\Ind(\cT^{\heartsuit})\simeq \Ind(\cT)^{\heartsuit}$ as a localizing subcategory, i.e. the $t$-structure $(\Ind(\cT)_{\geq 0},\Ind(\cT)_{\leq 0})$ is continuously bounded. 
\end{proof}

The following is immediate.

\begin{cor}\label{cor:Ind_of_T_a_b}
In the situation of Proposition \ref{prop:compactly_generated_t_structure}, for any integers $a\leq b$ the full subcategory $\Ind(\cT)_{[a,b]}\subset\Ind(\cT)$ is the essential image of the functor $\Ind(\cT_{[a,b]})\hto\Ind(\cT).$
\end{cor}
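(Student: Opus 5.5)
We have to show that, for integers $a\leq b$, the full subcategory $\Ind(\cT)_{[a,b]}=\Ind(\cT)_{\geq a}\cap\Ind(\cT)_{\leq b}$ coincides with the essential image of $\Ind(\cT_{[a,b]})\hto\Ind(\cT)$. Since the shift functor on $\Ind(\cT)$ is $\Ind$ of the shift on $\cT$, we may shift and assume $a=0$ throughout.

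The first step is the easy inclusion. The inclusion $\cT_{[0,b]}\subset\cT_{\geq 0}$ induces $\Ind(\cT_{[0,b]})\subset\Ind(\cT_{\geq 0})$, whose image is $\Ind(\cT)_{\geq 0}$ by Proposition \ref{prop:compactly_generated_t_structure}. Likewise $\cT_{[0,b]}\subset\cT_{\leq b}$ places the image inside $\Ind(\cT)_{\leq b}$. Hence the image of $\Ind(\cT_{[0,b]})$ lies in $\Ind(\cT)_{[0,b]}$.

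The second step is the reverse inclusion. Take $x\in\Ind(\cT)_{[0,b]}$ and write it as a filtered colimit $x\cong\indlim[i]x_i$ with $x_i\in\cT$. The key point is how the truncation functors of $\Ind(\cT)$ behave.
\begin{itemize}
\item By the description of the $t$-structure in Proposition \ref{prop:compactly_generated_t_structure} (\cite[Lemma C.2.4.3]{Lur18}), the truncations on $\Ind(\cT)$ are the $\Ind$-extensions of those on $\cT$: for $y=\inddlim y_i$ one has $\tau_{\geq 0}y=\inddlim\tau_{\geq 0}y_i$ and $\tau_{\leq b}y=\inddlim\tau_{\leq b}y_i$.
\item To see this, note that $\inddlim\tau_{\geq 0}y_i\to y\to\inddlim\tau_{\leq -1}y_i$ is a cofiber sequence, because filtered colimits are exact in $\Ind(\cT)$.
\item Its outer terms lie in $\Ind(\cT)_{\geq 0}$ and $\Ind(\cT)_{\leq -1}$ respectively, by definition of these subcategories as essential images. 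Uniqueness of truncation triangles then gives the formulas.
\end{itemize}

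The third step concludes. Applying these formulas, $x\cong\tau_{\leq b}\tau_{\geq 0}x\cong\inddlim\tau_{[0,b]}x_i$. Each $\tau_{[0,b]}x_i$ lies in $\cT_{[0,b]}$, so $x$ lies in the image of $\Ind(\cT_{[0,b]})$. Full faithfulness of $\Ind(\cT_{[0,b]})\to\Ind(\cT)$ is automatic, since the functor is $\Ind$ of a fully faithful functor.

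The only step needing any care is the identification of the truncation functors with $\Ind$ of the truncations on $\cT$; everything else is formal.
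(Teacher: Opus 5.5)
Your proposal is correct and follows essentially the same route as the paper: the easy inclusion, then computing the truncations of $x=\inddlim[i]x_i$ termwise to obtain $x\cong\inddlim[i]\tau_{[a,b]}x_i$. Your argument that truncations on $\Ind(\cT)$ are computed levelwise (as a filtered colimit of functorial truncation triangles, followed by uniqueness of truncation triangles) spells out what the paper's two-line proof leaves implicit.
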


\begin{proof}
Clearly, the essential image of $\Ind(\cT_{[a,b]})$ is contained in $\Ind(\cT)_{[a,b]}.$ On the other hand, for an object $x=\inddlim[i]x_i\in\Ind(\cT)_{[a,b]}$ the maps $\inddlim[i]\tau_{\geq a}x_i\to\inddlim[i]x_i$ and $\inddlim[i]\tau_{\geq a}x_i\to\inddlim[i]\tau_{[a,b]}x_i$ are isomorphisms. This proves the inverse inclusion.
\end{proof}

It is technically convenient to consider the natural $t$-structure on the ``twice large'' category $\Ind(\cC)$ when $\cC$ is a presentable stable category with an accessible $t$-structure. Namely, $\Ind(\cC)$ is a union of $\Ind(\cC^{\kappa}),$ for sufficiently large regular $\kappa$ the $t$-structure on $\cC$ induces a $t$-structure on $\cC^{\kappa},$ and the inclusion functors $\Ind(\cC^{\kappa})\to\Ind(\cC^{\lambda})$ are $t$-exact. The compatibility of $(\cC_{\geq 0},\cC_{\leq 0})$ with filtered colimits exactly means that the functor $\colim:\Ind(\cC)\to\cC$ is $t$-exact (it is automatically right $t$-exact).

We introduce the following natural generalization of a compactly generated $t$-structure. 

\begin{defi}\label{def:compactly_assembled_t_structure}
Let $\cC$ be a dualizable category, and let $(\cC_{\geq 0},\cC_{\leq 0})$ be an accessible $t$-structure compatible with filtered colimits. We say that this $t$-structure is compactly assembled if the functor $\hat{\cY}:\cC\to\Ind(\cC)$ is $t$-exact.

We will say that $\cC$ is a dualizable $t$-category if the $t$-structure is compactly assembled and continuously bounded.
\end{defi}

\begin{remark}\label{rem:comp_ass_t_structures}
\begin{enumerate}[label=(\roman*),ref=(\roman*)]
	\item In the situation of the Definition \ref{def:compactly_assembled_t_structure} the functor $\hat{\cY}$ is automatically right $t$-exact, since its right adjoint is $t$-exact.
	\item A compactly generated $t$-structure is compactly assembled.
	\item Let $(\cC_{\geq 0},\cC_{\leq 0})$ be a compactly assembled $t$-structure on a dualizable category $\cC.$ It follows from Proposition \ref{prop:SOD_for_C_mod_C^b} that the condition $\cC=\la \cC^b\ra$ is equivalent to the equalities $\cC=\la\cC^-\ra=\la\cC^+\ra.$ By Proposition \ref{prop:C^-_and_right_completion} the condition $\cC=\la\cC^-\ra$ is equivalent to the right completeness. One can show that $\cC=\la \cC^+\ra$ if and only if the Grothendieck prestable category $\cC_{\geq 0}$ is anticomplete in the sense of \cite[Definition C.5.5.4]{Lur18}
\end{enumerate}
\end{remark}

%We first explain a direct relation with coherently assembled exact categories.

%\begin{prop}
%Let $\cC$ be a dualizable category, and let $(\cC_{\geq 0},\cC_{\leq 0})$ be an  accessible $t$-structure compatible with filtered colimits. Then the category $\cC_{\geq 0}$ is compactly assembled and the inclusion $\cC_{\geq 0}\to\cC$ is strongly continuous. Moreover, the following are equivalent.
%\begin{enumerate}[label=(\roman*),ref=(\roman*)]
%	\item The $t$-structure $(\cC_{\geq 0},\cC_{\leq 0})$ is compactly assembled.
%	\item The category $\cC_{\geq 0}$ is coherently assembled as an exact category.
%	\item The functor $\hat{\cY}:\cC_{\geq 0}\to\Ind(\cC_{\geq 0})$ is left exact (i.e. commutes with finite limits).
%\end{enumerate}
%\end{prop}

%\begin{proof}
%The right adjoint to the inclusion $\cC_{\geq 0}\to \cC$ commutes with filtered colimits, hence $\cC_{\geq 0}$ is compactly assembled. 

%\end{proof}

To avoid confusion we spell out the following special case.

\begin{prop}\label{prop:comp_ass_on_compactly_generated}
Let $\cC$ be a compactly generated presentable stable category, and let $(\cC_{\geq 0},\cC_{\leq 0})$ be an accessible $t$-structure compatible with filtered colimits. Then this $t$-structure is compactly assembled if and only if it induces a $t$-structure on $\cC^{\omega},$ which equivalently means that we are in the situation of Proposition \ref{prop:compactly_generated_t_structure}. 
\end{prop}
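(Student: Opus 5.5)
For the "if" direction, suppose the $t$-structure restricts to $\cC^{\omega}$, i.e. $\tau_{\geq 0}$ preserves compact objects. Set $\cT=\cC^{\omega}$ with the induced $t$-structure. By Proposition \ref{prop:compactly_generated_t_structure} the category $\Ind(\cT)\simeq\cC$ carries the $t$-structure whose connective part is generated under filtered colimits by $\cT_{\geq 0}$. I will first check that this agrees with the given one. The inclusion $\Ind(\cT_{\geq 0})\subset\cC_{\geq 0}$ is clear. Conversely, any $x\in\cC_{\geq 0}$ is a filtered colimit $\indlim x_i$ with $x_i\in\cT$. Since $\tau_{\geq 0}$ commutes with filtered colimits, $x\cong\indlim\tau_{\geq 0}x_i$ with $\tau_{\geq 0}x_i\in\cT_{\geq 0}$. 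Two $t$-structures with the same connective part coincide, so the structures agree. The functor $\hat{\cY}$ is then $\Ind$ of the inclusion $\cT\hto\cC$ (i.e. $\hat{\cY}(\indlim x_i)=\inddlim x_i$), so it sends $\Ind(\cT_{\leq 0})$ into $\Ind(\cC_{\leq 0})=\Ind(\cC)_{\leq 0}$. Hence $\hat{\cY}$ is left $t$-exact, and being a left adjoint of a $t$-exact functor it is right $t$-exact. This gives compact assembly.

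For the "only if" direction, suppose $\hat{\cY}$ is $t$-exact, and let $x\in\cC^{\omega}$; we must show $\tau_{\geq 0}x\in\cC^{\omega}$. For compact $x$ we have $\hat{\cY}(x)=\cY(x)$. Moreover $\hat{\cY}$ commutes with $\tau_{\geq 0}$, because it is $t$-exact and $\Ind(\cC)$ carries the induced $t$-structure. Hence $\hat{\cY}(\tau_{\geq 0}x)\cong\tau_{\geq 0}^{\Ind(\cC)}\cY(x)$.

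The key point is that the truncation in $\Ind(\cC^{\kappa})$ of a representable object $\cY(y)$ is $\cY(\tau_{\geq 0}y)$. Since $\tau_{\geq 0}y\in\cC^{\kappa}$ for large $\kappa$, this is again representable. Thus $\hat{\cY}(\tau_{\geq 0}x)\cong\cY(\tau_{\geq 0}x)$, i.e. the canonical map $\hat{\cY}(z)\to\cY(z)$ is an isomorphism for $z=\tau_{\geq 0}x$. An object with this property is compact: its identity is a compact morphism, equivalently $\Hom(z,-)$ commutes with filtered colimits by adjunction. So $\tau_{\geq 0}$ preserves $\cC^{\omega}$, and then so does $\tau_{\leq -1}$, giving the induced $t$-structure on $\cC^{\omega}$.

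The only step requiring some care is the identification of truncations of representables in $\Ind(\cC)$. This follows from the description of the $t$-structure on $\Ind(\cC^{\kappa})$ via Proposition \ref{prop:compactly_generated_t_structure} applied to $\cT=\cC^{\kappa}$ for $\kappa$ large enough that $\cC^{\kappa}$ is closed under truncations.
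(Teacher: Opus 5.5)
Your proof is correct and follows the paper's argument. The ``only if'' direction is exactly the paper's chain $\hat{\cY}(\tau_{\geq 0}x)\cong \tau_{\geq 0}\hat{\cY}(x)\cong \tau_{\geq 0}\cY(x)\cong\cY(\tau_{\geq 0}x)$. For the ``if'' direction the paper simply cites Proposition \ref{prop:compactly_generated_t_structure}, whereas you also spell out two details it leaves implicit: why the given $t$-structure coincides with the one generated by $\cC^{\omega}$, and why $\hat{\cY}=\Ind(\cC^{\omega}\hto\cC)$ is then $t$-exact.
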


\begin{proof}
The ``if'' direction follows from Proposition \ref{prop:compactly_generated_t_structure}. For the ``only if'' direction we need to show that for $x\in\cC^{\omega}$ we have $\tau_{\geq 0}x\in\cC^{\omega}.$ This follows from the isomorphisms in $\Ind(\cC):$
\begin{equation*}
\hat{\cY}(\tau_{\geq 0}x)\cong \tau_{\geq 0}\hat{\cY}(x)\cong \tau_{\geq 0}\cY(x)\cong\cY(\tau_{\geq 0}x).\qedhere
\end{equation*}
\end{proof}

As usual with dualizable categories, we obtain the $\omega_1$-accessibility statement.

\begin{prop}\label{prop:comp_ass_t_structure_omega_1_accessible}
	Let $(\cC_{\geq 0},\cC_{\leq 0})$ be a compactly assembled $t$-structure on a dualizable category $\cC.$ Then this $t$-structure is $\omega_1$-accessible.
\end{prop}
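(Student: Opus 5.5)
The plan is to show that the truncation functor $\tau_{\geq 0}:\cC\to\cC$ preserves $\omega_1$-compact objects. Since $\cC$ is $\omega_1$-presentable (by \cite[Proposition 1.24]{E24}), this suffices. Indeed, if $\tau_{\geq 0}(\cC^{\omega_1})\subset\cC^{\omega_1}$, then $\cC_{\geq 0}$ is generated under colimits by $\cC_{\geq 0}\cap\cC^{\omega_1}$. To see this, write any $x\in\cC_{\geq 0}$ as an $\omega_1$-filtered colimit $\indlim[i] x_i$ with $x_i\in\cC^{\omega_1}$. Then $x\cong\tau_{\geq 0}x\cong\indlim[i]\tau_{\geq 0}x_i$, using that $\tau_{\geq 0}$ commutes with filtered colimits by compatibility with filtered colimits. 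Hence $\cC_{\geq 0}$ is $\omega_1$-accessible (in fact $\omega_1$-presentable), which is what $\omega_1$-accessibility of the $t$-structure means.

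The key step mimics the proof of Proposition \ref{prop:comp_ass_on_compactly_generated}, with compact objects replaced by $\omega_1$-compact ones. Recall that for a dualizable $\cC$ an object $x$ is $\omega_1$-compact if and only if $\hat{\cY}(x)\in\Ind(\cC)$ is represented by a sequential ind-system $\inddlim[n\in\N]x_n$ in which the transition maps $x_n\to x_{n+1}$ are compact morphisms and the colimit is $x$. Equivalently, $x$ is the colimit of a sequence of compact morphisms; this characterization is in \cite{E24}. Given $x\in\cC^{\omega_1}$, write $\hat{\cY}(x)\cong\inddlim[n] x_n$ in this way. Since $\hat{\cY}$ is $t$-exact and truncations in $\Ind(\cC)$ are computed termwise, we get
\begin{equation*}
\hat{\cY}(\tau_{\geq 0}x)\cong\tau_{\geq 0}\hat{\cY}(x)\cong\inddlim[n]\tau_{\geq 0}x_n.
\end{equation*}
So $\hat{\cY}(\tau_{\geq 0}x)$ is a sequential ind-object, and applying $\colim$ gives $\tau_{\geq 0}x\cong\indlim[n]\tau_{\geq 0}x_n$.

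It remains to conclude that $\tau_{\geq 0}x$ is $\omega_1$-compact. The cleanest route is the criterion that $y\in\cC$ is $\omega_1$-compact if and only if $\hat{\cY}(y)$ lies in the image of $\Ind(\cC^{\omega_1})$ and is countably presented as an ind-object. To check this for $y=\tau_{\geq 0}x$, note that each $\tau_{\geq 0}x_n\to\tau_{\geq 0}x_{n+1}$ factors through the canonical maps from $\hat{\cY}(\tau_{\geq 0}x_{n+1})$. One can then refine the sequence using the fact that each compact morphism factors as a composition of two compact morphisms, so that it exhibits $\tau_{\geq 0}x$ as a colimit of a sequence of compact morphisms.

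I expect the main obstacle to be the subtle point of whether the $x_n$ may be chosen $\omega_1$-compact, and whether the truncated maps remain compact. I would sidestep this with a direct argument: $\Hom(\tau_{\geq 0}x,-)$ restricted to $\cC_{\geq 0}$ equals $\Hom(x,-)$, so it commutes with $\omega_1$-filtered colimits in $\cC_{\geq 0}$. Hence $\tau_{\geq 0}x$ is $\omega_1$-compact in $\cC_{\geq 0}$. Since the inclusion $\cC_{\geq 0}\to\cC$ has a continuous right adjoint $\tau_{\geq 0}$, it preserves $\omega_1$-compact objects. This makes the $\hat{\cY}$ argument unnecessary for this implication; that argument then serves only as a consistency check.
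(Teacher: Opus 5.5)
\textbf{The final step fails.} The argument you actually rely on, in your last paragraph, is wrong, and it is the one you use to make the $\hat{\cY}$ argument ``unnecessary''. Since $\tau_{\geq 0}$ is the \emph{right} adjoint of the inclusion $\cC_{\geq 0}\hookrightarrow\cC$, the adjunction gives $\Map(z,\tau_{\geq 0}x)\simeq\Map(z,x)$ for $z\in\cC_{\geq 0}$. These are maps \emph{into} $\tau_{\geq 0}x$, which say nothing about its compactness. Maps out of $\tau_{\geq 0}x$ instead sit in a fiber sequence $\Hom(\tau_{\leq -1}x,z)\to\Hom(x,z)\to\Hom(\tau_{\geq 0}x,z)$, and the first term need not vanish for connective $z$.

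A sanity check confirms this. Your argument never uses that $\hat{\cY}$ is $t$-exact. Run verbatim with $\omega$ in place of $\omega_1$, it would show that every $t$-structure compatible with filtered colimits on a compactly generated category restricts to compact objects. That is false for the standard $t$-structure on $D(R)$ with $R=\mk[\varepsilon]/\varepsilon^2$: the perfect complex $K=\Cone(R\xrightarrow{\varepsilon}R)$ has $\tau_{\geq 1}K\simeq\mk[1]$, which is not perfect.

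\textbf{The fix is already in your proposal.} Keep your second paragraph, which is exactly the paper's proof, and delete the last one. The ``subtle point'' you worried about does not arise, because of the following observation. If $\hat{\cY}(y)\cong\inddlim[n\in\N]y_n$ for \emph{arbitrary} $y_n\in\cC$, with no condition on the terms or the transition maps, then $y$ is $\omega_1$-compact. To see this, let $(z_i)$ be an $\omega_1$-filtered system. The adjunction $\hat{\cY}\dashv\colim$ gives $\Map(y,z_i)\simeq\lim_n\Map(y_n,z_i)$ and $\Map(y,\indlim[i]z_i)\simeq\lim_n\colim_i\Map(y_n,z_i)$. These agree after taking $\colim_i$ of the former, because countable limits commute with $\omega_1$-filtered colimits of spaces.

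Hence the identification $\hat{\cY}(\tau_{\geq 0}x)\cong\inddlim[n]\tau_{\geq 0}x_n$ already shows $\tau_{\geq 0}x\in\cC^{\omega_1}$. Your first paragraph (using $\omega_1$-presentability of $\cC$ and continuity of $\tau_{\geq 0}$) then gives $\omega_1$-accessibility, as in the paper.

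If you prefer to stay with the ``colimit of a sequence of compact morphisms'' characterization, that also works. Applying $\tau_{\geq 0}$ to a factorization $\cY(x_n)\to\hat{\cY}(x_{n+1})\to\cY(x_{n+1})$ and using $t$-exactness of $\hat{\cY}$ shows that $\tau_{\geq 0}$ preserves compact morphisms (cf.\ Proposition \ref{prop:comp_ass_t_structures_via_compact_morphisms}). No refinement of the sequence is needed.
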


\begin{proof}
	Let $x\in\cC^{\omega_1},$ then $\hat{\cY}(x)\cong\inddlim[n\in\N]x_n.$ We obtain $\hat{\cY}(\tau_{\geq 0}x)\cong \inddlim[n]\tau_{\geq 0}x_n,$ hence $\tau_{\geq 0}x\in\cC^{\omega_1}.$ This proves that the $t$-structure on $\cC$ induces a $t$-structure on $\cC^{\omega_1}.$ Recall that $\cC$ is $\omega_1$-presentable by \cite[Corollary 1.21]{E24}. Since the $t$-structure on $\cC$ commutes with filtered colimits, it follows that this $t$-structure is $\omega_1$-accessible.
\end{proof}

Next, we have an interpretation via retracts.

\begin{prop}\label{prop:comp_ass_t_structure_via_retracts}
Let $\cC$ be a dualizable category with a $t$-structure $(\cC_{\geq 0},\cC_{\leq 0})$ compatible with filtered colimits. The following are equivalent.
\begin{enumerate}[label=(\roman*),ref=(\roman*)]
	\item The $t$-structure is compactly assembled. \label{comp_ass_t_structure}
	\item There exists a small stable category $\cT$ with a $t$-structure $(\cT_{\geq 0},\cT_{\leq 0})$ and a retraction $\cC\xto{F}\Ind(\cT)\xto{G}\cC,$ such that both $F$ and $G$ are colimit-preserving and $t$-exact. \label{retract_with_t_exactness}
	\item There exists a dualizable category $\cC'$ with a compactly assembled $t$-structure $(\cC'_{\geq 0},\cC'_{\leq 0})$ and a retraction $\cC\xto{F}\cC'\xto{G}\cC,$ such that both $F$ and $G$ are colimit-preserving and left $t$-exact. \label{retract_with_left_t_exactness}
\end{enumerate}
Moreover, similar assertions are equivalent for compactly assembled continuously bounded $t$-structures, where in \ref{retract_with_t_exactness} the $t$-structure $(\cT_{\geq 0},\cT_{\leq 0})$ is required to be bounded.
\end{prop}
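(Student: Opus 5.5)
The plan is to prove \Implies{comp_ass_t_structure}{retract_with_t_exactness}, then \Implies{retract_with_t_exactness}{retract_with_left_t_exactness}, then \Implies{retract_with_left_t_exactness}{comp_ass_t_structure}, and finally to check that boundedness is preserved along the way.

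For \Implies{comp_ass_t_structure}{retract_with_t_exactness}, Proposition \ref{prop:comp_ass_t_structure_omega_1_accessible} gives that the $t$-structure restricts to $\cT=\cC^{\omega_1}$. By Proposition \ref{prop:compactly_generated_t_structure} we then get a compactly generated $t$-structure on $\Ind(\cT)$. Take $F=\hat{\cY}:\cC\to\Ind(\cC^{\omega_1})$ and $G=\colim$. The functor $F$ is colimit-preserving, since it is a left adjoint, and it is $t$-exact by assumption. The functor $G$ is $t$-exact because the $t$-structure is compatible with filtered colimits. Here the $t$-structure on $\Ind(\cC^{\omega_1})$ is the one induced from $\cC^{\omega_1}$, which agrees with the restriction of the $t$-structure on the twice-large $\Ind(\cC)$ described before Definition \ref{def:compactly_assembled_t_structure}. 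Finally, $G\circ F\cong\Id$ holds because $\hat{\cY}$ is fully faithful, which is standard for dualizable categories.

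For \Implies{retract_with_t_exactness}{retract_with_left_t_exactness}, take $\cC'=\Ind(\cT)$. This is compactly generated, hence dualizable, and by Remark \ref{rem:comp_ass_t_structures} its $t$-structure is compactly assembled. The implication is then immediate.

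The main step is \Implies{retract_with_left_t_exactness}{comp_ass_t_structure}. Since $F$ and $G$ are continuous and $GF\cong\Id$, we have $\hat{\cY}_{\cC}\cong\Ind(G)\circ\hat{\cY}_{\cC'}\circ F$, as in the proof of Proposition \ref{prop:coherently_assembled_as_retract}. The functor $\hat{\cY}_{\cC}$ is automatically right $t$-exact (Remark \ref{rem:comp_ass_t_structures}), so only left $t$-exactness remains. The functor $F$ is left $t$-exact by hypothesis, $\hat{\cY}_{\cC'}$ is $t$-exact, and $\Ind(G)$ is left $t$-exact because $G$ preserves $\cC'_{\leq 0}$ and $\Ind(\cC)_{\leq 0}$ is generated under filtered colimits by $\cC_{\leq 0}$. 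Hence the composite is left $t$-exact.

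One subtlety needs care: the $t$-structure on $\Ind(\cC)$ must be made explicit. I would describe $\Ind(\cC)_{\leq 0}$ as ind-objects of $\cC^{\kappa}_{\leq 0}$ for large $\kappa$. Since $G$ preserves $\kappa$-compact objects for $\kappa$ large and commutes with $\tau_{\leq 0}$ on them, $\Ind(G)$ sends $\Ind(\cC'^{\kappa}_{\leq 0})$ into $\Ind(\cC^{\kappa}_{\leq 0})$. I expect this bookkeeping of cardinals to be the only real obstacle.

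For the continuously bounded version, the same constructions apply. In \Implies{comp_ass_t_structure}{retract_with_t_exactness}, $\cT=\cC^{\omega_1}$ need not have a bounded $t$-structure, so I replace it by $\cT^b=\cC^{\omega_1}\cap\cC^b$. The functor $\hat{\cY}$ lands in $\Ind(\cT^b)$ because $\hat{\cY}$ is $t$-exact and continuous, and $\cC=\la\cC^b\ra$ while $\hat{\cY}(\cC^b)\subset\Ind(\cC)^b\cap\Ind(\cC^{\omega_1})$, which is $\Ind(\cT^b)$ by Corollary \ref{cor:Ind_of_T_a_b}-type arguments. For the converse, $\cC$ is a retract of $\cC'$, which satisfies $\cC'=\la\cC'^b\ra$. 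Using Remark \ref{rem:comp_ass_t_structures}(iii), continuous boundedness of $\cC'$ amounts to $\cC'=\la\cC'^-\ra=\la\cC'^+\ra$. Then $G(\cC'^+)\subset\cC^+$ gives $\cC=\la\cC^+\ra$. To get $\cC=\la\cC^-\ra$, i.e. right completeness, I would use that $F$ is left $t$-exact: for $x\in\cC$ with $\indlim\tau_{\geq -n}x=0$, deduce $x=0$ from $F(x)$, using the right adjoint formula in Proposition \ref{prop:C^-_and_right_completion}. This last step is the one I am least sure about.
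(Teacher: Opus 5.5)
Your proposal is correct and follows the paper's proof essentially step for step. Both use the retraction $\cC\xto{\hat{\cY}}\Ind(\cC^{\omega_1})\xto{\colim}\cC$ (with $(\cC^{\omega_1})^b$ in the bounded case), the factorization $\hat{\cY}_{\cC}\cong\Ind(G)\circ\hat{\cY}_{\cC'}\circ F$, and $G$ to get $\cC=\la\cC^+\ra$. The step you were unsure about is exactly what the paper does: if $\indlim[n]\tau_{\geq -n}x=0$ then $x\in\bigcap_n\cC_{\leq -n}$, so $F(x)\in\bigcap_n\cC'_{\leq -n}=0$ by right completeness of $\cC'$, hence $x\cong G(F(x))=0$. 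In your cardinal bookkeeping you only need $G$ to be left $t$-exact and preserve $\kappa$-compact objects; it need not commute with $\tau_{\leq 0}$.
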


\begin{proof}
\Implies{comp_ass_t_structure}{retract_with_t_exactness}. By Proposition \ref{prop:comp_ass_t_structure_omega_1_accessible} it suffices to take $\cT=\cC^{\omega_1},$ $F=\hat{\cY},$ $G=\colim.$ If the $t$-structure $(\cC_{\geq 0},\cC_{\leq 0})$ is continuously bounded, then we instead take $\cT=(\cC^{\omega_1})^b.$ Note that the functor $\hat{\cY}$ indeed sends $\cC$ to $\Ind((\cC^{\omega_1})^b)$ since it sends $\cC^{\heartsuit}$ to $\Ind((\cC^{\omega_1})^{\heartsuit}).$ 

\Implies{retract_with_t_exactness}{retract_with_left_t_exactness} is trivial.

\Implies{retract_with_left_t_exactness}{comp_ass_t_structure}. By Remark \ref{rem:comp_ass_t_structures} the functor $\hat{\cY}_{\cC}$ is automatically right $t$-exact, so we only need to show the left $t$-exactness. This follows from the retraction, since $\hat{\cY}_{\cC}$ is identified with the composition of left $t$-exact functors 
\begin{equation*}
\cC\xto{F}\cC'\xto{\hat{\cY}_{\cC"}}\Ind(\cC')\xto{\Ind(G)}\Ind(\cC).\qedhere
\end{equation*}

Suppose that the $t$-structure on $\cC'$ is continuously bounded. We need to show that the $t$-structure on $\cC$ is also continuously bounded. By Remark \ref{rem:comp_ass_t_structures} it suffices to show the equalities $\cC=\la\cC^-\ra=\la\cC^+\ra.$ First, $\cC$ is generated by $G(\cC'_{\leq 0})\subset\cC_{\leq 0},$ hence $\cC=\la\cC^+\ra.$ It remains to show that $\cC=\la\cC^-\ra.$ Take $x\in\cC,$ and put $y=\Cone(\indlim[n\in\N]\tau_{\geq -n}x\to x).$ Then $y\in\bigcap\limits_{n}\cC_{\leq -n},$ hence $F(y)\in\bigcap\limits_n \cC'_{\leq -n}=0.$ We get $y\cong G(F(y))=0,$ so $x\in\la\cC^-\ra.$ This proves the proposition.  
\end{proof}

\begin{cor}\label{cor:induced_t_structure_is_compactly_assembled}
Let $\cC$ be a dualizable category with a compactly assembled $t$-structure $(\cC_{\geq 0},\cC_{\leq 0}).$ Let $\cD\subset\cC$ be a dualizable subcategory such that the inclusion functor $i:\cD\to\cC$ is strongly continuous. Suppose that the $t$-structure on $\cC$ induces a $t$-structure $(\cD_{\geq 0},\cD_{\leq 0})$ on $\cD.$ Then the latter $t$-structure is also compactly assembled.

If moreover the $t$-structure $(\cC_{\geq 0},\cC_{\leq 0})$ is continuously bounded, then so is the $t$-structure $(\cD_{\geq 0},\cD_{\leq 0}).$
\end{cor}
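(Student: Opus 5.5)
The plan is to apply Proposition \ref{prop:comp_ass_t_structure_via_retracts}, using the implication \Implies{retract_with_left_t_exactness}{comp_ass_t_structure}. The retraction will be $\cD\xto{i}\cC\xto{i^R}\cD$. Here $i^R$ is the right adjoint of the inclusion, and $i^R\circ i\cong\Id_{\cD}$ because $i$ is fully faithful.

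First, check that both functors are colimit-preserving.
\begin{itemize}
\item $i$ preserves colimits, being a morphism in $\Pr^L_{\st}$.
\item $i^R$ is continuous because $i$ is strongly continuous.
\item $i^R$ is exact, so it also preserves finite colimits, hence all colimits.
\end{itemize}

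Second, check left $t$-exactness.
\begin{itemize}
\item The hypothesis that the $t$-structure on $\cC$ induces one on $\cD$ means $\cD_{\geq 0}=\cD\cap\cC_{\geq 0}$ and $\cD_{\leq 0}=\cD\cap\cC_{\leq 0}$, with truncations in $\cD$ computed in $\cC$. So $i$ is $t$-exact.
\item For $i^R$, take $x\in\cC_{\leq 0}$ and $d\in\cD_{\geq 1}$. Then $\Hom(d,i^R x)=\Hom(i(d),x)$, and this vanishes since $i(d)\in\cC_{\geq 1}$. Hence $i^R x\in\cD_{\leq 0}$.
\end{itemize}

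Third, check the remaining hypotheses of the definition. The induced $t$-structure on $\cD$ is compatible with filtered colimits, since $\cD_{\leq 0}=\cD\cap\cC_{\leq 0}$ and $i$ preserves filtered colimits. It is accessible, because $\cD$ is presentable and the truncation functors are restrictions of accessible ones. Proposition \ref{prop:comp_ass_t_structure_via_retracts} then gives that the $t$-structure on $\cD$ is compactly assembled.

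For the continuously bounded case, I would rerun the argument at the end of the proof of Proposition \ref{prop:comp_ass_t_structure_via_retracts} with $F=i$ and $G=i^R$. By Remark \ref{rem:comp_ass_t_structures} it suffices to show $\cD=\la\cD^+\ra=\la\cD^-\ra$.
\begin{itemize}
\item For $\la\cD^+\ra$: $\cD=i^R(\cC)$ is generated by $i^R(\cC_{\leq 0})\subset\cD_{\leq 0}$, because $\cC$ is generated by $\cC_{\leq 0}$ and $i^R$ is colimit-preserving and essentially surjective onto $\cD$.
\item For $\la\cD^-\ra$: take $x\in\cD$ and put $y=\Cone(\indlim\tau_{\geq -n}x\to x)$. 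Since truncations in $\cD$ agree with those in $\cC$, we have $i(y)\in\bigcap_n\cC_{\leq -n}$.
\item Right completeness of $\cC$ (Proposition \ref{prop:C^-_and_right_completion}) gives $\bigcap_n\cC_{\leq -n}=0$, so $i(y)=0$ and hence $y=0$.
\end{itemize}

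The main point to be careful about is the left $t$-exactness of $i^R$. It relies on the induced $t$-structure making $i$ right $t$-exact. The rest is formal.
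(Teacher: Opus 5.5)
Your proposal is correct and is the paper's proof. The paper likewise applies Proposition~\ref{prop:comp_ass_t_structure_via_retracts} to the retraction $\cD\xto{i}\cC\xto{i^R}\cD$, with $i$ $t$-exact and $i^R$ continuous and left $t$-exact, and gets the continuously bounded case from the same proposition; your version just writes out that proposition's final argument with $F=i$ and $G=i^R$.
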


\begin{proof}
Denote by $i^R:\cC\to\cD$ the (continuous) right adjoint to $i.$ Then $i^R$ is left $t$-exact and $i$ is $t$-exact. Hence, the retraction $\cD\xto{i}\cC\xto{i^R}\cD$ implies that $(\cD_{\geq 0},\cD_{\leq 0})$ is compactly assembled by Proposition \ref{prop:comp_ass_t_structure_via_retracts}. The ``moreover'' assertion also follows from loc. cit.
\end{proof}

It is useful to keep in mind a reformulation in terms of compact morphisms (although we will not use it).

\begin{prop}\label{prop:comp_ass_t_structures_via_compact_morphisms}
Let $\cC$ be a dualizable category, and let $(\cC_{\geq 0},\cC_{\leq 0})$ be an accessible $t$-structure compatible with filtered colimits. The following are equivalent.
\begin{enumerate}[label=(\roman*),ref=(\roman*)]
	\item The $t$-structure $(\cC_{\geq 0},\cC_{\leq 0})$ is compactly assembled. \label{t_structure_comp_ass}
	\item For any compact morphism $f:x\to y$ in $\cC,$ the morphism $\tau_{\geq 0}f:\tau_{\geq 0}x\to\tau_{\geq 0}y$ is also compact in $\cC.$ \label{truncation_pf_compact_morphisms}
\end{enumerate} 
\end{prop}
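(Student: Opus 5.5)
We prove both implications by translating compactness of a morphism into a statement about $\hat{\cY}$. Recall that $f:x\to y$ is compact if and only if $\cY(f):\cY(x)\to\cY(y)$ factors through the counit-type map $\hat{\cY}(y)\to\cY(y)$ in $\Ind(\cC)$. We will also use that $\hat{\cY}$ is always right $t$-exact (Remark \ref{rem:comp_ass_t_structures}), and that $\tau_{\geq 0}$ on $\Ind(\cC)$ is the ind-extension of $\tau_{\geq 0}$ on $\cC$. In particular $\tau_{\geq 0}\cY(z)\cong\cY(\tau_{\geq 0}z)$.

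\Implies{t_structure_comp_ass}{truncation_pf_compact_morphisms}. Let $f:x\to y$ be compact, with a factorization $\cY(x)\to\hat{\cY}(y)\to\cY(y)$. Apply $\tau_{\geq 0}$ in $\Ind(\cC)$. Since $\hat{\cY}$ is $t$-exact, we have $\tau_{\geq 0}\hat{\cY}(y)\cong\hat{\cY}(\tau_{\geq 0}y)$, compatibly with the maps to $\cY(\tau_{\geq 0}y)$. We therefore obtain a factorization of $\cY(\tau_{\geq 0}f)$ through $\hat{\cY}(\tau_{\geq 0}y)$, so $\tau_{\geq 0}f$ is compact. The only point needing care is the naturality of the identification $\tau_{\geq 0}\hat{\cY}\cong\hat{\cY}\tau_{\geq 0}$ with respect to the map $\hat{\cY}\to\cY$. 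This follows because the map $\hat{\cY}(\tau_{\geq 0}y)\to\hat{\cY}(y)$ lands in $\Ind(\cC)_{\geq 0}$ and hence factors through $\tau_{\geq 0}\hat{\cY}(y)$.

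\Implies{truncation_pf_compact_morphisms}{t_structure_comp_ass}. We must show that $\hat{\cY}$ is left $t$-exact, or equivalently that $\hat{\cY}(\tau_{\geq 0}y)\to\tau_{\geq 0}\hat{\cY}(y)$ is an isomorphism for all $y$. It suffices to treat $y\in\cC^{\omega_1}$, since both sides commute with filtered colimits. Write $\hat{\cY}(y)\cong\inddlim[n\in\N]y_n$, where the transition maps $y_n\to y_{n+1}$ and the maps $y_n\to y$ are compact; such a presentation exists because compact morphisms are composites of two compact morphisms. Then $\tau_{\geq 0}\hat{\cY}(y)\cong\inddlim[n]\tau_{\geq 0}y_n$. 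By (ii) the system $(\tau_{\geq 0}y_n)$ has compact transition maps, and its colimit is $\tau_{\geq 0}y$ by compatibility with filtered colimits.

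The key step, and the expected main obstacle, is the fact that an ind-object $\inddlim[n]z_n$ with compact transition maps and colimit $z$ is isomorphic to $\hat{\cY}(z)$. This is the standard characterization of $\hat{\cY}(z)$ as a sequential system of compact maps; it is proved as in \cite[Section 1]{E24} by checking that $\inddlim z_n$ satisfies the universal property of the left adjoint, using that each $\cY(z_n)\to\cY(z_{n+1})$ factors through $\hat{\cY}(z_{n+1})$. Applying this with $z_n=\tau_{\geq 0}y_n$ gives $\tau_{\geq 0}\hat{\cY}(y)\cong\hat{\cY}(\tau_{\geq 0}y)$, which is the desired left $t$-exactness. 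Hence the $t$-structure is compactly assembled.
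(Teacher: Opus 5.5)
Your proof is correct and follows the paper's argument. For (i)$\Rightarrow$(ii) you apply $\tau_{\geq 0}$ to the factorization of $\cY(f)$ through $\hat{\cY}(y)$. For (ii)$\Rightarrow$(i) you truncate a presentation of $\hat{\cY}(y)$ with compact transition maps and recognize the result as $\hat{\cY}(\tau_{\geq 0}y)$. The only difference is cosmetic: you first reduce to $y\in\cC^{\omega_1}$ and use a sequential presentation, whereas the paper works directly with an arbitrary directed presentation of $\hat{\cY}(x)$, using that cofinally many of its transition maps are compact.
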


\begin{proof}
This is formal: the $t$-structure is compactly assembled if and only if the endofunctor $\tau_{\geq 0}:\cC\to\cC$ is strongly continuous. For completeness we give the details.
	
\Implies{t_structure_comp_ass}{truncation_pf_compact_morphisms}. Compactness of $f:x\to y$ means that $\cY(f):\cY(x)\to\cY(y)$ factors through $\hat{\cY}(y)$ in $\Ind(\cC).$ Since $\hat{\cY}(\tau_{\geq 0}y)\cong \tau_{\geq 0}\hat{\cY}(y),$ we conclude that $\tau_{\geq 0}f$ is also a compact morphism.

\Implies{truncation_pf_compact_morphisms}{t_structure_comp_ass}. Consider $\tau_{\geq 0}$ as a continuous endofunctor of $\cC,$ and similarly for $\Ind(\cC).$ We need to show that the morphism $\hat{\cY}\circ\tau_{\geq 0}\to\tau_{\geq 0}\circ\hat{\cY}$ is an isomorphism of functors from $\cC$ to $\Ind(\cC).$ Let $x\in\cC$ and $\hat{\cY}(x)=\inddlim[i\in I]x_i,$ where $I$ is directed. Then for each $i\in I$ there exists $j\geq i$ such that the transition morphism $x_i\to x_j$ is compact, hence so is $\tau_{\geq 0}x_i\to\tau_{\geq 0}x_j.$ This proves that the map $\hat{\cY}(\tau_{\geq 0}x)\to\inddlim[i]\tau_{\geq 0}x_i$ is an isomorphism, as required.
\end{proof}

The following basic properties of the categories $\cC_{[a,b]}$ are almost immediate.

\begin{prop}\label{prop:C_a_b_for_comp_ass_t_structure}
Let $\cC$ be a dualizable category with a compactly assembled $t$-structure $(\cC_{\geq 0},\cC_{\leq 0}).$
\begin{enumerate}[label=(\roman*),ref=(\roman*)]
	\item Let $a\leq b,$ where $a\in\Z\cup\{-\infty\},$ $b\in\Z\cup\{+\infty\}.$ The exact category $\cC_{[a,b]}$ (with the induced exact structure from $\cC$) is coherently assembled and the inclusion functor $\cC_{[a,b]}\to\cC$ is strongly continuous. In particular the abelian category $\cC^{\heartsuit}$ is coherently assembled. \label{C_a_b_coherently_assembled}
	\item The categories $\la\cC^-\ra,$ $\la\cC^+\ra$ and $\la\cC^b\ra$ are dualizable, and their inclusion functors into $\cC$ are strongly continuous. The induced $t$-structures are compactly assembled. \label{C^-_C^+_C^b_dualizable}
\end{enumerate}
\end{prop}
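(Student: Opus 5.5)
The whole proof will rest on the retraction characterization, Proposition \ref{prop:comp_ass_t_structure_via_retracts}: we reduce to the compactly generated case $\Ind(\cT)$, where everything is explicit, and then transport along the retraction. Concretely, fix the retraction $\cC\xto{\hat{\cY}}\Ind(\cC^{\omega_1})\xto{\colim}\cC$. Both functors are colimit-preserving and $t$-exact, and by Proposition \ref{prop:comp_ass_t_structure_omega_1_accessible} the $t$-structure restricts to $\cT=\cC^{\omega_1}$.

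For \ref{C_a_b_coherently_assembled}, first observe that $\cC_{[a,b]}$ is closed under extensions in $\cC$ and is closed under filtered colimits. The second point uses compatibility with filtered colimits, together with the fact that $\cC_{\geq a}$ is closed under colimits. So $\cC_{[a,b]}$ is a Grothendieck exact category with exact filtered colimits. Accessibility holds because $\cC_{[a,b]}$ is a retract of $\cC$ via $\tau_{[a,b]}$, which is continuous. Since both functors of the retraction are $t$-exact, they restrict to a retraction $\cC_{[a,b]}\to\Ind(\cT)_{[a,b]}\to\cC_{[a,b]}$. By Corollary \ref{cor:Ind_of_T_a_b} we have $\Ind(\cT)_{[a,b]}\simeq\Ind(\cT_{[a,b]})$. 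Its induced exact structure from $\Ind(\cT)$ coincides with the Ind-exact structure of Proposition \ref{prop:exact_structure_on_Ind_kappa}, applied to $\cT_{[a,b]}\subset\cT$ with extensions in $\cT$; this is by Proposition \ref{prop:properties_of_Ind_kappa_of_small_exact}. Hence $\Ind(\cT)_{[a,b]}$ is locally coherent. Both restricted functors are exact and commute with filtered colimits, so Proposition \ref{prop:coherently_assembled_as_retract} shows that $\cC_{[a,b]}$ is coherently assembled.

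For strong continuity of $\cC_{[a,b]}\hto\cC$, I would check directly that $\hat{\cY}_{\cC}$ restricted to $\cC_{[a,b]}$ lands in $\Ind(\cC_{[a,b]})$, which follows from $t$-exactness of $\hat{\cY}_{\cC}$. This restriction is then left adjoint to $\colim$ on $\cC_{[a,b]}$, because $\Ind(\cC_{[a,b]})\subset\Ind(\cC)$ is full. So it coincides with $\hat{\cY}_{\cC_{[a,b]}}$, and the inclusion commutes with $\hat{\cY}$. The cases $a=-\infty$ or $b=+\infty$ are handled identically, interpreting $\tau$ appropriately. Taking $a=b=0$ gives the statement for the heart.

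For \ref{C^-_C^+_C^b_dualizable}, take $\cD$ to be one of $\la\cC^-\ra$, $\la\cC^+\ra$, $\la\cC^b\ra$. I would show that the right adjoint $i^R$ of the inclusion is continuous, and this is the step I expect to need the most care. For $\la\cC^-\ra$ it is Proposition \ref{prop:C^-_and_right_completion}: $i^R(x)=\indlim[n]\tau_{\geq -n}x$. For $\la\cC^b\ra$ one then applies the analogous statement inside $\la\cC^+\ra$ via Proposition \ref{prop:SOD_for_C_mod_C^b}\ref{Beck_Chevalley1}. So everything reduces to $\la\cC^+\ra$.

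For $\la\cC^+\ra$, I would argue that $\hat{\cY}$ sends $\cC_{\leq 0}$ into $\Ind(\cC_{\leq 0}^{\omega_1})$, by $t$-exactness. Then for $x\in\la\cC^+\ra$ the object $\hat{\cY}(x)$ lies in $\Ind(\la\cC^+\ra^{\omega_1})$. This gives a left adjoint of $\colim$ on $\la\cC^+\ra$ that is compatible with $\hat{\cY}_{\cC}$. Hence $\la\cC^+\ra$ is compactly assembled, so dualizable, and the inclusion is strongly continuous; the same argument works uniformly for all three subcategories. Once the inclusion is strongly continuous, Corollary \ref{cor:induced_t_structure_is_compactly_assembled} together with Proposition \ref{prop:C^-_C^+_C^b_t_structures} shows that the induced $t$-structures are compactly assembled.
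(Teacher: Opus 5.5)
Your proof is correct and rests on the same key point as the paper: $t$-exactness of $\hat{\cY}$, combined with Corollary \ref{cor:Ind_of_T_a_b}, shows that $\hat{\cY}$ sends $\cC_{[a,b]}$ into $\Ind(\cC^{\omega_1}_{[a,b]})$, and likewise sends the generators $\cC_{\geq 0}$, $\cC_{\leq 0}$, $\cC^{\heartsuit}$ into $\Ind$ of the corresponding subcategories of $\cC^{\omega_1}$. The paper uses this directly for everything; your detours through the retraction criterion in (i) and through continuity of $i^R$ in (ii) are valid but redundant, since the restricted $\hat{\cY}$ is already the required left adjoint, and its exactness follows because $\Ind(\cC^{\omega_1}_{[a,b]})\hto\Ind(\cC^{\omega_1})$ reflects exactness (Proposition \ref{prop:properties_of_Ind_kappa_of_small_exact}).
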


\begin{proof}
\ref{C_a_b_coherently_assembled} By Proposition \ref{prop:comp_ass_t_structure_omega_1_accessible} we have the induced $t$-structure on $\cC^{\omega_1}.$ It suffices to prove that the functor $\hat{\cY}:\cC\to\Ind(\cC^{\omega_1})$ takes $\cC_{[a,b]}$ to (the essential image of) $\Ind((\cC^{\omega_1})_{[a,b]}).$ Let $x\in\C_{[a,b]}$ and let $\hat{\cY}(x)=\inddlim[i]x_i\in\Ind(\cC^{\omega_1}).$ By Corollary \ref{cor:Ind_of_T_a_b} we have an isomorphism $\inddlim[i]x_i\cong\inddlim[i]\tau_{[a,b]}x_i,$ as required.

\ref{C^-_C^+_C^b_dualizable} By \ref{C_a_b_coherently_assembled} the functor $\hat{\cY}_{\cC}$ takes $\cC_{\geq 0}$ to $\Ind(\cC_{\geq 0}),$ hence it takes $\la\cC^-\ra$ to $\Ind(\cC^-)\subset\Ind(\la \cC^-\ra).$ This proves that $\la \cC^-\ra$ is dualizable and its inclusion functor into $\cC$ is strongly continuous. Similar arguments show the same for $\la \cC^+\ra$ and $\la \cC^b\ra$ (replace $\cC_{\geq 0}$ with $\cC_{\leq 0}$ resp. $\cC^{\heartsuit}$).
 \end{proof}

From now on, in the situation of Proposition \ref{prop:C_a_b_for_comp_ass_t_structure} \ref{C_a_b_coherently_assembled} we write $\cC^{\omega_1}_{[a,b]}$ for $(\cC^{\omega_1})_{[a,b]}\simeq (\cC_{[a,b]})^{\omega_1}.$ Also, for a morphism $f:x\to y$ in $\cC_{[a,b]}$ the compactness of $f$ in $\cC$ is equivalent to its compactness in $\cC_{[a,b]},$ which we will tacitly use in the proofs. Note that for $a'\leq a\leq b\leq b'$ the inclusion functor $\cC_{[a,b]}\to\cC_{[a',b']}$ is exact and strongly continuous.

We formulate the following statement only for dualizable $t$-categories, but it holds in a much more general setting, see Remark \ref{rem:colim_of_St_of_C_0_n_for_presentable} below.

\begin{prop}\label{prop:colimit_of_St_of_C_0_n}
Let $\cC$ be a dualizable $t$-category (Definition \ref{def:compactly_assembled_t_structure}). Then we have an equivalence
\begin{equation}\label{eq:colim_of_St_of_C_0_n}
\indlim[n]\check{\St}(\cC_{[0,n]})\xto{\sim}\cC.
\end{equation}
\end{prop}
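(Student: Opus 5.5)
The plan is to show that the colimit in \eqref{eq:colim_of_St_of_C_0_n}, computed in $\Pr^L_{\st}$, maps fully faithfully onto a localizing subcategory of $\cC$ containing $\cC^{\heartsuit}$. Continuous boundedness then forces this image to be all of $\cC$.

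\emph{Setting up the functors.} By Proposition \ref{prop:C_a_b_for_comp_ass_t_structure} each $\cC_{[0,n]}$ is a coherently assembled exact category, and the inclusions $\cC_{[0,n]}\to\cC_{[0,n+1]}\to\cC$ are exact and strongly continuous. Proposition \ref{prop:St_of_coherently_assembled} then gives a direct system $\check{\St}(\cC_{[0,n]})$ in $\Cat_{\st}^{\dual}$ together with compatible functors $\Psi_n:\check{\St}(\cC_{[0,n]})\to\cC$. This yields the functor \eqref{eq:colim_of_St_of_C_0_n}.

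\emph{Essential surjectivity.} The images of the $\Psi_n$ contain $\cC_{[0,n]}$ for all $n$. Since the image is stable under shifts, it contains $\cC^b$. By continuous boundedness, $\cC$ is generated by $\cC^b$ as a localizing subcategory. So once fully faithfulness is known, the image is a localizing subcategory containing $\cC^b$, hence all of $\cC$.

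\emph{Fully faithfulness.} The colimit in $\Pr^L_{\st}$ is computed as the limit of right adjoints. Its compact-type generators are the images of $j_n(x)$ for $x\in\cC_{[0,n]}$, and these generate. So it suffices to show that for $x,y\in\cC_{[0,m]}$ the map
\begin{equation*}
\indlim[n\geq m]\Hom_{\check{\St}(\cC_{[0,n]})}(j_n x, j_n y)\to\Hom_{\cC}(x,y)
\end{equation*}
is an equivalence. One then extends to all objects by filtered colimits, using that the $j_n(x)$ are colimits of $\omega_1$-compact ones and that everything is strongly continuous.

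The key input is an Ext comparison: for $x,y\in\cC_{[0,n]}$, the map $\Ext^k_{\cC_{[0,n]}}(x,y)\to\Ext^k_{\cC}(x,y)$ is an isomorphism for $k\leq n+1$ (or at least for $k$ growing with $n$). The reason is that $\cC_{[0,n]}$ contains all truncations up to degree $n$, so it captures Yoneda extensions of length up to $n$. To prove it, I would reduce to small categories $\cC^{\omega_1}_{[0,n]}$ via Propositions \ref{prop:St_of_E_to_check_St_of_E} and \ref{prop:various_E^lambda}. Then I would use the weak effaceability criterion \cite[Lemmas 4.2, 4.3]{SW26}. Given $\alpha\in\Ext^k_{\cC}(y',y)$ with $y'\in\cC_{[0,n]}$ and $k\leq n$, the object $w=\Fiber(y'\to y[k])$ lies in $\cC_{[-?,n]}$. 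Replacing $w$ by a suitable truncation, one produces a projection $z\to y'$ in $\cC_{[0,n]}$ that kills $\alpha$, or factors $\alpha$ as a composite of $\Ext^1$'s inside $\cC_{[0,n]}$, as in the proof of Proposition \ref{prop:mono_on_Ext^2_for_small}. A cleaner route may be to argue inductively in $n$ on the fiber sequences $\tau_{\geq 1}x\to x\to\pi_0x$.

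Granting the Ext comparison, for fixed $k$ the colimit of $\Ext^k$ stabilizes to $\Ext^k_{\cC}$. Hence the map of Hom-spectra is an isomorphism on all homotopy groups, since filtered colimits commute with $\pi_*$.

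\emph{Main obstacle.} I expect the main difficulty to be this range-of-isomorphism statement for $\Ext^k$ between $\check{\St}(\cC_{[0,n]})$ and $\cC$. It is needed in arbitrarily high degree $k$ once $n\gg k$, which is much more than Corollary \ref{cor:mono_for_Ext^2} provides. My first attempt will be the effaceability approach on $\cC^{\omega_1}_{[0,n]}$, inducting on $k$. It uses that for $k<n$ the cone of a degree-$k$ extension between objects of $\cC_{[0,n-k]}$ remains in $\cC_{[0,n]}$, with the projection obtained via $\tau_{\geq 0}$.
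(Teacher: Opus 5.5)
Your route differs from the paper's, and it can be made to work, but two points need fixing.

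\textbf{The paper's route.} The paper never computes Hom spectra. It uses the retraction $\cC\to\Ind(\cT)\to\cC$ by $t$-exact functors from Proposition \ref{prop:comp_ass_t_structure_via_retracts} to reduce to $\cC=\Ind(\cT)$ with $\cT$ a small $t$-category. There $\check{\St}(\cC_{[0,n]})\simeq\Ind(\St(\cT_{[0,n]}))$. It then uses only that $\St$ is a left adjoint: $\indlim[n]\St(\cT_{[0,n]})\simeq\St(\cT_{\geq 0})\simeq\cT$. Your approach is more hands-on and gives quantitative information at each finite stage. Your essential surjectivity argument is fine.

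\textbf{First point: the Ext comparison is immediate.} What you call the main obstacle follows at once from the paper. Since $j(y)[s]\cong j(y[s])$ whenever $y,y[s]\in\cC_{[0,n]}$, for $x,y\in\cC_{[0,m]}$ and $n\geq m$ you get
$\Ext^k_{\check{\St}(\cC_{[0,n]})}(jx,jy)\cong\Ext^{k-n+m}_{\check{\St}(\cC_{[0,n]})}(jx,j(y[n-m]))$.
Corollary \ref{cor:mono_for_Ext^2} then gives an isomorphism with $\Ext^k_{\cC}(x,y)$ for $k\leq n-m+1$. This is exactly the argument of Proposition \ref{prop:properties_of_functor_St_of_C_0_m_to_C}, so no effaceability induction is needed.

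Your first formulation, an isomorphism for $k\leq n+1$ and all $x,y\in\cC_{[0,n]}$, is false. Take $x\in\cC^{\heartsuit}$ and $y=z[n]$ with $z\in\cC^{\heartsuit}$. Then $\Ext^2$ becomes $\Ext^{n+2}(x,z)$, where in general only injectivity holds.

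\textbf{Second point: Homs in the colimit.} Put $\cD_n=\check{\St}(\cC_{[0,n]})$, with transition functors $F_{mn}$ and $\iota_m:\cD_m\to\indlim[n]\cD_n$. The formula $\Hom(\iota_mx,\iota_my)\simeq\indlim[n]\Hom_{\cD_n}(F_{mn}x,F_{mn}y)$ is valid only for compact $x$. In general, by strong continuity of the $F_{mn}$, one has $\Hom(\iota_mx,\iota_my)\simeq\Hom_{\cD_m}(x,\indlim[n]G_ny)$ with $G_n=F_{mn}^RF_{mn}$. Being $\omega_1$-compact does not let $\Hom(x,-)$ commute with this sequential colimit. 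So "extending to all objects by filtered colimits" does not work as you describe it.

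There are two ways to repair this. You can first reduce to the compactly generated case by the retraction above, then take $x,y\in\cT_{[0,m]}$ and use Proposition \ref{prop:mono_on_Ext^2_for_small} with the same shift trick. Alternatively, write $\hat{\cY}(x)\cong\inddlim[i]x_i$ with $x_i\in\cC^{\omega_1}_{[0,m]}$ and compact transition maps. The towers $\{\Hom(x_i,\indlim[n]G_ny)\}_i$ and $\{\indlim[n]\Hom(x_i,G_ny)\}_i$ are then pro-isomorphic. Hence $\Hom(x,\indlim[n]G_ny)\simeq\lim_i\indlim[n]\Hom(x_i,G_ny)$, and your stagewise comparison applied to each $x_i$ suffices.
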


\begin{proof}
By Proposition \ref{prop:comp_ass_t_structure_via_retracts} we have a retraction $\cC\xto{F}\cC'\xto{G}\cC,$ where $\cC'\simeq\Ind(\cT),$ $\cT$ is equipped with a bounded $t$-structure, and the functors $F$ and $G$ are continuous, exact and $t$-exact. Then the functor $\indlim[n]\check{\St}(\cC_{[0,n]})\to\cC$ is a retract of the functor $\indlim[n]\check{\St}(\cC'_{[0,n]})\to\cC'$ (in $\Pr^L_{\st}$), hence we may and will assume that $\cC=\cC'.$

In this case by Proposition \ref{prop:St_of_locally_coherent} we have $\check{\St}(\cC_{[0,n]})\simeq\Ind(\St(\cT_{[0,n]})).$ Hence, it suffices to consider the corresponding colimit in $\Cat^{\perf}.$ We have
\begin{equation*}
\indlim[n]\St(\cT_{[0,n]})\xto{\sim}\St(\indlim[n]\cT_{[0,n]})\simeq \St(\cT_{\geq 0})\simeq\cT.
\end{equation*}
Taking the ind-completions, we obtain the equivalence \eqref{eq:colim_of_St_of_C_0_n}.
\end{proof}

\begin{remark}\label{rem:colim_of_St_of_C_0_n_for_presentable}
The statement of Proposition \ref{prop:colimit_of_St_of_C_0_n} holds more generally when $\cC$ is presentable stable, the accessible $t$-structure $(\cC_{\geq 0},\cC_{\geq 0})$ is right complete and compatible with filtered colimits, and the Grothendieck prestable category $\cC_{\geq 0}$ is anticomplete.
\end{remark}  

The following two important statements are special cases of much more general results on Grothendieck abelian $(m+1)$-categories, but our proofs use only the general machinery of presentable stable envelopes of coherently assembled exact categories.

\begin{prop}\label{prop:properties_of_functor_St_of_C_0_m_to_C}
	Let $\cC$ be a dualizable $t$-category, and let $m\geq 0$ be an integer. Denote by $j:\cC_{[0,m]}\to\check{\St}(\cC_{[0,m]})$ the universal functor. Then for $x,y\in\cC^{\heartsuit}$ the map
	\begin{equation*}
		\Ext^n_{\check{\St}(\cC_{[0,m]})}(j(x),j(y))\to\Ext^n_{\cC}(x,y)
	\end{equation*} 
	is an isomorphism for $n\leq m+1,$ and a monomorphism for $n=m+2.$ 
\end{prop}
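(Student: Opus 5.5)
The plan is to reduce to the case of small categories via the retraction machinery, and then compare Ext groups in $\St(\cT_{[0,m]})$ and $\cT$ by a dimension-shifting argument that uses the objects of $\cT_{[0,m]}$ as intermediate ``resolutions''.

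First, the reduction. By Proposition \ref{prop:comp_ass_t_structure_via_retracts} there is a retraction $\cC\xto{F}\Ind(\cT)\xto{G}\cC$ with $\cT$ a small $t$-category and $F,G$ continuous, exact and $t$-exact. These functors restrict to continuous exact functors between $\cC_{[0,m]}$ and $\Ind(\cT)_{[0,m]}\simeq\Ind(\cT_{[0,m]})$ (Corollary \ref{cor:Ind_of_T_a_b}), so they induce functors on $\check{\St}$. Hence the map of Ext groups for $\cC$ is a retract of the corresponding map for $\Ind(\cT)$, applied to $F(x),F(y)$, which lie in the heart. Since isomorphisms and monomorphisms are stable under retracts, we may assume $\cC=\Ind(\cT)$. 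Then $\check{\St}(\cC_{[0,m]})\simeq\Ind(\St(\cT_{[0,m]}))$ by Proposition \ref{prop:St_of_locally_coherent}. Heart objects are filtered colimits of objects of $\cT^{\heartsuit}$, so I would first treat $x$ compact, i.e. $x\in\cT^{\heartsuit}$. For compact $x$, Ext into a filtered colimit is a colimit on both sides, and filtered colimits preserve isos and monos. A general $x$ is then handled via $\hat{\cY}$ and a Milnor-type argument; more cleanly, one can avoid this by working with $\cC^{\omega_1}$ and Proposition \ref{prop:St_of_E_to_check_St_of_E}, which identifies the source with $\Ext^n_{\cC^{\omega_1}_{[0,m]}}(x,y)$. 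So the statement reduces to the following: for a small stable category $\cT$ with a $t$-structure and $\cE=\cT_{[0,m]}$, an extension-closed subcategory with the induced exact structure, the map $\Ext^n_{\cE}(x,y)\to\Ext^n_{\cT}(x,y)$ is an isomorphism for $n\leq m+1$ and a monomorphism for $n=m+2$, for $x,y\in\cT^{\heartsuit}$.

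The core step is induction on $m$, the case $m=0$ being Proposition \ref{prop:mono_on_Ext^2_for_small}. The key input is \cite[Lemma 4.3]{SW26}: every class $\alpha\in\Ext^n_{\cE}(x,y)$ with $n\geq 1$ is killed by some projection $z\to x$ in $\cE$. I would construct a ``resolution''. For $x\in\cT^{\heartsuit}$ and a class $\beta\in\Ext^n_{\cT}(x,y)$ with $2\leq n\leq m+1$, write $\beta$ as a map $x\to y[n]$ and let $w=\Fiber(x[-1]\to\tau_{\leq ?}\cdots)$. More concretely, for $x\in\cT^{\heartsuit}$ the object $y[n-1]$ lies in $\cT_{[0,m]}$, and the fiber $u=\Fiber(x\to y[n])$ satisfies $y[n-1]\to u\to x$, so $u\in\cT_{[0,m]}$. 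Thus $\beta$ is the connecting class of a short exact sequence in $\cE$, namely $\beta\in\Ext^1_{\cE}(x,y[n-1])$, and $y[n-1]\simeq j(y)[n-1]$ in $\St(\cE)$ since it is built from $y$ by extensions, as the suspension is realized through sequences $y[k]\to 0\to y[k+1]$ that are exact in $\cE$ when $k+1\leq m$. This yields surjectivity for $n\leq m+1$: compose the class in $\Ext^1_{\cE}(x,y[n-1])$ with the identification $j(y[n-1])\simeq j(y)[n-1]$. That identification requires $0\to y[k]\to 0\to y[k+1]$ to be exact in $\cE$, i.e. $y[k]\to0\to y[k+1]$ to be a fiber sequence with all terms in $\cE$, which holds for $k+1\leq m$, so $n-1\leq m$. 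Injectivity for $n\leq m+1$ and mono at $n=m+2$ follow by the same dimension-shifting as in Proposition \ref{prop:mono_on_Ext^2_for_small}. Given $\alpha\in\Ext^n_{\cE}(x,y)$ with image zero, kill it on a projection $p:z\to x$ in $\cE$, with $z\in\cT_{[0,m]}$ now not in the heart. Write $\alpha=\beta\circ\gamma$ with $\gamma\in\Ext^1_{\cE}(x,w)$, $w=\Fiber(p)$, and $\beta\in\Ext^{n-1}_{\cE}(w,y)$. Vanishing in $\cT$ forces $\beta$ to extend to $z$ in $\cT$, and one needs the statement for $\Ext^{n-1}$ and $\Ext^{n-2}$ between objects of $\cE$, not just the heart.

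The main obstacle, then, is that the induction must be strengthened to arbitrary $x,y\in\cT_{[0,m]}$, with the range depending on their amplitudes: roughly, iso for $n\leq m+1-(\text{amplitude shift})$. I would formulate this as follows: for $x\in\cT_{[0,m]}$ and $y\in\cT_{[a,m]}$ the map is an iso for $n\leq a+1$ and a mono for $n=a+2$, and then prove it by induction on $n$, using the killing lemma together with the five-lemma on the long exact sequences attached to $w\to z\to x$. Bookkeeping the degrees so that $w$ stays in the right range is the delicate part.
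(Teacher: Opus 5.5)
Your surjectivity argument is correct, but the injectivity half is not proved, and you need it both for the isomorphism range $n\leq m+1$ and for the monomorphism at $n=m+2$. You sketch a dimension-shifting induction, note that it requires a strengthened statement for objects outside the heart, and leave the degree bookkeeping open.

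The strengthening as you formulate it is also not correct. Take $y=y_0[m]$ with $y_0\in\cT^{\heartsuit}$ and $a=m$. It would then assert an isomorphism on $\Ext^{2}(j(x),j(y_0[m]))\cong\Ext^{m+2}(j(x),j(y_0))$. That is exactly the degree where only injectivity holds in general. Already for $m=1$ and finite-dimensional modules over $\mk[x]$ with $|x|=2$, the group $\Ext^3(\mk,\mk)$ is $0$ in $\St(\cT_{[0,1]})$ but $\mk$ in $\cT$. The correct hypothesis would be $y\in\cT_{[0,m-a]}$.

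The missing observation is that the shift identification you already use for surjectivity (with $n-1$ in place of $m$) settles everything at once. For $0\leq k<m$, the sequence $y[k]\to 0\to y[k+1]$ is short exact in $\cC_{[0,m]}$, because all three terms lie in $\cC_{[0,m]}$. Hence $j(y[m])\cong j(y)[m]$, compatibly with the functor $\check{\St}(\cC_{[0,m]})\to\cC$. So the map in question is identified with
\[
\Ext^{n-m}_{\check{\St}(\cC_{[0,m]})}(j(x),j(y[m]))\to\Ext^{n-m}_{\cC}(x,y[m]).
\]
Corollary \ref{cor:mono_for_Ext^2} (or Proposition \ref{prop:mono_on_Ext^2_for_small} after your reduction) applies to arbitrary objects of the exact category, in particular to the pair $(x,y[m])$. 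It gives an isomorphism for $n-m\leq 1$ and a monomorphism for $n-m=2$.

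No induction and no control of non-heart objects is needed; this is exactly the paper's two-line proof. Corollary \ref{cor:mono_for_Ext^2} is also already stated for the coherently assembled category $\cC_{[0,m]}$, so your reduction to $\Ind(\cT)$ and to compact $x$ can be dropped.
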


\begin{proof}
We have isomorphisms
\begin{equation*}
\Ext^n_{\check{\St}(\cC_{[0,m]})}(j(x),j(y))\cong \Ext^{n-m}_{\check{\St}(\cC_{[0,m]})}(j(x),j(y[m])),\quad n\in\Z.
\end{equation*}
Applying Corollary \ref{cor:mono_for_Ext^2}, we see that the map
\begin{equation*}
	\Ext^{n-m}_{\check{\St}(\cC_{[0,m]})}(j(x),j(y[m]))\to \Ext^n_{\cC}(x,y)
\end{equation*}
is an isomorphism for $n\leq m+1,$ and a monomorphism for $n=m+2,$ as required.
\end{proof}

Recall that in the situation of Proposition \ref{prop:colimit_of_St_of_C_0_n} we have $\check{\St}(\cC_{[0,0]})\simeq \check{D}(\cC^{\heartsuit})$ -- the unseparated derived category of the heart.
% We are interested in a situation when a certain approximation of the realization functor is an equivalence. More precisely, we have the following elementary proposition, which is again a very special case of a much more general statement.

\begin{prop}\label{prop:when_partial_realization_is_an_equivalence}
Let $\cC$ be a dualizable $t$-category and let $m\geq 0$ be an integer. Consider the (strongly continuous exact) functor $\Phi:\check{\cD}(\cC^{\heartsuit})\to \check{\St}(\cC_{[0,m]})$ induced by the inclusion $\cC^{\heartsuit}\hto\cC_{[0,m]}.$ The following are equivalent.
\begin{enumerate}[label=(\roman*),ref=(\roman*)]
	\item $\Phi$ is an equivalence. \label{partial_realization_equivalence}
	\item For $x,y\in\cC^{\heartsuit},$ the map $\Ext^n_{\cC^{\heartsuit}}(x,y)\to \Ext^n_{\cC}(x,y)$ is an isomorphism for $n\leq m+1,$ and a monomorphism for $n=m+2.$ \label{-m-2_connectivity_for_realization_functor}
	\item For $x,y\in\cC^{\omega_1,\heartsuit},$ the map $\Ext^n_{\cC^{\heartsuit}}(x,y)\to \Ext^n_{\cC}(x,y)$ is an isomorphism for $n\leq m+1.$ \label{isomorphisms_on_Ext_leq_m+1}
\end{enumerate}
\end{prop}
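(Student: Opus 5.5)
The plan is to prove \Implies{partial_realization_equivalence}{-m-2_connectivity_for_realization_functor} directly, note that \Implies{-m-2_connectivity_for_realization_functor}{isomorphisms_on_Ext_leq_m+1} is trivial, and spend the real effort on \Implies{isomorphisms_on_Ext_leq_m+1}{partial_realization_equivalence}.

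Throughout, $\Ext^n_{\cC^{\heartsuit}}$ means $\Ext$ in $\check{D}(\cC^{\heartsuit})$. By Proposition \ref{prop:St_of_coherently_assembled} and the identification $\check{D}(\cC^{\heartsuit})\simeq\check{\St}(\cC^{\heartsuit})$, this agrees with $\Ext$ computed in the heart. The realization $\check{D}(\cC^{\heartsuit})\to\cC$ factors as $\Phi$ followed by $\Psi_m:\check{\St}(\cC_{[0,m]})\to\cC$.

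For \Implies{partial_realization_equivalence}{-m-2_connectivity_for_realization_functor}: if $\Phi$ is an equivalence, then the map on $\Ext$'s is the map induced by $\Psi_m$. By Proposition \ref{prop:properties_of_functor_St_of_C_0_m_to_C}, this is an isomorphism for $n\leq m+1$ and a monomorphism for $n=m+2$.

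For \Implies{isomorphisms_on_Ext_leq_m+1}{partial_realization_equivalence}, first show that $\Phi$ is fully faithful. Both categories are generated as localizing subcategories by their hearts, via the images of $\cC^{\omega_1,\heartsuit}$ under the universal functors. The functor $\Phi$ is strongly continuous, and these generators are $\omega_1$-compact in both categories, since the universal functors are strongly continuous. So full faithfulness reduces to checking that $\Phi$ induces isomorphisms on all $\Ext^n$ between $j(x)$ and $j(y)$ for $x,y\in\cC^{\omega_1,\heartsuit}$. One also uses that $\omega_1$-filtered colimits commute with these $\Hom$'s in both categories, and that every object is an $\omega_1$-filtered colimit of $\omega_1$-compact objects from the generated subcategory.

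\begin{itemize}
\item For $n\leq m+1$: the map $\Ext^n_{\cC^{\heartsuit}}(x,y)\to\Ext^n_{\cC}(x,y)$ factors through $\Ext^n_{\check{\St}(\cC_{[0,m]})}$. The second map is an isomorphism by Proposition \ref{prop:properties_of_functor_St_of_C_0_m_to_C}, and the composite is an isomorphism by hypothesis, so the first map is an isomorphism.
\item For $n\geq m+2$: this is the main obstacle. Condition \ref{isomorphisms_on_Ext_leq_m+1} only controls low $\Ext$'s, so higher $\Ext$'s in $\check{\St}(\cC_{[0,m]})$ must be shown to come from the heart. I would use that $\cC_{[0,m]}$ is generated by the heart via extensions. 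The hypothesis implies that every object of $\cC_{[0,m]}$ (at least the $\omega_1$-compact ones) lies in the image of $\Phi$ restricted to $D^{[0,m]}$-type objects of $\check{D}(\cC^{\heartsuit})$. To see this, argue by induction on the Postnikov length: an object $z$ with $\pi_0 z=a$ and $\tau_{\geq 1}z=w$ is classified by a map $a\to w[1]$. Its components live in $\Ext^{\leq m+1}_{\cC}(a,\pi_k w)$, and the hypothesis lifts them to $\check{D}(\cC^{\heartsuit})$ via a compatible induction on the fibers. Once we have an exact, fully faithful, extension-closed copy of $\cC_{[0,m]}$ inside $\check{D}(\cC^{\heartsuit})$, namely $D(\cC^{\heartsuit})_{[0,m]}$ with the induced structure, the universal property of $\check{\St}(\cC_{[0,m]})$ gives an inverse functor $\Psi':\check{\St}(\cC_{[0,m]})\to\check{D}(\cC^{\heartsuit})$ with $\Psi'\circ\Phi\cong\id$, because it is the identity on the heart and both are continuous and exact. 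Essential surjectivity of $\Phi$ then shows that $\Phi$ is an equivalence, sidestepping the high-$\Ext$ computation.
\end{itemize}

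Precisely, define $\cE:=\check{D}(\cC^{\heartsuit})_{[0,m]}$. Show that the realization $\cE\to\cC_{[0,m]}$ is an equivalence of exact categories, compatible with colimits, under the hypothesis. For $\omega_1$-compact objects this is the above Postnikov induction: it gives full faithfulness from $\Ext^{\leq m}$ and essential surjectivity from $\Ext^{m+1}$. Passing to $\omega_1$-filtered colimits uses Proposition \ref{prop:coherently_assembled_is_locally_omega_1_coherent}. Exactness is reflected because both exact structures consist of all cofiber sequences within the range. Then $\check{\St}(\cE)\simeq\check{\St}(\cC_{[0,m]})$, and the composite $\check{D}(\cC^{\heartsuit})\to\check{\St}(\cE)\to\check{D}(\cC^{\heartsuit})$ is the identity by the universal property. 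So $\Phi$ has a left inverse, and it is essentially surjective because its image contains $j(\cC_{[0,m]})$, which generates. Hence $\Phi$ is an equivalence.
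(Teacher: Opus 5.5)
Your steps (i)$\Rightarrow$(ii) and (ii)$\Rightarrow$(iii) agree with the paper. The final step of (iii)$\Rightarrow$(i), however, does not work. Even granting $\check{D}(\cC^{\heartsuit})_{[0,m]}\simeq\cC_{[0,m]}$, the universal property only gives $\Psi'\circ\Phi\simeq\id$. That exhibits $\check{D}(\cC^{\heartsuit})$ as a retract of $\check{\St}(\cC_{[0,m]})$, which makes $\Phi$ split injective on mapping spectra, but not surjective.

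Nor does ``the image contains generators'' give essential surjectivity. The essential image of a colimit-preserving functor that is not fully faithful need not be closed under cones. For instance, $\Sp\to\Mod\hy\bS[x]$, $M\mapsto\bS[x]\otimes M$, has a left inverse induced by $x\mapsto 0$ and hits the generator $\bS[x]$, but it is not an equivalence.

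To conclude you would need $\Phi\circ\Psi'\simeq\id$, i.e.\ $\Phi\circ\iota\simeq j$ on $\check{D}(\cC^{\heartsuit})_{[0,m]}$, where $\iota$ is the inclusion into $\check{D}(\cC^{\heartsuit})$ and $j$ the universal functor. This is essentially the full faithfulness of $\Phi$ again. In effect you have only reduced to the case $\cC=\check{D}(\cC^{\heartsuit})$, where the statement is still non-trivial: the higher $\Ext$'s in $\check{\St}(\cC_{[0,m]})$ cannot be sidestepped by universal properties alone.

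The missing input is a genuine full-faithfulness criterion. The paper uses Proposition \ref{prop:left_special_coherently_assembled}, which is deduced from Theorem \ref{th:left_special_implies_fully_faithful}. It reduces full faithfulness of $\Phi$ to left speciality of $\cC^{\omega_1,\heartsuit}\subset\cC^{\omega_1}_{[0,m]}$: for each $x\in\cC^{\omega_1}_{[0,m]}$ you need some $y\in\cC^{\omega_1,\heartsuit}$ and a map $y\to x$ whose $\pi_0$ is an epimorphism. The paper obtains this by successively effacing the components of $\pi_0(x)\to\tau_{\geq 1}x[1]$ in $\Ext^{k+1}_{\cC}(-,\pi_k(x))$ for $1\leq k\leq m$. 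This uses only surjectivity on $\Ext^2,\dots,\Ext^{m+1}$ together with effaceability of $\Ext$ in the heart.

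Separately, your Postnikov induction for essential surjectivity of $\check{D}(\cC^{\heartsuit})^{\omega_1}_{[0,m]}\to\cC^{\omega_1}_{[0,m]}$ is incomplete as written. Lifting the gluing class compatibly meets an obstruction in $\ker(\Ext^{m+2}_{\cC^{\heartsuit}}\to\Ext^{m+2}_{\cC})$, and (iii) says nothing about $\Ext^{m+2}$. Injectivity there does follow from the isomorphisms on $\Ext^{\leq m+1}$ by the effacement argument of Proposition \ref{prop:mono_on_Ext^2_for_small}, but you have to supply it.
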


\begin{proof}
The implication \Implies{partial_realization_equivalence}{-m-2_connectivity_for_realization_functor} follows directly from Proposition \ref{prop:properties_of_functor_St_of_C_0_m_to_C}.

The implication \Implies{-m-2_connectivity_for_realization_functor}{isomorphisms_on_Ext_leq_m+1} is trivial.

\Implies{isomorphisms_on_Ext_leq_m+1}{partial_realization_equivalence}. Clearly, the essential image of $\Phi$ generates $\check{\St}(\cC_{[0,m]})$ as a localizing subcategory. Hence, we only need to show that $\Phi$ is fully faithful. By Proposition \ref{prop:left_special_coherently_assembled}, it suffices to show that for $x\in \cC^{\omega_1}_{[0,m]}$ there exists $y\in\cC^{\omega_1,\heartsuit}$ and a map $f:y\to x$ such that the map $\pi_0(f):y\to\pi_0(x)$ is an epimorphism. If $m=0,$ then there is nothing to prove. If $m>0,$ consider the map $g:\pi_0(x)\to\tau_{\geq 1}x[1].$ It follows from \ref{isomorphisms_on_Ext_leq_m+1} (applied to $\Ext^2$) that we can find an epimorphism $f_1:y_1\to\pi_0(x)$ in $\cC^{\omega_1,\heartsuit}$ such that the composition $y_1\xto{f_1}\pi_0(x)\xto{g}\tau_{\geq 1}x[1]\to\pi_1(x)[2]$ is zero. Then $g\circ f_1$ factors through $\tau_{\geq 2}x[1],$ so we obtain a map $g_1:y_1\to\tau_{\geq 2}x[1].$ Continuing the process, we find an epimorphism $f_m:y_m\to\pi_0(x)$ in $\cC^{\omega_1,\heartsuit}$ such that the composition $y_m\xto{f_m}\pi_0(x)\xto{g}\tau_{\geq 1}x[1]$ is zero. This means that $f_m$ factors through $x,$ which proves the implication.
\end{proof}

We conclude this subsection with a $K$-theoretic application of the above results.

\begin{cor}\label{cor:cartesian_square_C^-_C^+_C^b}
Let $\cC$ be a dualizable category with a compactly assembled $t$-structure $(\cC_{\geq 0},\cC_{\leq 0}).$ Then the following square of spectra is cartesian:
\begin{equation}\label{eq:cartesian_K_theory}
	\begin{tikzcd}
		K^{\cont}(\la \cC^b\ra) \ar[r]\ar[d] & K^{\cont}(\la \cC^-\ra)\ar[d]\\
		K^{\cont}(\la \cC^+\ra) \ar[r] & K^{\cont}(\cC).
	\end{tikzcd}
\end{equation}
\end{cor}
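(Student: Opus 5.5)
The plan is to turn the square \eqref{eq:cartesian_K_theory} into a statement about quotients, using the semi-orthogonal decomposition of Proposition \ref{prop:SOD_for_C_mod_C^b}. By Proposition \ref{prop:C_a_b_for_comp_ass_t_structure} \ref{C^-_C^+_C^b_dualizable} the categories $\la\cC^-\ra,$ $\la\cC^+\ra,$ $\la\cC^b\ra$ are dualizable and all the inclusions are strongly continuous. The inclusions $\la\cC^b\ra\subset\la\cC^-\ra$ and $\la\cC^+\ra\subset\cC$ are strongly continuous fully faithful functors between dualizable categories. Hence the quotients $\la\cC^-\ra/\la\cC^b\ra$ and $\cC/\la\cC^+\ra$ are dualizable, and we obtain two short exact sequences in $\Cat_{\st}^{\dual}.$

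The square \eqref{eq:Beck_Chevalley1} of inclusions and right adjoints gives a map between these sequences: the rows are $\la\cC^b\ra\to\la\cC^-\ra\to\la\cC^-\ra/\la\cC^b\ra$ and $\la\cC^+\ra\to\cC\to\cC/\la\cC^+\ra,$ and the vertical maps are the inclusions. Since $K^{\cont}$ is a localizing invariant on $\Cat_{\st}^{\dual},$ applying it yields a map of cofiber sequences of spectra. The square \eqref{eq:cartesian_K_theory} is cartesian exactly when the induced map on cofibers
\begin{equation*}
K^{\cont}(\la\cC^-\ra/\la\cC^b\ra)\to K^{\cont}(\cC/\la\cC^+\ra)
\end{equation*}
is an equivalence. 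This map is induced by the functor \eqref{eq:equivalence_of_quotients1}, which is an equivalence by Proposition \ref{prop:SOD_for_C_mod_C^b} \ref{SOD_for_C_mod_C^b}.

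The point needing care is that \eqref{eq:equivalence_of_quotients1} is an equivalence in $\Pr^L_{\st}$, and we need it to be an equivalence in $\Cat_{\st}^{\dual}$ compatible with the quotient functors. This holds because:
\begin{itemize}
\item the functor is induced by the strongly continuous inclusion $\la\cC^-\ra\to\cC$;
\item both quotient functors are strongly continuous;
\item an equivalence of underlying presentable categories is automatically strongly continuous, since its inverse is also its right adjoint.
\end{itemize}
The compatibility of the cofiber identification with $K^{\cont}$ then follows from functoriality.

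So the main (mild) obstacle is verifying the strong continuity needed to stay inside $\Cat_{\st}^{\dual}$. The substantive input, namely the Beck–Chevalley commutation of \eqref{eq:Beck_Chevalley1}, is already contained in Proposition \ref{prop:SOD_for_C_mod_C^b}.
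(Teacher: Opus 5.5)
Your proposal is correct and follows the paper's proof: you compare the horizontal cofibers $K^{\cont}(\la\cC^-\ra/\la\cC^b\ra)\to K^{\cont}(\cC/\la\cC^+\ra)$ and invoke the equivalence \eqref{eq:equivalence_of_quotients1} from Proposition \ref{prop:SOD_for_C_mod_C^b}, with well-definedness of the square coming from Proposition \ref{prop:C_a_b_for_comp_ass_t_structure}. Your extra remarks on strong continuity are accurate, with one small wording slip: the map of short exact sequences comes from the commuting square of inclusions, while \eqref{eq:Beck_Chevalley1} is what feeds into the equivalence of quotients.
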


\begin{proof}
By Proposition \ref{prop:C_a_b_for_comp_ass_t_structure} this square is well-defined: the four categories are dualizable and the inclusion functors are strongly continuous. It suffices to prove that the map between the cofibers of horizontal arrows is an equivalence. This follows directly from Proposition \ref{prop:SOD_for_C_mod_C^b}: we have
\begin{equation*}
K^{\cont}(\la\cC^-\ra/\la\cC^b\ra)\xto{\sim} K^{\cont}(\cC/\la\cC^+\ra).\qedhere
\end{equation*}
\end{proof}

\subsection{General constructions of $t$-structures}
\label{ssec:construction_of_t_structures}

We will need a general efficient method for constructing a $t$-structure. First we deal with small categories, for which the construction seems to be a folklore knowledge. In the following proposition we do not a priori assume the idempotent-completeness, but it holds a posteriori.

\begin{prop}\label{prop:t_structure_on_small_via_functor_from_abelian}
Let $\cC$ be a small stable category, and let $\cA$ be a small abelian category. Suppose that $F:\cA\to\cC$ is a fully faithful exact functor such that $F(\cA)$ generates $\cC$ as a stable subcategory. Then $\cC$ has a unique bounded $t$-structure such that $F(\cA)\subset \cC^{\heartsuit}.$ Each object of $\cC^{\heartsuit}$ has a finite filtration with subquotients in $F(\cA).$

Moreover, if $F$ induces isomorphisms $\Ext^1_{\cA}(x,y)\xto{\sim}\Ext^1_{\cC}(F(x),F(y))$ for $x,y\in\cA,$ then we have $\cC^{\heartsuit}\simeq\cA.$
\end{prop}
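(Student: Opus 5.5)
The plan is to build the $t$-structure from the "extension-closed hull" of $F(\cA)$. Let $\cH\subset\cC$ be the smallest full subcategory containing $F(\cA)$ and closed under extensions and isomorphisms. Define $\cC_{\geq 0}$ as the smallest subcategory closed under extensions containing $\cH[k]$ for $k\geq 0$. Define $\cC_{\leq 0}$ similarly using $\cH[k]$ for $k\leq 0$. Uniqueness is then automatic: any bounded $t$-structure with $F(\cA)\subset\cC^{\heartsuit}$ has $\cC_{\geq 0}\supset$ our $\cC_{\geq 0}$ and $\cC_{\leq 0}\supset$ our $\cC_{\leq 0}$. Once ours is shown to be a $t$-structure, these inclusions force equality.

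\textbf{Step 1: orthogonality.} We need $\Hom(\cH[k],\cH[l])$ to have vanishing $\pi_0$ for $k>l$, i.e. $\Ext^{<0}$ vanishes between objects of $\cH$. This reduces by dévissage on extensions to objects of $F(\cA)$. There it holds because $F$ is fully faithful as a functor of stable categories on the image, so $\Hom_{\cC}(F(x),F(y))$ is the discrete group $\Hom_{\cA}(x,y)$. Here full faithfulness means on mapping spaces; since $\cA$ is ordinary, $\Ext^{<0}=0$.

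\textbf{Step 2: existence of truncation triangles.} Let $\cC'\subset\cC$ be the full subcategory of objects $x$ admitting a triangle $a\to x\to b$ with $a\in\cC_{\geq 0}$ and $b\in\cC_{\leq -1}$, for every shift of the $t$-structure. It contains all $\cH[k]$. The main obstacle is to show $\cC'$ is closed under cones, so that it equals $\cC$ by generation.

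The standard approach (BBD-style, as for hearts of admissible abelian subcategories) is to show that the truncation functors are exact on $\cC'$. This requires the octahedral argument that, for a map $x\to y$ in $\cC'$, the truncations assemble to truncations of the cone. In the key case $x,y\in\cH$ this needs $\pi_0$ of the cone to be computed by a cokernel in an abelian category. For that, I would first prove that $\cH$ is itself an abelian category with kernels and cokernels computed via the fiber/cone. I would show this by induction on filtration length: for $f:F(a)\to F(b)$, the cone of $f$ sits in a triangle with $F(\coker f)$ and $F(\ker f)[1]$, by exactness of $F$ applied to $0\to\ker f\to a\to \operatorname{im} f\to 0$ and $0\to\operatorname{im} f\to b\to\coker f\to0$. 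One can then use the standard theorem that an extension-closed full subcategory $\cH$ with $\Ext^{<0}(\cH,\cH)=0$ generating $\cC$ is the heart of a bounded $t$-structure iff it is closed under the cone construction in this sense. I would use the filtration by $F(\cA)$-subquotients, via Noether-type arguments, to run the induction on total length. This is where the most care is needed.

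\textbf{Step 3: the finite filtration and the final claim.} Objects of $\cC^{\heartsuit}=\cH$ have finite filtrations with subquotients in $F(\cA)$ by construction of $\cH$ as an iterated extension hull. For the last claim, suppose $F$ is an isomorphism on $\Ext^1$. Then every extension in $\cC$ of $F(x)$ by $F(y)$ is, up to isomorphism, $F$ of an extension in $\cA$. So $F(\cA)$ is already closed under extensions, $\cH=F(\cA)$, and $\cC^{\heartsuit}\simeq\cA$. Idempotent-completeness a posteriori follows from boundedness, since a bounded $t$-structure with idempotent-complete abelian heart yields $K_{-1}$ vanishing. Alternatively it follows directly, since summands of objects in $\cC^b$ have bounded homology in the heart.
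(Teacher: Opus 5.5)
Your aisles, the orthogonality, the uniqueness argument and the final $\Ext^1$ step coincide with the paper's. However, the existence of truncation triangles, which is the only non-formal part of the proposition, is not actually proved. You reduce it to the claim that every morphism $X\to Y$ in $\mathcal{H}$ has its cone in $\mathcal{H}[1]*\mathcal{H}$ (an extension of the cokernel by the shifted kernel). You verify this only for $X,Y\in F(\cA)$ and leave the rest to an ``induction on total length via Noether-type arguments'', which you flag as the place needing care. That step is exactly the content of the statement. A morphism between iterated extensions does not respect the filtrations, so you must say how the abelian structure of $\cA$ enters at each inductive stage, and your proposal does not. The ``standard theorem'' you then invoke also needs an argument of the same kind, namely that objects with a decreasing filtration by shifts of $\mathcal{H}$ are closed under cones. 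Admittedly, that becomes a single step once $\mathcal{H}$ is known to be closed under cones.

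The missing idea, as in the paper, is to take a \emph{single} object of $F(\cA)$ as the source and compose with the projection onto the last graded piece of the target. Take $g\colon F(a)\to Y$ and a fiber sequence $Y'\to Y\to F(c)$, and let $\varphi\colon a\to c$ be the morphism with $F(\varphi)$ equal to the composite. Comparing the fiber sequences $F(\ker\varphi)\to F(a)\to F(\operatorname{im}\varphi)$ and $Y'\to Y\to F(c)$ shows that $\Cone(g)$ is an extension of $F(\coker\varphi)$ by $\Cone(h)$, where $h\colon F(\ker\varphi)\to Y'$ is the induced map. This gives an induction on the length of the target. An octahedron then gives the induction on the length of the source, which would complete your plan.

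The paper runs the same mechanism directly on objects filtered by $F(\cA)[k_1],\dots,F(\cA)[k_m]$ with $k_1\ge\dots\ge k_m$. For $f\colon F(y)\to x$ it factors $f$ through the part $x_p$ with $k_i\ge 0$. It replaces $x_p$ by $\Fiber(x_p\to F(\coker\varphi))$ to make $\varphi$ an epimorphism. It then uses $\Cone(F(y)\to x_p)\cong\Cone(F(\ker\varphi)\to x_{p-1})$ to induct on the number of degree-$0$ pieces. Since $F(\cA)$ generates $\cC$, this yields the truncation triangles for all of $\cC$ at once, without first proving that $\mathcal{H}$ is abelian.
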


\begin{proof}
We will construct the $t$-structure with required properties, and its uniqueness follows from the proof.
	
We define $\cC_{\geq 0}\subset\cC$ resp. $\cC_{\leq 0}\subset\cC$ to be the full subcategory generated via extensions by $F(\cA)[n],$ where $n\geq 0$ resp. $n\leq 0.$ Clearly, $\cC_{\geq 0}[1]\subset\cC_{\geq 0}$ $\cC_{\leq 0}[-1]\subset\cC_{\leq 0}$ and $F(\cA)\subset\cC_{\geq 0}\cap \cC_{\leq 0}.$ Also, the orthogonality holds since $\Ext_{\cC}^{<0}(F(x),F(y))=0$ for $x,y\in\cA$ by assumption. 

As usual we denote by $[m]$ the poset $\{0,1\dots,m\}$ with the usual order. We only need to prove that for any $x\in\cC$ there exists some $m\geq 0$ and a functor $[m]\mapsto \cC,$ $i\mapsto x_i,$ such that $x_0=0,$ $x_m\cong x$ and we have $\Cone(x_{i-1}\to x_i)\in F(\cA)[k_i]$ for $1\leq i\leq m,$ where $k_1\geq k_2\geq\dots\geq k_m.$ If this is the case for some object $x,$ we say that $x$ has a good filtration. Since $F(\cA)$ generates $\cC$ as a stable subcategory, we only need to show that for any object $x\in \cC$ with a good filtration, for any $y\in\cA,$ for any $l\in\Z$ and for any morphism $f:F(y)[l]\to x,$ the object $\Cone(f)$ has a good filtration.

Since the class of objects with a good filtration is closed under shifts, we may and will assume that $l=0.$ Let $[m]\to\cC,$ $x_i$ and $k_i$ be as above, with $x_m\cong x$ etc. If $m=0,$ then $x=0$ and there is nothing to prove. So we assume that $m>0.$ If $k_1<0,$ then $f=0$ and $\Cone(f)\cong x\oplus F(y)[1]$ has a good filtration.. Hence, we assume that $k_1\geq 0.$ We take the largest $p$ such that $k_p\geq 0,$ then we get a unique factorization $f:F(y)\xto{g}x_p\to x.$ It suffices to prove that $\Cone(g)$ has a good filtration with subquotients in $F(\cA)[\geq 0].$ We prove this by induction on $s=|\{i: k_i=0\}|.$

If $s=0,$ then $k_p\geq 1,$ hence the cofiber sequence $x_p\to \Cone(g)\to F(y)[1]$ shows that $\Cone(g)$ has a good filtration with subquotients in $F(\cA)[\geq 1].$ Now suppose that $s>0$ and the assertion holds for smaller values of $s.$ We have $k_p=0,$ so there is some $z\in \cA$ and an isomorphism $\Cone(x_{p-1}\to x_p)\cong F(z).$ Denote by $\varphi:y\to z$ the morphism in $\cA$ such that the composition $F(y)\xto{g}x_p\to F(z)$ is homotopic to $F(\varphi).$ First suppose that $\varphi$ is not an epimorphism. Consider the composition $\psi:x_p\to F(z)\to F(\coker(\varphi)),$ and put $x_p'=\Fiber(\psi).$ Then we have a unique factorization $g:F(y)\xto{g'} x_p'\to x_p,$ and it suffices to prove that $\Cone(g')$ has a good filtration with subquotients in $F(\cA)[\geq 0].$ Hence, we may and will assume that $\varphi:y\to z$ is an epimorphism. Put $w=\ker(\varphi),$ then the composition $F(w)\to F(y)\xto{g} x_p$ factors uniquely through $x_{p-1}.$ Moreover, we have $\Cone(g)\cong \Cone(F(w)\to x_{p-1}).$ By the induction hypothesis, this object has a good filtration with subquotients in $F(\cA)[\geq 0].$ This proves the induction step.   

Finally, the ``moreover'' assertion is immediate: the assumption implies that $F(\cA)$ is closed under extensions in $\cC^{\heartsuit},$ hence the functor $\cA\to\cC^{\heartsuit}$ is essentially surjective. 
\end{proof}

We deduce an analogue for dualizable categories.

\begin{prop}\label{prop:t_structure_on_dualizable_via_functor_from_abelian}
Let $\cC$ be a dualizable category, and let $\cA$ be a coherently assembled abelian category. Suppose that $F:\cA\to\cC$ is a strongly continuous fully faithful exact functor, such that $F(\cA)$ generates $\cC$ as a localizing subcategory. 
\begin{enumerate}[label=(\roman*),ref=(\roman*)]
	\item $\cC$ has a unique compactly assembled continuously bounded $t$-structure such that $F(\cA)\subset\cC^{\heartsuit}.$ Moreover, $F(\cA)$ generates $\cC^{\heartsuit}$ via extensions and filtered colimits. \label{t_structure_via_functor_from_abelian}
	\item If $F$ induces isomorphisms $\Ext^1_{\cA}(x,y)\xto{\sim}\Ext^1_{\cC}(F(x),F(y))$ for $x,y\in\cA^{\omega_1},$ then we have $\cC^{\heartsuit}\simeq\cA.$ \label{when_A_equivalent_to_heart_of_C}
\item Let $\cD$ be another dualizable $t$-category and let $G:\cC\to\cD$ be a strongly continuous exact functor. Then $G$ is $t$-exact if and only if we have $G(F(\cA^{\omega_1}))\subset\cD^{\heartsuit}.$ \label{sufficient_for_t_exactness}
\end{enumerate}
\end{prop}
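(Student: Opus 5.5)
We reduce to the small case, Proposition \ref{prop:t_structure_on_small_via_functor_from_abelian}, using the $\omega_1$-compact parts. By Corollary \ref{cor:coh_ass_abelian_general}, $\cA$ is locally $\omega_1$-coherent, so $\cA^{\omega_1}$ is a small abelian category. Strong continuity of $F$ gives $F(\cA^{\omega_1})\subset\cC^{\omega_1}$. Let $\cT\subset\cC^{\omega_1}$ be the stable subcategory generated by $F(\cA^{\omega_1})$. Proposition \ref{prop:t_structure_on_small_via_functor_from_abelian} applied to $F:\cA^{\omega_1}\to\cT$ then yields a bounded $t$-structure on $\cT$ with $F(\cA^{\omega_1})\subset\cT^{\heartsuit}$. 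Moreover, every object of $\cT^{\heartsuit}$ is a finite extension of objects of $F(\cA^{\omega_1})$.

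By Proposition \ref{prop:compactly_generated_t_structure} we obtain a continuously bounded compactly generated $t$-structure on $\Ind(\cT)$. The realization $G:\Ind(\cT)\to\cC$ is the colimit-preserving extension of the inclusion. It is essentially surjective onto a generating family, since $F(\cA)$ is a filtered colimit of $F(\cA^{\omega_1})$.

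To transport the $t$-structure to $\cC$, we exhibit $\cC$ as a retract of $\Ind(\cT)$ and then apply Proposition \ref{prop:comp_ass_t_structure_via_retracts}. We set $\cC_{\geq 0}$ (resp. $\cC_{\leq 0}$) to be the closure of $F(\cA)[n]$, $n\geq 0$ (resp. $n\leq 0$), under extensions and filtered colimits (resp. the right orthogonal of $\cC_{\geq 1}$). The key claim is that $\hat{\cY}_{\cC}$ lands in $\Ind(\cT)\subset\Ind(\cC^{\omega_1})$. This holds because $\hat{\cY}_{\cC}(F(a))\cong\Ind(F)(\hat{\cY}_{\cA}(a))\in\Ind(F(\cA^{\omega_1}))$ by strong continuity, and $\hat{\cY}_{\cC}$ is exact and continuous. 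So $\cC\xto{\hat{\cY}}\Ind(\cT)\xto{\colim}\cC$ is a retraction.

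We then show that both functors are $t$-exact for the $t$-structure on $\cC$ we define. Right $t$-exactness is clear from the generators. For left $t$-exactness of $\colim$ it suffices to check that $\colim$ sends $\Ind(\cT)_{\leq 0}=\Ind(\cT_{\leq 0})$ into $\cC_{\leq 0}$. This reduces to orthogonality: $\Hom(F(a)[n],\colim\, y)=0$ for $n\geq 1$ and $y\in\cT_{\leq 0}$, with $a$ ranging over $\omega_1$-compact objects. Hence we need no negative Ext between objects of $F(\cA)$, which follows from full faithfulness together with exactness and the filtered colimit approximation. Once we know the pair is a $t$-structure obtained as the image of the $t$-structure on $\Ind(\cT)$ under the retraction, we get $\tau_{\geq 0}x=\colim\,\tau_{\geq 0}\hat{\cY}(x)$. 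I expect this verification to be the main obstacle: producing truncation triangles in $\cC$ and showing $\hat{\cY}$ is $t$-exact rather than merely right $t$-exact.

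For the remaining claims:
\begin{itemize}
\item Uniqueness holds because any such $t$-structure compatible with filtered colimits must have $\cC_{\geq 0}$ equal to the colimit/extension closure of $F(\cA)[\geq 0]$.
\item The heart $\cC^{\heartsuit}$ is the image under $\colim$ of $\Ind(\cT^{\heartsuit})$. Since objects of $\cT^{\heartsuit}$ are finite extensions of objects of $F(\cA^{\omega_1})$, $F(\cA)$ generates $\cC^{\heartsuit}$ via extensions and filtered colimits.
\item Continuous boundedness is inherited by Proposition \ref{prop:comp_ass_t_structure_via_retracts}.
\item For \ref{when_A_equivalent_to_heart_of_C}, the Ext$^1$ hypothesis together with the moreover part of Proposition \ref{prop:t_structure_on_small_via_functor_from_abelian} gives $\cT^{\heartsuit}\simeq\cA^{\omega_1}$. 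Passing to retracts gives $\cC^{\heartsuit}\simeq\cA$, using that $F$ is fully faithful and continuous and that $\cA$ is generated by $\cA^{\omega_1}$ under filtered colimits.
\item For \ref{sufficient_for_t_exactness}, necessity is clear. For sufficiency, $G$ sends the generators of $\cC_{\geq 0}$ into $\cD_{\geq 0}$, so $G$ is right $t$-exact. For left $t$-exactness, $G\circ\colim\cong\colim\circ\Ind(G)$ on $\Ind(\cT)$ by strong continuity. The functor $G|_{\cT}$ is $t$-exact because $\cT$ is bounded with heart generated by $F(\cA^{\omega_1})$ under extensions, and $\cD^{\heartsuit}$ is extension-closed. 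Since $\colim$ on $\cD$ is $t$-exact, $G$ is left $t$-exact.
\end{itemize}
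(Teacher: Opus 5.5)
There is a genuine gap, and it is the step you flag as the main obstacle. The proposal never shows that your pair $(\cC_{\geq 0},\cC_{\leq 0})$ is compatible with filtered colimits, nor that $\hat{\cY}_{\cC}$ is left $t$-exact, and these are essentially the whole content of (i).

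Truncation triangles alone are not the problem. Since $\cC_{\geq 0}$ is generated under colimits and extensions by the essentially small collection $F(\cA^{\omega_1})$, Lurie's generation result (Higher Algebra, Proposition 1.4.4.11) already gives an accessible $t$-structure. But that result says nothing about filtered colimits or about $\hat{\cY}_{\cC}$.

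Your reduction for left $t$-exactness of $\colim$ checks orthogonality of $F(a)[n]$, $n\geq 1$, against objects of $\cT_{\leq 0}$. Passing from there to colimits of ind-objects of $\cT_{\leq 0}$ would require $\cC_{\geq 1}^{\perp}$ to be closed under filtered colimits. This is not automatic, because the generators $F(a)$, $a\in\cA^{\omega_1}$, are only $\omega_1$-compact. That particular piece can be rescued through the adjunction $\hat{\cY}_{\cC}\dashv\colim$. However, this does not touch the left $t$-exactness of $\hat{\cY}_{\cC}$, where no adjunction is available in the needed direction. Your arguments for (ii) and (iii) are fine, but both consume the output of (i); for instance, in (iii) you need $\hat{\cY}_{\cC}(x)\in\Ind(\cT)_{\leq 0}$ for $x\in\cC_{\leq 0}$.

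The missing idea, for which you already have both ingredients, is to prove that the idempotent endofunctor $\Phi=\hat{\cY}_{\cC}\circ\colim$ of $\Ind(\cT)$ is $t$-exact. The reduction goes as follows.
\begin{itemize}
\item $\Phi$ is exact and colimit-preserving, and the $t$-structure on $\cT$ is bounded, so it suffices to show $\Phi(\cT^{\heartsuit})\subset\Ind(\cT^{\heartsuit})$.
\item $\Ind(\cT)^{\heartsuit}$ is closed under extensions, and every object of $\cT^{\heartsuit}$ is a finite extension of objects of $F(\cA^{\omega_1})$.
\item So it reduces to $\Phi(F(a))\cong\Ind(F)(\hat{\cY}_{\cA}(a))\in\Ind(F(\cA^{\omega_1}))$, which is exactly your strong continuity computation.
\end{itemize}

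Once $\Phi$ is $t$-exact, it commutes with truncations. Hence the essential image $\hat{\cY}_{\cC}(\cC)$, i.e.\ the objects fixed by $\Phi$, is closed under the truncations of $\Ind(\cT)$, and one defines the $t$-structure on $\cC$ as the induced one. Then:
\begin{itemize}
\item $\hat{\cY}_{\cC}$ is $t$-exact by construction;
\item $\colim$ is $t$-exact because $\hat{\cY}_{\cC}\circ\colim=\Phi$ is;
\item Corollary \ref{cor:induced_t_structure_is_compactly_assembled}, applied to the strongly continuous fully faithful functor $\hat{\cY}_{\cC}:\cC\to\Ind(\cT)$, gives compact assembly and continuous boundedness.
\end{itemize}
This is how the paper argues. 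With it in place, your remaining claims go through as you describe.
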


\begin{proof}
\ref{t_structure_via_functor_from_abelian} Again, we construct the $t$-structure with required properties, and its uniqueness follows automatically.

Denote by $\cT\subset\cC^{\omega_1}$ the (small) stable subcategory generated by $F(\cA^{\omega_1}).$ By Proposition \ref{prop:t_structure_on_small_via_functor_from_abelian}, $\cT$ has a unique bounded $t$-structure such that $F(\cA^{\omega_1})\subset \cT^{\heartsuit}.$ The strong continuity of $F$ implies that $\hat{\cY}_{\cC}(F(\cA))\subset \Ind(\cT).$ Since $F(\cA)$ generates $\cC$ as a localizing subcategory, it follows that $\hat{\cY}_{\cC}(\cC)\subset\Ind(\cT).$ 

We claim that the $t$-structure on $\Ind(\cT)$ induces a $t$-structure on $\hat{\cY}_{\cC}(\cC).$ To see this, it suffices to prove that the composition $\Phi:\Ind(\cT)\xto{\colim}\cC\xto{\hat{\cY}_{\cC}}\Ind(\cT)$ is $t$-exact. Since the $t$-structure on $\cT$ is bounded, we only need to show the inclusion $\Phi(\cT^{\heartsuit})\subset \Ind(\cT^{\heartsuit}).$ By Proposition \ref{prop:t_structure_on_small_via_functor_from_abelian}, each object of $\cT^{\heartsuit}$ has a finite filtration with subquotients in $F(\cA^{\omega_1}).$ Hence, it suffices to show that $\Phi(F(\cA^{\omega_1}))\subset \Ind(F(\cA^{\omega_1})).$ The latter inclusion follows from the strong continuity of $F.$

We claim that the constructed $t$-structure on $\cC$ satisfies the desired properties. Since the $t$-structure on $\Ind(\cT)$ is compactly assembled and continuously bounded, so is the $t$-structure on $\cC$ by Corollary \ref{cor:induced_t_structure_is_compactly_assembled}. Since $F(\cA^{\omega_1})$ generates $\cT^{\heartsuit}$ via extensions (by Proposition \ref{prop:t_structure_on_small_via_functor_from_abelian}), the retraction $\cC^{\heartsuit}\to\Ind(\cT^{\heartsuit})\to\cC^{\heartsuit}$ implies that $\cC^{\heartsuit}$ is generated by $F(\cA)$ via extensions and filtered colimits.

\ref{when_A_equivalent_to_heart_of_C} follows from the proof of \ref{t_structure_via_functor_from_abelian}: we have $\cT^{\heartsuit}\simeq\cA^{\omega_1}$ by Proposition \ref{prop:t_structure_on_small_via_functor_from_abelian}. Hence, $\cC^{\heartsuit}$ is generated by $F(\cA^{\omega_1})$ via filtered colimits, which shows that the functor $\cA\to\cC^{\heartsuit}$ is essentially surjective.

\ref{sufficient_for_t_exactness} The ``only if'' direction is clear. For the ``if'' direction we first note that $G(\cC^{\heartsuit})\subset\cD^{\heartsuit}$ since $\cC^{\heartsuit}$ is generated by $F(\cA^{\omega_1})$ via extensions and filtered colimits. Next, we have $F(\cC_{\geq 0})\subset\cD_{\geq 0}$ since $\cC^{\heartsuit}$ generates $\cC_{\geq 0}$ via extensions and colimits. Finally, we have $F(\cC_{\leq 0})\subset\cD_{\leq 0}$ since $\cC_{\leq 0}$ is generated by $\cC^{\heartsuit}$ via finite limits, extensions and filtered colimits (by the right completeness).
\end{proof}

\begin{cor}\label{cor:t_structure_on_check_St_of_C_0_m}
Let $\cC$ be dualizable $t$-category and let $m\geq 0$ be an integer. There is a unique compactly assembled continuously bounded $t$-structure on $\check{\St}(\cC_{[0,m]})$ such that the functor $\check{D}(\cC^{\heartsuit})\to \check{\St}(\cC_{[0,m]})$ is $t$-exact. Moreover, we have $\check{\St}(\cC_{[0,m]})^{\heartsuit}\simeq \cC^{\heartsuit}.$
\end{cor}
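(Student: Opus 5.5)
The plan is to apply Proposition \ref{prop:t_structure_on_dualizable_via_functor_from_abelian} to the abelian category $\cA=\cC^{\heartsuit}$ and the composite functor
\begin{equation*}
F:\cC^{\heartsuit}\hto\cC_{[0,m]}\xto{j}\check{\St}(\cC_{[0,m]}).
\end{equation*}
First I check the hypotheses. By Proposition \ref{prop:C_a_b_for_comp_ass_t_structure}, $\cC^{\heartsuit}$ and $\cC_{[0,m]}$ are coherently assembled, and the inclusion $\cC^{\heartsuit}\to\cC_{[0,m]}$ is exact and strongly continuous. By Proposition \ref{prop:St_of_coherently_assembled}, $\check{\St}(\cC_{[0,m]})$ is dualizable and $j$ is strongly continuous and fully faithful. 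Hence $F$ is strongly continuous, exact and fully faithful. For generation, note that every object of $\cC_{[0,m]}$ is a finite iterated extension (in the exact structure of $\cC_{[0,m]}$) of the objects $\pi_k(x)[k]$ with $0\leq k\leq m$. In $\check{\St}(\cC_{[0,m]})$ these become $j(\pi_k(x))[k]$, using that $j$ is exact and $j(y[k])\cong j(y)[k]$; this last point follows from the fiber sequences $y[k-1]\to 0\to y[k]$ in $\cC_{[0,m]}$. Since $j(\cC_{[0,m]})$ generates $\check{\St}(\cC_{[0,m]})$ as a localizing subcategory, so does $F(\cC^{\heartsuit})$.

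Part \ref{t_structure_via_functor_from_abelian} of that proposition then gives a unique compactly assembled continuously bounded $t$-structure on $\check{\St}(\cC_{[0,m]})$ with $F(\cC^{\heartsuit})$ in the heart. For the heart, part \ref{when_A_equivalent_to_heart_of_C} requires that $\Ext^1_{\cC^{\heartsuit}}(x,y)\to\Ext^1_{\check{\St}(\cC_{[0,m]})}(F(x),F(y))$ be an isomorphism. Both sides classify extensions of $x$ by $y$ in $\cC$: by Proposition \ref{prop:properties_of_functor_St_of_C_0_m_to_C} the target agrees with $\Ext^1_{\cC}(x,y)$, since $1\leq m+1$. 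Alternatively, Corollary \ref{cor:mono_for_Ext^2} identifies it with extensions in $\cC_{[0,m]}$, and an extension of heart objects lies in the heart. So $\check{\St}(\cC_{[0,m]})^{\heartsuit}\simeq\cC^{\heartsuit}$.

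Next, $t$-exactness of $\Phi:\check{D}(\cC^{\heartsuit})\to\check{\St}(\cC_{[0,m]})$. The category $\check{D}(\cC^{\heartsuit})=\check{\St}(\cC_{[0,0]})$ carries its standard $t$-structure. Applying Proposition \ref{prop:t_structure_on_dualizable_via_functor_from_abelian} with $m=0$, this is the one characterized by containing $\cC^{\heartsuit}$ in the heart, and it is a dualizable $t$-structure. Then criterion \ref{sufficient_for_t_exactness}, applied to the strongly continuous functor $\Phi$ with source $\check{D}(\cC^{\heartsuit})$, reduces $t$-exactness to checking that $\Phi$ sends $\cC^{\heartsuit,\omega_1}$ into the heart. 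This holds by construction, since $\Phi\circ(\cC^{\heartsuit}\to\check{D}(\cC^{\heartsuit}))\cong F$.

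For uniqueness, let $t'$ be any compactly assembled continuously bounded $t$-structure on $\check{\St}(\cC_{[0,m]})$ making $\Phi$ $t$-exact. Then $F(\cC^{\heartsuit})=\Phi(\cC^{\heartsuit})$ lies in the heart of $t'$, and the uniqueness statement in part \ref{t_structure_via_functor_from_abelian} forces $t'$ to be our $t$-structure.

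The main obstacle I expect is the generation step, specifically making precise that the shifts in $\cC_{[0,m]}$ are carried to honest shifts in $\check{\St}(\cC_{[0,m]})$. Everything else is direct bookkeeping with the cited propositions.
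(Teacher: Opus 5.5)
Your proposal is correct and is essentially the paper's proof. The paper's argument is the single remark that Proposition \ref{prop:t_structure_on_dualizable_via_functor_from_abelian} applies to the composite $\cC^{\heartsuit}\to\cC_{[0,m]}\to\check{\St}(\cC_{[0,m]})$, and you have made explicit the generation, $\Ext^1$, $t$-exactness and uniqueness checks that the paper leaves to the reader. One point is stated loosely: identifying the standard $t$-structure on $\check{D}(\cC^{\heartsuit})$ with the one produced by the case $m=0$ does not follow from uniqueness alone unless the standard one is known to be compactly assembled. It does follow, however, by comparing connective and coconnective parts: both are generated from $\cC^{\heartsuit}$, and the standard $t$-structure is compatible with filtered colimits.
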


\begin{proof}
This follows directly from Proposition \ref{prop:t_structure_on_dualizable_via_functor_from_abelian}, applied to the functor $\cC^{\heartsuit}\to\cC_{[0,m]}\to\check{\St}(\cC_{[0,m]}).$
\end{proof}

Let $\cC$ be a dualizable category and let $A$ be a continuous exact monad on $\cC,$ i.e. $A\in\Alg_{\bE_1}(\Fun^L(\cC,\cC)).$ Here we consider the category $\Fun^L(\cC,\cC)$ as a presentable stable $\bE_1$-monoidal category (i.e. an $\bE_1$-algebra in $\Pr^L_{\st}$). Then we can consider the dualizable category $\Mod_A(\cC)$ of $A$-modules in $\cC.$ The forgetful functor $\Mod_A(\cC)\to\cC$ has a left adjoint $x\mapsto A(x),$ which is automatically strongly continuous. By \cite[Proposition C.1]{E24} this gives a fully faithful functor
\begin{equation}\label{eq:from_monads_to_undercategory}
	\Alg_{\bE_1}(\Fun^L(\cC,\cC))\hto (\Cat_{\st}^{\dual})_{\cC/}.
\end{equation} 
Its essential image consists of pairs $(\cD,\Phi:\cC\to\cD)$ such that $\Phi(\cC)$ generates $\cD$ as a localizing subcategory. For such $(\cD,\Phi)$ the functor $\Phi^R$ is monadic, so we have an equivalence
\begin{equation*}
	\cD\simeq \Mod_{\Phi^R\circ\Phi}(\cC).\end{equation*} More generally, the right adjoint to \eqref{eq:from_monads_to_undercategory} is given by
\begin{equation*}
	(\cD,\Phi)\mapsto \Phi^R\circ \Phi.
\end{equation*}

We now apply this point of view to dualizable $t$-categories.

\begin{cor}\label{cor:t_structures_on_modules}
	Let $\cC$ be a dualizable $t$-category. Let $A\in\Alg_{\bE_1}(\Fun^L(\cC,\cC))$ be a continuous exact monad. Suppose that the functor $\Cone(\Id_{\cC}\to A)[1]$ is left $t$-exact. Then the category $\Mod_A(\cC)$ has a unique compactly assembled continuously bounded $t$-structure such that the functor $\cC\to \Mod_A(\cC)$ is $t$-exact. 
\end{cor}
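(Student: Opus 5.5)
The plan is to apply Proposition \ref{prop:t_structure_on_dualizable_via_functor_from_abelian} to the composite functor $\Psi:\cC^{\heartsuit}\hto\cC\xto{A(-)}\Mod_A(\cC)$, where $A(-)$ is the free-module functor. This requires four inputs. First, $\cC^{\heartsuit}$ is coherently assembled by Proposition \ref{prop:C_a_b_for_comp_ass_t_structure}. Second, $\Psi$ is strongly continuous and exact: the inclusion $\cC^{\heartsuit}\to\cC$ is strongly continuous by the same proposition, and the free functor is strongly continuous. Third, $\Psi(\cC^{\heartsuit})$ generates $\Mod_A(\cC)$ as a localizing subcategory, because $\cC^{\heartsuit}$ generates $\cC$ (continuous boundedness) and $A(\cC)$ generates $\Mod_A(\cC)$. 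The fourth input is that $\Psi$ is fully faithful, and this is where the hypothesis enters.

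To check full faithfulness, take $x,y\in\cC^{\heartsuit}$ and compute
\begin{equation*}
\Hom_{\Mod_A(\cC)}(A(x),A(y))\simeq\Hom_{\cC}(x,A(y)).
\end{equation*}
The cofiber sequence $y\to A(y)\to \Cone(\Id\to A)(y)$ shows that $\Cone(\Id\to A)(y)\in\cC_{\leq -1}$, since $\Cone(\Id\to A)[1]$ is left $t$-exact. Hence $A(y)\in\cC_{\leq 0}$, and the map $\pi_0(y)=y\to\pi_0(A(y))$ is an isomorphism. It follows that $\Hom_{\cC}(x,y)\to\Hom_{\cC}(x,A(y))$ induces an isomorphism on $\pi_0$ and that $\pi_{>0}$ vanishes on both sides. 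Since $x\in\cC_{\geq 0}$ and $\Cone(\Id\to A)(y)\in\cC_{\leq -1}$, we get $\Ext^0(x,\Cone)=0$. On $\Ext^1$ we only get injectivity, from the long exact sequence, because $\Ext^{-1}(x,\Cone)=0$. So $\Psi$ is fully faithful on the ordinary (discrete) hearts, and Proposition \ref{prop:t_structure_on_dualizable_via_functor_from_abelian}\ref{t_structure_via_functor_from_abelian} then yields a compactly assembled, continuously bounded $t$-structure on $\Mod_A(\cC)$ with $\Psi(\cC^{\heartsuit})$ contained in the heart.

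Next, the free functor $\cC\to\Mod_A(\cC)$ is strongly continuous and exact, so part \ref{sufficient_for_t_exactness} of the same proposition, applied to the identity of $\cC$ (viewing $\cC$ as obtained from $\mathrm{id}:\cC^{\heartsuit}\to\cC$), shows it is $t$-exact. Concretely, it sends $\cC^{\heartsuit}$ into the heart, hence it is $t$-exact because $\cC$ is generated appropriately by its heart.

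For uniqueness, any such $t$-structure has $A(\cC^{\heartsuit})$ in its heart, and the uniqueness clause of \ref{t_structure_via_functor_from_abelian} applies. The main point needing care is the full-faithfulness computation above, in particular identifying the hypothesis as exactly what forces $\Hom(x,A(y))$ to be discrete with $\pi_0$ equal to $\Hom(x,y)$. The heart will in general not be $\cC^{\heartsuit}$, since the $\Ext^1$ map need not be surjective, and the statement does not claim it is.
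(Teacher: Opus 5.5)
Your proposal is correct and is essentially the paper's argument: the hypothesis gives, by adjunction, full faithfulness of $\cC^{\heartsuit}\to\cC\to\Mod_A(\cC)$, and since its image generates, the existence, the uniqueness and the $t$-exactness of $\cC\to\Mod_A(\cC)$ all follow from Proposition \ref{prop:t_structure_on_dualizable_via_functor_from_abelian}. One trivial index slip: injectivity on $\Ext^1$ comes from the vanishing of $\Ext^0(x,\Cone(\Id\to A)(y))$, which you had already noted, not from the vanishing of $\Ext^{-1}$.
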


\begin{proof}
By adjunction, the left $t$-exactness of the functor $\Cone(\Id_{\cC}\to A)[1]$ implies that the composition $\cC^{\heartsuit}\to \cC\to\Mod_A(\cC)$ is fully faithful. Since its image generates the target as a localizing subcategory, the assertion of the corollary is a special case of Proposition \ref{prop:t_structure_on_dualizable_via_functor_from_abelian}.
\end{proof}

We also deduce the following basic result.

\begin{prop}\label{prop:t_structure_on_nilpotent_endomorphisms}
	Let $\cC$ be a dualizable $t$-category. Let $y$ be a formal variable of degree $0,$ and consider the (compactly generated) category $\Mod_{y\hy\tors}\hy \bS[y]$ of $\bS[y]$-modules such that $y$ acts locally nilpotently on the homotopy groups. Denote by $i:\Sp\to \Mod_{y\hy\tors}\hy \bS[y]$ the restriction of scalars functor for $\bS[y]\to\bS,$ $y\mapsto 0.$ 
	\begin{enumerate}[label=(\roman*),ref=(\roman*)]
		\item The category $(\Mod_{y\hy\tors}\hy \bS[y])\otimes \cC$ has a unique compactly assembled continuously bounded $t$-structure such that the functor $i\otimes\cC:\cC\to (\Mod_{y\hy\tors}\hy \bS[y])\otimes \cC$ is $t$-exact. Moreover, the abelian category $((\Mod_{y\hy\tors}\hy \bS[y])\otimes \cC)^{\heartsuit}$ is identified with the category of pairs $(X,f),$ where $X\in\cC^{\heartsuit}$ and $f:X\to X$ is a locally nilpotent endomorphism. \label{t_structure_for_nilpotent_endomorphisms}
		\item If we have an equivalence $\check{D}(\cC^{\heartsuit})\xto{\sim}\cC,$ then we also have an equivalence
		\begin{equation*}
		\check{D}(((\Mod_{y\hy\tors}\hy \bS[y])\otimes \cC)^{\heartsuit})\xto{\sim} (\Mod_{y\hy\tors}\hy \bS[y])\otimes \cC.
		\end{equation*} \label{realization_is_equivalence_for_nil_endomorphisms}
	\end{enumerate}
\end{prop}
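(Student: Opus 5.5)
For part (i) the plan is to apply Proposition \ref{prop:t_structure_on_dualizable_via_functor_from_abelian} to the coherently assembled abelian category $\cA$ of pairs $(X,f)$ with $X\in\cC^{\heartsuit}$ and $f$ locally nilpotent. Write $\cM=\Mod_{y\hy\tors}\hy\bS[y]$. The category $\cM\otimes\cC$ is dualizable, being a tensor product of dualizable categories. Moreover it is the sequential colimit in $\Cat_{\st}^{\dual}$ of $\Mod_{\bS[y]/y^n}(\cC)$. Each of these is a module category over the monad $\bS[y]/y^n\otimes-$ on $\cC$.

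I would first treat each level $\cM_n=\Mod_{\bS[y]/y^n}\otimes\cC$ using Corollary \ref{cor:t_structures_on_modules}, applied to the forgetful/free adjunction. The trouble is that $\Cone(\bS\to\bS[y]/y^n)[1]$ is a sum of copies of $\bS[1]$, hence not left $t$-exact. So I would instead use the pushforward $i_*:\cC\to\cM_n$, which is restriction along $\bS[y]/y^n\to\bS$. Its right adjoint is continuous, and the monad is $\End_{\bS[y]/y^n}(\bS)\otimes-$. Since $\bS$ is perfect over $\bS[y]/y^n$ (a truncated polynomial ring, via the two-periodic resolution), this is a colimit-preserving monad. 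The fiber of its unit has homotopy in degrees $\leq -1$, with $\Ext^1$ and $\Ext^2$ classes in cohomological degrees $1,2,\dots$. Therefore $\Cone(\Id\to A)[1]$ has homotopy in degrees $\leq 0$ and is left $t$-exact. Corollary \ref{cor:t_structures_on_modules} then gives the $t$-structure on $\cM_n$, provided $i_*(\cC)$ generates $\cM_n$. Generation holds because every $\bS[y]/y^n$-module is an iterated extension of restrictions along $y$.

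A cleaner route avoids levels altogether. Apply Proposition \ref{prop:t_structure_on_dualizable_via_functor_from_abelian} directly to the functor $F:\cA\to\cM\otimes\cC$ sending $(X,f)$ to the cofiber of $y-f$ acting on $X[y^{-1}]$-type objects. Concretely, $F(X,f)=\colim_n(\bS[y]/y^n\otimes X)$ with $y$ acting as $f$; this is the object $X$ with its $\bS[y]$-action. This needs $F$ to be fully faithful, exact, strongly continuous, and generating. The last point, and the identification of the heart, follow from Proposition \ref{prop:loc_nilp_endomorphisms_coherently_assembled}, which gives that $\cA$ is coherently assembled. I will take this route, since part (i) and its heart identification both reduce to checking $\Ext^{\le 1}$ between objects of $\cA^{\omega_1}$.

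The key computation is the following. For $(X,f),(Y,g)$, the mapping spectrum in $\cM\otimes\cC$ is the fiber of
\[
\Hom_{\cC}(X,Y)\xrightarrow{\;g\circ-\;-\;-\circ f\;}\Hom_{\cC}(X,Y).
\]
This holds since $\bS[y]$-modules are objects with an endomorphism, with the resolution $\bS[y]\xrightarrow{y}\bS[y]$. Hence $\pi_0$ gives commuting maps and $\Hom$ is fully faithful. The fiber also yields a long exact sequence
\[
0\to \Ext^0_{\cA}\to\Hom_{\cC}(X,Y)\to\Hom_{\cC}(X,Y)\to\Ext^1\to\Ext^1_{\cC}(X,Y)\to\Ext^1_{\cC}(X,Y).
\]
This matches the same sequence for $\Ext^1_{\cA}$, computed via extensions. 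Fully faithfulness and the $\Ext^1$-isomorphism then hold, so \ref{when_A_equivalent_to_heart_of_C} identifies the heart with $\cA$. For $t$-exactness of $i\otimes\cC$, I would use \ref{sufficient_for_t_exactness} in reverse: the $t$-structure is characterized by its heart, and $i(X)=(X,0)$ lies in it. Uniqueness is part of the cited proposition.

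For part (ii), the plan is to compute $\check D(\cA)\to\cM\otimes\cC$ via Proposition \ref{prop:when_partial_realization_is_an_equivalence}. One needs $\Ext^n_{\cA}\to\Ext^n_{\cM\otimes\cC}$ to be an isomorphism for all $n$, on $\omega_1$-compact objects. I would rather identify $\check D(\cA)$ directly. The category $\cA$ is the locally nilpotent endomorphism category of $\cC^{\heartsuit}$, so $\check D(\cA)\simeq \cM\otimes\check D(\cC^{\heartsuit})$. I expect this is the main obstacle. I would attempt it by the same fiber-sequence computation of $\Ext^n_{\cA}$: the resolution by $y-f$ gives a long exact sequence in all degrees for both sides. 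By the five lemma, combined with the hypothesis $\Ext^*_{\cC^{\heartsuit}}\cong\Ext^*_{\cC}$, this gives isomorphisms in all degrees. Fully faithfulness on the heart generators then follows, and generation is clear, giving the equivalence.
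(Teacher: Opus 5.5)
There is a genuine gap in (ii), at exactly the step you flagged as the main obstacle; your route also differs from the paper's in both parts. You assert that ``the resolution by $y-f$'' gives a long exact sequence for $\Ext^*_{\cA}$ in all degrees. But that resolution, $0\to X[y]\to X[y]\to (X,f)\to 0$, does not live in your $\cA$: the free object $X[y]=\bigoplus_{\N}X$, with $y$ acting by the shift, is not locally nilpotent. It lives in the larger Grothendieck category $\cB[t]$ of all pairs $(X,f)$, with $\cB=\cC^{\heartsuit}$. There it does produce the sequence, since the free functor and the forgetful functor $U$ are both exact, so $\Ext^*_{\cB[t]}(X[y],-)\cong\Ext^*_{\cB}(X,U(-))$. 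Passing from $\cB[t]$ to $\cA$ is harmless for $\Ext^{\leq 1}$, because $\cA$ is closed under extensions; this is why your $\Ext^1$ comparison in (i) is fine. For $\Ext^{\geq 2}$, however, a torsion subcategory need not have the ambient Ext groups. For your $\cA$ this agreement is essentially equivalent to (ii) itself, so as written the key step is unjustified.

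The missing step can be supplied as follows.
- The right adjoint $\Gamma=\colim_n\ker(f^n)$ of $\cA\hto\cB[t]$ preserves injectives.
- For $I$ injective in $\cB[t]$, the map $f^n\colon I\to I$ is a split epimorphism: extend the counit $U(I)[y]\to I$ along the monomorphism $y^n$ and restrict to the degree-zero summand.
- Applying $\Gamma$ to an injective resolution of $N\in\cA$ therefore gives $R^{\geq 2}\Gamma(N)=0$ and $R^1\Gamma(N)=\colim_n\coker(f^n\colon N\to N)$, with transition maps induced by $f$. This is a quotient of $\colim(N\xto{f}N\xto{f}\cdots)=0$, so it vanishes.

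Hence $\Ext^*_{\cA}=\Ext^*_{\cB[t]}$ on $\cA$. Your five-lemma argument, followed by testing the unit of the adjunction for the strongly continuous realization functor against the generating objects of $\cA$, then gives (ii).

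The paper avoids computing $\Ext_{\cA}$ altogether. It first reduces by a retraction to $\cC$ compactly generated, with $\cC^{\heartsuit}\simeq\Ind(\cA_0)$. It then applies \cite[Lemma 2.1]{Nee21} to $D^b(\Nil(\cA_0))\to\Perf_{y\hy\tors}(\bS[y])\otimes D^b(\cA_0)$. For connective $M$, it takes an epimorphism onto the underlying $\pi_0$, forms the adjoint map $N'[y]\to M$, and factors it through $\Cone(N'[y]\xto{y^n}N'[y])$, where $y^n$ acts by zero on $M$. This gives a map from a heart object that is surjective on $\pi_0$.

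In (i), applying Proposition \ref{prop:t_structure_on_dualizable_via_functor_from_abelian} to $\cA$ itself is viable, but there are three omissions.
- You list strong continuity of $F\colon\cA\to\mathcal{M}\otimes\cC$ (with $\mathcal{M}=\Mod_{y\hy\tors}\hy\bS[y]$) among the hypotheses and never check it. It is not formal: $F$ does not preserve cokernels, so it has no right adjoint. It does hold, because the forgetful functor $\cA\to\cB$ has the continuous right adjoint $Y\mapsto(\bigoplus_{\N}Y,\mathrm{shift})$, the forgetful functor $\mathcal{M}\otimes\cC\to\cC$ is strongly continuous, and its ind-extension is conservative (two maps killed by the forgetful functor compose to zero). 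You need this again in (ii), so that the realization functor has a continuous right adjoint.
- Generation does not follow from Proposition \ref{prop:loc_nilp_endomorphisms_coherently_assembled}; it follows from $F(X,0)=(i\otimes\cC)(X)$.
- Uniqueness among $t$-structures making $i\otimes\cC$ $t$-exact needs the remark that any such heart contains $F(\cA)$. Filter by $\ker f^k$ and use that the heart is closed under extensions and filtered colimits.

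The paper's route is much shorter. Since $i(\bS)\cong\Cone(\bS[y]\xto{y}\bS[y])$ is compact, $i\otimes\cC$ is strongly continuous. Since $i^R i(\bS)\cong\bS\oplus\bS[-1]$, the functor $\Cone(\Id_{\cC}\to(i\otimes\cC)^R\circ(i\otimes\cC))[1]\cong\Id_{\cC}$ is left $t$-exact, and Corollary \ref{cor:t_structures_on_modules} applies directly. The heart is then read off from the $t$-exact conservative forgetful functor. Your abandoned first attempt was aiming at this, but you worked over $\bS[y]/y^n$, where $\bS$ is not perfect (its resolution is infinite), so that monad does not preserve colimits. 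Over $\bS[y]$ itself, no truncation levels are needed.
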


\begin{proof}
	\ref{t_structure_for_nilpotent_endomorphisms} It is well-known that the category $\Mod_{y\hy\tors}\hy \bS[y]$ is generated by the single compact object $i(\bS)\cong \Cone(\bS[y]\xto{y}\bS[y]),$ in particular the functor $i$ is strongly continuous. Thus, the functor $i\otimes \cC:\cC\to (\Mod_{y\hy\tors}\bS[y])\otimes\cC$ is also strongly continuous and its image generates the target as a localizing subcategory. 
	
	We have $\Cone(\bS\to i^R(i(\bS)))\cong \bS[-1],$ in particular the functor $\Cone(\Id_{\cC}\to (i\otimes\cC)^R\circ (i\otimes\cC))[1]\cong \Id_{\cC}$ is left $t$-exact. Applying Corollary \ref{cor:t_structures_on_modules}, we obtain the stated $t$-structure and its uniqueness. 
	
	By construction the forgetful functor $(\Mod_{y\hy\tors}\bS[y])\otimes\cC\to\cC$ is $t$-exact and conservative. This gives the description of the heart and proves
	\ref{t_structure_for_nilpotent_endomorphisms}.
	
	\ref{realization_is_equivalence_for_nil_endomorphisms} By Proposition \ref{prop:coherently_assembled_as_retract} and its proof, we may and will assume that $\cC^{\heartsuit}$ is locally coherent, i.e. $\cC$ is compactly generated. Put $\cA=(\cC^{\heartsuit})^{\omega},$ and let $\Nil(\cA)$ be the category of pairs $(X,f),$ where $X\in\cA$ and $f:X\to X$ is a nilpotent endomorphism. We need to prove that the realization functor
	\begin{equation}\label{eq:realization_equiv_for_nil_endomorphisms}
	D^b(\Nil(\cA))\to \Perf_{y\hy\tors}(\bS[y])\otimes D^b(\cA)=\cD
	\end{equation}
	is an equivalence. By \cite[Lemma 2.1]{Nee21} it suffices to show that for $M\in \cD_{\geq 0}$ there exists $N\in\cD^{\heartsuit}$ and a map $f:N\to M$ such that $\Cone(f)\in\cD_{\geq 1}.$ Denote by $\Phi:\cD\to D^b(\cA)$ the forgetful functor. Then there exists some $N'\in\cA$ and a map $g:N'\to\Phi(M)$ such that $\Cone(g)\in D^b(\cA)_{\geq 1}.$ By adjunction $g$ corresponds to a map $g':N'[y]\to M,$ where we consider $N'[y]$ as an object of $\Perf(\bS[y])\otimes D^b(\cA).$ Choose $n>0$ such that $y^n$ is acting by zero on $M.$ Then $g'$ factors through $N=\Cone(N'[y]\xto{y^n}N'[y])$ (non-uniquely), which gives a map $f:N\to M$ such that $\pi_0(f):N\to\pi_0(M)$ is an epimorphism, i.e. $\Cone(f)\in\cD_{\geq 1}.$ Since $N\in\cD^{\heartsuit},$ this shows that \eqref{eq:realization_equiv_for_nil_endomorphisms} is an equivalence and proves \ref{realization_is_equivalence_for_nil_endomorphisms}.
\end{proof}

\subsection{Sufficient conditions for left $t$-exactness}

In this subsection we consider dualizable $t$-categories, and we record some semi-trivial observations on how to check the left $t$-exactness of a functor between them. The statements of course hold in more general situations.

\begin{prop}\label{prop:left_t_exactness_suffices_for_the_heart}
Let $F:\cC\to\cD$ be a continuous exact functor between dualizable $t$-categories. The following are equivalent.
\begin{enumerate}[label=(\roman*),ref=(\roman*)]
	\item $F$ is left $t$-exact.
	\item We have $F(\cC^{\omega_1,\heartsuit})\subset \cD_{\leq 0}.$
\end{enumerate}
\end{prop}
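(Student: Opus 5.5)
The implication from left $t$-exactness to the inclusion $F(\cC^{\omega_1,\heartsuit})\subset\cD_{\leq 0}$ is trivial, since $\cC^{\omega_1,\heartsuit}\subset\cC_{\leq 0}$. For the converse, the plan is to show that the full subcategory $\cS=\{x\in\cC\mid F(x)\in\cD_{\leq 0}\}$ contains $\cC_{\leq 0}$. We use three facts. First, $\cD_{\leq 0}$ is closed under filtered colimits, because the $t$-structure on $\cD$ is compatible with filtered colimits. Second, $\cD_{\leq 0}$ is closed under extensions and finite limits. Third, $F$ is continuous and exact. Together these make $\cS$ closed under filtered colimits, extensions, fibers and desuspensions $[-1]$.

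\textbf{Step 1: the heart.} By Proposition \ref{prop:C_a_b_for_comp_ass_t_structure}, the heart $\cC^{\heartsuit}$ is coherently assembled, hence locally $\omega_1$-coherent by Corollary \ref{cor:coh_ass_abelian_general}. So every object of $\cC^{\heartsuit}$ is an $\omega_1$-filtered colimit, computed in $\cC^{\heartsuit}$, of objects of $\cC^{\omega_1,\heartsuit}$. Filtered colimits in the heart agree with filtered colimits in $\cC$, since $\cC^{\heartsuit}$ is closed under filtered colimits in $\cC$ by compatibility. Alternatively, one can use $\hat{\cY}$: it sends $\cC^{\heartsuit}$ to $\Ind(\cC^{\omega_1,\heartsuit})$, as in the proof of Proposition \ref{prop:C_a_b_for_comp_ass_t_structure}, and applying $\colim$ gives the same conclusion. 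Either way, $\cC^{\heartsuit}\subset\cS$.

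\textbf{Step 2: bounded objects.} By Step 1 and closure under shifts $[-k]$ and extensions, $\cC_{[-n,0]}\subset\cS$ for every $n$, using the Postnikov filtration of finite length.

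\textbf{Step 3: all of $\cC_{\leq 0}$.} This is the main step. For $x\in\cC_{\leq 0}$, we use right completeness: since $\cC$ is continuously bounded, we have $\cC=\la\cC^-\ra$ by Remark \ref{rem:comp_ass_t_structures}, and Proposition \ref{prop:C^-_and_right_completion} then gives $x\simeq\indlim[n]\tau_{\geq -n}x$. Each $\tau_{\geq -n}x$ lies in $\cC_{[-n,0]}\subset\cS$ by Step 2, and $\cS$ is closed under sequential colimits, so $x\in\cS$. The subtle point is exactly this use of right completeness, which is available because the $t$-structure is continuously bounded. No left completeness is needed, since only colimits are involved, and these are preserved by $F$ and by $\cD_{\leq 0}$.
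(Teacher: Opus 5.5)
Your proof is correct and is essentially the paper's argument written out in full. The paper's proof is a single line: right completeness and $\omega_1$-accessibility mean that $\cC_{\leq 0}$ is generated by $\cC^{\omega_1,\heartsuit}$ under finite limits, extensions and filtered colimits; your Steps 1--3 verify exactly this generation statement, using $\hat{\cY}$ or local $\omega_1$-coherence of the heart, Postnikov towers, and $x\simeq\indlim[n]\tau_{\geq -n}x$.
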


\begin{proof}
The $t$-structure on $\cC$ is right complete and $\omega_1$-accessible, hence $\cC_{\leq 0}$ is generated by $\cC^{\omega_1,\heartsuit}$ via finite limits, extensions and filtered colimits. This implies the equivalence.
\end{proof}

\begin{prop}\label{prop:left_t_exactness_suffices_for_composition}
Let $\cC$ and $\cD$ be dualizable $t$-categories, and let $F:\cC\to\cD$ be a strongly continuous, exact and $t$-exact functor, such that $F(\cC)$ generates $\cD$ as a localizing subcategory and moreover the restriction $F_{\mid \cC^{\heartsuit}}:\cC^{\heartsuit}\to\cD^{\heartsuit}$ is fully faithful. For a continuous exact endofunctor $G:\cD\to\cD$ the following are equivalent.
\begin{enumerate}[label=(\roman*),ref=(\roman*)]
	\item $G$ is left $t$-exact. \label{G_left_t_exact}
	\item The composition $F^R\circ G\circ F$ is left $t$-exact. \label{composition_left_t_exact}
\end{enumerate}
\end{prop}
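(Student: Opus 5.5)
The implication \Implies{G_left_t_exact}{composition_left_t_exact} is immediate: $F$ is $t$-exact, and $F^R$ is left $t$-exact as the right adjoint of the right $t$-exact functor $F$. So $F^R\circ G\circ F$ is a composition of left $t$-exact functors. Note also that $F^R$ is continuous, by strong continuity of $F$.

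For \Implies{composition_left_t_exact}{G_left_t_exact} I will use Proposition \ref{prop:left_t_exactness_suffices_for_the_heart}. It reduces the claim to showing $G(\cD^{\omega_1,\heartsuit})\subset\cD_{\leq 0}$; in fact I aim to show $G(\cD^{\heartsuit})\subset\cD_{\leq 0}$. The plan has two steps.

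\emph{Step 1: $G(F(x))\in\cD_{\leq 0}$ for every $x\in\cC^{\heartsuit}$.} Take $z=G(F(x))$. We know $F^R(z)\in\cC_{\leq 0}$, so for $y\in\cC^{\heartsuit}$ and $k\geq 1$,
\begin{equation*}
\Hom_{\cD}(F(y)[k],z)\cong\Hom_{\cC}(y[k],F^R(z))
\end{equation*}
has vanishing $\pi_0$. I want to conclude $\tau_{\geq 1}z=0$. Since $F$ generates $\cD$ and is $t$-exact with fully faithful restriction to the hearts, Proposition \ref{prop:t_structure_on_dualizable_via_functor_from_abelian} shows that $F(\cC^{\heartsuit})$ generates $\cD^{\heartsuit}$ via extensions and filtered colimits. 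The difficulty is that maps out of filtered colimits do not test nonvanishing well. I will therefore test $\tau_{\geq 1}z$ against compact morphisms. Write $\hat{\cY}(\tau_{\geq 1}z)=\inddlim x_i$ with $x_i\in\cD^{\omega_1}_{\geq 1}$, which is possible by $t$-exactness of $\hat{\cY}$. If $\tau_{\geq 1}z\neq 0$, some $\pi_k(\tau_{\geq 1}z)\neq 0$ with $k\geq 1$ by right completeness. Then some nonzero compact map into $\pi_k(\tau_{\geq 1}z)$ exists, and it factors through objects of the heart $\cD^{\omega_1,\heartsuit}$. By Proposition \ref{prop:t_structure_on_small_via_functor_from_abelian}, applied to the small category generated by $F(\cC^{\omega_1,\heartsuit})$, each such object has a finite filtration with subquotients in $F(\cC^{\omega_1,\heartsuit})$. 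Take the lowest filtration step on which the map is nonzero. This yields a nonzero map $F(y)\to\pi_k(\tau_{\geq 1}z)$. Working with the smallest $k$ for which $\pi_k\neq 0$, this map lifts to a nonzero map $F(y)[k]\to\tau_{\geq 1}z\to z$. That contradicts the vanishing above.

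\emph{Step 2: extend from $F(\cC^{\heartsuit})$ to all of $\cD^{\heartsuit}$.} The class of $w\in\cD^{\heartsuit}$ with $G(w)\in\cD_{\leq 0}$ is closed under extensions. It is also closed under filtered colimits, because $G$ is continuous and $\cD_{\leq 0}$ is closed under filtered colimits. Since $F(\cC^{\heartsuit})$ generates $\cD^{\heartsuit}$ via extensions and filtered colimits, this gives $G(\cD^{\heartsuit})\subset\cD_{\leq 0}$.

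The main obstacle is Step 1. The key point there is the detection statement: an object $v\in\cD_{\geq 1}$ with $\pi_0\Hom(F(y)[k],v)=0$ for all $y\in\cC^{\heartsuit}$ and $k\geq 1$ must vanish. I expect this to go through by choosing the minimal nonzero homotopy object. If that lifting is awkward, a fallback is to pass to the retraction $\cD\to\Ind(\cT)$ from Proposition \ref{prop:comp_ass_t_structure_via_retracts} and argue with compact objects there.
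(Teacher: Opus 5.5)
Your proof of (i)$\Rightarrow$(ii) is correct. So is your Step 2: combined with Proposition \ref{prop:left_t_exactness_suffices_for_the_heart}, it is essentially how the paper shows that $F^R\circ G$ is left $t$-exact. The gap is in Step 1, which carries all the content.

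First, right completeness does not produce a nonzero $\pi_k(\tau_{\geq 1}z)$. It only says that objects of $\bigcap_n\cD_{\leq -n}$ vanish. A dualizable $t$-category may contain nonzero objects of $\bigcap_n\cD_{\geq n}$, all of whose homotopy objects vanish. For example, let $\cA$ be the category of modules over $R=\mk[x]/x^2$. Then $\check{D}(\cA)\simeq\Ind(D^b(\cA^{\omega}))$, i.e.\ complexes of injectives up to homotopy, contains the acyclic but non-contractible complex $\cdots\xrightarrow{x}R\xrightarrow{x}R\xrightarrow{x}\cdots$. Nothing in your argument rules out $\tau_{\geq 1}z$ being an object of this kind.

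Second, suppose a minimal $k$ with $\pi_k(v)\neq 0$ does exist, where $v=\tau_{\geq 1}z\in\cD_{\geq k}$. A nonzero map $F(y)\to\pi_k(v)$ still need not lift to $F(y)[k]\to v$. The obstruction is the composite $F(y)[k]\to\pi_k(v)[k]\to(\tau_{\geq k+1}v)[1]$. This is a map from a shifted heart object into $\cD_{\geq k+2}$, i.e.\ a class of $\Ext^{\geq 2}$-type, and it has no reason to vanish.

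There is also a smaller issue. Objects of $\cD^{\omega_1,\heartsuit}$ are in general only sequential colimits of objects filtered by $F(\cC^{\omega_1,\heartsuit})$, not such objects themselves.

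The missing idea is to test the whole mapping spectrum rather than individual maps; this removes your worry about filtered colimits.
\begin{itemize}
\item The class of $w\in\cD$ with $\Hom_{\cD}(w,z)\in\Sp_{\leq 0}$ is closed under all colimits and extensions. This is because $\Hom_{\cD}(-,z)$ turns colimits into limits, and $\Sp_{\leq 0}$ is closed under limits.
\item By adjunction this class contains $F(\cC^{\heartsuit})$, since $F^R(z)\in\cC_{\leq 0}$.
\item Hence it contains $\cD^{\heartsuit}$, by Proposition \ref{prop:t_structure_on_dualizable_via_functor_from_abelian}.
\item Hence it contains all of $\cD_{\geq 0}$, because $\cD^{\heartsuit}$ generates $\cD_{\geq 0}$ under colimits and extensions. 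Indeed, by continuous boundedness $\hat{\cY}(\cD_{\geq 0})\subset\Ind(\cD^{\omega_1,b}_{\geq 0})$. This input is exactly what handles infinitely connective objects like the one above.
\end{itemize}
It follows that $\Map_{\cD}(w[1],z)\simeq *$ for all $w\in\cD_{\geq 0}$, i.e.\ $z\in\cD_{\leq 0}$.

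This is the paper's argument. The paper does your Step 2 first, to get $F^R\circ G$ left $t$-exact, and then runs the above for $G(x)$ with arbitrary $x\in\cD_{\leq 0}$. Your ordering works equally well once Step 1 is repaired this way.
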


\begin{proof}
The implication \Implies{G_left_t_exact}{composition_left_t_exact} is trivial since both $F$ and $F^R$ are left $t$-exact.

\Implies{composition_left_t_exact}{G_left_t_exact}. We first show that $F^R\circ G:\cD\to\cC$ is left $t$-exact. By Proposition \ref{prop:t_structure_on_dualizable_via_functor_from_abelian}, $F(\cC^{\heartsuit})$ generates $\cD^{\heartsuit}$ via extensions and filtered colimits. Hence, $F^R(G(\cD^{\heartsuit}))\subset\cC_{\leq 0}.$ Thus, by Proposition \ref{prop:left_t_exactness_suffices_for_the_heart} the functor $F^R\circ G$ is left $t$-exact.

Now we show that $G$ is left $t$-exact. Let $x\in\cD_{\leq 0}.$ By adjunction for $y\in\cC^{\heartsuit}$ we have $\Hom_{\cD}(F(y),G(x))\in\Sp_{\leq 0}.$ Again, since $F(\cC^{\heartsuit})$ generates $\cD^{\heartsuit}$ via extensions and filtered colimits, for any $z\in\cD_{\heartsuit}$ we have $\Hom_{\cD}(z,G(x))\in\Sp_{\leq 0}.$ But $\cD^{\heartsuit}$ generates $\cD_{\geq 0}$ via colimits and extensions, since $\hat{\cY}(\cD_{\geq 0})\subset\Ind(\cD^b_{\geq 0}).$ We conclude that $G(x)\in \cD_{\leq 0},$ as required.
\end{proof}

\subsection{Filtered colimits of dualizable $t$-categories}

The following result is elementary, and again it is a special case of a more general fact.

\begin{prop}\label{prop:filtered_colimit_of_dualizable_t_categories}
Let $(\cC_i)_{i\in I}$ be a directed system of dualizable $t$-categories, where we assume the transition functors $F_{ij}:\cC_i\to\cC_j$ to be strongly continuous, exact and $t$-exact. Then the colimit $\cC=\indlim[i]\cC_i$ is naturally a dualizable $t$-category and all the functors $\cC_i\to\cC$ are $t$-exact.
\end{prop}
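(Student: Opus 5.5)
The plan is to build the $t$-structure on $\cC=\indlim[i]\cC_i$ directly, using the description of colimits in $\Cat_{\st}^{\dual}$, and then to verify the required properties through the retraction criterion, Proposition \ref{prop:comp_ass_t_structure_via_retracts}.

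Recall that filtered colimits in $\Cat_{\st}^{\dual}$ are computed in $\Pr^L_{\st}$, and that $\cC$ is also the limit in $\Pr^R$ of the right adjoints $F_{ij}^R$. These right adjoints are continuous because the $F_{ij}$ are strongly continuous. Write $F_i:\cC_i\to\cC$ for the canonical functors. Each $F_i$ is strongly continuous and has a continuous right adjoint $F_i^R$. The identity of $\cC$ is the colimit of the endofunctors $F_iF_i^R$. Moreover, $\cC$ is generated as a localizing subcategory by the images $F_i(\cC_i)$.

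We define $\cC_{\geq 0}$ to be the full subcategory generated under colimits and extensions by the $F_i(\cC_{i,\geq 0})$. Then we set $\cC_{\leq 0}=\{x\mid F_i^R(x)\in\cC_{i,\leq 0}\text{ for all }i\}$.

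The key computation is that for $x\in\cC$ one has $x\simeq\indlim[i]F_iF_i^R(x)$. Each $F_i^R$ is left $t$-exact, because it is right adjoint to a $t$-exact functor. Using this, I would check that the formulas
\begin{equation*}
\tau_{\geq 0}x=\indlim[i]F_i(\tau_{\geq 0}F_i^R x),\qquad \tau_{\leq -1}x=\indlim[i]F_i(\tau_{\leq -1}F_i^Rx)
\end{equation*}
give the truncation triangle. Orthogonality reduces by adjunction to the $\cC_i$. The main point is to show $\indlim[i]F_i(\tau_{\leq -1}F_i^Rx)\in\cC_{\leq -1}$. For this I would use the formula $F_j^RF_i\simeq\indlim[k\geq i,j]F_{jk}^RF_{ik}$ for filtered colimits in $\Cat_{\st}^{\dual}$ (as in \cite{E24}). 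Each term $F_{jk}^RF_{ik}$ is left $t$-exact, and $\cC_{j,\leq -1}$ is closed under filtered colimits. This shows that every $F_i$ is $t$-exact. Accessibility and compatibility with filtered colimits follow because $\cC_{\leq 0}$ is cut out by continuous functors.

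For compact assembly I would use the formula $\hat{\cY}_{\cC}(x)=\indlim[i]\Ind(F_i)\hat{\cY}_{\cC_i}(F_i^Rx)$, which follows from strong continuity. Each term $\Ind(F_i)\circ\hat{\cY}_{\cC_i}\circ F_i^R$ is left $t$-exact, and $\Ind(\cC)_{\leq 0}$ is closed under filtered colimits. Hence $\hat{\cY}_{\cC}$ is left $t$-exact, and it is right $t$-exact automatically by Remark \ref{rem:comp_ass_t_structures}.

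Continuous boundedness is immediate. Each $\cC_i$ is generated by $\cC_i^{\heartsuit}$, so $\cC$ is generated by the $F_i(\cC_i^{\heartsuit})$, and these lie in $\cC^{\heartsuit}$ since the $F_i$ are $t$-exact.

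I expect the main obstacle to be the commutation formula for $F_j^RF_i$ together with the resulting verification of $\tau_{\leq -1}$. If that formula proves inconvenient, the alternative is to pass to the compactly generated retracts $\Ind(\cC_i^{\omega_1})$ and argue via Proposition \ref{prop:cohass_closed_under_filtered_colimits}.
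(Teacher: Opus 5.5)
Your proof is correct, but it follows a different route from the paper's.

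\textbf{The paper's route.} The paper never describes the $t$-structure on $\cC$ intrinsically. It first treats the compactly generated case, where $\indlim[i]\cC_i^{\omega}$ (colimit in $\Cat^{\perf}$) visibly inherits a bounded $t$-structure. For the general case it embeds $\cC$ into a compactly generated category: set $\cB=\indlim[i]\cC_i^{\omega_1,b}$ in $\Cat^{\perf}$, and let $F:\cC\to\Ind(\cB)$ be the colimit of the functors $\hat{\cY}:\cC_i\to\Ind(\cC_i^{\omega_1,b})$. This $F$ is fully faithful, since filtered colimits of fully faithful strongly continuous functors remain fully faithful, and $F\circ F^R$ is $t$-exact. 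Corollary \ref{cor:induced_t_structure_is_compactly_assembled} then gives the compactly assembled continuously bounded $t$-structure at once. The $t$-exactness of $\cC_i\to\cC$ follows from the factorization $\cC_i\to\Ind(\cC_i^{\omega_1,b})\to\Ind(\cB)\xto{F^R}\cC$.

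\textbf{Your route.} You define the $t$-structure as the one generated by the $F_i(\cC_{i,\geq 0})$, and you detect $\cC_{\leq 0}$ through the $F_i^R$. You then verify three things by hand: the truncation triangle, the $t$-exactness of each $F_i$, and the $t$-exactness of $\hat{\cY}_{\cC}$. The key inputs are the formulas $\Id_{\cC}\simeq\indlim[i]F_iF_i^R$ and $F_j^RF_i\simeq\indlim[k]F_{jk}^RF_{ik}$, valid because the $F_{ij}^R$ are continuous. You combine these with closure of $\cC_{j,\leq -1}$ and $\Ind(\cC)_{\leq 0}$ under filtered colimits.

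\textbf{What each buys.} Your approach gives explicit formulas for $\cC_{\leq 0}$ and for the truncations. It avoids both the reduction to small $t$-categories and the full faithfulness of the colimit embedding. The paper's approach is shorter given the machinery already in place, and compact assembly comes for free from the embedding into a compactly generated $t$-category with $t$-exact coreflection.

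\textbf{Two minor wording points.}
\begin{itemize}
\item Despite your opening sentence, you never actually invoke Proposition \ref{prop:comp_ass_t_structure_via_retracts}; you check the definition directly, which is fine.
\item The left $t$-exactness of $F_i^R$ holds by your definition of $\cC_{\leq 0}$, not because $F_i$ is already known to be $t$-exact at that stage. So there is no circularity, but the justification you give for it is misordered.
\end{itemize}
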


\begin{proof}
If all $\cC_i$ are compactly generated, then the assertion follows directly from Proposition \ref{prop:comp_ass_on_compactly_generated}: we have a diagram of small $t$-categories $(\cC_i^{\omega})_{i\in I},$ and the colimit $\indlim[i]\cC_i^{\omega}$ (taken in $\Cat^{\perf}$) has a natural bounded $t$-structure such that all the functors $\cC_i^{\omega}\to\indlim[i]\cC_i^{\omega}$ are $t$-exact. We have $\cC=\Ind(\indlim[i]\cC_i^{\omega}),$ which proves the proposition in this case.

The general case follows: take the colimit $\cB=\indlim[i]\cC_i^{\omega_1,b}$ in $\Cat^{\perf},$ and let $F:\cC\to\Ind(\cB)$ be the colimit (in $\Cat_{\st}^{\dual}$) of functors $\hat{\cY}:\cC_i\to\Ind(\cC_i^{\omega_1,b}).$ Then $F$ is fully faithful and the composition $F\circ F^R:\Ind(\cB)\to\Ind(\cB)$ is $t$-exact, hence by Corollary \ref{cor:induced_t_structure_is_compactly_assembled} we obtain a compactly assembled continuously bounded $t$-structure on $\cC.$ The functors $\cC_i\to\cC$ are $t$-exact since they are compositions of $t$-exact functors $\cC_i\to\Ind(\cC_i^{\omega_1,b})\to\Ind(\cB)\xto{F^R}\cC.$  
\end{proof}

We will need the following elementary application.

\begin{prop}\label{prop:functors_from_poset_to_dualizable_t_category}
Let $\cC$ be a dualizable $t$-category. Let $S$ be a poset, and consider the $t$-structure on the (dualizable) category $\cD=\Fun(S^{op},\cC),$ given by
\begin{equation*}
\cD_{\geq 0}=\Fun(S^{op},\cC_{\geq 0}),\quad \cD_{\leq 0}=\Fun(S^{op},\cC_{\leq 0}).
\end{equation*}
Then this $t$-structure is accessible, compatible with filtered colimits and is continuously bounded. Moreover, this $t$-structure is compactly assembled in the following two cases: 

\begin{enumerate}[label=(\roman*),ref=(\roman*)]
	\item $S$ is finite. \label{finite_poset}
    \item $S$ is a meet-semi-lattice, i.e. $S$ has the smallest element and for any $i,j\in S$ there exists $i\wedge j=\inf(i,j)\in S$ (in other words, $S$ considered as a category has finite products). \label{meet_semilattice}
\end{enumerate}
\end{prop}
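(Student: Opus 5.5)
The general properties are checked objectwise, and compact assembly is reduced to Proposition \ref{prop:comp_ass_t_structure_via_retracts} or to a colimit argument.

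\textbf{General properties.} Since $\cC$ is dualizable, so is $\cD=\Fun(S^{op},\cC)\simeq\Fun(S^{op},\Sp)\otimes\cC$, as a tensor product of a compactly generated category with a dualizable one. The truncation functors of $\cD$ are computed objectwise. Hence the $t$-structure is accessible, and since filtered colimits in $\cD$ are objectwise, $\cD_{\leq 0}$ is closed under filtered colimits.

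For continuous boundedness I would use Remark \ref{rem:comp_ass_t_structures} in its weak form, namely $\cD=\la\cD^-\ra=\la\cD^+\ra$. Alternatively, argue directly: $\cD$ is generated by the left Kan extensions $h_s\otimes c$ for $s\in S$ and $c\in\cC$. The functor $c\mapsto h_s\otimes c$ is colimit-preserving and $t$-exact, because left Kan extension along the inclusion of a point into a poset is a coproduct of copies of $c$ (in fact $c$ placed on the down-set, constant). Since $\cC=\la\cC^b\ra$, the image of $\cC^b$ lands in $\cD^b$ and generates, so $\cD=\la\cD^b\ra$.

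\textbf{Case (i), $S$ finite.} Here I would check directly that $\hat{\cY}_{\cD}$ is $t$-exact. For finite $S$ we have $\Ind(\cD)\simeq\Fun(S^{op},\Ind(\cC))$, and $\hat{\cY}_{\cD}$ should be obtained by applying $\hat{\cY}_{\cC}$ objectwise. The justification is that the objectwise functor is left adjoint to $\colim$ objectwise, since finite limits of diagrams commute with the identification $\Ind(\Fun(S^{op},\cC))\simeq\Fun(S^{op},\Ind(\cC))$, which holds for finite $S$. The $t$-structure on $\Ind(\cD)$ is objectwise, so $t$-exactness follows from that of $\hat{\cY}_{\cC}$.

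\textbf{Case (ii), $S$ a meet-semilattice.} I expect this case to be the main obstacle, because $\Ind$ no longer commutes with $\Fun(S^{op},-)$. I would write $S$ as the directed union of its finite sub-meet-semilattices $S'$ containing the smallest element; this works because finitely many elements generate a finite sub-meet-semilattice. For $S'\subset S''$ finite, the restriction $\Fun(S''^{op},\cC)\to\Fun(S'^{op},\cC)$ has a left adjoint given by left Kan extension. For a point $t\in S''$ this computes a colimit over $\{s\in S'\mid s\geq t\}$. That set has the initial object $\inf\{s\in S'\mid s\ge t\}$ when it is nonempty, and since $S'$ is closed under meets this infimum lies in $S'$ and is still $\geq t$. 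So the left Kan extension is evaluation at a point, or zero. It is therefore $t$-exact, and it is strongly continuous because its right adjoint, restriction, is continuous. Similarly $\Fun(S'^{op},\cC)\to\Fun(S^{op},\cC)$ is $t$-exact and strongly continuous, and $\cD\simeq\indlim[S']\Fun(S'^{op},\cC)$ in $\Cat_{\st}^{\dual}$, since the images generate and the functors are fully faithful. Proposition \ref{prop:filtered_colimit_of_dualizable_t_categories} together with case (i) then shows that the colimit $t$-structure is compactly assembled. Finally I would check that this colimit $t$-structure agrees with the objectwise one: both are accessible, and the colimit $t$-structure's connective part is generated by the images of $\Fun(S'^{op},\cC_{\geq 0})$, which is objectwise connective and generates $\cD_{\geq 0}$. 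I would verify this last identification carefully.
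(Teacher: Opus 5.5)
Your proof is correct and follows the paper's argument. Continuous boundedness comes from the $t$-exact left adjoints to evaluation, and case (i) from the fact that $\Fun(S^{op},-)$ commutes with $\Ind$ for finite $S$: the paper first reduces to compactly generated $\cC$ via Proposition \ref{prop:comp_ass_t_structure_via_retracts} and uses $\cD^{\omega}=\Fun(S^{op},\cC^{\omega})$, whereas you apply the identification directly to $\hat{\cY}_{\cC}$. Case (ii) is the same filtered colimit over finite sub-meet-semilattices, concluded via Proposition \ref{prop:filtered_colimit_of_dualizable_t_categories}. Two of your additions are worth keeping. Your pointwise computation of the transition functors (evaluation at the least element of $S'$ above $t$, or zero) is the right justification of their $t$-exactness. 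Your final comparison of the colimit $t$-structure with the objectwise one makes explicit a step the paper leaves implicit.
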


\begin{proof}
It is clear that the $t$-structure is well-defined, accessible and compatible with filtered colimits. Denote by $\Phi_i:\cC\to\cD$ the left adjoint to $F\mapsto F(i),$ $i\in I.$ Then the functors $\Phi_i^R,$ $i\in I,$ form a conservative family, hence $\cD$ is generated by $\Phi_i(\cC^{\heartsuit})\subset\cD^{\heartsuit},$ so the $t$-structure is continuously bounded. Now consider the cases.

\ref{finite_poset} By Proposition \ref{prop:comp_ass_t_structure_via_retracts} we may and will assume that $\cC$ is compactly generated. Then $\cD$ is also compactly generated and we have $\cD^{\omega}=\Fun(S^{op},\cC^{\omega})$ since $S$ is finite. It follows that the $t$-structure on $\cD$ induces a $t$-structure on $\cD^{\omega},$ which proves \ref{finite_poset}. 

\ref{meet_semilattice} Let $I$ be the poset of (full) finite subposets $T\subset S$ closed under finite meets (in particular, we require that $T$ contains the smallest element of $S$). Then $I$ is directed, and we have $\cD\simeq \indlim[T\in I]\Fun(T^{op},\cC),$ where the colimit is taken in $\Cat_{\st}^{\dual}$ (equivalently, in $\Pr^L_{\st}$). Here the transition functors are left Kan extensions. Now, each inclusion of posets $T\subset T'$ in $I$ has a left adjoint by assumption. Hence, the functor $\Fun(T^{op},\cC)\to \Fun(T^{'op},\cC)$ is $t$-exact. It remains to apply \ref{finite_poset} and Proposition \ref{prop:filtered_colimit_of_dualizable_t_categories}.
\end{proof}

\subsection{Dual $t$-structures}

This subsection can be skipped on the first reading, we include it for completeness. Recall that if $\cT$ is a small stable category with a $t$-structure $(\cT_{\geq 0},\cT_{\leq 0}),$ then the category $\cT^{op}$ has a natural opposite $t$-structure given by $(\cT^{op})_{\geq 0}=(\cT_{\leq 0})^{op},$ $(\cT^{op})_{\leq 0}=(\cT_{\geq 0})^{op}.$ By taking ind-objects we obtain the notion of a dual of a compactly generated $t$-structure. We explain a natural generalization for compactly assembled $t$-structures.

\begin{prop}\label{prop:dual_t_structure}
Let $\cC$ be a dualizable category with a compactly assembled $t$-structure $(\cC_{\geq 0},\cC_{\leq 0}).$ Then the category $\cC^{\vee}$ has a natural compactly assembled $t$-structure $((\cC^{\vee})_{\geq 0},(\cC^{\vee})_{\leq 0})$ which is described as follows. Consider the (strongly continuous fully faithful) functor $\Phi=(\colim)^{\vee}:\cC^{\vee}\to\Ind((\cC^{\omega_1})^{op}).$ Then 
%\begin{equation*}
%(\cC^{\vee})_{\geq 0}=\{x\in\cC^{\vee}\mid \forall y\in\cC^{\omega_1} \forall \alpha\in\pi_0\ev_{\cC}(y,x) \exists z\in \cC^{\omega_1}_{\leq 0} \exists \beta\in\pi_0\ev_{\cC}(z,x)\exists f:z\to y\text{ such that }\ev_{\cC}(f,x)(\beta)=\alpha\}
%\end{equation*}
\begin{equation*}\label{eq:C_vee_geq_0}
(\cC^{\vee})_{\geq 0}=\{x\in\cC^{\vee}\mid \Phi(x)\in \Ind((\cC^{\omega_1}_{\leq 0})^{op})\subset\Ind((\cC^{\omega_1})^{op})\},
\end{equation*}
\begin{equation*}\label{eq:C_vee_leq_0}
(\cC^{\vee})_{\leq 0}=\{x\in\cC^{\vee}\mid \forall y\in\cC^{\omega_1}_{\leq 0}\text{ we have }\ev_{\cC}(y,x)\in\Sp_{\leq 0}\}.
\end{equation*}
\end{prop}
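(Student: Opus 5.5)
We reduce to the compactly generated case via the retraction $\cC\xto{\hat{\cY}}\Ind(\cC^{\omega_1})\xto{\colim}\cC$. Both functors are $t$-exact and continuous, and $\hat{\cY}$ is strongly continuous. Dualizing in $\Cat_{\st}^{\dual}$ gives a retraction
\[
\cC^{\vee}\xto{\Phi}\Ind((\cC^{\omega_1})^{op})\xto{\hat{\cY}^{\vee}}\cC^{\vee},
\]
where $\Phi=(\colim)^{\vee}$ is the strongly continuous fully faithful functor from the statement. Here $\Ind((\cC^{\omega_1})^{op})$ carries the compactly generated $t$-structure induced by the opposite $t$-structure on $(\cC^{\omega_1})^{op}$, so its connective part is $\Ind((\cC^{\omega_1}_{\leq 0})^{op})$. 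The plan is to show that this $t$-structure restricts to the essential image of $\Phi$. Then Corollary \ref{cor:induced_t_structure_is_compactly_assembled} makes the restricted $t$-structure on $\cC^{\vee}$ compactly assembled, and it remains to identify its two halves with the formulas in the statement.

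The key step is that the idempotent continuous endofunctor $\Phi\circ\Phi^R=\Phi\circ\hat{\cY}^{\vee}$ of $\Ind((\cC^{\omega_1})^{op})$ is $t$-exact. This functor is the dual of $\hat{\cY}\circ\colim:\Ind(\cC^{\omega_1})\to\Ind(\cC^{\omega_1})$, which is $t$-exact since both factors are. So we need a lemma: if $G:\Ind(\cT)\to\Ind(\cT')$ is a strongly continuous functor between compactly generated $t$-categories and $G$ is $t$-exact, then the dual $G^{\vee}:\Ind(\cT'^{op})\to\Ind(\cT^{op})$ is $t$-exact for the opposite $t$-structures. Here $G^{\vee}$ is given on compacts by $G^{\vee}(y)(x)=\Hom(G(x),y)$, viewed as an ind-object.

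To prove the lemma, recall that left $t$-exactness of $G^{\vee}$ amounts to the following: for $y\in\cT'_{\geq 0}$ (opposite-coconnective) and $x\in\cT_{\leq -1}$, every map $G(x)\to y$ vanishes. This should follow from the right $t$-exactness of $G$. Right $t$-exactness of $G^{\vee}$ instead requires $G^{\vee}(y)\in\Ind(\cT_{\leq 0}^{op})$ for $y\in\cT'_{\leq 0}$. This means every map $G(x)\to y$ factors through $G(x')\to y$ with $x\to x'$ a map to some $x'\in\cT_{\leq 0}$, or at least does so at the ind-level. The natural candidate is $x'=\tau_{\leq 0}x$, using the left $t$-exactness of $G$ together with the fact that $G(\tau_{\geq 1}x)$ has no maps to $y$. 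I expect this right-exactness step to be the main obstacle. $G$ itself is not representable on compacts, so the argument must go through $\hat{\cY}$: write $G(x)=\inddlim G_i$ with the index system compatible with truncations, and use the strong continuity to control the transition maps.

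Having the restricted $t$-structure, we identify its halves. The connective part is by definition $\Phi^{-1}(\Ind((\cC^{\omega_1}_{\leq 0})^{op}))$, as stated. The coconnective part consists of those $x$ with $\Phi(x)$ right orthogonal to $\Ind((\cC^{\omega_1}_{\geq 1})^{op})$. Unwinding the formula $\Phi(x)=\inddlim\ev_{\cC}(-,x)$ should translate this into the condition $\ev_{\cC}(y,x)\in\Sp_{\leq 0}$ for all $y\in\cC^{\omega_1}_{\leq 0}$. Finally we would check that this $t$-structure is accessible and compatible with filtered colimits. Both properties are inherited through $\Phi$, since $\Phi$ is continuous and $\Phi\Phi^R$ is $t$-exact.
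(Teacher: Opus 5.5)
Your reduction to the retraction through $\Ind((\cC^{\omega_1})^{op})$ and your final appeal to Corollary \ref{cor:induced_t_structure_is_compactly_assembled} match the paper. The key step, however, is false: $\Phi\circ\Phi^R=\Phi\circ\hat{\cY}^{\vee}$ is not $t$-exact in general, because it fails to be right $t$-exact.

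Here is a counterexample. Take $\cC=D(\Z)=\Ind(\Perf(\Z))$ with the standard $t$-structure.
\begin{itemize}
\item We have $\cC^{\vee}\simeq\Fun^{\ex}(\Perf(\Z),\Sp)\simeq D(\Z)$ via $H\mapsto H(\Z)$, with $\ev_{\cC}(y,M)\simeq y\otimes M$. Under this identification $\Phi^R=\hat{\cY}^{\vee}$ is restriction along $\Perf(\Z)\subset\cC^{\omega_1}$.
\item Let $z=\Z[1/p]\in\cC^{\omega_1,\heartsuit}$. The corepresentable $h^z=\Hom_{\cC}(z,-)$ is connective in $\Ind((\cC^{\omega_1})^{op})$.
\item However, $\Phi^R(h^z)$ corresponds to $\Hom_{D(\Z)}(\Z[1/p],\Z)\simeq(\Z_p/\Z)[-1]$, the $\lim^1$ of $\cdots\xto{p}\Z\xto{p}\Z$.
\item The object $\Phi(\Q[-1])=\ev_{\cC}(-,\Q[-1])$ lies in $\Ind((\cC^{\omega_1})^{op})_{\leq -1}$, since $y\otimes\Q[-1]\in D(\Z)_{\leq -1}$ for $y\in D(\Z)_{\leq 0}$.
\item By full faithfulness of $\Phi$, $\pi_0\Hom(\Phi\Phi^R(h^z),\Phi(\Q[-1]))\cong\Hom_{\Z}(\Z_p/\Z,\Q)$. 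This is nonzero because $(\Z_p/\Z)\otimes\Q\cong\Q_p/\Q\neq 0$.
\end{itemize}
Hence $\Phi\Phi^R(h^z)$ is not connective.

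The same example refutes your lemma. The functor $\hat{\cY}=\Ind(\Perf(\Z)\hookrightarrow\cC^{\omega_1})$ is strongly continuous and $t$-exact, yet $\hat{\cY}^{\vee}$ is not right $t$-exact. In general, for strongly continuous $t$-exact $G$ it is $(G^R)^{\vee}$ that is $t$-exact (Proposition \ref{prop:functorizlity_of_dual_t_structures}). The functor $G^{\vee}$ is its right adjoint and is only left $t$-exact. Two further slips:
\begin{itemize}
\item Your lemma could not apply to $\hat{\cY}\circ\colim$ anyway, since that functor is not strongly continuous.
\item Your formula has the wrong variance: it should read $G^{\vee}(y)=\Hom(y,G(-))$.
\end{itemize}

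What you actually need is weaker than $t$-exactness of $\Phi\Phi^R$: you need $\tau_{\geq 0}\Phi(x)\in\Phi(\cC^{\vee})$ for every $x\in\cC^{\vee}$. The paper verifies this directly.
\begin{itemize}
\item Identify $\Ind((\cC^{\omega_1})^{op})$ with $\Fun^{\ex}(\cC^{\omega_1},\Sp)$. Then $\Phi(x)=\ev_{\cC}(-,x)$, and the essential image of $\Phi$ consists of the functors commuting with countable colimits.
\item The functor $\Omega^{\infty}\ev_{\cC}(-,x)$ on $\cC^{\omega_1}_{\leq 0}$ preserves finite limits. Hence it is pro-represented by some $(y_i)$ in $\cC^{\omega_1}_{\leq 0}$, and $\tau_{\geq 0}\Phi(x)\simeq\indlim[i]\Hom_{\cC}(y_i,-)$.
\item This functor commutes with countable colimits once every $y_i$ receives a compact map from some $y_j$.
\item That last property holds because $\ev_{\cC}(-,x)$ commutes with filtered colimits and $\hat{\cY}$ is $t$-exact. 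For $z\in\cC^{\omega_1}_{\leq 0}$ we get $\hat{\cY}(z)\cong\inddlim[n\in\N] z_n$ with $z_n\in\cC^{\omega_1}_{\leq 0}$, so every class over $z$ lifts along a compact map $z_n\to z$.
\end{itemize}
This is the step where compact assembly of the $t$-structure is genuinely used.

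Finally, in your last paragraph the orthogonality should be taken against $\Ind((\cC^{\omega_1}_{\leq -1})^{op})$, not $\Ind((\cC^{\omega_1}_{\geq 1})^{op})$.
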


\begin{proof}
Consider the compactly generated $t$-structure on $\Ind((\cC^{\omega_1})^{op}),$ coming from the opposite $t$-structure on $(\cC^{\omega_1})^{op}.$ It suffices to prove that  it induces the $t$-structure on $\cC^{\vee}$ via $\Phi,$ which would be automatically compactly assembled by Corollary \ref{cor:induced_t_structure_is_compactly_assembled}. Indeed, by definition $(\cC^{\vee})_{\geq 0}$ is the preimage of $\Ind((\cC^{\omega_1})^{op})_{\geq 0},$ and the condition $x\in(\cC^{\vee})_{\leq 0}$ means that the mapping space $\Map(y,\Phi(x))$ is discrete for $y\in \Ind((\cC^{\omega_1})^{op})_{\geq 0},$ or equivalently $\Phi(x)\in \Ind((\cC^{\omega_1})^{op})_{\leq 0}.$ 

% It is clear that $(\cC^{\vee})_{\geq 0}\subset\cC^{\vee}$ is closed under colimits and $(\cC^{\vee})_{\leq 0}\subset\cC^{\vee}$ is closed under finite limits and filtered colimits. The orthogonality also follows from the description. Namely, if we identify $\Ind((\cC^{\omega_1})^{op})$ with $\Fun^{\ex}(\cC^{\omega_1},\Sp),$ then $\Phi(x)=\ev_{\cC}(-,x)$ for $x\in\cC^{\vee}.$ Let $x\in (\cC^{\vee})_{\geq 0},$ then the functor $\ev_{\cC}(-,x):\cC^{\omega_1}\to\Sp$ is pro-representable by the objects of $\cC^{\omega_1}_{\leq 0}.$ Identifying $\cC^{\vee}$ with $\Fun^{\omega_1\hy\rex}(\cC^{\omega_1},\Sp)$ and using the Yoneda lemma, we conclude that for $y\in(\cC^{\vee})_{\leq 0}$ we have $\Hom_{\cC^{\vee}}(x,y)\in\Sp_{\leq 0},$ as required.

Let $x\in\cC^{\vee}.$ It suffices to prove that the object $\tau_{\geq 0}\Phi(x)$ is contained in the essential image of $\Phi.$ If we identify $\Ind((\cC^{\omega_1})^{op})$ with $\Fun^{\ex}(\cC^{\omega_1},\Sp),$ then $\Phi(x)=\ev_{\cC}(-,x).$
%  we need to construct a cofiber sequence $\tau_{\geq 0}x\to x\to \tau_{\leq -1}x$ with $\tau_{\geq 0}x\in(\cC^{\vee})_{\geq 0}$ and $\tau_{\leq -1}x\in (\cC^{\vee})_{\leq 0}[-1].$ 
Consider the functor
\begin{equation}\label{eq:restriction_to_C_leq_0}
F:\cC^{\omega_1}_{\leq 0}\to\cS,\quad F(y)=\Omega^{\infty}\ev_{\cC}(y,x).
\end{equation}
Then $F$ commutes with finite limits (since finite limits in $\cC^{\omega_1}_{\leq 0}$ are the same as in $\cC^{\omega_1}$). Hence, $F$ is pro-representable, i.e. there exists a directed poset $I$ and a functor $I^{op}\to \cC^{\omega_1}_{\leq 0},$ $i\mapsto y_i,$ such that $F\cong\indlim[i]\Map(y_i,-).$
Then \begin{equation*}
\tau_{\geq 0}\Phi(x)\cong \indlim[i]\Hom_{\cC}(y_i,-)\in\Fun^{\ex}(\cC^{\omega_1},\Sp).
\end{equation*}
The essential image of $\Phi$ in $\Fun^{\ex}(\cC^{\omega_1},\Sp)$ is identified with the full subcategory $\Fun^{\omega_1\hy\rex}(\cC^{\omega_1},\Sp).$ Hence, we need to prove that the functor $\indlim[i]\Hom_{\cC}(y_i,-):\cC^{\omega_1}\to\Sp$ commutes with countable colimits. It suffices to show that for any $i\in I$ there exists $j\geq i$ such that the map $y_j\to y_i$ is compact in $\cC_{\leq 0},$ hence also compact in $\cC.$ This follows directly from the fact that the functor \eqref{eq:restriction_to_C_leq_0} commutes with filtered colimits. Namely, it suffices to show that for $z\in \cC^{\omega_1}_{\leq 0}$ and for $\alpha\in\pi_0 F(z)$ there exists $z'\in\cC^{\omega_1}_{\leq 0},$ a compact map $f:z'\to z$ and an element $\beta\in\pi_0 F(z')$ such that $\pi_0 F(f)(\beta)=\alpha.$ It suffices to take $z'=z_n$ for large $n,$ where $\hat{\cY}(z)\cong\inddlim[n\in\N]z_n.$ This proves that $\tau_{\geq 0}\Phi(x)\in\Phi(\cC^{\vee}),$ as required.
\end{proof}

The above construction is functorial in the following sense.

\begin{prop}\label{prop:functorizlity_of_dual_t_structures}
Let $\cC$ and $\cD$ be dualizable categories with compactly assembled continuously bounded $t$-structures $(\cC_{\geq 0},\cC_{\leq 0})$ resp. $(\cD_{\geq 0},\cD_{\leq 0}).$ Let $F:\cC\to\cD$ be a strongly continuous, exact and $t$-exact functor. We equip $\cC^{\vee}$ and $\cD^{\vee}$ with $t$-structures as in Proposition \ref{prop:dual_t_structure}. Then the functor $(F^R)^{\vee}:\cC^{\vee}\to\cD^{\vee}$ is $t$-exact.
\end{prop}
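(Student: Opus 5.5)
The plan is to reduce both halves of $t$-exactness to the defining formula for $(\cC^{\vee})_{\leq 0}$ in Proposition \ref{prop:dual_t_structure}, using the pairings $\ev$ and the adjunction between dual functors. First, recall the two basic facts about duals in $\Pr^L_{\st}$. For a continuous exact functor $G:\cD\to\cC$ between dualizable categories, the dual $G^{\vee}:\cC^{\vee}\to\cD^{\vee}$ is characterized by $\ev_{\cD}(d,G^{\vee}(x))\simeq\ev_{\cC}(G(d),x)$. Moreover, dualization turns the adjunction $F\dashv F^R$, which lives in $\Pr^L_{\st}$ since $F$ is strongly continuous, into an adjunction $(F^R)^{\vee}\dashv F^{\vee}$. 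So $(F^R)^{\vee}$ is a left adjoint, and $F^{\vee}:\cD^{\vee}\to\cC^{\vee}$ is continuous.

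Next, a preliminary extension step. For $x\in(\cC^{\vee})_{\leq 0}$, I will show that $\ev_{\cC}(y,x)\in\Sp_{\leq 0}$ holds for \emph{all} $y\in\cC_{\leq 0}$, not only for $y\in\cC^{\omega_1}_{\leq 0}$.
\begin{itemize}
\item By Proposition \ref{prop:comp_ass_t_structure_omega_1_accessible}, the $t$-structure is $\omega_1$-accessible and compatible with filtered colimits. Hence every object of $\cC_{\leq 0}$ is an $\omega_1$-filtered colimit of objects of $\cC^{\omega_1}_{\leq 0}$, namely $\tau_{\leq 0}$ applied to an $\omega_1$-compact presentation.
\item The functor $\ev_{\cC}(-,x)$ sends colimits in $\cC$ to limits, so a filtered colimit of such $y$ is sent to a filtered limit of the values.
\item I therefore need $\Sp_{\leq 0}$ to be closed under cofiltered limits. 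This holds because $\Sp_{\leq 0}$ is closed under all limits in $\Sp$.
\end{itemize}
The same extension holds for $\cD$.

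Left $t$-exactness. Let $x\in(\cC^{\vee})_{\leq 0}$ and $y\in\cD^{\omega_1}_{\leq 0}$. Then $\ev_{\cD}(y,(F^R)^{\vee}(x))\simeq\ev_{\cC}(F^R(y),x)$. Since $F$ is $t$-exact, its right adjoint $F^R$ is left $t$-exact, so $F^R(y)\in\cC_{\leq 0}$. This object need not be $\omega_1$-compact, and this is exactly why the extension step is needed. By that step, $\ev_{\cC}(F^R(y),x)\in\Sp_{\leq 0}$, hence $(F^R)^{\vee}(x)\in(\cD^{\vee})_{\leq 0}$.

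Right $t$-exactness. Since $(F^R)^{\vee}$ is left adjoint to $F^{\vee}$, it suffices to show that $F^{\vee}$ is left $t$-exact. Let $z\in(\cD^{\vee})_{\leq 0}$ and $y\in\cC^{\omega_1}_{\leq 0}$. Then $\ev_{\cC}(y,F^{\vee}(z))\simeq\ev_{\cD}(F(y),z)$. Because $F$ is $t$-exact, $F(y)\in\cD_{\leq 0}$, and the extension step for $\cD$ gives $\ev_{\cD}(F(y),z)\in\Sp_{\leq 0}$. Hence $F^{\vee}(z)\in(\cC^{\vee})_{\leq 0}$. A left adjoint of a left $t$-exact functor is right $t$-exact, so $(F^R)^{\vee}$ is right $t$-exact.

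Main obstacle. I expect the main point needing care to be the bookkeeping of dual functors. One must confirm that the dual of the $\Pr^L_{\st}$-adjunction $F\dashv F^R$ is indeed $(F^R)^{\vee}\dashv F^{\vee}$, and that $\ev$ has the stated naturality in both variables. Both follow formally from dualizability, since $\Fun^L(\cC,\cD)\simeq\cC^{\vee}\otimes\cD$ and dualization is a contravariant $2$-functor on dualizable objects. I expect the continuous boundedness hypothesis not to be needed for this argument.
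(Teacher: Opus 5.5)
Your argument is correct, and it takes a different route from the paper's. There is one slip in the justification of your extension step. The functor $\ev_{\cC}(-,x)\colon\cC\to\Sp$ is \emph{covariant} and colimit-preserving: the pairing $\ev_{\cC}\colon\cC\otimes\cC^{\vee}\to\Sp$ is a morphism in $\Pr^L_{\st}$, and the paper identifies $(\colim)^{\vee}(x)$ with the exact functor $\ev_{\cC}(-,x)$ on $\cC^{\omega_1}$. So for $y\simeq\colim_i y_i$ with $y_i\in\cC^{\omega_1}_{\leq 0}$ you get $\ev_{\cC}(y,x)\simeq\colim_i\ev_{\cC}(y_i,x)$. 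The fact you need is that $\Sp_{\leq 0}$ is closed under filtered colimits, which holds because homotopy groups commute with filtered colimits; closure under limits is true but irrelevant. With this correction the extension step and your left $t$-exactness argument go through. In the right $t$-exactness step you do not even need the extension: $F$ is strongly continuous, so it preserves $\omega_1$-compact objects and $F(y)\in\cD^{\omega_1}_{\leq 0}$ directly.

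The paper proves both halves at once with a commutative square. Its horizontal sides are the fully faithful embeddings $(\colim)^{\vee}\colon\cC^{\vee}\to\Ind((\cC^{\omega_1})^{op})$ and $(\colim)^{\vee}\colon\cD^{\vee}\to\Ind((\cD^{\omega_1})^{op})$. Its vertical sides are $(F^R)^{\vee}$ and $\Ind((F^{\omega_1})^{op})$. The functor $\Ind((F^{\omega_1})^{op})$ is evidently $t$-exact for the compactly generated $t$-structures coming from the opposite $t$-structures. The horizontal embeddings are $t$-exact because the dual $t$-structures are, by the proof of Proposition \ref{prop:dual_t_structure}, induced from these. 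Hence $t$-exactness descends to $(F^R)^{\vee}$.

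That route is shorter once Proposition \ref{prop:dual_t_structure} is in hand. It does rely on the commutativity of the square, a Beck--Chevalley-type compatibility that uses the strong continuity of $F$, and on both halves of the dual $t$-structure being induced via $(\colim)^{\vee}$.

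Your argument is more intrinsic. It uses only three inputs: the description of $(\cC^{\vee})_{\leq 0}$ through the pairing, the identity $\ev_{\cD}(d,G^{\vee}(x))\simeq\ev_{\cC}(G(d),x)$, and the dualized adjunction $(F^R)^{\vee}\dashv F^{\vee}$. The cost is treating the two halves separately and extending the test objects from $\cC^{\omega_1}_{\leq 0}$ to $\cC_{\leq 0}$. You are also right that continuous boundedness plays no role; the paper's argument does not use it either.
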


\begin{proof}
We have a commutative square
\begin{equation*}
\begin{tikzcd}
\cC^{\vee} \ar{r}{(\colim)^{\vee}}\ar[d, "(F^R)^{\vee}"] & \Ind((\cC^{\omega_1})^{op})\ar{d}{\Ind((F^{\omega_1})^{op})}\\
\cD^{\vee} \ar{r}{(\colim)^{\vee}} & \Ind((\cD^{\omega_1})^{op}),
\end{tikzcd}
\end{equation*}
in which the functors are strongly continuous and the horizontal arrows are fully faithful. Clearly, the right vertical arrow is $t$-exact. By the proof of Proposition \ref{prop:dual_t_structure} the horizontal arrows are $t$-exact. Hence, the left vertical arrow is also $t$-exact.
\end{proof}

\begin{remark}
It is easy to show that in the situation of Proposition \ref{prop:dual_t_structure} the double dual $t$-structure on $\cC^{\vee\vee}\simeq\cC$ is identified with the original $t$-structure $(\cC_{\geq 0},\cC_{\leq 0}).$ Furthermore, the $t$-structure $(\cC_{\geq 0},\cC_{\leq 0})$ is continuously bounded if and only if so is $((\cC^{\vee})_{\geq 0},(\cC^{\vee})_{\leq 0}).$ This is also equivalent to the right completeness of both $t$-structures.
\end{remark}

\section{Refined $K$-theoretic theorem of the heart for dualizable categories}
\label{sec:theorem_of_the_heart_for_K}

In this section we prove the following result.

\begin{theo}\label{th:theorem_of_the_heart}
Let $\cC$ be a dualizable $t$-category. Let $m\geq 0$ be an integer. Then the map $K_j^{\cont}(\cC_{[0,m]})\to K_j^{\cont}(\cC)$ is an isomorphism for $j\geq -m-2,$ and a monomorphism for $j = -m-3.$
 \end{theo}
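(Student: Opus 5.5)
The plan is to prove the equivalent statement that $\Cone(K^{\cont}(\cC_{[0,m]})\to K^{\cont}(\cC))$ has vanishing $\pi_j$ for $j\geq -m-2$. Write $\cC_m=\check{\St}(\cC_{[0,m]})$. By Corollary \ref{cor:t_structure_on_check_St_of_C_0_m}, each $\cC_m$ is a dualizable $t$-category with heart $\cC^{\heartsuit}$. By Proposition \ref{prop:colimit_of_St_of_C_0_n}, $\cC\simeq\indlim[n]\cC_n$. Since $K^{\cont}$ commutes with filtered colimits in $\Cat_{\st}^{\dual}$, it suffices to prove a one-step estimate: for every $n\geq 0$ the map $K^{\cont}(\cC_n)\to K^{\cont}(\cC_{n+1})$ has cone with $\pi_j=0$ for $j\geq -n-2$. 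These cones get more and more coconnective, so passing to the colimit starting at $n=m$ yields the theorem.

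For the one-step estimate I will put the transition functor $\cC_n\to\cC_{n+1}$ into short exact sequences of dualizable categories, with auxiliary categories built from $\cC_{[0,n+1]}$ and the truncations $\tau_{\leq n}$, $\tau_{\geq 1}$ (the categories $\wt{\cC}$ mentioned in the overview). The inputs are the following.
\begin{itemize}
\item By Proposition \ref{prop:properties_of_functor_St_of_C_0_m_to_C}, the functor is an isomorphism on $\Ext^{\leq n+1}$ and a monomorphism on $\Ext^{n+2}$ between objects of the heart.
\item The general $t$-structure constructions of Subsection \ref{ssec:construction_of_t_structures} provide the auxiliary $t$-structures.
\item Corollary \ref{cor:cartesian_square_C^-_C^+_C^b} handles issues of boundedness versus continuous boundedness.
\end{itemize}
In these sequences, the relevant quotient or kernel terms should again be dualizable $t$-categories, with an explicit shift relating them to $\cC_{[0,n]}$-type categories. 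This allows an induction on $n$. The base case $n=0$ concerns the realization $\check{D}(\cC^{\heartsuit})\to\cC_1$, where only $K_{\geq -2}$ is at stake.

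The long exact sequences then reduce the claim to two things.
\begin{itemize}
\item Injectivity in degree $-n-2$ and isomorphism above. I expect this to follow from the induction together with vanishing of $K_{-1}$ for small categories with bounded $t$-structures (\cite[Theorem 2.35]{AGH19}), applied after passing to compactly generated models via Proposition \ref{prop:comp_ass_t_structure_via_retracts}.
\item Surjectivity $K^{\cont}_{-n-2}(\cC_n)\to K^{\cont}_{-n-2}(\cC_{n+1})$.
\end{itemize}

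The surjectivity is where I expect the main difficulty. The plan is to write $K^{\cont}_{-k}(\cD)\cong K_{0}(\Calk_{\omega_1}^{(k)}(\cD)^{\omega})$ using iterated Calkin categories, where each step shifts degrees by one via $F^{\cont}(\cD)=\Omega F(\Calk_{\omega_1}(\cD)^{\omega})$. I then need to show that the iterated Calkin categories of a dualizable $t$-category carry natural bounded (or at least nicely compactly generated) $t$-structures, compatible with the functor $\cC_n\to\cC_{n+1}$. This would reduce the problem to surjectivity on $K_0$ of small $t$-categories. On $K_0$ everything is generated by classes of heart objects, and the hearts agree, so surjectivity follows. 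Constructing and controlling these $t$-structures on $\Calk_{\omega_1}$ (showing that $\hat{\cY}$-exactness propagates and that boundedness survives) is the step I would attack first, since I expect it to be the hardest.
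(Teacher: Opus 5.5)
The surjectivity step, which you rightly single out as the crux, does not work by the mechanism you propose. The $t$-structures of Proposition \ref{prop:t_structure_on_Calk_properties} on iterated Calkin categories are only $\omega_1$-accessible. The bound on the amplitude of $\hat{\cY}$ gets worse by one at each Calkin step, and truncation does not in general preserve the compact objects $\Calk^k_{\omega_1}(\cD)^{\omega}$. So you do not get small $t$-categories to compare on $K_0$.

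More decisively, an argument of the form ``the hearts agree, hence $K_0$ of the $k$-fold Calkin category is hit'' never uses the relation between the degree $-n-2$ and the range $[0,n]$. Applied to the realization functor $\check{D}(\cC^{\heartsuit})\to\cC$, it would give surjectivity on $K^{\cont}_{-k}$ for every $k$, contradicting Theorem \ref{th:sharpness_of_estimates}.

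The missing idea is the one behind Lemma \ref{lem:key_surjectivity_on_K_-m-2}. That lemma proves surjectivity of $K^{\cont}_{-m-2}(\cC_{[0,m]})\to K^{\cont}_{-m-2}(\cC)$ onto $\cC$ itself, not a one-step transition. One realizes $\cC\simeq\la\cE^b\ra$ inside the category $\cE$ of Proposition \ref{prop:left_right_completion_dualizable}. By an Eilenberg swindle (Proposition \ref{prop:Eilenberg_swindle_E^-_E^+}), $K^{\cont}(\la\cE^-\ra)=K^{\cont}(\la\cE^+\ra)=0$, so every class in $K^{\cont}_{-m-2}(\cC)$ comes via boundary maps from $K^{\cont}_{-m-1}(\cE)$.

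If $x\in\Calk^{m+1}_{\omega_1}(\cE)^{\omega}$ represents such a class, the boundary is represented by $y=\Phi(\tau_{\geq 0}x)$. Here $\Phi$ is the passage from $\Calk^{m+1}_{\omega_1}(\cE)^{\omega_1}$ to $\Calk^{m+2}_{\omega_1}(\cE)^{\omega_1}$. Since $\Phi(x)=0$, $\Phi$ is right $t$-exact and $\Phi[-m-2]$ is left $t$-exact, one gets $y\cong\Phi(\tau_{\leq -1}x)[-1]$ in degrees $[0,m]$. This is exactly where $-m-2$ is matched with $[0,m]$. A conservativity argument on hearts then shows that the lift of $y$ to $\Calk^{m+2}_{\omega_1}(\check{\St}(\cE_{[0,m]}))$ is compact.

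Your plan for injectivity and for the isomorphisms in higher degrees also fails. It relies on vanishing of $K_{-1}$ after passing to compactly generated models via Proposition \ref{prop:comp_ass_t_structure_via_retracts}. That retraction goes through $\colim:\Ind(\cT)\to\cC$, which is not strongly continuous, so $K^{\cont}$ does not see it. Indeed $K^{\cont}_{-1}$ of a dualizable $t$-category can be nonzero (Corollary \ref{cor:K_-1_not_vanishing}).

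The paper uses no vanishing theorem at all. The kernel $\cT_m$ of $\check{\St}(\wt{\cC}_{[0,m]})\to\check{\St}(\cC_{[0,m]})$ is itself a dualizable $t$-category with heart $\cC^{\heartsuit}$ and the same $\Ext^{\leq m+1}$ (Proposition \ref{prop:properties_of_tilde_C}). Hence $\check{\St}((\cT_m)_{[0,l]})\simeq\check{D}(\cC^{\heartsuit})$ for $l\leq m$. Applying the surjectivity lemma to $\cT_m$, together with the left inverse coming from $j_2^R$, makes $K^{\cont}_j(\cT_m)\to K^{\cont}_j(\cC^{\heartsuit})$ an isomorphism for $j\in[-m-2,-2]$. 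The long exact sequence of $K^{\cont}(\cT_m)\to K^{\cont}(\cC_{[0,m-1]})\oplus K^{\cont}(\cC^{\heartsuit})\to K^{\cont}(\cC_{[0,m]})$ then gives injectivity and surjectivity together. Degrees $j\geq -1$ need a separate upward induction on $j$ through the same sequence, feeding surjectivity for $\cT_m$ back in.

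Finally, an indexing point: for the cone of $K^{\cont}(\cC_n)\to K^{\cont}(\cC_{n+1})$ to vanish in degrees $\geq -n-2$, you need injectivity in degree $-n-3$, not $-n-2$.
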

 
 We first mention an immediate corollary.
 
 \begin{cor}\label{cor:theorem_of_the_heart_under_condition_on_Exts}
 Let $\cC$ be a dualizable $t$-category. Suppose that for some $n\geq 1$ we have isomorphisms $\Ext^i_{\cC^{\heartsuit}}(x,y)\to \Ext^i_{\cC}(x,y)$ for $i\leq n$ and $x,y\in\cC^{\omega_1,\heartsuit}$ (this assumption always holds for $n=1$). Then the map $K_j^{\cont}(\cC^{\heartsuit})\to K_j^{\cont}(\cC)$ is an isomorphism for $j\geq -n-1,$ and a monomorphism for $j = -n-2.$
 \end{cor}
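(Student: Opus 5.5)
The plan is to deduce the corollary from Theorem \ref{th:theorem_of_the_heart} applied with $m=n-1$, combined with Proposition \ref{prop:when_partial_realization_is_an_equivalence}. Here $K_j^{\cont}(\cC^{\heartsuit})$ means $K_j^{\cont}(\check{D}(\cC^{\heartsuit}))$, and $\check{D}(\cC^{\heartsuit})\simeq\check{\St}(\cC_{[0,0]})$ by Proposition \ref{prop:St_of_Grothendieck_abelian}. Likewise $K^{\cont}(\cC_{[0,m]})=K^{\cont}(\check{\St}(\cC_{[0,m]}))$. The map $K^{\cont}(\cC^{\heartsuit})\to K^{\cont}(\cC)$ therefore factors as
\begin{equation*}
K^{\cont}(\check{D}(\cC^{\heartsuit}))\xto{\Phi}K^{\cont}(\check{\St}(\cC_{[0,n-1]}))\to K^{\cont}(\cC),
\end{equation*}
where $\Phi$ is induced by the strongly continuous functor of Proposition \ref{prop:when_partial_realization_is_an_equivalence} with $m=n-1$.

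The key step is to show that $\Phi$ is an equivalence of categories. For this I would use condition \ref{isomorphisms_on_Ext_leq_m+1} of Proposition \ref{prop:when_partial_realization_is_an_equivalence} with $m=n-1$. That condition asks that $\Ext^i_{\cC^{\heartsuit}}(x,y)\to\Ext^i_{\cC}(x,y)$ be an isomorphism for $i\leq m+1=n$ and all $x,y\in\cC^{\omega_1,\heartsuit}$. This is exactly the hypothesis of the corollary.

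One should check that $\Ext_{\cC^{\heartsuit}}$ is the intended Ext group, namely $\Ext$ computed in $\check{D}(\cC^{\heartsuit})$ between objects of $\cC^{\omega_1,\heartsuit}$. For $\omega_1$-compact objects this agrees with Ext in the small exact category $\cC^{\omega_1,\heartsuit}$, by Proposition \ref{prop:St_of_E_to_check_St_of_E} and Proposition \ref{prop:various_E^lambda}. It also agrees with the Yoneda Ext in the Grothendieck abelian category, since $\check{D}(\cA)^+\simeq D^+(\cA)$. So the hypothesis applies verbatim, and $\Phi$ is an equivalence.

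Theorem \ref{th:theorem_of_the_heart} with $m=n-1\geq 0$ then says that $K_j^{\cont}(\cC_{[0,n-1]})\to K_j^{\cont}(\cC)$ is an isomorphism for $j\geq -n-1$ and a monomorphism for $j=-n-2$. Composing with the equivalence $\Phi$ gives the claim.

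For the parenthetical remark, take $n=1$. Then $m=0$ and $\Phi$ is the identity of $\check{D}(\cC^{\heartsuit})$, so the corollary is Theorem \ref{th:theorem_of_the_heart} itself. Alternatively, the Ext hypothesis for $i\leq 1$ holds automatically by Corollary \ref{cor:mono_for_Ext^2}.

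The only subtle point, and thus the main obstacle, is the identification of the Ext groups in the hypothesis with those appearing in Proposition \ref{prop:when_partial_realization_is_an_equivalence}. Once that is granted, the argument is a formal composition of earlier results.
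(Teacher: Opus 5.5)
Your proposal is correct and is essentially the paper's proof: Proposition \ref{prop:when_partial_realization_is_an_equivalence} with $m=n-1$ turns the Ext hypothesis into an equivalence $\check{D}(\cC^{\heartsuit})\xto{\sim}\check{\St}(\cC_{[0,n-1]})$, and Theorem \ref{th:theorem_of_the_heart} with $m=n-1$ then gives the isomorphisms for $j\geq -n-1$ and the monomorphism for $j=-n-2$. Your extra check of which Ext groups appear is harmless, since condition \ref{isomorphisms_on_Ext_leq_m+1} of that proposition is already stated in exactly the form of the corollary's hypothesis.
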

 
 \begin{proof}
 By Proposition \ref{prop:when_partial_realization_is_an_equivalence} our assumption implies that we have an equivalence $\check{D}(\cC^{\heartsuit})\xto{\sim}\check{\St}(\cC_{[0,n-1]}).$ Hence, the assertion follows from Theorem \ref{th:theorem_of_the_heart}.
 \end{proof}

%If in the above theorem $\cC$ is compactly generated then we recover Barwick's theorem of the heart, and we have $K_{-1}^{\cont}(\cC^{\heartsuit})=K_{-1}^{\cont}(\cC)=0$ by Antieau-Gepner-Heller. 

%In the general case we no longer have the vanishing of $K_{-1}^{\cont},$ as shown by the example of sheaves of vector spaces on the real line. The proof of isomorphisms in non-negative degrees is quite different from Barwick's since the original argument is not available. 

Our arguments are completely different from \cite{Bar15}. Even if we are only interested in $K_{\geq 0}^{\cont},$ still the proof of \cite[Theorem 6.1]{Bar15} does not generalize to dualizable (not compactly generated) categories. We also note that an example below (Corollary \ref{cor:K_-1_not_vanishing}) shows that the group $K_{-1}^{\cont}$ might be non-zero for a dualizable $t$-category. In other words, the statement of \cite[Theorem 2.35]{AGH19} does not hold for dualizable categories. However, a different version of a construction from loc. cit. will be very useful for us, see Propositions \ref{prop:left_right_completion_dualizable} and \ref{prop:Eilenberg_swindle_E^-_E^+} below.

One of the key statements for our proof is the following special case of Theorem \ref{th:theorem_of_the_heart}.

\begin{lemma}\label{lem:key_surjectivity_on_K_-m-2}
Let $\cC$ be a dualizable $t$-category. Then for any $m\geq 0$ the map $K_{-m-2}^{\cont}(\cC_{[0,m]})\to K_{-m-2}^{\cont}(\cC)$ is surjective.
\end{lemma}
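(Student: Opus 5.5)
The plan is to translate the negative $K$-group into a $K_0$-group of an iterated Calkin category, and then prove surjectivity on $K_0$ using bounded $t$-structures. Recall that $K^{\cont}(\cD)=\Omega K(\Calk_{\omega_1}(\cD)^{\omega})$ for any dualizable $\cD$. Iterating this identity, and using that $K^{\cont}$ of the intermediate categories $\Ind(\cdot^{\omega_1})$ vanishes by an Eilenberg swindle (countable sums), I expect
\begin{equation*}
K_{-m-2}^{\cont}(\cD)\cong K_0\bigl(\Calk^{(m+2)}(\cD)^{\omega}\bigr),
\end{equation*}
naturally in strongly continuous functors. Here $\Calk^{(k)}$ denotes the $k$-fold iterated Calkin construction, so that $\Calk^{(m+2)}(\cD)$ is compactly generated. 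I would apply this both to $\cD=\check{\St}(\cC_{[0,m]})$, with the $t$-structure of Corollary \ref{cor:t_structure_on_check_St_of_C_0_m}, and to $\cD=\cC$. The comparison functor $\Psi:\check{\St}(\cC_{[0,m]})\to\cC$ is strongly continuous and $t$-exact by Proposition \ref{prop:t_structure_on_dualizable_via_functor_from_abelian}(iii), and it induces the map of Calkin categories whose effect on $K_0$ we must control.

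The second step is to put natural $t$-structures on the iterated Calkin categories. For a dualizable $t$-category $\cD$, the category $\Ind(\cD^{\omega_1})$ carries a compactly generated $t$-structure (Proposition \ref{prop:comp_ass_t_structure_omega_1_accessible}), and $\colim$ is $t$-exact. Hence $\Calk_{\omega_1}(\cD)=\ker(\colim)$ should inherit a $t$-structure, because $\hat{\cY}$ is $t$-exact and the projection $\Ind(\cD^{\omega_1})\to\Calk$ is given by $\Cone(\hat{\cY}\circ\colim\to\Id)$, which is $t$-exact. I would check that this $t$-structure is compactly generated and restricts to a bounded $t$-structure on the compact objects, possibly after passing to $\cD^{\omega_1,b}$ as in Proposition \ref{prop:comp_ass_t_structure_via_retracts}. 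By induction this yields small $t$-categories $\Calk^{(k)}(\cD)^{\omega}$, and the functors induced by $\Psi$ are $t$-exact. Since the $t$-structures are bounded, $K_0$ of $\Calk^{(m+2)}(\cD)^{\omega}$ is generated by classes of heart objects. It therefore suffices to show that every heart object of $\Calk^{(m+2)}(\cC)^{\omega}$ is, up to a $K_0$-relation, in the image of the heart of $\Calk^{(m+2)}(\check{\St}(\cC_{[0,m]}))^{\omega}$.

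The hard step, which I expect to be the main obstacle, is to show that the heart of the $k$-fold iterated Calkin category only sees Ext groups of the original heart in degrees $\leq k$, roughly. Each Calkin step should shift Ext information down by one: extensions in the Calkin heart come from $\Ext^{1}$ in the previous level, applied to ind-systems with compact transition maps. I would try to prove by induction on $k$ the following claim. If a $t$-exact strongly continuous functor between dualizable $t$-categories induces an equivalence on hearts and isomorphisms on $\Ext^{\leq k-1}$ with a monomorphism on $\Ext^{k}$ between heart objects, then the induced functor on hearts of $\Calk^{(k)}(-)^{\omega}$ is essentially surjective, at least up to extensions.

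Proposition \ref{prop:properties_of_functor_St_of_C_0_m_to_C} supplies exactly this input for $\Psi$ with $k=m+2$: isomorphisms for $\Ext^{\leq m+1}$ and a monomorphism on $\Ext^{m+2}$. Inside the induction, essential surjectivity at level $k$ should reduce to lifting, at level $k-1$, an extension class represented by an ind-system. At the final level only injectivity on the top Ext group is needed, which is what Proposition \ref{prop:properties_of_functor_St_of_C_0_m_to_C} gives. If a direct essential-surjectivity statement fails, I would instead show that the map on hearts has image generating the target heart under extensions, which is enough for surjectivity on $K_0$.
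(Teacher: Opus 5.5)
\textbf{The proposal has a genuine gap: your second step fails, and everything after it depends on it.}

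The localization $\Ind(\cD^{\omega_1})\to\Calk_{\omega_1}(\cD)$, $w\mapsto\Cone(\hat{\cY}(\colim w)\to w)$, is not $t$-exact. A cone of a map between $t$-exact functors is only right $t$-exact. For $w$ in the heart, the kernel of the counit $\hat{\cY}(\colim w)\to w$ shows up as $\pi_1$ of the cone. For example, for $x\in\cD^{\omega_1,\heartsuit}$ the compact object $\Cone(\hat{\cY}(x)\to\cY(x))$ has $\pi_1=\ker(\hat{\cY}(x)\to\cY(x))$, which is nonzero in general. This is why Proposition \ref{prop:t_structure_on_Calk_properties} only says that the projection is right $t$-exact and becomes left $t$-exact after the shift $[-1]$. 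The amplitude of $\hat{\cY}$ grows by one with each Calkin iteration.

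Worse, a bounded $t$-structure on $\Calk_{\omega_1}(\cD)^{\omega}$ does not exist in general. By the Antieau--Gepner--Heller vanishing of $K_{-1}$, it would force $K^{\cont}_{-2}(\cD)\cong K_{-1}(\Calk_{\omega_1}(\cD)^{\omega})=0$. Take $\cD=\Ind(D^b(\cA))$ with $\cA$ from Theorem \ref{th:sharpness_of_estimates} \ref{sharp_for_devissage}, where $K_{-2}(\cA)\neq 0$; this gives a contradiction. Your framework is even internally inconsistent: bounded $t$-structures at level $m+1$ would make the target $K^{\cont}_{-m-2}(\cC)\cong K_{-1}(\Calk^{m+1}_{\omega_1}(\cC)^{\omega})$ vanish, so the lemma would be vacuous. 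Hence ``$K_0$ is generated by heart classes'' is unavailable, and the Ext-induction on hearts of iterated Calkin categories has nothing to act on.

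What is missing is a way to produce representatives of classes in $K^{\cont}_{-m-2}(\cC)$ that are bounded in the window $[0,m]$. The paper does this as follows.
\begin{enumerate}
\item It realizes $\cC$ as $\la\cE^b\ra$ inside a dualizable $t$-category $\cE\subset\Ind(\cD)$, where $\cD$ is the left-and-right completion of $\cC^{\omega_1,b}$ (Proposition \ref{prop:left_right_completion_dualizable}).
\item $\cE$ is built so that $K^{\cont}(\la\cE^-\ra)=K^{\cont}(\la\cE^+\ra)=0$ by an Eilenberg swindle (Proposition \ref{prop:Eilenberg_swindle_E^-_E^+}). Hence $K^{\cont}_{-m-2}(\cC)\cong K^{\cont}_{-m-1}(\cE)$ via a boundary map.
\item A class represented by $x\in\Calk^{m+1}_{\omega_1}(\cE)^{\omega}$ maps to the class of $y=\Phi(\tau_{\geq 0}x)\cong\Phi(\tau_{\leq -1}x)[-1]$, where $\Phi$ is the projection to $\Calk^{m+2}_{\omega_1}$.
\item Since $\Phi$ is right $t$-exact and $\Phi[-m-2]$ is left $t$-exact, $y$ lies in degrees $[0,m]$. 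So it lifts to an object $z$ of $\Calk^{m+2}_{\omega_1}(\check{\St}(\cE_{[0,m]}))^{\omega_1}$.
\item $z$ is compact because the comparison functor at Calkin level $m+3$ is $t$-exact and identifies the hearts, hence is conservative on bounded objects.
\end{enumerate}
The Ext comparison of Proposition \ref{prop:properties_of_functor_St_of_C_0_m_to_C} plays no role in this lemma. The integer $m$ enters only through the controlled amplitude growth of the iterated Calkin construction. Your heuristic that each Calkin step costs one degree is a shadow of this, but it must be implemented through unbounded $t$-structures and truncation, not through bounded hearts.
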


The proof of this lemma will require some preparations. The general idea is to find ``sufficiently bounded'' representatives of classes in the negative continuous $K$-theory of $\cC.$ The key ingredient is the following construction.
%, which is a dualizable version of the combination of left and right completion with respect to a $t$-structure. 
We consider the set $\Q_{\geq 0}$ as a totally ordered set with the usual order.

\begin{prop}\label{prop:left_right_completion_dualizable}
Let $\cC$ be a dualizable $t$-category. Consider the small stable category $\cC^{\omega_1,b}$ with the bounded $t$-structure induced from $\cC,$ and let $\cD$ be its simultaneous left and right completion, i.e. $\cD=\prolim[n]\cC^{\omega_1}_{[-n,n]}$. Equivalently, $\cD$ is the left completion of the category $\cC^{\omega_1,+},$ i.e. $\cD\simeq\prolim[n]\cC^{\omega_1}_{\leq n}.$ 

Consider the localizing subcategory $\cE\subset\Ind(\cD)$ generated by objects of the form $\inddlim[a\in\Q_{\geq 0}]F(a),$ where $F:\Q_{\geq 0}\to \cD$ is a functor such that for any rational $0\leq a<b$ and for any $n\in\N$ the morphism $\tau_{[-n,n]}F(a)\to \tau_{[-n,n]}F(b)$ is compact in $\cC.$ Here we identify $\cD_{[-n,n]}$ with $\cC^{\omega_1}_{[-n,n]}\subset\cC.$

\begin{enumerate}[label=(\roman*),ref=(\roman*)]
	\item The category $\cE$ is dualizable and the inclusion functor $\cE\to\Ind(\cD)$ is strongly continuous. Moreover, the above generating collection of objects of $\cE$ is exactly the collection of all $\omega_1$-compact objects of $\cE,$ up to isomorphism. \label{E_is_dualizable_etc}
	\item The (compactly generated) $t$-structure on $\Ind(\cD)$ induces a $t$-structure on $\cE.$ The latter $t$-structure is compactly assembled. The category $\cE^{\omega_1,\heartsuit}$ consists of objects isomorphic to $\inddlim[a\in\Q_{\geq 0}]F(a),$ where $F:\Q_{\geq 0}\to\cD^{\heartsuit}\simeq\cC^{\omega_1,\heartsuit}$ is a functor such that for any rational $0\leq a<b$ the map $F(a)\to F(b)$ is compact in $\cC$ (equivalently, compact in $\cC^{\heartsuit}$). \label{t_structure_on_E}
	\item We have a $t$-exact equivalence $\cC\xto{\sim}\la \cE^b\ra,$ so that the following square commutes:
	\begin{equation*}
	\begin{tikzcd}
	\cC \ar[r, "\sim"]\ar{d}{\hat{\cY}} & \la \cE^b\ra\ar[d]\\
	\Ind(\cC^{\omega_1,b})\ar[r] & \Ind(\cD).
	\end{tikzcd}
	\end{equation*}
	Here the lower horizontal functor is induced by the inclusion $\cC^{\omega_1,b}\simeq \cD^b\to \cD.$ \label{E^b}
\end{enumerate}
\end{prop}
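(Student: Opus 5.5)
The plan is to realize $\cE$ as the essential image of a fully faithful strongly continuous embedding of a dualizable category into $\Ind(\cD)$. From that, \ref{E_is_dualizable_etc} follows formally, and the $t$-structure statements are then checked on $\omega_1$-compact generators.

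\textbf{Step 1: a model for the generators.} Let $\cE'\subset\Ind(\cD)$ be the full subcategory of objects $\inddlim[a\in\Q_{\geq 0}]F(a)$ with $F$ as in the statement. Call such $F$ \emph{levelwise compact}: each $\tau_{[-n,n]}F(a)\to\tau_{[-n,n]}F(b)$ is compact in $\cC$. I first show that $\cE'$ is a stable subcategory of $\Ind(\cD)^{\omega_1}$, closed under cofibers and retracts. Two facts are needed here.
\begin{itemize}
\item Any map between two such ind-objects can be represented, after reindexing along a cofinal order-preserving map $\Q_{\geq 0}\to\Q_{\geq 0}$, by a natural transformation $F\to G$. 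This works because $\Q_{\geq 0}$ is countable and dense, so the usual argument for sequential ind-objects of $\omega_1$-compact objects applies.
\item Compact morphisms in $\cC_{[-n,n]}$ are closed under cones and compositions with arbitrary maps. Truncation $\tau_{[-n,n]}$ preserves compact morphisms by Proposition \ref{prop:comp_ass_t_structures_via_compact_morphisms}. Hence the cone of a natural transformation is again levelwise compact, after composing transition maps in pairs so that the truncation of the cone maps compactly. For this last point I use that a compact morphism factors as a composition of two compact ones, and the density of $\Q$ allows this refinement.
\end{itemize}
The same density argument shows that each transition map $F(a)\to F(b)$ becomes a compact morphism in $\Ind(\cD)$ after passing to $\inddlim$. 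Concretely, $\inddlim F$ is an $\omega_1$-compact object whose $\hat{\cY}$ in the category $\la\cE'\ra$ is computed by the $\Q$-system itself.

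\textbf{Step 2: conclusion of \ref{E_is_dualizable_etc}.} Every object of $\cE=\la\cE'\ra=\Ind(\cE')$-colimits is then a filtered colimit of compact-morphism systems in $\cE'$. By the criterion \cite[Proposition 1.24]{E24}, a presentable stable category generated by $\omega_1$-compact objects that are colimits along compact maps is dualizable. Strong continuity of the inclusion amounts to preserving compact maps, which holds by construction. Conversely, every $\omega_1$-compact object of $\cE$ is a retract of a countable colimit of objects of $\cE'$. Reindexing countable diagrams along $\Q_{\geq 0}$ and applying the closure of $\cE'$ under retracts gives the stated description of $\cE^{\omega_1}$.

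\textbf{Step 3: \ref{t_structure_on_E}.} Apply $\tau_{\geq 0}$ levelwise. The functor $\tau_{\geq 0}F$ is levelwise compact because $\tau_{[-n,n]}\tau_{\geq 0}=\tau_{[0,n]}\tau_{[-n,n]}$ preserves compact maps. Since $\tau_{\geq 0}$ on $\Ind(\cD)$ is computed termwise, it preserves $\cE'$ and hence $\cE$. The induced $t$-structure on $\cE$ is compactly assembled by Corollary \ref{cor:induced_t_structure_is_compactly_assembled}. The description of $\cE^{\omega_1,\heartsuit}$ follows by applying $\pi_0$ levelwise, together with the identification $\cD^{\heartsuit}\simeq\cC^{\omega_1,\heartsuit}$.

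\textbf{Step 4: \ref{E^b}.} Define $\cC\to\cE$ as the composite of $\hat{\cY}:\cC\to\Ind(\cC^{\omega_1,b})$ with $\Ind(\cD^b\to\cD)$. For $x\in\cC^{\omega_1,b}$, write $\hat{\cY}(x)$ as a $\Q$-indexed system of compact maps. Such a system is bounded, hence lies in $\cE'$, so this functor lands in $\la\cE^b\ra$. It is fully faithful because $\hat{\cY}$ is fully faithful and $\cD^b\to\cD$ is fully faithful. It is $t$-exact by Definition \ref{def:compactly_assembled_t_structure}.

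Essential surjectivity onto $\la\cE^b\ra$ is where I expect the main obstacle. One must show that an object of $\cE^{\omega_1,\heartsuit}$, a $\Q$-system of compact maps in $\cC^{\omega_1,\heartsuit}$, equals $\hat{\cY}$ of its colimit in $\cC$. Since $\hat{\cY}(\colim G)\simeq G$ for any system $G$ of compact maps, and the heart generates $\la\cE^b\ra$, this should suffice. The delicate point is checking that generation of $\la\cE^b\ra$ by $\cE^{\omega_1,\heartsuit}$ holds inside $\cE$, which I would deduce from the Step 3 description together with continuous boundedness of $\la\cE^b\ra$ (Proposition \ref{prop:C^-_C^+_C^b_t_structures}).
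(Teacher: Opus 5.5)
Your outline follows the paper's proof. The generators are $\Q_{\geq 0}$-systems, and $\cE^{\omega_1}$ is identified by showing this class is closed under shifts, cones (via the midpoint subdivision) and countable colimits. Part (ii) uses levelwise $\tau_{\geq 0}$ together with Corollary~\ref{cor:induced_t_structure_is_compactly_assembled}. Part (iii) matches $\cE^{\omega_1,\heartsuit}$ with $\hat{\cY}(\cC^{\omega_1,\heartsuit})$ using $\hat{\cY}(\colim G)\cong G$. Parts (ii) and (iii) are fine given (i).

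The gap is in (i), in the phrase ``reindexing countable diagrams along $\Q_{\geq 0}$''. This is where the real content of $\cE^{\omega_1}=\cE'$ lies, and you give no construction. For a countable family $(F_n)$ the obvious candidate $a\mapsto\bigoplus_n F_n(a)$ fails for two reasons:
\begin{itemize}
\item an infinite sum of compact maps is not compact;
\item $\inddlim[a]\bigoplus_n F_n(a)$ is not the coproduct in $\Ind(\cD)$.
\end{itemize}
The paper staggers the family: $G_n$ vanishes on $[0,n)$ and agrees with $F_n$ on $\Q_{\geq n}$. Then $G(a)=\bigoplus_n G_n(a)$ is a \emph{finite} sum for each $a$, $G$ is again levelwise compact, and $\inddlim[a]G(a)\cong\bigoplus_n\inddlim[a]F_n(a)$. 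A diagonal reindexing of the sequential colimit of finite sums produces the same thing, but that is exactly what has to be said. You also assert closure under retracts in Step~1 without argument. It follows once you have countable coproducts and cones, since an idempotent splits as a sequential colimit.

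A smaller inaccuracy concerns $\hat{\cY}_{\cE}(x)$, which cannot be ``the $\Q$-system itself''. It must be an ind-system of objects of $\cE$, and the constant ind-objects $F(a)$ need not lie in $\cE$. What makes the dualizability criterion apply is the decomposition $x\cong\indlim[n]x_n$ with $x_n=\inddlim[0\leq a<n]F(a)$. Each $x_n$ lies in $\cE'$ because $\Q\cap[0,n)\cong\Q_{\geq 0}$, and each map $x_n\to x_{n+1}$ factors through the compact object $F(n)$ of $\Ind(\cD)$, so it is compact. The relevant reference for the resulting dualizability and strong continuity is \cite[Lemma 1.83]{E24}. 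Proposition 1.24 of loc.\ cit.\ is the $\omega_1$-accessibility statement, not a dualizability criterion.
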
 

\begin{proof}
\ref{E_is_dualizable_etc} The argument is the same as for similar constructions with dualizable categories, but we spell it out for completeness. Denote by $\cT\subset\cE$ the full subcategory formed by objects isomorphic to an object in the generating collection described above. Take some $x=\inddlim[a\in \Q_{\geq 0}]F(a)\in\cT,$ where $F:\Q_{\geq 0}\to\cD$ is a functor as above. Then for any $n\in\N_{>0}$ the object $x_n:=\inddlim[0\leq a<n]F(a)$ is also in $\cT,$ and we have $x\cong\indlim[n]x_n.$ Moreover, the map $x_n\to x_{n+1}$ in $\Ind(\cD)$ factors through the constant ind-object $F(n),$ hence this map is compact in $\Ind(\cD).$ This implies that $\cE$ is dualizable and the inclusion functor $\cE\to \Ind(\cD)$ is strongly continuous, see also \cite[Lemma1.83 and its proof]{E24}.

To prove the equality $\cE^{\omega_1}=\cT$ (of strictly full subcategories of $\cE$), we only need to show that $\cT$ is closed under countable colimits and shifts. Clearly, for $x\in\cT$ we have $x[n]\in\cT$ for $n\in\Z.$ Next, let $(F_n:\Q_{\geq 0}\to\cD)_{n\in \N}$ be a collection of functors as above. For $n\geq 0$ define $G_n:\Q_{\geq 0}\to \cD$ to be the left Kan extension of $(F_n)_{\mid \Q_{\geq n}},$ i.e. 
\begin{equation*}
G_n(a)=\begin{cases}F_n(a) & \text{for }a\geq n;\\
0 & \text{for }0\leq a<n.\end{cases} 
\end{equation*}
Then for any $a\in\Q_{\geq 0}$ there are only finitely many $n\in\N$ such that $G_n(a)\ne 0,$ hence the direct sum $G=\biggplus[n]G_n:\Q_{\geq 0}\to\cD$ is well-defined. Moreover, for any rational $0\leq a<b$ and for $k\in\N$ the map $\tau_{[-k,k]}G(a)\to\tau_{[-k,k]}G(b)$ is compact in $\cC.$ We have
\begin{equation*}
\inddlim[a\in\Q_{\geq 0}]G(a)\cong\biggplus[n\in\N]\inddlim[a\in\Q_{\geq 0}]F_n(a),
\end{equation*}
which shows that $\cT\subset\cE$ is closed under countable coproducts. 

It remains to show that $\cT\subset\cE$ is closed under cones (cofibers). Let $\varphi:x\to x'$ be a morphism in $\cT,$ and $x\cong \inddlim[a\in\Q_{\geq 0}]F(a),$ $x'\cong \inddlim[a\in\Q_{\geq 0}]F'(a),$ where the functors $F,F':\Q_{\geq 0}\to \cD$ satisfy the above conditions. We may assume that $\varphi$ is induced by a morphism of functors $F\to F'.$ Indeed, identifying $x$ resp. $x'$ with $\inddlim[n\in\N]F(n)$ resp. $\inddlim[n\in\N]F'(n),$ we first see that there exists a strictly increasing function $f:\N\to\N$ such that $\varphi$ is induced by a morphism of functors $F_{\mid\N}\to (F'_{\mid \N})\circ f.$ Take any strictly increasing function $g:\Q_{\geq 0}\to \Q_{\geq 0}$ such that $g(n)=f(n+1)$ for $n\in\N.$ We obtain a morphism of functors $F\to F'\circ g$ (which factors through the right Kan extension of $(F'_{\mid \N})\circ f$), which induces $\varphi:x\to x'.$ Replacing $F'$ by $F'\circ g,$ we may assume that we have a morphism $u:F\to F'$ which induces $\varphi.$

Now we have $\Cone(\varphi:x\to x')\cong\inddlim[a\in\Q_{\geq 0}]\Cone(u_a:F(a)\to F'(a)).$ It suffices to check that for $0\leq a<b$ and for $k\in\N$ the map $\tau_{[-k,k]}\Cone(u_a)\to\tau_{[-k,k]}\Cone(u_b)$ is compact in $\cC.$ By assumptions on $F$ and $F'$ the maps
\begin{equation*}
\tau_{[-k-1,k+1]}F(a)\to \tau_{[-k-1,k+1]}F\left(\frac{a+b}{2}\right),\quad \tau_{[-k-1,k+1]}F'\left(\frac{a+b}{2}\right)\to \tau_{[-k-1,k+1]}F'(b)
\end{equation*}
%\begin{equation*}
%\tau_{[-k-1,k+1]}F'(a)\to \tau_{[-k-1,k+1]}F'\left(\frac{a+b}{2}\right),\quad \tau_{[-k-1,k+1]}F'\left(\frac{a+b}{2}\right)\to \tau_{[-k-1,k+1]}F'(b)
%\end{equation*}
are compact in $\cC.$ It follows that the map
\begin{equation}\label{eq:compact_map_intermediate}
\Cone(\tau_{[-k-1,k+1]}F(a)\to \tau_{[-k-1,k+1]}F'(a))\to \Cone(\tau_{[-k-1,k+1]}F(b)\to \tau_{[-k-1,k+1]}F'(b))
\end{equation}
is also compact in $\cC.$ Applying $\tau_{[-k,k]}$ to \eqref{eq:compact_map_intermediate} we deduce that the map $\tau_{[-k,k]}\Cone(u_a)\to \tau_{[-k,k]}\Cone(u_b)$ is compact in $\cC.$ This finishes the proof of \ref{E_is_dualizable_etc}.

Now \ref{t_structure_on_E} follows directly from \ref{E_is_dualizable_etc} and Corollary \ref{cor:induced_t_structure_is_compactly_assembled}. Indeed, it suffices to show that for $x\in\cE^{\omega_1}$ we have $\tau_{\geq 0}x\in\cE,$ where $\tau_{\geq 0}$ is the truncation endofunctor of $\Ind(\cD).$ By \ref{E_is_dualizable_etc} we have $x\cong\inddlim[a\in\Q_{\geq 0}]F(a),$ where $F:\Q_{\geq 0}\to \cD$ satisfies the above conditions. Then the functor $\tau_{\geq 0}F:\Q_{\geq 0}\to\cD$ also satisfies the above conditions, hence
\begin{equation*}
\tau_{\geq 0}x\cong \inddlim[a\in\Q_{\geq 0}]\tau_{\geq 0}F(a)\in\cE.
\end{equation*}
We also have
\begin{equation*}
\pi_0 x\cong \inddlim[a\in \Q_{\geq 0}]\pi_0 F(a),
\end{equation*}
which gives the stated description of $\cE^{\omega_1,\heartsuit}.$ This proves \ref{t_structure_on_E}.

Finally, we prove \ref{E^b}. Consider the composition $\Phi:\cC\xto{\hat{\cY}}\Ind(\cC^{\omega_1,b})\to \Ind(\cD).$ Clearly, $\Phi$ is strongly continuous and fully faithful. It suffices to show that the essential image of $\Phi_{\mid \cC^{\omega_1,\heartsuit}}$ coincides with $\cE^{\omega_1,\heartsuit}.$ Take some object $x\in\cC^{\omega_1,\heartsuit},$ and let $\hat{\cY}(x)\cong\inddlim[n\in\N] x_n,$ where $x_n\in\cC^{\omega_1,\heartsuit}$ and each map $x_n\to x_{n+1}$ is compact in $\cC^{\heartsuit}.$ Arguing as in \cite[Proof of Proposition 1.82]{E24} we can extend the functor $\N\to \cC^{\omega_1,\heartsuit},$ $n\mapsto x_n,$ to a functor $F:\Q_{\geq 0}\to \cC^{\omega_1,\heartsuit}$ such that for any rational $0\leq a<b$ the map $F(a)\to F(b)$ is compact in $\cC^{\heartsuit}.$ Then we have $\Phi(x)\cong \inddlim[a\in\Q_{\geq 0}]F(a),$ where we identify $\cC^{\omega_1,\heartsuit}$ with $\cD^{\heartsuit}.$ In particular, we have $\Phi(x)\in\cE^{\omega_1,\heartsuit}$ and by \ref{t_structure_on_E} any isomorphism class in $\cE^{\omega_1,\heartsuit}$ can be obtained in this way. This proves \ref{E^b}.
\end{proof}

The following is a slightly more sophisticated version of Eilenberg swindle.

\begin{prop}\label{prop:Eilenberg_swindle_E^-_E^+}
Let $\cC,\cD,\cE$ be as in Proposition \ref{prop:left_right_completion_dualizable}. Then any localizing invariant vanishes on $\la \cE^-\ra$ and $\la \cE^+\ra.$ In particular, we have $K^{\cont}(\la \cE^-\ra) = K^{\cont}(\la \cE^+\ra) = 0.$ We obtain an isomorphism
\begin{equation}\label{eq:suspension_for_K_theory_of_t_categories}
K^{\cont}(\cC)\simeq \Omega K^{\cont}(\cE).
\end{equation}
\end{prop}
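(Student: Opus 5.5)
The plan is an Eilenberg swindle on $\la\cE^-\ra$ and $\la\cE^+\ra$, made possible by the left and right completeness of $\cD$, followed by the cartesian square of Corollary \ref{cor:cartesian_square_C^-_C^+_C^b} applied to $\cE$.

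\textbf{Step 1: the endofunctor on $\cD^-$.} By Proposition \ref{prop:left_right_completion_dualizable} \ref{t_structure_on_E}, $\cE$ is dualizable with a compactly assembled $t$-structure induced from $\Ind(\cD)$. So Proposition \ref{prop:C_a_b_for_comp_ass_t_structure} makes $\la\cE^-\ra$, $\la\cE^+\ra$, $\la\cE^b\ra$ dualizable with strongly continuous inclusions, and all the $K^{\cont}$ and localizing invariants below are defined.

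The category $\cD=\prolim[n]\cC^{\omega_1}_{[-n,n]}$ is both left and right complete. Hence for $x\in\cD_{\geq -k}$ the coproduct $\biggplus[n\geq 0]x[2n]$ exists in $\cD$. It is the object whose truncation $\tau_{[-l,l]}$ is the finite sum $\tau_{[-l,l]}\bigoplus_{2n\leq l+k}x[2n]$, and it coincides with the product. This gives an exact functor
\begin{equation*}
T_0:\cD^-\to\cD^-,\qquad T_0(x)=\biggplus[n\geq 0]x[2n],
\end{equation*}
with a natural isomorphism $T_0\cong\Id\oplus T_0\circ[2]$.

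\textbf{Step 2: $\Ind(T_0)$ preserves $\la\cE^-\ra$ strongly continuously.} Apply $\Ind(T_0)$ to $\Ind(\cD^-)$. The ω1-compact objects of $\la\cE^-\ra$ should be, by Proposition \ref{prop:left_right_completion_dualizable} \ref{E_is_dualizable_etc}, the objects $\inddlim[a\in\Q_{\geq 0}]F(a)$ with $F$ valued in $\cD_{\geq -k}$ for a fixed $k$. This needs a short check, using that $\tau_{\geq -k}$ preserves the compactness condition.

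Take such an $F$. For fixed $l$, the truncation $\tau_{[-l,l]}T_0F(a)$ is a finite sum of terms $\tau_{[-l,l]}F(a)[2n]$. Each transition map $\tau_{[-l,l]}F(a)[2n]\to\tau_{[-l,l]}F(b)[2n]$ is a truncation of a compact map $\tau_{[-l',l']}F(a)\to\tau_{[-l',l']}F(b)$, hence compact. So $T_0\circ F$ satisfies the defining condition, and $\Ind(T_0)$ sends $\la\cE^-\ra^{\omega_1}$ into itself.

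The induced endofunctor $T$ of $\la\cE^-\ra$ therefore preserves ω1-compact objects and the compact maps between them (the maps $x_n\to x_{n+1}$ factoring through constant ind-objects). Hence $T$ is strongly continuous and exact, and $T\cong\Id\oplus T\circ[2]$. Additivity of localizing invariants, together with $[2]$ acting as the identity on them, gives $E(\Id)=E(\Id)+E(T)-E(T)$, i.e. $E^{\cont}(\la\cE^-\ra)=0$ for every localizing invariant $E$.

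\textbf{Step 3: the dual case.} For $\la\cE^+\ra$ the argument is symmetric, with $T_0(x)=\biggplus[n\geq 0]x[-2n]$ on $\cD^+$. This sum exists because $\cD$ is right complete.

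\textbf{Step 4: conclusion.} Apply Corollary \ref{cor:cartesian_square_C^-_C^+_C^b} to $\cE$. In the cartesian square, both $K^{\cont}(\la\cE^-\ra)$ and $K^{\cont}(\la\cE^+\ra)$ vanish. Hence $K^{\cont}(\la\cE^b\ra)\simeq\Omega K^{\cont}(\cE)$. Combined with $\cC\simeq\la\cE^b\ra$ from Proposition \ref{prop:left_right_completion_dualizable} \ref{E^b}, this yields \eqref{eq:suspension_for_K_theory_of_t_categories}.

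\textbf{Main obstacle.} I expect Step 2 to be the hardest part. One must identify $\la\cE^-\ra^{\omega_1}$ with uniformly bounded-below representatives, and verify that $\Ind(T_0)$ really restricts to a \emph{strongly} continuous endofunctor. My first approach is to reuse the Kan-extension and reindexing tricks from the proof of Proposition \ref{prop:left_right_completion_dualizable} \ref{E_is_dualizable_etc}.
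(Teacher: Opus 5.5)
Your proof is the paper's proof. You apply the swindle $x\mapsto\bigoplus_{n\geq 0}x[2n]$ on $\cD^-$ (resp.\ $x\mapsto\bigoplus_{n\geq 0}x[-2n]$ on $\cD^+$), ind-extend it, restrict it to $\la\cE^-\ra$ (resp.\ $\la\cE^+\ra$), and then use Corollary~\ref{cor:cartesian_square_C^-_C^+_C^b} for $\cE$ together with $\cC\simeq\la\cE^b\ra$.

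The one thing to fix is Step~2. Your guessed description of $\la\cE^-\ra^{\omega_1}$ by uniformly bounded-below representatives is false. For $0\neq x\in\cE^{\omega_1,\heartsuit}$, the countable coproduct $\bigoplus_{n\geq 0}x[-n]$ is $\omega_1$-compact in $\la\cE^-\ra$. However, it has $\pi_{-n}\neq 0$ for every $n$, so it lies in no $\Ind(\cD)_{\geq -k}$.

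The description is also unnecessary, so your ``main obstacle'' disappears. Since $\Ind(T_0)$ preserves colimits, it suffices, as in the paper, to check $\Ind(T_0)(x)\in\la\cE^-\ra$ for the generators $x\in\cE^{\omega_1}_{\geq 0}$. For these you may take $F$ valued in $\cD_{\geq 0}$ by replacing it with $\tau_{\geq 0}F$. Strong continuity of the restricted endofunctor is then formal, because $\Ind(T_0)$ and the inclusion $\la\cE^-\ra\subset\Ind(\cD^-)$ are both strongly continuous.
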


\begin{proof}
Assuming the vanishing of $K^{\cont}(\la \cE^-\ra)$ and $K^{\cont}(\la \cE^+\ra),$ the isomorphism \eqref{eq:suspension_for_K_theory_of_t_categories} follows from Corollary \ref{cor:cartesian_square_C^-_C^+_C^b}.

It suffices to show that we have the vanishings
\begin{equation}\label{eq:vanishings_for_E^-_and_E^+}
[\Id_{\la \cE^-\ra}]=0\in K_0(\Fun^{LL}(\la \cE^-\ra,\la \cE^-\ra)),\quad [\Id_{\la \cE^+\ra}]=0\in K_0(\Fun^{LL}(\la \cE^+\ra,\la \cE^+\ra)).
\end{equation}
Note that $\la\cE^-\ra$ is contained in $\Ind(\cD^-)$ as a subcategory of $\Ind(\cD),$ and the inclusion functor is strongly continuous. For an object $x\in\cD^-$ the coproduct $\biggplus[n\geq 0]x[2n]$ exists in $\cD^-.$ Hence, we have a strongly continuous endofunctor
\begin{equation*}
\Phi=\Ind(\biggplus[n\in\N]\Sigma^{2n}):\Ind(\cD^-)\to\Ind(\cD^-).\end{equation*}
We claim that $\Phi$ preserves the subcategory $\la \cE^-\ra.$ It suffices to prove that for $x\in\cE^{\omega_1}_{\geq 0}$ we have $\Phi(x)\in \la\cE^-\ra.$ By Proposition \ref{prop:left_right_completion_dualizable} we have $x\cong\inddlim[a\in\Q_{\geq 0}]F(a),$ where $F:\Q_{\geq 0}\to \cD_{\geq 0}$ is a functor such that for any rational $0\leq a<b$ and for any $k\in\N$ the morphism $\tau_{[0,k]}F(a)\to\tau_{[0,k]}F(b)$ is compact in $\cC.$ Define $G:\Q_{\geq 0}\to\cD$ by the formula \begin{equation*}
G(a) = \biggplus[n\in\N]F(a)[2n].
\end{equation*}
Then for $a\in\Q_{\geq 0}$ and $k\in\N$ we have 
\begin{equation*}
\tau_{[-k,k]}G(a)\cong\biggplus[0\leq n\leq \lfloor \frac{k}{2}\rfloor] \tau_{[0,k-2n]}(F(a))[2n].
\end{equation*} 
Hence, for rational $0\leq a<b$ the map $\tau_{[-k,k]}G(a)\to \tau_{[-k,k]}G(b)$
is compact. Therefore, $\Phi(x)\cong\inddlim[a\in\Q_{\geq 0}]G(a)\in\la\cE^-\ra,$ as required.
 
Denote by $\Psi:\la \cE^-\ra\to \la \cE^-\ra$ the restriction of $\Phi.$ Then $\Psi$ is strongly continuous and by construction we have $\Psi\cong\Psi[2]\oplus\Id_{\la \cE^-\ra}.$ This proves the first vanishing in \eqref{eq:vanishings_for_E^-_and_E^+}. The other vanishing is proved by a similar argument.\end{proof}

We will need to consider the natural $t$-structures on the iterated Calkin categories. Recall from Subsection \ref{ssec:reminder_on_compass} that for a dualizable category $\cC$ we denote by $\Calk_{\omega_1}(\cC)$ the compactly generated category $\Ind(\cC^{\omega_1})/\hat{\cY}(\cC).$ We will denote by $\Calk_{\omega_1}^n(\cC)$ the $n$-th iteration of this construction, $n\geq 1.$ It is convenient to deal with them inductively, using the following observation.

\begin{prop}\label{prop:t_structure_on_Calk_properties}
Let $\cC$ be a dualizable category with an $\omega_1$-accessible $t$-structure $(\cC_{\leq 0},\cC_{\geq 0}),$ compatible with filtered colimits. Suppose that for some $n\geq 0$ the functor $\hat{\cY}_{\cC}[-n]:\cC\to \Ind(\cC^{\omega_1})$ is left $t$-exact. Consider the functor 
\begin{equation*}
\Phi:\cC\simeq\Ind_{\omega_1}(\cC^{\omega_1})\hto\Ind(\cC^{\omega_1})\to\Calk_{\omega_1}(\cC).
\end{equation*}
\begin{enumerate}[label=(\roman*),ref=(\roman*)]
	\item The $t$-structure on $\Ind(\cC^{\omega_1})$ induces an $\omega_1$-accessible $t$-structure on $\Calk_{\omega_1}(\cC)$ via the inclusion
	\begin{equation*}
	\Calk_{\omega_1}(\cC)\simeq \ker(\colim:\Ind(\cC^{\omega_1})\to\cC)\subset\Ind(\cC^{\omega_1}).
	\end{equation*}
	Moreover, this $t$-structure is compatible with filtered colimits. \label{t_structure_on_Calk}
	\item The functor $\Phi:\cC\to \Calk_{\omega_1}(\cC)$ is right $t$-exact, and the functor $\Phi[-n-1]$ is left $t$-exact. \label{functor_to_Calk_amplitude}
	\item For $\cD=\Calk_{\omega_1}(\cC),$ the functor $\hat{\cY}_{\cD}[-n-1]:\cD\to\Ind(\cD^{\omega_1})$ is left $t$-exact. \label{amplitude_for_Calk_jumps} 
\end{enumerate}
\end{prop}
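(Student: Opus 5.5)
We work throughout inside $\Ind(\cC^{\omega_1})$ with its compactly generated $t$-structure, induced from the $t$-structure on $\cC^{\omega_1}$ (which exists by $\omega_1$-accessibility). We write $\cK=\ker(\colim)\subset\Ind(\cC^{\omega_1})$ and identify $\Calk_{\omega_1}(\cC)$ with $\cK$.

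\textbf{Step \ref{t_structure_on_Calk}.} To see that $\cK$ inherits a $t$-structure, it suffices to show that the truncation $\tau_{\geq 0}$ of $\Ind(\cC^{\omega_1})$ preserves $\cK$. The functor $\colim:\Ind(\cC^{\omega_1})\to\cC$ is $t$-exact, because the $t$-structure on $\cC$ is compatible with filtered colimits. Hence $\colim(\tau_{\geq 0}x)\cong\tau_{\geq 0}\colim(x)=0$ for $x\in\cK$.

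Both truncation functors commute with filtered colimits in $\Ind(\cC^{\omega_1})$, and $\cK$ is closed under colimits. Therefore the induced $t$-structure is compatible with filtered colimits. Its $\omega_1$-accessibility follows because $\cK$ is compactly generated (in particular $\omega_1$-presentable) and the truncations are continuous.

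\textbf{Step \ref{functor_to_Calk_amplitude}.} Here $\Phi(x)=\Cone(\hat{\cY}(x)\to\cY(x))$ viewed in $\cK$; this is the right orthogonal projection, since $\cK\simeq\Ind(\cC^{\omega_1})/\hat{\cY}(\cC)$.

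For right $t$-exactness, take $x\in\cC_{\geq 0}$. Then $\cY(x)\in\Ind(\cC^{\omega_1})_{\geq 0}$, since $x$ is an $\omega_1$-filtered colimit of objects of $\cC^{\omega_1}_{\geq 0}$. Moreover $\hat{\cY}(x)$ is connective, because $\hat{\cY}$ is right $t$-exact by Remark \ref{rem:comp_ass_t_structures}, whose argument needs only that its right adjoint $\colim$ is $t$-exact. So the cone is in $\Ind(\cC^{\omega_1})_{\geq 0}$.

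For left $t$-exactness of $\Phi[-n-1]$, take $x\in\cC_{\leq 0}$. Then $\cY(x)\in\Ind(\cC^{\omega_1})_{\leq 0}$, and by hypothesis $\hat{\cY}(x)\in\Ind(\cC^{\omega_1})_{\leq n}$. The cone therefore lies in $\Ind(\cC^{\omega_1})_{\leq n+1}$, which gives $\Phi[-n-1](x)\in\cK_{\leq 0}$. The one point to verify is that $\cY(x)$ is coconnective for $x\in\cC_{\leq 0}$. This holds by writing $x$ as an $\omega_1$-filtered colimit of $\tau_{\leq 0}x_i$ with $x_i\in\cC^{\omega_1}$, using compatibility with filtered colimits and the $t$-structure on $\cC^{\omega_1}$.

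\textbf{Step \ref{amplitude_for_Calk_jumps}.} This is the main obstacle. The category $\cD=\cK$ is compactly generated by $\cK^{\omega}=\Calk_{\omega_1}(\cC)^{\omega}$. However, $\hat{\cY}_{\cD}$ is not simply the Yoneda embedding restricted to compacts, since $\cD^{\omega_1}$ is larger than $\cD^{\omega}$ and the $t$-structure need not restrict to compacts.

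The plan is to compute $\hat{\cY}_{\cD}$ through $\Ind(\cC^{\omega_1})$. The inclusion $\cK\hookrightarrow\Ind(\cC^{\omega_1})$ has the continuous right adjoint $y\mapsto\Fiber(\cY\colim y\cdots)$; more precisely, its right adjoint is $y\mapsto\Fiber(y\to\hat{\cY}(\colim y))$ up to the correct variance. Its left adjoint is $y\mapsto\Cone(\hat{\cY}\colim y\to y)$, which is the projection.

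Since $\Ind(\cC^{\omega_1})$ is compactly generated, $\hat{\cY}$ there is just $\cY$ restricted on compacts, and it is $t$-exact. Applying $\Ind(\text{projection})$ to $\hat{\cY}_{\Ind(\cC^{\omega_1})}(k)$ for $k\in\cK$ then yields
\begin{equation*}
\hat{\cY}_{\cD}(k)\cong\Cone\bigl(\Ind(\hat{\cY}\circ\colim)(\hat{\cY}k)\to\hat{\cY}k\bigr),
\end{equation*}
up to identifications. The hypothesis on $\hat{\cY}_{\cC}[-n]$ bounds the first term in degrees $\leq n$ for coconnective input. The cone then shifts this bound up by one, giving amplitude $\leq n+1$ and hence left $t$-exactness of $\hat{\cY}_{\cD}[-n-1]$.

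The delicate part is justifying this formula for $\hat{\cY}_{\cD}$. I would first try to obtain it from the strong continuity of the projection $\Ind(\cC^{\omega_1})\to\cD$, which holds because its right adjoint (the inclusion) is continuous. This gives $\hat{\cY}_{\cD}\circ p\cong\Ind(p)\circ\hat{\cY}$. One then checks the $t$-amplitude of $\Ind(p)$ in the same way as Step \ref{functor_to_Calk_amplitude}, with $\cC$ replaced by $\Ind(\cC^{\omega_1})$ and the map $\colim$ composed with $\hat{\cY}_{\cC}$.
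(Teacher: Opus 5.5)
Your proposal is correct and is essentially the paper's proof: (i) and (ii) use the same $t$-exactness of $\colim$ and the same extension/shift bookkeeping for $\Cone(\hat{\cY}_{\cC}(x)\to\cY_{\cC}(x))$. In (iii), strong continuity of the projection $p$ gives exactly the paper's formula $\hat{\cY}_{\cD}(\inddlim[i]x_i)\cong\inddlim[i]\Cone(\hat{\cY}_{\cC}(x_i)\to\cY_{\cC}(x_i))$ for $\inddlim[i]x_i\in\cD_{\leq 0}$ with $x_i\in\cC^{\omega_1}_{\leq 0}$; the paper reads this off directly from the compactness of these cones in $\cD$.

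Two small repairs are needed. First, $\omega_1$-accessibility in (i) does not follow from compact generation plus continuity of the truncations alone. Argue as the paper does: $\Calk_{\omega_1}(\cC)^{\omega_1}$ is the kernel of the $t$-exact functor $\colim$ on $\Ind(\cC^{\omega_1})^{\omega_1}$, and there the truncations preserve $\omega_1$-compact objects. Second, your description of the right adjoint of $\ker(\colim)\hookrightarrow\Ind(\cC^{\omega_1})$ is wrong. The formula $y\mapsto\Cone(\hat{\cY}(\colim y)\to y)$ is its left adjoint $p$, and there is no natural map $y\to\hat{\cY}(\colim y)$; nothing you actually use depends on this.
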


\begin{proof} \ref{t_structure_on_Calk} is straightforward since the colimit functor $\Ind(\cC^{\omega_1})\to\cC$ is $t$-exact. Note that the $\omega_1$-accessibility of the induced $t$-structure on $\Calk_{\omega_1}(\cC)$ follows from the identification of $\Calk_{\omega_1}(\cC)^{\omega_1}$ with the kernel of the $t$-exact functor $\colim:\Ind(\cC^{\omega_1})^{\omega_1}\to\cC^{\omega_1}.$

To prove \ref{functor_to_Calk_amplitude}, it suffices to consider the restriction of $\Phi$ to $\cC^{\omega_1}.$ Considering $\Calk_{\omega_1}(\cC)$ as a full subcategory of $\Ind(\cC^{\omega_1}),$ we have
\begin{equation*}
\Phi(x)[-n-1]\cong \Fiber(\hat{\cY}_{\cC}(x)[-n]\to\cY_{\cC}(x)[-n])\in \Calk_{\omega_1}(\cC)_{\leq 0},\quad x\in\cC^{\omega_1}_{\leq 0},
\end{equation*}
since the functor $\hat{\cY}_{\cC}[-n]$ is left $t$-exact by assumption. This proves that the functor $\Phi[-n-1]$ is left $t$-exact, as stated. It is also clear that $\Phi$ is right $t$-exact.

We prove \ref{amplitude_for_Calk_jumps}. Consider again $\cD = \Calk_{\omega_1}(\cC)$ as a full subcategory of $\Ind(\cC^{\omega_1}),$ and recall that for $x\in\cC^{\omega_1}$ the object $\Cone(\hat{\cY}_{\cC}(x)\to\cY_{\cC}(x))\in\cD$ is compact. Now take any object $\inddlim[i]x_i\in\cD_{\leq 0},$ where $x_i\in\cC^{\omega_1}_{\leq 0}$ and $\indlim[i]x_i = 0\in\cC.$ Consider the category $\Ind(\cD^{\omega_1})$ as a full subcategory of $\Ind(\Ind(\cC^{\omega_1})^{\omega_1}).$ Then we have
\begin{multline*}
\hat{\cY}_{\cD}(\inddlim[i]x_i)[-n-1]=\hat{\cY}_{\cD}(\indlim[i]\Cone(\hat{\cY}_{\cC}(x_i)\to\cY_{\cC}(x_i)))[-n-1]\\
\cong\Fiber(\inddlim[i]\hat{\cY}_{\cC}(x_i)[-n]\to \inddlim[i]\cY_{\cC}(x_i)[-n])\in \Ind(\Ind(\cC^{\omega_1})^{\omega_1})_{\leq 0},
\end{multline*}
because again by assumption we have $\hat{\cY}_{\cC}(x_i)[-n]\in\Ind(\cC^{\omega_1})^{\omega_1}_{\leq 0}.$
\end{proof}

In the following proof we use the notation $\Ind^{\omega_1}(\cA)$ for $\Ind(\cA)^{\omega_1},$ where $\cA$ is a small category. We write $(\Ind^{\omega_1})^n(\cA)$ for the $n$-th iteration of this construction, and for convenience we put $(\Ind^{\omega_1})^0(\cA)=\cA.$ 

\begin{proof}[Proof of Lemma \ref{lem:key_surjectivity_on_K_-m-2}]
Let $\cD$ and $\cE$ be as in Proposition \ref{prop:left_right_completion_dualizable}. By loc. cit. we can identify $\cC$ with $\la \cE^b\ra,$ and $\cC_{[0,m]}$ with $\cE_{[0,m]}.$  By Proposition \ref{prop:Eilenberg_swindle_E^-_E^+} we have the following isomorphisms:
\begin{equation}\label{eq:from_K_0_of_E_to_K_0_of_quotient}
\varphi:K_{-m-1}^{\cont}(\cE)\xto{\sim}K_{-m-1}^{\cont}(\cE/\la\cE^+\ra)\xto{\sim}K_{-m-1}^{\cont}(\la\cE^-\ra/\la\cE^b\ra),
\end{equation}
\begin{equation*}
\psi:K_{-m-1}^{\cont}(\la\cE^-\ra/\la\cE^b\ra)\xto{\sim} K_{-m-2}^{\cont}(\la\cE^b\ra).
\end{equation*}
It suffices to prove that the image of (the isomorphism) $\psi\circ\varphi$ is contained in the image of the map $K_{-m-2}^{\cont}(\cE_{[0,m]})\to K_{-m-2}^{\cont}(\la \cE^b\ra).$

 We identify $K_{-m-1}^{\cont}(\cE)$ with $K_0(\Calk_{\omega_1}^{m+1}(\cE)^{\omega}).$ Below for any $n\geq 1$ we identify the category $\Calk_{\omega_1}^n(\cE)^{\omega_1}$ with its essential image in $(\Ind^{\omega_1})^n(\cE^{\omega_1}),$ and similarly for $\check{\St}(\cE_{[0,m]})$ and $\la \cE^b\ra.$ More precisely, $\Calk_{\omega_1}^n(\cE)^{\omega_1}\subset (\Ind^{\omega_1})^n(\cE^{\omega_1})$ is the intersection of kernels of the functors
\begin{equation*}
(\Ind^{\omega_1})^k(\colim):(\Ind^{\omega_1})^n(\cE^{\omega_1})\to (\Ind^{\omega_1})^{n-1}(\cE^{\omega_1}),\quad 0\leq k\leq n-1.
\end{equation*}

Take some class $\alpha\in K_{-m-1}^{\cont}(\cE),$ and choose an object $x\in \Calk_{\omega_1}^{m+1}(\cE)^{\omega}$ such that $[x]=\alpha.$ Consider the natural $t$-structure on $\Calk_{\omega_1}^{m+1}(\cE)^{\omega_1},$ which is obtained by an iterated application of Proposition \ref{prop:t_structure_on_Calk_properties}. Then the object $\tau_{\geq 0}x\in\Calk_{\omega_1}^{m+1}(\la\cE^-\ra)^{\omega_1}$ is a lift of the image of $x$ in $\Calk_{\omega_1}^{m+1}(\cE/\la \cE^+\ra)^{\omega}\simeq\Calk_{\omega_1}^{m+1}(\la \cE^-\ra/\la \cE^b\ra)^{\omega}.$ The latter object represents the element $\varphi(\alpha)\in K_{-m-1}^{\cont}(\la \cE^-\ra/\la \cE^b\ra).$

We can now describe a representative for $\psi(\varphi(\alpha))\in K_{-m-2}(\la \cE^b\ra).$ Namely, consider the composition 
\begin{equation*}
\Phi:\Calk_{\omega_1}^{m+1}(\cE)^{\omega_1}\to (\Calk_{\omega_1}^{m+1}(\cE)^{\omega_1}/\Calk_{\omega_1}^{m+1}(\cE)^{\omega})^{\Kar}\simeq\Calk_{\omega_1}^{m+2}(\cE)^{\omega}\hto \Calk_{\omega_1}^{m+2}(\cE)^{\omega_1}.
\end{equation*}
Then the object $y=\Phi(\tau_{\geq 0}x)$ is contained in the full subcategory $\Calk_{\omega_1}^{m+2}(\la \cE^b \ra)^{\omega}\subset \Calk_{\omega_1}^{m+2}(\cE).$ We then have $[y]=\psi(\varphi(\alpha)).$

Now, by (an iterated application of) Proposition \ref{prop:t_structure_on_Calk_properties} the functor $\Phi$ is right $t$-exact and the functor $\Phi[-m-2]$ is left $t$-exact. In particular, we have $y\in \Calk_{\omega_1}^{m+2}(\la \cE^b \ra)^{\omega_1}_{\geq 0}.$ On the other hand, since $\Phi(x) = 0,$ we obtain
\begin{equation*}
y = \Phi(\tau_{\geq 0}x)\cong \Phi(\tau_{\leq -1}x)[-1]\in \Calk_{\omega_1}^{m+2}(\la \cE^b \ra)^{\omega_1}_{\leq m}.
\end{equation*}
Therefore, we have
\begin{equation*}
y\in \Calk_{\omega_1}^{m+2}(\la \cE^b \ra)^{\omega_1}_{[0,m]}.
\end{equation*}
It follows formally that the image of $y$ in $(\Ind^{\omega_1})^{m+2}(\la\cE^b\ra^{\omega_1})$ is contained in $(\Ind^{\omega_1})^{m+2}(\cE_{[0,m]}^{\omega_1}).$ Denote by $z\in(\Ind^{\omega_1})^{m+2}(\check{\St}(\cE_{[0,m]})^{\omega_1})$ the image of $y$ under the functor
\begin{equation*}
(\Ind^{\omega_1})^{m+2}(\cE_{[0,m]}^{\omega_1})\to (\Ind^{\omega_1})^{m+2}(\check{\St}(\cE_{[0,m]})^{\omega_1}).
\end{equation*}
By construction, we have $z\in\Calk_{\omega_1}^{m+2}(\check{\St}(\cE_{[0,m]}))^{\omega_1}.$ 

We claim that $z$ is compact. Equivalently, this means that $z$ is annihilated by the functor
\begin{equation*}
\Psi:\Calk_{\omega_1}^{m+2}(\check{\St}(\cE_{[0,m]}))^{\omega_1}\to \Calk_{\omega_1}^{m+3}(\check{\St}(\cE_{[0,m]}))^{\omega_1}
\end{equation*}
To see this, consider the similar functor
\begin{equation*}
\Theta:\Calk_{\omega_1}^{m+2}(\la \cE^b\ra)^{\omega_1}\to \Calk_{\omega_1}^{m+3}(\la \cE^b\ra)^{\omega_1}.
\end{equation*}
By Proposition \ref{prop:t_structure_on_Calk_properties}, both $\Psi$ and $\Theta$ preserve bounded objects. We obtain a commutative square
\begin{equation*}
\begin{tikzcd}
\Calk_{\omega_1}^{m+2}(\check{\St}(\cE_{[0,m]}))^{\omega_1,b} \ar[d, "\Psi"]\ar[r, "F"] & \Calk_{\omega_1}^{m+2}(\la\cE^b\ra)^{\omega_1,b}\ar[d, "\Theta"]\\
\Calk_{\omega_1}^{m+3}(\check{\St}(\cE_{[0,m]}))^{\omega_1,b}\ar[r, "G"] & \Calk_{\omega_1}^{m+3}(\la\cE^b\ra)^{\omega_1,b}.
\end{tikzcd}
\end{equation*}
Note that $G$ is $t$-exact and it induces an equivalence between the hearts. Indeed, the hearts of both the source and the target are identified with the full subcategory of $(\Ind^{\omega_1})^{m+3}(\cE^{\omega_1,\heartsuit}),$ which is the intersection of the kernels of the functors
\begin{equation*}
(\Ind^{\omega_1})^k(\colim):(\Ind^{\omega_1})^{m+3}(\cE^{\omega_1,\heartsuit})\to (\Ind^{\omega_1})^{m+2}(\cE^{\omega_1,\heartsuit}),\quad 0\leq k\leq m+2.
\end{equation*}
It follows that $G$ is conservative. Now we have $F(z)\cong y$ and $\Theta(y)=0,$ hence $G(\Psi(z))\cong\Theta(F(z))=0.$ Therefore, $\Psi(z)=0,$ i.e. $z\in\Calk_{\omega_1}^{m+2}(\check{\St}(\cE_{[0,m]}))^{\omega},$ as stated.

We conclude that the class $[y]=\psi(\varphi(\alpha))\in K_{-m-2}^{\cont}(\la \cE^b\ra)$ is the image of $[z]\in K_{-m-2}^{\cont}(\cE_{[0,m]}).$ Since $\alpha$ was arbitrary, this proves the lemma.
\end{proof}

To deduce Theorem \ref{th:theorem_of_the_heart} from Lemma \ref{lem:key_surjectivity_on_K_-m-2} we need to analyze the coherently assembled exact categories $\cC_{[0,m]}.$ 
%For $m>0$ we define the following exact category $\wt{\cC}_{[0,m]}:$ its underlying category is $\cC_{[0,m]},$ and a morphism $f:x\to y$ is an inclusion in $\wt{\cC}_{[0,m]}$ if both morphisms $\pi_{m-1}(f):\pi_{m-1}(x)\to \pi_{m-1}(y)$ and $\pi_m(f):\pi_m(x)\to \pi_m(y)$ are monomorphisms in $\cC^{\heartsuit}.$ 

\begin{prop}\label{prop:properties_of_tilde_C}
Let $\cC$ be a dualizable $t$-category and let $m\geq 1$ be an integer. Consider the exact category $\wt{\cC}_{[0,m]}$ whose underlying category is $\cC_{[0,m]}$ and the exact structure is the following. A morphism $f:x\to y$ is an inclusion in $\wt{\cC}_{[0,m]}$ if and only if both morphisms $\pi_{m-1}(f):\pi_{m-1}(x)\to \pi_{m-1}(y)$ and $\pi_m(f):\pi_m(x)\to \pi_m(y)$ are monomorphisms in $\cC^{\heartsuit}.$ Further, $f:x\to y$ is a projection in $\wt{\cC}_{[0,m]}$ if and only if both morphisms $\pi_0(f):\pi_0(x)\to\pi_0(y)$ and $\pi_m(f):\pi_m(x)\to \pi_m(y)$ are epimorphisms in $\cC^{\heartsuit}.$
\begin{enumerate}[label=(\roman*),ref=(\roman*)]
\item $\wt{\cC}_{[0,m]}$ is a coherently assembled exact category. We denote by $i:\wt{\cC}_{[0,m]}\to \check{\St}(\wt{\cC}_{[0,m]})$ the universal continuous exact functor. \label{tilde_C_is_well_defined}
\item We have strongly continuous fully faithful exact functors $j_1:\cC_{[0,m-1]}\to\wt{\cC}_{[0,m]},$ $j_2:\cC^{\heartsuit}\to\wt{\cC}_{[0,m]},$ given by $j_1(x)=x,$ $j_2(y)=y[m].$ They induce fully faithful functors on presentable stable envelopes, which we denote by the same symbols. They give a semi-orthogonal decomposition in $\Cat_{\st}^{\dual}:$
\begin{equation}\label{eq:SOD_for_tilde_C}
\check{\St}(\wt{\cC}_{[0,m]})=\la j_1(\check{\St}(\cC_{[0,m-1]})),j_2(\check{D}(\cC^{\heartsuit}))\ra.	
\end{equation} \label{SOD_for_tilde_C}
\item We have a short exact sequence in $\Cat_{\st}^{\dual}:$
\begin{equation*}
0\to \cT_m\to \check{\St}(\wt{\cC}_{[0,m]})\xto{q_m}\check{\St}(\cC_{[0,m]})\to 0.
\end{equation*}
Here $q_m$ is induced by the identity functor $\wt{\cC}_{[0,m]}\to\cC_{[0,m]},$ and $\cT_m$ is the kernel of $q_m.$ The functor
\begin{equation*}
\Phi_m:\cC^{\heartsuit}\to \cT_m,\quad \Phi_m(x)=\Cone(i(x[m-1])[1]\to i(x[m])),
\end{equation*}
is strongly continuous and fully faithful, and its essential image is the heart of a compactly assembled continuously bounded $t$-structure. Moreover, the maps $\Ext^s_{\cC^{\heartsuit}}(x,y)\xto{\sim}\Ext^s_{\cT_m}(\Phi_m(x),\Phi_m(y))$ are isomorphisms for $s\leq m+1,$ where $x,y\in\cC^{\heartsuit}.$ \label{short_exact_sequence_for_tilde_C}
\end{enumerate} 
\end{prop}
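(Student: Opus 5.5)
Everything will be reduced to the locally coherent case, i.e.\ to small $t$-categories. As in Proposition \ref{prop:colimit_of_St_of_C_0_n}, Proposition \ref{prop:comp_ass_t_structure_via_retracts} gives a retraction $\cC\xto{F}\Ind(\cT)\xto{G}\cC$ by exact, $t$-exact, continuous functors, with $\cT$ a small $t$-category. These functors preserve $\cC_{[0,m]}$ and the homotopy objects $\pi_k$. Since epimorphisms and monomorphisms in hearts are preserved by exact functors of abelian categories, $F$ and $G$ are exact for the tilde exact structures and induce a retraction $\wt{\cC}_{[0,m]}\to\wt{\Ind(\cT)}_{[0,m]}\to\wt{\cC}_{[0,m]}$. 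This gives \ref{tilde_C_is_well_defined} by Proposition \ref{prop:coherently_assembled_as_retract}, once the statements are known in the locally coherent case. In that case $\wt{\cC}_{[0,m]}=\Ind(\wt{\cT}_{[0,m]})$, and I must check two things. First, that the tilde structure is an exact structure on the small category $\wt{\cT}_{[0,m]}$. Second, that its filtered-colimit extension agrees with the definition, which uses exactness of filtered colimits in $\cC^{\heartsuit}$.

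The axioms are best seen by viewing $\wt{\cT}_{[0,m]}$ as an extension-closed subcategory of an auxiliary stable category. Concretely, an exact square in $\wt{\cT}_{[0,m]}$ should be a cofiber square in $\cT$ whose long exact sequence on $\pi_*$ has the boundary maps $\pi_m\to\pi_{m-1}$ and $\pi_0\to\pi_{-1}=0$ equal to zero. Equivalently, $x\to y\to z$ is exact iff it is a cofiber sequence in $\cT$ with $\pi_m y\to\pi_m z$ surjective. This is the exact structure induced on $\cT_{[0,m]}$ from the category of pairs $(x, \pi_m x)$, i.e.\ extension-closed inside the pullback $\cT\times_{\cT_{[m,m]}}\ldots$. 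I will check the pushout and pullback axioms directly, using the five lemma on $\pi_{m-1},\pi_m$.

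For \ref{SOD_for_tilde_C}, $j_1$ and $j_2$ are exact because a cofiber sequence in $\cC_{[0,m-1]}$ has $\pi_m=0$, and a shifted heart sequence is exact in degree $m$. For fully faithfulness and semi-orthogonality I will use Proposition \ref{prop:left_special_coherently_assembled} and Theorem \ref{th:left_special_implies_fully_faithful}. Every $x\in\wt{\cC}_{[0,m]}$ sits in a tilde-exact sequence $\pi_m(x)[m]\to x\to\tau_{\leq m-1}x$; this is exact since $\pi_m x[m]\to x$ is an iso on $\pi_m$. Hence $j_1,j_2$ generate. It remains to show that $\Hom(j_1(\cdot),j_2(\cdot))=0$ in $\check{\St}(\wt{\cC}_{[0,m]})$. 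Here $\Ext^0$ vanishes for degree reasons and $\Ext^1$ classifies tilde extensions $y[m]\to e\to x$, which split: in such an extension $\pi_m e\to 0$ must be onto, forcing $e\cong$ ... I will instead compute higher $\Ext$ via the effaceability criterion of \cite[Lemma 4.3]{SW26}, taking projections from objects with $\pi_m=0$.

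For \ref{short_exact_sequence_for_tilde_C}, $q_m$ is a localization because both envelopes are generated by the same objects and the exact structure on $\cC_{[0,m]}$ is larger. The kernel is generated by the cones $E(f)$ of sequences that are exact in $\cC_{[0,m]}$ but not tilde-exact. A minimal such sequence is $x[m-1]\to 0\to x[m]$, which yields $\Phi_m(x)$. I will use the semi-orthogonal decomposition to compute $\Hom_{\cT_m}(\Phi_m x,\Phi_m y)$ in terms of $\Ext$'s in $\check{\St}(\cC_{[0,m-1]})$ and $\check{D}(\cC^\heartsuit)$. Proposition \ref{prop:properties_of_functor_St_of_C_0_m_to_C} identifies these through degree $m$, giving the claimed isomorphism for $s\leq m+1$. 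The $t$-structure then follows from Proposition \ref{prop:t_structure_on_dualizable_via_functor_from_abelian}, once I show that $\Phi_m(\cC^\heartsuit)$ generates $\cT_m$ and that $\Phi_m$ is strongly continuous; the latter holds because $i$ is strongly continuous. I expect the main obstacle to be the generation statement together with this Ext computation, in particular the precise gluing data of the semi-orthogonal decomposition.
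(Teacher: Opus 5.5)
\textbf{Gap in (ii): you are proving the wrong orthogonality.} Your own generating sequence $\pi_m(x)[m]\to x\to\tau_{\leq m-1}x$ has its sub-object in $j_2$ and its quotient in $j_1$. So the decomposition \eqref{eq:SOD_for_tilde_C} needs $\Hom(j_2(\cdot),j_1(\cdot))=0$. The vanishing you set out to prove, $\Hom(j_1(\cdot),j_2(\cdot))=0$, is false:
\begin{itemize}
\item For $x,y\in\cC^{\heartsuit}$, full faithfulness of $i$ gives $\pi_0\Hom(i(x),i(y[m]))\cong\Ext^m_{\cC}(x,y)$. So $\Ext^0$ does not vanish for degree reasons; maps from $\cC_{\leq m-1}$ to $\cC_{\geq m}$ are unconstrained.
\item The morphism $i(x[m-1])[1]\to i(x[m])$ whose cone defines $\Phi_m(x)$ is itself a nonzero element of $\pi_0\Hom(j_1(x[m-1])[1],j_2(x))\cong\End(x)$.
\item Tilde-extensions $y[m]\to e\to x$ with $x\in\cC_{[0,m-1]}$ are all cofiber sequences in $\cC$, because the $\pi_m$-condition is vacuous when $\pi_m(x)=0$. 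They are classified by $\pi_0\Hom_{\cC}(x,y[m+1])$ (e.g.\ $\Ext^2_{\cC}$ for $x=y'[m-1]$), so they need not split. This is why your sentence breaks off.
\end{itemize}
Your two arguments do work in the correct direction. We have $\Hom_{\cC}(y[m],x)=0$. A tilde-extension $x\to e\to y[m]$ splits, since its class lies in $\Hom(y,\pi_{m-1}(x))$ and equals the boundary map, which tilde-exactness forces to vanish. But the higher $\Ext$'s still need an argument. Effacing by projections from objects with $\pi_m=0$ cannot work, because a tilde projection onto $y[m]$ must be surjective on $\pi_m$.

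\textbf{Left speciality does not apply to $j_1$.} Take $m=1$ and $y\in\cC_{[0,1]}$ with $k$-invariant $\delta\in\Ext^2_{\cC}(\pi_0y,\pi_1y)$. The tilde projection $y\to\tau_{\leq 0}y$ can be precomposed to a tilde projection from an object of $\cC^{\heartsuit}$ only if $\delta$ is killed by an epimorphism in the heart. That makes $\delta$ a Yoneda product, which fails whenever $\Ext^2_{\cC^{\heartsuit}}\to\Ext^2_{\cC}$ is not surjective (cf.\ Proposition \ref{prop:when_partial_realization_is_an_equivalence}). Left speciality does hold for $j_2$, via $\tau_{\geq m}w\to w$.

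\textbf{What the paper does instead.} It uses the actual reason for the tilde structure: $j_1^L=\tau_{\leq m-1}$ and $j_2^R=\pi_m$ are exact and continuous on $\wt{\cC}_{[0,m]}$. They satisfy $j_1^Lj_1\cong\Id$, $j_2^Rj_2\cong\Id$, $j_2^Rj_1=0$, and there is a functorial exact sequence $j_2j_2^R(x)\to x\to j_1j_1^L(x)$. These adjunctions pass to $\check{\St}$ and give full faithfulness, the correct orthogonality and generation at once.

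\textbf{Steps still missing in (iii).}
\begin{itemize}
\item Generation of $\cT_m$ by $\Phi_m(\cC^{\heartsuit})$. First use $\overline{E}(f)\cong\overline{E}(\tau_{\leq m-1}f)$. Then factor $f$ through $\Cone(z[m-1]\to x)$ with $z=\ker\pi_{m-1}(f)$, which gives $\overline{E}(f)\cong\Phi_m(z)$.
\item The gluing term. In the cartesian square of mapping spectra, $\pi_{-s}\Hom(i(x[m-1])[1],i(y[m]))$ must be identified with $\Ext^s_{\cC}(x,y)$ through $\Ext^1_{\wt{\cC}_{[0,m]}}(x[m-s+1],y[m])$.
\item The case $s=m+1$. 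Proposition \ref{prop:properties_of_functor_St_of_C_0_m_to_C} for $\cC_{[0,m-1]}$ gives isomorphisms only for $s\leq m$ and a monomorphism at $s=m+1$. The isomorphism at $s=m+1$ comes from the fact that $\Hom_{\cT_m}(\Phi_m x,\Phi_m y)\to\Hom_{\check{D}(\cC^{\heartsuit})}(x,y)$, induced by $j_2^R$, has a right inverse induced by $\Phi_m$.
\end{itemize}
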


\begin{proof}
The proof of \ref{tilde_C_is_well_defined} is straightforward: it is clear that the exact structure is well-defined, it induces an exact structure on $\wt{\cC}^{\omega_1}_{[0,m]},$ and the functors $\colim:\Ind(\wt{\cC}^{\omega_1}_{[0,m]})\to \wt{\cC}_{[0,m]}$ and $\hat{\cY}:\wt{\cC}_{[0,m]}\to \Ind(\wt{\cC}^{\omega_1}_{[0,m]})$ are exact.

We prove \ref{SOD_for_tilde_C}. Both functors $j_1:\cC_{[0,m-1]}\to \wt{\cC}_{[0,m]}$ and $j_2:\cC^{\heartsuit}\to \wt{\cC}_{[0,m]}$ are exact. The left adjoint to $j_1$ is given by $j_1^L(x)=\tau_{\leq m-1} x,$ hence it is exact. The right adjoint to $j_2$ is given by $j_2^R(x)=\pi_m(x),$ hence it is also exact. We have the isomorphisms $j_1^L j_1\xto{\sim}\Id_{\cC_{[0,m-1]}},$ $\Id_{\cC^{\heartsuit}}\xto{\sim} j_2^R j_2,$ and the vanishing $j_2^Rj_1=0.$ Moreover, we have a functorial short exact sequence
\begin{equation*}
j_2 j_2^R(x)\to x\to j_1 j_1^L(x),\quad x\in\wt{\cC}_{[0,m]}.
\end{equation*} 
Passing to presentable stable envelopes we deduce the fully faithfulness of $j_1$ and $j_2,$ and obtain the semi-orthogonal decomposition \eqref{eq:SOD_for_tilde_C}. This proves \ref{SOD_for_tilde_C}.

Finally we prove \ref{short_exact_sequence_for_tilde_C}. It is clear that $q_m$ is a strongly continuous quotient functor. We first show that $\cT_m$ (i.e. the kernel of $q_m$) is generated by the essential image of $\Phi_m$ as a localizing subcategory. Denote by $\cC_{[0,m]}^{\omega_1,\add}$ the additive category $\cC_{[0,m]}^{\omega_1}$ with a split exact structure. Clearly, $\cT_m$ is generated by the images of objects of the form $E(f)\in\St(\cC_{[0,m]}^{\omega_1,\add}),$ where  $f:x\to y$ is an exact inclusion in $\cC^{\omega_1}_{[0,m]},$ which means that $\pi_m(f)$ is a monomorphism in $\cC^{\heartsuit}$ (the notation $E(f)$ is introduced in \eqref{eq:object_E_of_f}). Denote by $\bbar{E}(f)$ the image of $E(f)$ in $\check{\St}(\wt{\cC}_{[0,m]}).$ It suffices to show that we have an isomorphism $\bbar{E}(f)\cong \Phi_m(z),$ where $z=\ker(\pi_{m-1}(f))\in\cC^{\omega_1,\heartsuit}.$ First we observe that $\tau_{\leq m-1}(f)$ is also an exact inclusion in $\cC^{\omega_1}_{[0,m]},$ and $\bbar{E}(f)\cong \bbar{E}(\tau_{\leq m-1}(f)).$ Hence, we may and will assume that $\pi_m(x)=\pi_m(y)=0.$ Put $x'=\Cone(z[m-1]\to x),$ then $f$ factors uniquely through $x',$ and the induced morphism $f':x'\to y$ is an exact inclusion in $\wt{\cC}_{[0,m]}.$ Hence $\bbar{E}(f)\cong\bbar{E}(z[m-1]\to 0)\cong \Phi_m(z),$ as stated.

It is clear that $\Phi_m$ is a strongly continuous exact functor (by definition of the exact category $\wt{\cC}_{[0,m]}$). Hence, by Propositions \ref{prop:St_of_Grothendieck_abelian} and \ref{prop:St_of_coherently_assembled} $\Phi_m$ induces a strongly continuous exact functor $\check{D}(\cC^{\heartsuit})\to \cT_m.$ By Proposition \ref{prop:t_structure_on_dualizable_via_functor_from_abelian} it remains to prove that the induced maps $\Ext^s_{\cC^{\heartsuit}}(x,y)\to \Ext^s_{\cT_m}(\Phi_m(x),\Phi_m(y))$ are isomorphisms for $s\leq m+1$ and $x,y\in\cC^{\heartsuit}$ (where $s$ can be strictly negative). We take such $x,y$ and denote by $i':C_{[0,m-1]}\to \check{\St}(\cC_{[0,m-1]})$ the universal functor. By \ref{SOD_for_tilde_C} we have a cartesian square of spectra:
\begin{equation}\label{eq:pullback_square_for_Hom_in_T_m}
\begin{tikzcd}
\Hom_{\cT_m}(\Phi_m(x),\Phi_m(y)) \ar[r]\ar[d] & \Hom_{\check{\St}(\cC_{[0,m-1]})}(i'(x[m-1]),i'(y[m-1]))\ar[d]\\
\Hom_{\check{D}(\cC^{\heartsuit})}(x,y)\ar[r] & \Hom_{\check{\St}(\wt{\cC}_{[0,m]})}(i(x[m-1])[1],i(y[m])).
\end{tikzcd}
\end{equation}
We first deal with the relevant homotopy groups of the bottom right spectrum. The fully faithfulness of the functor $i$ gives the isomorphisms
\begin{equation*}
\pi_{-s}\Hom_{\check{\St}(\wt{\cC}_{[0,m]})}(i(x[m-1])[1],i(y[m]))\cong \Ext^s_{\cC}(x,y)\quad\text{for }s\leq 1.
\end{equation*}
Next, for $2\leq s\leq m+1$ we compute:
\begin{multline*}
\pi_{-s}\Hom_{\check{\St}(\wt{\cC}_{[0,m]})}(i(x[m-1])[1],i(y[m]))\\
\cong \pi_{-s}\Hom_{\check{\St}(\wt{\cC}_{[0,m]})}(i(x[m-s+1])[s-1],i(y[m]))\cong \Ext^1_{\wt{\cC}_{[0,m]}}(x[m-s+1],y[m])\\
\cong \Ext^s_{\cC}(x,y).
\end{multline*}
%Similarly, we have isomorphisms
%\begin{equation*}
%\pi_{-s}\Hom_{\check{\St}(\cC_{[0,m-1]})}(i'(x[m-1]),i'(y[m-1]))\cong
%\begin{cases} \Ext^s_{\cC}(x,y) & \text{for }s\leq m;\\
%\Ext^2_{\cC_{[0,m-1]}}(x,y[m-1]).
%\end{cases} 
%\end{equation*}
Therefore, by Proposition \ref{prop:properties_of_functor_St_of_C_0_m_to_C} the right vertical arrow in \eqref{eq:pullback_square_for_Hom_in_T_m} induces isomorphisms on $\pi_{-s}$ for $s\leq m.$ and a monomorphism for $s=m+1.$ Hence, the same holds for the left vertical arrow in \eqref{eq:pullback_square_for_Hom_in_T_m}. But it has a right inverse, induced by the functor $\Phi_m.$ We conclude that the map $\Ext^s_{\cC^{\heartsuit}}(x,y)\to\Ext^s_{\cT_m}(\Phi_m(x),\Phi_m(y))$ is an isomorphism for $s\leq m+1,$ as required. This proves \ref{short_exact_sequence_for_tilde_C}. 
\end{proof}

\begin{proof}[Proof of Theorem \ref{th:theorem_of_the_heart}] We start with the key cofiber sequence of spectra, which will be used repeatedly. Let $m\geq 1$ and consider the exact category $\wt{\cC}_{[0,m]}$ and the dualizable category $\cT_m$ from Proposition \ref{prop:properties_of_tilde_C}. By loc. cit. we have a cofiber sequence
\begin{equation}\label{eq:exact_triangle_K_theory_C_0m}
K^{\cont}(\cT_m)\xto{(\alpha,\beta)} K^{\cont}(\cC_{[0,m-1]})\oplus K^{\cont}(\cC^{\heartsuit})\to K^{\cont}(\cC_{[0,m]}).
\end{equation}
	
 Next, we prove that for $m\geq 1$ the map $K_{j}^{\cont}(\cC_{[0,m-1]})\to K_{j}^{\cont}(\cC_{[0,m]})$ is an isomorphism for each $j\in [-m-1,-2],$ and a monomorphism for $j=-m-2.$ By Proposition \ref{prop:properties_of_tilde_C} the category $\cT_m$ from \eqref{eq:exact_triangle_K_theory_C_0m} has a compactly assembled continuously bounded $t$-structure, and we have an equivalence $\cC^{\heartsuit}\simeq \cT_m^{\heartsuit}.$ By the assertion on $\Ext^{\leq m+1}$ from loc. cit. and by Proposition \ref{prop:when_partial_realization_is_an_equivalence} we have an equivalence $\check{D}(\cC^{\heartsuit})\xto{\sim}\check{\St}((\cT_m)_{[0,l]})$ for $0\leq l\leq m.$ Hence, by Lemma \ref{lem:key_surjectivity_on_K_-m-2} we have a surjection $K_{j}^{\cont}(\cC^{\heartsuit})\onto K_{j}^{\cont}(\cT_m)$ for $j\in [-m-2,-2].$ On the other hand, the composition $\cT_m\to\check{\St}(\wt{\cC}_{[0,m]})\xto{j_2^R}\check{D}(\cC^{\heartsuit})$ is left inverse to the realization functor. Therefore the map $\beta$ from \eqref{eq:exact_triangle_K_theory_C_0m} induces an isomorphism $K_{j}^{\cont}(\cT_m)\xto{\sim} K_{j}^{\cont}(\cC^{\heartsuit})$ for $j\in [-m-2,-2]$ The long exact sequence of homotopy groups for \eqref{eq:exact_triangle_K_theory_C_0m} shows that the map $K_{j}^{\cont}(\cC_{[0,m-1]})\to K_{j}^{\cont}(\cC_{[0,m]})$ is an isomorphism for $j\in [-m-1,-2],$ and a monomorphism for $j=-m-2,$ as stated. 
 
 Passing to the colimit over $m$ and using Proposition \ref{prop:colimit_of_St_of_C_0_n} we deduce that for $m\geq 0$ the map $K_{j}^{\cont}(\cC_{[0,m]})\to K_{j}^{\cont}(\cC)$ is an isomorphism for $j\in [-m-2,2],$ and a monomorphism for $j=-m-3.$ 
 
 It remains to prove that we also have isomorphisms for $j\geq -1.$ For this we may and will assume that $m=0,$ i.e. we will prove that the map $K_j^{\cont}(\cC^{\heartsuit})\to K_j^{\cont}(\cC)$ is an isomorphism for $j\geq -1.$ We will use the already established case $j=-2$ as a base of induction on $j.$
 
 Suppose that the statement is proven for $j\in [-2,n],$ where $n\in\Z_{\geq -2}.$ We need to prove that the map $K_{n+1}^{\cont}(\cC^{\heartsuit})\to K_{n+1}^{\cont}(\cC)$ is an isomorphism. Take some $m\geq 1.$ It follows from the induction hypothesis that in \eqref{eq:exact_triangle_K_theory_C_0m} the map $\pi_n\beta:K_n^{\cont}(\cT_m)\to K_n^{\cont}(\cC^{\heartsuit})$ is an isomorphism (it has a right inverse, which is an isomorphism). It follows from the long exact sequence of homotopy groups that the images of the maps $K_{n+1}^{\cont}(\cC_{[0,m-1]})\to K_{n+1}^{\cont}(\cC_{[0,m]})$ and $K_{n+1}^{\cont}(\cC^{\heartsuit})\to K_{n+1}^{\cont}(\cC_{[0,m]})$ generate the target. But the image of the latter map is contained in the image of the former map, hence the former map is surjective. Passing to the colimit over $m$ and using Proposition \ref{prop:colimit_of_St_of_C_0_n} again, we conclude that the map $K_{n+1}^{\cont}(\cC^{\heartsuit})\to K_{n+1}^{\cont}(\cC)$ is surjective. Now let again $m\geq 1$ be a positive integer. Applying the  above surjectivity to $\cT_m,$ we see that the realization functor induces a surjection $K_{n+1}^{\cont}(\cC^{\heartsuit})\onto K_{n+1}^{\cont}(\cT_m).$ Therefore, its left inverse $\pi_{n+1}\beta$ is an isomorphism. Applying again the long exact sequence for \eqref{eq:exact_triangle_K_theory_C_0m} we deduce that the map $K_{n+1}^{\cont}(\cC_{[0,m-1]})\to K_{n+1}^{\cont}(\cC_{[0,m]})$ is an isomorphism. Passing to the colimit over $m$ we conclude that the map $K_{n+1}^{\cont}(\cC^{\heartsuit})\to K_{n+1}^{\cont}(\cC)$ is an isomorphism. This proves the induction step and the theorem.
\end{proof}

\section{Coconnectivity estimates}
\label{sec:coconnectivity_estimates}

\subsection{Estimates for $K$-groups}

Using Theorem \ref{th:theorem_of_the_heart} we prove the following key result, which is a dual analogue of Waldhausen's connectivity estimates.

\begin{theo}\label{th:coconnectivity_estimates}
Let $\cC$ and $\cD$ be dualizable $t$-categories, and let $n\geq 1$ be an integer. Suppose that $F:\cC\to\cD$ is a functor satisfying the following conditions:
\begin{itemize}
	\item $F$ is strongly continuous, exact and $t$-exact;
	\item $F(\cC)$ generates $\cD$ as a localizing subcategory;
	\item for $x,y\in\cC^{\omega_1,\heartsuit}$ the map $\Ext^i_{\cC}(x,y)\to \Ext^i_{\cD}(F(x),F(y))$ is an isomorphism for $i\leq n-1,$ and a monomorphism for $i=n.$ 
\end{itemize}
Then the induced map $K_j^{\cont}(\cC)\to K_j^{\cont}(\cD)$ is an isomorphism for $j\geq -n,$ and a monomorphism for $j=-n-1.$
\end{theo}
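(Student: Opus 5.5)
The strategy is to reduce Theorem \ref{th:coconnectivity_estimates} to Theorem \ref{th:theorem_of_the_heart}, applied to $\cC$, to $\cD$, and to auxiliary categories built between them. The overview in the introduction suggests the route. Since $F$ is strongly continuous and $F(\cC)$ generates $\cD$, we have $\cD\simeq\Mod_{A}(\cC)$ with $A=F^R\circ F$ a continuous exact monad. I would factor $F$ through a canonical sequence
\begin{equation*}
\cC=\cC_0\xto{F_0}\cC_1\xto{F_1}\cC_2\to\cdots,\qquad \indlim[k]\cC_k\simeq\cD,
\end{equation*}
in $\Cat_{\st}^{\dual}$. Here $\cC_{k+1}$ is the category of modules over a ``deformed tensor algebra'' on $\cC_k$, built from the cofiber $\Cone(\Id\to F_k^R F_k)$. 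Concretely, $\cC_{k+1}$ should be the category of modules over the free monad on the fiber of the unit, twisted so that the first $k$ stages of the bar filtration of $A$ are already accounted for.

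The first step is to equip each $\cC_k$ with a dualizable $t$-structure making the $F_k$ $t$-exact. For this I would use Corollary \ref{cor:t_structures_on_modules}, which requires checking that $\Cone(\Id\to F_k^RF_k)[1]$ is left $t$-exact. This should follow from $t$-exactness of $F$ together with the hypothesis on $\Ext^{\le n-1}$ and Proposition \ref{prop:left_t_exactness_suffices_for_composition}. The colimit is then a dualizable $t$-category by Proposition \ref{prop:filtered_colimit_of_dualizable_t_categories}, and it is identified with $\cD$ via $t$-exactness (Proposition \ref{prop:t_structure_on_dualizable_via_functor_from_abelian}\ref{sufficient_for_t_exactness}). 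Since $K^{\cont}$ commutes with filtered colimits in $\Cat_{\st}^{\dual}$, it suffices to bound $\Cone(K^{\cont}(\cC_k)\to K^{\cont}(\cC_{k+1}))$ uniformly. More precisely, I want this cofiber to have vanishing $\pi_j$ for $j\ge -n$, together with injectivity on $\pi_{-n-1}$ of the composite.

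The second step analyzes a single stage. For a tensor-type algebra $T(M)$ with $M$ ``of Ext-degree $\ge n$'' (meaning $M[n-1]$ or so is left $t$-exact on hearts), I would compare $\Mod_{T(M)}(\cC_k)$ with a category where the hearts agree. Then $\check{D}$ of the common heart maps to both, and by Proposition \ref{prop:when_partial_realization_is_an_equivalence} the partial realizations $\check{\St}((-)_{[0,n-2]})$ agree on both sides. Theorem \ref{th:theorem_of_the_heart} with $m=n-2$ then identifies $K_j^{\cont}$ of both sides with $K_j^{\cont}$ of $\check{\St}(\cC^\heartsuit_{[0,n-2]})$ for $j\ge -n$, with a monomorphism at $j=-n-1$. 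The case $n=1$ needs separate handling, since $m=n-2<0$. There I would use $K^{\cont}(\cC^\heartsuit)$ itself and the fact that the hearts of $\cC$ and $\cD$ are related by d\'evissage-type generation.

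Indeed, the cleanest formulation might avoid the tower. Both $\cC$ and $\cD$ receive functors from $\check{\St}(\cC_{[0,n-2]})$ and from $\check{\St}(\cD_{[0,n-2]})$. The Ext hypothesis together with Proposition \ref{prop:left_special_coherently_assembled} should make $\check{\St}(\cC_{[0,n-2]})\to\check{\St}(\cD_{[0,n-2]})$ fully faithful, and the problem then becomes the heart change $\cC^\heartsuit\to\cD^\heartsuit$. The heart $\cD^\heartsuit$ is generated by $F(\cC^\heartsuit)$ under extensions and filtered colimits, and the deformed tensor algebra is what controls this non-full heart change.

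The main obstacle is exactly this: $F$ need not be fully faithful on hearts when $n=1$, and for larger $n$ the monomorphism on $\Ext^n$ must be converted into a monomorphism on $K_{-n-1}$. I expect that an honest construction of the deformed tensor algebras is needed, together with a filtration argument showing that each stage changes the heart only by objects with vanishing low Ext. The monomorphism statement should then come from the monomorphism part of Theorem \ref{th:theorem_of_the_heart} applied to the auxiliary kernel categories, analogous to $\cT_m$ in Proposition \ref{prop:properties_of_tilde_C}.
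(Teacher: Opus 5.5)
\textbf{There is a genuine gap in the single-stage argument.} Your tower reduction matches the paper's, but the stage-by-stage step does not prove the theorem.

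For $n\geq 2$, what you describe amounts to Corollary \ref{cor:equiv_on_C_0_n} ($\cC_{[0,n-2]}\simeq\cD_{[0,n-2]}$) plus Theorem \ref{th:theorem_of_the_heart} with $m=n-2$. This does give $K_j^{\cont}(\cC)\cong K_j^{\cont}(\cD)$ for $j\geq -n$, even without the tower. At $j=-n-1$, however, it only gives the two injections $K_{-n-1}^{\cont}(\cC_{[0,n-2]})\hto K_{-n-1}^{\cont}(\cC)$ and $K_{-n-1}^{\cont}(\cC_{[0,n-2]})\hto K_{-n-1}^{\cont}(\cD)$. These make $K_{-n-1}^{\cont}(\cC)\to K_{-n-1}^{\cont}(\cD)$ injective only if the first map is onto, and it need not be (cf.\ Theorem \ref{th:sharpness_of_estimates}). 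In other words, you obtain $\Cone(K^{\cont}(F))\in\Sp_{\leq -n}$, one degree short of the required $\Sp_{\leq -n-1}$.

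For $n=1$ there is no $m=-1$ heart theorem. Appealing to d\'evissage there is circular, because Theorem \ref{th:devissage_for_K_and_KH}\ref{devissage_for_K} is deduced from exactly this statement with $n=1$. Two smaller corrections: for $n=1$ the functor \emph{is} fully faithful on hearts, since the hypothesis includes $\Ext^0$; what can fail is essential surjectivity. For $n\geq 2$ the hearts agree, so no ``heart change'' remains to analyze.

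\textbf{The missing idea is the one-degree gain coming from Proposition \ref{prop:ses_for_deformed}.} Take a single stage where $F^R\circ F\cong T^{\deff}(G)$ with $G[n]$ left $t$-exact, and put $H=\Fiber(G\to\Sigma_{\cC})$.
\begin{itemize}
\item The short exact sequence $0\to\cE\xto{\Psi}\cC\oright_H\cC\to\cD\to 0$ in $\Cat_{\st}^{\dual}$ identifies $\Cone(K^{\cont}(\cC)\to K^{\cont}(\cD))$ with $\Cone(K^{\cont}(\cE)\to K^{\cont}(\cC))$, the latter map induced by $i_2^R\circ\Psi$.
\item The functor $i_2^R\circ\Psi$ has a right inverse $\Theta:\cC\to\cE$ with $\Theta^R\circ\Theta\cong\Id_{\cC}\oplus G[-1]$. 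Hence $\Theta$ is $(-n-1)$-coconnective, one degree better than $F$ (Proposition \ref{prop:t_structures_for_deformed_tensor_algebra}).
\item Corollary \ref{cor:equiv_on_C_0_n} then gives $\cC_{[0,n-1]}\simeq\cE_{[0,n-1]}$. Theorem \ref{th:theorem_of_the_heart} with $m=n-1\geq 0$, which is available also for $n=1$, makes $\Theta$ an isomorphism on $K^{\cont}_j$ for $j\geq -n-1$, and hence so is its left inverse $i_2^R\circ\Psi$.
\item Therefore the cone lies in $\Sp_{\leq -n-1}$. This gives the isomorphisms and the monomorphism at $j=-n-1$ simultaneously, uniformly for all $n\geq 1$.
\end{itemize}
Every stage of the tower has this form with $G_k[n]$ left $t$-exact, and $(-n-1)$-coconnectivity of cones survives composition and filtered colimits, which finishes the general case.

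Your closing remark about kernel categories ``analogous to $\cT_m$'' points in this direction. However, the proposal does not close without the kernel $\cE$, the functor $\Theta$ and its improved coconnectivity.
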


It is convenient to introduce the following notion.

\begin{defi}\label{def:-n_coconnective_functor} Let $\cC,\cD$ and $n\geq 1$ be as in Theorem \ref{th:coconnectivity_estimates}. We say that a functor $F:\cC\to\cD$ is $(-n)$-coconnective if it satisfies the conditions of Theorem \ref{th:coconnectivity_estimates}.\end{defi}

%We will deduce Theorem \ref{th:coconnectivity_estimates} from Theorem \ref{th:theorem_of_the_heart} using certain constructions with monads on dualizable $t$-categories. For this we need to recall some preliminary notions.

%First we observe that under a suitable condition on a monad the category of modules has a natural $t$-structure.

\begin{remark}It is easy to see that for a $(-n)$-coconnective functor $F:\cC\to\cD$ the condition on maps between $\Ext$ groups automatically holds for all $x,y\in\cC^{\heartsuit}.$ On the other hand, if $\cC$ is compactly generated, then it suffices to check this condition for $x,y\in\cC^{\omega,\heartsuit}.$ These assertions follow for example from Proposition \ref{prop:-n_coconnectivity_via_monads} below.\end{remark}

We first interpret the $(-n)$-coconnectivity condition via monads.

\begin{prop}\label{prop:-n_coconnectivity_via_monads}
	Let $\cC$ be a dualizable $t$-category. Let $A\in\Alg_{\bE_1}(\Fun^L(\cC,\cC))$ be a continuous exact monad. Suppose that the functor $\Cone(\Id_{\cC}\to A)[1]$ is left $t$-exact, so by Proposition \ref{cor:t_structures_on_modules} there is a unique compactly assembled continuously bounded $t$-structure on $\Mod_A(\cC)$ such that the natural functor $\cC\to\Mod_A(\cC)$ is $t$-exact. For $n\geq 1$ the following are equivalent.
	
	\begin{enumerate}[label=(\roman*),ref=(\roman*)]
		\item The functor $\cC\to\Mod_A(\cC)$ is $(-n)$-coconnective. \label{-n_coconnectivity}
		\item The functor $\Cone(\Id_{\cC}\to A)[n]:\cC\to\cC$ is left $t$-exact. \label{shift_by_n_left_t_exact} 
	\end{enumerate}
\end{prop}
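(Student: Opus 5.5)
The plan is to reduce both conditions to a statement about the cofiber $B=\Cone(\Id_{\cC}\to A)$ evaluated on the heart. Write $\Phi:\cC\to\Mod_A(\cC)$ for the free functor and $\Phi^R$ for the forgetful functor, so that $\Phi^R\Phi\cong A$. For $x,y\in\cC^{\heartsuit}$, adjunction identifies $\Hom_{\Mod_A(\cC)}(\Phi(x),\Phi(y))$ with $\Hom_{\cC}(x,A(y))$. The map on morphism spectra induced by $\Phi$ is then the composite with the unit $y\to A(y)$, and its cofiber is $\Hom_{\cC}(x,B(y))$. Applying the long exact sequence, condition \ref{-n_coconnectivity} (isomorphisms on $\Ext^{i}$ for $i\leq n-1$ and a monomorphism on $\Ext^n$) is equivalent to the vanishing $\pi_{-i}\Hom_{\cC}(x,B(y))=0$ for all $i\leq n-1$, i.e.
\begin{equation*}
\Hom_{\cC}(x,B(y)[n-1])\in\Sp_{\leq -1}\quad\text{for all } x,y\in\cC^{\omega_1,\heartsuit}.
\end{equation*}
The other conditions of Definition \ref{def:-n_coconnective_functor} hold automatically: $\Phi$ is strongly continuous, its image generates, and it is $t$-exact by Corollary \ref{cor:t_structures_on_modules}.

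Next I would show that this vanishing is equivalent to $B(y)[n]\in\cC_{\leq 0}$ for $y\in\cC^{\omega_1,\heartsuit}$. The vanishing says $\Hom_{\cC}(x,B(y)[n])\in\Sp_{\leq 0}$ for all $x\in\cC^{\omega_1,\heartsuit}$. Since $\cC^{\omega_1,\heartsuit}$ generates $\cC^{\heartsuit}$ under filtered colimits, and a filtered colimit of objects turns into a limit of $\Hom$-spectra, which stays in $\Sp_{\leq 0}$, the same bound holds for all $x\in\cC^{\heartsuit}$. Then, as in the last step of the proof of Proposition \ref{prop:left_t_exactness_suffices_for_composition}, $\cC^{\heartsuit}$ generates $\cC_{\geq 0}$ under colimits and extensions, because $\hat{\cY}(\cC_{\geq 0})\subset\Ind(\cC^{\omega_1,b}_{\geq 0})$ and bounded connective objects are finite extensions of shifts $\cC^{\heartsuit}[k]$ with $k\geq 0$. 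For those shifts the bound only improves. Hence $\Hom_{\cC}(w,B(y)[n])\in\Sp_{\leq 0}$ for all $w\in\cC_{\geq 0}$, which says exactly that $B(y)[n]\in\cC_{\leq 0}$.

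Finally, Proposition \ref{prop:left_t_exactness_suffices_for_the_heart} applies because $B[n]$ is continuous and exact: it says $B[n]$ is left $t$-exact if and only if $B[n](\cC^{\omega_1,\heartsuit})\subset\cC_{\leq 0}$. Combining this with the previous step gives \ref{-n_coconnectivity}$\iff$\ref{shift_by_n_left_t_exact}.

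The main obstacle is the middle step, passing from $\Ext$-vanishing against $\omega_1$-compact heart objects to a genuine coconnectivity statement. It needs right completeness and the generation of $\cC_{\geq 0}$ by the heart, and I would handle it exactly as in Proposition \ref{prop:left_t_exactness_suffices_for_composition}. One detail to check there is that the hypothesis that $B[1]$ is left $t$-exact is used only to get the $t$-structure on $\Mod_A(\cC)$, so for $n=1$ the equivalence is consistent. As a byproduct, the argument shows that the $\Ext$ condition then holds for all $x,y\in\cC^{\heartsuit}$, which is the claim in the preceding remark.
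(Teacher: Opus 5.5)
Your proof is correct and takes the same route as the paper. Adjunction identifies the cofiber of the map on $\Ext$'s with $\Hom_{\cC}(x,\Cone(\Id_{\cC}\to A)(y))$; generation of $\cC_{\geq 0}$ by $\cC^{\omega_1,\heartsuit}$ via extensions and colimits upgrades the resulting bound to $\Cone(\Id_{\cC}\to A)(y)[n]\in\cC_{\leq 0}$; and Proposition \ref{prop:left_t_exactness_suffices_for_the_heart} finishes, with your writeup simply spelling out what the paper compresses into ``by adjunction''.
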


\begin{proof}
The implication \Implies{shift_by_n_left_t_exact}{-n_coconnectivity} is immediate by adjunction.

\Implies{-n_coconnectivity}{shift_by_n_left_t_exact}. Again, by adjunction the functor $\Cone(\Id_{\cC}\to A)[n]$ takes $\cC^{\omega_1,\heartsuit}$ to $\cC_{\leq 0}.$ Here we of course used that $\cC_{\geq 0}$ is generated by $\cC^{\omega_1,\heartsuit}$ via extensions and colimits. By Proposition \ref{prop:left_t_exactness_suffices_for_the_heart} we conclude that this functor is left $t$-exact.
\end{proof}

\begin{cor}\label{cor:equiv_on_C_0_n}
	Let $F:\cC\to\cD$ be a $(-n)$-coconnective functor between dualizable $t$-categories, where $n\geq 2.$ Then $F$ induces an equivalence of exact categories $\cC_{[0,n-2]}\xto{\sim} \cD_{[0,n-2]}.$ 
\end{cor}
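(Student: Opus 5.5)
We have to show that for a $(-n)$-coconnective $F:\cC\to\cD$ with $n\geq 2$, the restriction $F:\cC_{[0,n-2]}\to\cD_{[0,n-2]}$ is fully faithful and essentially surjective. Exactness of the structures then comes for free: both exact structures are induced from the ambient stable categories, and $F$ is exact.

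We first pass to monads. Since $F(\cC)$ generates $\cD$, we have $\cD\simeq\Mod_A(\cC)$ with $A=F^R\circ F$. Because $F$ is $t$-exact, Corollary \ref{cor:t_structures_on_modules} together with the uniqueness statement identifies the $t$-structure on $\cD$ with the one on $\Mod_A(\cC)$. By Proposition \ref{prop:-n_coconnectivity_via_monads}, the functor $M:=\Cone(\Id_{\cC}\to A)[n]$ is left $t$-exact.

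\textbf{Full faithfulness.} Take $x,y\in\cC_{[0,n-2]}$. By adjunction there is a cofiber sequence
\begin{equation*}
\Hom_{\cC}(x,y)\to\Hom_{\cD}(F(x),F(y))\to\Hom_{\cC}(x,M(y))[-n].
\end{equation*}
Since $y\in\cC_{\leq n-2}$ and $M$ is left $t$-exact, $M(y)\in\cC_{\leq n-2}$, so $M(y)[-n]\in\cC_{\leq -2}$. As $x\in\cC_{\geq 0}$, the spectrum $\Hom_{\cC}(x,M(y)[-n])$ is concentrated in degrees $\leq -2$. Therefore the first map is an isomorphism on $\pi_{\geq 0}$, and in fact on $\pi_{-1}$ as well. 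It follows that $F$ induces an equivalence of mapping spaces, so $F$ is fully faithful on $\cC_{[0,n-2]}$. The extra degree of room on $\pi_{-1}$, i.e. on $\Ext^1$, also shows that $F(\cC_{[0,n-2]})$ is closed under extensions in $\cD$: an extension of objects in the image is classified by an $\Ext^1$ class, and every such class lifts.

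\textbf{Essential surjectivity.} This is the main step. We argue by induction on the amplitude, using Postnikov towers. For the heart, the case of $\Ext^{\leq 1}$ with $n\geq 2$ gives that $F^{\heartsuit}:\cC^{\heartsuit}\to\cD^{\heartsuit}$ is fully faithful and that its image is closed under extensions. By Proposition \ref{prop:t_structure_on_dualizable_via_functor_from_abelian}, the image generates $\cD^{\heartsuit}$ via extensions and filtered colimits. The image is closed under filtered colimits since $F$ is continuous, fully faithful on hearts, and $t$-exact. Hence $\cC^{\heartsuit}\simeq\cD^{\heartsuit}$.

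For a general $z\in\cD_{[0,n-2]}$, we write $z$ as an iterated extension of $\pi_k(z)[k]$ for $0\leq k\leq n-2$. Each $\pi_k(z)[k]$ lies in the image of $F$ by the heart case. Since the image of $F$ restricted to $\cC_{[0,n-2]}$ is closed under extensions, as shown above, $z$ itself lies in the image.

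The expected obstacle is verifying carefully that extensions of objects in $\cD_{[0,n-2]}$ with pieces in the image stay in the image. Concretely, the relevant $\Ext^1$ between objects of $\cC_{[0,n-2]}$ must be computed by the same degree estimate. This works because the estimate above controls $\pi_{-1}$ of the mapping spectrum precisely when both objects lie in $\cC_{[0,n-2]}$, and this is exactly why the bound $n-2$ appears.
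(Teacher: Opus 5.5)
Your proposal is correct and follows essentially the same route as the paper. Left $t$-exactness of $\Cone(\Id_{\cC}\to F^R\circ F)[n]$ gives, by adjunction, isomorphisms on $\Ext^{\leq 1}$ between objects of $\cC_{[0,n-2]}$, hence full faithfulness and closure of the essential image under extensions; essential surjectivity then follows from the heart equivalence (which you reprove by hand instead of citing Proposition \ref{prop:t_structure_on_dualizable_via_functor_from_abelian} \ref{when_A_equivalent_to_heart_of_C}) together with Postnikov filtrations.

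The one step stated too quickly is reflection of exactness: knowing that $F$ is exact only gives preservation. To see that a sequence $x\to y\to z$ in $\cC_{[0,n-2]}$ whose image is a cofiber sequence is itself one, you need either the paper's observation that short exact sequences in these categories are intrinsically all fiber-cofiber sequences, or conservativity of $F$ on bounded objects (from $t$-exactness and the heart equivalence) applied to the comparison map $\Cone(x\to y)\to z$.
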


\begin{proof}
	By Proposition \ref{prop:t_structure_on_dualizable_via_functor_from_abelian} \ref{when_A_equivalent_to_heart_of_C} the functor $F$ induces an equivalence $\cC^{\heartsuit}\xto{\sim}\cD^{\heartsuit}.$ By Proposition \ref{prop:-n_coconnectivity_via_monads} the functor $\Cone(\Id_{\cC}\to F^R\circ F)$ takes $\cC_{\leq 0}$ to $\cC_{\leq -n},$ hence it takes $\cC_{[0,n-2]}$ to $\cC_{\leq -2}.$ It follows by adjunction that the functor $\cC_{[0,n-2]}\to \cD_{[0,n-2]}$ is fully faithful, and it induces isomorphisms on $\Ext^1.$ To see that it is essentially surjective, note that its essential image is closed under extensions, and it contains $\cD^{\heartsuit}[i]$ for $0\leq i\leq n-2.$ Therefore, this essential image coincides with $\cD_{[0,n-2]}$ and we have an equivalence of categories $\cC_{[0,n-2]}\xto{\sim}\cD_{[0,n-2]}.$
	
	It remains to observe that this equivalence automatically reflects exactness: in both categories the short exact sequences are precisely all fiber-cofiber sequences.
\end{proof}

We now recall the machinery of deformed tensor algebras, which will be crucial for our arguments. We refer to \cite[Section 1.9]{E25c} for this notion in an abstract setting. Here we need the following special case. Let $\cC$ be a dualizable category. We consider the functor
\begin{equation*}
T^{\deff}(-):\Fun^L(\cC,\cC)_{/\Sigma_{\cC}}\to \Alg_{\bE_1}(\Fun^L(\cC,\cC)),
\end{equation*}
which is left adjoint to the composition
\begin{equation*}
\Alg_{\bE_1}(\Fun^L(\cC,\cC))\to \Alg_{\bE_0}(\Fun^L(\cC,\cC))\simeq \Fun^L(\cC,\cC)_{\Id_{\cC}/}\simeq \Fun^L(\cC,\cC)_{/\Sigma_{\cC}}.
\end{equation*}

We will typically write $T^{\deff}(F),$ assuming that $F:\cC\to\cC$ is a continuous exact functor with a chosen morphism $F\to\Sigma_{\cC}.$ Note that by adjunction we have a natural morphism of functors
\begin{equation*}
\Fiber(F\to\Sigma_{\cC})\to T^{\deff}(F).
\end{equation*} We consider $T^{\deff}(F)$ as a continuous exact monad on $\cC.$ 

We will use the natural non-negative multiplicative filtration $\Fil_{\bullet}$ on $T^{\deff}(F)$ as explained in \cite[Section 1.9]{E25c}. Its associated graded is the (non-deformed) tensor algebra $T(F).$ Note that we have $\Fil_0 T^{\deff}(F)\cong \Id_{\cC}$ and $\Fil_1 T^{\deff}(F)\cong \Fiber(F\to\Sigma_{\cC}).$ 

To deal with monads which are deformed tensor algebras we will use the key short exact sequence given by the following proposition. We recall the following notation: for an accessible exact functor $F:\cD\to\cC$ between presentable stable categories we denote by $\cC\oright_F \cD$ the category of triples $(x,y,\varphi),$ where $x\in\cC,$ $y\in\cD$ and $\varphi:x\to F(y).$ By \cite[Proposition 1.2]{E24} this category is also presentable stable. Moreover, if $\cC$ and $\cD$ are dualizable and $F$ is continuous, then the category $\cC\oright_F\cD$ is also dualizable by \cite[Proposition 1.78]{E24}.

\begin{prop}\label{prop:ses_for_deformed}
Let $\cC$ be a dualizable category, and let $F\in\Fun^L(\cC,\cC)$ be a continuous exact functor with a morphism $f:\Id_{\cC}\to F.$ 
\begin{enumerate}[label=(\roman*),ref=(\roman*)]
	\item We have a short exact sequence in $\Cat_{\st}^{\dual}:$
	\begin{equation*}
		0\to \cD\xto{\Psi} \cC\oright_F \cC\xto{\Phi} \Mod_{T^{\deff}(\Cone(f))}(\cC)\to 0.
	\end{equation*}
	Here $\cD\subset \cC\oright_F \cC$ is generated as a localizing subcategory by the image of the strongly continuous functor
	\begin{equation}\label{eq:Theta_generating_the_kernel}
		\Theta:\cC\to \cC\oright_F \cC,\quad x\mapsto (x,x,f_x).
\end{equation}
We consider $\Theta$ as a (strongly continuous) functor from $\cC$ to $\cD.$ \label{ses_for_deformed}
\item We denote by $i_1,i_2:\cC\to \cC\oright_F \cC$ the (strongly continuous) inclusion functors
\begin{equation*}
i_1(x)=(x[-1],0,0),\quad i_2(x)=(0,x,0),
\end{equation*}
and denote by $i_1^L$ resp. $i_2^R$ the (strongly continuous) left adjoint to $i_1$ resp. right adjoint to $i_2$ Then the functors $\Phi\circ i_1,\Phi\circ i_2:\cC\to \Mod_{T^{\deff}(\Cone(f))}(\cC)$ are isomorphic, and they are left adjoint to the forgetful functor.
The functors $i_1^L[-1]\circ\Psi$ and $i_2^R\circ\Psi$ are left inverses to the functor $\Theta:\cC\to\cD.$ \label{left_inverse}
\item The adjunction counit $\Id_{\cC}\to \Theta^R\circ\Theta$ is a split monomorphism, its left inverse is given by
\begin{equation}\label{eq:left_inverse_to_counit}
\Theta^R\circ\Theta\xto{\Theta^R\circ\eta\circ\Theta} \Theta^R\circ(i_2^R\circ\Psi)^R\circ i_2^R\circ\Psi\circ\Theta\cong \Id_{\cC}.
\end{equation}
This gives a direct sum decomposition in $\Fun^L(\cC,\cC):$
\begin{equation*} \Theta^R\circ \Theta\cong \Id_{\cC}\oplus \Fiber(f).
\end{equation*} \label{counit_splits}
\end{enumerate}
\end{prop}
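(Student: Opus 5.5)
The plan is to identify $\cC\oright_F\cC$ with modules over a monad on $\cC\oplus\cC$, then reduce \ref{ses_for_deformed} to a computation of the monad $\Phi^R\Phi$ on the quotient. Along the way I will check the easy adjunction statements \ref{left_inverse} and \ref{counit_splits} directly.

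\emph{Step 1: part \ref{left_inverse}.} Objects of $\cC\oright_F\cC$ are triples $(x,y,\varphi:x\to F(y))$. The inclusion $i_2(y)=(0,y,0)$ has right adjoint $i_2^R(x,y,\varphi)=y$. The inclusion $i_1(x)=(x[-1],0,0)$ has left adjoint $(x,y,\varphi)\mapsto x[1]$, so $i_1^L[-1](x,y,\varphi)=x$. Both functors are strongly continuous and exact. Applied to $\Theta(x)=(x,x,f_x)$, each gives $x$, which proves the left-inverse claims.

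For the isomorphism $\Phi i_1\cong\Phi i_2$, use the cofiber sequence $i_2(x)\to\Theta(x)\to(x,0,0)=i_1(x)[1]$. This gives $\Phi i_1(x)\cong\Phi i_2(x)$, since $\Phi\Theta=0$. The remaining assertion, that these functors are left adjoint to the forgetful functor, will follow once the quotient is identified in Step 3.

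\emph{Step 2: part \ref{counit_splits}.} Compute $\Theta^R$ directly. Since $\Theta(x)$ is an extension of $i_1(x)[1]$ by $i_2(x)$, we get
\[\Hom(\Theta(x),(a,b,\psi))=\Hom(x,a)\times_{\Hom(x,F b)}\Hom(x,b).\]
Hence $\Theta^R(a,b,\psi)=a\times_{F(b)}b$, and so $\Theta^R\Theta(x)=x\times_{F(x)}x$. The diagonal map into this pullback is the counit. One projection retracts it, and that projection is exactly the map \eqref{eq:left_inverse_to_counit}. The complement of the diagonal is the fiber of $f_x$, so
\[\Theta^R\Theta\cong\Id_{\cC}\oplus\Fiber(f).\]

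\emph{Step 3: part \ref{ses_for_deformed}, the main obstacle.} Let $\cQ=(\cC\oright_F\cC)/\cD$, and let $\Phi:\cC\oright_F\cC\to\cQ$ be the quotient functor. This functor is strongly continuous, because $\cD$ is generated by the image of the strongly continuous functor $\Theta$. The functors $i_1$ and $i_2$ jointly generate $\cC\oright_F\cC$. In $\cQ$ their images agree, so $\Phi i_2(\cC)$ generates $\cQ$.

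By the monad description \eqref{eq:from_monads_to_undercategory}, it follows that $\cQ\simeq\Mod_B(\cC)$ with $B=(\Phi i_2)^R\Phi i_2$. The remaining task is to identify $B$ with $T^{\deff}(\Cone(f))$ as an algebra. To do this I use the universal property of the left adjoint $T^{\deff}$.

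First, I construct an $\bE_0$-structure, i.e.\ a map $\Cone(f)\to\Sigma$ realized on $B$. The cofiber sequence $i_2\to\Theta\to i_1[1]$, together with $\Phi\Theta=0$, gives a map
\[\Id\to B=i_2^R\Phi^R\Phi i_2.\]
This map corresponds to the unit, and its composite with the relation between $i_1$ and $i_2$ encodes $f$. This supplies an algebra map $T^{\deff}(\Cone(f))\to B$.

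To show this map is an isomorphism, I compute $\Phi^R\Phi$ as the colimit of the Bousfield localization tower that kills $\Theta(\cC)$. Iterating the $\Theta$-cofiber, each stage contributes a further tensor factor of $\Cone(f)$, shifted by $[-1]$. This matches the filtration $\Fil_\bullet T^{\deff}$, whose associated graded is $T(\Cone(f)[-1])$ with the appropriate convention. I would check that the map of filtered objects induces an isomorphism on associated gradeds. The first stages can be read off from Step 2: there $\Fil_1 T^{\deff}(\Cone(f))=\Fiber(\Cone(f)\to\Sigma)$, and $\Cone(f)\to\Sigma$ has fiber $\Id\oplus\Fiber(f)$ up to the obvious identification.

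The hard point is making this inductive comparison precise. For that I would invoke the general abstract statement about deformed tensor algebras in \cite[Section 1.9]{E25c}: a quotient of a lax limit by the image of a "graph" functor realizes the deformed tensor algebra. This then gives the short exact sequence in $\Cat_{\st}^{\dual}$.
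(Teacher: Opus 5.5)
Your Steps 1 and 2 are fine. They are the same direct verifications the paper makes for \ref{left_inverse} and \ref{counit_splits}: the cofiber sequence $i_2(x)\to\Theta(x)\to i_1(x)[1]$, and $\Theta^R\Theta(x)\simeq x\times_{F(x)}x$.

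The gap is in Step 3. You construct a comparison map $T^{\deff}(\Cone(f))\to B$ but never prove it is an equivalence. The filtration comparison is only gestured at, and the one concrete identification you give is wrong. From the cofiber sequence $\Id_{\cC}\xto{f}F\to\Cone(f)\to\Sigma_{\cC}$ one gets $\Fil_1T^{\deff}(\Cone(f))\cong\Fiber(\Cone(f)\to\Sigma_{\cC})\cong F$, not $\Id_{\cC}\oplus\Fiber(f)$. The latter is $\Theta^R\circ\Theta$, the monad of the kernel $\cD$, not of the quotient. For $F=0$ these are $0$ and $\Id_{\cC}\oplus\Id_{\cC}$. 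Likewise $\gr T^{\deff}(\Cone(f))\cong T(\Cone(f))$, with no shift.

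So Step 2 does not let you "read off" the first stages of $B$. Passing from $\Theta^R\Theta$ to $B\simeq\Cone(i_2^R\Psi\Psi^R i_2\to\Id_{\cC})$ would need an explicit bar-construction computation of $\Psi\Psi^R$, which you do not carry out. Deferring the hard point to an unspecified abstract statement in \cite{E25c} amounts to assuming the claim.

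The missing idea is that no computation is needed: the identification is a Yoneda argument. Write $\mathcal{Q}=(\cC\oright_F\cC)/\cD$. Since \eqref{eq:from_monads_to_undercategory} is fully faithful and $(\mathcal{Q},\Phi\circ i_2)$ lies in its essential image, it suffices to give an equivalence
$\Map((\mathcal{Q},\Phi\circ i_2),(\cE',G'))\simeq\Map_{\Alg_{\bE_0}(\Fun^L(\cC,\cC))}(F,G'^R\circ G')$,
natural in $(\cE',G')\in(\Cat_{\st}^{\dual})_{\cC/}$.

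The left side is the space of strongly continuous $H:\cC\oright_F\cC\to\cE'$ that kill $\Theta(\cC)$ and satisfy $H\circ i_2\cong G'$. Describe such $H$ via the lax limit, as in \cite[Proposition 1.79]{E24}. It amounts to $H\circ i_1$, $H\circ i_2$, and a gluing map $F\to(H\circ i_1)^R\circ(H\circ i_2)$ coming from $\Hom(i_1(y),i_2(x))\simeq\Hom(y,F(x))$.

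Now $\Theta(x)\simeq\Cone(\theta_x:i_1(x)\to i_2(x))$, where $\theta_x$ corresponds to $f_x$. So killing $\Theta$ says exactly two things: $H(\theta)$ identifies $H\circ i_1$ with $H\circ i_2=G'$, and the composite $\Id_{\cC}\xto{f}F\to G'^R\circ G'$ is the unit. That is precisely an $\bE_0$-map $(F,f)\to G'^R\circ G'$.

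Hence $B$ corepresents $\Map_{\Alg_{\bE_0}}(F,-)$ on $\bE_1$-algebras. This is the defining property of $T^{\deff}(\Cone(f))$ as the value of the left adjoint, so $B\simeq T^{\deff}(\Cone(f))$. Your map $F\to B$ is just the case $(\cE',G')=(\mathcal{Q},\Phi\circ i_2)$ of this correspondence. Using the full natural equivalence rather than the single map is what closes the argument, and it also gives the adjunction claim in \ref{left_inverse} for free.
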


\begin{proof}
We prove \ref{ses_for_deformed} using the universal property of the monad $T^{\deff}(\Cone(f)).$ First, by \cite[Proposition 1.78]{E24} the functors $i_1$ and $i_2$ give a semi-orthogonal decomposition in $\Cat_{\st}^{\dual},$ i.e. $\cC\oright_F \cC=\la i_1(\cC), i_2(\cC)\ra.$ By \cite[Proposition 1.79]{E24} the functor $\Theta$ from \eqref{eq:Theta_generating_the_kernel} is strongly continuous. Hence, by \cite[Corollary 1.57]{E24} the category $\cD$ (as defined in \ref{ses_for_deformed}) is dualizable and the inclusion functor $\Psi:\cD\to\cC\oright_F \cC$ is strongly continuous. We put $\cE=(\cC\oright_F \cC)/\Psi(\cD),$ and consider the (strongly continuous) composition
\begin{equation*}
G:\cC\xto{i_2}\cC\oright_F \cC \to\cE.
\end{equation*}    
By construction, $\cE$ is dualizable, the functor $G$ is strongly continuous and $G(\cC)$ generates $\cE$ as a localizing subcategory (since $\Psi(\cD)$ and $i_2(\cC)$ generate $\cC\oright_F \cC$). Hence, $(\cE,G)\in(\Cat_{\st}^{\dual})_{\cC/}$ is contained in the essential image of \eqref{eq:from_monads_to_undercategory}. For any other pair $(\cE',G')\in (\Cat_{\st}^{\dual})_{\cC/}$ using \cite[Proposition 1.79]{E24} we obtain a natural equivalence of spaces:
\begin{multline*}
\Map_{(\Cat_{\st}^{\dual})_{\cC/}} ((\cE,G),(\cE',G'))\simeq \Map_{\Fun^L(\cC,\cC)_{\Id_{\cC}/}}(F,G'^R\circ G')\\
\simeq \Map_{\Alg_{\bE_0}(\Fun^L(\cC,\cC))}(F,G'^R\circ G').
\end{multline*}
Hence, the fully faithfulness of \eqref{eq:from_monads_to_undercategory} implies an equivalence
\begin{equation*}
\cE\simeq\Mod_{T^{\deff}(\Cone(f))}(\cC).
\end{equation*}
This proves \ref{ses_for_deformed} and also the first assertion of \ref{left_inverse}. Namely, we see that $\Phi\circ i_2$ is left adjoint to the forgetful functor, and the compositions $\Phi\circ i_1$ and $\Phi\circ i_2$ are isomorphic by the construction of $\cD\subset\cC\oright_F \cC.$ The second assertion of \ref{left_inverse} follows from the definitions.

It remains to prove \ref{counit_splits}. Since we have an isomorphism $i_2^R\circ \Psi\circ\Theta\cong \Id_{\cC},$ the map \eqref{eq:left_inverse_to_counit} is indeed a left inverse to the adjunction counit.  The direct sum decomposition is obtained by a direct computation:
\begin{equation*}
\Theta^R\circ\Theta \cong \Fiber(\Id_{\cC}\oplus \Id_{\cC}\xto{(f,f)}F)\cong \Id_{\cC}\oplus \Fiber(f).\qedhere
\end{equation*}
\end{proof}

We apply this general construction to dualizable $t$-categories.

\begin{prop}\label{prop:t_structures_for_deformed_tensor_algebra}
	Let $\cC,$ $F$ and $f:\Id_{\cC}\to F$ be as in Proposition \ref{prop:ses_for_deformed}, and we keep the notation from therein. Suppose that $(\cC_{\geq 0},\cC_{\leq 0})$ is a compactly assembled continuously bounded $t$-structure on $\cC.$ Suppose that for some $n\geq 1$ the functor $\Cone(f)[n]$ is left $t$-exact. Then the categories $\Mod_{T^{\deff}(\Cone(f))}(\cC)$ and $\cD$ have natural compactly assembled continuously bounded $t$-structures such that the functor $\cC\to \Mod_{T^{\deff}(\Cone(f))}(\cC)$ is $(-n)$-coconnective and the functor $\Theta:\cC\to\cD$ is $(-n-1)$-coconnective. 
\end{prop}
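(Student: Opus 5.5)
Write $G=\Cone(f)$ and $A=T^{\deff}(G)$, with $G[n]$ left $t$-exact. The plan is to build both $t$-structures with Corollary \ref{cor:t_structures_on_modules} and then read off the coconnectivity from Proposition \ref{prop:-n_coconnectivity_via_monads}. The only input needed is control of $\Cone(\Id_{\cC}\to A)$ via the multiplicative filtration $\Fil_\bullet$ on $A$.

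\textbf{Step 1: the category $\Mod_A(\cC)$.} We have $\Fil_0A=\Id_{\cC}$, and the associated graded is $T(F')$ with $F'=\Fiber(G\to\Sigma_{\cC})$ in the deformed sense. Concretely, $\Fil_1A/\Fil_0A\cong G[-1]$, and $\gr_kA\cong (G[-1])^{\circ k}$. Since $G[n]$ is left $t$-exact, $G[-1]$ sends $\cC_{\leq 0}$ into $\cC_{\leq -n-1}$. Hence $\gr_k A$ sends $\cC_{\leq 0}$ into $\cC_{\leq -k(n+1)}\subset\cC_{\leq -n-1}$ for $k\geq 1$. The $t$-structure is compatible with filtered colimits, so $\cC_{\leq -n-1}$ is closed under extensions and filtered colimits. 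Therefore $\Cone(\Id_{\cC}\to A)=\colim_k \Fil_kA/\Fil_0A$ sends $\cC_{\leq 0}$ into $\cC_{\leq -n-1}$. Equivalently, $\Cone(\Id\to A)[n+1]$ is left $t$-exact, and in particular $[n]$- and $[1]$-shifts of it are left $t$-exact.

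Corollary \ref{cor:t_structures_on_modules} then gives the compactly assembled continuously bounded $t$-structure on $\Mod_A(\cC)$. Proposition \ref{prop:-n_coconnectivity_via_monads} shows that $\cC\to\Mod_A(\cC)$ is $(-n)$-coconnective; in fact the computation gives even slightly more.

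\textbf{Step 2: the category $\cD$.} By Proposition \ref{prop:ses_for_deformed}, $\Theta(\cC)$ generates $\cD$, and $\Theta$ is strongly continuous. So $\cD\simeq\Mod_{\Theta^R\Theta}(\cC)$ by the discussion around \eqref{eq:from_monads_to_undercategory}. By part \ref{counit_splits} of Proposition \ref{prop:ses_for_deformed},
\[
\Cone(\Id_{\cC}\to\Theta^R\Theta)\cong\Fiber(f)[1]\cong G.
\]
I need to be careful with the split: the cokernel of the split monomorphism is the complementary summand $\Fiber(f)$, and $\Fiber(f)\cong G[-1]$. So the cone is $G[-1]$, and $G[-1][n+1]=G[n]$ is left $t$-exact; in particular $G[-1][1]=G$ is left $t$-exact as well. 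Corollary \ref{cor:t_structures_on_modules} then produces the $t$-structure on $\cD$ with $\Theta$ $t$-exact, and Proposition \ref{prop:-n_coconnectivity_via_monads} with $n+1$ shows that $\Theta$ is $(-n-1)$-coconnective.

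\textbf{Main obstacle.} The delicate point is Step 1, namely identifying the graded pieces of $\Fil_\bullet T^{\deff}$ with iterated composites of $G[-1]$ and checking that the relevant colimit is preserved by left $t$-exactness. I would cite \cite[Section 1.9]{E25c} for $\gr_k\simeq (\Fil_1/\Fil_0)^{\circ k}$ and the exhaustiveness of the filtration. I would also double-check the shift conventions $F'=\Fiber(G\to\Sigma)$ against $\Fil_1T^{\deff}\cong\Fiber(F'\to\Sigma_{\cC})$, so that $\Fil_1/\Fil_0\cong G[-1]$ rather than $G$. If the shift came out as $G$ instead, the estimate would still give $(-n)$-coconnectivity, provided one uses $n\geq1$ and the $k(n+1)$ growth for $k\geq 2$.
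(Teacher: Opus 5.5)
Your strategy is the paper's: realize both categories as module categories over continuous monads on $\cC$, obtain the $t$-structures from Corollary \ref{cor:t_structures_on_modules}, and read off coconnectivity from Proposition \ref{prop:-n_coconnectivity_via_monads}. Step 2, after your self-correction, is exactly the paper's computation: $\Cone(\Id_{\cC}\to\Theta^R\Theta)\cong\Fiber(f)\cong G[-1]$, so the shift $[n+1]$ gives $G[n]$. You should delete the earlier display claiming this cone is $\Fiber(f)[1]\cong G$.

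The main line of Step 1, however, has the shift wrong. The filtration has $\Fil_0=\Id_{\cC}$ and $\Fil_1T^{\deff}(G)\cong\Fiber(G\to\Sigma_{\cC})\cong F$. Hence $\gr_1\cong\Cone(f)=G$, and the associated graded is $T(G)$, i.e.\ $\gr_k\cong G^{\circ k}$. It is not $(G[-1])^{\circ k}$, and not $T(\Fiber(G\to\Sigma_{\cC}))$.

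As a result, your claimed ``slightly more'' (that $\Cone(\Id_{\cC}\to A)[n+1]$ is left $t$-exact) is false. For a counterexample, take $F=\Id_{\cC}\oplus\Id_{\cC}[-n]$ with $f$ the inclusion of the first summand. Then $G\cong\Id_{\cC}[-n]$, the map $G\to\Sigma_{\cC}$ is zero, and $A$ is the ordinary tensor algebra $\bigoplus_{k\geq 0}\Id_{\cC}[-kn]$. So $\Cone(\Id_{\cC}\to A)[n+1]$ contains $\Id_{\cC}[1]$ as a summand, which is not left $t$-exact.

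The correct estimate is your fallback, but with growth $kn$ rather than $k(n+1)$. Each $G^{\circ k}$ sends $\cC_{\leq 0}$ into $\cC_{\leq -kn}\subset\cC_{\leq -n}$ for $k\geq 1$. The filtration is exhaustive and $\cC_{\leq -n}$ is closed under extensions and filtered colimits, so $\Cone(\Id_{\cC}\to A)[n]$ is left $t$-exact. Since $n\geq 1$, so is the $[1]$-shift needed for Corollary \ref{cor:t_structures_on_modules}.

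This is precisely the paper's argument. It yields exactly $(-n)$-coconnectivity, and the example above shows this is sharp.
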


\begin{proof}
	This follows almost directly from Propositions \ref{prop:-n_coconnectivity_via_monads} and \ref{prop:ses_for_deformed}. Namely, we need to see that the corresponding monads on $\cC$ satisfy the required assumptions. This is clear for $\Theta,$ since we have $\Cone(\Id_{\cC}\to\Theta^R\circ\Theta)[n+1]\cong \Fiber(f)[n+1]\cong \Cone(f)[n].$ Next, the endofunctor $\Cone(\Id_{\cC}\to T^{\deff}(\Cone(f)))$ has an exhaustive increasing filtration with subquotients $\Cone(f)^{\circ m},$ $m\geq 1.$ By assumption, $\Cone(f)^{\circ m}[mn]$ is left $t$-exact, hence $\Cone(f)^{\circ m}[n]$ is left $t$-exact for $m\geq 1.$ We conclude that $\Cone(\Id_{\cC}\to T^{\deff}(\Cone(f)))[n]$ is left $t$-exact. This proves the proposition.
\end{proof}

The following general construction gives a way to reduce certain questions about (continuous exact) monads to the case of a deformed tensor algebra.

\begin{prop}\label{prop:approx_by_deformed_tensor_algebras}
Let $\Phi:\cC\to\cD$ be a strongly continuous exact functor between dualizable categories, and suppose that $\Phi(\cC)$ generates $\cD$ as a localizing subcategory. We define a functor $\N\to (\Cat_{\st}^{\dual})_{/\cD},$ $n\mapsto (\cC_n,\Phi_n),$ inductively as follows. We put $(\cC_0,\Phi_0)=(\cC,\Phi),$ and for $n\geq 0$ we define
\begin{equation*}
\cC_{n+1}=\Mod_{T^{\deff}(\Cone(\Id_{\cC_n}\to \Phi_n^R\circ \Phi_n))}(\cC_n).
\end{equation*}
The functor $\Phi_{n+1}:\cC_{n+1}\to\cC$ is induced by the natural map of monads on $\cC_n,$ namely
\begin{equation*}
T^{\deff}(\Cone(\Id_{\cC_n}\to \Phi_n^R\circ \Phi_n))\to \Phi_n^R\circ \Phi_n.
\end{equation*}
Then we have an equivalence
\begin{equation*}
\indlim[n]\cC_n\simeq \cD.
\end{equation*}
\end{prop}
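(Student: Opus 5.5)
The plan is to reduce the statement to a statement about monads on the fixed category $\cC$, and then to show that the resulting sequence of monads converges, via a retraction argument, to the monad $A=\Phi^R\circ\Phi$.

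First I set up the reduction. Let $G_n:\cC\to\cC_n$ denote the composite of the left adjoints $\cC=\cC_0\to\cC_1\to\dots\to\cC_n$, and put $B_n=G_n^R\circ G_n$, a continuous exact monad on $\cC$. Each functor $\cC_n\to\cC_{n+1}$ is the free-module functor for a monad, so its image generates. Hence $G_n(\cC)$ generates $\cC_n$, and by the fully faithful functor \eqref{eq:from_monads_to_undercategory} we have $\cC_n\simeq\Mod_{B_n}(\cC)$ in $(\Cat_{\st}^{\dual})_{\cC/}$. The transition functors correspond to maps of monads $B_n\to B_{n+1}\to A$, compatible with the maps to $\cD\simeq\Mod_A(\cC)$.

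Next I compute the colimit. The colimit $\indlim[n]\cC_n$ in $\Cat_{\st}^{\dual}$, taken under $\cC$, is again generated by the image of $\cC$. So it lies in the essential image of \eqref{eq:from_monads_to_undercategory}, and it is given by the monad $G_\infty^R G_\infty$. I will check that this monad is the colimit $\indlim[n]B_n$ by using the right adjoint description $(\cE,G)\mapsto G^R\circ G$. Since all functors involved are strongly continuous, the right adjoints commute with filtered colimits, so $G_\infty^R G_\infty\simeq\indlim[n]G_n^RG_n$ objectwise. Moreover, sequential colimits of $\bE_1$-algebras are computed on underlying objects. It therefore suffices to prove that the map of functors $\indlim[n]B_n\to A$ is an isomorphism.

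The key step, and the one I expect to be the main obstacle, is to produce sections that exhibit the ind-system $(B_n)$ as ind-isomorphic to the constant system $A$. Put $M_n=\Phi_n^R\circ\Phi_n$ on $\cC_n$ and $T_n=T^{\deff}(\Cone(u_n))$, where $u_n:\Id\to M_n$ is the unit. The structure map $\Cone(u_n)\to\Sigma_{\cC_n}$ is the boundary map, so the filtration gives
\begin{equation*}
\Fil_1T_n\cong\Fiber(\Cone(u_n)\to\Sigma_{\cC_n})\cong M_n.
\end{equation*}
I need to verify that the composite $M_n\cong\Fil_1T_n\to T_n\to M_n$, where the last map is the augmentation defining $\Phi_{n+1}$, is the identity. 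This should follow from the adjunction defining $T^{\deff}$: the augmentation is adjoint to the identity of $M_n$ viewed as an $\bE_0$-algebra. Moreover, the unit $\Id\to T_n$ factors as $\Id=\Fil_0\to\Fil_1\cong M_n\to T_n$.

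Finally I transport this to $\cC$ by applying $G_n^R(-)G_n$, which sends $M_n$ to $A$ (indeed $\Phi_n\circ G_n\simeq\Phi$) and $T_n$ to $B_{n+1}$. This gives maps $s_n:A\to B_{n+1}$ such that $A\xto{s_n}B_{n+1}\to A$ is the identity, and such that the transition map $B_n\to B_{n+1}$ equals the composite $B_n\to A\xto{s_n}B_{n+1}$. A sequential system with these properties has colimit $A$: the maps $s_n$ assemble into an inverse to $\indlim[n]B_n\to A$. Hence $\indlim[n]\cC_n\simeq\Mod_A(\cC)\simeq\cD$. The delicate points are the identification $\Fil_1T_n\cong M_n$ compatibly with the augmentation, and the compatibility of these factorizations with the identifications $G_{n+1}^RG_{n+1}\cong G_n^RT_nG_n$. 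These are only needed up to homotopy, since the colimit is checked on the homotopy category of functors.
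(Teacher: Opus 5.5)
Your proposal is correct and follows essentially the same route as the paper: you identify $\Fil_1$ of the deformed tensor algebra with $\Phi_n^R\circ\Phi_n$, which factors the transition monad maps as $F_{0n}^R\circ F_{0n}\to\Phi^R\circ\Phi\to F_{0,n+1}^R\circ F_{0,n+1}$ with the second map a section of the augmentation; from this you deduce $\indlim[n]F_{0n}^R\circ F_{0n}\simeq\Phi^R\circ\Phi$ and conclude by generation. Your extra remarks, on computing the colimit monad via continuity of the right adjoints and on the retraction argument for sequential colimits, only make explicit what the paper leaves implicit.
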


\begin{proof}
This is in fact the same construction as in \cite[Proof of Proposition 3.15]{E25c} if we spell it out in terms of monads on $\cC.$ We give the details for completeness.

Denote the transition functors by $F_{nm}:\cC_n\to\cC_m,$ $n\leq m.$ By construction, $F_{0n}(\cC_0)$ generates $\cC_n$ for all $n\geq 0.$ Next, for $n\geq 0$ the morphism of functors $\Id_{\cC_n}\to F_{n,n+1}^R\circ F_{n,n+1}$ factors through $\Phi_n^R\circ\Phi_n.$ Indeed, by construction the monad $F_{n,n+1}^R\circ F_{n,n+1}$ is the deformed tensor algebra of $\Cone(\Id_{\cC_n}\to \Phi_n^R\circ \Phi_n).$ The term $\Fil_1$ of the standard filtration is identified with $\Phi_n^R\circ \Phi_n.$ Composing with $F_{0n}^R$ and precomposing with $F_{0n}$ we get a factorization in the category $\Fun^L(\cC_0,\cC_0):$
\begin{equation*}
F_{0n}^R\circ F_{0n}\to \Phi_0^R\circ \Phi_0\to F_{0,n+1}^R\circ F_{0,n+1}.
\end{equation*}
Moreover, the composition
\begin{equation*}
\Phi_0^R\circ \Phi_0\to F_{0,n+1}^R\circ F_{0,n+1}\to \Phi_0^R\circ\Phi_0
\end{equation*}
is homotopic to the identity. Hence, we get an isomorphism of monads
\begin{equation*}
\indlim[n] F_{0n}^R\circ F_{0n}\xto{\sim} \Phi_0^R\circ \Phi_0.
\end{equation*}
By assumption, $\Phi_0(\cC_0)$ generates $\cD,$ hence we obtain an equivalence $\indlim[n]\cC_n\simeq \cD.$ This proves the proposition.
\end{proof}

Again, we apply this abstract result to dualizable $t$-categories.

\begin{prop}\label{prop:approx_by_deformed_for_t_categories} We keep the notation and assumption from Proposition \ref{prop:approx_by_deformed_tensor_algebras}. Suppose that $\cC$ and $\cD$ are moreover dualizable $t$-categories and the functor $\Phi:\cC\to\cD$ is $(-n)$-coconnective for some $n\geq 1.$ There are compactly assembled continuously bounded $t$-structures on the categories $\cC_k,$ $k\geq 1,$ which are uniquely determined by the requirement that all the transition functors are $t$-exact. Moreover, for each $k\geq 0$ the functor $\Cone(\Id_{\cC_k}\to\Phi_k^R\circ\Phi_k)[2^k(n-1)+1]:\cC_k\to\cC_k$ is left $t$-exact.
\end{prop}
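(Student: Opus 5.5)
The plan is induction on $k\geq 0$, constructing the $t$-structure on $\cC_{k+1}$ from the estimate at stage $k$ and then proving the estimate at stage $k+1$. Put $G_k=\Cone(\Id_{\cC_k}\to\Phi_k^R\circ\Phi_k)$ and $m_k=2^k(n-1)+1$, so that $m_0=n$ and $m_{k+1}=2m_k-1$.

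\emph{Base case.} For $k=0$ the functor $\Phi_0=\Phi$ is $(-n)$-coconnective. By Proposition \ref{prop:-n_coconnectivity_via_monads}, applied to the monad $\Phi^R\circ\Phi$ with $\cD\simeq\Mod_{\Phi^R\circ\Phi}(\cC)$, the functor $G_0[n]$ is left $t$-exact. This is exactly the claim, since $m_0=n$.

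\emph{Existence and uniqueness of the $t$-structure on $\cC_{k+1}$.} Assume $\cC_k$ is a dualizable $t$-category and $G_k[m_k]$ is left $t$-exact, with $m_k\geq 1$. Apply Proposition \ref{prop:t_structures_for_deformed_tensor_algebra} to $\cC_k$ with $F=\Phi_k^R\circ\Phi_k$ and $f$ the unit. This gives a compactly assembled continuously bounded $t$-structure on $\cC_{k+1}=\Mod_{T^{\deff}(G_k)}(\cC_k)$. For it, the transition functor $F_{k,k+1}:\cC_k\to\cC_{k+1}$ is $t$-exact and in fact $(-m_k)$-coconnective. Uniqueness comes from Corollary \ref{cor:t_structures_on_modules}: a $t$-structure of this type making $F_{k,k+1}$ $t$-exact is unique. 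Inducting on $k$, the $t$-structures on all $\cC_k$ are therefore determined by $t$-exactness of the transition functors.

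\emph{Estimate at stage $k+1$.} Write $F=F_{k,k+1}$. It is strongly continuous, $t$-exact and generating. Since it is $(-m_k)$-coconnective with $m_k\geq1$, it is fully faithful on hearts (Proposition \ref{prop:t_structure_on_dualizable_via_functor_from_abelian}). Proposition \ref{prop:left_t_exactness_suffices_for_composition} then says $G_{k+1}[m_{k+1}]$ is left $t$-exact if and only if $F^R\circ G_{k+1}\circ F[m_{k+1}]$ is. Using $\Phi_{k+1}\circ F\cong\Phi_k$, I compute
\begin{equation*}
F^R\circ G_{k+1}\circ F\cong\Cone\bigl(F^R\circ F\to\Phi_k^R\circ\Phi_k\bigr)=\Cone\bigl(T^{\deff}(G_k)\to\Phi_k^R\circ\Phi_k\bigr).
\end{equation*}
The key point is that the map of monads $T^{\deff}(G_k)\to\Phi_k^R\circ\Phi_k$ restricts to an isomorphism on $\Fil_1T^{\deff}(G_k)\cong\Fiber(G_k\to\Sigma)\cong\Phi_k^R\circ\Phi_k$. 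This is the same identification used in the proof of Proposition \ref{prop:approx_by_deformed_tensor_algebras}. Hence
\begin{equation*}
F^R\circ G_{k+1}\circ F\cong\bigl(T^{\deff}(G_k)/\Fil_1\bigr)[1].
\end{equation*}
The right-hand side has an exhaustive increasing filtration with subquotients $G_k^{\circ j}[1]$, $j\geq 2$. Each $G_k^{\circ j}[jm_k]$ is left $t$-exact, so $G_k^{\circ j}[2m_k]$ is left $t$-exact for $j\geq2$, and thus $G_k^{\circ j}[1][2m_k-1]$ is left $t$-exact. Left $t$-exact functors are closed under extensions and filtered colimits, because $\cC_k$ is compatible with filtered colimits. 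So $F^R\circ G_{k+1}\circ F[2m_k-1]$ is left $t$-exact, and $2m_k-1=m_{k+1}$, which closes the induction.

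The step I expect to be the main obstacle is the identification of the cone of the monad map $T^{\deff}(G_k)\to\Phi_k^R\circ\Phi_k$ with $(T^{\deff}/\Fil_1)[1]$. This needs the compatibility of the canonical map with the filtration, namely that it restricts to an isomorphism on $\Fil_1$. I would extract this from the universal property of $T^{\deff}$ as left adjoint to the $\bE_0$-forgetful functor.
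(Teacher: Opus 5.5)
Your proof is correct and follows the paper's argument essentially step for step. The shared steps are: induction on $k$; the $t$-structure on $\cC_{k+1}$ from Proposition \ref{prop:t_structures_for_deformed_tensor_algebra}; the reduction via Proposition \ref{prop:left_t_exactness_suffices_for_composition} to $\Cone(T^{\deff}(G_k)\to\Phi_k^R\circ\Phi_k)$; splitting off $\Fil_1$, whose inclusion the paper also uses as a right inverse, and which is just a triangle identity for the adjunction defining $T^{\deff}$; and bounding the graded pieces $G_k^{\circ j}$ for $j\geq 2$. One small citation fix: full faithfulness of $F_{k,k+1}$ on hearts follows directly by adjunction from the left $t$-exactness of $\Cone(\Id\to F^R\circ F)[1]$, as in the proof of Corollary \ref{cor:t_structures_on_modules}, rather than from Proposition \ref{prop:t_structure_on_dualizable_via_functor_from_abelian}.
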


\begin{proof}
We construct the $t$-structures inductively, simultaneously proving the left $t$-exactness assertion. We already have the $t$-structure on $\cC_0=\cC.$ By assumption on $\Phi_0=\Phi$ and by Proposition \ref{prop:-n_coconnectivity_via_monads}, the functor $\Cone(\Id_{\cC_0}\to \Phi_0^R\circ\Phi_0)[n]$ is left $t$-exact. 

Suppose that for some $l\geq 0$ we have the claimed $t$-structures on $\cC_k$ for $0\leq k\leq l,$ such that for these $k$ the functor $\Cone(\Id_{\cC_k}\to\Phi_k^R\circ\Phi_k)[2^k(n-1)+1]$ is left $t$-exact. Applying Proposition \ref{prop:t_structures_for_deformed_tensor_algebra}, we obtain a unique compactly assembled continuously bounded $t$-structure on $\cC_{l+1}$ such that the transition functor $F_{l,l+1}:\cC_l\to\cC_{l+1}$ is $t$-exact. We need to prove that the functor 
\begin{equation*}
\Cone(\Id_{\cC_{l+1}}\to \Phi_{l+1}^R\circ \Phi_{l+1})[2^{l+1}(n-1)+1]
\end{equation*} is left $t$-exact. By Proposition \ref{prop:left_t_exactness_suffices_for_composition} this is equivalent to the left $t$-exactness of the functor $\Cone(F_{l,l+1}^R\circ F_{l,l+1}\to \Phi_l^R\circ\Phi_l)[2^{l+1}(n-1)+1].$ By construction, the monad $F_{l,l+1}^R\circ F_{l,l+1}$ has a non-negative increasing filtration $\Fil_{\bullet},$ such that $\Fil_1\cong \Phi_l^R\circ \Phi_l.$ The inclusion of $\Fil_1$ is a right inverse to the morphism $F_{l,l+1}^R\circ F_{l,l+1}\to \Phi_l^R\circ\Phi_l.$ Hence, it suffices to show that for $m\geq 2$ the functor $\gr_m \Fil_{\bullet}[2^{l+1}(n-1)+2]$ is left $t$-exact. This follows from the induction hypothesis: we have
\begin{multline*}
\gr_m \Fil_{\bullet}[2^{l+1}(n-1)+2]\cong \Cone(\Id_{\cC_l}\to \Phi_l^R\circ\Phi_l)^{\circ m}[2^{l+1}(n-1)+2]\\
\cong (\Cone(\Id_{\cC_l}\to \Phi_l^R\circ\Phi_l)[2^l(n-1)+1])^{\circ m}[(2-m)(2^l(n-1)+1)],
\end{multline*} and the latter functor is left $t$-exact since $\Cone(\Id_{\cC_l}\to \Phi_l^R\circ\Phi_l)[2^l(n-1)+1]$ is left $t$-exact and $m\geq 2.$ This proves the induction step and the proposition. 
\end{proof}

\begin{proof}[Proof of Theorem \ref{th:coconnectivity_estimates}]
We first consider the following special case. Let $F:\cC\to\cD$ be as in the theorem, and suppose that moreover the monad $F^R\circ F$ is isomorphic to $T^{\deff}(G),$ where $G\in\Fun^L(\cC,\cC)$ is equipped with a morphism $G\to \Sigma_{\cC}$ and the functor $G[n]$ is left $t$-exact. Put $H = \Fiber(G\to\Sigma_{\cC}).$ Applying the construction from Proposition \ref{prop:ses_for_deformed} (and changing the notation for the categories), we obtain a short exact sequence in $\Cat_{\st}^{\dual}$
\begin{equation}\label{eq:ses_for_deformed_tensor_algebra}
0\to \cE\xto{\Psi}\cC\oright_H \cC\xto{\Phi} \cD\to 0. 
\end{equation}
As above, we denote by $i_1,i_2:\cC\to\cC\oright_H \cC.$ the natural inclusions, and consider the adjoints $i_1^L$ and $i_2^R.$ Applying $K^{\cont}(-)$ to \eqref{eq:ses_for_deformed_tensor_algebra}, we obtain the cofiber sequence
\begin{equation*}
K^{\cont}(\cE)\to K^{\cont}(\cC)\oplus K^{\cont}(\cC)\to K^{\cont}(\cD).
\end{equation*}
This gives an equivalence of spectra
\begin{equation}\label{eq:equivalence_of_cones}
\Cone(K^{\cont}(\cC)\xto{K^{\cont}(F)}  K^{\cont}(\cD))\cong \Cone(K^{\cont}(\cE)\xto{K^{\cont}(i_2^R\circ \Psi)} K^{\cont}(\cC)).
\end{equation}
Now, recall that by Proposition \ref{prop:ses_for_deformed} we have a functor $\Theta:\cC\to\cE,$ right inverse to $i_2^R\circ \Psi.$ Moreover, by Proposition \ref{prop:t_structures_for_deformed_tensor_algebra} $\cE$ is naturally a $t$-category and the functor $\Theta$ is $(-n-1)$-coconnective. By Corollary \ref{cor:equiv_on_C_0_n} $\Theta$ induces an equivalence $\cC_{[0,n-1]}\xto{\sim} \cE_{[0,n-1]}.$ By Theorem \ref{th:theorem_of_the_heart} the latter equivalence of exact categories implies that $\Theta$ induces an equivalence $K_{\geq -n-1}^{\cont}(\cC)\xto{\sim} K_{\geq -n-1}^{\cont}(\cE).$ Hence the left inverse functor $i_2^R\circ \Psi$ also induces an equivalence on $K^{\cont}_{\geq -n-1}.$ We conclude from \eqref{eq:equivalence_of_cones} that $\Cone(K^{\cont}(\cC)\to K^{\cont}(\cD))\in \Sp_{\leq -n-1}.$

Now consider the general situation of the theorem. By Proposition \ref{prop:approx_by_deformed_for_t_categories} we have a sequence $\cC=\cC_0\to\cC_1\to\dots$ of dualizable $t$-categories such that $\cD\simeq\indlim[k]\cC_k,$ each transition functor $F_k:\cC_k\to \cC_{k+1}$ is $t$-exact, $\cC_{k+1}$ is generated by $F_k(\cC_k)$ and the monad $F_k^R\circ F_k$ is the deformed tensor algebra of a functor $G_k:\cC_k\to \cC_k$ such that $G_k[n]$ is left $t$-exact. By the above special case, we know that the spectrum $\Cone(K^{\cont}(F_k))$ is $(-n-1)$-coconnective for $k\geq 0.$ Hence, also the spectra $\Cone(K^{\cont}(\cC_0)\to K^{\cont}(\cC_k))$ are $(-n-1)$-coconnective. Taking the colimit over $k,$ we conclude that $\Cone(K^{\cont}(\cC)\to K^{\cont}(\cD))$ is $(-n-1)$-coconnective. This proves the theorem.  
\end{proof}

\subsection{Estimates for higher nil groups}

For the proof of the theorem of the heart for $KH$ it will be important to have a version of Theorem \ref{th:coconnectivity_estimates} for higher nil groups $N^s K_j^{\cont}.$ We will need the following immediate application of Proposition \ref{prop:t_structure_on_nilpotent_endomorphisms}.
%In the following corollary for convenience we put $N^0 K = K$ for convenience.

\begin{prop}\label{prop:t_structure_on_nilpotent_endomorphisms_coconnectivity}
	Let $\cC$ and $\cD$ be a dualizable $t$-categories, and let $F:\cC\to\cD$ be a $(-n)$-coconnective functor for some $n\geq 1.$ Let $y$ be a formal variable of degree $0.$ Then the functor
	\begin{equation}\label{eq:id_otimes_F}
		\Id\otimes F:(\Mod_{y\hy\tors}\hy \bS[y])\otimes \cC\to (\Mod_{y\hy\tors}\hy \bS[y])\otimes \cD
	\end{equation} 
	is also $(-n)$-coconnective, where the source and the target are equipped with $t$-structures from Proposition \ref{prop:t_structure_on_nilpotent_endomorphisms}.
		% and consider the (compactly generated) category $\Mod_{y\hy\tors}\hy \bS[y]$ of $\bS[y]$-modules such that $y$ acts locally nilpotently on the homotopy groups. Denote by $i:\Sp\to \Mod_{y\hy\tors}\hy \bS[y]$ the restriction of scalars functor for $\bS[y]\to\bS,$ $y\mapsto 0.$ 
	%\begin{enumerate}[label=(\roman*),ref=(\roman*)]
%		\item The category $(\Mod_{y\hy\tors}\hy \bS[y])\otimes \cC$ has a unique compactly assembled continuously bounded $t$-structure such that the functor $i\otimes\cC:\cC\to (\Mod_{y\hy\tors}\hy \bS[y])\otimes \cC$ is $t$-exact. \label{t_structure_for_nilpotent_endomorphisms}
%		\item Let $\cD$ be another dualizable $t$-category and let $F:\cC\to\cD$ be a $(-n)$-coconnective functor for some $n\geq 1.$ Then the functor 
%		\begin{equation}\label{eq:id_otimes_F}
%%		\end{equation} 
%		is also $(-n)$-coconnective. \label{preservation_of_coconnectivity_for_nil_endomorphsisms}
%	\end{enumerate}
\end{prop}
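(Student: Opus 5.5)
We need to verify the three conditions of Definition \ref{def:-n_coconnective_functor} for $\Id\otimes F$. Write $\cM=\Mod_{y\hy\tors}\hy\bS[y]$ and $i:\Sp\to\cM$ as in Proposition \ref{prop:t_structure_on_nilpotent_endomorphisms}.

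Strong continuity and exactness of $\Id\otimes F$ are formal, since $\cM\otimes-$ is a functor on $\Cat_{\st}^{\dual}$. Generation is also formal: the target is generated by $(i\otimes\cD)(\cD)$, and $\cD$ is generated by $F(\cC)$. Since $(\Id\otimes F)\circ(i\otimes\cC)\cong(i\otimes\cD)\circ F$, the image of $\Id\otimes F$ generates the target.

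For $t$-exactness I will use Proposition \ref{prop:t_structure_on_dualizable_via_functor_from_abelian} \ref{sufficient_for_t_exactness}. The heart of $\cM\otimes\cC$ is the category of pairs $(X,N)$ with $N$ locally nilpotent. By the construction via Corollary \ref{cor:t_structures_on_modules}, this heart is generated by $i\otimes\cC(\cC^{\heartsuit})$ under extensions and filtered colimits. These generators are sent to $(i\otimes\cD)(F(\cC^{\heartsuit}))\subset(\cM\otimes\cD)^{\heartsuit}$, because $F$ and $i\otimes\cD$ are $t$-exact. Alternatively, the forgetful functors are $t$-exact and conservative and commute with $\Id\otimes F$ (on underlying objects, $\Id\otimes F$ is just $F$), so $t$-exactness can be checked after forgetting.

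The substantive step is the Ext condition, and I will check it through monads using Proposition \ref{prop:-n_coconnectivity_via_monads}. I need to show that $\Cone(\Id\to(\Id\otimes F)^R(\Id\otimes F))[n]$ is left $t$-exact on $\cM\otimes\cC$. Since $\cM$ is dualizable and $F^R$ is continuous, $(\Id\otimes F)^R\cong\Id_{\cM}\otimes F^R$. Hence this cone is $\Id_{\cM}\otimes G$ with $G=\Cone(\Id_{\cC}\to F^RF)$, and $G[n]$ is left $t$-exact by hypothesis and Proposition \ref{prop:-n_coconnectivity_via_monads}. Now $\Id_{\cM}\otimes G$ commutes with the forgetful functor $\cM\otimes\cC\to\cC$, which is $t$-exact and conservative, i.e.\ restriction along $\bS[y]$, computed objectwise since $\cM\otimes\cC$ is a full subcategory of $\bS[y]$-modules in $\cC$. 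An object is in $(\cM\otimes\cC)_{\leq 0}$ iff its underlying object is in $\cC_{\leq 0}$. So $(\Id\otimes G)[n]$ is left $t$-exact.

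The main obstacle I expect is justifying the compatibility of $\Id\otimes G$ with the forgetful functor, and the description of the $t$-structure via underlying objects. I would handle both by identifying $\cM\otimes\cC$ with $y$-torsion $\bS[y]$-modules in $\cC$, on which $\Id\otimes G$ acts by applying $G$ to the underlying object with its induced $y$-action.
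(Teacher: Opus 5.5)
Your proof is correct, but it detects left $t$-exactness by a different mechanism than the paper. Both arguments reduce, via Proposition \ref{prop:-n_coconnectivity_via_monads}, to showing that $(\Id_{\mathcal{M}}\otimes G)[n]$ is left $t$-exact, where $G=\Cone(\Id_{\cC}\to F^R F)$ and $\mathcal{M}=\Mod_{y\hy\tors}\hy\bS[y]$.

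The paper restricts along $i\otimes\cC$. Proposition \ref{prop:left_t_exactness_suffices_for_composition} reduces the claim to left $t$-exactness of $(i\otimes\cC)^R\circ(\Id_{\mathcal{M}}\otimes G)[n]\circ(i\otimes\cC)\cong (i^R i)\otimes G[n]$. The explicit formula $i^R i\cong\Id\oplus\Sigma^{-1}$ then turns this into $G[n]\oplus G[n-1]$.

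You instead push forward along the forgetful functor $\mathcal{M}\otimes\cC\to\cC$. By bifunctoriality of the Lurie tensor product it intertwines $\Id_{\mathcal{M}}\otimes G$ with $G$. It is $t$-exact and conservative, as stated in the proof of Proposition \ref{prop:t_structure_on_nilpotent_endomorphisms}, so it reflects $(-)_{\leq 0}$.

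What each route buys:
\begin{itemize}
\item Your route avoids computing $i^R i$ and avoids the hypotheses of Proposition \ref{prop:left_t_exactness_suffices_for_composition}, such as full faithfulness on hearts. The price is that it relies on identifying $\mathcal{M}\otimes\cC$ with $y$-torsion $\bS[y]$-modules in $\cC$, which is what gives conservativity.
\item The paper's route stays within the adjunction-based toolkit it uses repeatedly, for instance in Proposition \ref{prop:approx_by_deformed_for_t_categories}. It only needs $i^R i$ to be a sum of non-positive shifts.
\item You also verify explicitly that $\Id\otimes F$ is $t$-exact and that its image generates the target. This is what identifies the $t$-structure from Proposition \ref{prop:t_structure_on_nilpotent_endomorphisms} on the target with the one singled out in Proposition \ref{prop:-n_coconnectivity_via_monads}; the paper leaves this point implicit.
\end{itemize}
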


\begin{proof}
	By Proposition \ref{prop:-n_coconnectivity_via_monads}, the $(-n)$-coconnectivity of $F$ means that the functor $\Cone(\Id_{\cC}\to F^R\circ F)[n]$ is left $t$-exact, and we need to prove that the same holds for the adjunction counit of the functor \eqref{eq:id_otimes_F}. As in Proposition \ref{prop:t_structure_on_nilpotent_endomorphisms} we consider the functor $i:\Sp\to\Mod_{y\hy\tors}\bS[y]$ (restriction of scalars for $y\mapsto 0$). Consider the composition 
\begin{equation*}
		\Phi:\cC\xto{i\otimes\cC}(\Mod_{y\hy\tors}\bS[y])\otimes\cC\xto{\Id\otimes F} (\Mod_{y\hy\tors}\bS[y])\otimes\cD.
\end{equation*}
	By Proposition \ref{prop:left_t_exactness_suffices_for_composition} it suffices to show that the functor
	$\Cone((i\otimes\cC)^R\circ (i\otimes\cC)\to\Phi^R\circ\Phi)[n]$ is left $t$-exact. This follows from the isomorphism
	\begin{multline*}
		\Cone((i\otimes\cC)^R\circ (i\otimes\cC)\to\Phi^R\circ\Phi)[n]\\
		\cong \Cone(\Id_{\cC}\to F^R\circ F)[n]\oplus \Cone(\Id_{\cC}\to F^R\circ F)[n-1].\qedhere
	\end{multline*}
\end{proof}

We recall that for $\cC\in\Cat^{\perf}$ and for $s\geq 1$ the higher nil spectrum $N^s K(\cC)$ is defined as $K(\cC\otimes\Perf(\bS[x_1,\dots,x_s]))^{\red},$ where we take the reduced part in the simplicial sense and we consider the assignment $[s]\mapsto \bS[x_1,\dots,x_s]$ as a simplicial $\bE_{\infty}$-ring as explained in \cite[Section 8]{E25c}. On the level of homotopy groups we have
\begin{multline*}
N^s K_j(\cC)\cong \coker(\biggplus[1\leq i\leq s]K_j(\cC\otimes\Perf(\bS[x_1,\dots,x_{i-1},x_{i+1},\dots,x_s]))\\
\to K_j(\cC\otimes\Perf(\bS[x_1,\dots,x_s]))),
\end{multline*}
see \cite{Wei89} for the $\Z$-linear version.

As usual, we denote by $N^s K^{\cont}$ the associated localizing invariant of dualizable categories.

\begin{cor}\label{cor:coconnectivity_estimates_for_N^s K}
	Let $F:\cC\to\cD$ be a $(-n)$-coconnective functor between dualizable $t$-categories, where $n\geq 1.$ Let $s\geq 1$ be an integer. Then the induced map $N^s K_j^{\cont}(\cC)\to N^s K_j^{\cont}(\cD)$ is an isomorphism for $j\geq -n+s,$ and a monomorphism for $j=-n+s-1.$ %Hence, the spectrum $\Cone(U_k^{\cont}(F))$ is also $(-n+2k-1)$-coconnective for $k\geq 0.$ 
\end{cor}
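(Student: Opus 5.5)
The plan is to induct on $s$, using the fundamental theorem for $K$-theory in its nilpotent form: for a dualizable $\cC$, the nil term $NK^{\cont}(\cC)$ is identified, up to a shift, with the reduced $K$-theory of the category of locally nilpotent endomorphisms. Concretely, the localization sequence attached to $\Mod_{y\hy\tors}\hy\bS[y]\to\Mod\hy\bS[y]\to\Mod\hy\bS[y^{\pm1}]$, tensored with $\cC$, identifies
\begin{equation*}
\Cone\bigl(K^{\cont}(\cC)\to K^{\cont}((\Mod_{y\hy\tors}\hy \bS[y])\otimes\cC)\bigr)\simeq \Sigma\, NK^{\cont}(\cC),
\end{equation*}
naturally in $\cC\in\Cat_{\st}^{\dual}$. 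Here the map is induced by $i\otimes\cC$, equivalently by any of the functors $x\mapsto(x,0)$, and its retraction is induced by the forgetful functor. I will use this as a standard input (cf. \cite[Section 8]{E25c}), applied with $y$ of degree $0$. Since $N^sK=N(N^{s-1}K)$ as localizing invariants, and $N^{s-1}K^{\cont}$ applied to $(\Mod_{y\hy\tors}\hy\bS[y])\otimes\cC$ is again computed by the same formula, it suffices to treat the case $s=1$ for every localizing invariant of the relevant shape. The cleanest route is a general statement of the following form: if $E$ is a localizing invariant such that $\Cone(E^{\cont}(F))$ is $(-n+s-1)$-coconnective for every $(-n)$-coconnective $F$ (and every $n$), then $\Cone((NE)^{\cont}(F))$ is $(-n+s)$-coconnective.

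For the case $s=1$ with $E=K$, consider the commutative square whose rows are the maps $K^{\cont}(\cC)\to K^{\cont}((\Mod_{y\hy\tors}\hy\bS[y])\otimes\cC)$ and the analogous map for $\cD$, and whose columns are induced by $F$ and $\Id\otimes F$. By Theorem \ref{th:coconnectivity_estimates}, $\Cone(K^{\cont}(F))$ is $(-n-1)$-coconnective. By Proposition \ref{prop:t_structure_on_nilpotent_endomorphisms_coconnectivity}, the functor $\Id\otimes F$ is $(-n)$-coconnective between dualizable $t$-categories, so Theorem \ref{th:coconnectivity_estimates} gives that its cone is $(-n-1)$-coconnective as well. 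Taking cofibers of the rows, the cone of $\Sigma NK^{\cont}(F)$ is a direct summand of the cone of $K^{\cont}(\Id\otimes F)$, because the rows are split via the forgetful functors, which are compatible with $F$. It is therefore $(-n-1)$-coconnective, so $\Cone(NK^{\cont}(F))$ is $(-n)$-coconnective. This is exactly the claim for $s=1$: an isomorphism for $j\geq -n+1$ and a monomorphism for $j=-n$.

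For the inductive step, replace $K$ by $N^{s-1}K$. The induction hypothesis, applied to both $F$ and $\Id\otimes F$ (the latter being $(-n)$-coconnective by Proposition \ref{prop:t_structure_on_nilpotent_endomorphisms_coconnectivity}), gives that their $N^{s-1}K^{\cont}$-cones are $(-n+s-2)$-coconnective. The same split-square argument then shifts the bound up by one and yields the claim for $N^sK$.

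The main obstacle is the identification of $\Sigma NK^{\cont}$ with the reduced $K$-theory of $(\Mod_{y\hy\tors}\hy\bS[y])\otimes\cC$ for dualizable $\cC$, which must be compatible with the splittings and hold for an arbitrary localizing invariant $E$ in place of $K$. To establish it, I would derive it from the short exact sequence above in $\Cat_{\st}^{\dual}$, tensored with $\cC$. I would use the projective line to split off $E(\cC)$ from $E(\cC\otimes\Perf\bS[y^{\pm1}])$, so that the cofiber of $E(\cC)\to E(\cC\otimes\Perf\bS[y])$ matches the reduced torsion term. The delicate point is checking that the correct copy of $NE$ appears, namely the nil term in $y$ rather than in $y^{-1}$; this I would handle by the symmetric role of the two nil terms in the fundamental theorem.
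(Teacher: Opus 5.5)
Your overall strategy is the paper's. You realize the nil term as a direct summand of a shifted reduced $K^{\cont}$ of the locally-nilpotent-endomorphism category, use Proposition \ref{prop:t_structure_on_nilpotent_endomorphisms_coconnectivity} to see that $\Id\otimes F$ is again $(-n)$-coconnective, and apply Theorem \ref{th:coconnectivity_estimates}. The paper handles all $s$ variables at once, via $(\Mod_{x^{-1}\hy\tors}\hy\bS[x^{-1}])^{\otimes s}$ and a shift by $[s]$, whereas you induct one variable at a time. This is equivalent. Your $y$ versus $y^{-1}$ worry is harmless: the paper simply uses $x^{-1}$-torsion modules, and in any case the two nil terms are the same invariant.

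There is, however, a concrete error in your key identification: the shift goes the other way. The correct formula is
\begin{equation*}
\Cone\bigl(K^{\cont}(\cC)\to K^{\cont}((\Mod_{y\hy\tors}\hy\bS[y])\otimes\cC)\bigr)\simeq\Omega\, NK^{\cont}(\cC).
\end{equation*}
Equivalently, $NK^{\cont}(\cC)$ is $\Sigma$ of the reduced $K^{\cont}$ of the nilpotent-endomorphism category. This is what the paper's identity $\wt{\cU}_{\loc}(\bS[x])\cong\Sigma\wt{\cU}_{\loc}(\Perf_{\infty}(\PP^{1,\flat}_{\bS}))$ says; classically it is $NK_j(R)\cong\Nil_{j-1}(R)$. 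It also falls out of your own localization sequence, since the fiber of $K(\cC[y])\to K(\cC[y^{\pm1}])$ is $K(\cC)\oplus\Omega NK(\cC)$.

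With your $\Sigma$, the summand argument would show $\Sigma\Cone(NK^{\cont}(F))\in\Sp_{\leq -n-1}$, i.e.\ $\Cone(NK^{\cont}(F))\in\Sp_{\leq -n-2}$. That bound is two degrees too strong and false in general (cf.\ Theorem \ref{th:sharpness_for_N^sK}). In particular, your step ``therefore $(-n)$-coconnective'' does not follow from what you wrote.

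What you actually conclude, that each variable raises the bound by exactly one, is what the corrected formula gives. Namely, $\Omega\Cone(N^sK^{\cont}(F))$ is a summand of $\Cone(N^{s-1}K^{\cont}(\Id\otimes F))\in\Sp_{\leq -n+s-2}$, so $\Cone(N^sK^{\cont}(F))\in\Sp_{\leq -n+s-1}$. Replacing $\Sigma$ by $\Omega$ throughout makes your argument correct and identical in substance to the paper's.
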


\begin{proof}
	Recall that we have the identification of reduced localizing motives $\wt{\cU}_{\loc}(\bS[x])\cong \Sigma \wt{\cU}_{\loc}(\Perf_{\infty}(\PP_{\bS}^{1,\flat}))$ which follows from the Beilinson's full exceptional collection $\la\cO(-1),\cO \ra$ on $\PP_{\bS}^{1,\flat}$ \cite{Bei78}, see for example \cite[Proof of Theorem 9.1]{E25c}. This isomorphism implies that $N^s K^{\cont}(\cC)$ is naturally a direct summand of $K^{\cont}((\Mod_{x^{-1}\hy\tors}\hy \bS[x^{-1}])^{\otimes s}\otimes \cC)[s],$ and similarly for $\cD.$ Hence, the spectrum $\Cone(N^s K^{\cont}(F))$ is a direct summand of the spectrum
	\begin{equation}\label{eq:cone_after_tensoring_by_s_th_power}
		\Cone(K^{\cont}((\Mod_{x^{-1}\hy\tors}\hy \bS[x^{-1}])^{\otimes s}\otimes \cC)\to K^{\cont}((\Mod_{x^{-1}\hy\tors}\hy \bS[x^{-1}])^{\otimes s}\otimes \cD))[s].
	\end{equation}
	By the inductive application of Propositions \ref{prop:t_structure_on_nilpotent_endomorphisms} and \ref{prop:t_structure_on_nilpotent_endomorphisms_coconnectivity}, we see that the tensor products $(\Mod_{x^{-1}\hy\tors}\hy \bS[x^{-1}])^{\otimes s}\otimes \cC$ and $(\Mod_{x^{-1}\hy\tors}\hy \bS[x^{-1}])^{\otimes s}\otimes \cD$ are naturally $t$-categories, and the functor
	\begin{equation*}
		(\Mod_{x^{-1}\hy\tors}\hy \bS[x^{-1}])^{\otimes s}\otimes \cC\to (\Mod_{x^{-1}\hy\tors}\hy \bS[x^{-1}])^{\otimes s}\otimes \cD
	\end{equation*}
	is $(-n)$-coconnective. Applying Theorem \ref{th:coconnectivity_estimates} again, we see that the spectrum \eqref{eq:cone_after_tensoring_by_s_th_power} is $(-n+s-1)$-coconnective. 
\end{proof}

We record another immediate application of the above results.

\begin{cor}\label{cor:coconnectivity_for_N^sK_of_t_categories}
Let $\cC$ be a dualizable $t$-category. Then we have
\begin{equation*}
N^s K_j^{\cont}(\cC) = 0\quad\text{for }s\geq 1,\,j\geq s-1.
\end{equation*}
\end{cor}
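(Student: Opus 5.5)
We would deduce the vanishing by comparing $\cC$ with its heart in the ``nil'' direction, using the same reduction as in the proof of Corollary \ref{cor:coconnectivity_estimates_for_N^s K}. By the Beilinson decomposition recalled there, $N^s K^{\cont}(\cC)$ is a natural direct summand of $K^{\cont}(\cC_s)[s]$, where $\cC_s=(\Mod_{x^{-1}\hy\tors}\hy \bS[x^{-1}])^{\otimes s}\otimes\cC$. More precisely, it is the summand cut out by the reduced parts: the cofiber of the maps induced by the $s$ functors $i\otimes\Id$, each inserting the zero endomorphism in one coordinate. So it suffices to show the following claim: the map on $K^{\cont}_{j}$ induced by the ``zero nilpotent endomorphism'' functor $\cC_{s-1}\to\cC_s$ (in one variable) is an isomorphism for $j\geq -1$. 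Iterating this over the $s$ variables, the reduced summand of $K^{\cont}(\cC_s)$ would then be $(-2)$-coconnective, i.e.\ concentrated in degrees $\leq -2$. After the shift by $s$, this gives $N^sK^{\cont}_j(\cC)=0$ for $j\geq s-1$, which is exactly the asserted range.

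To prove the claim, we would use Proposition \ref{prop:t_structure_on_nilpotent_endomorphisms}: $\cC_s$ is a dualizable $t$-category, and the functor $i\otimes\Id:\cC_{s-1}\to\cC_s$ is $t$-exact. Its heart is the category of pairs $(X,N)$ with $X\in\cC_{s-1}^{\heartsuit}$ and $N$ locally nilpotent. The inclusion of hearts $X\mapsto(X,0)$ is the d\'evissage situation of Proposition \ref{prop:loc_nilp_endomorphisms_coherently_assembled}.

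The key input is Theorem \ref{th:theorem_of_the_heart} with $m=0$, which gives $K^{\cont}_j(\cC^{\heartsuit})\cong K^{\cont}_j(\cC)$ for $j\geq -2$, together with a d\'evissage statement for continuous $K$-theory of coherently assembled abelian categories in degrees $\geq -1$. The d\'evissage statement is not yet proved at this point. We would obtain it directly via Theorem \ref{th:coconnectivity_estimates} with $n=1$, applied to $\check D(\cB)\to\check D(\cA)$, where $\cB=\cC_{s-1}^{\heartsuit}$ and $\cA$ is the nil-category. This functor is $t$-exact and generating. On $\Ext^0$ it is an isomorphism, because $\cB\subset\cA$ is full. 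On $\Ext^1$ it is a monomorphism, because $\cB$ is closed under subobjects and quotients, so an extension in $\cA$ of objects of $\cB$ that splits in $\cA$ already lies in $\cB$ and splits there. Theorem \ref{th:coconnectivity_estimates} then gives an isomorphism on $K^{\cont}_{j}$ for $j\geq -1$. Combined with the theorem of the heart on both sides, this proves the claim.

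The main obstacle is the bookkeeping that identifies the $N^s$-summand with the iterated reduced parts compatibly with all $s$ inclusions. Concretely, one must check that $\cC_s\cong(\Mod_{x^{-1}\hy\tors}\hy\bS[x^{-1}])\otimes\cC_{s-1}$ carries the $t$-structure inductively, so that the one-variable argument applies at each step. Equivalently, one can apply Corollary \ref{cor:coconnectivity_estimates_for_N^s K}'s technique with $n=1$ to each coordinate functor separately. If the d\'evissage step at $\Ext^1$ proves delicate, the fallback is to check $(-1)$-coconnectivity through Proposition \ref{prop:-n_coconnectivity_via_monads} instead. The relevant monad is $\Cone(\Id\to\Id\oplus\Id[-1])=\Id[-1]$ up to the forgetful functor, and $\Id[-1][1]$ is left $t$-exact.
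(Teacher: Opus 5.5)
Your proposal is correct and is essentially the paper's argument. Unwinding Corollary \ref{cor:coconnectivity_estimates_for_N^s K}, the paper also exhibits $N^sK^{\cont}(\cC)$ as a direct summand of $\Sigma^s\Cone\bigl(K^{\cont}(\cC_{s-1})\to K^{\cont}(\cC_s)\bigr)$, and it bounds this cone by Theorem \ref{th:coconnectivity_estimates} with $n=1$ applied to the $(-1)$-coconnective functor $i\otimes\Id$ --- exactly your fallback. Your ``iterating over the $s$ variables'' should be read as this splitting, coming from the forgetful left inverses of $i\otimes\Id$, so that a single edge suffices; iterating cofiber estimates instead would lose one degree per variable. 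Your main route, via Theorem \ref{th:theorem_of_the_heart} on both sides plus d\'evissage on the hearts, is also valid and even gives a monomorphism on $K^{\cont}_{-2}$. It only repackages the same input, since that d\'evissage is itself Theorem \ref{th:coconnectivity_estimates} with $n=1$, and its $\Ext^1$-injectivity follows simply from fullness of $\cB\subset\cA$.
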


\begin{proof}
Put $\cD = (\Mod_{x^{-1}\hy\tors}\hy\bS[x^{-1}])\otimes \cC.$ As in Proposition \ref{prop:t_structure_on_nilpotent_endomorphisms} consider the strongly continuous functor $i:\Sp\to\Mod_{x^{-1}\hy\tors}\hy\bS[x^{-1}]$ and put $F=i\otimes\cC:\cC\to\cD.$ By loc. cit. $\cD$ is naturally a dualizable $t$-category and the functor $F$ is $(-1)$-coconnective. Applying Corollary \ref{cor:coconnectivity_estimates_for_N^s K} and using again the isomorphism $\wt{\cU}_{\loc}(\bS[x])\cong \Sigma \wt{\cU}_{\loc}(\Perf_{\infty}(\PP_{\bS}^{1,\flat})),$ we obtain that for $s\geq 1$ the spectrum
\begin{equation*}
N^s K^{\cont}(\cC)\cong \Sigma\Cone(N^{s-1}K^{\cont}(F))
\end{equation*}
is $(s-2)$-coconnective. This proves the corollary.
\end{proof}

\section{Theorem of the heart for $KH$}
\label{sec:th_of_the_heart_for_KH}

We prove the following result.

\begin{theo}\label{th:theorem_of_the_heart_for_KH}
Let $\cC$ be a dualizable $t$-category. Then the realization functor induces an equivalence of spectra $KH^{\cont}(\cC^{\heartsuit})\xto{\sim} KH^{\cont}(\cC).$
\end{theo}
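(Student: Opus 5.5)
The idea is to follow the outline sketched in the introduction. We reduce to a $(-n)$-coconnective functor with $n$ large and then use the standard filtration of $KH$. Set $F:\check{D}(\cC^{\heartsuit})\to\cC$ for the realization functor. By Proposition \ref{prop:properties_of_functor_St_of_C_0_m_to_C} with $m=0$, together with Corollary \ref{cor:t_structure_on_check_St_of_C_0_m}, the functor $F$ is $(-2)$-coconnective in the sense of Definition \ref{def:-n_coconnective_functor}: it is an isomorphism on $\Ext^{\le 1}$ and a monomorphism on $\Ext^2$. It is also $t$-exact and strongly continuous, and its image generates $\cC$. For $k\geq 0$ let $U_k=\Fil_k KH$ denote the geometric realization over $\Delta^{op}_{\leq k}$ in \eqref{eq:definition_of_KH_intro}, taken in its continuous version. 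Then $U_0=K^{\cont}$ and $\indlim[k]U_k=KH^{\cont}$. It therefore suffices to show that $\indlim[k]\Cone(U_k(F))=0$. Since colimits commute with homotopy groups, this amounts to the following: for each $c$ and each $k$, the map $\pi_c\Cone(U_k(F))\to\pi_c\Cone(U_l(F))$ vanishes for $l\gg k$.

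First I bound each stage. The layers $\Cone(U_{k-1}\to U_k)$ are $\Sigma^k N^kK^{\cont}$, up to the usual identification of the skeletal filtration of a simplicial object with its normalized pieces. So Corollary \ref{cor:coconnectivity_estimates_for_N^s K} with Theorem \ref{th:coconnectivity_estimates} gives the following: if $G$ is $(-n)$-coconnective, then $\Cone(U_k(G))$ is roughly $(-n+2k-1)$-coconnective. This bound gets worse as $k$ grows, so coconnectivity of the $U_k$ alone cannot kill the colimit. The key step is to \emph{improve $n$ by passing through the tower of Proposition \ref{prop:approx_by_deformed_for_t_categories}}. Apply it to $F$, getting $t$-exact functors $\cC_0=\check{D}(\cC^\heartsuit)\to\cC_1\to\dots$ with colimit $\cC$. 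By Propositions \ref{prop:-n_coconnectivity_via_monads} and \ref{prop:approx_by_deformed_for_t_categories}, the functor $\cC_j\to\cC$ is $(-(2^j(n-1)+1))$-coconnective. Each step $\cC_j\to\cC_{j+1}$ is a deformed tensor algebra, so it sits in the short exact sequence of Proposition \ref{prop:ses_for_deformed}. This mirrors the reduction in the proof of Theorem \ref{th:coconnectivity_estimates}, and it identifies $\Cone(U(\cC_j\to\cC_{j+1}))$ with $\Cone(U(\Theta))$ up to a retract.

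I then argue by induction on $m$, with the precise claim that $\tau_{\geq c}\Cone(U_k(F))\to\tau_{\geq c}\Cone(U_l(F))$ is null for $c\geq -2^m(n-1)+2k-1$ and $l-k\geq 2^m-1$. The mechanism is $\A^1$-homotopy. The map $U_k\to U_{k+1}$ kills the ``first nil direction'', so elements coming from a nil term $N^sK$ die after one more filtration step. One way to see this: the composite $U_k(\cC)\to U_{k+1}(\cC)$ factors through the evaluation $x\mapsto 0$ up to homotopy, on the part supported on the $k$-th layer. Combining this with the coconnectivity from the first step, the induction runs as follows. A class in degree $c$ of $\Cone(U_k(F))$ lifts to the approximation $\cC_j$ with $j$ chosen so that $\cC_j\to\cC$ is coconnective enough to kill it after doubling. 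Each doubling of the coconnectivity range costs doubling the number $l-k$ of filtration steps needed, which is where $2^m$ appears. Taking $n=2$ and letting $m\to\infty$ for fixed $c,k$ gives the vanishing of the colimit, hence Theorem \ref{th:theorem_of_the_heart_for_KH}.

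The main obstacle is this induction step. I need a precise statement that the transition $U_k\to U_l$ annihilates the truncated cone. I would first try to prove the following lemma. For a $(-n)$-coconnective $G$, the map $\Cone(U_k(G))\to\Cone(U_{2k+1}(G))$ factors, in degrees $\geq -2n+2k-1$, through $\Cone(U_k(G'))$ for a $(-2n+1)$-coconnective $G'$. To get it, I would use the deformed tensor algebra sequence together with the splitting $\Theta^R\Theta\cong\Id\oplus\Fiber(f)$ of Proposition \ref{prop:ses_for_deformed}, and the simplicial contraction of $\bS[x_1,\dots,x_s]$.
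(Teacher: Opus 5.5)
Your outline has the right architecture: the $(-2)$-coconnectivity of the realization functor, the bound that $\Cone(U_k(F))$ is $(-n+2k-1)$-coconnective, and the tower of Proposition \ref{prop:approx_by_deformed_for_t_categories}. You even have the exact numerology ($c\geq -2^m(n-1)+2k-1$, $l-k\geq 2^m-1$) of the paper's inductive statement. But the induction step, which you flag as the main obstacle, is where all the content lies, and the proposal does not supply it.

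The heuristic that the transition $U_k\to U_{k+1}$ ``kills the first nil direction'' is not a usable general principle. What one actually has, from \cite{E25c}, is that $\wt{\cU}_{\loc}(\bS[x])\otimes M_k\to\wt{\cU}_{\loc}(\bS[x])\otimes M_{k+1}$ is null. Here $M_k$ is the realization of the $k$-skeleton of $\cU_{\loc}(\Delta^{\bullet})$. To use this you need $\Cone(\cU_{\loc}^{\cont}(F))$ to be a retract of $\wt{\cU}_{\loc}(\bS[x])\otimes\Cone(\cU_{\loc}^{\cont}(F))$. That holds for $\cC\to\Mod_{T(G)}(\cC)$ when $T(G)$ is a \emph{non-deformed} tensor algebra: its grading gives a map of monads $T(G)\to\bS[x]\otimes T(G)$, split by $x\mapsto 1$ (Proposition \ref{prop:A^1-action_on_tensor_algebra}). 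The steps of your tower are \emph{deformed} tensor algebras, which carry no grading. So the simplicial contraction of $\bS[x_1,\dots,x_s]$ alone will not produce the null-homotopies you need.

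The missing idea is how to trade a deformed step for a non-deformed one while roughly doubling coconnectivity. Consider a single step $F:\cC'\to\Mod_{T^{\deff}(G)}(\cC')$ with $G[n]$ left $t$-exact.
\begin{itemize}
\item One has $\Cone(U_j(F))\simeq\Cone(U_j(i_2^R\circ\Psi))$ and $\Theta^R\circ\Theta\cong\Id\oplus G[-1]$.
\item The inclusion of the summand $G[-1]$ induces a map of monads $T(G[-1])\to\Theta^R\circ\Theta$. This gives a factorization $\Theta\simeq\Phi\circ\Lambda$ through $\cB=\Mod_{T(G[-1])}(\cC')$.
\item The functor $\Phi:\cB\to\cE$ is $(-2n-1)$-coconnective, since $\Cone(\Lambda^R\Lambda\to\Lambda^R\Phi^R\Phi\Lambda)\cong\bigoplus_{m\geq 2}G[-1]^{\circ m}[1]$.
\item In the cofiber sequence $\Cone(U_j(i_2^R\Psi\Phi))\to\Cone(U_j(i_2^R\Psi))\to\Cone(U_j(\Phi))[1]$, the inductive hypothesis for $\Phi$ kills the level-$k$-to-level-$(l-1)$ transition on the third term after $\tau_{\geq c}$.
\item So that transition factors through the first term. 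The first term's transition from level $l-1$ to level $l$ is null, because $i_2^R\Psi\Phi$ is left inverse to $\Lambda$ and Proposition \ref{prop:A^1-action_on_tensor_algebra} applies to $\Lambda$.
\end{itemize}
This is Lemma \ref{lem:A_implies_B}.

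One then replaces $F$ by $\cC_0\to\cC_{m+1}$, using your first-step bound for $\cC_{m+1}\to\cC$. The $m+1$ deformed steps of the tower are chained via Proposition \ref{prop:composition_of_maps_between_filtered_spectra_vanishes}, with the step $\cC_{m-i}\to\cC_{m-i+1}$ costing $2^i$ filtration steps.

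Your proposed lemma has the wrong shape. It has target $U_{2k+1}$ and factors through $\Cone(U_k(G'))$. But the number of extra filtration steps must be independent of $k$. Also, the more coconnective functor enters through a cofiber sequence, together with an $\A^1$-contraction of the complementary piece, not as a factorization.
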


To prove Theorem \ref{th:theorem_of_the_heart_for_KH} we will deal with the natural filtration on $KH.$ For $k\geq 0$ we consider the localizing invariant $U_k=\Fil_k KH:\Cat^{\perf}\to \Sp,$ i.e.
\begin{equation}\label{eq:definition_of_U_k}
U_k(\cC) = \indlim[\substack{[n]\in \Delta_{\leq k}^{op}}] K(\cC\otimes \Perf(\bS[x_1,\dots,x_n])).
\end{equation}
These functors form a direct sequence $U_0\to U_1\to\dots.$ We have 
\begin{equation*}
U_0(\cC)\cong K(\cC),\quad \indlim[k]U_k(\cC)\cong KH(\cC),
\end{equation*}
and
\begin{equation*}
\Cone(U_k(\cC)\to U_{k+1}(\cC))\cong N^{k+1} K(\cC)[k+1],\quad k\geq 0,\,\cC\in\Cat^{\perf}. 
\end{equation*}
For convenience we put $N^0 K = K.$

Throughout this section we will systematically use the notion of a $(-n)$-coconnective functor from Definition \ref{def:-n_coconnective_functor}. Our first step is to deduce the coconnectivity estimates for $\Cone(U_k^{\cont}(F)),$ where $F$ is $(-n)$-coconnective for some $n\geq 1.$

\begin{cor}\label{cor:coconnectivity_estimates_for_U_k}
Let $F:\cC\to\cD$ be a $(-n)$-coconnective functor between dualizable $t$-categories, where $n\geq 1.$ Then for $k\geq 0$ the spectrum $\Cone(U_k^{\cont}(F))$ is $(-n+2k-1)$-coconnective.
\end{cor}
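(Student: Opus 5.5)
The plan is to induct on $k$, using the cofiber sequences
\begin{equation*}
U_{k-1}^{\cont}\to U_k^{\cont}\to N^k K^{\cont}[k],\quad k\geq 1,
\end{equation*}
which hold for the continuous extensions because $F\mapsto F^{\cont}$ is exact in $F$: it is built from $\Omega F(\Calk_{\omega_1}(-)^{\omega})$, and cofiber sequences of localizing invariants remain cofiber sequences. Applying these to $F:\cC\to\cD$ and taking cones produces a cofiber sequence of spectra
\begin{equation*}
\Cone(U_{k-1}^{\cont}(F))\to\Cone(U_k^{\cont}(F))\to\Cone(N^kK^{\cont}(F))[k].
\end{equation*}

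For the base case $k=0$ we have $U_0^{\cont}=K^{\cont}$. By Theorem \ref{th:coconnectivity_estimates}, the map $K_j^{\cont}(F)$ is an isomorphism for $j\geq -n$ and a monomorphism for $j=-n-1$. The long exact sequence then shows that $\pi_j\Cone(K^{\cont}(F))=0$ for $j\geq -n$, so this cone is $(-n-1)$-coconnective, which is the claim for $k=0$.

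For the inductive step, take $s=k\geq 1$ in Corollary \ref{cor:coconnectivity_estimates_for_N^s K}. It says $N^kK_j^{\cont}(F)$ is an isomorphism for $j\geq -n+k$ and a monomorphism for $j=-n+k-1$. Hence $\Cone(N^kK^{\cont}(F))$ is $(-n+k-1)$-coconnective, and after the shift $[k]$ the third term is $(-n+2k-1)$-coconnective. The first term is $(-n+2k-3)$-coconnective by the induction hypothesis, so it is in particular $(-n+2k-1)$-coconnective. Coconnective spectra of a fixed bound are closed under extensions, so the middle term $\Cone(U_k^{\cont}(F))$ is $(-n+2k-1)$-coconnective.

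The only point needing care is the identification of the cofiber of $U_{k-1}^{\cont}\to U_k^{\cont}$ with $N^kK^{\cont}[k]$ on dualizable categories, since the paper states it only on $\Cat^{\perf}$. I expect this to follow directly from the uniqueness and exactness of the continuous extension in \cite[Theorem 4.10]{E24}. Beyond that, the estimate is a routine bookkeeping of degrees.
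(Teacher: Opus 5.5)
Your proof is correct and is essentially the paper's argument: the paper observes that $\Cone(U_k^{\cont}(F))$ has a finite filtration with subquotients $\Cone(N^sK^{\cont}(F))[s]$, $0\leq s\leq k$, and combines Theorem \ref{th:coconnectivity_estimates} with Corollary \ref{cor:coconnectivity_estimates_for_N^s K}; your induction on $k$ simply unwinds that filtration one step at a time, with the same degree bookkeeping. The point you flag about the continuous extension needs no appeal to uniqueness: the formula $F^{\cont}(\cC)=\Omega F(\Calk_{\omega_1}(\cC)^{\omega})$ is visibly exact in $F$, so the cofiber sequences $U_{k-1}\to U_k\to N^kK[k]$ pass directly to the continuous versions.
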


\begin{proof}
This follows from Theorem \ref{th:coconnectivity_estimates} and Corollary \ref{cor:coconnectivity_estimates_for_N^s K} since the spectrum $\Cone(U_k^{\cont}(F))$ has a finite filtration with subquotients $\Cone(N^s K^{\cont}(F))[s],$ $0\leq s\leq k.$
\end{proof}

To prove Theorem \ref{th:theorem_of_the_heart_for_KH} we will consider certain statements depending on four integer parameters (we do not claim that they hold for all values of the parameters, this is certainly not the case). Namely, for $n\geq 1,$ $c\in\Z$ and $0\leq k\leq l$ consider the statements

\begin{itemize}
	\item $A(n,c,k,l).$ If $F:\cC\to\cD$ is a $(-n)$-connective functor between dualizable $t$-categories, then the map of spectra
	\begin{equation*}
	\tau_{\geq c}\Cone(U_k^{\cont}(F))\to \tau_{\geq c}\Cone(U_l^{\cont}(F))
	\end{equation*}
	is null-homotopic.
	\item $B(n,c,k,l).$ Let $\cC$ be a dualizable $t$-category, and suppose that $G:\cC\to\cC$ is a continuous exact endofunctor with a map $G\to\Sigma_{\cC},$ such that $G[n]$ is left $t$-exact. Consider the functor $F:\cC\to\Mod_{T^{\deff}(G)}(\cC).$ Then the map of spectra
	\begin{equation*}
		\tau_{\geq c}\Cone(U_k^{\cont}(F))\to \tau_{\geq c}\Cone(U_l^{\cont}(F))
	\end{equation*} 
	is null-homotopic.
\end{itemize}

Note that $B(n,c,k,l)$ is a special case of $A(n,c,k,l)$ by Proposition \ref{prop:t_structures_for_deformed_tensor_algebra}. The following lemma is crucial for the proof of Theorem \ref{th:theorem_of_the_heart_for_KH}.

\begin{lemma}\label{lem:A_implies_B}
Let $n\geq 1,$ $c\in\Z$ and $0\leq k<l.$ Then the statement $A(2n+1,c-1,k,l-1)$ implies the statement $B(n,c,k,l).$
\end{lemma}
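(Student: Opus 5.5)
The plan is to use the short exact sequence of Proposition \ref{prop:ses_for_deformed} to replace $\Cone(U_k^{\cont}(F))$ by a shifted cone of a $(-n-1)$-coconnective functor $\Theta$. I then factor $\Theta$ into two steps. The first step is a genuine (non-deformed) tensor algebra, which an $\A^1$-homotopy argument kills at the cost of one step of the filtration $U_\bullet$. The second step is $(-(2n+1))$-coconnective, so the hypothesis $A(2n+1,c-1,k,l-1)$ applies to it.

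\emph{Step 1: reduction to $\Theta$.} Put $\cD=\Mod_{T^{\deff}(G)}(\cC)$ and $H=\Fiber(G\to\Sigma_{\cC})$. As in the proof of Theorem \ref{th:coconnectivity_estimates} we have the sequence \eqref{eq:ses_for_deformed_tensor_algebra} $0\to\cE\to\cC\oright_H\cC\to\cD\to0$, together with $\Theta:\cC\to\cE$ and its left inverse $i_2^R\circ\Psi$. Applying the localizing invariant $U_k^{\cont}$ instead of $K^{\cont}$ gives, naturally in $k$,
\begin{equation*}
\Cone(U_k^{\cont}(F))\simeq\Cone(U_k^{\cont}(i_2^R\circ\Psi))\simeq\Sigma\Cone(U_k^{\cont}(\Theta)).
\end{equation*}
The second equivalence uses the splitting $U_k^{\cont}(\cE)\simeq U_k^{\cont}(\cC)\oplus\Cone(U_k^{\cont}(\Theta))$, which is natural in the invariant. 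Since $\tau_{\geq c}\Sigma=\Sigma\tau_{\geq c-1}$, it suffices to prove that $\tau_{\geq c-1}X_k\to\tau_{\geq c-1}X_l$ is null, where $X_j=\Cone(U_j^{\cont}(\Theta))$. By Proposition \ref{prop:t_structures_for_deformed_tensor_algebra} the functor $\Theta$ is $(-n-1)$-coconnective. Its monad is $\Theta^R\Theta\cong\Id_{\cC}\oplus\Fiber(f)$, split by Proposition \ref{prop:ses_for_deformed}\ref{counit_splits}, and $\Fiber(f)[n+1]$ is left $t$-exact.

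\emph{Step 2: factor $\Theta$.} Apply the first step of Proposition \ref{prop:approx_by_deformed_tensor_algebras} to $\Theta$. This gives $\Theta=\Theta_2\circ\Theta_1$ with
\begin{equation*}
\Theta_1:\cC\to\cC_1=\Mod_{T^{\deff}(\Cone(\Id\to\Theta^R\Theta))}(\cC),\qquad \Theta_2:\cC_1\to\cE.
\end{equation*}
Because the unit $\Id\to\Theta^R\Theta$ is split, the map $\Cone(\Id\to\Theta^R\Theta)\to\Sigma_{\cC}$ is zero. Hence the deformed tensor algebra defining $\cC_1$ is an ordinary tensor algebra $T(H')$, with $H'[n+1]$ left $t$-exact. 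By the inductive bound in Proposition \ref{prop:approx_by_deformed_for_t_categories}, applied at the stage $k=1$ with $n+1$ in place of $n$, the functor $\Cone(\Id_{\cC_1}\to\Theta_2^R\Theta_2)[2n+1]$ is left $t$-exact. By Proposition \ref{prop:-n_coconnectivity_via_monads}, $\Theta_2$ is therefore $(-(2n+1))$-coconnective between dualizable $t$-categories. So $A(2n+1,c-1,k,l-1)$ gives that $\tau_{\geq c-1}\Cone(U_k^{\cont}(\Theta_2))\to\tau_{\geq c-1}\Cone(U_{l-1}^{\cont}(\Theta_2))$ is null.

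\emph{Step 3: the tensor algebra step and the combination.} I claim that $\Cone(U_j^{\cont}(\Theta_1))\to\Cone(U_{j+1}^{\cont}(\Theta_1))$ is null for every $j$. The idea is to scale $H'$ by $t$. This gives a family $\Mod_{T(tH')}(\cC)$ over $\bS[t]$, interpolating between $\Mod_{T(H')}(\cC)$ at $t=1$ and the split-square-zero situation at $t=0$. At $t=0$ the composite $\cC_1\to\cC\to\cC_1$ is $x\mapsto T(H')\otimes_{T(H')}$-augmentation. This yields an $\A^1$-homotopy between $\Id_{\cC_1}$ and the endofunctor of $\cC_1$ given by the augmentation to $\cC$ followed by induction back along $\Theta_1$. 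This endofunctor factors through $\Theta_1$, so it acts trivially on the relative term. An $\A^1$-homotopy of localizing invariants becomes a homotopy after one step of the filtration $U_j\to U_{j+1}$, by the simplicial definition \eqref{eq:definition_of_U_k}. This is where the passage from index $l-1$ to $l$ comes from.

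Finally, take the cofiber sequences $Y_j\to X_j\to Z_j$ of cones of $\Theta_1$, $\Theta$ and $\Theta_2$, which are compatible in $j$. By Step 2, the composite $\tau_{\geq c-1}X_k\to X_{l-1}\to Z_{l-1}$ is null. Hence $\tau_{\geq c-1}X_k\to X_{l-1}$ factors through $Y_{l-1}$. By Step 3, composing further to $X_l$ factors through the null map $Y_{l-1}\to Y_l$. So $\tau_{\geq c-1}X_k\to X_l$ is null, which proves $B(n,c,k,l)$.

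I expect Step 3 to be the main obstacle. It requires setting up the $\A^1$-deformation of $\Mod_{T(H')}(\cC)$ to its augmentation inside $\Cat_{\st}^{\dual}$ compatibly with all the functors. It also requires verifying that the resulting homotopy costs exactly one filtration step, and relative to $\cC$ rather than absolutely. A secondary point to check is that the bound of Proposition \ref{prop:approx_by_deformed_for_t_categories} applies to $\Theta$, whose source monad is split. I expect to argue this directly from the filtration on $T(H')$, whose graded pieces $H'^{\circ m}$ for $m\geq2$ are left $t$-exact after the shift $[2n+2]$.
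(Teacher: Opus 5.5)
Your proposal is correct and is essentially the paper's proof. Your factorization $\Theta=\Theta_2\circ\Theta_1$ through $\cC_1\simeq\Mod_{T(G[-1])}(\cC)$ is exactly the paper's chain $\cC\xto{\Lambda}\cB\xto{\Phi}\cE$, with $\Phi$ shown to be $(-2n-1)$-coconnective by the same filtration estimate, and the final cofiber-sequence combination is the same, only written for the cones of $\Theta$ rather than of its left inverse $i_2^R\circ\Psi$. Your Step 3 is the paper's Proposition \ref{prop:A^1-action_on_tensor_algebra}, which the paper proves using the same grading coaction $T(G)\to\bS[x]\otimes T(G)$ underlying your $\A^1$-homotopy; it phrases this as exhibiting $\Cone(\cU_{\loc}^{\cont}(F))$ as a retract of $\wt{\cU}_{\loc}(\bS[x])\otimes\Cone(\cU_{\loc}^{\cont}(F))$ rather than as a homotopy between $\Id_{\cC_1}$ and the augmentation-then-induction endofunctor.
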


To prove this implication we use a general observation on monads which are (non-deformed) tensor algebras. 

\begin{prop}\label{prop:A^1-action_on_tensor_algebra}
Let $\cC$ be a dualizable category and let $G:\cC\to\cC$ be a continuous exact endofunctor. Consider the monad $T(G),$ put $\cD=\Mod_{T(G)}(\cC),$ and denote by $F:\cC\to\cD$ the left adjoint to the forgetful functor. Then for any $k\geq 0$ the map $\Cone(U_k^{\cont}(F))\to\Cone(U_{k+1}^{\cont}(F))$ is null-homotopic.   
\end{prop}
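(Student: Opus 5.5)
The plan is to use the grading on the tensor algebra $T(G)$, i.e. a $\mathbb{G}_m$-action (an $\A^1$-homotopy), to produce an $\A^1$-homotopy between $F$ and a functor factoring through $\cC$. Concretely, consider the $\bS[t]$-linear family of monads $T(tG)$ on $\cC\otimes\Perf(\bS[t])$, obtained by rescaling the degree-$m$ part by $t^m$. This gives a dualizable category
\begin{equation*}
\cD_t=\Mod_{T(t G)}(\cC\otimes\Mod\hy\bS[t])
\end{equation*}
over $\bS[t]$. Its fiber at $t=1$ is $\cD$. Its fiber at $t=0$ is $\Mod_{T(0)}(\cC)\simeq\cC$, with monad $\Id$ and $F$ specializing to $\Id_{\cC}$. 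Equivalently, I would use the strongly continuous exact functor
\begin{equation*}
H:\cD\to\cD\otimes\Mod\hy\bS[t],\quad M=\biggplus[m]M_m\text{ (grading via the }T(G)\text{-action)}\mapsto\text{``}M[t]\text{ with }G\text{ acting via }t\text{''}.
\end{equation*}
The cleanest way to set this up is as a coaction of $\bS[t]$ on $\cD$ compatible with $F$. It is induced by the map of monads $T(G)\to T(G)\otimes\bS[t]$ sending $G\mapsto G\otimes t$.

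Next, evaluating this family at $t=1$ and $t=0$ gives two functors $\cD\to\cD$, namely $\mathrm{ev}_1\circ H\simeq\Id_{\cD}$ and $\mathrm{ev}_0\circ H\simeq F\circ F'$. Here $F':\cD\to\cC$ is the augmentation, i.e. the base change along $T(G)\to\Id$, which is strongly continuous and satisfies $F'\circ F\simeq\Id_{\cC}$. The same holds compatibly over $\cC$, since $H\circ F\simeq F\otimes\bS[t]$ because the image of $F$ is ``free''. Hence the pair $(F,\Id_\cD)$ and the pair $(F,F\circ F')$ define two maps of arrows $(\cC\to\cD)\to(\cC\to\cD)$, and these are connected by an elementary $\A^1$-homotopy, i.e. a single map into $(\cC\to\cD)\otimes\Perf(\bS[t])$.

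Now apply $\Cone(U^{\cont}_\bullet(-))$. The standard fact is that for a localizing invariant $E$, the filtration step $U_k\to U_{k+1}$ kills the difference of $\mathrm{ev}_0$ and $\mathrm{ev}_1$. More precisely, for any $\cA$ the two maps $U_k(\cA\otimes\Perf(\bS[t]))\to U_k(\cA)\to U_{k+1}(\cA)$ given by $\mathrm{ev}_0$ and $\mathrm{ev}_1$ become homotopic after composing to $U_{k+1}$. This follows from the simplicial formula \eqref{eq:definition_of_U_k}: the homotopy is given by the $1$-simplices of $\Delta^\bullet$, which shift the filtration degree by one. Applying this to the arrow $\cC\to\cD$, the map $\Cone(U_k^{\cont}(F))\to\Cone(U_{k+1}^{\cont}(F))$ is homotopic to the one induced by the endomorphism $(\Id_\cC,F\circ F')$ of the arrow. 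That endomorphism factors through the arrow $\Id:\cC\to\cC$, whose cone is zero, so the map is null-homotopic.

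The main obstacle is making the $\A^1$-homotopy rigorous in the dualizable setting, i.e. constructing $H$ as a strongly continuous $\bS[t]$-linear functor compatible with $F$, and verifying the "$U_k\to U_{k+1}$ kills $\A^1$-homotopies" statement for $U_k^{\cont}$ at the level of cones. The second point follows formally from the case of $\Cat^{\perf}$, since $U_k^{\cont}$ is determined by $U_k$ and commutes with $-\otimes\Perf(\bS[t])$. The first point I would handle via the universal property of $T(G)$ as a free monad together with \cite[Proposition C.1]{E24}.
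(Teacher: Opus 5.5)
Your argument is essentially the paper's: the same coaction $T(G)\to\bS[t]\otimes T(G)$, $G\mapsto t\otimes G$, whose evaluation at $t=1$ is the identity of $\cD$ and at $t=0$ is $F\circ F'$ (which kills the cone), combined with the fact that $\A^1$-homotopies die under $U_k\to U_{k+1}$. The paper phrases this at the level of motives: it shows $\Cone(\cU_{\loc}^{\cont}(F))$ is a retract of $\wt{\cU}_{\loc}(\bS[x])\otimes\Cone(\cU_{\loc}^{\cont}(F))$, and it cites \cite{E25c} for the null-homotopy of $\wt{\cU}_{\loc}(\bS[x])\otimes M_k\to\wt{\cU}_{\loc}(\bS[x])\otimes M_{k+1}$; that null-homotopy is equivalent to your ``$\mathrm{ev}_0\simeq\mathrm{ev}_1$ after $U_k\to U_{k+1}$'' input, is its rigorous source (your $1$-simplex remark only literally covers $k=0$), and gives naturality for free.
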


\begin{proof}
It is convenient to prove a stronger statement on localizing motives. Namely, for $k\geq 0$ we put $M_k=\indlim[\substack{[n]\in\Delta_{\leq k}^{op}}] \cU_{\loc}(\Delta^n)\in\Mot^{\loc}.$ It suffices to prove that the map
\begin{equation*}
\Cone(\cU_{\loc}^{\cont}(F))\otimes M_k\to \Cone(\cU_{\loc}^{\cont}(F))\otimes M_{k+1}
\end{equation*}
is null-homotopic in $\Mot^{\loc}.$ To see this, we first recall that by \cite[Proofs of Theorem 8.1 and Proposition 8.2]{E25c} the map
\begin{equation*}
\wt{\cU}_{\loc}(\bS[x])\otimes M_k\to \wt{\cU}_{\loc}(\bS[x])\otimes M_{k+1}
\end{equation*}
is null-homotopic. Thus, it suffices to show that the object $\Cone(\cU_{\loc}^{\cont}(F))$ is a retract of the object $\wt{\cU}_{\loc}(\bS[x])\otimes \Cone(\cU_{\loc}^{\cont}(F)).$ This follows from the standard argument: informally speaking, the non-negative grading on the tensor algebra $T(G)$ corresponds to the multiplicative $\A^{1,\flat}_{\bS}$-action. More precisely, consider the map of monads on $\cC,$
\begin{equation}\label{eq:coaction_on_tensor_algebra}
T(G)\to \bS[x]\otimes T(G)\quad\text{in }\Alg_{\bE_1}(\Fun^L(\cC,\cC)),
\end{equation}
which by adjunction corresponds to the composition
\begin{equation*}
G\xto{x\otimes G}\bS[x]\otimes G\to \bS[x]\otimes T(G).
\end{equation*}
The map \eqref{eq:coaction_on_tensor_algebra} has a left inverse induced by the map of $\bE_1$-rings $\bS[x]\to\bS,$ $x\mapsto 1.$ Passing to the categories of modules over the monads, we get the retraction
\begin{equation*}
\Mod_{T(G)}(\cC)\to (\Mod\hy\bS[x])\otimes \Mod_{T(G)}(\cC)\to \Mod_{T(G)}(\cC). 
\end{equation*}
Passing to motives, we get a retraction
\begin{equation*}
\cU_{\loc}^{\cont}(\Mod_{T(G)}(\cC))\to \cU_{\loc}(\bS[x])\otimes \cU_{\loc}^{\cont}(\Mod_{T(G)}(\cC))\to \cU_{\loc}^{\cont}(\Mod_{T(G)}(\cC)).
\end{equation*}
Now, the object $\Cone(\cU_{\loc}^{\cont}(F))$ is naturally a direct summand of $\cU_{\loc}^{\cont}(\Mod_{T(G)}(\cC)).$ Passing to reduced motives, we obtain the desired retraction
\begin{equation*}
\Cone(\cU_{\loc}^{\cont}(F))\to \wt{\cU}_{\loc}(\bS[x])\otimes \Cone(\cU_{\loc}^{\cont}(F))\to \Cone(\cU_{\loc}^{\cont}(F)).
\end{equation*}
This proves the proposition.
\end{proof}

\begin{proof}[Proof of Lemma \ref{lem:A_implies_B}]
Suppose that $A(2n+1,c-1,k,l-1)$ holds. Let $G:\cC\to\cC$ and $F:\cC\to\cD$ be as in $B(n,c,k,l).$ Put $H=\Fiber(G\to\Sigma_{\cC}).$ Consider the short exact sequence from Proposition \ref{prop:ses_for_deformed}, with modified notation:
\begin{equation*}
0\to \cE\xto{\Psi} \cC\oright_H \cC\to\cD\to 0.
\end{equation*}
We denote by $i_1.i_2:\cC\to\cC\oright_H \cC$ the standard inclusions as in loc. cit., and denote by $i_2^R$ the right adjoint to $i_2.$ Arguing as in the proof of Theorem \ref{th:coconnectivity_estimates}, we obtain the isomorphisms
\begin{equation*}\Cone(U_j^{\cont}(F))\cong \Cone(U_j^{\cont}(i_2^R\circ\Psi)),\quad j\geq 0,
\end{equation*}
compatible with the transition maps. As in Proposition \ref{prop:ses_for_deformed}, we consider the functor $\Theta:\cC\to\cE,$ right inverse to $i_2^R\circ\Psi.$ Recall that $\Theta(\cC)$ generates $\cE$ by construction, and by part \ref{counit_splits} of loc.cit., the adjunction counit $\Id_{\cC}\to \Theta^R\circ\Theta$ is a split monomorphism, giving a direct sum decomposition $\Theta^R\circ\Theta\cong \Id_{\cC}\oplus G[-1].$ Consider the morphism of monads $T(G[-1])\to \Theta^R\circ\Theta,$ corresponding by adjunction to the inclusion of $G[-1]$ as a direct summand. Put $\cB=\Mod_{T(G[-1])}(\cC).$ Then the maps of monads $\Id_{\cC}\to T(G[-1])\to\Theta^R\circ\Theta\to\Id_{\cC}$ correspond to the chain of (strongly continuous exact) functors
\begin{equation*}
\cC\xto{\Lambda}\cB\xto{\Phi}\cE\xto{i_2^R\circ\Psi}\cC.
\end{equation*}
By Proposition \ref{prop:t_structures_for_deformed_tensor_algebra} and Proposition \ref{prop:t_structure_on_dualizable_via_functor_from_abelian} \ref{sufficient_for_t_exactness}, $\cB$ and $\cE$ are naturally dualizable $t$-categories and the functors $\Lambda,$ $\Phi$ and $i_2^R\circ\Psi$ are $t$-exact.

We claim that the functor $\Phi:\cB\to\cE$ is $(-2n-1)$-coconnective. By Proposition \ref{prop:-n_coconnectivity_via_monads} we need to check that the functor $\Cone(\Id_{\cB}\to \Phi^R\circ\Phi)[2n+1]$ is left $t$-exact. By Proposition \ref{prop:left_t_exactness_suffices_for_composition} it suffices to show that the functor $\Cone(\Lambda^R\circ\Lambda\to\Lambda^R\circ\Phi^R\circ\Phi\circ\Lambda)[2n+1]$ is left $t$-exact. This is clear by construction: the latter functor is isomorphic to
\begin{equation*}
(\biggplus[m\geq 2]G[-1]^{\circ m})[2n+2]\cong\biggplus[m\geq 2] G[n]^{\circ m}[(2-m)(n+1)], 
\end{equation*}  
and $G[n]$ is left $t$-exact by assumption.

For $j\geq 0$ consider the cofiber sequence
\begin{equation*}
\Cone(U_j^{\cont}(i_2^R\circ\Psi\circ\Phi))\to \Cone(U_j^{\cont}(i_2^R\circ\Psi))\to \Cone(U_j^{\cont}(\Phi))[1].
\end{equation*}
Applying $A(2n+1,c-1,k,l-1)$ to $\Phi,$ we see that the map
\begin{equation*}
\tau_{\geq c}(\Cone(U_k^{\cont}(\Phi))[1])\to \tau_{\geq c}(\Cone(U_{l-1}^{\cont}(\Phi))[1])
\end{equation*}
is null-homotopic. Hence, the map 
\begin{equation*}
\tau_{\geq c}\Cone(U_k^{\cont}(i_2^R\circ\Psi))\to \tau_{\geq c}\Cone(U_{l-1}^{\cont}(i_2^R\circ\Psi))
\end{equation*} factors through $\tau_{\geq c}\Cone(U_{l-1}^{\cont}(i_2^R\circ\Psi\circ\Phi)).$ Now the functor $i_2^R\circ\Psi\circ\Phi$ is left inverse to $\Lambda.$ Applying Proposition \ref{prop:A^1-action_on_tensor_algebra} we see that the map
\begin{equation*}
\Cone(U_{l-1}^{\cont}(i_2^R\circ\Psi\circ\Phi))\to \Cone(U_l^{\cont}(i_2^R\circ\Psi\circ\Phi))
\end{equation*}
is null-homotopic. We conclude that the map
\begin{equation*}
\tau_{\geq c}\Cone(U_k^{\cont}(i_2^R\circ\Psi))\to \tau_{\geq c}\Cone(U_l^{\cont}(i_2^R\circ\Psi))
\end{equation*}
is null-homotopic. This proves the implication $A(2n+1,c-1,k,l-1)\Rightarrow B(n,c,k,l).$
\end{proof}

Before proving Theorem \ref{th:theorem_of_the_heart_for_KH} we spell out a trivial sufficient condition for vanishing of a composition of maps between spectra equipped with finite filtrations. As usual, we denote by $[m]$ the totally ordered set $\{0,1,\dots,m\}$ for $m\geq 0.$

\begin{prop}\label{prop:composition_of_maps_between_filtered_spectra_vanishes}
Let $m\geq 1$ and $c$ be integers, and let $\Phi:[m]\times [m]\to\Sp$ be a functor. Suppose that the following conditions hold.
\begin{enumerate}[label=(\roman*),ref=(\roman*)]
	\item $\Phi(0,i)=0$ for $0\leq i\leq m.$ \label{first_column_vanishes}
	\item The map 
	\begin{equation*}
		\tau_{\geq c}\Cone(\Phi(m-i-1,i)\to \Phi(m-i,i))\to \tau_{\geq c}\Cone(\Phi(m-i-1,i+1)\to \Phi(m-i,i+1)) 
	\end{equation*} 
	is null-homotopic for $0\leq i\leq m-1.$ \label{from_row_i_to_row_i+1}
\end{enumerate}
Then the map $\tau_{\geq c}\Phi(m,0)\to\tau_{\geq c}\Phi(m,m)$ is null-homotopic. 
\end{prop}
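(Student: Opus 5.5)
We argue by induction on $m$, peeling off the diagonal of the grid one step at a time. The guiding picture is that the map $\Phi(m,0)\to\Phi(m,m)$ factors through the points $(m,i)$, and each step in the second coordinate can be controlled using the filtration in the first coordinate.

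For $0\leq j\leq m$ we prove the following claim: the map $\tau_{\geq c}\Phi(j,m-j)\to\tau_{\geq c}\Phi(j,m)$ is null-homotopic. Define $a_j\colon \Phi(j,m-j)\to\Phi(j,m)$ to be the map induced by the functor.

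The base case $j=0$ holds by \ref{first_column_vanishes}, since $\Phi(0,m)=0$. The case $j=m$ is exactly the statement of the proposition.

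For the inductive step, assume the claim for $j-1$ and set $i=m-j$. Consider the commutative diagram of cofiber sequences
\begin{equation*}
\Phi(j-1,i)\to\Phi(j,i)\to C_i,\qquad \Phi(j-1,i+1)\to\Phi(j,i+1)\to C_{i+1},
\end{equation*}
where $C_s=\Cone(\Phi(j-1,s)\to\Phi(j,s))$. By \ref{from_row_i_to_row_i+1}, the map $\tau_{\geq c}C_i\to\tau_{\geq c}C_{i+1}$ is null-homotopic.

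We would like to conclude that $\tau_{\geq c}\Phi(j,i)\to\tau_{\geq c}\Phi(j,i+1)$ factors through $\tau_{\geq c}\Phi(j-1,i+1)$. This needs care, because $\tau_{\geq c}$ does not preserve cofiber sequences. We handle it as follows. Let $f$ denote the composite $\tau_{\geq c}\Phi(j,i)\to\Phi(j,i+1)\to C_{i+1}$. This composite factors as $\tau_{\geq c}\Phi(j,i)\to\tau_{\geq c}C_i\to\tau_{\geq c}C_{i+1}\to C_{i+1}$, and is therefore null. Hence the map $\tau_{\geq c}\Phi(j,i)\to\Phi(j,i+1)$ lifts to a map $\tau_{\geq c}\Phi(j,i)\to\Phi(j-1,i+1)$. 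Since the source is $c$-connective, this lift factors through $\tau_{\geq c}\Phi(j-1,i+1)$.

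Now compose with the map $\tau_{\geq c}\Phi(j-1,i+1)\to\tau_{\geq c}\Phi(j-1,m)$. Since $i+1=m-(j-1)$, the induction hypothesis says this map is null. It follows that the composite $\tau_{\geq c}\Phi(j,i)\to\tau_{\geq c}\Phi(j-1,m)\to\tau_{\geq c}\Phi(j,m)$ is null.

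By functoriality of $\Phi$, this composite equals $\tau_{\geq c}a_j$, which completes the inductive step.

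The only subtle point in the argument is the lifting step, where truncation fails to commute with cofibers. It is handled by working with the non-truncated target and using the universal property of $\tau_{\geq c}$ for maps out of connective objects. Everything else is formal bookkeeping of the commutative grid.
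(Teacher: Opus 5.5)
Your proof is correct and is essentially the paper's argument. The paper inducts on $m$: it uses condition (ii) at $i=0$ to factor $\tau_{\geq c}\Phi(m,0)\to\tau_{\geq c}\Phi(m,1)$ through $\tau_{\geq c}\Phi(m-1,1)$, then applies the induction hypothesis to the shifted grid $\Psi(i,j)=\Phi(i,j+1)$; unwound, this is exactly your chain of claims that $\tau_{\geq c}\Phi(j,m-j)\to\tau_{\geq c}\Phi(j,m)$ is null, and your explicit lifting through the untruncated cofiber fills in the factorization step that the paper states without comment.
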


\begin{proof}
We apply induction on $m.$ The base $m=1$ is tautological.

Let $m\geq 2$ and suppose that the statement holds for $m-1.$ Let $\Phi:[m]\times [m]\to \Sp$ be a functor satisfying the conditions \ref{first_column_vanishes} and \ref{from_row_i_to_row_i+1}. Applying \ref{from_row_i_to_row_i+1} to $i=0$ we see that the map $\tau_{\geq c}\Phi(m,0)\to \tau_{\geq c}\Phi(m,1)$ factors through $\tau_{\geq c}\Phi(m-1,1).$ Hence, it suffices to show that the map $\tau_{\geq c}\Phi(m-1,1)\to \tau_{\geq c}\Phi(m-1,m)$ is null-homotopic. This follows from the induction hypothesis applied to the functor
\begin{equation*}
\Psi:[m-1]\times [m-1]\to\Sp,\quad \Psi(i,j)=\Phi(i,j+1).\qedhere
\end{equation*}
\end{proof}

We now prove a vanishing result which is in fact stronger than Theorem \ref{th:theorem_of_the_heart_for_KH}.

\begin{prop}\label{prop:statements_S_m_and_T_m}
For $m\geq 0$ the following statements hold.
\begin{itemize}
	\item $(S_m):$ the statement $A(n,c,k,l)$ holds if 
	\begin{equation*}
	n\geq 1,\quad k\geq 0,\quad l-k\geq 2^m -1,\quad c\geq -2^m(n-1)+2k-1.
	\end{equation*}
	\item $(T_m):$ the statement $B(n,c,k,l)$ holds if
	\begin{equation*}
		n\geq 1,\quad k\geq 0,\quad l-k\geq 2^m,\quad c\geq -2^{m+1} n+2k.
	\end{equation*}
\end{itemize}
\end{prop}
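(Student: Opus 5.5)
We prove $(S_m)$ and $(T_m)$ by a simultaneous induction on $m$, in the order $(S_0)$, then $(S_m)\Rightarrow(T_m)$ via Lemma \ref{lem:A_implies_B}, then $(T_m)$ together with the approximation by deformed tensor algebras $\Rightarrow(S_{m+1})$.

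\emph{Base case $(S_0)$.} Here $l-k\geq 0$ and $c\geq -(n-1)+2k-1=-n+2k$. By Corollary \ref{cor:coconnectivity_estimates_for_U_k} the spectrum $\Cone(U_k^{\cont}(F))$ is $(-n+2k-1)$-coconnective. Hence $\tau_{\geq c}\Cone(U_k^{\cont}(F))=0$, and any map out of it is null.

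\emph{Step $(S_m)\Rightarrow(T_m)$.} Let $n,c,k,l$ satisfy the hypotheses of $(T_m)$, so $l-k\geq 2^m$ and $c\geq -2^{m+1}n+2k$. By Lemma \ref{lem:A_implies_B} it suffices to verify $A(2n+1,c-1,k,l-1)$. We check the hypotheses of $(S_m)$ with $n'=2n+1$:
\begin{equation*}
(l-1)-k\geq 2^m-1,\qquad -2^m(n'-1)+2k-1=-2^{m+1}n+2k-1\leq c-1.
\end{equation*}
Both hold, so $(S_m)$ gives $A(2n+1,c-1,k,l-1)$.

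\emph{Step $(T_m)$ (and the earlier statements) $\Rightarrow(S_{m+1})$.} Let $F:\cC\to\cD$ be $(-n)$-coconnective, with $l-k\geq 2^{m+1}-1$ and $c\geq -2^{m+1}(n-1)+2k-1$. Apply Proposition \ref{prop:approx_by_deformed_for_t_categories} to obtain $\cC=\cC_0\to\cC_1\to\cdots$ with $\indlim\cC_p\simeq\cD$. Each step $\cC_p\to\cC_{p+1}$ is of the form $\cC_p\to\Mod_{T^{\deff}(G_p)}(\cC_p)$ with $G_p[n_p]$ left $t$-exact, where $n_p=2^p(n-1)+1$. Since $U_k^{\cont}$ commutes with filtered colimits in $\Cat_{\st}^{\dual}$, the cone of $U_k^{\cont}(F)$ is the colimit of the cones of $U_k^{\cont}(\cC_0\to\cC_p)$.

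For large $p$, the cone of $U_k^{\cont}(\cC_p\to\cD)$ is highly coconnective, since $\cC_p\to\cD$ is $(-n_p)$-coconnective (from the left $t$-exactness in Proposition \ref{prop:approx_by_deformed_for_t_categories} and Proposition \ref{prop:-n_coconnectivity_via_monads}). So in the degrees $\geq c$ we only need to handle finitely many deformed tensor algebra steps $\cC_p\to\cC_{p+1}$, $0\leq p<P$. The cone for $\cC_0\to\cC_P$ carries a finite filtration whose graded pieces are the cones for $\cC_p\to\cC_{p+1}$.

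Apply Proposition \ref{prop:composition_of_maps_between_filtered_spectra_vanishes} to the grid $\Phi(a,i)$, defined as the cone of $U_{k_i}^{\cont}(\cC_0\to\cC_a)$, for an increasing sequence $k=k_0<k_1<\cdots$. Condition \ref{from_row_i_to_row_i+1} there is exactly $B(n_p,c,k_i,k_{i+1})$, supplied by $(T_m)$ for the relevant step. This needs $k_{i+1}-k_i\geq 2^m$ and $c\geq -2^{m+1}n_p+2k_i$. The step $p=0$ uses $n_0=n$. The steps with $p\geq 1$ have much larger $n_p$, so the coconnectivity estimates from Corollary \ref{cor:coconnectivity_estimates_for_U_k} kill them directly, or $(T_m)$ applies with room to spare.

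\emph{Main obstacle.} The hardest part is the bookkeeping. The budget $l-k\geq 2^{m+1}-1$ must cover both the $2^m$ shift coming from $(T_m)$ at $p=0$ and the part handled by $(S_m)$ for the composite $\cC_1\to\cD$, which is $(-(2n-1))$-coconnective and so needs $l-k\geq 2^m-1$. Splitting the map as
\begin{equation*}
U_k\to U_{k+2^m}\to U_{k+2^{m+1}-1}
\end{equation*}
fits exactly: $2^m+(2^m-1)=2^{m+1}-1$. For the degree condition, the middle index $k+2^m$ gives $-2^m(2n-2)+2(k+2^m)-1\leq c$, which should follow from $c\geq -2^{m+1}(n-1)+2k-1$ after accounting for the $2^{m+1}$ term. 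I expect to need to adjust which of the two factors carries the larger $k$-index to make this inequality close.

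Finally, the cofiber sequence of cones for the composite $\cC_0\to\cC_1\to\cD$ gives the required null-homotopy, in the same way as in the proof of Lemma \ref{lem:A_implies_B}.
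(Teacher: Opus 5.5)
Your base case and the step $(S_m)\Rightarrow(T_m)$ coincide with the paper's. The passage to $(S_{m+1})$ has a gap as written, in two places.

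\emph{The grid paragraph.} The claim that the stages $p\geq 1$ are killed outright by Corollary \ref{cor:coconnectivity_estimates_for_U_k} is false. It needs $-n_p+2k-1\leq c-1$ with $c=-2^{m+1}(n-1)+2k-1$, i.e.\ $n_p\geq 2^{m+1}(n-1)+1$. For $n\geq 2$ this holds only when $p\geq m+1$. Moreover, using $(T_m)$ at each of the stages $0,\dots,m$ costs $2^m$ of $U$-index per stage, far more than the budget $2^{m+1}-1$.

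\emph{The two-step split.} This is the right idea, but as you suspected the order must be reversed. The order is forced by the shape of the argument, not only by the inequality. Write $F=F_1\circ F_{01}$ with $F_{01}:\cC_0\to\cC_1$ and $F_1:\cC_1\to\cD$, and use the cofiber sequences $\Cone(U_j^{\cont}(F_{01}))\to\Cone(U_j^{\cont}(F))\to\Cone(U_j^{\cont}(F_1))$, natural in $j$. A null-homotopy on $\tau_{\geq c}$ of the third term from $j$ to $j'$ lets $\tau_{\geq c}\Cone(U_j^{\cont}(F))\to\tau_{\geq c}\Cone(U_{j'}^{\cont}(F))$ factor through $\tau_{\geq c}\Cone(U_{j'}^{\cont}(F_{01}))$, as in the proof of Lemma \ref{lem:A_implies_B}. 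A null-homotopy on the first term at the earlier indices gives no comparable factorization.

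So the tail must come first. By Propositions \ref{prop:approx_by_deformed_for_t_categories} and \ref{prop:-n_coconnectivity_via_monads}, $F_1$ is $(-(2n-1))$-coconnective, so $(S_m)$ gives $A(2n-1,c,k,k+2^m-1)$. Its degree condition $c\geq -2^m(2n-2)+2k-1$ is exactly your hypothesis. Then $(T_m)$ gives $B(n,c,k+2^m-1,k+2^{m+1}-1)$, because $-2^{m+1}n+2(k+2^m-1)=-2^{m+1}(n-1)+2k-2<c$. With the split $U_k\to U_{k+2^m-1}\to U_{k+2^{m+1}-1}$ in this order the induction step closes. The finiteness discussion and the grid then become unnecessary.

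The paper instead runs the tower for $m+1$ steps. Since $\cC_{m+1}\to\cD$ is $(-2^{m+1}(n-1)-1)$-coconnective, $(S_0)$ puts $\Cone(U_k^{\cont}(\cC_{m+1}\to\cD))$ in $\Sp_{\leq c-1}$. Hence $\tau_{\geq c}\Cone(U_k^{\cont}(F_{0,m+1}))\to\tau_{\geq c}\Cone(U_k^{\cont}(F))$ is an equivalence. Stage $p$ is then handled by $(T_{m-p})$ on $U_{k+2^{m-p}-1}\to U_{k+2^{m-p+1}-1}$, and everything is assembled by Proposition \ref{prop:composition_of_maps_between_filtered_spectra_vanishes}. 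So the paper's induction step uses $(S_0)$ and all $(T_i)$, $i\leq m$.

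Your corrected argument is the same mechanism with the recursion kept inside the hypothesis $(S_m)$. The tower attached to $F_1$ is $\cC_1\to\cC_2\to\cdots$, and unrolling $(S_m)$ reproduces exactly the paper's allocation of $U$-indices and its degree bounds. Your version is shorter and needs only $(S_m)$ and $(T_m)$; the paper's makes the whole allocation explicit in one step.
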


\begin{proof}
We apply induction on $m.$ We already know that $(S_0)$ holds: this is exactly the assertion of Corollary \ref{cor:coconnectivity_estimates_for_U_k}. 

Next, for $m\geq 0$ we have the implication $(S_m)\Rightarrow (T_m).$ Indeed, suppose that $(S_m)$ holds and let $(n,c,k,l)$ be as in $(T_m).$ Then the quadruple $(2n+1,c-1,k,l-1)$ satisfies the assumptions of $(S_m),$ so $A(2n+1,c-1,k,l-1)$ holds. The latter statement implies $B(n,c,k,l)$ by Lemma \ref{lem:A_implies_B}. This proves that $(S_m)$ implies $(T_m).$ 

It remains to prove $(S_{m+1})$ assuming $(S_0)$ and $(T_i)$ for $0\leq i\leq m.$ Let $(n,c,k,l)$ be as in $(S_{m+1}).$ We need to prove that $A(n,c,k,l)$ holds. We may and will assume that $l=k+2^{m+1}-1$ and $c=-2^{m+1}(n-1)+2k-1.$ 

Let $F:\cC\to\cD$ be a $(-n)$-coconnective functor between dualizable $t$-categories. We apply the construction from Proposition \ref{prop:approx_by_deformed_tensor_algebras}. By Propositions \ref{prop:approx_by_deformed_for_t_categories} and \ref{prop:t_structures_for_deformed_tensor_algebra}, it gives a direct sequence of dualizable $t$-categories $\cC=\cC_0\to\cC_1\to\dots,$ such that $\indlim[i]\cC_i\simeq\cD$ and each functor $F_i:\cC_i\to\cD$ is $(-2^i(n-1)-1)$-coconnective. Moreover, if $F_{ij}:\cC_i\to\cC_j$ are the transition functors for $0\leq i\leq j,$ then $F_{0,i}(\cC_0)$ generates $\cC_i,$ and each monad $F_{i,i+1}^R\circ F_{i,i+1}$ is isomorphic to a deformed tensor algebra of some $G_i:\cC_i\to\cC_i$ such that $G_i[2^i(n-1)+1]$ is left $t$-exact.

Applying $(S_0)$ to the functor $F_{m+1}:\cC_{m+1}\to\cD,$ we see that $\tau_{\geq c}\Cone(U_k^{\cont}(F_{m+1}))=0.$ Hence, we have an equivalence
\begin{equation*}
\tau_{\geq c}\Cone(U_k^{\cont}(F_{0,m+1}))\xto{\sim} \tau_{\geq c}\Cone(U_k^{\cont}(F)).
\end{equation*}
Thus, it suffices to prove that the map
\begin{equation}\label{eq:null_homotopy_for_F_0_m+1}
\tau_{\geq c} \Cone(U_k^{\cont}(F_{0,m+1}))\to \tau_{\geq c}\Cone(U_l^{\cont}(F_{0,m+1}))
\end{equation}
is null-homotopic. We will apply Proposition \ref{prop:composition_of_maps_between_filtered_spectra_vanishes} to the following functor:
\begin{equation*}
\Phi:[m+1]\times [m+1]\to \Sp,\quad \Phi(i,j)=\Cone(U_{k+2^j-1}^{\cont}(F_{0,i})).
\end{equation*}
We need to check that the assumptions of loc. cit. hold. Clearly, we have $\Phi(0,i)=0$ for $0\leq i\leq m+1.$ It remains to see that for $0\leq i\leq m$ the map 
\begin{equation*}
\tau_{\geq c} \Cone(U_{k+2^i-1}^{\cont}(F_{m-i,m-i+1}))\to \tau_{\geq c}\Cone(U_{k+2^{i+1}-1}^{\cont}(F_{m-i,m-i+1}))
\end{equation*}
is null-homotopic. This follows from $(T_i).$ Indeed, the functor $G_{m-i}[2^{m-i}(n-1)+1]:\cC_{m-i}\to\cC_{m-i}$ is left $t$-exact, the monad $F_{m-i,m-i+1}^R\circ F_{m-i,m-i+1}$ is the deformed tensor algebra of $G_{m-i}$ and we have
\begin{equation*}
-2^{i+1}(2^{m-i}(n-1)+1)+2(k+2^i-1)=-2^{m+1}(n-1)+2k-2<c.
\end{equation*}
Applying Proposition \ref{prop:composition_of_maps_between_filtered_spectra_vanishes}, we obtain that the map \eqref{eq:null_homotopy_for_F_0_m+1} is null-homotopic, as required. This proves the proposition.
\end{proof}

\begin{proof}[Proof of Theorem \ref{th:theorem_of_the_heart_for_KH}]
The realization functor $F:\check{D}(\cC^{\heartsuit})\to\cC$ is $(-2)$-coconnective. It follows from Proposition \ref{prop:statements_S_m_and_T_m} that for any $k\geq 0$ and for any $c\in\Z$ there exists $l\geq k$ such that $A(2,c,k,l)$ holds. Namely, it suffices to take the smallest $m\geq 0$ such that $c\geq -2^m+2k-1,$ and put $l=k+2^m-1.$ Therefore, for any $c\in\Z$ we have
\begin{equation*}
\tau_{\geq c}\Cone(KH^{\cont}(F))\cong \indlim[k]\tau_{\geq c}\Cone(U_k^{\cont}(F))=0.
\end{equation*}
In other words, the map $KH^{\cont}(F)$ is an equivalence of spectra. This proves the theorem.
\end{proof}

\section{D\'evissage theorems for $K$-theory and homotopy $K$-theory}
\label{sec:devissage}

In this section we apply the results from Sections \ref{sec:coconnectivity_estimates} and \ref{sec:th_of_the_heart_for_KH} to deduce d\'evissage theorems for coherently assembled abelian categories. One way to approach this is by generalizing the construction from \cite[Theorem 0.3]{E25b}. However, we choose a slightly different but similar argument.

\begin{theo}\label{th:devissage_for_K_and_KH}
Let $\cA$ be a coherently assembled abelian category. Let $\cB\subset\cA$ be a coherently assembled abelian full subcategory, such that the inclusion functor is strongly continuous, and $\cB$ generates $\cA$ via extensions and filtered colimits. 

\begin{enumerate}[label=(\roman*),ref=(\roman*)]
\item The map $K_j^{\cont}(\cB)\to K_j^{\cont}(\cA)$ is an isomorphism for $j\geq -1,$ and a monomorphism for $j=-2.$ \label{devissage_for_K}
\item The map $KH^{\cont}(\cA)\to KH^{\cont}(\cB)$ is an equivalence of spectra. \label{devissage_for_KH}
\end{enumerate}
\end{theo}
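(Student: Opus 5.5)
We will reduce both statements to the coconnectivity results of Sections \ref{sec:coconnectivity_estimates} and \ref{sec:th_of_the_heart_for_KH} by producing a $(-1)$-coconnective functor between dualizable $t$-categories whose source has heart $\cB$ and whose target has heart $\cA$. The natural candidate is the functor $F:\check{D}(\cB)\to\check{D}(\cA)$ induced by the inclusion $i:\cB\to\cA$. By Corollary \ref{cor:coh_ass_abelian_general} both categories are dualizable, and $F$ is strongly continuous by Proposition \ref{prop:St_of_coherently_assembled}. The standard $t$-structures on $\check{D}(\cB)$ and $\check{D}(\cA)$ are compactly assembled and continuously bounded; this follows for instance from Proposition \ref{prop:t_structure_on_dualizable_via_functor_from_abelian} applied to $\cB\to\check{D}(\cB)$, whose Ext$^1$ condition holds. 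Since $i$ is exact, $F$ is $t$-exact, for example by part \ref{sufficient_for_t_exactness} of the same proposition.

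We will verify the remaining conditions of Definition \ref{def:-n_coconnective_functor} with $n=1$. For generation: $\cB$ generates $\cA$ via extensions and filtered colimits, and $\cA$ generates $\check{D}(\cA)$ as a localizing subcategory, so $F(\check{D}(\cB))$ generates $\check{D}(\cA)$. For the Ext condition, we need $\Ext^0$ (and negative Ext) to be isomorphisms and $\Ext^1_{\cB}(x,y)\to\Ext^1_{\cA}(x,y)$ to be injective for $x,y\in\cB^{\omega_1}$. Here $\Ext^{\le 0}$ is clear from full faithfulness. For injectivity, we use Proposition \ref{prop:devissage_condition_for_coherently_assembled}, which shows that $\cB$ is closed under subobjects and quotients in $\cA$, hence under kernels and cokernels. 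Consequently, an extension $0\to y\to e\to x\to 0$ in $\cB$ that splits in $\cA$ splits in $\cB$, because the splitting map lies in the full subcategory $\cB$ and $e\in\cB$. Thus $F$ is $(-1)$-coconnective.

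For part \ref{devissage_for_K}, Theorem \ref{th:coconnectivity_estimates} with $n=1$ then gives isomorphisms on $K^{\cont}_j$ for $j\ge -1$ and a monomorphism for $j=-2$, since $K^{\cont}(\cA)=K^{\cont}(\check{D}(\cA))$ by definition.

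For part \ref{devissage_for_KH}, the proof of Theorem \ref{th:theorem_of_the_heart_for_KH} used $(-2)$-coconnectivity only to feed Proposition \ref{prop:statements_S_m_and_T_m}. We will rerun that argument with $n=1$. The statement $(S_m)$ gives $A(1,c,k,l)$ whenever $c\ge 2k-1$ and $l-k\ge 2^m-1$. The bound on $c$ no longer improves with $m$, since $2^m(n-1)=0$, so $n=1$ alone does not suffice. This is the main obstacle.

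To get around it, we will first replace $F$ by the first step of the tower of Proposition \ref{prop:approx_by_deformed_tensor_algebras}. By Proposition \ref{prop:approx_by_deformed_for_t_categories}, $F_1:\cC_1\to\check{D}(\cA)$ is $(-2)$-coconnective, hence induces an equivalence on $KH^{\cont}$ by the argument of Theorem \ref{th:theorem_of_the_heart_for_KH}. It then remains to treat $\check{D}(\cB)\to\cC_1$, whose monad is a deformed tensor algebra $T^{\deff}(G)$ with $G[1]$ left $t$-exact.

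For this step we use Lemma \ref{lem:A_implies_B}: the statement $B(1,c,k,l)$ follows from $A(3,c-1,k,l-1)$. Since $n=3\ge 2$, $(S_m)$ supplies $A(3,c-1,k,l-1)$ for every $c$ once $m$ is large. Hence for each $c$ and $k$ there is $l$ with $\tau_{\ge c}\Cone(U^{\cont}_k)\to\tau_{\ge c}\Cone(U^{\cont}_l)$ null, and passing to the colimit over $k$ gives an equivalence on $KH^{\cont}$. Composing the two steps yields part \ref{devissage_for_KH}.
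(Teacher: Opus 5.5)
Part (i) of your proposal is correct and is the paper's argument. (Injectivity on $\Ext^1$ only needs fullness of $\cB\subset\cA$, not closure under subobjects and quotients.) Part (ii), however, has a genuine gap.

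The gap is your claim that your $F_1$, i.e. the functor $\Phi_1:\cC_1\to\check{D}(\cA)$ of Proposition \ref{prop:approx_by_deformed_for_t_categories}, is $(-2)$-coconnective. That proposition only gives left $t$-exactness of $\Cone(\Id_{\cC_k}\to\Phi_k^R\circ\Phi_k)[2^k(n-1)+1]$. For $n=1$ this shift is $1$ for every $k$. The same vanishing of $2^k(n-1)$ that you noticed in $(S_m)$ means the tower gives no improvement of coconnectivity at all.

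Concretely, put $C=\Cone(\Id\to\Phi_0^R\circ\Phi_0)$. The cone of $F_{01}^R\circ F_{01}\to\Phi_0^R\circ\Phi_0$ has graded pieces $C^{\circ m}[1]$ for $m\geq 2$. From left $t$-exactness of $C[1]$ you get left $t$-exactness of $C^{\circ 2}[2]$, but not of $C^{\circ 2}[3]$. By Proposition \ref{prop:left_t_exactness_suffices_for_composition}, $\Phi_1$ is therefore only $(-1)$-coconnective.

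This is not merely unproved but false in general. Take $\Vect_{\mk}\subset\Mod\hy\mk[x]/x^2$. There $\Phi_0^R\circ\Phi_0\cong E\otimes-$ with $\pi_{-i}E=\Ext^i(\mk,\mk)=\mk$ for all $i\geq 0$. The unit splits, so the deformed tensor algebra is the undeformed $T(\bar E\otimes -)$, with $\bar E=E/\mk$. The above cone then contains $(\bar E\otimes\bar E)[1]\otimes-$, which has $\mk[-1]\otimes-$ as a summand. So $\cC_1\to\check{D}(\cA)$ is in exactly the same position as the functor you started with, and your reduction is circular.

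The fix is to apply your single-step argument to every transition functor of the tower, not just the first, and then pass to the colimit.

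Each $\cC_k\to\cC_{k+1}$ has monad $T^{\deff}(G_k)$ with $G_k[1]$ left $t$-exact. Your treatment of such a step is correct: $B(1,c,k,l)$ follows from $A(3,c-1,k,l-1)$, which is just $(T_m)$ with $n=1$. It therefore shows that every map $KH^{\cont}(\cC_k)\to KH^{\cont}(\cC_{k+1})$ is an equivalence.

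Since $\check{D}(\cA)\simeq\indlim[k]\cC_k$ in $\Cat_{\st}^{\dual}$ and $KH^{\cont}$ commutes with filtered colimits there, (ii) follows. This is the structure of the paper's proof.

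The only difference is in the single step. The paper uses the $(-2)$-coconnective functor $\Theta:\cC\to\cE$ of Proposition \ref{prop:ses_for_deformed}, for which $\Cone(KH^{\cont}(F))\cong\Cone(KH^{\cont}(\Theta))[1]$, and then applies Theorem \ref{th:theorem_of_the_heart_for_KH}. You invoke $(T_m)$ via Lemma \ref{lem:A_implies_B} instead. Both single-step arguments are valid.
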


\begin{proof}
\ref{devissage_for_K} The assumption implies that the induced functor $\check{D}(\cB)\to\check{D}(\cA)$ is $(-1)$-coconnective. Hence, the statement of \ref{devissage_for_K} is a special case of Theorem \ref{th:coconnectivity_estimates}.

\ref{devissage_for_KH} Consider more generally a $(-1)$-coconnective functor $F:\cC\to\cD$ between dualizable $t$-categories. We need to prove that the map $KH^{\cont}(F)$ is an equivalence. Arguing as in the proof of Theorem \ref{th:coconnectivity_estimates} we reduce to the special case when the monad $F^R\circ F$ is isomorphic to a deformed tensor algebra of some $G:\cC\to\cC$ such that $G[1]$ is left $t$-exact. Arguing as in loc. cit. we find a dualizable $t$-category $\cE$ and a $(-2)$-coconnective functor $\Theta:\cC\to\cE$ such that $\Cone(KH^{\cont}(F))\cong \Cone(KH^{\cont}(\Theta))[1].$ The latter spectrum is zero by Theorem \ref{th:theorem_of_the_heart_for_KH} since we have $\cC^{\heartsuit}\simeq\cE^{\heartsuit}$ by Proposition \ref{prop:t_structure_on_dualizable_via_functor_from_abelian}. 
\end{proof}

We will see in the next section that in the situation of Theorem \ref{th:devissage_for_K_and_KH} the map $K_{-2}^{\cont}(\cB)\to K_{-2}^{\cont}(\cA)$ does not have to be surjective, even when $\cA$ and $\cB$ are locally coherent and linear over a field.

\begin{remark}
One can obtain a more precise version of part \ref{devissage_for_KH} of Theorem \ref{th:devissage_for_K_and_KH} when every object of $\cA$ is an extension of two objects of $\cB.$ Under this assumption a generalization of \cite[Theorem 0.3]{E25b} shows that the functor $F:\check{D}(\cB)\to\check{D}(\cA)$ satisfies the following property: the monad $F^R\circ F$ is isomorphic to a deformed tensor algebra of some continuous exact functor $G:\check{D}(\cB)\to\check{D}(\cB)$ such that $G[1]$ is left $t$-exact. By Proposition \ref{prop:statements_S_m_and_T_m} for any $m\geq 0,$ $k\geq 0$ and $c=-2^{m+1}+2k$ the map
\begin{equation*}
\tau_{\geq c}\Cone(U_k^{\cont}(F))\to\tau_{\geq c}\Cone(U_{k+2^m}^{\cont}(F))
\end{equation*}
is null-homotopic. Here $U_k=\Fil_k KH$ ($k\geq 0$) is the localizing invariant defined in \eqref{eq:definition_of_U_k}. 
\end{remark}

\section{Sharpness of the estimates}
\label{sec:sharpness_of_estimates}

\subsection{Estimates for theorems of the heart and d\'evissage}

In this section we prove the sharpness of our estimates from Corollary \ref{cor:theorem_of_the_heart_under_condition_on_Exts} and Theorem \ref{th:devissage_for_K_and_KH} \ref{devissage_for_K}, even when we are dealing with small dg resp. abelian categories over a field.

\begin{theo}\label{th:sharpness_of_estimates} Let $\mk$ be a field.
\begin{enumerate}[label=(\roman*),ref=(\roman*)]
\item  There exist small $\mk$-linear abelian categories $\cB\subset \cA$ such that the (fully faithful) inclusion functor $\cB\to\cA$ is exact, each object of $\cA$ is an extension of two objects of $\cB,$ and the map $K_{-2}(\cB)\to K_{-2}(\cA)$ is not surjective. \label{sharp_for_devissage}
\item For any $n\geq 1$ there exists a $\mk$-linear small $t$-category $\cC_n$ such that the maps $\Ext_{\cC_n^{\heartsuit}}^i(x,y)\to\Ext_{\cC_n}^i(x,y)$ are isomorphisms for $i\leq n$ and $x,y\in\cC_n^{\heartsuit},$ and the map $K_{-n-2}(\cC_n^{\heartsuit})\to K_{-n-2}(\cC_n)$ is not surjective. \label{sharp_for_th_of_the_heart}
\end{enumerate}
\end{theo}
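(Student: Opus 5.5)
The construction rests on the cuspidal cubic, as announced in the introduction. Let $X$ be the projective cuspidal cubic over $\mk$, and let $\cE=\mathrm{Vect}(X)$ be its exact category of vector bundles. The only properties used are that $X$ is proper and $NK_0(X)\neq 0$. The strategy is to use the exact category $\cE$ and its effaceable functors to realize a short exact sequence in $\Cat^{\perf}$
\begin{equation*}
0\to\Ac(\cE)\to\St(\cE^{\add})\to\St(\cE)^{\Kar}\to 0,
\end{equation*}
in which $\Ac(\cE)$ carries the bounded $t$-structure of Proposition \ref{prop:t_structure_on_Ac}, with heart $\eff(\cE)$. Since $\St(\cE^{\add})$ is generated by a weight structure with heart the additive category $\cE$, it is controlled by the split $K$-theory of $\cE$. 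The boundary map then identifies negative $K$-groups of $\Ac(\cE)$ with shifts of $K$ of $X$ and of the additive category $\cE$. For $X$ proper, $\cE$ is Krull--Schmidt-like and its split $K$-theory has no negative part. We would then have $K_{-1}(\Ac(\cE))\cong\coker(K_0^{\oplus}(\cE)\to K_0(X))$, and $K_{-j-1}(\Ac(\cE))\cong K_{-j}(X)$ for $j\geq 1$. To get nonvanishing in the low degrees we are after, we will base change along $\bS[x_1,\dots,x_s]$, or equivalently pass to nilpotent-endomorphism categories via Proposition \ref{prop:t_structure_on_nilpotent_endomorphisms}. This converts $NK_0(X)\ne 0$ into a class in a negative $K$-group of a $t$-category.

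For \ref{sharp_for_devissage}, we take $\cA=\Nil(\eff(\cE))$ of nilpotent endomorphisms, with $\cB=\eff(\cE)$ embedded by $N=0$, restricted to nilpotence of order $\le 2$ so that every object is an extension of two objects of $\cB$. Then $K(\cA)$ differs from $K(\cB)$ by the nil-term. By the $\PP^1$ identification used in Corollary \ref{cor:coconnectivity_estimates_for_N^s K}, the cokernel of $K_{-2}(\cB)\to K_{-2}(\cA)$ contains a copy of $N K_{-1}(\Ac(\cE))$, which is built from $NK_0(X)\ne0$. The realization equivalence $D^b(\Nil(\cA_0))\simeq\Perf_{y\text{-tors}}\otimes D^b(\cA_0)$ from Proposition \ref{prop:t_structure_on_nilpotent_endomorphisms}\ref{realization_is_equivalence_for_nil_endomorphisms} lets us compute $K(\cA)$ on the derived side. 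Truncating to square-zero should not change the relevant $K_{-2}$ quotient, since a nonzero class there already appears at nilpotence order two, by the swindle/retraction of Proposition \ref{prop:A^1-action_on_tensor_algebra}.

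For \ref{sharp_for_th_of_the_heart}, we build $\cC_n$ inductively using the deformed-tensor-algebra machinery, run in reverse. Start from $\Ac(\cE)$, or from a suitable $\Nil$-version with nonzero $K_{-3}$ not hit by its heart. We then want a $t$-category whose realization map agrees on $\Ext^{\le n}$ but misses $K_{-n-2}$. Following Proposition \ref{prop:properties_of_tilde_C}, the categories $\check{\St}(\cA_{[0,m]})$ interpolate between $D^b(\cA)$ and the target. We would take $\cC_n=\St(\cD_{[0,n-1]})$ for a suitable small $t$-category $\cD$ built from $\cE$ and $n$ copies of $\bS[x]$. Then $\Ext^{\le n}$ agrees with the heart by Proposition \ref{prop:properties_of_functor_St_of_C_0_m_to_C}. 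Nonsurjectivity on $K_{-n-2}$ is detected by comparing, via the cofiber sequence \eqref{eq:exact_triangle_K_theory_C_0m}, with $N^{n}K_0(X)$-type classes, which are nonzero for the cusp by the standard computation $NK_0(X)\supset \Omega_{\mk}$-style nil terms (iterating as $N^sK$).

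The main obstacle is the explicit $K$-theoretic computation showing the specific class is not in the image. We will attempt it by exhibiting a localizing invariant — $KH$ or its fiber $DK=\Fiber(K\to KH)$ — under which the heart's contribution vanishes by Theorem \ref{th:theorem_of_the_heart_for_KH} and Corollary \ref{cor:coconnectivity_for_N^sK_of_t_categories}, while the target's $K_{-n-2}$ retains $N^{n+1}K_{-1}$-type classes coming from $NK_0(X)\ne 0$.
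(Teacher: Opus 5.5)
\textbf{Part (ii) fails as proposed.} Your choice $\cC_n=\St(\cD_{[0,n-1]})$ defeats itself, for any $\cD$.

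Proposition \ref{prop:properties_of_functor_St_of_C_0_m_to_C} (with $m=n-1$) compares $\Ext^{\le n}$ in $\St(\cD_{[0,n-1]})$ with $\Ext^{\le n}$ in $\cD$. It does not compare with Yoneda $\Ext$ in the heart, so it does not give you the $\Ext$ claim. What it does give is that the required condition on $\cC_n$ is equivalent to $\Ext^i_{\cD^{\heartsuit}}\cong\Ext^i_{\cD}$ for $i\le n$. By Proposition \ref{prop:when_partial_realization_is_an_equivalence}, that condition is equivalent to the realization $D^b(\cD^{\heartsuit})\to\St(\cD_{[0,n-1]})=\cC_n$ being an equivalence. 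So whenever your $\cC_n$ satisfies the $\Ext$ hypothesis, the map $K_{-n-2}(\cC_n^{\heartsuit})\to K_{-n-2}(\cC_n)$ is an isomorphism.

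Your proposed starting categories have the same defect. Neeman gives $D^b(\eff(\cE))\simeq\Ac(\cE)$, and the nilpotent-endomorphism version inherits an equivalent realization by Proposition \ref{prop:t_structure_on_nilpotent_endomorphisms}(ii). So neither has a class in $K_{-3}$ missed by its heart. Passing to $KH$ or $DK$ does not help: Theorem \ref{th:theorem_of_the_heart_for_KH} only identifies the cofiber of $K(\cC^{\heartsuit})\to K(\cC)$ with the corresponding cofiber for $DK$, which gives no lower bound.

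The missing idea is to build $\cC_n$ as a \emph{kernel} that carries a \emph{left inverse} to its realization functor. Then a failure of surjectivity in one degree becomes a failure of injectivity of the left inverse one degree lower.
\begin{itemize}
\item For $n=1$: take $\cB\subset\cA$ from (i), and let $\cE_{\cA,\cB}$ be the category of $y\subset x$ in $\cA$ with $y,x/y\in\cB$. Put $\cC_1=\ker(D^b(\cE_{\cA,\cB})\to D^b(\cA))$. Then $D^b(\cE_{\cA,\cB})=\langle i_1D^b(\cB),i_2D^b(\cB)\rangle$ and $\cC_1^{\heartsuit}\simeq\cB$. In the sequence $K(\cC_1)\to K(\cB)^{\oplus 2}\to K(\cA)$, both components of the second map come from $\cB\subset\cA$. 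So the nonzero cokernel on $K_{-2}$ yields a nonzero kernel of $K_{-3}(i_2^R\circ\Phi)$, and $i_2^R\circ\Phi$ is left inverse to the realization.
\item For $n\to n+1$: with $\cD=\cC_n$, set $\cC_{n+1}=\ker(\St(\wt{\cD}_{[0,n]})\to\St(\cD_{[0,n]}))$. The $\Ext$ hypothesis on $\cD$, via Proposition \ref{prop:when_partial_realization_is_an_equivalence}, identifies both pieces of the decomposition in Proposition \ref{prop:properties_of_tilde_C} with $D^b(\cD^{\heartsuit})$. Part (iii) of that proposition gives the heart and $\Ext^{\le n+1}$. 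Theorem \ref{th:theorem_of_the_heart} gives $K_{-n-2}(\cD_{[0,n]})\cong K_{-n-2}(\cD)$, so the same argument moves the failure down to $K_{-n-3}$.
\end{itemize}
No $N^nK_0(X)$-type classes are needed; all the input comes from (i).

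\textbf{Part (i)} follows the paper's route (cusp, $\eff(\cE)$, nilpotent endomorphisms, $NK_{-1}(\eff(\cE))\cong NK_0(X)\neq 0$), but two steps need repair.
\begin{itemize}
\item Fixing the pair ``$f=0$'' $\subset$ ``$f^2=0$'' is unjustified. Proposition \ref{prop:t_structure_on_nilpotent_endomorphisms}(ii) computes the cokernel only for the full category $\Nil(\eff(\cE))$. Proposition \ref{prop:A^1-action_on_tensor_algebra} concerns the filtration on $KH$, not nilpotence order. Instead, write $\Nil(\eff(\cE))=\bigcup_k\cB_k$ with $\cB_k=\{f^k=0\}$. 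Since $K$ commutes with filtered colimits, if every $K_{-2}(\cB_k)\to K_{-2}(\cB_{k+1})$ were surjective, so would be $K_{-2}(\cB_1)\to K_{-2}(\Nil(\eff(\cE)))$. Hence \emph{some} pair $\cB_k\subset\cB_{k+1}$ works.
\item Extracting $NK_{-1}(\eff(\cE))\cong NK_0(X)$ from $0\to D^b(\eff(\cE))\to\St(\cE^{\add})\to\Perf(X)\to 0$ requires $NK_0(\cE^{\add})=NK_{-1}(\cE^{\add})=0$, not merely $K_{<0}(\cE^{\add})=0$. This is Lemma \ref{lem:non_positive_nil_groups_for_artinian}, which applies because endomorphism algebras of vector bundles on the proper curve $X$ are finite-dimensional.
\end{itemize}
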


We will need the following lemma, which is certainly known to experts, but we do not know a reference.

\begin{lemma}\label{lem:non_positive_nil_groups_for_artinian}
Let $\cE$ be an additive $\infty$-category, such that for any object $x\in\cE$ the ring $\pi_0\End_{\cE}(x)$ is left or right artinian. Then we have $N^s K_j(\cE)=0$ for $s\geq 1,$ $j\leq 0.$
\end{lemma}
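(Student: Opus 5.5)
The plan is to reduce to the case of a single artinian ring by a filtered colimit argument, and then use the nilpotence of the Jacobson radical together with the regularity of semisimple rings.

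\emph{Reduction to one object.} Since $K$-theory of additive categories commutes with filtered colimits, we write $\cE$ as a filtered union of the full additive subcategories generated by single objects $x$ (take finite direct sums). Each such subcategory is, after idempotent completion, equivalent to the category of finitely generated projective modules over the connective $\bE_1$-ring $R=\End_{\cE}(x)$. Nonpositive $K$-groups and nil groups are insensitive to idempotent completion in the relevant sense, since nonconnective $K$-theory is Karoubi invariant. The tensor product with $\Perf(\bS[x_1,\dots,x_s])$ also commutes with these colimits. It therefore suffices to show $N^sK_j(R)=0$ for $j\leq 0$ whenever $R$ is connective and $\pi_0R$ is left or right artinian.

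\emph{Reduction to $\pi_0$.} Apply Proposition \ref{prop:connectivity_estimates} with $n=1$ to the map $R[x_1,\dots,x_s]\to\pi_0(R)[x_1,\dots,x_s]$. This map is an isomorphism on $\pi_0$, so $K_j$ agrees for $j\leq 1$, and the same holds for every face of the cube used to define $N^s$. Hence $N^sK_j(R)\cong N^sK_j(\pi_0R)$ for $j\leq 0$, and we may assume $R=A$ is an ordinary artinian ring.

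\emph{The artinian case.} Let $J$ be the Jacobson radical, which is nilpotent, and let $A/J$ be semisimple. The ideal $J[x_1,\dots,x_s]\subset A[x_1,\dots,x_s]$ is also nilpotent. Proposition \ref{prop:connectivity_estimates} with $n=0$ then gives $K_j(A[x_1,\dots,x_s])\cong K_j((A/J)[x_1,\dots,x_s])$ for $j\leq 0$, compatibly with the cube structure. The ring $(A/J)[x_1,\dots,x_s]$ is regular noetherian, being a product of polynomial rings over division rings. For such rings $K_{<0}=0$ and $K_0$ is homotopy invariant, so $N^sK_j(A/J)=0$ for $j\leq 0$.

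The main point needing care is that the identifications of $N^s$ pass through the reductions. Since $N^sK_j$ is a cokernel of a map between groups indexed by faces, and each face map is an isomorphism in degrees $\leq 0$ by naturality, the cokernels agree. This uses the isomorphism range $j\leq n$ for $n=0$ in Proposition \ref{prop:connectivity_estimates}, exactly at $j=0$. For $K_0$ alone, this is the classical lifting of idempotents modulo a nilpotent ideal.
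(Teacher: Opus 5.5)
Your argument is correct and is essentially the paper's proof. Both reduce by filtered colimits to $\Proj_{f.g.}$ over a connective $\bE_1$-ring, then pass to $\pi_0$ and to $\pi_0/J$ in degrees $\leq 0$, and finish with the semisimple case; the paper cites \cite[Theorem 9.53]{BGT} and nilinvariance of $K_{\leq 0}$ at the middle step, where you apply Proposition \ref{prop:connectivity_estimates} with $n=1$ and $n=0$ to the polynomial extensions. One minor imprecision: $(A/J)[x_1,\dots,x_s]$ is a finite product of matrix rings over the $D_i[x_1,\dots,x_s]$, not of the polynomial rings themselves, which Morita invariance handles exactly as in the paper's reduction to a skew-field.
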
 

\begin{proof}
Since the functors $N^s K_j$ commute with filtered colimits, it suffices to consider the case when $\cE$ is equivalent to $\Proj_{f.g.}\hy A,$ the category of finite projective right modules over a connective $\bE_1$-ring $A,$ such that $\pi_0(A)$ is left or right artinian. Let $s\geq 1$ and $j\leq 0.$ Then by \cite[Theorem 9.53]{BGT} and by the nilinvariance of $K_{\leq 0}$ we have $N^s K_j(A)\cong N^s K_j(\pi_0(A)/J)$ where $J\subset \pi_0(A)$ is the Jacobson radical. The ring $\pi_0(A)/J$ is a finite product of matrix algebras over skew-fields, hence we may assume that $A = D$ is a skew-field. In this case all the spectra $N^s K(D)$ vanish for $s\geq 1.$ 
\end{proof}

In the following proof we minimize the computations, focusing only on proving the sharpness results.

\begin{proof}[Proof of Theorem \ref{th:sharpness_of_estimates}]
\ref{sharp_for_devissage} Let $X=\{x_1^3 = x_0 x_2^2\}\subset \PP^2_{\mk}$ be the cuspidal cubic curve over $\mk.$ Denote by $\cE=\Vect_X$ the category of vector bundles of finite rank on $X.$ Denote by $\Ac^b(\cE)$ the dg category of acyclic bounded complexes of objects of $\cE.$ By \cite[Lemma 1.2]{Nee21} this category has a bounded $t$-structure whose heart $\cB_0=\Ac^b(\cE)^{\heartsuit}$ is equivalent to the category $\eff(\cE)$ of effaceable functors $\cE^{op}\to\Ab$ (this is a special case of Proposition \ref{prop:t_structure_on_Ac}).
% consists of short exact sequences placed in degrees $0,1,2$ (homologically):
%\begin{equation}\label{eq:short_exact_acyclic_complex}
%\dots\to 0\to\cF_2\to\cF_1\to\cF_0\to 0\to\dots
%\end{equation}
%We recall that we have a fully faithful exact inclusion $\cB_0\to\Fun^{\add}(\cE,\Ab)$ which sends the object \eqref{eq:short_exact_acyclic_complex} to $\coker(\h_{\cF_1}\to \h_{\cF_0}).$ Here we use the usual notation $\h_{\cF}=\Hom(-,\cF)$ for $\cF\in\cE.$ This identifies $\Ac^b(\cE)^{\heartsuit}$ with the category $\eff(\cE)$ of effaceable functors $\cE^{op}\to\Ab.$ 

Denote by $\Nil(\cB_0)$ the abelian category of pairs $(x,f),$ where $x\in\cB_0$ and $f:x\to x$ is a nilpotent endomorphism. We have a fully faithful inclusion $i:\cB_0\to\Nil(\cB_0),$ $i(x)=(x,0),$ and we identify $\cB_0$ with its essential image. More generally, for $k\geq 0$ denote by $\cB_k\subset\Nil(\cB_0)$ the (abelian) full subcategory of pairs $(x,f)$ such that $f^k=0.$ Then the inclusion functors are exact and each object of $\cB_{k+1}$ is an extension of two objects of $\cB_k.$

We claim that for some $k\geq 0$ the map $K_{-2}(\cB_k)\to K_{-2}(\cB_{k+1})$ is not surjective, which would prove \ref{sharp_for_devissage}. It suffices to prove that the map $K_{-2}(\cB_0)\to K_{-2}(\Nil(\cB_0))$ is not surjective. 

By Proposition \ref{prop:t_structure_on_nilpotent_endomorphisms} \ref{realization_is_equivalence_for_nil_endomorphisms} we have $\coker(K_{-2}(\cB_0)\to K_{-2}(\Nil(\cB_0)))\cong NK_{-1}(\cB_0),$ so we need to show that this nil group is non-zero. By \cite[Proposition 2.4]{Nee21} the realization functor $D^b(\cB_0)\to \Ac^b(\cE)$ is an equivalence. Denoting by $\cE^{\add}$ the category $\cE$ with a split exact structure, we obtain a short exact sequence
\begin{equation*}
0\to D^b(\cB_0)\to \St(\cE^{\add})\to \Perf(X)\to 0.
\end{equation*}
By Lemma \ref{lem:non_positive_nil_groups_for_artinian} we have $NK_{\leq 0}(\cE^{\add})=0.$ Hence, $NK_{-1}(\cB_0)\cong NK_0(X).$ The latter group is non-zero, since we have a surjection
\begin{equation*}
NK_0(X)\onto \coker(\Pic(X)\to \Pic(X\times \A^1))\cong x\mk[x].
\end{equation*} 
This proves \ref{sharp_for_devissage}.

\ref{sharp_for_th_of_the_heart} Take any pair $\cB\subset\cA$ as in \ref{sharp_for_devissage} such that the map $K_{-2}(\cB)\to K_{-2}(\cA)$ is not surjective. We will construct inductively the small $\mk$-linear $t$-categories $\cC_n$ with required properties, so that $\cC_n^{\heartsuit}\simeq \cB$ for all $n\geq 1.$

First we construct $\cC_1.$ We use the construction from \cite[Theorem 0.3]{E25b} and recall the assertions of loc. cit. We denote by $\cE_{\cA,\cB}$ the quasi-abelian category of pairs $(x,y),$ where $x\in\cA$ and $y\subset x$ is a subobject such that both $y$ and $x/y$ are in $\cB.$ The forgetful functor $\cE_{\cA,\cB}\to\cA,$ $(x,y)\mapsto x,$ induces a quotient functor on bounded derived categories. We define $\cC_1$ to be its kernel, so we have a short exact sequence
\begin{equation}\label{eq:ses_for_devissage}
0\to \cC_1\xto{\Phi} D^b(\cE_{\cA,\cB})\xto{q} D^b(\cA)\to 0.
\end{equation}
We have a semi-orthogonal decomposition $D^b(\cE_{\cA,\cB})=\la i_1(D^b(\cB)), i_2(D^b(\cB))\ra.$ Here the fully faithful functors $i_1,i_2:D^b(\cB)\to D^b(\cE_{\cA,\cB})$ are induced by the exact functors $\cB\to\cE_{\cA,\cB},$ which are given respectively by $x\mapsto (x,0),$ $x\mapsto (x,x).$ The category $\cC_1$ has a bounded $t$-structure, and we have an equivalence $\cB\xto{\sim}\cC_1^{\heartsuit},$ given by $x\mapsto\Cone((x,0)\to (x,x)).$

Now, the cofiber sequence of $K$-theory spectra for \eqref{eq:ses_for_devissage} is of the form
\begin{equation*}
K(\cC_1)\to K(\cB)\oplus K(\cB)\to K(\cA).
\end{equation*} 
Both functors $q\circ i_1$ and $q\circ i_2$ are isomorphic to the derived functor of the inclusion $\cB\to\cA.$ Hence, by assumption the map $K_{-2}(\cB)\oplus K_{-2}(\cB)\to K_{-2}(\cA)$ is not surjective. It follows that the map $K_{-3}(i_2^R\circ \Phi):K_{-3}(\cC_1)\to K_{-3}(\cB)$ is not a monomorphism, where $i_2^R$ is the right adjoint to $i_2.$ But the functor $i_2^R\circ \Phi:\cC_1\to D^b(\cB)$ is left inverse to the realization functor, hence the map $K_{-3}(\cC_1^{\heartsuit})\to K_{-3}(\cC_1)$ is not surjective, as required.

Now suppose that for some $n\geq 1$ we have constructed a $\mk$-linear small $t$-category $\cC_n$ with required properties such that the map $K_{-n-2}(\cC_n^{\heartsuit})\to K_{-n-2}(\cC_n)$ is not surjective. We will construct the $t$-category $\cC_{n+1}$ as follows. Put $\cD=\cC_n,$ and consider the exact 
category $\cD_{[0,n]}\subset\cD$ with the induced exact structure. Consider another exact category $\wt{\cD}_{[0,n]}$ from Proposition 
\ref{prop:properties_of_tilde_C} with the same underlying category as $\cD_{[0,n]}$ (we use the small category version of loc. cit.). Recall from loc. cit. that a morphism $f:x\to y$ in $\wt{\cD}_{[0,n]}$ is an inclusion if and only if both 
$\pi_{n-1}(f)$ and $\pi_n(f)$ are monomorphisms. We define $\cC_{n+1}$ via the short exact sequence
\begin{equation}\label{eq:ses_for_St_D_0_n}
0\to\cC_{n+1}\xto{\Phi} \St(\wt{\cD}_{[0,n]})\xto{q} \St(\cD_{[0,n]})\to 0.
\end{equation}

Our assumption on $\cD=\cC_n$ implies that the realization functor $D^b(\cD^{\heartsuit})\to\St(\cD_{[0,n-1]})$ is an equivalence: this is Proposition \ref{prop:when_partial_realization_is_an_equivalence} applied to $\Ind(\cD).$ Hence, by Proposition \ref{prop:properties_of_tilde_C} we have a semi-orthogonal decomposition $D^b(\wt{\cD}_{[0,n]})=\la j_1(D^b(\cD^{\heartsuit}), j_2(D^b(\cD^{\heartsuit})))\ra.$ Here the functors $j_1$ and $j_2$ are induced by the exact functors $\cD^{\heartsuit}\to \wt{\cD}_{[0,n]},$ given by $x\mapsto x$ resp. $x\mapsto x[n].$ The category $\cC_{n+1}$ has a bounded $t$-structure and we have an equivalence $\cD^{\heartsuit}\xto{\sim}\cC_{n+1}^{\heartsuit}.$ Moreover, by loc. cit. the maps $\Ext^i_{\cC_{n+1}^{\heartsuit}}(x,y)\to \Ext^i_{\cC_{n+1}}(x,y)$ are isomorphisms for $i\leq n+1,$ $x,y\in\cC_{n+1}^{\heartsuit}.$ 

Now the cofiber sequence of $K$-theory spectra for \eqref{eq:ses_for_St_D_0_n} is of the form
\begin{equation*}
K(\cC_{n+1})\to K(\cD^{\heartsuit})\oplus K(\cD^{\heartsuit})\to K(\cD_{[0,n]}).
\end{equation*}

Both $q\circ j_1$ and $q\circ j_2[-m]$ are isomorphic to the realization functor $D^b(\cD^{\heartsuit})\to \St(\cD_{[0,n]}).$ Now, by Theorem \ref{th:theorem_of_the_heart} we have an isomorphism $K_{-n-2}(\cD_{[0,n]})\xto{\sim} K_{-n-2}(\cD).$ Hence, our assumption on $\cD=\cC_n$ implies that the map $K_{-n-2}(\cD^{\heartsuit})\oplus K_{-n-2}(\cD^{\heartsuit})\to K_{-n-2}(\cD_{[0,n]})$ is not surjective. Hence, the map $K_{-n-3}(j_2^R\circ \Phi):K_{-n-3}(\cC_{n+1})\to K_{-n-3}(\cD^{\heartsuit})$ is not a monomorphism, where $j_2^R$ is right adjoint to $j_2.$ The functor $j_2^R\circ \Phi$ is left inverse to the realization functor, hence the map $K_{-n-3}(\cC_{n+1}^{\heartsuit})\to K_{-n-3}(\cC_{n+1})$ is not surjective, as required. This concludes the inductive step of the construction and proves the theorem. 
\end{proof}

\subsection{Estimates for higher nil groups of abelian categories}

We give a closer look at the abelian category $\cB_0$ from the proof of Theorem \ref{th:sharpness_of_estimates} in the characteristic zero case. We recall the localizing invariant $DK=\Fiber(K\to KH).$ For a small abelian category $\cA$ we put $DK(\cA)=DK(D^b(\cA)).$ We do the following computation, in particular proving the sharpness of the estimates from Corollary \ref{cor:coconnectivity_for_N^sK_of_t_categories}.

\begin{theo}\label{th:sharpness_for_N^sK}
Let $\mk$ be a field of characteristic zero. Let $X=\{x_1^3=x_0 x_2^2\}\subset\PP^2_{\mk}$ be the cuspidal cubic curve over $\mk,$ and let $\cE=\Vect_X$ be the category of vector bundles of finite rank on $X.$ Let $\cA=\eff(\cE)\simeq\Ac^b(\cE)^{\heartsuit}$ be the (small) abelian category of effaceable functors $\cE^{op}\to\Ab.$ 

\begin{enumerate}[label=(\roman*),ref=(\roman*)]
	\item For $s\geq 1$ we have
	\begin{equation}\label{eq:N^sK_computation}
		N^s K(\cA)\cong \HC^{-,\red}(\mk[x_1,\dots,x_s]/\mk)[-2].
	\end{equation}
	In particular, we have 
	\begin{equation*}
		N^s K_{s-2}(\cA)\cong \Omega^s_{\mk[x_1,\dots,x_s]/\mk}\ne 0.
	\end{equation*} \label{N^sK_of_effaceable}
\item We have
\begin{equation}\label{eq:DK_of_effaceable}
	DK_n(\cA)=\begin{cases}
		\mk & \text{for }n\geq -1\text{ odd;}\\
		0 & \text{else.}
	\end{cases}
\end{equation} \label{DK_of_effaceable}
\end{enumerate}
\end{theo}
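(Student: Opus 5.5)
The plan is to reduce everything to the short exact sequence already used in the proof of Theorem \ref{th:sharpness_of_estimates}, namely
\begin{equation*}
0\to D^b(\cA)\to \St(\cE^{\add})\to \Perf(X)\to 0,
\end{equation*}
which exists because the realization functor $D^b(\cA)\to\Ac^b(\cE)$ is an equivalence by \cite[Proposition 2.4]{Nee21}. Applying the localizing invariants $N^sK$ and $DK$ gives cofiber sequences relating $\cA$ to $\cE^{\add}$ and to $X$. The strategy is to show that $\cE^{\add}$ contributes nothing after taking nil parts and $DK$, and then to compute the remaining term for the cusp $X$ explicitly.

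\textbf{Step 1: the contribution of $\cE^{\add}$ vanishes.} For $N^sK$, I would use the fact that $\cE^{\add}$ is additive with endomorphism rings finite-dimensional over $\mk$, since $X$ is proper. These rings are therefore artinian. I would strengthen Lemma \ref{lem:non_positive_nil_groups_for_artinian} to show $N^sK(\cE^{\add})=0$ in all degrees. The argument is to reduce to $\pi_0$ modulo the Jacobson radical, a finite product of matrix algebras over skew fields. In positive degrees this nil-invariance is not automatic. Instead, I would use that each finite-dimensional algebra $R$ has $N^sK(R)\cong N^sK(R/J)$ rationally (Goodwillie, in characteristic zero), and that $N^sK$ of a semisimple algebra vanishes. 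Since $NK$ groups in characteristic zero are $\Q$-vector spaces (Weibel), this rational statement suffices. It follows that $N^sK(\cA)\simeq \Omega N^sK(X)$, and similarly $DK(\cA)\simeq\Omega DK(X)$, because $DK$ is built from the $N^sK$.

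\textbf{Step 2: compute $N^sK(X)$ for the cusp.} Here I would use that $X$ has a single singular point, locally $\Spec\mk[t^2,t^3]$, with normalization $\PP^1$. By excision (Cortiñas, or Land--Tamme), $N^sK(X)$ agrees with the nil term of the Milnor square for $\mk[t^2,t^3]\to\mk[t]$ relative to the conductor. The smooth pieces $\PP^1$ and $\Spec\mk$ have vanishing $N^sK$. This reduces the problem to $N^sK$ of the cusp ring, and by the Goodwillie/Cortiñas--Haesemeyer--Weibel results that is $N^s$ of cyclic homology shifted by one. For $\mk[t^2,t^3]$ the relative $\HC$ is classical: the reduced cyclic homology of the cusp relative to its normalization is $\mk$ in a single degree, $\HC_0$. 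Tensoring with $\mk[x_1,\dots,x_s]$ and taking the reduced part should give $N^sK(X)\cong \HC^{\red}(\mk[x_1,\dots,x_s]/\mk)[-1]$, or in the negative cyclic form as in \eqref{eq:N^sK_computation}. Combined with the shift by $\Omega$ from Step 1, this yields \eqref{eq:N^sK_computation}. The top group is then $\Omega^s$ by Hochschild--Kostant--Rosenberg. This identification, getting the exact shift and the $\HC^-$ versus $\HC$ form right, is the main obstacle, and I would check it against $NK_0(X)\cong x\mk[x]$ for $s=1$.

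\textbf{Step 3: compute $DK(\cA)$.} Here I would again use $DK(\cA)\simeq\Omega DK(X)$ together with excision. $DK$ of a smooth scheme vanishes, so $DK(X)=DK$ of the cusp ring relative to $\mk[t]$. In characteristic zero, $DK$ agrees with the fiber of $\HC$ minus its $\A^1$-localization (Cortiñas--Haesemeyer--Weibel), which is essentially the relative $\HC$ of the cusp. That is $\mk$ in each even degree $\ge0$, coming from the $S^1$-periodicity. Shifting by $\Omega$ gives $\mk$ in odd degrees $\ge -1$, which is \eqref{eq:DK_of_effaceable}.
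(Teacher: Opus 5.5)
Your Step~1 is false, and the rest of the argument depends on it. Goodwillie's theorem does not say that $N^sK(R)\to N^sK(R/J)$ is a rational equivalence for a nilpotent ideal $J$. It identifies the relative term with relative cyclic homology shifted by one, which is typically nonzero: already $NK_1(\mk[\veps])\cong \veps x\mk[x]$, and the nonsplit self-extension of $\cO_X$ has endomorphism ring $\mk[\veps]$. This is exactly why Lemma~\ref{lem:non_positive_nil_groups_for_artinian} stops at degree $0$: only $K_{\leq 0}$ is nil-invariant.

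In fact $N^sK(\cE^{\add})=0$ contradicts the paper. By \eqref{eq:N^sK_of_X} with $s=1$ one has $NK_1(X)\cong H_1\Cone(\HH(X)\to\HH(\PP^1_{\mk}))\otimes\Omega^1_{\Q[x]/\Q}$. By Zariski descent this $H_1$ contains the class of $dt$ in $\coker(\Omega^1_{R/\Q}\to\Omega^1_{\mk[t]/\Q})$, where $R=\mk[t^2,t^3]$, so $NK_1(X)\neq 0$. Your conclusion $N^sK(\cA)\simeq\Omega N^sK(X)$ would then give $NK_0(\cA)\neq 0$, against Corollary~\ref{cor:coconnectivity_for_N^sK_of_t_categories}.

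Your Step~2 claim is also wrong. The torsion in $\Omega^1_R$ and the Tjurina-algebra classes in $\HH_n(R)$ for all $n\geq 2$ make the relative Hochschild homology of $R\subset\mk[t]$ nonzero in infinitely many degrees. By the Connes sequence the same then holds for the relative cyclic homology, and over $\Q$ there are additional $\Omega^\ast_{\mk/\Q}$-terms. The two errors cancel, which is why your final formula in (i) looks right. Step~3 inherits the same defect, since $DK(\cE^{\add})$ does not vanish either.

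The missing idea is that one should not try to compute either side explicitly. The paper keeps $\cE^{\add}$ and combines three inputs:
\begin{itemize}
\item the vanishing $N^sK(\cE^{\add}/\rad)=0$;
\item the Dundas--Goodwillie--McCarthy theorem;
\item the presentation of $\Perf(X)$ via the lax pullback and the dg algebra $B$ (Proposition~\ref{prop:ses_for_cat_res_of_cuspidal}).
\end{itemize}
From these it writes $N^sK(\cA)\cong V[-2]\otimes\HC^{-,\red}(\Q[x_1,\dots,x_s])$, where $V$ is the total cofiber of the square of Hochschild homologies (Lemma~\ref{lem:computing_N^sK_of_effaceable}).

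Easy connectivity gives $V\in D(\Q)_{\geq 0}$ with $H_0(V)\cong\mk$. The crucial upper bound $V\in D(\Q)_{\leq 0}$ comes from the heart-theoretic vanishing $NK(\cA)\in\Sp_{\leq -1}$ of Corollary~\ref{cor:coconnectivity_for_N^sK_of_t_categories}. All the higher Hochschild homology of $\cE^{\add}$ and of the cusp cancels inside $V$ without ever being computed.

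Part (ii) is then deduced from (i), not from a separate excision computation for $X$. The inputs are homotopy invariance of $\HP$ and the vanishing of $|\HH(\mk[x_1,\dots,x_\bullet]/\mk)|$.
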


We elaborate on the meaning of the right hand side of \eqref{eq:N^sK_computation}. First, by $\HC^-(-/\mk)$ we mean the negative cyclic homology over $\mk,$ considered as a spectrum \cite{Tsy83, FT85, Con94}. The superscript ``red'' means the reduced part in the simplicial sense, where we consider the assignment $[s]\mapsto \mk[x_1,\dots,x_s]$ as a functor $\Delta^{op}\to \CAlg(\Vect_{\mk}),$ see for example \cite[Section 8]{E25c}.

For the proof of Theorem \ref{th:sharpness_for_N^sK} we will need a certain short exact sequence, which might be known to some experts. We recall the notion of a lax pullback: if $F:\cA\to\cC$ and $G:\cB\to\cC$ are functors between small $\infty$-categories, then $\cA\laxtimes{\cB}\cC$ is the category of triples $(x,y,\varphi),$ where $x\in\cA,$ $y\in\cB$ and $\varphi:F(x)\to G(y).$ If $\cA,\cB,\cC$ are stable and $F, G$ are exact, then the category $\cA\laxtimes{\cC}\cB$ is also stable, and it has a semi-orthogonal decomposition $\cA\laxtimes{\cC}\cB=\la\cA,\cB\ra,$ where the inclusions are given by $x\mapsto (x,0,0)$ resp. $y\mapsto (0,y,0).$ The same applies to $\mk$-linear stable categories and $\mk$-linear exact functors. 

\begin{prop}\label{prop:ses_for_cat_res_of_cuspidal}
Let $X$ be as in Theorem \ref{th:sharpness_for_N^sK}. We identify the normalization $\wt{X}$ with $\PP^1_{\mk}$ with coordinate $x=\frac{x_2}{x_1}.$ We put $\mk[\veps]=\mk[x]/x^2,$ and consider the pullback functors $\Perf(\PP^1_{\mk})\to\Perf(\mk[\veps]),$ $\Perf(\mk)\to\Perf(\mk[\veps]).$ Consider the semi-free (associative) dg algebra $B=\mk\la x,y\ra,$ where $\deg(x)=0,$ $\deg(y)=1,$ $dx=0,$ $dy=x^2.$ Then we have a short exact sequence in $\Cat_{\mk}^{\perf}:$
\begin{equation}\label{eq:ses_cat_res_for_cuspidal}
0\to \Perf(X)\xto{F} \Perf(\mk)\laxtimes{\Perf(\mk[\veps])} \Perf(\PP^1_{\mk}) \to \Perf(B)\to 0.
\end{equation}
\end{prop}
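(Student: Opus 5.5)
The plan is to realize \eqref{eq:ses_cat_res_for_cuspidal} as the categorical form of the conductor (Milnor) square of the cusp, and then to identify the Verdier quotient by computing the endomorphism algebra of a single generator. Let $\pi:\PP^1_{\mk}\to X$ be the normalization and $p\in X$ the cusp. In the affine chart $\Spec\mk[t^2,t^3]\subset X$ the conductor ideal is $(t^2,t^3)$, so the preimage of $p$ is $\Spec\mk[\veps]$ with $\veps=t=x$. This gives a cartesian and cocartesian square of schemes
\begin{equation*}
\Spec\mk[\veps]\to\PP^1_{\mk},\qquad \Spec\mk[\veps]\to\Spec\mk,
\end{equation*}
with pushout $X$, which is a Milnor square. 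I would define $F$ by
\begin{equation*}
F(\cF)=\bigl(\cF_{\mid p},\ \pi^*\cF,\ \varphi\bigr),
\end{equation*}
where $\varphi$ is the canonical isomorphism of the two pullbacks to $\Spec\mk[\veps]$. This lands in the strict pullback, which sits inside the lax pullback.

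The first step is to show that $F$ is fully faithful. I would reduce this Zariski-locally: away from $p$ the square is trivial, and on the affine chart it is the statement for the Milnor square of rings $\mk[t^2,t^3]\to\mk[t]$, $\mk\to\mk[\veps]$. For that square, full faithfulness into the lax pullback is the Land--Tamme statement that $\Perf(A)$ embeds into $\Perf(\mk)\laxtimes{\Perf(\mk[\veps])}\Perf(\mk[t])$ via the Mayer--Vietoris fiber sequence $A\to\mk\oplus\mk[t]\to\mk[\veps]$ on mapping spectra. One has to check that Hom-spectra in the lax pullback between objects of the form $(a,b,\mathrm{iso})$ are computed by exactly this fiber formula.

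The second step is to find a generator of the quotient. The lax pullback has the semi-orthogonal decomposition $\la i_1(\Perf(\mk)),i_2(\Perf(\PP^1_{\mk}))\ra$, and $\Perf(\PP^1_{\mk})=\la\cO(-1),\cO\ra$. Using the cofiber sequences
\begin{equation*}
i_1(\mk)\to F(\cO_X)\to i_2(\cO),\qquad i_1(\mk)\to F(\cL)\to i_2(\cO(-1)),
\end{equation*}
where $\cL$ is a line bundle on $X$ of degree $-1$ pulling back to $\cO(-1)$, every generator becomes a shift of the image $G$ of $i_1(\mk)$ in the quotient. Hence the quotient is $\Perf(\End(G))$.

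The main obstacle is computing $\End(G)$ as an $\bE_1$-$\mk$-algebra and identifying it with $B$. I would compute it as the Land--Tamme $\odot$-ring $\mk\odot^{\mk[\veps]}_{\mk[t^2,t^3]}\mk[t]$; this is legitimate because the quotient only sees the neighborhood of $p$. Its underlying object is $\mk\otimes_{\mk[\veps]}\ldots$, obtained via the formula $\Hom(i_1\mk,i_1\mk)$ modulo maps factoring through $F(\Perf X)$. Concretely, I would compute the right orthogonal to $F(\Perf(X))$ and the projection of $i_1(\mk)$ onto it, obtaining a complex with a degree $0$ generator $x$ (from $\veps$) and a degree $1$ generator $y$ killing $x^2$ (from the relation $t^2\in A$ together with $\veps^2=0$). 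To pin down the multiplicative structure I would construct an explicit map $B\to\End(G)$ by sending $x$ to $\veps$ and $y$ to the null-homotopy of $\veps^2$ supplied by $A$. I would then check that it is a quasi-isomorphism by comparing homology: the homology of $B$ is computable from its free filtration, and the homology of $\End(G)$ is computable from the Mayer--Vietoris Tor sequence.
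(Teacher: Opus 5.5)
Your setup matches the paper's proof. $F$ is the Kuznetsov--Lunts gluing functor, and your Milnor-patching argument for full faithfulness is a fine substitute for the citation. The quotient can be computed on the affine chart $U$, and it is $\Perf$ of the endomorphism algebra of a single generator; in the chart your $G$ is $\bar P_2[1]$ with $P_2=(0,\mk[x],0)$. (A small slip: in the lax pullback $\Hom(i_2(-),i_1(-))=0$, so the sequences are $i_2(\cO)\to F(\cO_X)\to i_1(\mk)$. In your direction they would split. Generation is unaffected.)

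The gap is in the one nontrivial step, $\End(G)\simeq B$. Building $B\to\End(G)$ from a degree-$0$ cycle lifting $t$ and a null-homotopy of its square is fine, since $B$ is semi-free. But ``comparing homology'' does not prove this map is a quasi-isomorphism. You can compute $H_*(\End(G))\cong\operatorname{Tor}^R_*(\mk,\mk[t])$ with $R=\mk[t^2,t^3]$; the underlying complex is $\mk\otimes^{\bL}_R\mk[t]$, a derived tensor product over $R$, not over $\mk[\veps]$. You can also compute $H_*(B)$ separately, but matching dimensions says nothing about the induced map. The positive-degree classes of $B$, e.g.\ $[xy-yx]$ and $[xyx-yx^2]$, are products. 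To know their images you need the multiplication on $\End(G)$ in positive degrees, and neither the Land--Tamme description of the underlying spectrum nor the Tor sequence gives you that. Nor can you shortcut via the commutative algebra $\mk\otimes^{\bL}_R\mk[t]$: since $x^2y-yx^2=d(y^2)$, one has $x\cdot[xy-yx]=-[xy-yx]\cdot x\neq 0$, so $H_*(B)$ is not graded-commutative.

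The missing ingredient is an explicit dg model for $\End(G)$, which is how the paper closes the argument. In the chart put $P_1=(\mk,\mk[x],\id)\cong\bar F(\cO_U)$. Then $\End(P_1\oplus P_2)$ is discrete, with $\Hom(P_1,P_1)=R$, $\Hom(P_1,P_2)=x^2\mk[x]$ and $\Hom(P_2,P_1)=\Hom(P_2,P_2)=\mk[x]$. Drinfeld's quotient by $P_1$ (adjoin $h$ of degree $1$ with $dh=\id_{P_1}$) therefore gives $\End(\bar P_2)\simeq B'=\mk[x]\oplus\bigoplus_{n\geq 0}(x^2\mk[x]h)\otimes(Rh)^{\otimes n}\otimes\mk[x]$, with the bar differential. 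The map $x\mapsto x$, $y\mapsto x^2h$ sends $x^{a_0}yx^{a_1}y\cdots yx^{a_n}$ to $x^{a_0+2}h\,x^{a_1+2}h\cdots x^{a_{n-1}+2}h\,x^{a_n}$. So it identifies $B$ with the normalized bar subcomplex whose middle factors lie in $\mathfrak{m}=x^2\mk[x]$. The inclusion of the normalized bar complex into the unnormalized one is a quasi-isomorphism. With this model the comparison becomes a formal bar-complex statement rather than a homology count.
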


\begin{proof}
The fully faithful functor $F$ in \eqref{eq:ses_cat_res_for_cuspidal} is the familiar categorical resolution of singularities from \cite{KL15}. It is induced by the pullback functors: to the singular point $\Perf(X)\to\Perf(\mk)$ and to the normalization $\Perf(X)\to\Perf(\PP^1_{\mk}).$ The essential image of $F$ is in fact the strict pullback.

Put $\cC=\Perf(\mk)\laxtimes{\Perf(\mk[\veps])} \Perf(\PP^1_{\mk}).$ We need to compute the quotient $\cC/F(\Perf(X)).$ First take the affine chart $U=X\setminus\{(0:0:1)\}.$ Then $\Perf(U)$ is a quotient of $\Perf(X),$ and $F$ induces a fully faithful functor
\begin{equation*}
\bbar{F}:\Perf(U)\hto \Perf(\mk)\laxtimes{\Perf(\mk[\veps])} \Perf(\mk[x])=:\bbar{\cC}.
\end{equation*}
We have an equivalence
\begin{equation*}
\cC/F(\Perf(X))\xto{\sim}\bbar{\cC}/\bbar{F}(\Perf(U)).
\end{equation*}
We consider the objects
\begin{equation*}
P_1=(\mk,\mk[x],\id)\in\bbar{\cC},\quad P_2=(0,\mk[x],0)\in\bbar{\cC}.
\end{equation*}
Clearly, $P_1\cong \bbar{F}(\cO_U),$ and together $P_1$ and $P_2$ generate $\bbar{\cC}$ (as an idempotent-complete stable subcategory). Hence, it suffices to compute the endomorphism dg algebra of the image of $P_2$ in the quotient of $\bbar{\cC}$ by (the stable subcategory generated by) $P_1$. 

First, the dg algebra $\End_{\bbar{\cC}}(P_1\oplus P_2)$ is discrete, i.e. its homology vanishes in non-zero degrees. Hence, we may identify it with its $H_0.$ Putting $R=\mk[x^2,x^3]\subset\mk[x],$ we have
\begin{equation*}
\Hom_{\bbar{\cC}}(P_1,P_1)=R,\quad \Hom_{\bbar{\cC}}(P_1,P_2)=x^2\mk[x],
\end{equation*}
\begin{equation*}
\Hom_{\bbar{\cC}}(P_2,P_1)=\mk[x],\quad \Hom_{\bbar{\cC}}(P_2,P_2)=\mk[x].
\end{equation*}
Denote by $\bbar{P_2}$ the image of $P_2$ in $\cC'=\bbar{\cC}/\la P_1\ra.$ By \cite{Dr04} the dg algebra $\End_{\cC'}(\bbar{P_2})$ is quasi-isomorphic to
\begin{equation*}
B'=\mk[x]\oplus\biggplus[n\geq 0](x^2\mk[x]h)\otimes (R h)^{\otimes n}\otimes\mk[x],
\end{equation*}
where $h$ has degree $1,$ the multiplication is straightforward and the differential is given by $d(x^k h)=x^k.$ Note that the cokernel of $\mk[x]\to B$ is identified with the shifted non-reduced bar complex computing the derived tensor product $x^2\mk[x]\Ltens{R}\mk[x].$ Since the non-reduced bar complex is quasi-isomorphic to the reduced bar complex, we have a quasi-isomorphism
\begin{equation*}
B\xto{\sim} B',\quad x\mapsto x,\quad y\mapsto x^2 h.
\end{equation*}
This proves the proposition.
\end{proof}

We recall from \cite{Ke64} that for an ordinary small additive category $\cB$ its Jacobson radical is a sub-bifunctor 
\begin{equation*}
\rad_{\cB}\subset\cB(-,-):\cB^{op}\times\cB\to\Ab,
\end{equation*} 
\begin{equation*}
\rad_{\cB}(x,y)=\{f\in\cB(x,y)\mid \forall g\in\cB(y,x)\text{ the morphism }1_x+gf\text{ is invertible}\}.
\end{equation*}
We denote by $\cB/\rad$ the category with the same objects as in $\cB,$ where the morphisms are given by
\begin{equation*}
(\cB/\rad)(x,y)=\cB(x,y)/\rad_{\cB}(x,y).
\end{equation*}
We will need a simple observation.

\begin{prop}\label{prop:vanishing_of_N^sK_of_quotient_by_radical}
Let $\mk$ be a field and suppose that $\cB$ is an ordinary idempotent-complete $\mk$-linear small category with finite-dimensional morphisms. Then we have $N^s K(\cB/\rad) = 0$ for $s\geq 1.$
\end{prop}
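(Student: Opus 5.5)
The plan is to show that $\cB/\rad$ is semisimple, and then to reduce to the vanishing of $N^sK$ for finite-dimensional semisimple algebras, much as in the proof of Lemma \ref{lem:non_positive_nil_groups_for_artinian}.

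First, I would record the structure of $\cB$. Every object $x$ has a finite-dimensional endomorphism ring $\cB(x,x)$, which is therefore artinian. Since $\cB$ is idempotent-complete, $x$ decomposes as a finite direct sum of indecomposables with local endomorphism rings, so $\cB$ is Krull--Schmidt.

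Next comes the standard fact on the Jacobson radical of a Krull--Schmidt category (cf. \cite{Ke64}). For each $x$ we have $\rad_{\cB}(x,x)=J(\cB(x,x))$, so $(\cB/\rad)(x,x)=\cB(x,x)/J$ is a finite-dimensional semisimple $\mk$-algebra. Moreover, for non-isomorphic indecomposables $x,y$ one has $\rad_{\cB}(x,y)=\cB(x,y)$. It follows that $\cB/\rad$ is an additive category in which each object has a semisimple endomorphism ring. It is again idempotent-complete, because idempotents lift modulo the radical for finite-dimensional algebras.

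Then I would pass to $K$-theory. Write $\cB/\rad$ as the filtered union of its full additive subcategories generated by finitely many objects; each of these is $\Proj_{f.g.}\hy A$ for $A=\End(x)/J$ with $x$ the sum of the chosen objects. Since $N^sK$ commutes with filtered colimits (and with idempotent completion, which changes nothing here), it suffices to treat $\Proj_{f.g.}\hy A$ for a finite-dimensional semisimple $\mk$-algebra $A$. By Wedderburn, $A\cong\prod M_{n_i}(D_i)$ with $D_i$ division algebras. Morita invariance and additivity then reduce us to $K(D[x_1,\dots,x_s])$ for a skew-field $D$. Since $D[x_1,\dots,x_s]$ is regular noetherian, its $K$-theory is homotopy invariant (Quillen's fundamental theorem, iterated), and hence $N^sK(D)=0$ for all $s\geq 1$. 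This is the same fact already used in Lemma \ref{lem:non_positive_nil_groups_for_artinian}, but now in all degrees.

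The main point needing care is that $N^sK$ here is computed on the additive category $\cB/\rad$ with its split exact structure, that is, on $\St((\cB/\rad)^{\add})$. I have to make sure this agrees with $\Perf$ of the semisimple algebras above. It does: on $\Proj_{f.g.}\hy A$ the split exact structure is the only exact structure, and its stable envelope is $\Perf(A)$. Everything else is routine.
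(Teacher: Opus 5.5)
Your proposal is correct and follows essentially the same route as the paper, which simply observes that $\cB/\rad$ is equivalent to a (possibly infinite) direct sum of categories of finite-dimensional vector spaces over skew-fields $D$, and that $N^sK(D)=0$ for $s\geq 1$. Your Krull--Schmidt analysis of the radical, the filtered-colimit reduction, Wedderburn/Morita, and the appeal to regularity of $D[x_1,\dots,x_s]$ are just the standard details behind those two lines.
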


\begin{proof}
Indeed, the category $\cB/\red$ is equivalent to a direct sum of (a possibly infinite collection of) categories of finite-dimensional vector spaces over skew-fields. For any skew-field $D$ we have $N^s K(D) = 0$ for $s\geq 1.$
\end{proof}

Now Theorem \ref{th:sharpness_for_N^sK} is easily deduced from the following lemma. We first clarify the notation. For a $\Q$-linear stable category (or for a $\Q$-algebra, or for a scheme over $\Q$) we denote by $\HH$ the Hochschild homology over $\Q,$ and similarly for $\HC^-,$ $\HC$ and $\HP.$ We will also use the notation $N^s \HC^-$ for the higher nil versions, so for $\cC\in\Cat_{\Q}^{\perf}$ we have 
\begin{equation*}
N^s \HC^-(\cC)=\HC^{-,\red}(\cC\otimes\Perf(\Q[x_1,\dots,x_s])),\quad s\geq 0.
\end{equation*}
We also note that for $s\geq 1$ the $\Q S^1$-module $HH^{\red}(\Q[x_1,\dots,x_s])$ is isomorphic to a direct sum of shifts of $\Q S^1,$ which implies an isomorphism
\begin{equation*}
N^s \HC^-(\cC)\cong HH(\cC)\otimes \HC^{-,\red}(\Q[x_1,\dots,x_s]),\quad s\geq 1,\,\cC\in\Cat^{\perf}_{\Q}.
\end{equation*}

\begin{lemma}\label{lem:computing_N^sK_of_effaceable}
Let $X,$ $\cE=\Vect_X$ and $\cA=\eff(\cE)$ be as in Theorem \ref{th:sharpness_for_N^sK}. As in Proposition \ref{prop:ses_for_cat_res_of_cuspidal} we identify the normalization $\wt{X}$ with $\PP^1_{\mk},$ and denote by $\wt{\cE}=\Vect_{\PP^1_{\mk}}$ the category of vector bundles on $\PP^1_{\mk}$ of finite rank.
\begin{enumerate}[label=(\roman*),ref=(\roman*)]
	\item Consider the universal finitary localizing invariant of $\Q$-linear idempotent-complete stable categories $\cU_{\loc}:\Cat_{\Q}^{\perf}\to \Mot^{\loc}_{\Q}.$ Then we have an isomorphism $\cU_{\loc}(\wt{\cE}^{\add})\xto{\sim}\cU_{\loc}(\wt{\cE}^{\add}/\rad).$ This gives a natural commutative square in the category $D(\Q)^{B S^1}\simeq D(\Q S^1):$
	\begin{equation}\label{eq:square_of_HH}
		\begin{tikzcd}
			\HH(\cE^{\add})\ar[r]\ar[d] & \HH(X)\ar[d]\\
			\HH(\cE^{\add}/\rad) \ar[r] & \HH(\PP^1_{\mk}).
		\end{tikzcd}
	\end{equation}
	\label{square_of_HH}
\item Denote by $V$ the total cofiber of \eqref{eq:square_of_HH}, considered as a complex of $\Q$-vector spaces (forgetting the $S^1$-action). Then for $s\geq 1$ we have a natural isomorphism
\begin{equation}\label{eq:N^sK_of_A}
N^s K(\cA)\cong V[-2]\otimes \HC^{-,\red}(\Q[x_1,\dots,x_s]).
\end{equation}
\label{higher_nik_K_theory_via_nil_HC^-}
\end{enumerate}
\end{lemma}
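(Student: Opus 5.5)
We take the two parts of Lemma \ref{lem:computing_N^sK_of_effaceable} in order; the main work is in \ref{higher_nik_K_theory_via_nil_HC^-}.

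\textbf{Part \ref{square_of_HH}.} The functor $\Perf(\wt{\cE}^{\add})\to\Perf(\wt{\cE}^{\add}/\rad)$ is a quotient by a nilpotent-type ideal. By Grothendieck every vector bundle on $\PP^1_{\mk}$ splits as $\bigoplus\cO(a)$, so $\wt{\cE}^{\add}$ is generated by the $\cO(a)$, and $\rad$ consists of the maps $\cO(a)\to\cO(b)$ with $a<b$. On any finite collection of line bundles $\cO(a_1),\dots,\cO(a_r)$ the endomorphism algebra is triangular: a semisimple part $\mk^r$ plus a nilpotent radical. For a finite-dimensional algebra of this kind, the quotient by the radical is a directed (triangular) extension, and the localizing motive is additive on semi-orthogonal decompositions. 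So every finitary localizing invariant sends the map to an isomorphism, by the standard triangular-matrix additivity. Passing to the filtered colimit over finite collections gives $\cU_{\loc}(\wt{\cE}^{\add})\simeq\cU_{\loc}(\wt{\cE}^{\add}/\rad)$.

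The square \eqref{eq:square_of_HH} then comes from the commutative diagram
\begin{equation*}
\begin{tikzcd}
\cE^{\add}\ar[r]\ar[d] & \Perf(X)\ar[d]\\
\wt{\cE}^{\add}\ar[r] & \Perf(\PP^1_{\mk}),
\end{tikzcd}
\end{equation*}
where the vertical arrows are pullback along normalization. Here:
\begin{itemize}
\item the map $\cE^{\add}\to\cE^{\add}/\rad$ composed with $\cE^{\add}/\rad\to\wt{\cE}^{\add}/\rad$ is used to replace $\wt{\cE}^{\add}$ by $\wt{\cE}^{\add}/\rad$ on $\HH$;
\item the bottom arrow $\HH(\wt{\cE}^{\add}/\rad)\to\HH(\PP^1_{\mk})$ is defined by inverting the isomorphism above.
\end{itemize}

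\textbf{Part \ref{higher_nik_K_theory_via_nil_HC^-}, reduction to a total cofiber.} Apply $N^sK$ to the exact sequence $0\to D^b(\cA)\to\St(\cE^{\add})\to\Perf(X)\to0$ from the proof of Theorem \ref{th:sharpness_of_estimates}. This gives
\begin{equation*}
N^sK(\cA)\simeq\Fiber\bigl(N^sK(\cE^{\add})\to N^sK(X)\bigr).
\end{equation*}
Now compare with the square of $K$-theories attached to \eqref{eq:square_of_HH}:
\begin{itemize}
\item $N^sK(\cE^{\add}/\rad)=0$ by Proposition \ref{prop:vanishing_of_N^sK_of_quotient_by_radical};
\item $N^sK(\PP^1_{\mk})=0$ since $\PP^1_{\mk}$ is regular.
\end{itemize}
Hence $N^sK(\cA)$ is the total fiber of the $N^sK$-square. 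Call this $\Omega$ of the total cofiber, up to indexing; the shift will be fixed at the end.

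\textbf{Part \ref{higher_nik_K_theory_via_nil_HC^-}, passage to cyclic homology.} We want Goodwillie's theorem: the relative $K$-theory agrees rationally with relative $\HC^-$ (equivalently $\HC[1]$) for nilpotent extensions. To use it, the whole square must be a nilpotent-type situation. Write it as the horizontal composite of two squares:
\begin{itemize}
\item $\cE^{\add}\to\cE^{\add}/\rad$ versus $\wt{\cE}^{\add}\to\wt{\cE}^{\add}/\rad$: these are quotients by radicals. Radicals are not nilpotent, but on each finite collection of objects they are nilpotent, since the Hom spaces are finite-dimensional. By \cite[Theorem 9.53]{BGT}-type arguments and a filtered colimit, Goodwillie applies after tensoring with $\Q[x_1,\dots,x_s]$.
\item The Milnor-type square $\Perf(X)\to\Perf(\PP^1_{\mk})$ over $\mk\to\mk[\veps]$: this is handled by Proposition \ref{prop:ses_for_cat_res_of_cuspidal}, which exhibits the cofiber as $\Perf(B)$ with $B$ a connective dg algebra with $H_0(B)=\mk[x]/x^2$, i.e. again nilpotent over $\mk$-pieces. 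Equivalently one can use the Cortiñas / Land–Tamme excision for the truncating invariant $K^{\inv}$.
\end{itemize}

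\textbf{Part \ref{higher_nik_K_theory_via_nil_HC^-}, concluding.} In the resulting identification $N^sK\simeq N^s\HC^-$ on the total fiber, all $K$-terms become $\HC^-$-terms. The $\HC^-(\Q[x_1,\dots,x_s])^{\red}$ factor splits off via the stated isomorphism
\begin{equation*}
N^s\HC^-(\cC)\cong\HH(\cC)\otimes\HC^{-,\red}(\Q[x_1,\dots,x_s]).
\end{equation*}
This leaves $\HH$ of the square, i.e. $V$ up to shift. The shift $[-2]$ is the fiber shift (one loop, from $N^sK(\cA)$ being a fiber) combined with the Goodwillie shift $K\leftrightarrow\HC^-[?]$, and is fixed by checking degrees.

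The main obstacle is justifying Goodwillie's theorem for the non-nilpotent radical quotients of large additive categories. I would handle this by reducing to finite full subcategories, where the radical is nilpotent, and then passing to the filtered colimit, using that $N^sK$ and $N^s\HC^-$ commute with filtered colimits.
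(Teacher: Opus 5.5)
Your proposal is essentially the paper's proof: part (i) via the exceptional collection $\{\cO(n)\}_{n\in\Z}$ on $\PP^1_{\mk}$ and additivity of $\cU_{\loc}$, and part (ii) via $N^sK(\cA)\simeq\Fiber(N^sK(\cE^{\add})\to N^sK(X))$, the vanishing of $N^sK(\cE^{\add}/\rad)$ and $N^sK(\PP^1_{\mk})$, Goodwillie/DGM applied to each column of the square separately (the radical quotient, after your reduction to finite subcategories where the radical is nilpotent, and $X\to\PP^1_{\mk}$ via the dg algebra $B\to\mk$ of Proposition~\ref{prop:ses_for_cat_res_of_cuspidal}), and finally the splitting $N^s\HC^-\cong\HH\otimes\HC^{-,\red}(\Q[x_1,\dots,x_s])$. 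Two corrections: first, the relative comparison with $\HC^-$ involves no shift, so the $[-2]$ is exactly the passage from the total fiber (which is $N^sK(\cA)$) to the total cofiber $V$, not a ``Goodwillie shift''; second, exactly as in the paper, the arrow $\cE^{\add}/\rad\to\wt{\cE}^{\add}/\rad$ is not literally induced by pullback along the normalization, because pullback does not preserve radicals (the square-zero radical endomorphism of the nonsplit extension of $\cO_X$ by $\cO_X$ pulls back to a nonzero nilpotent in $\End(\cO_{\PP^1_{\mk}}^{\oplus 2})=M_2(\mk)$, whose radical is zero), so that arrow needs a separate justification.
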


%Again, in \eqref{eq:N^sK_of_A} the superscript ``red'' means the reduced part in the simplicial sense.

\begin{proof}
\ref{square_of_HH} By Grothendieck's theorem every vector bundle on $\PP^1_{\mk}$ is a finite direct sum of line bundles $\cO(n).$ Hence, as a $\mk$-linear dg category $\St(\wt{\cE}^{\add})$ has a $\Z$-indexed full exceptional collection $\{\cO(n)\}_{n\in\Z}.$ Now, the Jacobson radical of $\wt{\cE}^{\add}$ is generated by the morphisms $\cO(n)\to\cO(n')$ for $n<n'.$ Hence, the category $\St(\wt{\cE}^{\add}/\rad)$ has a (completely orthogonal) $\Z$-indexed full exceptional collection formed by the images of $\cO(n).$ It follows that the functor $\St(\wt{\cE})\to\St(\wt{\cE}^{\add}/\rad)$ induced an isomorphism on $\cU_{\loc}.$ In particular, the map $\HH(\wt{\cE}^{\add})\to\HH(\wt{\cE}^{\add}/\rad)$ is an isomorphism. The square \eqref{eq:square_of_HH} is obtained from the diagram
\begin{equation*}
\begin{tikzcd}
\HH(\cE^{\add}/\rad)\ar[d] & \HH(\cE^{\add})\ar[l]\ar[d]\ar[r] & \HH(X)\ar[d]\\
\HH(\wt{\cE}^{\add}/\rad) & \HH(\wt{\cE}^{\add})\ar[swap]{l}{\sim}\ar[r] & \HH(\PP^1_{\mk}).
\end{tikzcd}
\end{equation*}

\ref{higher_nik_K_theory_via_nil_HC^-} Let $s\geq 1$ be an integer. Recall that by \cite[Proposition 2.4]{Nee21} we have an equivalence $D^b(\cA)\simeq\Ac^b(\cE).$ Hence, we have a cofiber sequence of spectra
\begin{equation}\label{eq:cofiber_sequence_for_N^sK_of_A}
N^s K(\cA)\to N^s K(\cE^{\add})\to N^s K(X).
\end{equation}
By Proposition \ref{prop:vanishing_of_N^sK_of_quotient_by_radical} and by the Dundas-Goodwillie-McCarthy theorem \cite[Theorem 7.0.0.2]{DGM13} we have the isomorphisms
\begin{multline}\label{eq:N^sK_of_E^add}
N^s K(\cE^{\add})\cong \Fiber(N^s K(\cE^{\add})\to N^s K(\cE^{\add}/\rad))\\
\cong \Fiber(N^s \HC^-(\cE^{\add})\to N^s\HC^-(\cE^{\add}/\rad))\\
\cong \Fiber(\HH(\cE^{\add})\to\HH(\cE^{\add}/\rad))\otimes\HC^{-,\red}(\Q[x_1,\dots,x_s]).
\end{multline}
Next, recall the semi-free dg $\mk$-algebra $B$ from Proposition \ref{prop:ses_for_cat_res_of_cuspidal}: it is given by $B=\mk\la x, y\ra,$ where $\deg(x) = 0,$ $\deg(y) = 1,$ and $d x = 0,$ $d y = x^2.$ Applying loc. cit. and \cite[Theorem 7.0.0.2]{DGM13} again we obtain
\begin{multline}\label{eq:N^sK_of_X}
N^s K(X)\cong N^s K(B)[-1]\cong \Fiber(N^s K(\mk)\to N^s K(B))\\
\cong \Fiber(N^s \HC^-(\mk)\to N^s \HC^-(B))\cong\Fiber(N^s \HC^-(X)\to N^s \HC^-(\PP^1_{\mk}))\\
\cong \Fiber(\HH(X)\to\HH(\PP^1_{\mk}))\otimes\HC^{-,\red}(\Q[x_1,\dots,x_s]). 
\end{multline}
Combining \eqref{eq:N^sK_of_E^add} and \eqref{eq:N^sK_of_X} together with the cofiber sequence \eqref{eq:cofiber_sequence_for_N^sK_of_A} we obtain the isomorphism \eqref{eq:N^sK_of_A}.
\end{proof}

\begin{proof}[Proof of Theorem \ref{th:sharpness_of_estimates}]
\ref{N^sK_of_effaceable} Consider the complex of $\Q$-vector spaces $V$ from Lemma \ref{lem:computing_N^sK_of_effaceable}. By loc. cit. it suffices to prove that $V\cong \mk.$ Clearly, we have $\Cone(\HH(\cE^{\add})\to\HH(\cE^{\add}/\rad))\in D(\Q)_{\geq 1}.$ Next, by Proposition \ref{prop:ses_for_cat_res_of_cuspidal} for the dg $\mk$- algebra $B$ from loc. cit. we have
\begin{equation*}
\Cone(\HH(X)\to\HH(\PP^1_{\mk}))\cong \Cone(\HH(\mk)\to\HH(B))\in D(\Q)_{\geq 0}.
\end{equation*}
Moreover, $H_0$ of the latter complex is isomorphic to $\mk.$ Hence, we have $V\in D(\Q)_{\geq 0}$ and $H_0(V)\cong \mk.$ Applying Lemma \ref{lem:computing_N^sK_of_effaceable} for $s = 1$ we obtain
\begin{equation*}
NK(\cA)\cong V[-2]\otimes\HC^{-,\red}(\Q[x_1])\cong V[-1]\otimes\Omega^1_{\Q[x_1]/\Q}.
\end{equation*}
By Corollary \ref{cor:coconnectivity_for_N^sK_of_t_categories} we have $NK(\cA)\in\Sp_{\leq -1}.$ We conclude that $V\in D(\Q)_{\leq 0},$ and by the above we obtain $V\cong\mk.$ This proves \ref{N^sK_of_effaceable}.

\ref{DK_of_effaceable} now follows almost directly from \ref{N^sK_of_effaceable}. Namely, we first obtain an isomorphism
\begin{equation*}
DK(\cA)\cong \Fiber(\HC^-(\mk/\mk)[-2]\to \colim\limits_{[n]\in\Delta^{op}}\HC^-(\mk[x_1,\dots,x_n]/\mk)[-2]).
\end{equation*}
Since $\HP(\mk[x_1,\dots,x_n]/\mk)\cong HP(\mk/\mk)$ for $n\geq 0,$ we obtain
\begin{multline*}
\Fiber(\HC^-(\mk/\mk)[-2]\to \colim\limits_{[n]\in\Delta^{op}}\HC^-(\mk[x_1,\dots,x_n]/\mk)[-2])\\
\cong \Fiber(\HC(\mk/\mk)[-1]\to\colim\limits_{[n]\in\Delta^{op}}\HC(\mk[x_1,\dots,x_n]/\mk)[-1]))\cong\HC(\mk/\mk)[-1].
\end{multline*}
The latter isomorphism follows from the well-known vanishing of the colimit $\colim\limits_{[n]\in\Delta^{op}}\HH(\mk[x_1,\dots,x_n]/\mk):$ this is an $\bE_{\infty}$-algebra in which the unit element is homotopic to zero. Therefore we obtain
\begin{equation*}
DK(\cA)\cong\HC(\mk/\mk)[-1],
\end{equation*}
which gives \eqref{eq:DK_of_effaceable}.
\end{proof}

\begin{remark}
It follows directly from the \cite[Theorem 9.3]{E25c} that for any $\cC\in\Cat_{\Q}^{\perf}$ the spectrum $DK(\cC)$ is naturally a $\Q[[u]]$-module, where $u$ has cohomological degree $2$ (recall that $TC(\Q)\cong\Q[[u]]$). Moreover, it follows from \cite[Theorem 9.1]{E25c} that $u$ is acting locally nilpotently on $DK(\cC),$ since by loc. cit. we have $\pi_{-2}\End_{\Mot^{\loc}_{\Q}}(\wt{\cU}_{\loc}(\Q[x]))=0.$ The proof of Theorem \ref{th:sharpness_for_N^sK} shows that for the abelian category $\cA=\eff(\Vect_X)$ we have an isomorphism $DK(\cA)\cong (\mk((u))/u \mk[[u]])[-1]$ of $\Q[[u]]$-modules.
\end{remark}

\section{Examples of compactly assembled $t$-structures and coherently assembled abelian categories}
\label{sec:examples_of_comp_ass_and_coh_ass}

\subsection{Sheaves on locally compact Hausdorff spaces}

For simplicity we consider only the continuously bounded $t$-structures, and we only consider the categories of sheaves with values in a fixed category (not in a presheaf of categories like in \cite[Sections 6.2-6.4]{E24}).

\begin{prop}\label{prop:t_structure_on_sheaves}
Let $X$ be a compact Hausdorff space and let $\cC$ be a dualizable $t$-category. 

\begin{enumerate}[label=(\roman*),ref=(\roman*)]
	\item There is a compactly assembled continuously bounded $t$-structure on the dualizable category of presheaves $\PSh(X;\cC),$ such that $\PSh(X;\cC)_{\geq 0}=\PSh(X;\cC_{\geq 0}).$  \label{t_structure_PSh}
	\item There is a unique compactly assembled continuously bounded $t$-structure on the category $\Shv(X;\cC)$ such that the sheafification functor $\PSh(X;\cC)\to \Shv(X;\cC)$ is $t$-exact. Moreover, we have $\Shv(X;\cC)^{\heartsuit}\simeq\Shv(X;\cC^{\heartsuit}).$ \label{t_structure_Sh}
\item If $\cA$ is a coherently assembled abelian category, then the category $\Shv(X;\cA)$ is also coherently assembled. \label{Sh_is_coherently_assembled}
\end{enumerate}
\end{prop}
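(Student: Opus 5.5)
For \ref{t_structure_PSh}, I will realize $\PSh(X;\cC)$ as a functor category and apply Proposition \ref{prop:functors_from_poset_to_dualizable_t_category}. The presheaf category is $\Fun(\mathcal{O}(X)^{op},\cC)$, where $\mathcal{O}(X)$ is the poset of open subsets. This poset is a meet-semilattice: it has the smallest element $\emptyset$, and meets are intersections. Case \ref{meet_semilattice} of that proposition therefore gives the claimed compactly assembled, continuously bounded $t$-structure with $\PSh(X;\cC)_{\geq 0}=\PSh(X;\cC_{\geq 0})$. No further argument should be needed; dualizability of presheaves is built into that proposition.

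For \ref{t_structure_Sh}, I will use the standard facts from \cite[Section 6]{E24} for a compact Hausdorff $X$:
\begin{itemize}
\item $\Shv(X;\cC)$ is dualizable;
\item the sheafification $L:\PSh(X;\cC)\to\Shv(X;\cC)$ is strongly continuous, since its right adjoint (the inclusion) commutes with filtered colimits, because sheaves on compact Hausdorff spaces can be tested on the compacta/finite covers;
\item $L$ is a localization whose image generates.
\end{itemize}
The plan is then to show that the $t$-structure on presheaves induces one on sheaves via the inclusion $\iota=L^R$. Concretely, I need $\iota L$ to be $t$-exact, or at least that $\tau_{\geq 0}$ of a sheaf is a sheaf up to applying $L$ compatibly.

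This is the main obstacle. Truncations of sheaves are not sheaves in general, and the natural $t$-structure on sheaves has $\tau_{\geq 0}$ given by $L\tau_{\geq 0}\iota$. My approach is to work with $\cK$-sheaves. On compact Hausdorff spaces, $\Shv(X;\cC)$ is equivalent to $\cK$-sheaves, which are functors on the poset of compact subsets satisfying finite Mayer--Vietoris and continuity $\mathcal{F}(K)=\colim_{U\supset K}\mathcal{F}(U)$. These conditions are preserved by filtered colimits. Finite Mayer--Vietoris is a finite limit condition, so it is preserved by $t$-exact... but not by $\tau_{\geq 0}$ directly.

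I would therefore instead construct the $t$-structure via Proposition \ref{prop:t_structure_on_dualizable_via_functor_from_abelian}. Take $\cA=\Shv(X;\cC^{\heartsuit})$ and the functor $F:\cA\to\Shv(X;\cC)$. I would first prove \ref{Sh_is_coherently_assembled} for $\cC^{\heartsuit}$, i.e.\ that $\Shv(X;\cA')$ is coherently assembled, then check that $F$ is strongly continuous, exact and fully faithful, and that its image generates.

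Uniqueness and $t$-exactness of $L$ then follow by Proposition \ref{prop:t_structure_on_dualizable_via_functor_from_abelian}\ref{sufficient_for_t_exactness}, since $L$ sends presheaves in the heart to sheaves in the heart. For the identification of the heart I would check the $\Ext^1$ comparison on $\omega_1$-compact objects, as required by part \ref{when_A_equivalent_to_heart_of_C}; this holds because $\Ext^1$ into a sheaf of heart objects is computed by \v{C}ech data in degree $\leq 1$.

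For \ref{Sh_is_coherently_assembled}, I will use Proposition \ref{prop:coh_ass_ab_retraction_left_exact}. Presheaves $\PSh(X;\cA)$ form a coherently assembled category by the same meet-semilattice argument applied to the abelian setting. The retraction is
\[
\Shv(X;\cA)\xto{\iota}\PSh(X;\cA)\xto{L}\Shv(X;\cA).
\]
Here $\iota$ is left exact and continuous (continuity is the compact Hausdorff input), and $L$ is exact and continuous. This yields \ref{Sh_is_coherently_assembled} directly.
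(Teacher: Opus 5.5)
Your part (i) is correct and is the paper's argument. The gap is in (ii) and (iii), which both rest on the claim that the inclusion $\iota:\Shv(X;\cC)\hto\PSh(X;\cC)$ commutes with filtered colimits, or equivalently that sheafification $L$ is strongly continuous. This is false. On a compact Hausdorff space, sections over \emph{compact} subsets commute with filtered colimits, but sections over opens do not.

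For example, take sheaves of abelian groups on $X=[0,1]$, let $\delta_p$ be the skyscraper sheaf at $p$ with stalk $\Z$, and put $U=(0,1)$. The sheaf colimit of $\bigoplus_{2\leq n\leq N}\delta_{1/n}$ has sections $\prod_{n\geq 2}\Z$ over $U$, because $\{1/n\}$ is closed and discrete in $U$. The colimit of the sections over $U$ is only $\bigoplus_{n\geq 2}\Z$.

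\begin{itemize}
\item \textbf{Part (iii).} Your retraction $\Shv(X;\cA)\xto{\iota}\PSh(X;\cA)\xto{L}\Shv(X;\cA)$ does not satisfy the hypotheses of Proposition \ref{prop:coh_ass_ab_retraction_left_exact}, so (iii) is not proved.
\item \textbf{Part (ii).} Your route inherits the problem in three ways. It needs (iii) for $\cC^{\heartsuit}$ in order to apply Proposition \ref{prop:t_structure_on_dualizable_via_functor_from_abelian}. You never explain why $\Shv(X;\cC^{\heartsuit})\to\Shv(X;\cC)$ would be strongly continuous. Finally, part \ref{sufficient_for_t_exactness} of that proposition, which you invoke for $L$, requires exactly the strong continuity of $L$ that fails.
\end{itemize}

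The missing idea is to replace $\iota$ by a different right inverse of $L$ that \emph{is} continuous. The paper uses $\Phi(\cF)(U)=\Gamma_c(X,i_{\bar U,*}i_{\bar U}^*\cF)$, where $i_{\bar U}:\bar U\to X$ is the inclusion. Concretely, this is the sections of $\cF|_{\bar U}$ over the compact set $\bar U$. It has the required properties:
\begin{itemize}
\item it commutes with filtered colimits, because $\bar U$ is compact;
\item it is exact and left $t$-exact;
\item $L\Phi(\cF)\cong\cF$, which is checked by comparing sections over compact subsets.
\end{itemize}

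So $\Shv(X;\cC)\xto{\Phi}\PSh(X;\cC)\xto{L}\Shv(X;\cC)$ is a retraction by colimit-preserving, left $t$-exact functors. The presheaf $t$-structure from (i) is compactly assembled and continuously bounded. Proposition \ref{prop:comp_ass_t_structure_via_retracts} then gives (ii) directly, with no detour through Proposition \ref{prop:t_structure_on_dualizable_via_functor_from_abelian}. The existence and uniqueness of a $t$-structure making $L$ $t$-exact are standard: its aisles are the essential images under $L$ of those for presheaves.

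Part (iii) then follows by applying (ii) to $\cC=\check{D}(\cA)$. Its heart $\Shv(X;\cA)$ is the heart of a compactly assembled $t$-structure, hence coherently assembled by Proposition \ref{prop:C_a_b_for_comp_ass_t_structure}.
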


\begin{proof}
\ref{t_structure_PSh} follows directly from Proposition \ref{prop:functors_from_poset_to_dualizable_t_category}.

\ref{t_structure_Sh} It is well-known that there is a unique $t$-structure on $\Shv(X;\cC)$ such that the sheafification functor $\cF\mapsto\cF^{\sharp}$ is $t$-exact. Moreover, this $t$-structure is compatible with filtered colimits. To finish the proof, by Proposition \ref{prop:comp_ass_t_structure_via_retracts} it suffices to construct a functor $\Phi:\Shv(X;\cC)\to\PSh(X;\cC),$ right inverse to the sheafification functor, such that $\Phi$ is continuous, exact and left $t$-exact. For a closed subset $Z\subset X$ denote by $i_Z:Z\to X$ the inclusion. We put 
\begin{equation*}
	\Phi(\cF)(U)=\Gamma_c(X,i_{\bar{U},*}(i_{\bar{U}}^*(\cF))),\quad \cF\in\Shv(X;\cC),\quad U\subset X\text{ open}.
\end{equation*}
Then $\Phi$ commutes with filtered colimits since so do $i_{\bar{U}}^*,$ $i_{\bar{U},*}$ and $\Gamma_c(X,-).$ For the same reason $\Phi$ is left $t$-exact. It remains to show that $\Phi$ is right inverse to the sheafification.

To see this, consider another (accessible exact) functor $\Psi:\Shv(X;\cC)\to\PSh(X;\cC),$ given by
\begin{equation*}
\Psi(\cF)(U)=\Gamma(\bar{U},i_{\bar{U}}^*(\cF)),\quad \cF\in\Shv(X;\cC)\quad U\subset X\text{ open}.
\end{equation*}
We have natural transformations $\Phi\to \Psi$ and $\Psi\to\incl,$ where $\incl$ is the inclusion functor. By construction, for $\cF\in\Shv(X;\cC)$ and for any compact $Z\subset X$ we have
\begin{equation*}
\Gamma(Z,\Cone(\Phi(\cF)^{\sharp}\to \Psi(\cF)^{\sharp})_{\mid Z})=0,\quad \Gamma(Z,\Cone(\Psi(\cF)^{\sharp}\to \cF)_{\mid Z})=0.
\end{equation*}
Hence, the maps $\Phi(\cF)^{\sharp}\to\Psi(\cF)^{\sharp}$ and $\Psi(\cF)^{\sharp}\to\cF$ are isomorphisms of sheaves, as required.

Finally, \ref{Sh_is_coherently_assembled} follows from \ref{t_structure_Sh} applied to $\cC=\check{D}(\cA).$
\end{proof}

\begin{cor}\label{cor:K_-1_not_vanishing}
There exist coherently assembled abelian categories for which the group $K_{-1}^{\cont}$ is non-zero. For example, let $\mk$ be a field, and take $\cA=\Shv(\R;\Vect_{\mk}).$ Then $\cA$ is coherently assembled and $K_{-1}^{\cont}(\cA)=\Z.$ 
\end{cor}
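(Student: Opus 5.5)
The corollary has two parts, and I would treat them separately: first show that $\cA=\Shv(\R;\Vect_{\mk})$ is coherently assembled, then compute $K_{-1}^{\cont}(\cA)$.

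\emph{Coherent assembly.} Proposition \ref{prop:t_structure_on_sheaves} is stated only for compact Hausdorff spaces, so I would pass to a compactification. Take the one-point compactification $X=\R\cup\{\infty\}\cong S^1$ with open inclusion $j:\R\to X$. Then $\Shv(\R;\Vect_{\mk})$ is the Serre quotient of $\Shv(X;\Vect_{\mk})$ by the sheaves supported at $\infty$, and the latter subcategory is $\Vect_{\mk}$.
\begin{itemize}
\item By Proposition \ref{prop:t_structure_on_sheaves}\ref{Sh_is_coherently_assembled}, $\Shv(X;\Vect_{\mk})$ is coherently assembled.
\item The subcategory $\Vect_{\mk}$ is locally coherent. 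Its inclusion $i_*$ has right adjoint $i^!$, which for abelian sheaves is $\Gamma_{\{\infty\}}$. I need $\Gamma_{\{\infty\}}$ to commute with filtered colimits. I would check this using $\Gamma_{\{\infty\}}(\cF)=\ker(\cF(U)\to\cF(U\setminus\infty))$ for a small neighborhood $U$, together with the fact that filtered colimits of sheaves on compact Hausdorff spaces commute with sections over compact sets.
\item Proposition \ref{prop:Serre_quotient_coh_ass} then gives that $\cA$ is coherently assembled.
\end{itemize}
Alternatively, one can write $\cA$ as a retract of $\Shv(X;\Vect_{\mk})$ via $j^*$ and $j_!$, with $j_!$ exact and continuous, and apply Proposition \ref{prop:coh_ass_ab_retraction_left_exact}.

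\emph{Computing $K_{-1}^{\cont}$.} Next I would identify $\check{D}(\cA)$ with $\Shv(\R;D(\mk))$. By Proposition \ref{prop:t_structure_on_sheaves}, $\Shv(X;D(\mk))$ carries a dualizable $t$-structure with heart $\Shv(X;\Vect_{\mk})$. Both $\check{D}(\cA)$ and $\Shv(\R;D(\mk))$ are dualizable $t$-categories with heart $\cA$. By Theorem \ref{th:theorem_of_the_heart}, the realization functor $\check{D}(\cA)\to\Shv(\R;D(\mk))$ induces an isomorphism on $K_j^{\cont}$ for $j\geq -2$. This is enough, and it avoids having to prove that realization is an equivalence.

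It therefore suffices to compute $K^{\cont}(\Shv(\R;D(\mk)))$. By \cite{E24} (continuous $K$-theory of sheaves on locally compact spaces), $K^{\cont}(\Shv(\R;\cC))\simeq\Omega K^{\cont}(\cC)$, i.e.\ compactly supported cohomology of $\R$. Hence $K_{-1}^{\cont}=K_0(\mk)=\Z$. If that result is not available, I would derive it from the localization sequence
\[
\Shv(\R)\to\Shv(S^1)\to D(\mk),
\]
together with the vanishing $K^{\cont}(\Shv([0,1];\cC))\simeq K(\cC)$ given by an Eilenberg swindle, and Mayer--Vietoris for $S^1$.

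\emph{Main obstacle.} I expect the hardest step to be the bookkeeping between the abelian and the stable categories, namely justifying that $K^{\cont}(\cA)$ may be computed via $\Shv(\R;D(\mk))$. The route through Theorem \ref{th:theorem_of_the_heart} in degree $-1$ is the one I would try first. The input from \cite{E24} on sheaves I take as given.
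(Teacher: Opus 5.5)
\textbf{The Serre-quotient route fails.} Your main argument for coherent assembly contains a false step. The right adjoint $i^!=\Gamma_{\{\infty\}}$ of the skyscraper inclusion $i_*:\Vect_{\mk}\to\Shv(S^1;\Vect_{\mk})$ does \emph{not} commute with filtered colimits. Hence $i_*$ is not strongly continuous, and Proposition \ref{prop:Serre_quotient_coh_ass} cannot be applied.

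Here is a counterexample. Let $I_1\supset I_2\supset\dots$ be closed arcs with $\bigcap_n I_n=\{\infty\}$. Let $\cF_n=\mk_{I_n}$ be the constant sheaf on $I_n$ extended by zero, with the restriction maps $\cF_n\to\cF_{n+1}$ as transitions.
\begin{itemize}
\item The stalks of $\colim_n\cF_n$ are $\mk$ at $\infty$ and $0$ elsewhere. So the colimit is the skyscraper $\mk_{\{\infty\}}$, and $\Gamma_{\{\infty\}}(\colim_n\cF_n)=\mk$.
\item On the other hand, $\Gamma_{\{\infty\}}(\cF_n)=0$ for every $n$. A locally constant section on an arc that vanishes off a single point is zero.
\end{itemize}
Your proposed check breaks exactly here: $U\setminus\{\infty\}$ is not compact, so sections over it do not commute with filtered colimits.

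\textbf{Your alternative works.} Make it the main argument. The composite $\cA\xto{j_!}\Shv(S^1;\Vect_{\mk})\xto{j^*}\cA$ is a retraction by exact, colimit-preserving functors. The middle term is coherently assembled by Proposition \ref{prop:t_structure_on_sheaves}. So Proposition \ref{prop:coh_ass_ab_retraction_left_exact} applies.

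You also need the same retraction for $D(\mk)$-valued sheaves. There $j_!$ and $j^*$ are $t$-exact, and $\Shv(\R;D(\mk))$ is a retract of a dualizable category. Combined with Proposition \ref{prop:comp_ass_t_structure_via_retracts}, this justifies your claim that $\Shv(\R;D(\mk))$ is a dualizable $t$-category with heart $\cA$. You stated that claim without proof, and it is precisely the hypothesis you need to invoke Theorem \ref{th:theorem_of_the_heart}.

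\textbf{Comparison with the paper.} With this repair your proof agrees with the paper's. The paper cites Proposition \ref{prop:t_structure_on_sheaves} directly for $\R$, even though that proposition is stated for compact spaces, so your compactification step is the more careful version. It then writes $K_{-1}^{\cont}(\cA)\cong K_{-1}^{\cont}(\Shv(\R;D(\mk)))\cong K_0(\mk)$, using \cite[Proposition 4.18]{E24} for the second isomorphism. Your use of Theorem \ref{th:theorem_of_the_heart} with $m=0$ is a valid justification of the first isomorphism, which the paper leaves implicit.
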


\begin{proof}
By Proposition \ref{prop:t_structure_on_sheaves} the category $\cA$ is coherently assembled. The computation of $K_{-1}^{\cont}(\cA)$ is a special case of \cite[Proposition 4.18]{E24}: we have
\begin{equation*}
K_{-1}^{\cont}(\cA)\cong K_{-1}^{\cont}(\Shv(\R;D(\mk)))\cong K_0(\mk)\cong\Z.\qedhere
\end{equation*}
\end{proof}

\subsection{Nuclear modules over $\Z_p$}

Consider the abelian category $\Solid_{\Z_p}$ of light solid modules over $Z_p$ from \cite{CS20, CS24}. It is generated by a single compact projective object $P=\prodd[\N]\Z_p.$ We denote by $\Nuc^{CS}(\Z_p)\subset D(\Solid_{\Z_p})$ the original version of nuclear solid modules over $\Z_p,$ whose description will be recalled in the proof of the next proposition. This category is known to be dualizable (but not compactly generated). Recall from \cite{E25a} that we have a bigger dualizable category $\Nuc(\Z_p)\supset\Nuc^{CS}(\Z_p),$ such that the inclusion functor is strongly continuous and we have
\begin{equation*}
\Nuc(\Z_p)\simeq\prolim[n]^{\dual}D(\Z/p^n).
\end{equation*}
Here the inverse limit is taken in $\Cat_{\st}^{\dual}.$ Recall that we have
\begin{equation*}
\Nuc(\Z_p)^{\omega}\simeq\Nuc^{CS}(\Z_p)^{\omega}\simeq\Perf(\Z_p).
\end{equation*}
The latter category has a standard $t$-structure since $\Z_p$ is regular. Our goal in the next proposition is to show that this $t$-structure extends to a compactly assembled continuously bounded $t$-structure on $\Nuc^{CS}(\Z_p).$ One can show by a more difficult argument that it further extends to a compactly assembled $t$-structure on $\Nuc(\Z_p)$ with the same heart, but the latter $t$-structure is not continuously bounded. 

The first part of the following proposition is known but we include the proof for completeness. %It is known that $A$ is right coherent.  

\begin{prop}\label{prop:t_structure_on_Nuc_of_Z_p}
\begin{enumerate}[label=(\roman*),ref=(\roman*)]
\item The category $\Solid_{\Z_p}$ is locally coherent. The abelian category $\Solid_{\Z_p}^{\omega}$ has homological dimension $1,$ and we have $\check{D}(\Solid_{\Z_p})\simeq D(\Solid_{\Z_p}).$ \label{solid_is_locally_coherent}
\item The standard $t$-structure on $D(Solid_{\Z_p})$ induces a compactly assembled continuously bounded $t$-structure on $\Nuc^{CS}(\Z_p).$ Moreover, the realization functor $\check{D}(\Nuc^{CS}(\Z_p)^{\heartsuit})\to \Nuc^{CS}(Z_p)$ is an equivalence. \label{nuc_is_coherently_assembled}
\end{enumerate}
\end{prop}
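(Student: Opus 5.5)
For part (i), I would work with the compact projective generator $P=\prodd[\N]\Z_p$ and its endomorphism ring $A=\End(P)$, so that $\Solid_{\Z_p}\simeq\Mod\hy A$. Local coherence amounts to showing that finitely presented objects of $\Solid_{\Z_p}$ are closed under kernels. The plan is to use the known description of $\Solid_{\Z_p}^{\omega}$: its objects are cokernels of maps $P^a\to P^b$, and every such map is a limit of maps of finitely generated $\Z_p$-modules. Concretely, I would argue that a finitely presented solid module is $\prolim[n]$ of finitely generated modules over $\Z/p^n$ along surjections, up to a free part. Together with the fact that $\Z_p$ is a regular noetherian ring of dimension $1$, this should show two things: kernels of maps between finitely presented objects are again finitely presented, and every subobject of $P^b$ that is finitely presented is projective. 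The latter is the analogue of "submodules of free modules over a PID are free", and it gives homological dimension $1$ for $\Solid_{\Z_p}^{\omega}$.

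Once $\Solid_{\Z_p}^{\omega}$ is known to have finite homological dimension, the identification $\check{D}(\Solid_{\Z_p})\simeq D(\Solid_{\Z_p})$ follows. Indeed, by \cite[Theorem 4.9]{Kr15} we have $\check{D}=\Ind(D^b(\Solid^{\omega}_{\Z_p}))$, and $D^b(\Solid^{\omega}_{\Z_p})=\Perf(A)$ because projective resolutions are finite. Moreover $D(\Solid_{\Z_p})\simeq D(A)=\Ind(\Perf(A))$.

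For part (ii), I would use Proposition \ref{prop:t_structure_on_dualizable_via_functor_from_abelian}. I would recall the presentation of $\Nuc^{CS}(\Z_p)$ as the localizing subcategory of $D(\Solid_{\Z_p})$ generated by the basic nuclear objects, namely sequential colimits $\indlim[n]P_n$ along trace-class maps, equivalently $\omega_1$-compacts of the form $\inddlim$ of compact objects along compact morphisms. The key step is to check that truncations $\tau_{\geq 0}$ in $D(\Solid_{\Z_p})$ preserve $\Nuc^{CS}(\Z_p)$. It suffices to check this on $\omega_1$-compact generators $x=\indlim[n]x_n$ with $x_n\in\Perf(\Z_p)$ and trace-class transitions. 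Since $\Perf(\Z_p)$ has a bounded $t$-structure, we get $\tau_{\geq 0}x\cong\indlim[n]\tau_{\geq 0}x_n$ with the same kind of transition maps, by exactness of filtered colimits in $\Solid$.

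Having established this, I would argue as in Corollary \ref{cor:induced_t_structure_is_compactly_assembled}. The inclusion $\Nuc^{CS}(\Z_p)\hto D(\Solid_{\Z_p})$ is strongly continuous and the ambient $t$-structure is compactly generated, so the induced $t$-structure is compactly assembled. Continuous boundedness holds because the generators are colimits of objects of $\Perf(\Z_p)^b$ (note $\Perf(\Z_p)=\Perf(\Z_p)^b$), so $\Nuc^{CS}(\Z_p)$ is generated by the heart.

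For the realization equivalence, I would apply Proposition \ref{prop:when_partial_realization_is_an_equivalence} for all $m$ and take the colimit via Proposition \ref{prop:colimit_of_St_of_C_0_n}. It then suffices to show that $\Ext^n$ computed in the heart agrees with $\Ext^n$ in $\Nuc^{CS}$ on $\omega_1$-compact heart objects. The strategy is that $\Nuc^{CS}$ has "cohomological dimension $\leq 1$" in the sense that $\Ext^{\geq 2}_{\Nuc^{CS}}$ vanishes between heart objects. This should hold because the ambient $D(\Solid_{\Z_p})$ computes these Ext groups via $\Solid^{\omega}$ of dimension $1$, and the inclusion of $\Nuc^{CS}$ is fully faithful. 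With $\Ext^{\geq 2}=0$, the usual effacement argument in Proposition \ref{prop:when_partial_realization_is_an_equivalence} goes through.

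The main obstacle is the agreement of $\Ext^{\leq m+1}$ computed in the heart of $\Nuc^{CS}$ versus in the ambient category. The heart is not closed under enough projectives, so it is not obvious that Ext in the heart vanishes in degree $\geq 2$. I would first try to write each $\omega_1$-compact heart object as $\indlim$ of finitely generated $\Z_p$-modules along compact maps. I would then try to produce a two-term resolution by "nuclear free" heart objects (colimits of $\Z_p^{a_n}$) that are Ext-acyclic in both categories.
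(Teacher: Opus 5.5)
You sketch (i) as a Pontryagin dual of the paper's argument, which identifies $A^{op}\cong\End(\biggplus[\N]\Q_p/\Z_p)$, an injective cogenerator of countable $p$-torsion groups. That sketch is fine, and so is your derivation of $\check{D}\simeq D$ via $D^b(\Coh\hy A)\simeq\Perf(A)$. The first real gap is the $t$-structure in (ii). You take the $\omega_1$-compact generators of $\Nuc^{CS}(\Z_p)$ to be $\indlim[n]x_n$ with $x_n\in\Perf(\Z_p)$. This cannot be right: $\Perf(\Z_p)=\Nuc^{CS}(\Z_p)^{\omega}$, so $\Nuc^{CS}(\Z_p)$ would then be compactly generated. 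The basic nuclear objects are colimits of compact objects of $D(\Solid_{\Z_p})$, such as $P=\prodd[\N]\Z_p$, along trace-class maps. The whole difficulty is whether the truncated maps $\tau_{\geq 0}(f_n)$ are again trace-class. Exactness of filtered colimits says nothing about this, and ``the same kind of transition maps'' is exactly the unproved claim. Your continuous-boundedness argument rests on the same misdescription, although continuous boundedness would follow from Corollary \ref{cor:induced_t_structure_is_compactly_assembled} once truncation-closure is known. In the paper this is the main content. One has $\Nuc^{CS}(\Z_p)=\ker(D(A)\to D(A/J))$ for the trace-class ideal $J\subset A$, and $i\circ i^R\simeq-\tens{A}J$. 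Its $t$-exactness follows from flatness of $J$ as a left $A$-module: $J^{op}$ is the directed union of the free right ideals $X_f\cdot A^{op}$, with $X_f$ the injective diagonal operators $e_n\mapsto p^{f(n)}e_n$. Your route can be repaired using (i). Compact objects of $D(\Solid_{\Z_p})$ lie in $D^b$ of a hereditary abelian category, so $\tau_{\geq 0}y\to y$ is split. Hence $\tau_{\geq 0}f=r\circ f\circ\iota$, which is trace-class because trace-class maps form an ideal. Some such argument must be given.

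The second gap is the realization equivalence, which you leave open. Your proposed input, $\Ext^{\geq 2}_{\Nuc^{CS}}=0$ between heart objects because $\Solid_{\Z_p}^{\omega}$ has dimension $1$, is not justified. Dimension $1$ controls $\Ext$ only out of compact objects, while $\omega_1$-compact heart objects of $\Nuc^{CS}(\Z_p)$ are merely countably presented $A$-modules. For $M=\indlim[k]M_k$ with $M_k$ coherent one has $\Ext^2(M,N)\cong\lim\nolimits^1_k\Ext^1(M_k,N)$, which need not vanish. The missing idea is that no $\Ext$ comparison is needed. Once the $t$-structure is induced, $i$ is continuous and $t$-exact. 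Also $i^R$ is continuous, left $t$-exact by adjunction, and right $t$-exact because $i\circ i^R\simeq-\tens{A}J$. By the universal property of $\check{D}$, the realization functor $\check{D}(\Nuc^{CS}(\Z_p)^{\heartsuit})\to\Nuc^{CS}(\Z_p)$ is then a retract of $\check{D}(\Solid_{\Z_p})\to D(\Solid_{\Z_p})$, which is an equivalence by (i).
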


\begin{proof}
\ref{solid_is_locally_coherent} Consider the compact projective generator $P\in\Solid_{\Z_p}$ as above. Putting $A=\End(P),$ we have an equivalence $D(\Solid_{\Z_p})\cong D(A).$ Recall that the morphisms $P\to P$ are simply the continuous maps of topological abelian groups $\prodd[\N]\Z_p\to \prodd[\N]\Z_p.$ It follows that we have an isomorphism $A^{op}\cong\End_{\Z}((\biggplus[\N]\Z)^{\wedge}_p).$ Recall that we have an equivalence $D_{p\hy\compl}(\Z)\xto{\sim}D_{p\hy\tors}(\Z),$ sending $(\biggplus[\N]\Z)^{\wedge}_p$ to $\biggplus[\N]\Q_p/\Z_p[-1].$ Hence, we have an isomorphism $A^{op}\cong \End(\biggplus[\N]\Q_p/\Z_p).$ Now, the object $I=\biggplus[\N]\Q_p/\Z_p$ is injective in the small abelian category $\cA=(\Mod_{p\hy\tors}\hy\Z)^{\omega_1}.$ Moreover, any object of $\cA$ can be embedded into $I.$ It follows that the ring $A$ is right coherent and we have equivalences $\Solid_{\Z_p}^{\omega}\simeq \Coh\hy A\cong \cA^{op}.$ In particular, these categories have homological dimension $1.$ Since $\Solid_{\Z_p}$ also has a compact projective generator, the equivalence $\check{D}(\Solid_{\Z_p})\simeq D(\Solid_{\Z_p})$ follows for example from \cite[Proposition C.5.8.12]{Lur18}.

\ref{nuc_is_coherently_assembled} Within the above notation, denote by $J\subset A$ the ideal of trace-class maps in the sense of \cite[Definition 13.11]{CS20}. Under the above identification $A^{op}\cong\End_{\Z}((\biggplus[\N]\Z)^{\wedge}_p),$ the ideal $J^{op}\subset A^{op}$ consists of compact operators, i.e. maps $X:(\biggplus[\N]\Z)^{\wedge}_p\to (\biggplus[\N]\Z)^{\wedge}_p$ such that for each $n>0$ the map $X\otimes\Z/p^n:\biggplus[\N]\Z/p^n\to \biggplus[\N]\Z/p^n$ has finite rank. We have a short exact sequence
\begin{equation*}
0\to \Nuc^{CS}(\Z_p)\xto{i} D(A)\to D(A/J)\to 0.
\end{equation*}
By \ref{solid_is_locally_coherent}, the standard $t$-structure on $D(A)$ is compactly assembled and continuously bounded. By Corollary \ref{cor:induced_t_structure_is_compactly_assembled} it suffices to prove that the functor $i\circ i^R:D(A)\to D(A)$ is $t$-exact, where $i^R$ is right adjoint to $i.$ %Assuming this for the moment, we see that the assertions about the $t$-structure on $\Nuc^{CS}(\Z_p)$ follow. The realization functor is automatically an equivalence, since we have $\Nuc^{CS}(\Z_p)^{\heartsuit}\simeq \Mod\hy A/\Mod\hy (A/J).$

%It remains to prove the $t$-exactness claim. 
We have $i\circ i^R\cong -\tens{A}J.$ Hence, we need to prove that $J$ is flat as a left $A$-module. This is known, but we give a proof for completeness. It is convenient to deal with the opposite algebra, i.e. with endomorphisms of $(\biggplus[\N]\Z)^{\wedge}_p.$ We consider the poset 
\begin{equation*}
S= \{f:\N\to\N \mid \lim\limits_{n\to\infty}f(n)=\infty\}
\end{equation*} with reverse order: $f\leq g$ if $f(n)\geq g(n)$ for $n\geq 0.$ Then $S$ is directed. For each such $f\in S$ consider the operator 
\begin{equation*}
X_f:(\biggplus[\N]\Z)^{\wedge}_p\to (\biggplus[\N]\Z)^{\wedge}_p,\quad X_f(e_n)=p^{f(n)}e_n\text{ for }n\geq 0.
\end{equation*}
Here $\{e_n\}_{n\geq 0}$ is the topological basis of $(\biggplus[\N]\Z)^{\wedge}_p.$ It is easy to see that $J^{op}$ as a right ideal in $A^{op}$ is given by the directed union:
\begin{equation*}
J^{op}=\bigcup\limits_{f:\N\to\N} X_f\cdot A^{op},
\end{equation*}
where $f$ runs through functions as above. Now, each $X_f$ (as an operator) has zero kernel, and for $f\leq g$ we have $X_f\in X_g\cdot A^{op}.$ It follows that $J^{op}$ is flat as a right $A^{op}$-module, i.e. $J$ is flat as a left $A$-module. This proves the proposition.
\end{proof}

\subsection{Chase criterion}
\label{ssec:chase_criterion}

In this subsection we consider abelian categories in the setting of the following proposition. In fact they are exactly the dualizable $\Ab$-modules in $\Pr^L,$ as shown in \cite{LLS}. They are also exactly the categories described by Roos' theorem \cite{Roo65}. Namely, these are categories of the form $\Mod\hy R/\Mod\hy(R/I),$ where $R$ is an associative unital ring and $I\subset R$ is an ideal such that $I^2=I.$ 

\begin{prop}\label{prop:dualizable_Ab_modules}
Let $\cC$ be a compactly assembled additive ordinary category. Consider the category $\cA=\Fun^{\cont,\add}(\cC,Ab)$ of continuous additive functors. Then $\cA$ is a Grothendieck abelian category, satisfying (AB6) and (AB4*). Moreover, we have an equivalence
\begin{equation}\label{eq:Lazard_generalized}
\cC\xto{\sim}\Fun^{\cont,\ex}(\cA,\Ab),\quad x\mapsto (F\mapsto F(x)).
\end{equation}
Here the target is the category of continuous exact functors.
\end{prop}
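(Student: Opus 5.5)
The plan is to reduce everything to the compactly generated case by means of a retraction, using the same technique that proved Propositions \ref{prop:minimal_exact_structure} and \ref{prop:coherently_assembled_as_retract}. Take $\cC'=\Ind(\cC^{\omega_1})$ and the retraction $\cC\xto{\hat{\cY}}\cC'\xto{\colim}\cC$. Here $\hat{\cY}$ is strongly continuous and $\colim$ is continuous, and both functors are additive. Precomposition gives a retraction of functor categories, $\cA\to\cA'\to\cA$ with $\cA'=\Fun^{\cont,\add}(\cC',\Ab)\simeq\Fun^{\add}(\cC^{\omega_1},\Ab)$, which is a module category over a ring with several objects. Concretely, $\colim^*:\cA\to\cA'$ is fully faithful, and it has a left adjoint given by precomposition with $\hat{\cY}$, namely $G\mapsto G\circ\hat{\cY}$ (with $G$ extended continuously to $\Ind$). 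This left adjoint is a localization. So $\cA$ is a colimit-closed reflective subcategory of $\cA'$, with an exact localization functor, because evaluation is exact and $\hat{\cY}$ is applied objectwise.

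From this I would read off the required properties. First, $\cA$ is closed in $\cA'$ under all colimits, since continuity is preserved by pointwise colimits. It is also closed under finite limits, because filtered colimits in $\Ab$ are exact. Hence $\cA$ is abelian, colimits and finite limits are computed pointwise, and $\cA$ satisfies (AB5). A generator is given by the images of representables, so $\cA$ is Grothendieck. For (AB4*), products in $\cA$ are the reflection of pointwise products, i.e. $(\prod F_i)\circ\hat{\cY}$, which means $x\mapsto\colim\prod_i F_i(x_n)$ along $\hat{\cY}(x)$. This is a composition of the exact functor $\prod$ in $\Ab$ with filtered colimits, so it is exact. For (AB6) I would use the same description together with the fact that filtered colimits commute with products in $\Ab$ in the sense required by AB6. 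Alternatively, I would show compact assembly directly: $\cA$ is a retract in $\Pr^L$ of the compactly generated $\cA'$, and by the remark following Definition \ref{def:coh_ass_abelian_cats} compact assembly is equivalent to (AB6).

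For the equivalence \eqref{eq:Lazard_generalized}, first treat $\cC'$. Write $\cB=\cC^{\omega_1}$. Continuous exact functors $\Fun^{\add}(\cB,\Ab)\to\Ab$ are exactly tensoring with flat right $\cB$-modules, and by Lazard's theorem flat modules are $\Ind(\cB)=\cC'$ (using that $\cB$ is additive and idempotent complete). So the statement holds for $\cC'$. For $\cC$, a continuous exact functor $\phi:\cA\to\Ab$ gives $\phi\circ L:\cA'\to\Ab$, where $L$ is the exact localization. This functor is continuous and exact, so by the case of $\cC'$ it corresponds to some $y\in\cC'$. I would then show that $y$ lies in the essential image of $\hat{\cY}$, and set $x=\colim(y)$.

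I expect this last identification to be the main obstacle: one must characterize the objects of $\Ind(\cB)$ whose associated functor on $\cA'$ factors through $L$ as exactly those of the form $\hat{\cY}(x)$. I would attack it through the idempotent comonad $\hat{\cY}\circ\colim$ on $\cC'$. A functor factors through $L$ iff it inverts the counit maps $L^R L G\to G$. Testing this on representables, the condition says that $y\simeq\hat{\cY}(\colim y)$, so $y$ is a fixed point of the comonad and hence lies in the image of $\hat{\cY}$. Full faithfulness of \eqref{eq:Lazard_generalized} then follows from the full faithfulness of $\hat{\cY}$.
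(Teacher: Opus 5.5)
Your route is the paper's: pass through the retraction $\cC\xrightarrow{\hat{\cY}}\Ind(\cC^{\omega_1})\xrightarrow{\colim}\cC$, transfer a generator and the axioms (AB4*), (AB5) and (AB6) from $\Fun^{\add}(\cC^{\omega_1},\Ab)$, and reduce the equivalence to Lazard's theorem.

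One claim is wrong, however. The adjunction $\hat{\cY}\dashv\colim$ induces $\colim^*\dashv\hat{\cY}^*$ on functor categories. So precomposition with $\hat{\cY}$ is the \emph{right} adjoint of the inclusion $\colim^*$ (the paper's $\Phi^R$), and $\cA\subset\cA'$ is coreflective, not reflective. In particular $\cA$ is not closed under products in $\cA'$: for $\cC=\Ab$, the functor $c\mapsto c^{\N}$ on countable groups is not the restriction of a continuous functor.

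This does not damage your argument. The formulas you actually use are exactly those of this coreflection. Products in $\cA$ are obtained by applying $\hat{\cY}^*$ to pointwise products, because limits in a coreflective subcategory are coreflections of ambient limits. The maps $\colim^*\hat{\cY}^*G\to G$ are the counits. Just read your $L^R$ as the left adjoint $\colim^*$.

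The step you flag as the main obstacle can also be avoided. Since $\hat{\cY}^*\circ\colim^*\cong\Id_{\cA}$, if $\phi\circ\hat{\cY}^*$ is evaluation at $y\in\Ind(\cC^{\omega_1})$, then $\phi\cong\phi\circ\hat{\cY}^*\circ\colim^*$ is evaluation at $\colim y$. As the paper puts it, the evaluation functor for $\cC$ is a retract of the one for $\Ind(\cC^{\omega_1})$, hence an equivalence. Your comonad fixed-point argument is correct, but it is not needed.
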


\begin{proof}
If $\cC$ is compactly generated, then $\cA\simeq\Fun^{\add}(\cC^{\omega},\Ab),$ and the stated axioms hold. The equivalence \eqref{eq:Lazard_generalized} is essentially a reformulation of Lazard's theorem. Indeed, we have an equivalence $\Fun^L(\cA,\Ab)\simeq\Fun^{\add}((\cC^{\omega})^{op},\Ab),$ and the full subcategory of flat functors $(\cC^{\omega})^{op}\to \Ab$ is identified with $\Ind(\cC^{\omega})\simeq\cC.$

Now consider the general case. Restriction to $\cC^{\omega_1}$ gives a fully faithful functor $\Phi:\cA\hto \Fun^{\add}(\cC^{\omega_1},\Ab).$ Its right adjoint $\Phi^R$ is induced by the functor $\hat{\cY}:\cC\to\Ind(\cC^{\omega_1}).$ Explicitly, for $x\in\cC$ with $\hat{\cY}(x)=\inddlim[i]x_i\in\Ind(\cC^{\omega_1}),$ we have
\begin{equation*}
\Phi^R(F)(x)=\indlim[i]F(x_i),\quad F\in\Fun^{\add}(\cC^{\omega_1},\Ab).
\end{equation*}
In particular, $\Phi^R$ is exact and commutes with filtered colimits. Since (AB5), (AB6) and (AB4*) hold in $\Fun^{\add}(\cC^{\omega_1},\Ab),$ they also hold in $\cA.$ It is also clear that $\cA$ has a generator, namely $\Phi^R(G),$ where $G$ is a generator in $\Fun^{\add}(\cC^{\omega_1},\Ab).$

Finally, note that by the above observations the functor \eqref{eq:Lazard_generalized} is a retract of the functor
\begin{equation*}
\Ind(\cC^{\omega_1})\to \Fun^{\cont,\ex}(\Fun(\cC^{\omega_1},\Ab),\Ab),
\end{equation*}
which is an equivalence by Lazard's theorem.	 
\end{proof}

The following result is a generalization of Chase criterion of left coherence of an associative unital ring $R.$ For completeness we give essentially a self-contained proof, which is purely categorical and covers the original theorem \cite[Theorem 2.1]{Ch60}. More precisely, the result in loc. cit. corresponds to the case when $\cC=\Flat\hy R$ is the (compactly generated) category of flat right $R$-modules and $\cA=R\hy\Mod$ is the abelian category of left $R$-modules.

\begin{theo}\label{th:Chase}
Let $\cC$ be a compactly assembled additive ordinary category, and let $\cA=\Fun^{\cont,\add}(\cC,\Ab).$ The following are equivalent.
\begin{enumerate}[label=(\roman*),ref=(\roman*)]
	\item $\cA$ is coherently assembled. \label{Chase_coherently_assembled}
	\item $\cC$ has infinite products. \label{Chase_products_of_flat}
\end{enumerate}
\end{theo}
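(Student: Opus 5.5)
The plan is to characterize products in $\cC$ through the equivalence \eqref{eq:Lazard_generalized}, identifying $\cC$ with $\Fun^{\cont,\ex}(\cA,\Ab)$, and to compare them with pointwise products of functors $\cA\to\Ab$. Since $\cA$ is compactly assembled by Proposition \ref{prop:dualizable_Ab_modules}, the inclusion $\Fun^{\cont,\add}(\cA,\Ab)\subset\Fun^{\add}(\cA,\Ab)$ has a right adjoint, the continuous coreflection
\begin{equation*}
G\mapsto G^{c}=\colim\circ\Ind(G)\circ\hat{\cY}_{\cA},\quad G^{c}(x)=\indlim[i]G(x_i)\ \text{ for }\hat{\cY}(x)\cong\inddlim[i]x_i.
\end{equation*}
Given a family $(F_s)$ in $\cC$, the pointwise product $P=\prodd[s]F_s$ is additive and exact, because products are exact in $\Ab$. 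It need not be continuous, and $P^{c}$ is the product of the $F_s$ in $\Fun^{\cont,\add}(\cA,\Ab)$. The whole proof therefore comes down to deciding when $P^{c}$ is exact.

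\Implies{Chase_coherently_assembled}{Chase_products_of_flat}. If $\hat{\cY}_{\cA}$ is exact, then $P^{c}$ is a composition of exact functors: $\hat{\cY}$, then $\Ind(P)$, which is exact since $P$ is exact, then $\colim:\Ind(\Ab)\to\Ab$, which is exact. So $P^{c}\in\cC$. Because $\cC$ is a full subcategory of $\Fun^{\cont,\add}(\cA,\Ab)$, the object $P^{c}$ is then the product of the $F_s$ in $\cC$.

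\Implies{Chase_products_of_flat}{Chase_coherently_assembled}. This is the main obstacle, and it has two steps.

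The first step is to show that, whenever a product $Q=\prodd[s]F_s$ exists in $\cC$, the canonical map $Q\to P^{c}$ is an isomorphism. I would test this against enough objects of $\cC$. Let $\hat{\cY}_{\cC}(F)\cong\inddlim[j]F_j$ with compact transition maps. For such a compact map $F_j\to F_{j'}$ of $\cC$ and any natural transformation $F_{j'}\to P$ (equivalently, a family of maps $F_{j'}\to F_s$), I would show that the composite $F_j\to F_{j'}\to P$ lifts to $F_j\to Q$, by building a map from $F_j$ into $\cC$ via the universal property of $Q$. This would show that $Q\to P^{c}$ is an isomorphism after applying $\hat{\cY}$, hence an isomorphism. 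I expect the cleanest route to be the retraction $\cA\xto{\Phi}\Fun^{\add}(\cC^{\omega_1},\Ab)\xto{\Phi^R}\cA$ from the proof of Proposition \ref{prop:dualizable_Ab_modules}, which reduces the comparison to the compactly generated situation, where this is Lazard's description of flat functors.

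The second step shows that exactness of all $P^{c}$ forces $\hat{\cY}_{\cA}$ to be left exact; right exactness is automatic. Let $u:x\hookrightarrow y$ be a mono with $\hat{\cY}(x)\cong\inddlim[i]x_i$ and $\hat{\cY}(y)\cong\inddlim[k]y_k$. Left exactness of $\hat{\cY}(u)$ is a statement about elements dying in the ind-system, and I would detect it using a single large product. Take the family of all flat functors $F$ together with all elements of $F(x_i)$. For the product $P$ of this family, exactness of $P^{c}=\colim\circ\Ind(P)\circ\hat{\cY}$ on $u$ expresses exactly a Chase-type condition: a uniform vanishing in the ind-systems that holds for all flat $F$ at once. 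Since the flat functors $\ev(-,F)$, $F\in\cC$, jointly detect exactness of sequences in $\Ind(\cA)$ (by the equivalence \eqref{eq:Lazard_generalized} applied to $\Ind$-objects), this uniform condition gives injectivity of $\hat{\cY}(x)\to\hat{\cY}(y)$ in $\Ind(\cA)$. I expect this to be the hardest bookkeeping step: choosing the indexing family so that a single product sees the whole ind-system. It mirrors Chase's argument with $\prodd R$ and finitely generated ideals.
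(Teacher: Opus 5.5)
Your (i)$\Rightarrow$(ii) is the paper's argument: $P^{c}=\colim\circ\Ind(P)\circ\hat{\cY}_{\cA}$ is exactly the paper's $\Psi^R$ applied to the restricted pointwise product. For (ii)$\Rightarrow$(i) you take a genuinely different route, and its architecture works: products in $\cC$ are the functors $P^{c}$, so every $P^{c}$ is exact, and exactness of suitable $P^{c}$ forces $\hat{\cY}_{\cA}$ to be left exact. However, neither load-bearing step is a proof as written.

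\emph{Step 2.} A single product over ``all flat functors'' is indexed by a proper class, so it does not exist. The detection claim also needs care. Composed with $\colim$, every $F\in\cC$ factors through $\colim\colon\Ind(\cA)\to\cA$, because it is continuous. So these functors kill $\ker\hat{\cY}(u)$ whether or not it is zero, and the claim is true only levelwise, for $\Ind(F)\colon\Ind(\cA)\to\Ind(\Ab)$. The levelwise claim holds because objects of $\cA$ are functors on $\cC$ and $\cC$ has direct sums. Concretely, $G=\bigoplus F$, over representatives $F$ of $\cC^{\omega_1}$, lies in $\cC$ and is faithful on $\cA$, since restriction to $\cC^{\omega_1}$ is fully faithful. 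With this the step is short, and no further bookkeeping is needed:
\begin{enumerate}
\item Take a level representation $\hat{\cY}(u)\cong\inddlim[i](u_i\colon x_i\to y_i)$, fix $i$, and put $P=\prodd[\alpha\in G(\ker u_i)]G$.
\item The tautological element of $P(\ker u_i)\subset P(x_i)$ dies in $P(y_i)$.
\item Injectivity of $P^{c}(u)\colon\indlim[i']P(x_{i'})\to\indlim[i']P(y_{i'})$ then gives a single $i'$ with $G(\ker u_i\to x_{i'})=0$.
\item Since $G$ is faithful, $\ker u_i\to x_{i'}$ is zero. As $i$ was arbitrary, $\hat{\cY}(u)$ is a monomorphism.
\end{enumerate}

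\emph{Step 1.} Comparing $\Hom(G',Q)$ with $\Hom(G',P^{c})$ for $G'\in\cC$ is tautological: both are $\prodd[s]\Hom(G',F_s)$. It does not give an isomorphism, because $\cC$ does not generate $\Fun^{\cont,\add}(\cA,\Ab)$; for $\cC=\Flat\hy\Z$, the compact projective $\Hom_{\Z}(\Z/2,-)$ is not exact. Your fallback via the retraction is the right argument and should be written out:
\begin{enumerate}
\item The right adjoint $\iota\colon\cC\simeq\Ind_{\omega_1}(\cC^{\omega_1})\hookrightarrow\Ind(\cC^{\omega_1})$ of $\colim$ preserves products.
\item In $\Ind(\cC^{\omega_1})$, products agree with the coreflected pointwise products of the corresponding functors on $\Fun^{\add}(\cC^{\omega_1},\Ab)$. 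To see this, compare on corepresentables and then use exactness.
\item Precomposition with the restriction $\Phi\colon\cA\to\Fun^{\add}(\cC^{\omega_1},\Ab)$ is right adjoint to precomposition with $\Phi^R$. So it preserves products, and it sends $\iota(F)$ to $F$. Hence $Q\cong P^{c}$.
\end{enumerate}

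These are the paper's ingredients too. The real difference is how left exactness of $\hat{\cY}_{\cA}$ is obtained: you use Chase's original element chase. The paper instead works first in the compactly generated case. There it identifies $\colim\colon\Ind(\cA^{\kappa})\to\cA$ with a restriction functor $\Theta$ whose left adjoint is exact exactly when the functors $\prodd[j]\Hom(-,x_j)$ are flat. This exhibits $\cA$ as a retract of the locally coherent $\Ind(\cA^{\kappa})$, and in the general case the paper only needs products of $\omega_1$-compact objects.
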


\begin{proof} \Implies{Chase_coherently_assembled}{Chase_products_of_flat}. By Proposition \ref{prop:dualizable_Ab_modules} we need to show that the category $\Fun^{\cont,\ex}(\cA,\Ab)$ has infinite products, provided that $\cA$ is coherently assembled. Restriction to $\cA^{\omega_1}$ defines a fully faithful functor
\begin{equation*}
\Psi:\Fun^{\cont,\ex}(\cA,\Ab)\hto \Fun^{\ex}(\cA^{\omega_1},\Ab).
\end{equation*}
Now, exactness of $\hat{\cY}_{\cA}:\cA\to\Ind(\cA^{\omega_1})$ implies that $\Psi$ has a right adjoint, given by
\begin{equation*}
\Psi^R(F)(x)=\indlim[i]F(x_i),\quad F\in\Fun^{\ex}(\cA^{\omega_1},\Ab),\,x\in\cA,\,\hat{\cY}_{\cA}(x)=\inddlim[i]x_i.
\end{equation*}
Since the category of abelian groups satisfies (AB4*) (exactness of infinite products), the category $\Fun^{\ex}(\cA^{\omega_1},\Ab)$ has infinite products. Hence, so does the category $\Fun^{\cont,\ex}(\cA,\Ab),$ as required.

\Implies{Chase_products_of_flat}{Chase_coherently_assembled} We first consider the case when $\cC$ is compactly generated, and we identify $\cA\simeq\Fun^{\add}(\cC^{\omega},\Ab).$ Choose a sufficiently large regular cardinal $\kappa$ with the following properties: the set of isomorphism classes of objects in $\cC^{\omega}$ is $\kappa$-small, and for any $x,y\in\cC^{\omega}$ the group $\Hom_{\cC^{\omega}}(x,y)$ is $\kappa$-small. Then the category $\cA^{\kappa}$ is abelian. Moreover, this small category has a collection of projective generators. More precisely, let $\cB\subset\cA^{\kappa}$ be the full (additive) subcategory of objects of the form $\biggplus[j\in J]\h_{x_j}^{\vee},$ where $J$ is a $\kappa$-small set, $x_j\in\cC^{\omega}$ and $\h_{x_j}^{\vee}=\Hom(x_j,-)\in\cA^{\omega}$ for $j\in J.$ Then we have an equivalence $\Ind(\cA^{\kappa})\simeq\Fun^{\add}(\cB^{op},\Ab).$ We identify $\cC^{\omega}$ with a full subcategory of $\cB^{op}$ via $x\mapsto \h_x^{\vee}.$

Now, the colimit functor $\colim:\Ind(\cA^{\kappa})\to\cA$ is identified with the precomposition functor $\Theta:\Fun^{\add}(\cB^{op},\Ab)\to\Fun^{\add}(\cC^{\omega},\Ab),$ which is exact and commutes with all colimits. Thus, it suffices to prove that the left adjoint functor $\Theta^L:\Fun^{\add}(\cC^{\omega},\Ab)\to\Fun^{\add}(\cB^{op},\Ab)$ is exact. Indeed, this would imply that $\cA$ is coherently assembled since so is $\Ind(\cA^{\kappa}).$

Now, exactness of $\Theta^L$ means that for any $P\in\cB$ the functor $F_P:(\cC^{\omega})^{op}\to\Ab,$ $F_P(x)=\Hom_{\cB}(P,\h_x^{\vee}),$ is flat. But this holds by assumption: if $P=\biggplus[j\in J]\h_{x_j}^{\vee},$ then the product $\prodd[j\in J]x_j$ exists in $\cC,$ which exactly means that the functor $F_P\cong \prodd[j]\Hom(-,x_j)$ is flat. This proves that $\cA$ is coherently assembled, or equivalently locally coherent.

Now consider the general case. We first observe that the category $\Ind(\cC^{\omega_1})$ has infinite products. Indeed, it suffices to show that for a collection of objects $\{x_j\in\cC^{\omega_1}\}_{j\in J}$ their product exists in $\Ind(\cC^{\omega_1}).$ This product is computed as follows: we first take the product of $x_j$ in $\cC$ (which exists by assumption), and then take its image under the functor $\cC\simeq\Ind_{\omega_1}(\cC^{\omega_1})\subset\Ind(\cC^{\omega_1}).$

By the above special case we know that the category $\Fun^{\add}(\cC^{\omega_1},\Ab)$ is coherently assembled. By the proof of Proposition \ref{prop:dualizable_Ab_modules} we have a retraction $\cA\to\Fun^{\add}(\cC^{\omega_1},\Ab)\to\cA,$ where both functors are exact and commute with all colimits. Therefore, by Proposition \ref{prop:coherently_assembled_as_retract} the category $\cA$ is coherently assembled, as required.
\end{proof}

\subsection{Concluding remarks}
\label{ssec:concluding_remarks}

We covered only some very basic non-trivial examples of compactly assembled $t$-structures and coherently assembled abelian categories. We briefly mention the following directions.

\begin{itemize}
	\item One can consider abelian categories of almost modules $\cA=\Mod\hy R/\Mod\hy (R/I),$ where $R$ is a discrete associative unital ring, and $I\subset R$ is a two-sided ideal such that $I^2 = I.$ It is interesting to figure out some basic sufficient conditions (for example, in the commutative case) which imply that $\cA$ is coherently assembled. If this is the case, the functor $\check{D}(\cA)\to D(\cA)$ does not have to be an equivalence, and one can consider the continuous $K$-theory $K^{\cont}(\cA)$ as $G$-theory of the pair $(R,I).$ When $I=R,$ this means that $R$ is right coherent and we obtain the usual $G$-theory of $R.$
	
	\item One can consider the categories of nuclear modules for more general adic spaces, in particular for formal schemes. For example, if $R$ is a regular noetherian commutative ring with an ideal $I\subset R,$ we expect that the category $\Nuc^{CS}(R^{\wedge}_I)$ has a compactly assembled continuously bounded $t$-structure. If $R$ is not regular, there should still be a continuously bounded $t$-structure with a coherently assembled heart, so one can consider $G$-theory in this framework. We also expect nice $t$-structures in the archimedean setting, say for nuclear gaseous modules over various gaseous rings.
	
    \item One can construct a dualizable $t$-category starting from a coherently assembled exact category $\cE.$ Namely, (slightly abusing the notation) denote by $\cE^{\add}$ the same category with an exact structure from Proposition \ref{prop:minimal_exact_structure}. Then we have a short exact sequence in $\Cat_{\st}^{\dual}:$
    \begin{equation*}\label{eq:ses_check_Ac_of_E}
    	0\to \check{\Ac}(\cE)\to\check{\St}(\cE^{\add})\to\check{\St}(\cE)\to 0.
    \end{equation*}
    One can show that the category $\check{\Ac}(\cE)$ has a natural compactly assembled continuously bounded $t$-structure, induced by the natural $t$-structure on $\check{\St}(\cE^{\add}).$ If $\cE$ is locally coherent, then $\check{\Ac}(\cE)\simeq\Ind(\Ac(\cE^{\omega})),$ and the induced $t$-structure on $\Ac(\cE^{\omega})$ coincides with the (bounded) $t$-structure from Proposition \ref{prop:t_structure_on_Ac}. In general the heart $\check{\Ac}(\cE)^{\heartsuit}$ is equivalent to the category of sequential limit-preserving additive functors $F:(\cE^{\omega_1})^{op}\to\Ab$ such that for any $x\in\cE^{\omega_1},$ for any $\alpha\in F(x)$ and for any compact morphism $f:y\to x$ with $y\in\cE^{\omega_1},$ there exists an exact projection $g:z\to y$ in $\cE^{\omega_1}$ such that $F(fg)(\alpha)=0.$
    
   	\item Let $R$ be an associative unital ring, and consider the abelian category $\cA=\Mod\hy R$ of right $R$-modules. By Proposition \ref{prop:coh_ass_comp_gen}, $\cA$ is coherently assembled if and only if $R$ is right coherent. In general, one can consider the left derived functors $L_k\hat{\cY}:\cA\to\Ind(\cA),$ and define the non-coherence rank of $R$ to be 
   	\begin{equation*}
   	n=\sup\{k\geq 0\mid L_k\hat{\cY}\ne 0\}\in\N\cup\{\infty\}.
   	\end{equation*}
   	Suppose that  $0<n<\infty.$ One can show that the category $\check{D}(\cA)$ is dualizable and we are in the situation of Question \ref{ques:non_coherence_rank_positive}. One can define $G$-theory of $R,$ putting $G(R)=K^{\cont}(\check{D}(\cA)).$ We are not aware of examples when $\check{D}(\cA)$ is not compactly generated (under the assumption $0<n<\infty$).
\end{itemize}

\end{document}